\numberwithin{equation}{subsection}
\renewcommand{\tocsection}[3]{%
  \indentlabel{\@ifnotempty{#2}{\bfseries\ignorespaces#1 #2\quad}}\bfseries#3}
\renewcommand{\tocsubsection}[3]{%
  \indentlabel{\@ifnotempty{#2}{\ignorespaces#1 #2\quad}}#3}
\newcommand\@dotsep{4.5}
\def\@tocline#1#2#3#4#5#6#7{\relax
  \ifnum #1>\c@tocdepth % then omit
  \else
    \par \addpenalty\@secpenalty\addvspace{#2}%
    \begingroup \hyphenpenalty\@M
    \@ifempty{#4}{%
      \@tempdima\csname r@tocindent\number#1\endcsname\relax
    }{%
      \@tempdima#4\relax
    }%
    \parindent\z@ \leftskip#3\relax \advance\leftskip\@tempdima\relax
    \rightskip\@pnumwidth plus1em \parfillskip-\@pnumwidth
    #5\leavevmode\hskip-\@tempdima{#6}\nobreak
    \leaders\hbox{$\m@th\mkern \@dotsep mu\hbox{.}\mkern \@dotsep mu$}\hfill
    \nobreak
    \hbox to\@pnumwidth{\@tocpagenum{\ifnum#1=1\bfseries\fi#7}}\par% <-- \bfseries for \section page
    \nobreak
    \endgroup
  \fi}
\renewcommand\csname r@tocindent0\endcsname{0pt}
\def\l@subsection{\@tocline{2}{0pt}{2.5pc}{5pc}{}}
    \def\paragraph{\@startsection{paragraph}{4}%
    \z@\z@{-\fontdimen2\font}%
    {\normalfont\bfseries}}
    \definecolor{darkblue}{rgb}{0,0,.85} % not great, not terrible
    \definecolor{darkred}{rgb}{0.84,0,0}
\newtheorem{thm}{Theorem}[section]
\newtheorem{prop}[thm]{Proposition}
\newtheorem{thmi}{Theorem}
\newtheorem{propi}{Proposition}
\newtheorem{lem}[thm]{Lemma}
\newtheorem{cor}[thm]{Corollary}
\newtheorem{construction}[thm]{Construction}
\theoremstyle{definition}
\newtheorem{example}[thm]{Example}
\newtheorem{nota}[thm]{Notation}
\theoremstyle{plain}
\newtheorem*{thmn}{Theorem}
\newtheorem*{corn}{Corollary}
\newtheoremstyle{examplestyle}
  {1em}% space before
  {1em}% space after
  {\addtolength{\@totalleftmargin}{1.0em}
   \addtolength{\linewidth}{-1.0em}
   \parshape 1 1.0em \linewidth}% body font
  {}% indent
  {\bfseries}% header font
  {.}% punctuation
  {.5em}% after theorem header
  {}% header specification (empty for default)
\theoremstyle{examplestyle}
\newtheorem{rem}[thm]{Remark}
\newtheorem{remi}{Remark}
\newtheorem{defn}[thm]{Definition}
\DeclareMathOperator{\Spec}{Spec}
\DeclareMathOperator{\Gal}{Gal}
\DeclareMathOperator{\id}{id}
\DeclareMathOperator{\Hom}{Hom}
\DeclareMathOperator{\Spf}{Spf}
\DeclareMathOperator{\Aut}{Aut} 
\DeclareMathOperator{\Fil}{Fil}
\DeclareMathOperator{\Spa}{Spa}
\DeclareMathOperator{\Sht}{Sht}
\DeclareMathOperator{\Spd}{Spd}
\DeclareMathOperator{\Frob}{Frob}
\DeclareMathOperator{\Gr}{Gr}
\DeclareMathOperator{\GL}{GL}
\newcommand{\sht}{\mathrm{sht}}
\newcommand{\colim@}[2]{%
  \vtop{\m@th\ialign{##\cr
    \hfil$#1\operator@font colim$\hfil\cr
    \noalign{\nointerlineskip\kern1.5\ex@}#2\cr
    \noalign{\nointerlineskip\kern-\ex@}\cr}}%
}
\newcommand{\colim}{%
  \mathop{\mathpalette\colim@{}}\nmlimits@
}
\newcommand{\B}{\mathrm B}
\newcommand{\F}{\mathbb{F}}
\newcommand{\bb}[1]{\mathbb{#1}}
\newcommand{\mc}[1]{\mathcal{#1}}
\newcommand{\mbb}[1]{\mathbb{#1}}
\newcommand{\mf}[1]{\mathfrak{#1}}
\newcommand{\wh}{\widehat}
\newcommand{\wt}{\widetilde}
\newcommand{\GVect}{\mathcal{G}\text{-}\mathbf{Vect}}
\newcommand{\GLoc}{\mathcal{G}\text{-}\mathbf{Loc}}
\newcommand{\GRflx}{\mathcal{G}\text{-}\mathbf{Rflx}}
\newcommand{\GSht}{\mathcal{G}\text{-}\mathbf{Sht}}
\newcommand{\GShtmu}{\mathcal{G}\text{-}\mathbf{Sht}_{\bm{\mu}}}
\newcommand{\hyphen}{\mathchar`-}
\newcommand{\Frac}{\mathrm{Frac}}
\newcommand{\ur}{\mathrm{ur}}
\newcommand{\Z}{\mathbb{Z}}
\newcommand{\Q}{\mathbb{Q}}
\renewcommand{\ll}{\llbracket}
\newcommand{\rr}{\rrbracket}
\newcommand{\lt}{\mathrm{lt}}
\newcommand{\rflx}{\mathrm{rflx}}
\renewcommand{\email}[2][]{%
  \ifx\emails\@empty\relax\else{\g@addto@macro\emails{,\space}}\fi%
  \@ifnotempty{#1}{\g@addto@macro\emails{\textrm{(#1)}\space}}%
  \g@addto@macro\emails{#2}%
}
\newcommand{\triv}{\mathrm{triv}}
\newcommand{\stacks}[1]{\cite[\href{https://stacks.math.columbia.edu/tag/#1}{Tag~#1}]{StacksProject}}
\newcommand{\aff}{\mathrm{aff}}
\newcommand{\an}{\mathrm{an}}
\newcommand{\cat}[1]{\mathbf{#1}}
\DeclareMathOperator{\Isom}{Isom}
\newcommand{\uIsom}{\underline{\Isom}}
\newcommand{\ms}[1]{\mathscr{#1}}
\newcommand{\fl}{\mathrm{fl}}
\newcommand{\Zar}{\mathrm{Zar}}
\newcommand{\fpqc}{\mathrm{fpqc}}
\newcommand{\proet}{\mathrm{pro\acute{e}t}}
\newcommand{\qproet}{\mathrm{qpro\acute{e}t}}
\newcommand{\Isoc}{\mathbf{Isoc}}
\newcommand{\perf}{\mathrm{perf}}
\def\Item(#1){\item[\llap{(}\refstepcounter{enumi}$\bullet$] #1)}
\DeclareMathOperator*{\twolim}{2-lim}
\DeclareMathAccent{\wtilde}{\mathord}{largesymbols}{"65}
\newcommand*\isomto{%
        \xrightarrow{\raisebox{-0.2 em}{\smash{\ensuremath{\sim}}}}%
    }
    \newcommand{\ov}[1]{\overline{#1}}
    \newcommand{\et}{\mathrm{\acute{e}t}}
    \newcommand{\Qsyn}{\mathrm{QSYN}}
    \newcommand{\qsyn}{\mathrm{qsyn}}
    \newcommand{\pris}{\mathrm{pris}}
    \newcommand{\PRIS}{\mathrm{PRIS}}
    \newcommand{\qrsp}{\mathrm{qrsp}}
    \newcommand{\perfd}{\mathrm{perfd}}
    \newcommand{\Ainf}{\mathrm{A}_\mathrm{inf}}
    \newcommand{\Acrys}{\mathrm{A}_\mathrm{crys}}
    \newcommand{\Bcrys}{\mathrm{B}_\mathrm{crys}}
    \newcommand{\Bdr}{\mathrm{B}_\mathrm{dR}}
   \newcommand{\defeq}{\vcentcolon=}
    \newcommand{\efdeq}{=\vcentcolon}
    \newcommand{\be}{\begin{equation*}}
    \newcommand{\ee}{\end{equation*}}
    \newcommand{\bx}{\begin{equation*}\xymatrix}
    \newcommand{\ex}{\end{equation*}}
\DeclareSymbolFontAlphabet{\mathbbl}{bbold}
\newcommand{\Prism}{{\mathlarger{\mathbbl{\Delta}}}}
\newcommand{\prism}{{\mathlarger{\mathbbl{\Delta}}}}
\newcommand{\smallprism}{{{\mathsmaller{\Prism}}}}
\newcommand{\smallN}{{{\mathsmaller{\mc{N}}}}}
\renewcommand{\inf}{\mathrm{inf}}
\newcommand{\crys}{\mathrm{crys}}
\newcommand{\strcrys}{{{\mathsmaller{\Prism}}\text{-}\mathrm{gr}}}
\newcommand{\mr}{\mathrm}
\newcommand{\ad}{\mathrm{ad}}
\newcommand{\dR}{\mathrm{dR}}
\title{A Tannakian framework for prismatic $F$-crystals}
\author{Naoki Imai$^{(1)}$}
\address[1]{\scriptsize Graduate School of Mathematical Sciences, The University of Tokyo,
    3-8-1 Komaba, Meguro-ku, Tokyo, 153-8914, Japan}
\email[1]{\scriptsize naoki@ms.u-tokyo.ac.jp}
\author{Hiroki Kato$^{(2)}$}
\address[2]{\scriptsize 
Institut des Hautes Études
Scientifiques, 35 route de Chartres, 91440 Bures-sur-Yvette, France
}
\email[2]{\scriptsize hiroki@ihes.fr}
\author{Alex Youcis$^{(3)}$}
\address[3]{\scriptsize University of Toronto, Bahen Centre, Room 6290
40 St. George St., Toronto, ON, M5S 2E4, Canada}
\email[3]{\scriptsize alex.youcis@gmail.com}
\date{\today}
\begin{document}
\begin{abstract}
    We develop the Tannakian theory of (analytic) prismatic $F$-crystals on a smooth formal scheme $\mf{X}$ over the ring of integers of a discretely valued field with perfect residue field. Our main result gives an equivalence between the $\mc{G}$-objects of prismatic $F$-crystals on $\mf{X}$ and $\mc{G}$-objects on a newly-defined category of $\Z_p$-local systems on $\mf{X}_\eta$: those of \emph{prismatically good reduction}. Additionally, we develop a \emph{shtuka realization functor} for (analytic) prismatic $F$-crystals on $p$-adic (formal) schemes and show it satisfies several compatibilities with previous work on the Tannakian theory of shtukas over such objects.
\end{abstract}

\maketitle

\tableofcontents

\section*{Introduction}

Since its inception, (integral) $p$-adic Hodge theory has provided an immensely powerful tool for studying how the $p$-adic geometry of objects varies in families. Central to this idea is the notion of crystalline $\Z_p$-local systems on $\mf{X}_\eta$, where $\mf{X}$ is a smooth formal scheme over the ring of integers $\mc{O}_K$ %(in the sense of \cite[ChapterI, \S5.3.(c)]{FujiwaraKato}) 
of a discretely valued $p$-adic field,\footnote{More precisely, $K$ is a discretely valued field of mixed characteristic $(0,p)$ and which has perfect residue field.} which promises to capture the idea of those $\Z_p$-local systems which have `good reduction' relative to $\mf{X}$. This guiding principle has recently been put on very firm footing via the introduction of the category $\cat{Vect}^\varphi(\mf{X}_\Prism)$ of \emph{prismatic $F$-crystals on $\mf{X}$} in the work \cite{BhattScholzeCrystals} of Bhatt and Scholze, and related objects, which have been shown to give `models' for crystalline $\Z_p$-local systems.

Additionally, the category $\cat{Vect}^\varphi(\mf{X}_\Prism)$ also provides a good approximation to (what should be) the category of $\Z_p$-motives over $\mf{X}$. This makes them an attractive notion for the study of integral (local) Shimura varieties, which ought to parameterize such motivic objects. That said, to rigorously utilize prismatic $F$-crystals in applications to Shimura varieties, one really needs a \emph{Tannakian theory}: a good theory of the category of $\mc{G}$-objects in $\cat{Vect}^\varphi(\mf{X}_\Prism)$ and related categories, where $\mc{G}$ is initially any smooth group scheme over $\Z_p$, but is assumed reductive for many of the main results of this article.

The goal of this paper is to provide such a Tannakian theory in some cases, with our main theorem being the following equivalence of categories.

\begin{thmi}[{see Theorem \ref{thm:equiv-G-Vect-and-G-strcrys}}]\label{thmi:main} If $\mc{G}$ is reductive, then there is an equivalence of categories
\begin{equation*}
T_\et\colon \GVect^\varphi(\mf{X}_\Prism)\isomto \GLoc_{\Z_p}^\strcrys(\mf{X}_\eta)\subseteq \GLoc_{\Z_p}^\crys(\mf{X}_\eta),
\end{equation*}
where $\cat{Loc}_{\Z_p}^\strcrys(\mf{X}_\eta)$ is the category of \emph{$\Z_p$-local systems of prismatically good reduction}.
\end{thmi}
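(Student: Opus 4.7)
The plan is to reduce the $\mc{G}$-equivariant equivalence to the $\mc{G}=\GL_n$ case via the standard Tannakian interpretation of $\mc{G}$-objects. By definition (as set up earlier in the paper), a $\mc{G}$-object in a symmetric monoidal exact category $\mc{C}$ is an exact symmetric monoidal functor $\Rep_{\Z_p}(\mc{G})\to\mc{C}$. The non-$\mc{G}$ étale realization $T_\et\colon \cat{Vect}^\varphi(\mf{X}_\Prism)\isomto\cat{Loc}^\strcrys_{\Z_p}(\mf{X}_\eta)$, also established earlier, is exact and symmetric monoidal, so post-composition with it produces the functor of the theorem on $\mc{G}$-objects and shows that it automatically lands inside $\GLoc^\strcrys_{\Z_p}(\mf{X}_\eta)$ (the latter being defined, representation-by-representation, via the good-reduction condition on the underlying non-$\mc{G}$ local systems).

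For essential surjectivity, I would take a $\mc{G}$-local system $F\colon\Rep_{\Z_p}(\mc{G})\to\cat{Loc}^\strcrys_{\Z_p}(\mf{X}_\eta)$ of prismatic good reduction and lift it pointwise through the quasi-inverse of the non-$\mc{G}$ equivalence: each $F(V)$ maps to some $\widetilde{F}(V)\in\cat{Vect}^\varphi(\mf{X}_\Prism)$. The tensor-compatibility of $T_\et$ then transports the symmetric monoidal coherence of $F$ to $\widetilde{F}$, yielding an exact tensor functor, i.e., a $\mc{G}$-prismatic $F$-crystal. Full faithfulness reduces directly to that of the non-$\mc{G}$ $T_\et$: a morphism of $\mc{G}$-objects is a monoidal natural transformation, and since $\mc{G}$ is reductive, $\Rep_{\Z_p}(\mc{G})$ is tensor-generated by a faithful representation $V$, so such a transformation is determined by its value on $V$ along with compatibility constraints on $V^\vee$, $V\otimes V^\vee$, and so on, all of which are controlled by the non-$\mc{G}$ equivalence.

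The main technical obstacle is the coherence check: verifying that the pointwise lift $V\mapsto\widetilde{F}(V)$ assembles into a genuinely exact symmetric monoidal functor rather than merely a functorial family of objects. This amounts to showing that $T_\et$ on vector bundles respects tensor products, duals, and short exact sequences via a coherent system of canonical isomorphisms — a compatibility that should have been built into the construction of the non-$\mc{G}$ equivalence, but whose promotion from GL-valued data to $\mc{G}$-valued data requires care. The reductivity hypothesis is what keeps these coherence checks finite in character (via the faithful-representation argument, together with semisimplicity properties of $\Rep_{\Z_p}(\mc{G})[1/p]$) and ensures that $\Rep_{\Z_p}(\mc{G})$ behaves well under Tannakian reconstruction, so that no exotic representations spoil the transport of structure.
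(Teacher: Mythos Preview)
Your proposal has a genuine gap at the step where you claim that the pointwise lift $\widetilde{F}=T_\et^{-1}\circ F$ ``assembles into a genuinely exact symmetric monoidal functor.'' The monoidal coherence does transport automatically (a quasi-inverse of a $\otimes$-equivalence is a $\otimes$-equivalence), but exactness does \emph{not}: the paper explicitly points out, citing \cite[Example~4.1.4]{LiuDifferentUnif}, that the quasi-inverse $T_\et^{-1}\colon\cat{Loc}^\strcrys_{\Z_p}(X)\to\cat{Vect}^\varphi(\mf{X}_\Prism)$ is not exact, already for $\mf{X}=\Spf(\mc{O}_K)$. Concretely, $T_\et^{-1}$ involves extending a vector bundle from the open locus $U(\mf{S}_R,(E))$ across the codimension-$2$ set $V(p,E)$, and this $j_\ast$ operation does not preserve short exact sequences. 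So $\widetilde{F}$ is a $\otimes$-functor but a priori not exact, hence not a $\mc{G}$-object. Your proposed fix via ``semisimplicity of $\Rep_{\Z_p}(\mc{G})[\nicefrac{1}{p}]$'' cannot help: the exactness being tested is integral, and $\Rep_{\Z_p}(\mc{G})$ has plenty of non-split short exact sequences even for reductive $\mc{G}$.

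The paper's proof avoids ever forming $\widetilde{F}$ as a functor. Instead it fixes a tensor package $(\Lambda_0,\mathds{T}_0)$ with $\mc{G}=\mathrm{Fix}(\mathds{T}_0)$, lifts only the single object $\omega(\Lambda_0)$ to a prismatic $F$-crystal $\mc{E}_0$, lifts the tensors $\omega(\mathds{T}_0)$ to tensors $\mathds{T}_\Prism$ on $\mc{E}_0$ (via full faithfulness of $\cat{Vect}^\varphi\hookrightarrow\cat{Vect}^{\an,\varphi}$), and forms the pseudo-torsor $\mc{Q}_\omega=\underline{\Isom}((\Lambda_0\otimes\mc{O}_\Prism,\mathds{T}_0\otimes 1),(\mc{E}_0,\mathds{T}_\Prism))$. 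The entire content of the theorem is then Proposition~\ref{prop:main-torsor-claim}: $\mc{Q}_\omega$ is actually a torsor. This is proved by showing that its restriction to $U(\mf{S}_R,(E))$ is a torsor (this uses the bi-exact \emph{analytic} equivalence of Proposition~\ref{prop:GR-DLMS-exact}), and then extending across the codimension-$2$ complement via a Hartogs-type argument for $\mc{G}$-torsors (Proposition~\ref{prop:reflexive-pseu-tors-is-tors-criterion}), which relies on the affineness of $\GL(\Lambda_0)/\mc{G}$. This last step is precisely where reductivity of $\mc{G}$ is used --- not in any semisimplicity argument on representations.
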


The applicability of Theorem \ref{thmi:main} to the study of integral Shimura varieties is not hypothetical. In fact, this article may be seen as a companion paper to \cite{IKY2} as well as providing some foundational results needed for \cite{IKY3}. There we show the existence of a prismatic ($F$-gauge) realization functor on integral Shimura varieties at hyperspecial level, which has several important consequences for the theory of such integral Shimura varieties (e.g., an analogue of the Serre--Tate theorem). Theorem \ref{thmi:main} plays a pivotal role in \cite{IKY2}.

In the rest of this introduction we explicate the difficulty in proving Theorem \ref{thmi:main} and discuss related Tannakian results established in this paper.

\medskip

\paragraph{The Tannakian theory of (analytic) prismatic $F$-crystals} Throughout the following we fix a mixed characteristic complete discrete valuation ring $\mc{O}_K$ with perfect residue field $k$ of characteristic $p$, and a smooth formal $\mc{O}_K$-scheme $\mf{X}$ (see Notation and conventions for our convention on smoothness) with generic fiber $X$. 

In \cite{BhattScholzeCrystals}, Bhatt and Scholze define a category $\cat{Vect}^\varphi(\mf{X}_\Prism)$ of prismatic $F$-crystals on $\mf{X}$, which presents the correct notion of a `deformation' of an $F$-crystal on $\mf{X}_k$ to $\mf{X}$. They further construct a functor $T_\et\colon \cat{Vect}^\varphi(\mf{X}_\Prism)\to\cat{Loc}_{\Z_p}(X)$, the \'etale realization functor, which is a prismatic analogue of the Riemann--Hilbert correspondence. When $\mf{X}=\Spf(\mc{O}_K)$, they show that $T_\et$ induces an equivalence between $\cat{Vect}^\varphi(\mf{X}_\Prism)$ and the category $\cat{Loc}^\crys_{\Z_p}(X)$ of crystalline $\Z_p$-local systems on $X$ (i.e.\@, $\Z_p$-lattices $\bb{L}$ in crystalline $\Q_p$-local systems).

In general, $\cat{Vect}^\varphi(\mf{X}_\Prism)$ is not sufficient to recover every crystalline lattice on $X$ (see \cite[Example 3.36]{DLMS}), and Guo and Reinecke in \cite{GuoReinecke} consider an enlargement $\cat{Vect}^{\an,\varphi}(\mf{X}_\Prism)$ consisting of so-called analytic prismatic $F$-crystals. The functor $T_\et$ extends to this larger category, and \cite{GuoReinecke} showed that the \'etale realization functor $T_\et$ forms an equivalence between $\cat{Vect}^{\an,\varphi}(\mf{X}_\Prism)$ and $\cat{Loc}^\crys_{\Z_p}(X)$ (cf.\@ the results of \cite{DLMS}). 

As above, for applications of these $p$-adic Hodge theoretic ideas to the study of integral (local) Shimura varieties it is important to have Tannakian version of these results. To this end, let us fix a smooth affine group $\Z_p$-scheme $\mc G$, and for an exact $\Z_p$-linear $\otimes$-category $\mc{C}$ (see \S\ref{section:torsors via tensors}) we define $\mc{G}\text{-}\mc{C}$ to be the category of $\mc{G}$-objects in $\mc{C}$ (i.e., exact $\Z_p$-linear $\otimes$-functors $\omega\colon \cat{Rep}_{\Z_p}(\mc{G})\to \mc{C}$).

\begin{propi}[{see Propositions \ref{prop:crystalline-can-be-tested-on-faithful-rep} and \ref{prop:GR-DLMS-exact}}]\label{propi:GR-DLMS-exact} Let $\mf{X}$ be a smooth formal $\mc{O}_K$-scheme and $\mc{G}$ a smooth affine group $\Z_p$-scheme.
\begin{enumerate} 
\item Both $T_\et\colon \cat{Vect}^{\varphi,\an}(\mf{X}_\Prism)\isomto \cat{Loc}_{\Z_p}^\crys(X)$ and its quasi-inverse are exact. 
\item There is an equivalence
\begin{equation*}
T_\et\colon \GVect^{\varphi,\an}(\mf{X}_\Prism)\isomto \GLoc^\crys_{\Z_p}(X),\quad \omega\mapsto T_\et\circ\omega.
\end{equation*}
\item An object $\omega$ of $\GLoc_{\Z_p}(X)$ lies in the full subcategory $\GLoc^{\crys}_{\Z_p}(X)$ if and only if $\omega(\Lambda_0)$ is crystalline for one faithful representation $\mc{G}\hookrightarrow \GL(\Lambda_0)$.
\end{enumerate}
\end{propi}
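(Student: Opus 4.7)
The plan is to establish the three parts in the order (1), (3), (2), with Part~(2) being essentially a formal consequence of Part~(1).

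\emph{Part~(1).} I would first verify exactness of $T_\et$ itself from its construction. The functor is built by restricting a prismatic $F$-crystal to the étale locus of the absolute prismatic site and passing to $\Z_p$-étale sheaves; each of these operations preserves short exact sequences of locally finite projective $\mc{O}_\Prism$-modules, since such sequences are locally split. Exactness of the quasi-inverse is harder and is the main obstacle of the proposition. The construction of DLMS/Guo--Reinecke produces an analytic prismatic $F$-crystal from a crystalline local system by glueing local pieces over a cover of $\mf{X}_\Prism$ (by, e.g., Breuil--Kisin prisms and perfect Fargues-type prisms). I would reduce exactness to a local statement on such a cover, where the functor is explicitly given by base change through Fontaine-type period rings and manifestly sends short exact sequences of free $\Z_p$-local systems to short exact sequences of vector bundles. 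Descending this local exactness through the glueing data then completes the argument.

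\emph{Part~(3).} The forward direction is tautological. For the backward direction, suppose $\omega(\Lambda_0)$ is crystalline for a single faithful $\mc{G}\hookrightarrow \GL(\Lambda_0)$. I first extend $\omega$ to a $\Q_p$-linear exact $\otimes$-functor
\begin{equation*}
\omega_{\Q_p}\colon \cat{Rep}_{\Q_p}(\mc{G}_{\Q_p})\to \cat{Loc}_{\Q_p}(X), \qquad V\mapsto \omega(V^\circ)\otimes_{\Z_p}\Q_p,
\end{equation*}
for any $\mc{G}$-stable $\Z_p$-lattice $V^\circ\subseteq V$; this is canonically independent of $V^\circ$. Since crystallinity depends only on the rational local system, it suffices to show that $\omega_{\Q_p}(V)$ is crystalline for every $V$. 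By a classical Tannakian fact (\cf Deligne--Milne), since $\mc{G}_{\Q_p}\hookrightarrow \GL(\Lambda_0\otimes\Q_p)$ remains a closed immersion, every object of $\cat{Rep}_{\Q_p}(\mc{G}_{\Q_p})$ is a subquotient of a finite direct sum of tensor powers of $\Lambda_0\otimes\Q_p$ and its dual. As crystalline $\Q_p$-local systems are stable under tensor products, duals, direct sums, subobjects, and quotients, and $\omega_{\Q_p}$ is exact, the conclusion follows.

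\emph{Part~(2).} This is a formal consequence of Part~(1). Composition with the exact $\Z_p$-linear $\otimes$-equivalence $T_\et$ sends an exact $\Z_p$-linear tensor functor $\omega\colon \cat{Rep}_{\Z_p}(\mc{G})\to \cat{Vect}^{\varphi,\an}(\mf{X}_\Prism)$ to one valued in $\cat{Loc}_{\Z_p}^\crys(X)\subseteq \cat{Loc}_{\Z_p}(X)$, because $T_\et$ itself takes values in crystalline lattices. Composition with the (also exact and $\otimes$-compatible) quasi-inverse provides the inverse construction, yielding the asserted equivalence $\omega\mapsto T_\et\circ\omega$. Throughout, the main obstacle of the plan remains the exactness of the quasi-inverse in Part~(1); all other steps reduce either to Tannakian formalism or to standard stability properties of crystalline local systems.
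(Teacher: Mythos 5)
Your overall strategy coincides with the paper's: part (3) is proven exactly as you describe (reduce to $\Q_p$-coefficients, use that every object of $\cat{Rep}_{\Q_p}(G)$ is a subquotient of $\bigoplus_i \Lambda_0^{\otimes m_i}\oplus(\Lambda_0^\vee)^{\otimes n_i}$, and invoke closure of crystalline local systems under duals, tensor products, direct sums, and subquotients), part (2) is treated as a formal consequence of (1), and for (1) the paper likewise reduces exactness of $T_\et^{-1}$ to an evaluation on a cover of the final object of $\cat{Shv}(\mf{X}_\Prism)$ and then computes with period rings. Two of your steps, however, compress exactly the places where the real work happens. First, ``descending this local exactness through the glueing data'' is not ordinary fpqc descent: exactness of analytic prismatic crystals is defined on the punctured spectra $U(A,I)=\Spec(A)-V(p,I)$, and for a $(p,I)$-completely faithfully flat map $A\to B$ it is not even clear in this non-Noetherian setting that $U(B,IB)\to U(A,I)$ is surjective. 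The paper needs a dedicated argument (surjectivity onto closed points, or Gabber's argument for complete rings) before it can check exactness on a cover. Second, the local functor evaluated on the perfectoid prism $(\Ainf(\wt{R}),(\tilde{\xi}))$ is not a single base change through a Fontaine-type ring: it is a Kedlaya-type gluing of $\Lambda\otimes_{\Z_p}\mathrm{B}_{[0,\nicefrac{1}{p}]}$ with a Beauville--Laszlo modification of $D_\crys(\Lambda)$ base-changed to $\mathrm{B}_{[\nicefrac{1}{p},\infty]}[\nicefrac{1}{\tilde{\xi}}]$, and exactness follows only after one verifies that the gluing equivalences are bi-exact and that each piece is an exact functor of $\Lambda$ (flatness of $\Z_p\to\mathrm{B}_{[0,\nicefrac{1}{p}]}$ on the \'etale side, exactness of $D_\crys$ on the crystalline side). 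Neither point invalidates your plan, but both require arguments that ``manifestly'' does not supply.
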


That said, the difference between $\cat{Vect}^\varphi(\mf{X}_\Prism)$ and $\cat{Vect}^{\an,\varphi}(\mf{X}_\Prism)$ suggests a strengthening of the notion of crystalline. Namely, let $\cat{Loc}^\strcrys_{\Z_p}(X)$ be the category of \emph{prismatically good reduction $\Z_p$-local systems}: those crystalline $\Z_p$-local systems $\bb{L}$ such that $T_\et^{-1}(\bb{L})$ is a prismatic $F$-crystal. In fact, it is this category of prismatically good reduction $\Z_p$-local systems which plays
a more important role in many applications to Shimura varieties (e.g., see \cite{IKY2}) as it is $\cat{Vect}^\varphi(\mf{X}_\Prism)$ which is closer to motivic $\Z_p$-objects over $\mf{X}$, e.g., $p$-divisble groups (see \cite{AnschutzLeBrasDD}).

That said, the analogue of Proposition \ref{propi:GR-DLMS-exact} is far more subtle in this case. Namely, while the functor $T_\et\colon \cat{Vect}^\varphi(\mf{X}_\Prism)\to\cat{Loc}^\strcrys_{\Z_p}(X)$ is an exact $\Z_p$-linear $\otimes$-equivalence, its quasi-inverse is not exact, as it involves the extension of a vector bundle on an open subset of some space over its non-trivial closed complement (e.g., see \cite[Example 4.1.4]{LiuDifferentUnif}). So, there is no a priori reason for $T_\et^{-1}\circ\nu\colon \cat{Rep}_{\Z_p}(\mc{G})\to\cat{Vect}^\varphi(\mf{X}_\Prism)$ to be exact for $\nu$ in $\GLoc^\strcrys_{\Z_p}(X)$.

Despite this, a Tannakian equivalence still holds, at least if $\mc{G}$ is reductive, as in the following result which, in particular, refines Theorem \ref{thmi:main}. 

\begin{thmi}[{see Theorem \ref{thm:equiv-G-Vect-and-G-strcrys} and Corollary \ref{cor:strcrys-faithful-condition}}]\label{thmi:Tannakian} Let $\mf{X}$ be a smooth formal $\mc{O}_K$-scheme and $\mc{G}$ a reductive group $\Z_p$-scheme. 
\begin{enumerate}
    \item The functor
\begin{equation*}
T_\et\colon \GVect^\varphi(\mf{X}_\Prism)\to\GLoc^\strcrys_{\Z_p}(X),\qquad \omega\mapsto T_\et\circ\omega,
\end{equation*}
is an equivalence, i.e.\@, $T_\et^{-1}\circ\nu$ is exact for any $\nu$ in $\GLoc^\strcrys_{\Z_p}(X)$. 
\item An object $\omega$ of $\GLoc_{\Z_p}(X)$ belongs to the subcategory $\GLoc^\strcrys_{\Z_p}(X)$ if and only if $\omega(\Lambda_0)$ is of prismatically good reduction for some faithful representation $\mc{G}\hookrightarrow \GL(\Lambda_0)$.
\end{enumerate}
\end{thmi}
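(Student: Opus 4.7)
My plan is to deduce both parts from a single $\mc{G}$-torsor extension problem on the prismatic site, with the reductiveness of $\mc{G}$ entering through the affineness of $\GL(\Lambda_0)/\mc{G}$. Start with $\omega\in\GLoc_{\Z_p}(X)$ such that $\omega(\Lambda_0)\in \cat{Loc}^\strcrys_{\Z_p}(X)$ for a faithful representation $\mc{G}\hookrightarrow \GL(\Lambda_0)$. Since prismatically good reduction implies crystalline, Proposition \ref{propi:GR-DLMS-exact}(3) places $\omega$ in $\GLoc^\crys_{\Z_p}(X)$, and Proposition \ref{propi:GR-DLMS-exact}(2) then supplies an exact tensor functor $\nu\colon \cat{Rep}_{\Z_p}(\mc{G})\to \cat{Vect}^{\varphi,\an}(\mf{X}_\Prism)$ with $T_\et\circ\nu\cong\omega$ and $\nu(\Lambda_0)\in \cat{Vect}^\varphi(\mf{X}_\Prism)$. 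The entire theorem will follow once we show that $\nu$ factors through $\cat{Vect}^\varphi(\mf{X}_\Prism)\hookrightarrow \cat{Vect}^{\varphi,\an}(\mf{X}_\Prism)$: this gives the characterization in part (2) and provides the required quasi-inverse in part (1), whose exactness will be automatic.

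Via the Tannakian framework of \S\ref{section:torsors via tensors}, $\nu$ corresponds to a $\mc{G}$-torsor $\mc{P}$ with Frobenius on the analytic prismatic site. The assumption on $\nu(\Lambda_0)$ translates to the statement that the induced $\GL(\Lambda_0)$-torsor $\mc{P}\times^{\mc{G}}\GL(\Lambda_0)$ extends, with its Frobenius, to a $\GL(\Lambda_0)$-torsor $\mc{Q}$ on the full prismatic site. Extending $\mc{P}$ is thus equivalent to extending the $\mc{G}$-reduction of structure group of $\mc{Q}$ across the closed complement of the analytic locus. Here the reductive hypothesis enters decisively: when $\mc{G}$ is reductive over $\Z_p$, the fppf quotient $\GL(\Lambda_0)/\mc{G}$ is an affine $\Z_p$-scheme (a standard consequence of reductiveness), so a $\mc{G}$-reduction of $\mc{Q}$ is precisely a section of the affine bundle $\mc{Q}\times^{\GL(\Lambda_0)}(\GL(\Lambda_0)/\mc{G})$ over $\mf{X}_\Prism$.

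The main obstacle is this extension of a section of an affine scheme across the non-analytic closed locus in a Frobenius-compatible manner. I expect to handle it by reducing to local statements over Breuil--Kisin-type prismatic charts $(\mf{S},(E))$, where the analytic locus is $\Spec(\mf{S})\setminus V(p,E)$ and the closed complement has codimension $2$ in a regular Noetherian ring; there, sections of affine schemes extend by the classical algebraic Hartogs principle, and the uniqueness of the extension forces Frobenius-equivariance automatically. Globalizing across the prismatic site by descent, using the functoriality of the $\mc{G}$-torsor, is the subtle step, but analogous strategies have been carried out in the prismatic literature for related torsor-extension problems.

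Once the extended $\mc{G}$-torsor $\widetilde{\mc{P}}$ is produced, the associated tensor functor $\Lambda\mapsto \widetilde{\mc{P}}\times^{\mc{G}}\Lambda$ visibly lands in $\cat{Vect}^\varphi(\mf{X}_\Prism)$, proving part (2), and its exactness---hence part (1)---follows because fppf-locally the torsor trivializes and the functor reduces to $\Lambda\mapsto \Lambda\otimes_{\Z_p}\mc{O}$, which is exact by flatness of $\mc{O}$ over $\Z_p$. Full faithfulness of $T_\et$ at the $\mc{G}$-level is inherited from the underlying equivalence of Proposition \ref{propi:GR-DLMS-exact}(2).
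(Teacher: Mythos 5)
Your proposal is correct and follows essentially the same route as the paper: reduce to Breuil--Kisin charts $(\mf{S}_R,(E))$, where the complement of the analytic locus has codimension $2$, and extend the $\mc{G}$-structure across it using the affineness of $\GL(\Lambda_0)/\mc{G}$ (where reductivity enters) together with the algebraic Hartogs principle for sections of affine schemes over large opens --- this is exactly the content of Proposition \ref{prop:main-torsor-claim} combined with Proposition \ref{prop:reflexive-pseu-tors-is-tors-criterion} and Remark \ref{rem:alternative-to-rflx-pseudo-torsors}. The paper phrases the reduction via tensor-preserving isomorphism sheaves rather than via a section of the quotient bundle $\mc{Q}\times^{\GL(\Lambda_0)}(\GL(\Lambda_0)/\mc{G})$, but these are equivalent, and the globalization you flag as subtle is handled simply by the covering property of the Breuil--Kisin prisms (Proposition \ref{prop:small-open-cover-topos-cover}).
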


We further remark that both $\GVect^\varphi(\mf{X}_\Prism)$ and $\GLoc_{\Z_p}(X)$ may be interpreted in terms of torsors (with extra structure, e.g., a Frobenius $\varphi$). In particular, there are equivalences
\begin{equation*}
    \GVect^\varphi(\mf{X}_\Prism)\simeq \cat{Tors}^\varphi_\mc{G}(\mf{X}_\Prism),\qquad \GLoc_{\Z_p}(X)\simeq \cat{Tors}_{\mc{G}(\Z_p)}(X_\proet),
\end{equation*}
where $X_\proet$ is Scholze's pro-\'etale site on rigid spaces as in \cite{ScholzepadicHT} (see Propositions \ref{prop:varphi-equivariant-GVect-and-tors-identification} and \ref{prop:G-for-proet-adic}, respectively). Thus, Theorem \ref{thmi:Tannakian} can be interpreted as an equivalence of categories
\begin{equation*}
    T_\et\colon \cat{Tors}_\mc{G}^\varphi(\mf{X}_\prism)\isomto \cat{Tors}_{\mc{G}(\Z_p)}^\strcrys(X_\proet)\subseteq \cat{Tors}_{\mc{G}(\Z_p)}(X_\proet)
\end{equation*}
where the target of the first map is the full subcategory of $\mc{G}(\Z_p)$-torsors $\mc{P}$ on $X_\proet$ such that $\rho_\ast(\mc{P})$ is a $\Z_p$-local system of prismatically good reduction for any (equiv.\@, one faithful) representation $\rho\colon \mc{G}\to\GL(\Lambda)$.

\begin{remi} The main technical result needed to prove Theorem \ref{thmi:Tannakian} may be proven independently of many of the main results of \cite{GuoReinecke} and \cite{DLMS}, and in greater generality (e.g., including power series rings), using an adaptation of an idea of Kisin. See \S\ref{ss:complementary-results} for details.
\end{remi}

\begin{remi} A natural question is how the Tannakian theory developed here extends and interacts with the stacks $X^\smallprism$, $X^\smallN$, and $X^\mr{syn}$ developed by Drinfeld and Bhatt--Lurie. This question is largely answered by combining the contents of this paper with \cite[\S1]{IKY2}.
\end{remi}

\medskip

\paragraph{Relation to the Tannakian theory of shtukas} With applications to integral Shimura varieties in mind, it is important to understand the relationship between the Tannakian theory of (analytic) prismatic $F$-crystals and the Tannakian theory of shtukas. Indeed, much of the recent work on studying the $p$-adic Hodge theory of integral Shimura varieties, notably \cite{PappasRapoportI}, uses the theory of shtukas (as developed in \cite{ScholzeBerkeley}) in a central way.

More precisely, recall that in \cite{PappasRapoportI} there is developed a theory of shtukas over $\ms{X}$, where $\ms{X}$ is either a (formal) $\Z_p$-scheme or a $\Q_p$-scheme. Roughly, such an object is a morphism of $v$-stacks
\begin{equation*}
    \ms{X}^?\to \GSht,
\end{equation*}
where the target is the $v$-stack of $\mc{G}$-shtukas and $\ms{X}^?$ is a certain $v$-sheaf associated to $\ms{X}$.\footnote{The notation $?$ is due to the fact that exactly which $v$-sheaf one takes depends on precisely what type of object $\ms{X}$ is (e.g., it differs between $\Q_p$-schemes and $\Z_p$-schemes).}

Suppose now that $\ms{X}=X$ is a smooth $\Q_p$-scheme. For a smooth affine group $\Z_p$-scheme $\mathcal{G}$, there is developed in op.\@ cit.\@ a functor 
\begin{equation*}
    U_\mr{sht}\colon \GLoc^{\mr{dR}}_{\Z_p}(X^\mr{an})\to \GSht(X),
\end{equation*}
where the source is the category of de Rham $\mc{G}(\Z_p)$-local systems on $X^\mr{an}$ (i.e., $\mc{G}(\Z_p)$-lattices in de Rham $\mc{G}(\Q_p)$-local systems on $X^\mr{an}$).

The following result shows that there is a precise way to realize a $\mc{G}$-object in analytic prismatic $F$-crystals in the category of $\mc{G}$-shtukas, and that the functor $U_\mr{sht}$ intertwines this shtuka realization with the \'etale realization.

\begin{thmi}[{see Theorem \ref{thm:prismatic-F-crystal-shtuka-relationship}}]\label{thmi:shtuka-realization} Let $\mf{X}$ be a smooth formal $\mc{O}_K$-scheme and $\mc{G}$ a smooth affine group $\Z_p$-scheme. Then, there is a \emph{shtuka realization functor}
\begin{equation*}
    T_\mr{sht}\colon \GVect^{\varphi,\an}(\mf{X}_\Prism)\to \GSht(\mf{X}),
\end{equation*}
and a comparison isomorphism
\begin{equation*}
    \varrho\colon (T_\mr{sht})_\eta\isomto U_\mr{sht}\circ T_\et.
\end{equation*}
\end{thmi}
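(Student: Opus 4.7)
My plan is to first construct the shtuka realization at the level of underlying (non-Tannakian) objects, verify that it is exact and symmetric monoidal, and then promote both the functor and the comparison isomorphism to $\mc{G}$-objects by post-composition.

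For the underlying functor, I will produce
\begin{equation*}
t_\mr{sht}\colon \cat{Vect}^{\varphi,\an}(\mf{X}_\Prism)\to \cat{Sht}(\mf{X})
\end{equation*}
by unwinding the definitions on perfectoid test objects. Given $\mc{E}\in \cat{Vect}^{\varphi,\an}(\mf{X}_\Prism)$ and a perfectoid $\Spa(R,R^+)\to \mf{X}^\diamond$, the value of $\mc{E}$ on the associated prism $(A_\mr{inf}(R^+),(\xi))$ is a Frobenius-equivariant finite projective module over $A_\mr{inf}(R^+)$ whose restriction to the analytic locus $\Spa A_\mr{inf}(R^+)\setminus V(p,[\varpi^\flat])$ is a vector bundle. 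This is precisely the datum of a shtuka on $\Spa(R,R^+)$ in the sense of \cite{ScholzeBerkeley,PappasRapoportI}, and $v$-descent along perfectoid covers of $\mf{X}^\diamond$ glues these into the desired shtuka on $\mf{X}$. Exactness and symmetric monoidality of $t_\mr{sht}$ are immediate from the corresponding properties of restriction to an open subset and from tensor-compatibility of the prismatic evaluation.

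The Tannakian promotion is then formal. Using the description of $\GSht(\mf{X})$ as exact $\Z_p$-linear $\otimes$-functors $\cat{Rep}_{\Z_p}(\mc{G})\to \cat{Sht}(\mf{X})$, in the spirit of the torsor-via-tensor philosophy employed elsewhere in the paper, I set $T_\mr{sht}(\omega)\defeq t_\mr{sht}\circ \omega$. The same formalism applies to $U_\mr{sht}$, which is the post-composition with an underlying functor $u_\mr{sht}\colon \cat{Loc}^\dR_{\Z_p}(X^\mr{an})\to \cat{Sht}(X)$ defined in \cite{PappasRapoportI}. Thus the construction of $\varrho$ reduces to producing a natural tensor-compatible isomorphism
\begin{equation*}
\varrho_0\colon t_\mr{sht}(\mc{E})|_X \isomto u_\mr{sht}(T_\et(\mc{E})),\qquad \mc{E}\in \cat{Vect}^{\varphi,\an}(\mf{X}_\Prism),
\end{equation*}
and whiskering with $\omega$. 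On a perfectoid test object lying over $X$, both sides coincide with the Frobenius-equivariant vector bundle on the relevant punctured adic Spec associated to the étale $\Z_p$-local system $T_\et(\mc{E})$ — the left side because the étale realization factors through restriction to the analytic locus, the right side by construction of $u_\mr{sht}$ — so $\varrho_0$ emerges essentially tautologically, with tensor-compatibility inherited from that of $T_\et$.

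\emph{Main obstacle.} The principal technical point is Step 1, namely matching the pointwise prismatic datum with the precise formalism of shtukas on $\mf{X}^\diamond$ used in \cite{PappasRapoportI} (in particular the meromorphy condition along the correct untilt Cartier divisor) and checking $v$-descent in that formalism. Once this is pinned down, exactness and tensor-compatibility of $t_\mr{sht}$ are routine, and the comparison $\varrho_0$ follows from the identification over the generic fiber of both constructions with the same shtuka naturally attached to $T_\et(\mc{E})$.
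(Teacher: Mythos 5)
Your overall architecture agrees with the paper's: evaluate the analytic prismatic $F$-crystal on the perfect prisms $(\Ainf(R^{\sharp+}),(\xi))$ attached to perfectoid test objects, pull back to $\mc{Y}_{[0,\infty)}(S)$ to get a shtuka with leg at $S^\sharp$, glue over a basis of $\cat{Perf}_{\mf{X}}$ (the paper uses the analytic topology on \emph{formalizing} morphisms; note that a general $\Spa(R,R^+)\to\mf{X}^\lozenge$ need not extend to $\Spf(R^+)\to\mf{X}$, so some such restriction is needed before one can speak of "the associated prism"), and reduce the Tannakian statement to $\GL_n$ by post-composition. This is essentially the paper's construction of $T_\sht$.

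The genuine gap is in your treatment of $\varrho_0$. You assert that over the generic fiber both shtukas "coincide with the Frobenius-equivariant vector bundle associated to $T_\et(\mc{E})$" and that the comparison is therefore essentially tautological. But the \'etale local system only determines the shtuka \emph{away from the leg}, i.e., the underlying $\varphi$-module over the integral Robba ring $\wt{\mc{R}}^{\mr{int}}_S$, equivalently the finite free $\Z_p$-module $T$ in the associated $\B_\dR^+$-pair $(T,\Xi)$. The entire content of the comparison is the identification of the $\Bdr^+$-lattices $\Xi$, i.e., of the modifications at the leg, and this is not formal. For $U_\sht(T_\et(\mc{E}))$ the lattice is cut out by Scholze's de Rham comparison $c_{\mr{Sch}}$ and the filtration on $D_\dR$; for $T_\sht(\mc{E})_\eta$ the lattice is $\varphi_{\mc{E}}\bigl((\phi^\ast\mc{E})_{\Bdr^+(S^\sharp)}\bigr)$, which by the explicit description of the Guo--Reinecke construction near the crystalline locus (Lemma \ref{lem:triviality on Y}(2)) is cut out by the Faltings comparison $c_{\mr{Fal}}$ and $D_\crys$. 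Matching the two requires the compatibility $c_{\mr{Sch}}=\theta_\dR^\nabla\circ c_{\mr{Fal}}$ of Lemma \ref{lem: Faltings c and Scholze c}, which rests on the proof of \cite[Corollary 2.37]{GuoReinecke}; this is the real mathematical input of Theorem \ref{thm:prismatic-F-crystal-shtuka-relationship} and is absent from your argument. (A smaller omission: to compare $\B_\dR^+$-pairs at all one must first arrange that both shtukas are free over $\wt{\mc{R}}^{\mr{int}}_S$, which the paper does by restricting to the test objects $X^w_\qrsp$ of \cite[Definition 4.9]{GuoReinecke} before invoking Lemma \ref{lem: Fargues equivalence for shtukas}.)
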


Using the comparison isomorphism in Theorem \ref{thmi:shtuka-realization} we can actually upgrade our shtuka realization functor from smooth formal $\mc{O}_K$-schemes $\mf{X}$ to also include smooth $\mc{O}_K$-schemes $\ms{X}$. Namely, as in Definition \ref{defn:prismatic-model}, let $\GVect^{\varphi,\an}(\ms{X}_\Prism)$ be the category of $\mc{G}$-objects in analytic prismatic $F$-crystals on $\ms{X}$. Essentially by definition, an object of this category consists of a triple $(\omega_\et,\omega_\Prism,\iota)$ where
\begin{itemize}
    \item $\omega_\et$ is a de Rham $\mc{G}(\Z_p)$-local system on $\ms{X}_K^\mr{an}$,
    \item $\omega_\Prism$ is an object of $\GVect^{\varphi,\an}(\wh{\ms{X}}_\Prism)$ (with $\wh{\ms{X}}$ the $p$-adic completion of $\ms{X}$),
    \item $\iota\colon T_\et\circ \omega_\Prism\isomto \omega_\et|_{\wh{\ms{X}}_\eta}$ is an isomorphism in $\GLoc_{\Z_p}(\wh{\ms{X}}_\eta)$.
\end{itemize}
We then obtain a \emph{shtuka realization functor}
\begin{equation*}
    T_\mr{sht}\colon \GVect^{\varphi,\an}(\ms{X}_\Prism)\to \GSht(\ms{X}),
\end{equation*}
given by the rule 
\begin{equation*} 
T_\mr{sht}(\omega_\et,\omega_\Prism,\iota)=(U_\sht(\omega_\et),T_\sht(\omega_\Prism),U_\sht (\iota) \circ \varrho_{\omega_\Prism}),
\end{equation*}
where $\varrho_{\omega_\Prism}$ is the isomorphism induced by the comparison isomorphism $\varrho$ from Theorem \ref{thmi:shtuka-realization}.
In \cite{IKY2}, Theorem \ref{thmi:shtuka-realization} plays an important role in relating prismatic realization functors to shtuka realization functors, especially in proving the Pappas--Rapoport conjecture on the shtuka realization functors for Shimura varieties of abelian type at hyperspecial level. 

% In addition to the applications of the shtuka realization functor in the theory of Shimura varieties, we may use $T_\sht$, and the functor $\mc{M}_\Prism$ from \cite{AnschutzLeBrasDD}, to cleanly verify that Pappas--Rapoport's notion of shtukas on a smooth $p$-adic (formal) scheme $\ms{X}$ contains the category $\cat{BT}_p(\ms{X})$ of $p$-divisible groups on $\ms{X}$ as a special case.

% \begin{propi}[see] Let $\ms{X}$ be a smooth (formal) shceme over $\mc{O}_K$. Then, there is a fully faithful embedding
% \begin{equation*}
%     \mc{M}_\sht\defeq T_\sht\circ \mc{M}_\Prism\colon \cat{BT}_p(\ms{X})\to \cat{Sht}(\ms{X}).
% \end{equation*}
% \end{propi}
\medskip

\paragraph{A remark on reductive hypotheses}
As discussed above, while many of the results in this paper only require $\mathcal{G}$ to be a smooth affine group $\mathbb{Z}_p$-schemes, many of the main results (e.g., Theorem \ref{thmi:main}) require $\mathcal{G}$ to be reductive.

Pivotally, this makes use of an observation of Colliot-Th\'el\`ene--Sansuc. To state it precisely, let $X$ be an integral Noetherian scheme, $U\subseteq X$ an open subscheme such that $X-U$ has depth at least $2$, $\mc{G}$ a smooth affine group $X$-scheme, and $\rho\colon\mathcal{G}\hookrightarrow \GL_n$ a faithful representation. 

We then have the following result (see \S\ref{ss:reflexive-pseudo-torsors} for a more comprehensive discussion).

\begin{thmn}[Colliot-Th\'el\`ene--Sansuc] Let notation be as above, and assume that $\mathcal{G}$ is a reductive group $X$-scheme. Then, if $\mc{P}$ is a $\mathcal{G}$-torsor on $U$ such that the vector bundle $\rho_\ast\mathcal{P}$ extends to $X$ as a vector bundle, then the $\mc{G}$-torsor $\mathcal{P}$ extends to $X$.
\end{thmn}
Ultimately, this theorem boils down to an application of Hartog's lemma and the fact that the reductivity of $\mathcal{G}$ implies that the quotient $\GL_n/\mathcal{G}$ is affine. As a specific example of this, if $(A,\mf{m})$ is a regular local ring of dimension $2$, then the condition on $\rho_\ast\mathcal{P}$ is always satisfied by the Auslander--Buchsbaum formula, and so we obtain the following corollary. 

\begin{corn}Suppose that $(A,\mf{m})$ is a two-dimensional regular local ring and $\mc{G}$ a reductive group $A$-scheme. Then, any $\mathcal{G}$-torsor on $\Spec(A)-\{\mf{m}\}$ extends to a $\mathcal{G}$-torsor on $\Spec(A)$.
\end{corn}

It is unreasonable to expect the theorem of Colliot-Th\'el\`ene--Sansuc to hold when $\mathcal{G}$ is a general smooth affine group $X$-scheme. Namely, write $Y=\mathrm{GL}_n/\mathcal{G}$ for the quotient algebraic space. In this generality the best one can hope for is that $Y$ is quasi-affine (e.g., see \cite[Corollary 11.7]{PappasZhu} for the case considered in the above corollary), and so we assume this and set $Y^\mathrm{aff}=\Spec(\mc{O}(Y))$. Then, any section of $U\to Y$ extends to a section of $X\to Y^\mr{aff}$, but there is no reason to believe this avoids the boundary $Y^\mr{aff}-Y$. In fact, by considering the universal case $U=Y$ and $X=Y^\mathrm{aff}$, one may show that the theorem of Colliot-Th\'el\`ene--Sansuc fails whenever $\mathcal{G}$ is not reductive. 

That said, from the perspective of Shimura varieties, the most interesting case beyond the reductive case is when $\mathcal{G}$ is a parahoric group scheme over $\mathbb{Z}_p$. In this case, while the theorem of Colliot-Th\'el\`ene--Sansuc fails to hold, we expect that the above-stated corollary concerning regular local rings of dimension $2$ \emph{does hold}. Strong evidence in this direction is provided by \cite{Anschutz}, which shows that the result does hold for certain $A$, notably $A=\mathfrak{S}_\mathcal{O}$ for mixed characteristic discrete valuation rings $\mathcal{O}$. 

Finally, while the cases of this observation of Colliot-Th\'el\`ene--Sansuc relevant to proving Theorem \ref{thmi:main} are not two-dimensional, it may still be possible to bootstrap from the two-dimensional case (or even the more specific result of Ansch\"{u}tz) to prove such an analogue of Theorem \ref{thmi:main} at parahoric level.

% Moreover, we suspect that this corollary could be enough to prove \ref{thmi:main} with $\mathcal{G}$-parahoric. As seen by the proof of Proposition \ref{prop:base-ring-torsor}, proving Theorem \ref{thmi:main} for some $\mathcal{G}$ ultimately comes from showing that some quasi-torsor $\mathcal{P}\to\Spec(\mathfrak{S}_R)$ (with notation in loc.\@ cit.\@) is a torsor, i.e., that it's faithfully flat. One may then imagine applying a valutive criterion for flatness argument to reduce this to proving this after base change along $\mathfrak{S}_R\to\mathfrak{S}_\mathcal{O}$ for $\mathcal{O}$ (certain) mixed-characteristic discrete valuation ring. One may then imagine applying Ansch\"{u}tz's results directly to this case. 

% This idea is not entirely unprecedented, as it is similar in nature to arguments such as those given in \cite[\S4.6.3]{PappasRapoportI}. That said, the precise details of such an argument are unclear.

\medskip

\paragraph{Acknowledgments} The authors would like to heartily thank Abhinandan, Piotr Achinger, Bhargav Bhatt, Alexander Bertoloni Meli, K\k{e}stutis \v{C}esnavi\v{c}ius, Patrick Daniels, Ofer Gabber, Ian Gleason, Haoyang Guo, Kentaro Inoue, Mark Kisin, Emanuel Reinecke, Peter Scholze, and Koji Shimizu, for helpful discussions and comments. They would also like to thank an anonymous referee for many helpful comments which improved the readability of the paper. Part of this work was conducted during a visit to the Hausdorff Research Institute for Mathematics, funded by the Deutsche Forschungsgemeinschaft (DFG, German Research Foundation) under Germany's Excellence Strategy – EXC-2047/1 – 390685813. This work was supported by JSPS KAKENHI Grant Numbers 22KF0109, 22H00093 and 23K17650, the European Research Council (ERC) under the European Union’s Horizon 2020 research and innovation programme (grant agreement No. 851146), and funding through the Max Planck Institute for Mathematics in Bonn, Germany (report numbers MPIM-Bonn-2022, MPIM-Bonn-2023, MPIM-Bonn-2024). 

\medskip

\paragraph{Notation and conventions}

\begin{itemize}
    \item The symbol $p$ will always denote a (rational) prime.
    \item All rings are assumed commutative and unital unless stated otherwise.
    \item Reductive group schemes are assumed (by definition) to have connected fibers.
    \item Formal schemes are assumed to have a locally finitely generated ideal sheaf of definition.
    \item For a property $P$ of morphisms of schemes, an adic morphism of formal schemes $\mf{X}\to\mf{Y}$, where $\mf{Y}$ has an ideal sheaf of definition $\mc{I}$, is \emph{adically $P$ (or $\mc{I}$-adically $P$)} if the reduction modulo $\mc{I}^n$ is $P$ for all $n$. If $A\to B$ is an adic morphism of rings with the $I$-adic topology, for $I\subseteq A$ an ideal, then we make a similar definition.
    \item For \'etaleness/smoothness of a morphism of formal schemes, we follow the conventions in \cite[Chapter I, \S5.3.(b)--(c)]{FujiwaraKato}. In particular, smooth morphisms are adic. 
    \item For a non-archimedean field $K$, a \emph{rigid $K$-space} $X$ is an adic space locally of finite type over $K$. We denote the set of \emph{classical points} by $|X|^\mathrm{cl}:=\left\{x\in X: [k(x):K]<\infty\right\}$.
    \item For two categories $\ms{C}$ and $\ms{D}$, the notation $(F,G)\colon \ms{C}\to\ms{D}$ means a pair of functors $F\colon \ms{C}\to \ms{D}$ and $G\colon\ms{D}\to\ms{C}$, with $F$ being right adjoint to $G$.
    \item For an $R$-module $M$ and an ideal $I\subseteq R$ we write $M/I$ as shorthand for $M/IM$.
    \item A filtration always means a decreasing, separated, and exhaustive $\Z$-filtration. 
    \item For a ring $A$ which is $a$-adically separated for $a$ in $A$, denote by $\Fil^\bullet_a$ the filtration with $\Fil^r_a=a^r A$ for $r> 0$, and $\Fil^r_a=A$ for $r\leqslant 0$. Define $\Fil^\bullet_\triv\defeq \Fil^\bullet_0$.
    \item A filtration of modules (of sheaves) is \emph{locally split} if its graded pieces are locally free.
    \item For an $\bb{F}_p$-algebra $R$ (resp.\@ $\bb{F}_p$-scheme $X$), we denote by $F_R$ (resp.\@ $F_X$) the absolute Frobenius of $R$ (resp.\@ $X$).
\end{itemize}

\section{The Tannakian framework for prismatic objects}\label{s:G-objects-prismatic-crystals}

In this section we discuss the fundamental Tannakian theory of (analytic) prismatic $F$-crystals on a quasi-syntomic $p$-adic formal scheme $\mf{X}$. See Appendix \ref{s:Tannakian-appendix} for our conventions concerning, topoi, formal schemes, and Tannakian formalism.

\medskip

\begin{nota}\label{nota:Section-1} We fix the following notation:
\begin{itemize}
\item $k$ is a perfect field extension of $\bb{F}_p$, $W\defeq W(k)$, and $K_0\defeq \mr{Frac}(W)$,
\item $K$ is a finite totally ramified extension of $K_0$, with ring of integers $\mc{O}_K$ and ramification index $e$,
\item $\pi$ is a uniformizer of $K$, which we take to be $p$ if $K=K_0$, and $E=E(u)$ in $W[u]$ is the minimal polynomial for $\pi$ over $K_0$,
\item $\ov{K}$ is an algebraic closure of $K$ and $C$ is its $p$-adic completion,
\item $\pi^\flat$ and $p^\flat$ in $C^\flat$ are as in \cite[Lemma 6.2.2]{ScholzeBerkeley},
\item $\varepsilon\defeq (1,\zeta_p,\ldots)$ in $C^\flat$ is a compatible system of $p^\text{th}$-power roots of $1$, 
\item $q=[\varepsilon]$ in $\Ainf(\mc{O}_C)$, and $t\defeq\log(q)=-\sum_{n\geqslant 1} \frac{(1-q)^n}{n}$ an element of $\Acrys(\mc{O}_C)$, where $\Ainf(\mc{O}_C)$ and $\Acrys(\mc{O}_C)$ are as in \cite[\S1.2.3]{FontainePeriodes} and \cite[\S2.2]{FontainePeriodes}, respectively,
\item $\xi_0\defeq p-[p^\flat]$ and $\tilde{\xi}_0\defeq p-[p^\flat]^p$, elements of $\Ainf(\mc{O}_C)$,
\item $(\Lambda_0,\mathds{T}_0)$ is a tensor package over $\Z_p$, with $\mc{G}\defeq \mathrm{Fix}(\mathds{T}_0)$ smooth over $\Z_p$ (see \S\ref{section:torsors via tensors}),
\item $G\defeq \mc{G}_{\Q_p}$.
\end{itemize}
\end{nota}
\subsection{The absolute prismatic and quasi-syntomic sites}\label{ss:absolute-prismatic-and-qsyn-sites} 

We now record notation and basic results about the prismatic and quasi-syntomic sites developed in \cite{BMS-THH}, \cite{BhattScholzePrisms}, and \cite{BhattScholzeCrystals}.

\subsubsection{Prisms}\label{subsubsec: prisms}
 For a $\Z_{(p)}$-algebra $A$, a \emph{$\delta$-structure} is a map $\delta\colon A\to A$ with $\delta(0)=\delta(1)=0$ and
\begin{equation*}
    \delta(xy)=x^p\delta(y)+y^p\delta(x)+p\delta(x)\delta(y),\qquad \delta(x+y)=\delta(x)+\delta(y)+\tfrac{1}{p}(x^p+y^p-(x+y)^p).
\end{equation*}
Associated to $\delta$ is a Frobenius lift $\phi\colon A\to A$ given by $\phi(x)=x^p+p\delta(x)$, which we call the \emph{Frobenius}. If $A$ is $p$-torsion-free then any Frobenius lift $\phi$ on $A$ defines a $\delta$ structure by $\tfrac{1}{p}(\phi(x)-x^p)$, establishing a bijection between the two types of structures, and we conflate the two notions. We call the pair $(A,\delta)$ a \emph{$\delta$-ring}. We often suppress $\delta$ from the notation, writing $\delta_A$ (or $\phi_A$) when we want to be clear. A morphism of $\delta$-rings is a ring map that intertwines the $\delta$-structures.

A \emph{prism} is a pair $(A,I)$ where $A$ is a $\delta$-ring and $I\subseteq A$ is an invertible ideal with $A$ derived $(p,I)$-adically complete (see \cite[\S6.2]{BMSI}), and such that $I+\phi(I)$ contains $p$. Thus, $I$ is finitely generated and $\Spec(A)-V(I)$ is affine (see \stacks{07ZT}), and we denote by $A[\nicefrac{1}{I}]$ the global sections of the structure sheaf. Let $j_{(A,I)}$ denote the inclusion of $U(A,I)\defeq \Spec(A)-V(p,I)$ into $\Spec(A)$. For a prism $(A,I)$, unless stated otherwise, we view $A$ as being equipped with the $(p,I)$-adic topology. 

A prism $(A,I)$ is \emph{bounded} if $A/I$ has bounded $p^\infty$-torsion. The following two results will be used without comment in the sequel.

\begin{lem} Let $(A,I)$ be a bounded prism. Then, $A$ is $(p,I)$-adically complete, and $A/I^r$ is $p$-adically complete for all $r\geqslant 1$.
\end{lem}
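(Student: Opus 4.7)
The plan is first to show that each $A/I^r$ is classically $p$-adically complete, and then to bootstrap to the $(p,I)$-adic completeness of $A$ itself using the derived completeness built into the definition of a prism.

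For the first assertion I would argue by induction on $r$. For the base case $r=1$: the quotient $A/I$ inherits derived $(p,I)$-completeness from $A$ (quotients by finitely generated ideals preserve derived completeness), and since $I$ annihilates $A/I$ this reduces to derived $p$-completeness. The hypothesis of boundedness of $p^\infty$-torsion on $A/I$ then converts derived $p$-completeness into classical $p$-adic completeness, via the standard criterion that a derived $p$-complete module with bounded $p^\infty$-torsion is automatically $p$-adically separated, hence honestly complete. For the inductive step, invertibility of $I$ makes $I^r/I^{r+1}$ an invertible $A/I$-module, and in particular Zariski-locally isomorphic to $A/I$, so it too is derived $p$-complete with bounded $p^\infty$-torsion. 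The short exact sequence
\begin{equation*}
0\to I^r/I^{r+1}\to A/I^{r+1}\to A/I^r\to 0,
\end{equation*}
combined with the stability of both conditions under extensions, then yields the same two properties for $A/I^{r+1}$, and a second application of the criterion upgrades this to classical $p$-completeness.

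For the second assertion, I would first observe that the $(p,I)$-adic topology on $A$ coincides with the topology defined by the ideals $J_n\defeq p^nA+I^nA$, since $(p,I)^{2n}\subseteq J_n\subseteq (p,I)^n$. Hence it suffices to show $A\isomto \varprojlim_n A/J_n$. The transition maps in the inverse system $\{A/J_n\}=\{(A/I^n)/p^n\}$ are surjective, so $\varprojlim{}^1$ vanishes; combining this with the derived $(p,I)$-completeness isomorphism $A\isomto R\varprojlim_n A/(p,I)^n$ identifies $A$ with the honest classical $(p,I)$-adic completion.

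The main obstacle is the inductive step, and specifically the appeal to the criterion that derived $p$-completeness together with bounded $p^\infty$-torsion forces classical completeness; once this is available, everything is formal. The two essential structural inputs are the invertibility of $I$ (so that the graded pieces $I^r/I^{r+1}$ look locally like $A/I$ and therefore inherit its finiteness properties) and the boundedness hypothesis $A/I$ has bounded $p^\infty$-torsion, which is precisely what propagates derived completeness to classical completeness.
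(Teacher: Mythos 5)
Your argument for the second assertion (that each $A/I^r$ is classically $p$-adically complete) is correct, and it takes a genuinely different route from the paper: the paper writes $A/I^r$ as the cokernel of a map $A^n\to A$ of $p$-adically complete modules to get derived $p$-completeness and then invokes Ogus's separatedness argument from the comments to \stacks{0BKF}, whereas you induct on $r$ using the exact sequence $0\to I^r/I^{r+1}\to A/I^{r+1}\to A/I^r\to 0$, the invertibility of $I$ (so the graded pieces are invertible $A/I$-modules), and the criterion that a derived $p$-complete module with bounded $p^\infty$-torsion is classically $p$-complete. Your version is more self-contained and makes the role of the boundedness hypothesis explicit; the paper's is shorter because it outsources the separatedness step.

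The first assertion is where there is a genuine gap. You invoke ``the derived $(p,I)$-completeness isomorphism $A\isomto R\varprojlim_n A/(p,I)^n$,'' but derived $(p,I)$-completeness is \emph{not} the statement that $A\isomto R\varprojlim_n A/(p,I)^n$: it is the statement that $A$ is the limit of the Koszul-type complexes $\mathrm{Kos}(A;p^n,d^n)$ (equivalently, that $\mathrm{Ext}^\ast$ against the relevant localizations vanishes), and the comparison map from this derived completion to $R\varprojlim_n A/(p,I)^n$ is an isomorphism only under additional hypotheses such as weak proregularity of $(p,I)$. If your claimed isomorphism were automatic, then since the transition maps in $\{A/(p,I)^n\}$ are always surjective, every derived $(p,I)$-complete ring would be classically $(p,I)$-complete, and the boundedness hypothesis would be vacuous; this is false, and indeed the whole point of ``bounded'' is to bridge exactly this gap (which is why the paper simply cites \cite[Lemma 3.7 (1)]{BhattScholzePrisms}). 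Your strategy can be repaired: since $I$ is invertible it is locally generated by a nonzerodivisor $d$, so $A$ is derived $I$-complete and $I$-torsion-free, hence classically $I$-adically complete by the same ``derived complete plus bounded torsion'' criterion you already use; combined with the $p$-completeness of each $A/I^n$ and a cofinality argument identifying $\varprojlim_n (A/I^n)/p^n$ with $\varprojlim_n A/I^n$, this yields $A\isomto\varprojlim_n A/(p^nA+I^n)$, which is the classical $(p,I)$-adic completion by your (correct) comparison of topologies.
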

\begin{proof}
    The first claim is precisely \cite[Lemma 3.7 (1)]{BhattScholzePrisms}. For the second claim, let $I^r=(d_1,\ldots,d_n)$. Then, $A/I^r=\mathrm{coker}(f)$, where $f\colon A^n\to A$ is given by $f(a_1,\ldots,a_n)=\sum_{i=1}^n d_ia_i$. As $A^n$ and $A$ are $p$-adically complete, we know by \cite[Lemma 3.4.14]{BhattScholzeProetale} that $A/I^r$ is derived $p$-adically complete. But, it is then $p$-adically complete by the argument of Ogus given in the comments of \stacks{0BKF}.
\end{proof}

A \emph{morphism} $(A,I)\to (B,J)$ is a morphism $A\to B$ of $\delta$-rings mapping $I$ into $J$. By the \emph{rigidity property of morphisms} of prisms (see \cite[Proposition 3.5]{BhattScholzePrisms}), if $(A,I)\to (B,J)$ is a morphism of prisms, then $I\otimes_A B$ maps isomorphically onto $J$ and, in particular, $J=IB$. A morphism $(A,I)\to (B,IB)$ is \emph{$(p,I)$-completely (faithfully) flat (resp.\@ \'etale, smooth}) when $B\otimes^L_A (A/(p,I))$ is concentrated in degree $0$ and $A/(p,I)\to B\otimes^L_A (A/(p,I))$ is (faithfully) flat (resp.\@ \'etale, smooth).

\begin{lem}\label{lem:adically-and-completely-agree} Let $(A,I)$ be a bounded prism. Let $B$ be a  $(p,I)$-adically complete $A$-algebra. Then 
$A \to B$ is $(p,I)$-completely (faithfully) flat (resp.\@ \'etale, smooth) if and only if $\Spf(B)\to\Spf(A)$ is adically (faithfully) flat (resp.\@ \'etale, smooth).
\end{lem}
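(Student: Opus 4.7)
The plan is to reduce to the flat case and then bootstrap to étale and smooth via flat lifting of these properties. Throughout, write $J = (p,I)$.

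For the flat case, the goal is to show that for a $J$-adically complete $A$-algebra $B$, the condition that $B \otimes^L_A A/J$ sits in degree $0$ and is $(A/J)$-flat is equivalent to $B/J^n$ being $(A/J^n)$-flat for every $n \geqslant 1$. The forward implication proceeds by dévissage along the short exact sequences
\[
0 \to J^n/J^{n+1} \to A/J^{n+1} \to A/J^n \to 0,
\]
noting that each successive quotient $J^n/J^{n+1}$ is an $A/J$-module and so has vanishing higher Tor against $B$ by the hypothesis. Inductively, $\Tor^A_{\geqslant 1}(B, A/J^n) = 0$, so $B \otimes_A A/J^n = B/J^n$, and flatness of $B/J^n$ over $A/J^n$ then follows from the local criterion for flatness applied to the nilpotent thickening $A/J \hookrightarrow A/J^n$. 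The converse is essentially definitional: flatness of $B/J^n$ over $A/J^n$ for all $n$, combined with the $J$-adic completeness of $B$ and the $p$-adic completeness of each $A/J^n$ (from the preceding lemma), yields $(p,I)$-complete flatness. The faithful version adds no difficulty, as the underlying topological map $\Spec(B/J) \to \Spec(A/J)$ is the same in either interpretation.

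For étale and smooth, both properties are characterized by flatness together with a fibre-wise condition modulo $J$, and both lift along nilpotent thickenings provided flatness is maintained \stacks{07M7}. Consequently, if $A \to B$ is $(p,I)$-completely smooth (resp.\@ étale), then $B/J$ is smooth (resp.\@ étale) over $A/J$, and combined with the flatness of $B/J^n$ over $A/J^n$ established in the flat case, the flat lifting criterion gives smoothness (resp.\@ étaleness) of $B/J^n$ over $A/J^n$ for every $n$. Conversely, if $B/J^n$ is smooth (resp.\@ étale) over $A/J^n$ for every $n$, then taking $n=1$ gives the required fibre-wise condition, and the flatness equivalence provides the Tor-vanishing needed for the $(p,I)$-complete version.

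The main obstacle is the dévissage step in the flat case, specifically controlling the higher Tors against the successive quotients $J^n/J^{n+1}$. This is where the boundedness hypothesis on $(A,I)$ enters: it ensures that the quotients $A/J^n$ are genuine $p$-adically complete rings and that the completed and underived tensor products with $B$ agree, so that the dévissage argument produces the Tor-vanishing required to promote the mod-$J$ hypothesis to all higher thickenings.
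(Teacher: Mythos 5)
Your forward direction and your reduction of the \'etale/smooth cases to the flat case are both sound, and in fact your d\'evissage along $0\to J^n/J^{n+1}\to A/J^{n+1}\to A/J^n\to 0$ is a self-contained version of what the paper obtains by citing \cite[Theorem 4.3]{YekutieliI}. (Minor quibble: the relevant map is the surjection $A/J^n\twoheadrightarrow A/J$, not an inclusion, but the local criterion of flatness is indeed the right tool there, using the base-change identity $B\otimes^L_A A/J=(B\otimes^L_A A/J^n)\otimes^L_{A/J^n}A/J$ to get the required Tor-vanishing over $A/J^n$.)

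The genuine gap is in the converse, which you dismiss as ``essentially definitional.'' It is not: knowing that $B/J^nB$ is flat over $A/J^n$ for every $n$ and that $B=\varprojlim_n B/J^nB$ does \emph{not} formally yield that $B\otimes^L_A A/J$ is concentrated in degree $0$. The derived tensor product does not commute with the inverse limit defining $B$, and since $A$ is typically non-Noetherian one cannot fall back on Artin--Rees to control $\Tor^A_{>0}(B,A/J)$. This is exactly where the paper has to input something substantive: for a \emph{bounded} prism the ideal $(p,I)$ is weakly proregular (\cite[Theorem 7.3]{YekutieliII}), and for a weakly proregular finitely generated ideal and an adically complete module, termwise flatness of the reductions is equivalent to complete flatness (\cite[Theorem 6.9]{YekutieliI}). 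So the boundedness hypothesis is doing its real work in the converse, not in your d\'evissage (which needs no boundedness at all); your closing paragraph misattributes the role of that hypothesis. To repair the proof you would need to either invoke weak proregularity as the paper does, or supply an argument that the pro-system $\{\Tor_i^A(A/J^n,-)\}_n$ is pro-zero for $i>0$, which is precisely the content of weak proregularity.
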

\begin{proof} We put $J=IB$. If $(A,I)\to (B,J)$ is $(p,I)$-completely (faithfully) flat then \cite[Theorem 4.3]{YekutieliI} implies the map $A/(p,I)^n\to B/(p,J)^n$ is (faithfully) flat for all $n$, and so $\Spf(B)\to\Spf(A)$ is adically (faithfully) flat. Suppose that $\Spf(B)\to\Spf(A)$ is adically (faithfully) flat. Then \cite[Theorem 7.3]{YekutieliII} implies the ideal $(p,I)\subseteq A$ is weakly proregular in the sense of op.\@ cit.\@ Therefore, we deduce that $A\to B$ is $(p,I)$-completely (faithfully) flat by completeness of these modules and \cite[Theorem 6.9]{YekutieliI}. The second claim follows from this as $A\to B$ is $(p,I)$-completely \'etale (resp.\@ smooth) if and only if it is $(p,I)$-completely flat and $A/(p,I)\to B/(p,J)$ is \'etale (resp.\@ smooth), and $\Spf(B)\to\Spf(A)$ is adically \'etale if and only if it is adically flat and $A/(p,I)\to B/(p,J)$ is \'etale (resp.\@ smooth).
\end{proof}

\begin{prop}\label{prop:etale-prism-extension} Let $(A,I)$ be a bounded prism and $\Spf(B)\to\Spf(A)$ an adically \'etale map, where $B$ is $(p,I)$-adically complete. Then, there exists a unique $\delta$-structure on $B$ such that $(A,I)\to (B,IB)$ is a morphism of bounded prisms.
\end{prop}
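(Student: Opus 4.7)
The plan is to first extend the $\delta$-structure on $A$ uniquely to $B$, and then verify the bounded prism axioms for $(B, IB)$. For the extension, the key input is the classical étale lifting property of $\delta$-structures, i.e.\@, \cite[Lemma 2.18]{BhattScholzePrisms}: for an étale morphism $A \to B_0$ of rings with $A$ a $\delta$-ring, there is a unique $\delta$-structure on $B_0$ making the map compatible. To apply this in our $(p,I)$-completely étale setting, I would use the standard deformation-theoretic equivalence between $(p,I)$-completely étale $(p,I)$-adically complete $A$-algebras and étale $A/(p,I)$-algebras (which follows from topological invariance of the étale site together with $(p,I)$-adic completeness, via Lemma \ref{lem:adically-and-completely-agree}). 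Concretely, for each $n$ unique étale lifting along $A/(p,I)^n \to B/(p,I)^n$ produces a Frobenius on $B/(p,I)^n$ compatible with $\phi_A$, and these assemble, using $(p,I)$-adic completeness of $B$, to a canonical Frobenius lift $\phi_B \colon B \to B$; the resulting $\delta$-structure is characterized by $\delta_B(b) = \tfrac{1}{p}(\phi_B(b) - b^p)$ when $B$ is $p$-torsion-free, and in general by the corresponding $W_2$-theoretic section produced by the same lifting.

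Once $\phi_B$ is in hand, verifying that $(B, IB)$ is a prism is essentially formal. First, $IB$ is an invertible ideal of $B$ since $I \otimes_A B \to IB$ is an isomorphism by $(p,I)$-complete flatness and invertibility of $I$. Second, $B$ is $(p, IB)$-adically complete, hence derived complete, by hypothesis. Third, the relation $p \in IB + \phi_B(IB)$ follows from the analogous relation $p \in I + \phi_A(I)$ inside $A$ via the structure map $A \to B$ and the fact that $\phi_B$ extends $\phi_A$.

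The more delicate remaining point is boundedness of $(B, IB)$, i.e.\@, that $B/IB$ has bounded $p^\infty$-torsion. Here one uses that $A \to B$ being $(p,I)$-completely flat implies $A/I \to B/IB$ is $p$-completely flat; bounded $p^\infty$-torsion is preserved under $p$-completely flat extensions of $p$-adically complete rings, by a standard argument using the short exact sequence $0 \to (A/I)[p^\infty] \to A/I \to A/I[\nicefrac{1}{p}]$ and derived $p$-completion. Thus boundedness for $(A, I)$ transfers to $(B, IB)$.

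The main obstacle, in my view, is the rigorous passage from the classical étale case of \cite[Lemma 2.18]{BhattScholzePrisms} to the $(p,I)$-completely étale case, especially when $B$ may have $p$-torsion. Once one commits to the bootstrap via mod-$(p,I)^n$ reduction and passage to the limit—or, alternatively, via the $W_2$-functorial description of $\delta$-structures, which handles non-$p$-torsion-free $B$ uniformly—the remainder of the argument is routine bookkeeping.
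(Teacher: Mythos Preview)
Your approach is correct and matches the paper's, which is terse: it cites \cite[Lemma 2.18 and Lemma 3.7(3)]{BhattScholzePrisms} for the unique $\delta$-structure and the prism axioms (so your explicit verification of invertibility of $IB$, completeness, and $p\in IB+\phi_B(IB)$ is already packaged there), and then for boundedness cites \cite[Lemma 3.4.14]{BhattScholzeProetale} to see $B/IB$ is derived $p$-complete, followed by \cite[Corollary 4.8(1)]{BMS-THH} to transfer bounded $p^\infty$-torsion along the $p$-completely flat map $A/I\to B/IB$.

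One small point of precision in your boundedness sketch: a priori $B/IB$ is only \emph{derived} $p$-complete, not classically $p$-adically complete (classical completeness would follow from boundedness, which is what you are proving), so the statement you need is that bounded $p^\infty$-torsion passes along $p$-completely flat maps of \emph{derived} $p$-complete rings. This is exactly \cite[Corollary 4.8(1)]{BMS-THH}; your gesture at a ``short exact sequence $0\to (A/I)[p^\infty]\to A/I\to A/I[\nicefrac{1}{p}]$'' is not quite the right mechanism (the sequence is not exact on the right as written), so it is cleaner to invoke that reference directly rather than attempt an ad hoc argument.
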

\begin{proof}
By \cite[Lemma 2.18 and Lemma 3.7 (3)]{BhattScholzePrisms}, we obtain a unique $\delta$-structure so that $(A,I)\to (B,IB)$ is a map of prisms. Then by \cite[Lemma 3.4.14]{BhattScholzeProetale}, $B/IB$ is derived $p$-adically complete. Hence $(B,IB)$ is a bounded prism by \cite[Corollary 4.8 (1)]{BMS-THH}. 
\end{proof}

A prism $(A,I)$ is \emph{perfect} if $\phi_A$ is an isomorphism, in which case it is bounded (see \cite[Lemma 2.34]{BhattScholzePrisms}). For a perfectoid ring $R$, we have the perfect prism $(\Ainf(R),\ker(\theta_R))$ where $\Ainf(R)=W(R^\flat)$ is Fontaine's ring, which comes equipped with a natural Frobenius $\phi_R$, and $\theta_R\colon \Ainf(R)\to R$ is Fontaine's map. We also have the perfect prism $(\Ainf(R),\ker(\wt{\theta}_R))$, where $\wt{\theta}_R\defeq \theta_R\circ \phi^{-1}_R$, which is isomorphic to $(\Ainf(R),\ker(\theta_R))$ via $\phi_R$ (see Remark \ref{rem:choice-of-twist} to see why we introduce these two choices). We often fix a generator $\xi_R$ of $\ker(\theta_R)$ and set $\tilde{\xi}_R\defeq \phi_R(\xi_R)$ so that $\ker(\wt{\theta}_R)=(\tilde{\xi}_R)$. When $R$ is clear from context we shall omit the decoration of $R$ at all places. When $R$ is an $\mc{O}_C$-algebra, we may take $\xi=\xi_0$ and $\tilde\xi=\tilde\xi_0$ which we often implicitly do.

\subsubsection{The absolute prismatic site} Let $\mf{X}$ be a $p$-adic formal scheme. Consider the category $\mf{X}_\Prism^\mathrm{op}$ of triples $(A,I,s)$ where $(A,I)$ is a bounded prism, and  $s\colon \Spf(A/I)\to \mf{X}$ is a morphism, and where morphisms are maps of prisms commuting with the maps to $\mf{X}$. We often omit $s$ from the notation. The \emph{absolute prismatic site} $\mf{X}_
\Prism$ of $\mf{X}$ (see \cite[Definition 2.3]{BhattScholzeCrystals}) is the opposite category of $\mf{X}_\Prism^\mathrm{op}$, endowed with the topology where $\{\alpha_i\colon (A,I)\to (B_i,J_i)\}$ in $\mf{X}_\Prism^\mathrm{op}$ corresponds to a cover if $\{\Spf(\alpha_i)\colon \Spf(B_i)\to \Spf(A)\}$ is a cover in $\Spf(A)_{\fl}$ (see \S\ref{ss:torsors-and-vb-on-formal-schemes}). That this is a site follows from the argument in \cite[Corollary 3.12]{BhattScholzePrisms}, which also shows that for a diagram in $\mathfrak{X}_\Prism^\mathrm{op}$
\begin{equation*}
    (B,IB)\leftarrow (A,I)\to (C,IC),
\end{equation*}
with one of the maps adically faithfully flat, then its cofibered product is $(B\wh{\otimes}_A C,I(B\wh{\otimes}_A C))$ with the obvious $\delta$-structure and map to $\mf{X}$. We often abuse notation when working in $\mf{X}_\Prism$, writing objects and morphisms as in $\mf{X}_\Prism^\mathrm{op}$. We also shorten $\Spf(R)_\Prism$ to $R_\Prism$, in which case we often write $s\colon \Spf(A/I)\to \Spf(R)$ as $s\colon R\to A/I$.

By \cite[Corollary 3.12]{BhattScholzePrisms}, the presheaves $\mc{O}_{\mf{X}_\Prism}(A,I)\defeq A$ and $\ov{\mc{O}}_{\mf{X}_\Prism}(A,I)\defeq A/I$ are sheaves. By the rigidity property of morphisms of prisms, $\mc{I}_{\mf{X}_\Prism}(A,I)\defeq I$ is a quasi-coherent ideal sheaf of $\mc{O}_{\mf{X}_\Prism}$. The association
$\mc{O}_{\mf{X}_\Prism}[\nicefrac{1}{\mc{I}_\Prism}](A,I)\defeq A[\nicefrac{1}{I}]$ is a sheaf of rings as $A\to A[\nicefrac{1}{I}]$ is flat. Define
\begin{equation*}
     \mc{O}_{\mf{X}_\Prism}[\nicefrac{1}{\mc{I}_\Prism}]^{\wedge}_p\defeq \varprojlim_n  \mc{O}_{\mf{X}_\Prism}[\nicefrac{1}{\mc{I}_\Prism}]/p^n \mc{O}_{\mf{X}_\Prism}[\nicefrac{1}{\mc{I}_\Prism}]=R\varprojlim_n  \mc{O}_{\mf{X}_\Prism}[\nicefrac{1}{\mc{I}_\Prism}]/p^n \mc{O}_{\mf{X}_\Prism}[\nicefrac{1}{\mc{I}_\Prism}],
\end{equation*}
(see \cite[Remark 2.4]{BhattScholzeCrystals} and \cite[Proposition 3.1.10]{BhattScholzeProetale} for the second equality). The morphisms $\phi_A$ collectively form a morphism of sheaves of rings $\phi_{\mf{X}_\Prism}\colon \mc{O}_{\mf{X}_\Prism}\to \mc{O}_{\mf{X}_\Prism}$. We often omit the $\mf{X}$ from the above notation when it is clear from context, writing $\mc{O}_\Prism$, $\ov{\mc{O}}_\Prism$, $\mc{I}_\Prism$, $\mc{O}_\Prism[\nicefrac{1}{\mc{I}_\Prism}]$, $\mc{O}_\Prism[\nicefrac{1}{\mc{I}_\Prism}]^\wedge_p$, and $\phi$. 

For a morphism $f\colon \mf{X}\to \mf{Y}$, the association $((A,I),s)\mapsto ((A,I),f\circ s)$ is cocontinuous, so gives a morphism of topoi $(f_{\Prism\ast},f^\ast_\Prism)\colon \cat{Shv}(\mf{X}_\prism)\to \cat{Shv}(\mf{Y}_\Prism)$. 
%We shorten $R^if_{\Prism\ast}\mc{O}_{\mf{X}_\Prism}$ to $\mc{H}^i_\prism(\mf{X}/\mf{Y})$.

\subsubsection{The quasi-syntomic site}\label{ss:qsyn-site} 
As in \cite{BMS-THH}, call a $p$-adically complete ring $R$ \emph{quasi-syntomic} if it has bounded $p^\infty$-torsion and the cotangent complex $L_{R/\Z_p}$ has $p$-complete Tor-amplitude in $[-1,0]$ (see \cite[Definition 4.1]{BMS-THH}), which we consider as having the $p$-adic topology. A map $R\to S$ of $p$-adically complete rings with bounded $p^\infty$-torsion is called a \emph{quasi-syntomic morphism (resp.\@ cover)} if it is adically flat (resp.\@ adically faithfully flat)\footnote{By \cite[Corollary 4.8]{BMS-THH}, $R\to S$ is adically (faithfully) flat if and only if it is $p$-completely (faithfully) flat.} and $L_{S/R}$ has $p$-complete Tor-amplitude in $[-1,0]$. By \cite[Proposition 4.19]{BMS-THH} a perfectoid ring $R$ is quasi-syntomic.

One extends these definitions to (maps of) $p$-adic formal schemes by working affine locally. For a quasi-syntomic $p$-adic formal scheme $\mf{X}$ the \emph{big (resp.\@ small) quasi-syntomic site} of $\mf{X}$, denoted $\mf{X}_\Qsyn$ (resp.\@ $\mf{X}_\qsyn$), has objects maps (resp.\@ quasi-syntomic maps) $\Spf(R)\to\mf{X}$ for $R$ a $p$-adically complete ring with bounded $p^\infty$-torsion, morphisms given by $\mf{X}$-morphisms, and covers given by quasi-syntomic covers (see \cite[Lemma 4.17]{BMS-THH}). 

The functor
\begin{equation*}
    u\colon \mf{X}_\Prism\to\mf{X}_\Qsyn,\qquad ((A,I),s)\mapsto s,
\end{equation*}
is cocontinuous (see \cite[Corollary 3.24]{AnschutzLeBrasDD}), and therefore gives rise to a morphism of topoi $(u_\ast,u^{-1})\colon \cat{Shv}(\mf{X}_\Prism) \to \cat{Shv}(\mf{X}_\Qsyn)$. The inclusion $\mf{X}_{\qsyn}\to \mf{X}_{\Qsyn}$, while continuous, may not induce a morphism of sites (see \cite[\S4.1]{AnschutzLeBrasDD}), but we may still consider the functor
\begin{equation*}
    v_\ast\colon \cat{Shv}(\mf{X}_\Prism)\to\cat{Shv}(\mf{X}_\qsyn), \qquad \mc{F}\mapsto u_\ast(\mc{F})|_{\mf{X}_\qsyn}.
\end{equation*}
Following \cite[Definition 4.1]{AnschutzLeBrasDD}, we use $\mc{O}^\pris_\mf{X}$ to denote $v_\ast(\mc{O}_{\mf{X}_\Prism})$ and $\mc{I}_\mf{X}^\pris$ for $v_\ast(\mc{I}_{\mf{X}_\prism})$. Define 
\begin{equation*}
\mc{O}^\pris_{\mf{X}}[\nicefrac{1}{\mc{I}^\pris_\mf{X}}](R)\defeq \mc{O}^\pris_{\mf{X}}(R)[\nicefrac{1}{\mc{I}^\pris_\mf{X}}(R)]=v_\ast(\mc{O}_{\mf{X}_\Prism}[\nicefrac{1}{\mc{I}_{\mf{X}_\Prism}}]). 
\end{equation*}
There is a morphism of sheaves of rings $\phi_\mf{X}^\pris\defeq v_\ast(\phi_{\mf{X}_\Prism})\colon \mc{O}^\mathrm{pris}_\mf{X}\to \mc{O}^\mathrm{pris}_\mf{X}$. When no confusion will arise, we omit $\mf{X}$ from notation, writing $\mc{O}^\pris$, $\mc{I}^\pris$, $\mc{O}_\pris[\nicefrac{1}{\mc{I}^\pris}]$, and $\phi$. There are also obvious analogues of these objects using $u_\ast$ in place of $v_\ast$, which we denote by $\mc{O}^\PRIS$, etc.

A quasi-syntomic ring $R$ is called \emph{quasi-regular semi-perfectoid} (see \cite[Definition 4.20 and Remark 4.22]{BMS-THH}), abbreviated \emph{qrsp}, if there exists a surjection $S\to R$ with $S$ a perfectoid ring. 

\begin{lem}\label{lem:flatness-preserves-boundedness}Let $R \to R'$ be a $p$-adically flat morphism of $p$-adically complete rings. If $N\geqslant 1$ is such that $R[p^N]=R[p^\infty]$, then $R'[p^N]=R'[p^\infty]$.
\end{lem}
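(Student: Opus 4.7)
The plan is to reduce the $p^\infty$-torsion question for $R'$ to a statement about $R/p^m$-modules, exploiting the hypothesis that $R/p^m\to R'/p^m$ is flat for every $m\geqslant 1$ (this is the content of $p$-adic flatness, cf.\ the conventions in \S\ref{ss:qsyn-site} and \cite[Corollary 4.8]{BMS-THH}).

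First I would directly compute $(R/p^m)[p^n]$ for $m\geqslant n\geqslant N$: unwinding the definition, $\bar r\in R/p^m$ satisfies $p^n r\in p^m R$ precisely when $r-p^{m-n}s\in R[p^n]$ for some $s\in R$. Using $R[p^n]=R[p^N]$ (valid since $n\geqslant N$), this yields
$$(R/p^m)[p^n]=\bigl(R[p^N]+p^{m-n}R\bigr)/p^mR.$$
Tensoring the exact sequence $0\to(R/p^m)[p^n]\to R/p^m\xrightarrow{p^n}R/p^m$ with $R'/p^m$ over $R/p^m$ and invoking flatness then gives
$$(R'/p^m)[p^n]=\bigl(R[p^N]R'+p^{m-n}R'\bigr)/p^mR'$$
as an equality of submodules of $R'/p^m$, where $R[p^N]R'\subseteq R'$ denotes the extension of ideals.

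Now let $x\in R'[p^n]$ for some $n\geqslant N$. The image $\bar x$ in $R'/p^m$ lies in $(R'/p^m)[p^n]$ for every $m\geqslant n$, which forces
$$x\in R[p^N]R'+p^{m-n}R'\quad\text{for every }m\geqslant n.$$
Letting $m\to\infty$, this places $x$ in the $p$-adic closure of $R[p^N]R'\subseteq R'$. Since every element of $R[p^N]$ is annihilated by $p^N$, we have $R[p^N]R'\subseteq R'[p^N]$, and $R'[p^N]$ is $p$-adically closed in $R'$ (it is the kernel of the continuous map $p^N\colon R'\to R'$, and $R'$ is Hausdorff by $p$-adic separatedness). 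Hence $x\in R'[p^N]$, i.e., $p^Nx=0$. Combined with the trivial inclusion $R'[p^N]\subseteq R'[p^n]$, this gives $R'[p^n]=R'[p^N]$ for every $n\geqslant N$, and so $R'[p^\infty]=R'[p^N]$.

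The principal delicate step is the identification of $(R'/p^m)[p^n]$ via flatness of $R/p^m\to R'/p^m$; everything afterwards is a clean topological consequence of the $p$-adic completeness of $R'$.
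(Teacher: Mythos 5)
Your proof is correct and follows essentially the same route as the paper's: reduce modulo $p^m$, use flatness of $R/p^m\to R'/p^m$ to identify the $p^n$-torsion of $R'/p^m$ with the base change of that of $R/p^m$, and conclude via the $p$-adic separatedness of $R'$. The only cosmetic difference is that you compute the submodule $(R/p^m)[p^n]$ explicitly and phrase the last step as a closure argument, whereas the paper tracks images along the transition maps $(R/p^{m+n-N})[p^n]\to(R/p^m)[p^n]$.
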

\begin{proof}
Let $r'$ be in $R'[p^{\infty}]$ and $n \geqslant N$ such that $p^nr'=0$. As $R'$ is $p$-adically separated, it suffices to show that $p^Nr'=0\mod p^m$ for all $m\geqslant 0$. Note that the image of $(R/p^{m+n})[p^n]$ in $(R/p^m)[p^n]$ is contained in $(R/p^m)[p^N]$, as $R[p^n]=R[p^N]$. Note also that $(R'/p^\ell)[p^s]=(R'/p^\ell) \otimes_{R/p^\ell} (R/p^\ell)[p^s]$ for any $\ell,s\geqslant 0$ by $p$-adic flatness. Thus, the image of $(R'/p^{m+n})[p^n]$ in $(R'/p^m)[p^n]$ is contained in $(R'/p^m)[p^N]$. This implies that the image of $r'$ (or equivalently of $r'\mod p^{m+n}$) in $(R'/p^m)[p^n]$ is contained in $(R'/p^m)[p^N]$.
\end{proof}

\begin{lem}\label{lem:etale-over-qrsp}
    If $R$ is qrsp and $\Spf(R')\to \Spf(R)$ is an adically \'etale map, then $R'$ is qrsp.
\end{lem}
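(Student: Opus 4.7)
The plan is to check the two parts of the definition of qrsp for $R'$: (i) quasi-syntomicity, and (ii) existence of a surjection from a perfectoid ring.

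For (i), note first that $R'$ is $p$-adically complete by the adic hypothesis on $\Spf(R') \to \Spf(R)$. Since $R \to R'$ is then adically flat and $R$ has bounded $p^{\infty}$-torsion (being qrsp), Lemma~\ref{lem:flatness-preserves-boundedness} yields the same for $R'$. For the cotangent complex, I would use the transitivity triangle
\[
L_{R/\Z_p} \otimes^{L}_R R' \to L_{R'/\Z_p} \to L_{R'/R}
\]
together with the $p$-complete vanishing of $L_{R'/R}$ (because $R/p \to R'/p$ is classically \'etale) to deduce a $p$-complete identification $L_{R'/\Z_p} \simeq L_{R/\Z_p} \otimes^{L}_R R'$; the right side has $p$-complete Tor-amplitude in $[-1,0]$ since $R \to R'$ is $p$-completely flat and $L_{R/\Z_p}$ already satisfies this condition.

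For (ii), I would fix a surjection $\pi\colon S \twoheadrightarrow R$ with $S$ perfectoid, set $J \defeq \ker(\pi)$, and lift the \'etale $R/p$-algebra $R'/p$ to an \'etale $S$-algebra $S'_0$. This lift is available because $(S, pS)$ is a henselian pair ($S$ being $p$-adically complete), so $(S, J + pS)$ is also henselian since $V(J+pS) \subseteq V(pS)$; base change along $S \twoheadrightarrow S/(J + pS) = R/p$ then induces an equivalence of \'etale sites, which we apply to $R'/p$. Setting $S' \defeq (S'_0)^{\wedge}_p$, the map $S \to S'$ is adically \'etale, so $S'$ is perfectoid (adically \'etale extensions of perfectoid rings are perfectoid, \emph{e.g.}~by tilting). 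Finally, $S'/JS'$ is the $p$-adic completion of the \'etale $R$-algebra $S'_0 \otimes_S R$, whose reduction mod $p$ recovers $R'/p$; uniqueness of $p$-adically complete adically \'etale lifts over the henselian pair $(R, pR)$ identifies this completion with $R'$, so $S' \twoheadrightarrow R'$ is the required perfectoid surjection.

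The main obstacle is the \'etale lifting of $R'/p$ across the surjection $S \twoheadrightarrow R/p$; everything else is fairly formal once this lift is in hand. In particular, one must verify that the equivalence of \'etale sites for the henselian pair $(S, J+pS)$ is applicable at the level required here (namely for potentially non-finitely-presented $R/p$-algebras), which is standard henselian-pair theory but should be confirmed within the chosen formalism.
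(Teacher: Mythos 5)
Your part (i) is fine and is essentially the paper's own argument for quasi-syntomicity. The gap is in part (ii), at exactly the step you flag. The inference ``$(S,pS)$ is henselian, so $(S,J+pS)$ is also henselian since $V(J+pS)\subseteq V(pS)$'' is false: henselianity does not ascend to larger ideals (otherwise every pair would be henselian, since $(A,0)$ always is), and the pair $(S,J+pS)$ genuinely fails to be henselian in basic examples. Take $S=\mc{O}_C\langle x^{1/p^\infty}\rangle$ and $R=S/(x)$, a standard qrsp ring, so $J=(x)$. The continuous map $S\to\mc{O}_C$ sending $x^{1/p^n}\mapsto 1$, composed with $\mc{O}_C\to k_C$, has as kernel a maximal ideal of $S$ not containing $x$; hence $J+pS$ is not contained in the Jacobson radical of $S$ and $(S,J+pS)$ is not a henselian pair. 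Consequently the equivalence of \'etale sites you invoke is unavailable, the lift $S_0'$ of $R'/p$ to an \'etale $S$-algebra is unjustified, and the construction of the perfectoid surjection $S'\twoheadrightarrow R'$ collapses. (Lifting $R'/p$ to an \'etale $R$-algebra is fine, since $(R,pR)$ is henselian, but that only recovers $R'$ itself, not a perfectoid cover of it.)

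The paper avoids producing a surjection altogether: by \cite[Remark 4.22]{BMS-THH}, a ring is qrsp as soon as it is quasi-syntomic, its reduction mod $p$ is semi-perfect, and it admits \emph{some} (not necessarily surjective) map from a perfectoid ring --- the last being clear from $S\to R\to R'$. The ingredient your write-up is then missing is the semi-perfectness of $R'/p$, which you never verify; it follows because the surjection $\Frob_{R/p}$ base changes along the \'etale map $R/p\to R'/p$ to a map identified with $\Frob_{R'/p}$. If you insist on constructing an actual perfectoid surjection onto $R'$, you would essentially have to reprove that remark (e.g.\ by adjoining compatible $p$-power roots of lifts of generators), and semi-perfectness of $R'/p$ is again the key input; the henselian route you propose does not work.
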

\begin{proof} By \cite[Remark 4.22]{BMS-THH} it is sufficient to show that $R'$ is quasi-syntomic, $R'/p$ is semi-perfect, and that there exists a perfectoid ring $S$ and a morphism $S\to R'$. The last of these is clear. To prove the first claim, we observe that as $R'$ has bounded $p^\infty$-torsion by Lemma \ref{lem:flatness-preserves-boundedness}, that $R\to R'$ is $p$-completely flat by \cite[Corollary 4.8]{BMS-THH} and so $R'\otimes^L_R (R/p)=R'/p$. 
Thus, by \stacks{08QQ} we have that $L_{R'/R}\otimes^L_{R'} (R'/p)=L_{(R'/p)/(R/p)}=0$. For semi-perfectness, observe that as $\Frob_{R/p}\colon R/p\to R/p$ is surjective, thus so is the induced map $R'/p\to R'/p\otimes_{R/p,\Frob_{R/p}}R/p$. But, this map is identified with $\Frob_{R'/p}\colon R'/p\to R'/p$ as $R/p\to R'/p$ is \'etale.
\end{proof}

Denote by $\mf{X}_{\qrsp}$ the full subcategory of $\mf{X}_\qsyn$ consisting of qrsp objects, with the induced topology. By \cite[Lemma 4.28 and Proposition 4.31]{BMS-THH}, $\mf{X}_\qrsp$ is a basis for $\mf{X}_\qsyn$. By a \emph{qrsp cover} $\{\Spf(R_i)\to\mf{X}\}$, we mean a quasi-syntomic cover where each $R_i$ is qrsp.

\subsubsection{Initial prisms}\label{ss: initial prisms} When $R$ is a qrsp ring, the category $R_\Prism$ has an initial object $(\Prism_R,I_R)$, necessarily unique up to unique isomorphism (see \cite[Proposition 7.2]{BhattScholzePrisms}).

\begin{example} If $R$ is perfectoid, then by \cite[Lemma 4.8]{BhattScholzePrisms}, $(\Ainf(R),(\xi),\text{nat.})$, or the isomorphic $(\Ainf(R),(\tilde{\xi}),\wt{\mr{nat}}.)$, are initial objects. Here we denote by $\mathrm{nat.}\colon R\isomto \Ainf(R)/(\xi)$ (resp.\@ $\wt{\mr{nat}}.\colon R\isomto \Ainf(R)/(\tilde{\xi})$) the natural isomorphism induced by $\theta$ (resp.\@ $\tilde\theta$). 
\end{example}

\begin{example}\label{example:initial-prism-Acrys}Let $R$ be a qrsp $\bb{F}_p$-algebra. If $R^\flat\defeq \varprojlim_{F_R} R$ and $J$ denotes the kernel of the composition $W(R^\flat)\to R^\flat\to R$, set $\Acrys(R)\defeq W(R^\flat)[\{\tfrac{x^n}{n!}:x\in J\}]^\wedge_p$ to be the $p$-completed divided power envelope of $(W(R^\flat),J)$, which constitutes the universal pro-(PD thickening) of $R$ over $W$ (see \cite[Th\'eor\`eme 2.2.1]{FontainePeriodes}). Let $\phi_R\colon \Acrys(R)\to \Acrys(R)$ denote the morphism induced from $F_{R}$ by this universality. Then, by \cite[Theorem 8.14]{BMS-THH}, $\Acrys(R)$ is $p$-torsion-free and $\phi_R$ is a Frobenius lift on $\Acrys(R)$ and so $(\Acrys(R),(p))$ is a prism.

Under the natural Frobenius-equivariant morphism $W(R^\flat)\to \Acrys(R)$, the ideal $\phi_R(J)$ maps to $(p)$. Indeed, as $\phi_R(J)$ contains $p$ it suffices to show that if $x$ is in $J$, then $p$ divides $\phi_R(x)$ in $\Acrys(R)$. But, observe that $x^p= p(p-1)!\tfrac{x^p}{p!}$ and so $\phi_R(x)=x^p=0\mod p\Acrys(R)$. We then obtain a morphism $\wt{\mr{nat}}.\colon R\to \Acrys(R)/p$ obtained as the composition
\begin{equation*}
    R\isomto W(R^\flat)/J\xrightarrow{\phi_R}W(R^\flat)/\phi_R(J)\to \Acrys(R)/p.
\end{equation*}
So, $(\Acrys(R),(p),\wt{\mr{nat}}.)$ constitutes an element of $R_\Prism$, and is initial by \cite[Lemma 3.27]{AnschutzLeBrasDD} and \cite[Proposition 7.10]{BhattScholzePrisms}.
\end{example}

\begin{example}\label{example:initial-prism-ainf-to-acrys}Let $R$ be a perfectoid ring, and set $\Acrys(R)\defeq \Acrys(R/p)$. Then, the universal property of $\Acrys(R)$ implies that the map $\theta\colon \Ainf(R)\to R$ extends to a map $\theta\colon \Acrys(R)\to R$ which is an initial object of $(R/W)_\crys$ (see \S\ref{ss:filtered-f-isocrystals} for this notation). We write $\phi_R$ instead of $\phi_{R/p}$. From Example \ref{example:initial-prism-Acrys}, we have a morphism of prisms $(\Ainf(R),(\tilde{\xi}),\wt{\mathrm{nat}}.)\to (\Acrys(R),(p),\wt{\mr{nat}}.)$. This is injective by \cite[Lemma 4.1.7]{ScholzeWeinstein} as $R/p$ is $f$-adic by \cite[Proposition 4.1.2]{ScholzeWeinstein} since $\ker(R^\flat\to R/p)$ is generated by the image of $\xi$ under $\Ainf(R)=W(R^\flat)\to R^\flat$.
\end{example}

\begin{rem}\label{rem:choice-of-twist} The morphism $(\Ainf(R),(\tilde{\xi}),\wt{\mathrm{nat}}.)\to (\Acrys(R),(p),\wt{\mr{nat}}.)$ in Example \ref{example:initial-prism-ainf-to-acrys} justifies the appearance of the element $\tilde{\xi}$. One cannot generally undo these Frobenius twists as while $\phi_R$ is an isomorphism on $\Ainf(R)$, it is not on $\Acrys(R)$.
\end{rem}

If $\Spf(R)\to\mf{X}$ is an object of $\mf{X}_{\qrsp}$ then $u^{-1}(h_{\Spf(R)})$ is equal to $h_{(\Prism_R,I_R)}$. Using this and the cocontinuity of $u$ (see \cite[Corollary 3.24]{AnschutzLeBrasDD}), one deduces the following (cf.\@ Lemma \ref{lem:cocont-inverse-image-locally-non-empty}).

\begin{prop}\label{prop:cover-of-final-object-qrsp} If $\{\Spf(R_i)\to \mf{X}\}$ is a qrsp cover, then $\{(\Prism_{R_i},I_{R_i})\}$ covers $\ast$ in $\cat{Shv}(\mf{X}_\Prism)$.
\end{prop}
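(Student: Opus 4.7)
The plan is to reduce the statement to a corresponding covering statement in $\cat{Shv}(\mf{X}_\Qsyn)$ and then transfer it via the inverse image functor $u^{-1}$, using the identity $u^{-1}(h_{\Spf(R)}) = h_{(\Prism_R, I_R)}$ recalled just above the statement together with the fact that $(u_\ast, u^{-1})$ is a morphism of topoi (so $u^{-1}$ preserves the terminal object and epimorphisms).

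First, I would check that $\{h_{\Spf(R_i)}\}$ covers the terminal sheaf of $\cat{Shv}(\mf{X}_\Qsyn)$. Given any object $\Spf(R) \to \mf{X}$ of $\mf{X}_\Qsyn$, the base change of the qrsp cover $\{\Spf(R_i) \to \mf{X}\}$ along $\Spf(R) \to \mf{X}$ is again a quasi-syntomic cover of $\Spf(R)$ (qsyn covers being stable under base change), whose members admit $\mf{X}$-morphisms to the $\Spf(R_i)$ by construction. When $\mf{X}$ is not affine the relevant fiber products need not themselves be affine, but this is not an issue: one covers them by elements of $\mf{X}_\qsyn$ and uses that $\mf{X}_\qrsp$ is a basis for $\mf{X}_\qsyn$. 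This is exactly the sheaf-theoretic condition for $\coprod h_{\Spf(R_i)} \to \ast$ to be an epimorphism in $\cat{Shv}(\mf{X}_\Qsyn)$.

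Second, I would apply $u^{-1}$. Since $u^{-1}$ is the inverse image functor of a morphism of topoi, it preserves finite limits and arbitrary colimits; in particular it preserves the terminal object and sends sheaf epimorphisms to sheaf epimorphisms. Applying it to the epimorphism $\coprod h_{\Spf(R_i)} \to \ast$ and using the identity $u^{-1}(h_{\Spf(R_i)}) = h_{(\Prism_{R_i}, I_{R_i})}$ for qrsp $R_i$ then yields that $\coprod h_{(\Prism_{R_i}, I_{R_i})} \to \ast$ is an epimorphism in $\cat{Shv}(\mf{X}_\Prism)$, which is precisely the statement that $\{(\Prism_{R_i}, I_{R_i})\}$ covers $\ast$.

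The main point of friction is the first step, namely verifying the covering property in $\cat{Shv}(\mf{X}_\Qsyn)$ when $\mf{X}$ is not assumed affine, together with the precise compatibility between base changes in the big qsyn site and the cocontinuity of $u$. I expect this is exactly what is packaged into the referenced Lemma \ref{lem:cocont-inverse-image-locally-non-empty}, which presumably states the general principle that for a cocontinuous functor $u\colon \mc{C} \to \mc{D}$ and a family of objects $\{X_i\}$ of $\mc{D}$ whose representables cover $\ast$, the sheaves $\{u^{-1}(h_{X_i})\}$ cover $\ast$ in $\cat{Shv}(\mc{C})$; once invoked, the proposition reduces immediately to the already-recalled identification $u^{-1}(h_{\Spf(R_i)}) = h_{(\Prism_{R_i}, I_{R_i})}$.
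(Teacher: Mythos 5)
Your proposal is correct and follows the same route as the paper: one checks that the qrsp cover gives a covering of $\ast$ in $\cat{Shv}(\mf{X}_\Qsyn)$, then transfers it along $u^{-1}$ using cocontinuity of $u$ (the content of Lemma \ref{lem:cocont-inverse-image-locally-non-empty}) together with the identification $u^{-1}(h_{\Spf(R_i)})=h_{(\Prism_{R_i},I_{R_i})}$ for qrsp $R_i$. Your guess about what the cited lemma packages is accurate, and your handling of the non-affine fiber products in the first step is the only detail the paper leaves implicit.
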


\subsubsection{Small and base \texorpdfstring{$\mc{O}_K$-algebras}{OK algebras}}\label{ss:small-and-base-rings} We now discuss the rings of main interest in this article.

\medskip

\paragraph{First definitions} Let $R$ be a $p$-adically complete $\mc{O}_K$-algebra with $\Spec(R)$ connected. Call $R$ a \emph{base $\mc{O}_K$-algebra} if $R=R_0\otimes_W\mc{O}_K$ where $R_0$ is a $J_{R_0}$-adically complete ring for which the pair $(R_0,J_{R_0})=(A_n,I_n)$, is obtained from the following iterative procedure. For some $d\geqslant 0$, let $A_0$ be $T_d\defeq W\langle t_1^{\pm 1},\ldots,t_d^{\pm 1}\rangle$, and set $I_0=(p)$. For each $i=0,\ldots,n-1$ iteratively form the pair $(A_{i+1},I_{i+1})$ by one of the following operations:
\begin{itemize}
    \item $A_{i+1}$ is the $p$-adic completion of an \'etale $A_i$-algebra,\footnote{By Elkik's theorem (cf.\@ \stacks{0AKA}) we may replace this with: `a $p$-adically \'etale $A_i$-algebra'.} and $I_{i+1}=I_iA_{i+1}$,
    \item $A_{i+1}$ is the $p$-adic completion of a localization $A_i\to (A_i)_\mf{p}$ at a prime $\mf{p}$ containing $p$, and $I_{i+1}=I_iA_{i+1}$,
    \item $A_{i+1}$ is the $I$-adic completion of $A_i$ with respect to an ideal $I\subseteq A_i$ containing $p$, and $I_{i+1}=(I_i,I)A_{i+1}$.
\end{itemize}
While the discussion of base $\mc{O}_K$-algebras implicitly entails other topologies, we always think of a base $\mc{O}_K$-algebra as being equipped with the $p$-adic topology.

A map $t\colon T_d\to R_0$ (where we implicitly have $R=R_0\otimes_W \mc{O}_K$) of the form constructed above is called a \emph{presentation}.  In the following we use terminology from \cite[\S2.2]{KimBK}. Moreover, for a map of rings $R\to S$ and an ideal $J\subseteq S$, we say that $R\to S$ is \emph{$J$-formally \'etale} if the following solid diagram 
\begin{equation*}
\begin{tikzcd}
	S & {A/I} \\
	R & A
	\arrow[from=1-1, to=1-2]
	\arrow[dashed, from=1-1, to=2-2]
	\arrow[from=2-1, to=1-1]
	\arrow[from=2-1, to=2-2]
	\arrow[from=2-2, to=1-2]
\end{tikzcd}
\end{equation*}
can be uniquely completed along the dotted arrow assuming that $A$ is a ring and $I\subseteq A$ is a square-zero ideal such that the image $J$ in $A/I$ is nilpotent, and similarly for $J$-formally smooth.

\begin{prop}\label{prop:base-algebra-good-properties} A base $\mc{O}_K$-algebra $R$ is excellent and regular, and $R/\pi$ has a finite $p$-basis. For a presentation $t\colon T_d\to R_0$, the $k$-algebra $R_0/J_{R_0}$ is finite type, and $t$ is $J_{R_0}$-formally \'etale.
\end{prop}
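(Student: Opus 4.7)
The plan is to induct on the length $n$ of the iterative construction producing $(R_0, J_{R_0}) = (A_n, I_n)$, establishing simultaneously at each stage $i$ that $A_i$ is excellent and regular, that $A_i/p$ has a finite $p$-basis, that $A_i/I_i$ is (essentially) of finite type over $k$, and that the composite $T_d \to A_i$ is $I_i$-formally \'etale. The conclusions then transfer from $R_0$ to $R = R_0\otimes_W\mc{O}_K$ since $W\to\mc{O}_K$ is a finite totally ramified extension (preserving excellence and regularity, the latter via flatness and the fact that the closed fiber $k$ is a field) and $R/\pi = R_0/p$ (so a finite $p$-basis of $R_0/p$ transports verbatim).

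The base case $n=0$ with $A_0 = T_d$ and $I_0 = (p)$ is standard: $T_d$ is regular excellent Noetherian, $T_d/p = k[t_1^{\pm 1},\ldots,t_d^{\pm 1}]$ is manifestly finite type over $k$ with finite $p$-basis $\{t_1,\ldots,t_d\}$ (using perfectness of $k$), and $\mr{id}_{T_d}$ is tautologically $(p)$-formally \'etale. For the inductive step, each of the three allowed operations is analyzed separately. Operation (1), an \'etale extension followed by $p$-adic completion, preserves excellence, regularity, finite type of $A_{i+1}/I_{i+1}$ over $k$, and a finite $p$-basis (since \'etale maps pull back $p$-bases, as these encode a free module structure on $\Omega^1_{-/\F_p}$), and is $I_i$-formally \'etale. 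Operation (2), localization at a prime $\mf{p}\supseteq p$ followed by $p$-adic completion, preserves excellence and regularity, yields $A_{i+1}/I_{i+1} = (A_i/I_i)_{\bar{\mf p}}$ which is essentially of finite type over $k$, preserves the $p$-basis since $\Omega^1_{-/\F_p}$ commutes with localization, and is formally \'etale (localizations are formally \'etale; $p$-adic completion is $p$-formally \'etale). Operation (3), $I$-adic completion with $I\supseteq p$, is $I$-formally \'etale, preserves regularity (completion of a regular Noetherian local ring is regular), and gives $A_{i+1}/I_{i+1} = A_i/(I_i+I)$, which is a further quotient of $A_i/I_i$, hence preserves the finite-type property and also preserves the finite $p$-basis on the reduction modulo $p$ (as $A_{i+1}/p$ is identified with a quotient of $A_i/p$).

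The main obstacle is the preservation of excellence through the adic-completion step (3), for which one invokes Gabber's theorem on the excellence of adic completions of excellent Noetherian rings along finitely generated ideals. Noetherianness is preserved throughout (\'etale extensions and localizations of Noetherian rings are Noetherian, and adic completion of a Noetherian ring along a finitely generated ideal is Noetherian), which is what permits Gabber's theorem to apply at each step. A secondary subtlety is to verify that the composition of $I_i$-formally \'etale maps remains $I_{i+1}$-formally \'etale with respect to the possibly enlarged ideal in step (3); this is immediate from the definition via the lifting criterion against nilpotent ideals, which is preserved under both composition and enlargement of the target ideal.
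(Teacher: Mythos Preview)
Your inductive scheme is close to the paper's, but there is a genuine error in the $p$-basis step for operation (3). Under $I$-adic completion with $p\in I$, one has $A_{i+1}/p = (A_i)^\wedge_I/p = (A_i/p)^\wedge_{\bar I}$ where $\bar I$ is the image of $I$ in $A_i/p$; this is a \emph{completion} of $A_i/p$, not a quotient. Your parenthetical ``as $A_{i+1}/p$ is identified with a quotient of $A_i/p$'' is therefore false, and even if it were a quotient, quotients do not in general preserve the existence of a finite $p$-basis (e.g.\ $\bb{F}_p[x]\to\bb{F}_p[x]/(x^2)$ for $p>2$). The preservation of a finite $p$-basis under adic completion requires a separate argument; the paper invokes \cite[Lemma 1.1.3]{deJongCrystalline} precisely for this.

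Two further points. First, your justification for regularity under operation (3), ``completion of a regular Noetherian local ring is regular'', is inapplicable since $A_i$ need not be local. The claim that $I$-adic completion of a regular excellent Noetherian ring is regular is nonetheless true (the formal fibers are geometrically regular, so flatness plus regularity of fibers gives the conclusion), so your iterative approach to regularity can be salvaged; the paper instead deduces regularity in one stroke from the $J_{R_0}$-formal \'etaleness via \cite[Theorem 6.4.2]{MajadasRodicio}. Second, after operation (2) the quotient $A_{i+1}/I_{i+1}$ is only \emph{essentially} of finite type over $k$, as you note; this is a genuine (minor) imprecision in the statement as written, not a flaw in your argument.
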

\begin{proof} Fix a presentation $t\colon T_d\to R_0$. We first prove that $R$ is excellent. As $R_0\to R$ is finite type, it suffices to prove that $R_0$ is excellent (see \stacks{07QU}). We prove that $A_i$ is excellent by induction on $i$. As $T_d$ is the $p$-adic completion of $W[t_1^{\pm 1},\ldots,t_d^{\pm 1}]$, which is excellent by loc.\@ cit.\@, we know that $T_d$ is excellent by \cite[Main Theorem 2]{KuranoShimomoto}. If $A_i$ is excellent, then $A_{i+1}$ is excellent regardless of which of the three constructions is applied to $A_i$ by combining \stacks{07QU} and \cite[Main Theorem 2]{KuranoShimomoto}. To prove that $t$ is $J_{R_0}$-formally \'etale, it suffices by induction to prove that $A_i\to A_{i+1}$ is $I_{i+1}$-formally \'etale for each $i=0,\ldots,n-1$, but this is clear. Thus, $T_d\otimes_W \mc{O}_K\to R$ is $J_{R_0}$-formally \'etale, and thus it follows from \cite[Theorem 6.4.2]{MajadasRodicio} that $T_d\otimes_W \mc{O}_K\to R$ is regular and thus $R$ is regular.\footnote{For a map $f\colon (R,\mf{m})\to (S,\mf{n})$ of local rings, the phrase `formally smooth' in \cite[Theorem 6.4.2]{MajadasRodicio} means $\mf{n}$-formally smooth in our terminology.} The final claims concerning $R/\pi=R_0/p$ and $R_0/J_{R_0}$ may be checked iteratively, the latter being obvious. The former being preserved by our three operations requires \cite[Lemma 1.1.3]{deJongCrystalline}, and the observation that if $\{x_\alpha\}$ is a $p$-basis for an $\bb{F}_p$-algebra $A$, and $A\to B$ is \'etale then $\{x_\alpha\}$ is a $p$-basis for $B$ as $F_A\otimes 1$ is identified with $F_B$ via the isomorphism $A \otimes_{F_A,A} B \isomto B$. 
\end{proof}

We call a decomposition $R=R_0\otimes_W \mc{O}_K$ and a $J_{R_0}$-formally \'etale map $t\colon T_d\to R_0$, where $J_{R_0}$ is any ideal coming from a presentation, a \emph{formal framing}. For a formal framing $t$, the ring $R_0$ carries a unique Frobenius lift $\phi_{t}$ such that $\phi_t\circ t=t \circ \phi_0$, where $\phi_0$ is the Frobenius lift on $T_d$ acting as usual on $W$ and with $\phi_0(t_i)=t_i^p$. 

\medskip

\paragraph{Perfectoid type prisms} Let $R$ be a base $\mc{O}_K$-algebra. Write $\mc{K}$ for $\Frac(R)$. Fix an algebraic closure $\ov{\mc{K}}$ containing $\ov{K}$, and denote by $\mc{K}^\ur$ the maximal subfield of $\ov{\mc{K}}$ unramified along $R[\nicefrac{1}{p}]$ (cf.\@ \stacks{0BQJ}). Denote $\Gal(\mc{K}^\ur/\mc{K})$ by $\Gamma_R$, which agrees with $\pi_1^\et(\Spec(R[\nicefrac{1}{p}]),\overline{x})$ where $\overline{x}$ is the geometric point determined by $\ov{\mc{K}}$ (see \stacks{0BQM}). Denote by $\ov{R}$ (resp.\@ $R^\ur$) the integral closure of $R$ in $\ov{\mc{K}}$ (resp.\@ $\mc{K}^\ur$), and set $\wt{R}$ (resp.\@ $\check{R}$) to be its $p$-adic completion.\footnote{Note that our notation here differs from some references (e.g.\@ \cite{DLMS}) where the notation $\ov{R}$ is used for what we denote by $\check{R}$.} The $\mc{O}_C$-algebras $\check{R}$ and $\wt{R}$ are perfectoid by the following lemma. 

\begin{lem}[{\cite[Proposition 2.1.8]{CesnaviciusScholze}}] Let $A$ be a $p$-torsion-free $\mc{O}_C$-algebra which is $p$-integrally closed in $A[\nicefrac{1}{p}]$ (i.e.\@, if $x^p$ is in $A$ for $x$ in $A[\nicefrac{1}{p}]$, then $x$ is in $A$). Then, the $p$-adic completion $\widehat{A}$ is perfectoid if and only if $A/p$ is semi-perfect.
\end{lem}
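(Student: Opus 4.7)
The ``only if'' direction is immediate: if $\widehat{A}$ is perfectoid, the Frobenius of $\widehat{A}/p$ is surjective by definition of perfectoid, and since $\widehat{A}/p = A/p$, we conclude that $A/p$ is semi-perfect.

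For the converse, I plan to invoke the characterization that a $p$-adically complete, $p$-torsion-free $\mathcal{O}_C$-algebra $R$ is perfectoid if and only if $R/p$ is semi-perfect and $R$ is $p$-integrally closed in $R[\nicefrac{1}{p}]$ (where the principal kernel of $\theta$ is automatic from the $\mathcal{O}_C$-algebra structure, via the image of $\xi \in \Ainf(\mathcal{O}_C)$). To see the role of the integrally closed hypothesis, fix $\pi \defeq p^{1/p} \in \mathcal{O}_C$ so that $\pi^p = up$ for a unit $u$: the injectivity of Frobenius $R/\pi \to R/p$ --- equivalent to the implication ``$x \in R,\ x^p \in pR \Rightarrow x \in \pi R$'' --- is the same as asking that for every $y \defeq x/\pi \in R[\nicefrac{1}{p}]$ with $y^p = u^{-1}x^p/p \in R$ one has $y \in R$; this is exactly the $p$-integrally closed condition. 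Surjectivity of the same Frobenius map is the semi-perfectness of $R/p$.

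Granting this characterization, the backward direction reduces to verifying that the four properties all pass from $A$ to $\widehat{A}$: the $p$-adic completeness is by construction; the $p$-torsion-freeness of $\widehat{A}$ follows from that of $A$ by a standard inverse-limit argument on the sequences $0 \to A \xrightarrow{p^n} A \to A/p^n \to 0$; and the semi-perfectness of $\widehat{A}/p = A/p$ is exactly the hypothesis.

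The principal obstacle is transferring the $p$-integrally closed property from $A$ to $\widehat{A}$. My approach here is $p$-adic approximation: given $y \in \widehat{A}[\nicefrac{1}{p}]$ with $y^p \in \widehat{A}$, I would write $y = x/p^n$ for some $x \in \widehat{A}$, choose approximations $\tilde{x}, \tilde{w} \in A$ with $\tilde{x} \equiv x$ and $\tilde{w} \equiv y^p$ modulo $p^N$ for $N \gg n$, deduce an approximate equation $\tilde{x}^p \equiv p^{pn}\tilde{w} \pmod{p^N}$, apply the $p$-integrally closed property of $A$ (after the appropriate rescaling) to extract an approximate root $\tilde{y}_N \in A$ at each stage, and assemble these into a Cauchy sequence whose $p$-adic limit in $\widehat{A}$ equals $y$. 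The delicate step, and the one in which $p$-torsion-freeness of $\widehat{A}$ is crucial, is controlling the $p$-th power map under approximation so that the Cauchy condition and the identity with $y$ in the limit are both guaranteed.
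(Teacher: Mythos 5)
Your plan is sound. Note first that the paper itself gives no argument for this lemma: it is quoted verbatim from Ces\-na\-vi\-\v{c}ius--Scholze, so there is no "paper proof" to match against; what you have written is essentially a reconstruction of the cited proof. The reduction to the criterion that a $p$-adically complete, $p$-torsion-free ring with $\pi\defeq p^{1/p}$ is perfectoid iff $\varphi\colon R/\pi\to R/p$ is bijective is the right move, and your identification of injectivity with $p$-integral closedness (via $x^p\in pR\Rightarrow (x/\pi)^p\in R$) and of surjectivity with semi-perfectness of $R/p$ is correct. The transfer of $p$-integral closedness to $\widehat{A}$ also goes through as you sketch: the one point you should make explicit is that $A\cap p^{N}\widehat{A}=p^{N}A$ (i.e.\ $A/p^{N}\isomto\widehat{A}/p^{N}\widehat{A}$), which is what lets you convert the congruence $\tilde{x}^{p}\equiv p^{pn}\tilde{w}$ from $\widehat{A}$ into the containment $\tilde{x}^{p}\in p^{pn}A$ needed to invoke $p$-integral closedness of $A$. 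Finally, the Cauchy-sequence apparatus is unnecessary: it suffices to verify injectivity of $\varphi\colon\widehat{A}/\pi\to\widehat{A}/p$ directly, and this is a statement about a single element modulo $p$, so one approximation $\tilde{x}\in A$ of $x$ to precision $p^{N}$ with $N\geqslant p$ already yields $\tilde{x}\in\pi A$ and hence $x\in\pi A+p^{N}\widehat{A}\subseteq\pi\widehat{A}$; no limit needs to be taken.
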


Thus, we have the objects $(\Ainf(\check{R}),(\tilde{\xi}),\wt{\mr{nat}}.)$ and $(\Acrys(\check{R}),(p),\wt{\mr{nat}}.)$ of $R_\Prism$, and their $\wt{R}$ counterparts.

\medskip

\paragraph{Breuil--Kisin type prisms} Let $R=R_0\otimes_W \mc{O}_K$ be a base $\mc{O}_K$-algebra and $t\colon T_d\to R_0$ a formal framing.  We consider the following objects of $R_\prism$.
\begin{enumerate}[itemsep=7pt,leftmargin=.7cm]
    \item The object $(R_0^{(\phi_t)},(p),q)$. Here $R_0^{(\phi_t)}$ is the ring $R_0$ equipped with the $\delta$-strucure corresponding to the Frobenius lift $\phi_t$, and $q$ is the quotient map $R\to R/\pi\isomto R_0/p$.
    \item\textbf{(Breuil--Kisin prism)} The object $(\mf{S}_R^{(\phi_t)},(E),\mathrm{nat.})$. Here $\mf{S}_R^{(\phi_t)}\defeq \mf{S}_R\defeq R_0\ll u\rr$ is equipped with the $\delta$-structure corresponding to the Frobenius $\phi_t\colon \mf{S}_R\to\mf{S}_R$ extending $\phi_t$ on $R_0$ and satisfying $\phi_t(u)=u^p$. The map  $\mathrm{nat.}\colon R\isomto \mf{S}_R/(E)$ is the natural one.
    \item\textbf{(Breuil prism)} Consider the Breuil ring $S_R$, defined to be the $p$-adic completion of the PD-envelope of $\mf{S}_R\twoheadrightarrow R$, which can be explicitly described as follows:
    \begin{equation}\label{eq:Breuil-ring-description}
        S_R=\left\{\sum_{m=0}^\infty a_m\frac{u^m}{\lfloor m/e\rfloor!}\in R_0[\nicefrac{1}{p}]\ll u\rr: a_m\text{ converge to }0\text{ }p\text{-adically}\right\}. 
    \end{equation}
    The ring $S_R^{(\phi_t)}\defeq S_R$ has a unique Frobenius $\phi_t$ extending that on $\mf{S}_R$, and thus an associated $\delta$-structure. We then have the triple $(S_R^{(\phi_t)},(p),\ov{i}\circ \ov{\phi}_t\circ \text{nat.})$, where $i\colon \mf{S}_R\to S_R$ is the natural inclusion, and $\ov{\phi}_t\colon \mf{S}_R/(E)\to \mf{S}_R/(\phi_t(E))$ is the map induced by $\phi_t$, and this composition makes sense as a map $R\to S_R/p$ as $\tfrac{\phi_t(E)}{p}$ is a unit in $S_R$.
\end{enumerate}
We often omit the decoration $(-)^{(\phi_t)}$ when the choice of a particular formal framing is clear or unimportant, in which case we just write $\phi$ for $\phi_t$.

\medskip

\paragraph{Various morphisms of prisms} Let $R=R_0\otimes_W\mc{O}_K$ be a base $\mc{O}_K$-algebra and choose a formal framing $t\colon T_d\to R_0$. Denote by $t^\flat$ the choice of $p^\text{th}$-power roots $t_i^\flat$ of $t_i$ in $\check{T}_d^\flat$. 

Define $R_0\to \Ainf(\check{R})$, using the $J_{R_0}$-formal \'etaleness of $t$, as the unique extension of the map $T_d\to \Ainf(\check{R})$ sending $t_i$ to $[t_i^\flat]$ and inducing the natural map $R_0\to \check{R}$ after composition with $\wt{\theta}$. We further define $\alpha_\inf=\alpha_{\inf,t^\flat}\colon \mf{S}_R\to \Ainf(\check{R})$ to be the unique extension of this map such that $\alpha_{\inf,t^\flat}(u)=[\pi^\flat]$. We then have the following diagram of prisms, where each inclusion arrow is the obvious inclusion, and all other not previously defined arrows are determined uniquely by commutativity.

\begin{equation}\label{eq:big-BK-diagram}
\begin{tikzcd}
	&& {(R_0^{(\phi_t)},(p),q)} & {(R_0^{(\phi_t)},(p),F_{R_0/p}\circ q)} \\
	{(\mathfrak{S}_R^{(\phi_t)},(E),\mathrm{nat.})} && {(\mathfrak{S}_R^{(\phi_t)},(\phi_t(E)),\ov{\phi}_t\circ \mathrm{nat.})} & {(S_R^{(\phi_t)},(p),\ov{i}\circ\ov{\phi}_t\circ \mathrm{nat.})\text{     }} \\
	&& {(\mathrm{A}_\mathrm{inf}(\check{R}),(\tilde{\xi}),\widetilde{\mr{nat}}.)} & {(\mathrm{A}_\mathrm{crys}(\check{R}),(p),\widetilde{\mr{nat}}.)} \\
	&& {(\mathrm{A}_\mathrm{inf}(\widetilde{R}),(\tilde{\xi}),\widetilde{\mr{nat}}.)} & {(\Acrys(\widetilde{R}),(p),\widetilde{\mr{nat}}.)}
	\arrow["{\alpha_{\mathrm{inf},t^\flat}}"', from=2-3, to=3-3]
	\arrow[hook, from=3-3, to=3-4]
	\arrow["{\alpha_{\mathrm{crys},t^\flat}}", from=2-3, to=3-4]
	\arrow[hook, from=3-3, to=4-3]
	\arrow[hook, from=3-4, to=4-4]
	\arrow[hook, from=4-3, to=4-4]
	\arrow["{\phi_t}", from=2-1, to=2-3]
	\arrow["{\widetilde{\alpha}_{\mathrm{inf},t^\flat}}"', from=2-1, to=3-3]
	\arrow["{\phi_t}", from=1-3, to=1-4]
	\arrow["{\gamma_{t^\flat}}",from=2-4, to=3-4]
	\arrow[hook, from=2-3, to=2-4]
	\arrow["{(\ast)}",hook, from=1-4, to=2-4]
	\arrow["\color{black}\beta_{t^\flat}", shift left=5, curve={height=-37pt}, from=1-4, to=3-4]
\end{tikzcd}
\end{equation}
Note that for all of these triples, save $(R_0^{(\phi_t)},(p),q)$ and $(R_0^{(\phi_t)},(p),F_{R_0/p}\circ q)$, the structure morphism is unambiguous given the first two entries, so we often omit it. If we write $(R_0^{(\phi_t)},(p))$ then the reader should assume we mean $(R_0^{(\phi_t)},(p),q)$.

While every morphism in Diagram \eqref{eq:big-BK-diagram} is a morphism of prisms, the arrow labeled $(\ast)$ is the only map which may not be a morphism in $R_\Prism$. In fact, this happens precisely when $\mc{O}_K=W$.

\begin{lem}\label{lem: map in R prism} For an integer $a\geqslant 0$, let $s_{a+1}$, denote the following composition:
\begin{equation*}
    R\xrightarrow{\mr{nat}.}\mf S_R/(E)\xrightarrow{\bar\phi_t} \mf{S}_R/(\phi_t(E))\xrightarrow{\ov{i}}S_R/p\xrightarrow{F_{S_R/p}^a} S_R/p.
\end{equation*} 
Then $(R_0,(p),F^{a+1}_{R_0/p}\circ q)\hookrightarrow (S_R,(p),s_{a+1})$ is a morphism in $R_\Prism$ if and only if $p^{a}\geqslant e$.
\end{lem}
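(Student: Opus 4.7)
The plan is to reduce the statement to a single computation about when $u^{p^{a+1}}$ lies in $pS_R$. First, I would observe that the underlying ring map $\iota\colon R_0 \hookrightarrow \mf{S}_R \xrightarrow{i} S_R$ commutes with $\phi_t$ on both sides by construction of the Frobenius lift on $S_R$, so it is a map of $\delta$-rings, and it sends $(p)$ to $(p)$. Both $(R_0^{(\phi_t)},(p))$ and $(S_R^{(\phi_t)},(p))$ being bounded prisms, $\iota$ is automatically a morphism of prisms. So the question is purely about the compatibility of the structure maps to $R$; namely, whether the diagram
\begin{equation*}
\begin{tikzcd}
R \ar[r, "F^{a+1}_{R_0/p}\circ q"] \ar[rd, "s_{a+1}"'] & R_0/p \ar[d, "\bar\iota"] \\
& S_R/p
\end{tikzcd}
\end{equation*}
commutes, where $\bar\iota$ is the reduction of $\iota$ modulo $p$.

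Second, since $R = R_0\otimes_W \mc{O}_K$ is generated over $W$ by $R_0$ and $\pi$, it suffices to verify commutativity on these generators. For $r_0 \in R_0$ both compositions carry $r_0$ to $r_0^{p^{a+1}}\bmod pS_R$, using the congruence $\phi_t(r_0)\equiv r_0^p\bmod p\mf{S}_R$, so this case is automatic. For $\pi$, the top composition gives $0$ since $q(\pi)=0$, while tracing $s_{a+1}$ step-by-step --- $\mr{nat}.(\pi)=u$, $\bar\phi_t(u)=u^p$, then (using that $\phi_t(E)\in pS_R$ makes $\bar i$ well-defined into $S_R/p$) $\ov i(u^p)=u^p$, and finally $F^a_{S_R/p}(u^p)=u^{p^{a+1}}$ --- yields the class of $u^{p^{a+1}}$ in $S_R/p$. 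Thus the lemma reduces to the assertion that $u^{p^{a+1}}\in pS_R$ if and only if $p^a\geqslant e$.

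The main obstacle, and the only non-formal step, is to establish the cleaner claim that $u^N \in pS_R$ if and only if $N\geqslant pe$; specializing to $N = p^{a+1}$ this is equivalent to $p^{a+1}\geqslant pe$, hence to $p^a\geqslant e$. For this I would use the description \eqref{eq:Breuil-ring-description}: under the unique expansion of an element of $S_R$ as $\sum_m a_m u^m/\lfloor m/e\rfloor!$ with $a_m$ tending to $0$ $p$-adically in $R_0$, the element $u^N$ corresponds to $a_N = \lfloor N/e\rfloor!$ with $a_m = 0$ otherwise. Hence $u^N = py$ for some $y = \sum_m b_m u^m/\lfloor m/e\rfloor! \in S_R$ forces $pb_N = \lfloor N/e\rfloor!$, which is solvable in $R_0$ precisely when $\lfloor N/e\rfloor! \in pR_0$, i.e., $\lfloor N/e\rfloor \geqslant p$, i.e., $N \geqslant pe$. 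Alternatively, one may identify $S_R/p$ with the divided power envelope of $(R_0/p)\llbracket u\rrbracket$ along $(u^e)$ (using $E\equiv u^e \bmod p$), in which $\{u^r \gamma_n(u^e) : 0 \leqslant r < e,\ n\geqslant 0\}$ is an $(R_0/p)$-basis and the relation $u^{ne+r} = n!\cdot u^r \gamma_n(u^e)$ gives the claim directly.
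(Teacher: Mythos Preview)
Your proof is correct and follows essentially the same approach as the paper: both reduce to checking commutativity on the generator $\pi$ (it being automatic on $R_0$), which amounts to determining when $u^{p^{a+1}} \in pS_R$, and both settle this using the explicit description \eqref{eq:Breuil-ring-description} of $S_R$. Your write-up is somewhat more detailed (spelling out why the underlying map is a morphism of prisms, and offering an alternative via the divided power envelope of $(R_0/p)\llbracket u\rrbracket$ along $(u^e)$), but the argument is the same.
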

\begin{proof} It suffices to observe that the following diagram commutes if and only if $p^a\geqslant e$:
\begin{equation*}
  \begin{tikzcd}[column sep=2.8em]
	{\mathfrak{S}_R/(\phi_t(E))} & {S_R/p} & {S_R/p} & {R_0/p} & {R_0/p} \\
	{\mathfrak{S}_R/(E)} && R && {R/(\pi)}
	\arrow["{\ov{i}}", from=1-1, to=1-2]
	\arrow["{F_{S_R/p}^a}", from=1-2, to=1-3]
	\arrow[from=1-4, to=1-3]
	\arrow["{F_{R_0/p}^{a+1}}"', from=1-5, to=1-4]
	\arrow["\wr"', from=2-5, to=1-5]
	\arrow[from=2-3, to=1-3]
	\arrow[from=2-3, to=2-5]
	\arrow["{\mathrm{nat.}}", from=2-3, to=2-1]
	\arrow["{\ov{\phi}_t}", from=2-1, to=1-1]
\end{tikzcd}
\end{equation*}
Note that $R=R_0[\pi]$ and that the diagram always commutes on $R_0$. Then chasing where $\pi$ is sent, one sees this commutativity holds if and only if $u^{p^{a+1}}$ is divisible by $p$ in $S_R$. Using Equation \eqref{eq:Breuil-ring-description}, one easily sees that this happens if and only if $\tfrac{p^{a+1}}{e}\geqslant p$ or, equivalently, that $p^a\geqslant e$.
\end{proof}

\medskip

\paragraph{Miscellanea} Following \cite{DLMS}, we call a Zariski connected $p$-adically complete $\mc{O}_K$-algebra $R$ \emph{small} if there exists a $p$-adically \'etale morphism $t\colon \mc{O}_K\langle t_1^{\pm 1},\ldots,t_d^{\pm 1}\rangle\to R$ for some $d\geqslant 0$, called a \emph{framing}. By the topological invariance of the \'etale site of a formal scheme, there is a unique $p$-adically \'etale morphism $T_d\to R_0$ with $R=R_0\otimes_W \mc{O}_K$ such that the composition $T_d\to R_0\to R$ is equal to $t$. We denote the map $T_d\to R_0$ also by $t$. Thus, $R$ is a base $\mc{O}_K$-algebra.

By a \emph{base formal $\mc{O}_K$-scheme} we mean a morphism of formal schemes $\mf{X}\to \Spf(\mc{O}_K)$ such that there exists an open cover $\{\Spf(R_i)\}$ of $\mf{X}$ with each $R_i$ a base $\mc{O}_K$-algebra. By the discussion in the last paragraph, this includes smooth formal schemes $\mf{X}\to\Spf(\mc{O}_K)$.

\begin{lem}[{cf.\@ \cite[Theorem 5.16]{Bha20}}]\label{lem:Bhatt-flatness} Let $R$ be a base $\mc{O}_K$-algebra. Then, $R\to \wt{R}$ is faithfully flat and quasi-syntomic.
\end{lem}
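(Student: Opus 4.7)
The plan is first to verify that $\wt R$ is perfectoid, then to establish faithful flatness of $R \to \wt R$ following [Bha20, Theorem 5.16] after a reduction to the complete local case, and finally to deduce quasi-syntomicity via the cotangent complex triangle together with the regularity of $R$.

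By Proposition \ref{prop:base-algebra-good-properties}, $R$ is an excellent regular Noetherian ring and, since $\Spec(R)$ is connected, a domain. The ring $\ov R$ is $p$-torsion-free and $p$-integrally closed in $\ov R[\nicefrac{1}{p}]$, and $\ov R/p$ is semi-perfect (for any $\bar r \in \ov R$, any $p$-th root of $\bar r$ in $\ov{\mc K}$ is integral over $R$, hence lies in $\ov R$). By the lemma preceding Example \ref{example:initial-prism-ainf-to-acrys}, $\wt R$ is therefore perfectoid.

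To obtain faithful flatness of $R \to \wt R$, we reduce to the complete local setting of [Bha20, Theorem 5.16]. Faithful flatness of $R/p \to \wt R/p$ may be tested after further localization and completion at each maximal ideal $\mf m \ni p$ of $R$. The ring $\wh{R_\mf m}$ is itself a base $\mc O_K$-algebra (obtainable from $R$ by a localization at $\mf m$ followed by an adic completion, both among our three operations) and is a complete regular local Noetherian domain with perfect residue field. Thus [Bha20, Theorem 5.16] applies to $\wh{R_\mf m}$, yielding faithful flatness of $\wh{R_\mf m} \to \wt{\wh{R_\mf m}}$. The main technical subtlety is the compatibility of $\ov{(-)}$ with $\mf m$-adic completion, which allows this local faithful flatness to be transferred back to $R \to \wt R$; this is handled by faithfully flat descent along $R \to \prod_{\mf m \ni p}\wh{R_\mf m}$. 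The equivalence with $p$-complete faithful flatness is then \cite[Corollary 4.8]{BMS-THH}.

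For quasi-syntomicity we apply the transitivity triangle
\begin{equation*}
\wt R \otimes^L_R L_{R/\Z_p} \to L_{\wt R/\Z_p} \to L_{\wt R/R},
\end{equation*}
reducing to showing both $L_{\wt R/\Z_p}$ and $L_{R/\Z_p}$ have $p$-complete Tor-amplitude in $[-1,0]$. The first holds by \cite[Proposition 4.19]{BMS-THH} since $\wt R$ is perfectoid. The second follows by induction on the construction of $R$ from $T_d$: each of the three permitted operations ($p$-adically \'etale extension, localization at a prime $\mf p \ni p$, or $I$-adic completion for $I \ni p$) is $p$-completely flat with vanishing $p$-complete relative cotangent complex, so $L_{A_{i+1}/\Z_p}$ agrees $p$-completely with $L_{A_i/\Z_p} \otimes^L_{A_i} A_{i+1}$, with the base case $L_{T_d/\Z_p}$ free in degree $0$ as $T_d$ is $p$-completely smooth over $\Z_p$.
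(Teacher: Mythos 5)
Your proof has genuine gaps in both halves.

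For faithful flatness, the reduction to the complete local case is both unnecessary and, as written, broken. It is unnecessary because \cite[Theorem 5.16]{Bha20} applies verbatim to $R$ itself: the theorem concerns excellent regular domains with $p$ in the Jacobson radical, and a base $\mc{O}_K$-algebra is excellent, regular and (being connected) a domain by Proposition \ref{prop:base-algebra-good-properties}, with $p\in\mathrm{rad}(R)$ since $R$ is finite over the $J_{R_0}$-adically complete ring $R_0$ and $p\in J_{R_0}$; the paper's proof is just this citation. It is broken because the descent step compares the wrong rings: descent along $R\to\prod_{\mf{m}}\wh{R_{\mf{m}}}$ would require faithful flatness of $\wh{R_{\mf{m}}}\to \wt{R}\otimes_R\wh{R_{\mf{m}}}$, whereas the local input is faithful flatness of $\wh{R_{\mf{m}}}\to\wt{\wh{R_{\mf{m}}}}$. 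The absolute integral closure does not commute with completion (the fraction field changes), and bridging these two rings is precisely the hard content of Bhatt's own arguments, not something that ``faithfully flat descent handles.''

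For quasi-syntomicity, knowing that $L_{R/\Z_p}$ and $L_{\wt{R}/\Z_p}$ both have $p$-complete Tor-amplitude in $[-1,0]$ only bounds $L_{\wt{R}/R}$ in $[-2,0]$: writing $A=L_{R/\Z_p}\otimes^L_R\wt{R}$ and $B=L_{\wt{R}/\Z_p}$, the long exact sequence gives, for any $\wt{R}/p$-module $N$, an identification of $H^{-2}(L_{\wt{R}/R}\otimes^L_{\Z_p}\bb{F}_p\otimes^L N)$ with $\ker\bigl(H^{-1}(A\otimes^L\bb{F}_p\otimes^L N)\to H^{-1}(B\otimes^L\bb{F}_p\otimes^L N)\bigr)$, and you never show this kernel vanishes. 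The issue is not cosmetic: when $e>1$ the complex $L_{\mc{O}_K/W}\otimes^L\bb{F}_p\simeq[\mc{O}_K/p\xrightarrow{E'(\pi)}\mc{O}_K/p]$ has nonzero $H^{-1}$, so $L_{R/\Z_p}\otimes^L\bb{F}_p$ genuinely lives in both degrees $-1$ and $0$ (your induction, which only treats $R_0$, also silently skips the ramified step $R_0\to R_0\otimes_W\mc{O}_K$). The paper's proof avoids all of this by running the transitivity triangle over $\mc{O}_K$ rather than $\Z_p$: there $L_{R/\mc{O}_K}$ is $p$-completely concentrated in degree $0$ (as $\mc{O}_K/p\to R/p$ is formally smooth) while $L_{\wt{R}/\mc{O}_K}$ is $p$-completely concentrated in degree $-1$ (perfectoidness, via $\mc{O}_K\to\Ainf(\wt{R})\otimes_W\mc{O}_K\to\wt{R}$), and with the outer terms in disjoint single degrees the cone lands in $[-1,0]$. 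You should switch to the base $\mc{O}_K$, or else prove directly that the displayed kernel vanishes.
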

\begin{proof} The faithful flatness follows from \cite[Theorem 5.16]{Bha20}. To prove quasi-syntomicness, first observe that $L_{\wt{R}/\mc{O}_K}\otimes^L_{\mc{O}_K} (\mc{O}_K/p)$ is concentrated in degree $-1$ (cf.\@ the proof of \cite[Proposition 4.19]{BMS-THH}, using $\mc{O}_K\to \Ainf(\wt{R})\otimes_W \mc{O}_K\xrightarrow{\theta\otimes 1}\wt{R}$). On the other hand, $\mc{O}_K\to R$ is $p$-completely flat, and so $R\otimes^L_{\mc{O}_K} \mc{O}_K/p=R/p$. 
Thus, by \stacks{08QQ} we have that $L_{R/\mc{O}_K}\otimes^L_{\mc{O}_K} (\mc{O}_K/p)$ is equal to $L_{(R/p)/(\mc{O}_K/p)}$. But, as $\mc{O}_K/p\to R/p$ is formally smooth, $L_{(R/p)/(\mc{O}_K/p)}$ is concentrated in degree $0$. So, the claim follows by the triangle property for the cotangent complex.
\end{proof}

\begin{prop}\label{prop:small-open-cover-topos-cover}
    Let $\mf{X}\to\Spf(\mc{O}_K)$ be a base formal $\mc{O}_K$-scheme, and $\{\Spf(R_i)\}$ an open cover with each $R_i$ a base $\mc{O}_K$-algebra. Then, $\{(\mf{S}_{R_i},(E))\}$ is a cover of $\ast$ in $\cat{Shv}(\mf{X}_\Prism)$.
\end{prop}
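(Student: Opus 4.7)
The plan is to deduce the statement from the qrsp covering result of Proposition \ref{prop:cover-of-final-object-qrsp}, by replacing each base $\mc{O}_K$-algebra $R_i$ with its perfectoid cover $\widetilde{R_i}$, and then transferring the resulting cover of the final object $\ast$ through the framing morphisms $\widetilde{\alpha}_{\inf,t_i^\flat}$ of Diagram \eqref{eq:big-BK-diagram}.

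In detail, for each $i$ the ring $\widetilde{R_i}$ is perfectoid (hence qrsp) by the discussion in \S\ref{ss:small-and-base-rings}, and by Lemma \ref{lem:Bhatt-flatness} the map $R_i\to\widetilde{R_i}$ is a quasi-syntomic cover. Composing with the Zariski cover $\{\Spf(R_i)\to\mf{X}\}$, we obtain a qrsp cover $\{\Spf(\widetilde{R_i})\to\mf{X}\}$ of $\mf{X}$. Applying Proposition \ref{prop:cover-of-final-object-qrsp} shows that $\{(\Ainf(\widetilde{R_i}),(\widetilde{\xi}))\}$ covers $\ast$ in $\cat{Shv}(\mf{X}_\Prism)$.

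Next, for each $i$ choose a formal framing $t_i$ of $R_i$ and a compatible system $t_i^\flat,\pi^\flat$ of $p$-power roots so as to produce the morphism $\widetilde{\alpha}_{\inf,t_i^\flat}\colon (\mf{S}_{R_i},(E))\to (\Ainf(\check{R_i}),(\widetilde{\xi}))$ of Diagram \eqref{eq:big-BK-diagram}. Postcomposing with the natural inclusion $(\Ainf(\check{R_i}),(\widetilde{\xi}))\hookrightarrow (\Ainf(\widetilde{R_i}),(\widetilde{\xi}))$ yields a morphism of prisms $(\mf{S}_{R_i},(E))\to (\Ainf(\widetilde{R_i}),(\widetilde{\xi}))$ lying in $R_{i,\Prism}^{\mathrm{op}}$ (it commutes with the structure maps to $R_i$ and respects $\delta$-structures), and hence, via $\Spf(R_i)\to\mf{X}$, in $\mf{X}_\Prism^{\mathrm{op}}$. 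In the site $\mf{X}_\Prism$ (the opposite of this category), it corresponds to a morphism $(\Ainf(\widetilde{R_i}),(\widetilde{\xi}))\to (\mf{S}_{R_i},(E))$, inducing a map of representable sheaves $h_{(\Ainf(\widetilde{R_i}),(\widetilde{\xi}))}\to h_{(\mf{S}_{R_i},(E))}$.

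Taking the disjoint union over $i$, one obtains a factorization
\[
    \bigsqcup_i h_{(\Ainf(\widetilde{R_i}),(\widetilde{\xi}))}\longrightarrow \bigsqcup_i h_{(\mf{S}_{R_i},(E))}\longrightarrow \ast
\]
in $\cat{Shv}(\mf{X}_\Prism)$ whose composite is an epimorphism by the second paragraph; hence the second arrow is an epimorphism as well, which is precisely the statement that $\{(\mf{S}_{R_i},(E))\}$ covers $\ast$. The only substantive verification the argument requires, and the one most prone to error, is that $\widetilde{\alpha}_{\inf,t_i^\flat}$ genuinely lies in $R_{i,\Prism}^{\mathrm{op}}$, i.e.\@ is compatible with the structure maps $\mathrm{nat.}\colon R_i\to\mf{S}_{R_i}/(E)$ and $\widetilde{\mathrm{nat.}}\colon R_i\to \Ainf(\widetilde{R_i})/(\widetilde{\xi})$; this is exactly the commutativity already encoded in Diagram \eqref{eq:big-BK-diagram} (and amounts to the identity $\widetilde{\alpha}_{\inf,t_i^\flat}(u)=[\pi^\flat]^p$ matching $\widetilde{\mathrm{nat.}}(\pi)$ modulo $(\widetilde{\xi})$), so no serious obstacle is expected.
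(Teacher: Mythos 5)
Your proof is correct and follows essentially the same route as the paper's: deduce from Proposition \ref{prop:cover-of-final-object-qrsp} and Lemma \ref{lem:Bhatt-flatness} that $\{(\Ainf(\wt{R}_i),(\tilde{\xi}))\}$ covers $\ast$, then use the morphisms $\wt{\alpha}_{\inf,t_i^\flat}$ (composed with $\Ainf(\check{R}_i)\hookrightarrow\Ainf(\wt{R}_i)$) from Diagram \eqref{eq:big-BK-diagram} to factor these covering maps through the $(\mf{S}_{R_i},(E))$. Your explicit verification of the direction of arrows in the site versus $\mf{X}_\Prism^{\mathrm{op}}$ and of the compatibility of $\wt{\alpha}_{\inf,t_i^\flat}$ with the structure maps is exactly the content the paper leaves implicit.
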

\begin{proof} The maps $\wt{\alpha}_{i,\inf}\colon (\mf{S}_{R_i},(E))\to (\Ainf(\wt{R}_i),(\tilde{\xi}))$ in $\mf{X}_\Prism$, shows that $\{(\mf{S}_{R_i},(E))\to \ast\}$ refines $\{(\Ainf(\wt{R}_i),(\wt{\xi}))\to \ast\}$. But, combining Proposition \ref{prop:cover-of-final-object-qrsp} with Lemma \ref{lem:Bhatt-flatness}, we see that $\{(\Ainf(\wt{R}_i),(\wt{\xi}))\to \ast\}$ is a cover, and thus so is $\{(\mf{S}_{R_i},(E))\to \ast\}$ (see Lemma \ref{lem:condition-for-covering-of-final-object}).
\end{proof}

\subsection{(Analytic) prismatic torsors with \texorpdfstring{$F$}{F}-structure}\label{ss:analytic-prismatic-F-torsors}

We now discuss the theory of (analytic) prismatic $F$-crystals, as in \cite{BhattScholzePrisms} and \cite{GuoReinecke}. Fix a quasi-syntomic formal scheme $\mf{X}$, and an object $T$ of $\cat{Shv}(\mf{X}_\Prism)$, which we omit from the notation when $T=\ast$. 
Although we will only need the case where $T=\ast$ in the sequel, we discuss the general case for the sake of naturality, especially in order to allow us to use the framework established in Appendix \ref{s:Tannakian-appendix} (e.g., Remark \ref{rem: need general T to define local triviality}). 
As usual we confuse representable (pre)sheaves and the objects that represent them via the Yoneda embedding (as discussed in \S\ref{ss:basic-definitions-and-results}). 

\subsubsection{Prismatic \texorpdfstring{$F$-crystals}{F-crystals}} Define the category of \emph{prismatic crystals in vector bundles (resp.\@ perfect complexes)} over $T$ as follows:
\begin{equation*}
     \twolim_{(A,I)\in\mf{X}_\Prism/T}\cat{Vect}(A)\qquad \bigg(\text{resp. }\lim_{(A,I)\in\mf{X}_\Prism/T}\cat{D}_\perf(A)\bigg).
\end{equation*}
Concretely, a prismatic crystal in vector bundles (resp.\@ perfect complexes) is a collection of finite projective $A$-modules $M_{(A,I)}$ (resp.\@ perfect complexes $K^\bullet_{(A,I)}$), indexed by objects $(A,I)$ of $\mf{X}_\Prism/T$, together with (quasi-)isomorphisms $M_{(A,I)}\otimes_A B\isomto M_{(B,J)}$ (resp.\@ $K^\bullet_{(A,I)}\otimes^L_A B\isomto K^\bullet_{(B,J)}$) for any morphism $(A,I)\to (B,J)$ in $\mf{X}_\Prism$ (the \emph{crystal property}), satisfying the obvious compatibility conditions. The category of prismatic crystals in vector bundles carries the structure of an exact $\Z_p$-linear $\otimes$-category where exactness and tensor products are defined term-by-term.

\begin{prop}[{cf.\@ \cite[Proposition 2.7]{BhattScholzeCrystals}}]\label{prop:vector-bundle-limit-decomp} The global sections functor 
\begin{equation*}
    \cat{Vect}(\mf{X}_\Prism/T,\mc{O}_\Prism)\to \twolim_{(A,I)\in\mf{X}_\Prism/T}\cat{Vect}(A) 
\end{equation*}
is a bi-exact $\Z_p$-linear $\otimes$-equivalence. Moreover, the derived global sections functor
\begin{equation*}
    \cat{D}_\perf(\mf{X}_\Prism/T,\mc{O}_\Prism)\to \lim_{(A,I)\in\mf{X}_\Prism/T}\cat{D}_\perf(A) 
\end{equation*}
is an equivalence of $\infty$-categories.
\end{prop}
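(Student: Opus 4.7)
The plan is to reduce both assertions to faithfully flat descent --- classical flat descent for vector bundles in the first part, and its $\infty$-categorical Drinfeld--Mathew enhancement for perfect complexes in the second --- applied to the $(p,I)$-completely faithfully flat covers that generate the prismatic topology.

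For the first assertion, I would begin by constructing the comparison functor. Given a vector bundle $\mc{M}$ on $(\mf{X}_\Prism/T,\mc{O}_\Prism)$, set $M_{(A,I)}\defeq \mc{M}(A,I)$. Local freeness of $\mc{M}$ along a prismatic cover of $(A,I)$, which by definition arises from a $(p,I)$-completely faithfully flat (equivalently, by Lemma \ref{lem:adically-and-completely-agree}, adically faithfully flat) map of $A$-algebras, combined with faithfully flat descent for finite projective modules in the $(p,I)$-complete setting, forces $M_{(A,I)}$ to be finite projective over $A$ and the transition maps $M_{(A,I)}\otimes_A B\to M_{(B,IB)}$ to be isomorphisms. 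This yields the desired functor, which is manifestly $\Z_p$-linear, bi-exact, and symmetric monoidal since all such operations are computed termwise on sections.

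The quasi-inverse sends a compatible family $\{M_{(A,I)}\}$ in the $2$-limit to the presheaf $(A,I)\mapsto M_{(A,I)}$ on $\mf{X}_\Prism/T$. The sheaf axiom for a cover $(A,I)\to\{(B_i,IB_i)\}$ amounts, after noting that $A\to \prod_i B_i$ is $(p,I)$-completely faithfully flat, to the exactness of the associated descent complex; this is standard faithfully flat descent for finite projective modules, which extends to the $(p,I)$-complete setting by reduction modulo $(p,I)^n$ and passage to the limit (using boundedness of the prism). The resulting sheaf is locally a vector bundle because each $M_{(A,I)}$ is Zariski-locally free on $\Spf(A)$, providing the required trivialization in $\mf{X}_\Prism$. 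Full faithfulness then follows from the sheaf axiom applied to internal $\Hom$, which for finite projective modules is computed termwise.

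For the derived statement the strategy is parallel, but the main obstacle --- and the reason the argument is not formal --- is establishing $(p,I)$-completely faithfully flat descent for perfect complexes, which is genuinely more subtle than the vector bundle case because $\cat{D}_\perf$ carries nontrivial higher homotopical data and its limits must be computed in the $\infty$-categorical sense. The input needed is that, for every $(p,I)$-completely faithfully flat morphism $A\to B$ of bounded prisms, the natural map
\begin{equation*}
\cat{D}_\perf(A)\longrightarrow \lim_{[n]\in \Delta} \cat{D}_\perf(B^{\widehat{\otimes}_A(n+1)})
\end{equation*}
is an equivalence of $\infty$-categories; this is exactly the content of $p$-complete faithfully flat descent in the style of Drinfeld--Mathew, adapted to the $(p,I)$-adic setting as in \cite{BhattScholzePrisms} and \cite{BhattScholzeCrystals}, and the `over $T$' version follows by working with the slice site since covers and their hypercovers are preserved. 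Granting this, both the functor and its inverse are constructed as in the classical case, with coherence data packaged into the homotopy limit, and the equivalence is then a termwise application of the descent theorem.
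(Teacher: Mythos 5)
Your argument for the underlying equivalences is essentially a re-derivation of the result the paper simply cites: the statement is labelled ``cf.\@ \cite[Proposition 2.7]{BhattScholzeCrystals}'', and the paper's proof takes both equivalences (vector bundles and perfect complexes) directly from that reference, whose own proof runs exactly along the descent lines you sketch ($(p,I)$-completely faithfully flat descent for perfect complexes \`a la Drinfeld--Mathew, then deduction of the vector bundle case). So your route is viable, but it front-loads work that is already available, and the entire content of the paper's proof lies in the two points you dismiss as ``manifest'': compatibility with tensor products and bi-exactness.

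Neither of these is formal, and this is where your proposal has a genuine gap. For the $\otimes$-structure, the tensor product on $\cat{Vect}(\mf{X}_\Prism/T,\mc{O}_\Prism)$ is the \emph{sheafification} of the presheaf tensor product, while on the $2$-limit side it is defined termwise; to conclude that global sections are monoidal you must show that the presheaf $(A,I)\mapsto \mc{F}(A,I)\otimes_A\mc{G}(A,I)$ is already a sheaf. This does follow from the machinery you set up (the termwise tensor products form a crystal by the crystal property, and the quasi-inverse produces a sheaf with exactly these sections), but it is an application of the descent theorem, not something ``computed termwise'' for free. More seriously, bi-exactness is not symmetric: that a termwise exact sequence is exact as sheaves is easy, but the converse --- that an exact sequence of sheaves of vector bundles evaluates to an exact sequence on every $(A,I)$ --- requires the surjectivity $\mc{E}^2(A,I)\to\mc{E}^3(A,I)$, which needs the vanishing $H^i((A,I),\mc{E}^1)=0$ for $i>0$. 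This is Lemma \ref{lem: acyclicity of prismatic crystals} in the paper, proved by tensoring the \v{C}ech--Alexander complex computing the cohomology of $\mc{O}_\Prism$ (which is acyclic by \cite[Corollary 3.12]{BhattScholzePrisms}) with the flat module $\mc{E}^1(A,I)$. Your proposal never supplies this input, and without it the claim that evaluation at $(A,I)$ is exact does not follow from descent alone.
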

\begin{proof} By \cite[Proposition 2.7]{BhattScholzeCrystals}, it remains only to verify that the first functor is a bi-exact $\Z_p$-linear $\otimes$-equivalence. By \cite[Proposition 2.7]{BhattScholzeCrystals}, if $\mc{F}$ and $\mc{G}$ are objects of $\cat{Vect}(\mf{X}_\Prism/T,\mc{O}_\Prism)$ then the presheaf $(A,I)\mapsto \mc{F}(A,I)\otimes_A \mc{G}(A,I)$ is a sheaf, and so the global sections functor preserves tensor products. Indeed, as $(\mc{F}(A,I)\otimes_A \mc{G}(A,I))$ forms a prismatic crystal in vector bundles by the crystal property for $\mc{F}$ and $\mc{G}$, there exists by \cite[Proposition 2.7]{BhattScholzeCrystals} a prismatic crystal $\mc{H}$ with $\mc{H}(A,I)=\mc{F}(A,I)\otimes_A \mc{G}(A,I)$, and as $\mc{H}$ is a sheaf the claim follows. The bi-exactness claim follows easily from Lemma \ref{lem: acyclicity of prismatic crystals}.
\end{proof}

\begin{lem}[{cf.\ \cite[Corollary 3.12]{BhattScholzePrisms}}]\label{lem: acyclicity of prismatic crystals}
    Let $\mc E$ be an object of $\cat{Vect}(\mf X_\prism,\mc{O}_\prism)$. Then for any object $(A,I)$ of $\mf{X}_\Prism$, we have that $H^i((A,I),\mc E)=0$ for any $i>0$. 
\end{lem}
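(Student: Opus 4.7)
The plan is to reduce the vanishing to $(p,I)$-completely faithfully flat descent applied to the finite projective $A$-module underlying $\mc{E}$, directly generalizing the argument of \cite[Corollary 3.12]{BhattScholzePrisms} (which is the case $\mc{E} = \mc{O}_\Prism$).

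First, localize to $\mf{X}_\Prism/(A,I)$ so that $(A,I)$ becomes the final object, and set $M := \mc{E}(A,I)$, which is a finite projective $A$-module by Proposition \ref{prop:vector-bundle-limit-decomp}. A standard acyclicity criterion (running the \v{C}ech-to-derived functor spectral sequence inductively, as in \stacks{03F7}) reduces the desired vanishing to showing that for every cover $\mc{V} = \{(A,I) \to (B_j, IB_j)\}$ in $\mf{X}_\Prism/(A,I)$, the associated \v{C}ech complex of $\mc{E}$ is exact in positive degrees. Any such cover (which we may take to be finite by quasi-compactness of $\Spf(A)$) can be refined to a single cover $(A,I) \to (B,IB)$ by setting $B := \prod_j B_j$: this inherits a $\delta$-ring structure from the factors, keeps $I$ principal, has $B/IB$ of bounded $p^\infty$-torsion, and the resulting map is $(p,I)$-completely faithfully flat.

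For such a single cover, the iterated fiber products in $\mf{X}_\Prism/(A,I)$ are the bounded prisms $(B^{(n)}, IB^{(n)})$ with $B^{(n)} := B \wh{\otimes}_A \cdots \wh{\otimes}_A B$ ($n$ factors), which exist by the discussion following the definition of the prismatic site. By the crystal property from Proposition \ref{prop:vector-bundle-limit-decomp}, together with $M$ being finite projective (so that tensoring commutes with $(p,I)$-adic completion of $(p,I)$-adically complete flat $A$-algebras), we have $\mc{E}(B^{(n)}, IB^{(n)}) = M \otimes_A B^{(n)}$. Hence the \v{C}ech complex identifies with $M \otimes_A C^\bullet$, where $C^\bullet$ is the augmented cosimplicial complex $A \to B \to B \wh{\otimes}_A B \to \cdots$. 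By $(p,I)$-completely faithfully flat descent---obtainable by reducing modulo powers of $(p,I)$, invoking classical fppf descent, and passing to a derived limit, as implicit in \cite[Corollary 3.12]{BhattScholzePrisms}---the complex $C^\bullet$ is exact in positive degrees, and since $M$ is $A$-flat, tensoring preserves this exactness.

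The main technical point is the $(p,I)$-completely faithfully flat descent for the augmented complex $C^\bullet$; the passage from \v{C}ech acyclicity on all covers to the sheaf-cohomological vanishing is routine once one verifies that products of bounded prisms remain bounded prisms (so refinement to a single cover is legitimate) and that the \v{C}ech and derived functor cohomologies agree in the presence of such acyclicity.
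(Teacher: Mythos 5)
Your argument is correct and is essentially the paper's proof: the paper likewise observes that the \v{C}ech complex of $\mc{E}$ over any cover is obtained by tensoring the \v{C}ech complex of $\mc{O}_\Prism$ with the flat module $M=\mc{E}(A,I)$, and then invokes the acyclicity established in the proof of \cite[Corollary 3.12]{BhattScholzePrisms}. You have simply made explicit the reduction steps (refinement to a single faithfully flat cover, identification of the terms via the crystal property, and the \v{C}ech-to-derived passage) that the paper leaves to the cited reference.
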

\begin{proof} The proof of \cite[Corollary 3.12]{BhattScholzePrisms} applies as the \v{C}ech complex for $\mathcal{E}$ is the result of tensoring the \v{C}ech complex for $\mathcal{O}_\Prism$ with the flat module $M=\mathcal{E}(A,I)$, and so is still exact.
\end{proof}

As in \cite[Definition 4.1]{BhattScholzeCrystals}, define the category $\cat{Vect}^\varphi(\mf{X}_\Prism/T)$ of \emph{prismatic $F$-crystals over $T$} to have objects $(\mc{E},\varphi_\mc{E})$ where $\mc{E}$ is an object of $\cat{Vect}(\mf{X}_\Prism/T,\mc{O}_\Prism)$ and 
\begin{equation*}
\varphi_\mc{E}\colon (\phi^\ast\mc{E})[\nicefrac{1}{\mc{I}_\Prism}]\isomto \mc{E}[\nicefrac{1}{\mc{I}_\Prism}],
\end{equation*}
is an isomorphism in $\cat{Vect}(\mf{X}_\Prism/T,\mc{O}_\Prism[\nicefrac{1}{\mc{I}_\Prism}])$, called the \emph{Frobenius}, and morphisms are morphisms in $\cat{Vect}(\mf{X}_\Prism/T,\mc{O}_\Prism)$ commuting with the Frobenii. Likewise, define the category $\cat{D}^\varphi_\perf(\mf{X}_\Prism/T)$ of \emph{prismatic $F$-crystals in perfect complexes} to be the category of pairs $(\mc{E}^\bullet,\varphi_{\mc{E}^\bullet})$ where $\mc{E}^\bullet$ is an object of $\cat{D}_\perf(\mf{X}_\Prism/T,\mc{O}_\Prism)$ together with a \emph{Frobenius isomorphism} in $\cat{D}_\perf(\mf{X}_\Prism,\mc{O}_\Prism[\nicefrac{1}{\mc{I}_\Prism}])$
\begin{equation*}
    \varphi_{\mc{E}^\bullet}\colon L\phi^\ast\mc{E}^\bullet[\nicefrac{1}{\mc{I}_\Prism}]\isomto \mc{E}^\bullet[\nicefrac{1}{\mc{I}_\Prism}],
\end{equation*}
and with morphisms being those in $\cat{D}_\perf(\mf{X}_\Prism/T,\mc{O}_\Prism)$ commuting with Frobenii. By Proposition \ref{prop:vector-bundle-limit-decomp}, when $\cat{Vect}^\varphi(A,I)$ and $\cat{D}^\varphi_{\perf}(A,I)$ are given the obvious meanings, then
\begin{equation*}
\cat{Vect}^\varphi(\mf{X}_\Prism/T)=\twolim_{\scriptscriptstyle (A,I)\in\mf{X}_\Prism/T}\cat{Vect}^\varphi(A,I),\qquad \cat{D}^\varphi_{\perf}(\mf{X}_\Prism/T)=\lim_{\scriptscriptstyle (A,I)\in\mf{X}_\Prism/T}\cat{D}^\varphi_{\perf}(A,I).
\end{equation*}
Observe that $\cat{Vect}^\varphi(\mf{X}_\Prism/T)$ inherits  from $\cat{Vect}(\mf{X}_\Prism/T,\mc{O}_\Prism)$ the structure of an exact $\Z_p$-linear $\otimes$-category. We say $(\mc{E},\varphi_\mc{E})$ is \emph{effective} if $\varphi_\mc{E}$ is induced from a morphism $\varphi_{\mc{E},0}\colon \phi^\ast\mc{E}\to\mc{E}$, which is automatically injective, and the minimal $r$ such $\mc{I}_\Prism^r$ kills $\mathrm{coker}(\varphi_{\mc{E},0})$ is the \emph{height} of $(\mc{E},\varphi_\mc{E})$.

\begin{rem}\label{rem:pullback-crystal-Frobenius} Suppose that $(A,I,s)$ is an object of $\mf{X}_\Prism$ with the property that $\phi_A(I)$ is an invertible ideal (e.g.\@, if $\phi_A$ is flat, see \stacks{02OO}, or $I=(p)$). In this case $(A,\phi_A(I),s\circ \Spf(\ov{\phi}_A))$ is an object of $\mf{X}_\Prism$, where $\ov{\phi}_A\colon A/I\to A/\phi_A(I)$ is the natural map. Indeed, if $p=i+\phi_A(i')$, with $i$ and $i'$ in $I$, then evidently $p=\phi_A(i)+\phi_A(\phi_A(i'))$ so that $p\in \phi_A(I)+\phi_A(\phi_A(I))$, and $A$ is $(p,\phi_A(I))$-adically complete as $(p,I)^p\subseteq (p,\phi_A(I))\subseteq (p,I)$. Moreover, we observe that $\phi_A\colon (A,I,s)\to (A,\phi_A(I),s\circ\Spf(\ov{\phi}_A))$ is a morphism in $\mf{X}_\Prism$ so that, by the crystal property, we have an identification of $A$-modules
\begin{equation*}
    \phi^\ast\mc{E}(A,I,s)=\phi_A^\ast(\mc{E}(A,I,s))\isomto \mc{E}(A,\phi_A(I),s\circ\Spf(\ov{\phi}_A)). 
\end{equation*}
Thus, we, in particular, see that via this identification there is an isomorphism 
\begin{equation*}
    \varphi_\mc{E}\colon (\phi^2)^\ast\mc{E}(A,I,s)[\nicefrac{1}{\phi_A(I)}]\isomto \phi^\ast\mc{E}(A,I,s)[\nicefrac{1}{\phi_A(I)}],
\end{equation*}
which provides $\phi^\ast\mc{E}(A,I,s)$ with a Frobenius-like structure.
\end{rem}

\subsubsection{Analytic prismatic \texorpdfstring{$F$-crystals}{F-crystals}} We now discuss the notion of analytic prismatic $F$-crystals as defined in \cite{GuoReinecke}, which are necessary to model every crystalline $\Z_p$-local system.

\medskip

\paragraph{Basic definitions} Following \cite[Definition 3.1]{GuoReinecke}, define the category of \emph{analytic prismatic crystals} over $T$ as follows (where we recall that $U(A,I)\defeq \Spec(A)-V(p,I)$):
\begin{equation*}
    \cat{Vect}^\mathrm{an}(\mf{X}_\Prism/T)\defeq\twolim_{\scriptscriptstyle(A,I)\in \mathfrak{X}_\Prism/T}\cat{Vect}(U(A,I)).
\end{equation*}
We denote an object of $\cat{Vect}^\an(\mf{X}_\Prism/T)$ as $\mc{V}=(\mc{V}_{(A,I)})$. By the argument given in \cite[Proposition 3.7]{GuoReinecke}, the following restriction is fully faithful: 
\begin{equation*}
    \cat{Vect}(\mf{X}_\Prism/T,\mc{O}_\Prism)\isomto \twolim_{\scriptscriptstyle(A,I)\in\mf{X}_\Prism/T}\cat{Vect}(A)\to \twolim_{\scriptscriptstyle(A,I)\in\mf{X}_\Prism/T}\cat{Vect}(U(A,I))=\cat{Vect}^\an(\mf{X}_\Prism/T). 
\end{equation*}
Endow $\cat{Vect}^\an(\mf{X}_\Prism/T)$ with the structure of a $\Z_p$-linear $\otimes$-category defined term-by-term in the two-limit. Then the above restriction is a $\Z_p$-linear $\otimes$-functor. Denote the object $(\mc{O}_{U(A,I)})$  by $\mc{O}^\an_\Prism$. As in \cite[Definition 3.5]{GuoReinecke}, we then define the category $\cat{Vect}^{\varphi,\mr{an}}(\mf{X}_\Prism)$ of \emph{analytic prismatic $F$-crystals}, denoted typically by $(\mc{V},\varphi_\mc{V})$, in the same way and endowed with the analogous structure of a $\Z_p$-linear $\otimes$-category.

We say that a sequence of analytic prismatic crystals 
    \begin{equation*}
0\to \mc{V}^1\to \mc{V}^2\to \mc{V}^3\to 0
\end{equation*}
is \emph{exact} if the sequence of vector bundles 
\begin{equation*}
0\to \mc{V}^1_{(A,I)}\to \mc{V}^2_{(A,I)}\to \mc{V}^3_{(A,I)}\to 0
\end{equation*}
on $\Spec(A)-V(p,I)$ is exact for all $(A,I)$ in $\mf{X}_\Prism$. We define a sequence of objects of $\cat{Vect}^{\varphi,\mr{an}}(\mf{X}_\Prism)$ to be exact if the underlying sequence of analytic prismatic crystals is. This endows $\cat{Vect}^\an(\mf{X}_\Prism)$ and $\cat{Vect}^{\an,\varphi}(\mf{X}_\Prism)$ with the structures of exact $\Z_p$-linear $\otimes$-categories.

\medskip

\paragraph{Criteria for exactness} Our next goal is to establish the following basic result.

\begin{prop}\label{prop:exactness-preserved-cover-analytic-prismatic-crystals} A sequence
\begin{equation*}
0\to \mc{V}^1\to \mc{V}^2\to \mc{V}^3\to 0
\end{equation*}
in $\cat{Vect}^{\mathrm{an}}(\mf{X}_\Prism)$ is exact if and only if there exists a cover $\{(A_i,I_i)\}$ of $\ast$ such that 
\begin{equation*} 
0\to \mathcal{V}^1_{(A_i,I_i)}\to \mathcal{V}^2_{(A_i,I_i)}\to \mathcal{V}^3_{(A_i,I_i)}\to 0
\end{equation*}
is exact for all $i$.
\end{prop}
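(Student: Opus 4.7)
The forward direction is immediate, since exactness is required on all prisms in $\mf{X}_\Prism$ and in particular on the $(A_i, I_i)$. For the converse, I fix an arbitrary object $(A, I)$ of $\mf{X}_\Prism$ and must show exactness of the sequence on $U(A, I)$. The hypothesis that $\{(A_i, I_i)\}$ covers $\ast$ in $\cat{Shv}(\mf{X}_\Prism)$ unpacks, by the sheaf-theoretic definition of covering the final object, into the following concrete data: there exists a covering family $\{\gamma_j\colon (A, I) \to (B_j, J_j)\}$ of $(A, I)$ in $\mf{X}_\Prism^{\mathrm{op}}$ together with, for each $j$, a morphism of prisms $\beta_j\colon (A_{i(j)}, I_{i(j)}) \to (B_j, J_j)$.

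Using the crystal property along $\gamma_j$ and $\beta_j$ respectively, each $\mc{V}^k_{(B_j, J_j)}$ is identified simultaneously with the pullback of $\mc{V}^k_{(A, I)}$ along $U(B_j, J_j) \to U(A, I)$ and with the pullback of $\mc{V}^k_{(A_{i(j)}, I_{i(j)})}$ along $U(B_j, J_j) \to U(A_{i(j)}, I_{i(j)})$. Since a short exact sequence of vector bundles is Zariski-locally split, its pullback along any morphism of schemes remains exact. Therefore the hypothesized exactness on $U(A_{i(j)}, I_{i(j)})$ forces exactness of the pulled-back sequence on each $U(B_j, J_j)$.

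It then remains to descend exactness from the family $\{U(B_j, J_j)\}$ back to $U(A, I)$. Since $\{(A, I) \to (B_j, J_j)\}$ is a covering in $\mf{X}_\Prism$, the induced map $A \to \prod_j B_j$ is $(p, I)$-completely faithfully flat; by Lemma \ref{lem:adically-and-completely-agree} this is equivalent to $\{\Spf(B_j) \to \Spf(A)\}$ being adically faithfully flat. My plan is to upgrade this to an fpqc cover of schemes $\coprod_j U(B_j, J_j) \to U(A, I)$, after which fpqc descent for short exact sequences of quasi-coherent sheaves concludes the argument.

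The main obstacle is precisely this last descent step: $(p, I)$-complete faithful flatness is a condition on formal completions along $V(p, I)$, whereas $U(A, I)$ is the \emph{complement} of that locus, so the flatness on $U$ does not follow tautologically. I would handle this by working on the affine open cover $U(A, I) = \Spec A[\nicefrac{1}{p}] \cup \Spec A[\nicefrac{1}{d}]$, where $d$ locally generates the invertible ideal $I$, and checking faithful flatness of $A[\nicefrac{1}{p}] \to B_j[\nicefrac{1}{p}]$ and $A[\nicefrac{1}{d}] \to B_j[\nicefrac{1}{d}]$ separately. On the first piece, flatness follows by reducing $(p, I)$-complete flatness modulo $p$-power truncations and inverting $d$; on the second, one uses that $d \in I$ already makes $(p, I)$ equal to $(p)$ after inverting $d$, so $(p, I)$-complete flatness becomes $p$-complete flatness, which passes to the $d$-localization cleanly. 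This is routine but technical, and is implicitly part of the foundational yoga of the analytic prismatic site.
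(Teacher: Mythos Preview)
Your reduction is correct up to the point where you obtain exactness on each $U(B_j,J_j)$: the cover-refinement argument via Lemma~\ref{lem:condition-for-covering-of-final-object} and the universal exactness of short exact sequences of vector bundles are exactly what the paper uses. The gap is in your final step. Upgrading the adically faithfully flat cover $\{\Spf(B_j)\to\Spf(A)\}$ to an fpqc cover $\{U(B_j,IB_j)\to U(A,I)\}$ is \emph{not} routine, and the paper explicitly flags this in the Remark immediately following the statement: without Noetherian hypotheses one cannot in general pass from flatness modulo all powers of $(p,I)$ to honest flatness of $A\to B$, and it is not even clear a priori that the map on open complements is surjective. Your sketch does not work. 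On $\Spec A[\nicefrac{1}{d}]$ you claim that $(p,I)$-complete flatness becomes $p$-complete flatness and that this ``passes to the $d$-localization cleanly'', but $A[\nicefrac{1}{d}]$ is not $p$-adically complete, and $p$-complete flatness does not imply flatness outside the Noetherian setting (this is exactly what \stacks{0912} gives you with Noetherian hypotheses, and it fails in general). On $\Spec A[\nicefrac{1}{p}]$ the situation is worse: in $A/(p,I)^n$ both $p$ and $d$ are nilpotent, so localizing at either gives the zero ring and no information survives; you have no access to $A/p^n\to B/p^n$ being flat from the hypothesis.

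The paper avoids fpqc descent entirely. It proves a substitute (Proposition~\ref{prop:exactness-ff-cover}): if $J\subseteq A$ is finitely generated and contained in the Jacobson radical, and $A/J^n\to B/J^nB$ is faithfully flat for all $n$, then exactness of a sequence of vector bundles on $\Spec(A)-V(J)$ can be checked after pullback to $\Spec(B)-V(JB)$. The ingredients are (i) a residue-field criterion (Lemma~\ref{lem:local-ring-seq}) saying that exactness of a vector bundle sequence on a locally ringed space can be tested at the residue fields of a set of points through which every point specializes, and (ii) a direct elementary argument that $\Spec(B)-V(JB)\to\Spec(A)-V(J)$ hits every \emph{closed} point of the target. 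Neither step asserts flatness of $A\to B$ on the open locus; the paper even records Gabber's observation (Proposition~\ref{prop:Gabber}) that the map on spectra is in fact surjective, but notes that this still does not yield an fpqc cover.
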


\begin{rem} Proposition \ref{prop:exactness-preserved-cover-analytic-prismatic-crystals} does not easily follow using fpqc
descent. The latter condition implies (see Lemma \ref{lem:condition-for-covering-of-final-object}) that for every $(A,I)$ there exists a cover $\{(B_j,IB_j)\}$ of $(A,I)$ such that sequence over $(A,I)$ is exact when pulled back to each $U(B_j,IB_j)$. If $S=\{U(B_j,IB_j)\to U(A,I)\}$ were an fpqc cover we would be done. But, a priori all we know is that $\{\Spf(B_j)\to \Spf(A)\}$ is an adically flat cover. With Noetherian hypotheses this is sufficient (cf.\@ \stacks{0912}), but in our generality it is not even clear that $S$ is jointly surjective (although see Proposition \ref{prop:Gabber} below).
\end{rem}

Using Lemma \ref{lem:condition-for-covering-of-final-object}, and the fact that an exact sequence of vector bundles is universally exact (see \stacks{058H}), the desired result is a special case of the following.

\begin{prop}\label{prop:exactness-ff-cover} Let $f\colon A\to B$ be a map of rings, and suppose that the finitely generated ideal $J\subseteq A$ is contained in the Jacobson radical of $A$. If $f\colon A/J^n\to B/J^nB$ is faithfully flat for all $n$ then, a sequence of vector bundles
\begin{equation*}
    P\colon \quad 0\to \mc{M}_1\to\mc{M}_2\to\mc{M}_3\to 0,
\end{equation*}
on $\Spec(A)-V(J)$ is exact if and only if the sequence
    \begin{equation*}
    f^\ast P\colon \quad 0\to f^\ast\mc{M}_1\to f^\ast\mc{M}_2\to f^\ast\mc{M}_3\to 0,
\end{equation*}
of vector bundles on $\Spec(B)-V(JB)$ is exact.
\end{prop}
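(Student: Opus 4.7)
The ``only if'' direction I dispatch immediately via \stacks{058H}: vector bundles are flat, so the pullback along $f$ of an exact sequence of vector bundles remains exact.

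For the ``if'' direction, the plan is to reduce first to principal affine opens and then to test exactness pointwise on residue fields. Since $J$ is finitely generated, the open $U(A)\defeq \Spec(A)-V(J)$ is covered by the principal opens $D(j)=\Spec(A[\nicefrac{1}{j}])$ for $j$ in a finite generating set of $J$; as exactness of a complex of vector bundles can be checked on any such cover, I reduce to the affine statement that for each $j\in J$, a complex of finite projective $A[\nicefrac{1}{j}]$-modules
\[
0\to M_1\xrightarrow{\alpha}M_2\xrightarrow{\beta}M_3\to 0
\]
is exact given that its base change along $A[\nicefrac{1}{j}]\to B[\nicefrac{1}{f(j)}]$ is exact.

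The next step will be to use that exactness of such a three-term complex of finite projective modules over a ring $R$ is detectable pointwise on $\Spec(R)$: namely, for each $\mf{p}\in\Spec(R)$ one tests that the induced complex of $\kappa(\mf{p})$-vector spaces is exact. Indeed, $\Coker(\beta)$ is finitely generated, so Nakayama at each stalk reduces surjectivity of $\beta$ to surjectivity modulo $\mf{p}$; once $\beta$ is known surjective, the projectivity of $M_3$ splits $\ker\beta$ off as a finite projective direct summand of $M_2$, and iterating the Nakayama argument then reduces the remaining injectivity of $\alpha$ and middle-exactness to the same pointwise conditions.

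To verify pointwise exactness, I invoke Proposition~\ref{prop:Gabber} to produce, for each $\mf{p}\in U(A)$, a lift $\mf{q}$ in $U(B)\defeq \Spec(B)-V(JB)$; the constraint $j\notin\mf{q}\cap A=\mf{p}$ forces $f(j)\notin\mf{q}$, so in fact $\mf{q}\in\Spec(B[\nicefrac{1}{f(j)}])$, and the induced field extension $\kappa(\mf{p})\hookrightarrow\kappa(\mf{q})$ is faithfully flat. By hypothesis the base change to $B[\nicefrac{1}{f(j)}]$ is exact, hence so is its further base change to $\kappa(\mf{q})$, and faithfully flat descent along the field extension yields exactness over $\kappa(\mf{p})$. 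The main obstacle is precisely the surjectivity of $U(B)\to U(A)$, which, as flagged in the remark following the proposition, is the nontrivial input available in our non-Noetherian generality only through Proposition~\ref{prop:Gabber}; without it, an adically faithfully flat map of formal schemes gives no reason for the complement of the vanishing locus to surject, and exactness cannot be descended in this way.
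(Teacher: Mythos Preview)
Your argument has a genuine gap: Proposition~\ref{prop:Gabber} requires that $A$ and $B$ be $J$-adically (resp.\ $JB$-adically) \emph{complete}, whereas Proposition~\ref{prop:exactness-ff-cover} only assumes that $J$ lies in the Jacobson radical of $A$. These are not the same hypothesis, and Gabber's argument uses completeness essentially (his Claim~1 relies on convergent infinite sums in $A$). So you cannot invoke Proposition~\ref{prop:Gabber} here to produce a preimage of an arbitrary point of $U(A)$.

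The paper circumvents this by asking for less: it only needs the map $U(B)\to U(A)$ to hit every \emph{closed} point. Since $J$ is finitely generated, $U(A)$ is quasi-compact, so every point of $U(A)$ specializes to a closed one; and your Nakayama argument (which is essentially the paper's Lemma~\ref{lem:local-ring-seq}) already shows that exactness at a specialization propagates to any generization, because the stalk at a generization is a flat localization of the stalk at the specialization. Surjectivity on closed points is then proved directly, without completeness: for a closed point $\mf{p}$ of $U(A)$ one passes to $\ov A=A/\mf{p}$, $\ov B=B/\mf{p}B$ and uses faithful flatness of $A/J^n\to B/J^nB$ together with Nakayama (valid since $J$ lies in the Jacobson radical) to force $\Spec(\ov B)-V(J\ov B)\ne\varnothing$. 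Your reduction to principal opens and your pointwise-exactness criterion are both fine; the missing ingredient is precisely this reduction to closed points, which is what makes the surjectivity step go through in the stated generality.
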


Proposition \ref{prop:exactness-ff-cover} follows from the following two lemmas as $\Spec(B)-V(JB)$ is quasi-compact (as $J$ is finitely generated) so that any point contains a closed point in its closure.

\begin{lem}\label{lem:local-ring-seq} Let $(Y,\mc{O}_Y)$ be a locally ringed space and $S$ a subset of $Y$ such that every point of $Y$ admits an element of $S$ as a specialization. Then, a sequence 
\begin{equation*}
    Q\colon \quad 0\to V_1\to V_2\to V_3\to 0,
\end{equation*}
of $\mc{O}_Y$-modules, where each $V_i$ is a vector bundle on $(Y,\mc{O}_Y)$, is exact if and only if for all $y$ in $S$ the induced sequence
\begin{equation*}
    0\to (V_1)_{k(y)}\to (V_2)_{k(y)}\to (V_3)_{k(y)}\to 0,
\end{equation*}
of vector spaces over the residue field $k(y)$ is exact. In particular, if $f\colon (Y',\mc{O}_Y)\to (Y,\mc{O}_Y)$ is a map of locally ringed spaces containing $S$ in its image, then $Q$ is exact if and only if 
\begin{equation*}
    f^\ast Q\colon \quad 0\to f^\ast V_1\to f^\ast V_2\to f^\ast V_3\to 0,
\end{equation*}
is exact.
\end{lem}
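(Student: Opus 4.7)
The plan is to prove both directions by reducing to a statement on stalks, and in the ``if'' direction reducing further to a local-algebra lemma about free modules over a local ring. For the ``only if'' direction, exactness of $Q$ may be checked on stalks; at each $y\in Y$ the stalk sequence $0\to (V_1)_y\to (V_2)_y\to (V_3)_y\to 0$ is a short exact sequence of finite free $\mathcal{O}_{Y,y}$-modules, and since $(V_3)_y$ is projective it splits. Split exactness is preserved under any base change, so reducing modulo $\mathfrak{m}_y$ yields exactness of the $k(y)$-fiber sequence at every $y\in Y$, and in particular at every $y\in S$.

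For the ``if'' direction I will check exactness at an arbitrary stalk $y\in Y$ by leveraging the specialization hypothesis to reduce to the stalk at some $s\in S$. Given $y$, choose $s\in S$ with $y\rightsquigarrow s$; since every open containing $s$ also contains $y$, there is a canonical ring map $\mathcal{O}_{Y,s}\to\mathcal{O}_{Y,y}$ (the induced map of colimits of sections), and on any open on which the $V_i$ are simultaneously trivialized this map carries the transition matrices defining the stalk sequence at $s$ to those at $y$. Hence $(V_i)_y\cong (V_i)_s\otimes_{\mathcal{O}_{Y,s}}\mathcal{O}_{Y,y}$ compatibly with the morphisms. If the stalk sequence at $s$ is exact then, as in the previous paragraph, it splits (because $(V_3)_s$ is projective over the local ring $\mathcal{O}_{Y,s}$), so split exactness is preserved under the base change to $\mathcal{O}_{Y,y}$.

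The main obstacle is therefore the local-algebra lemma: a complex $0\to F_1\xrightarrow{\alpha} F_2\xrightarrow{\beta} F_3\to 0$ of finite free modules over a local ring $(R,\mathfrak{m})$ whose reduction modulo $\mathfrak{m}$ is exact is itself exact. By Nakayama, $\beta$ is surjective; with $F_3$ projective, the surjection splits, so $\ker\beta$ is a free direct summand of $F_2$ of rank equal to $\operatorname{rank} F_1$ (by rank comparison on residue fields). The induced map $\tilde\alpha\colon F_1\to\ker\beta$ is an isomorphism modulo $\mathfrak{m}$, hence has unit determinant, hence is itself an isomorphism. Applying this to $Q_s$ for each $s\in S$, whose $k(s)$-fiber is exact by hypothesis, gives exactness of the stalk sequence at $s$, and combining with the reduction of the previous paragraph completes the argument.

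The ``in particular'' statement then reduces to the main assertion. That $Q$ exact implies $f^*Q$ exact follows from stalk-wise split exactness of $Q$, which survives the stalk-wise base changes comprising $f^*$. Conversely, if $f^*Q$ is exact, the already-proven part of the lemma applied on $Y'$ shows the $k(y')$-fiber is exact at each $y'\in Y'$; for $s\in S$ and $y'\in f^{-1}(s)$, the locality of the ring map $\mathcal{O}_{Y,s}\to\mathcal{O}_{Y',y'}$ yields an identification $(f^*V_i)_{k(y')}\cong (V_i)_{k(s)}\otimes_{k(s)}k(y')$, and the faithful flatness of the field extension $k(s)\to k(y')$ transports exactness down to the $k(s)$-fiber, so the main assertion of the lemma produces exactness of $Q$.
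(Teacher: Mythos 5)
Your proof is correct and follows essentially the same route as the paper: reduce exactness at an arbitrary stalk to exactness at the stalk of a specialization lying in $S$, and prove the local statement via Nakayama, a rank count using the splitting of $0\to\ker\beta\to F_2\to F_3\to 0$, and the fact that a map of finite free modules of equal rank over a local ring that is an isomorphism modulo the maximal ideal is an isomorphism. Your use of split exactness (rather than the flatness of the specialization map $\mathcal{O}_{Y,s}\to\mathcal{O}_{Y,y}$ that the paper invokes, which is not obvious for general locally ringed spaces) to transfer exactness from $s$ to an arbitrary $y$ is a slightly more careful justification of the same step, and you also spell out the ``in particular'' clause that the paper leaves to the reader.
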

\begin{proof} The second claim follows easily from the first, and only the if condition of the first statement requires proof. To prove this, it suffices to show that for each $y$ in $S$, the sequence of projective $\mathcal{O}_{Y,y}$-modules
\begin{equation*}
    Q_y\colon \quad 0\to (V_1)_y\to (V_2)_y\to (V_3)_y\to 0,
\end{equation*}
is exact. Indeed, if $y'$ is an arbitrary point of $Y$ and $y$ in $S$ is a specialization, then the sequence $Q_{y'}$ is obtained by base changing the sequence $Q_y$ along the flat map $\mc{O}_{Y,y}\to \mc{O}_{Y,y'}$.

To show that $Q_y$ is exact, let $n_i$, for $i=1,2,3$, be the rank of the finite free $\mc{O}_{Y,y}$-module $(V_i)_y$. That $(V_2)_y\to (V_3)_y$ is surjective follows from Nakayama's lemma. We then see that the sequence
\begin{equation}\label{eq:local-ring-seq}
    0\to \ker((V_2)_y\to (V_3)_y)\to (V_2)_y\to (V_3)_y\to 0
\end{equation}
is exact. As $V_3$ is a vector bundle, this sequence splits. So, $\ker((V_2)_y\to (V_3)_y)$ is locally free of rank equal to $n_2-n_3$. As rank can be computed over $k(y)$, we have that $n_1=n_2-n_3$. Thus, $(V_1)_y\to \ker((V_2)_y\to (V_3)_y)$ is a map of finite free $\mc{O}_{Y,y}$-modules of the same rank. Thus, it is an isomorphism if and only if it is surjective (see \cite{Orzech} for a generalization of this fact).  But, by Nakayama's lemma, it suffices to check this claim after base change to $k(y)$. But, as \eqref{eq:local-ring-seq} is exact, and $(V_3)_y$ flat, this is equivalent to checking that $(V_1)_{k(y)}\to \ker((V_2)_{k(y)}\to (V_3)_{k(y)})$ is surjective, which is true by assumption.
\end{proof}

\begin{lem} Let $f\colon A\to B$ be a map of rings, and suppose that the finitely generated ideal $J\subseteq A$ is contained in the Jacobson radical. If $f\colon A/J^n\to B/J^nB$ is faithfully flat for all $n$, then the map $\Spec(B)-V(JB)\to \Spec(A)-V(J)$ is surjective on closed points.
\end{lem}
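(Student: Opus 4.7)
The plan is to reduce to the case $\mf{p}=0$, prove the technical claim that $\bigcap_n J^n=0$ in the reduced ring, and conclude via injectivity coming from faithful flatness.

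First, given a closed point $\mf{p}$ of $\Spec(A)-V(J)$, I base change along $A\twoheadrightarrow A/\mf{p}$. Faithful flatness of $A/J^n\to B/J^nB$ is preserved, and the image of $J$ in $A/\mf{p}$ remains a finitely generated ideal contained in the Jacobson radical. The closedness of $\mf{p}$ in $\Spec(A)-V(J)$ translates to: every nonzero prime of $A/\mf{p}$ contains the image of $J$. So I may assume $A$ is a domain with $J=(j_1,\ldots,j_r)$ such that every nonzero prime of $A$ contains $J$. In this setup, since $J\neq 0$, some generator is nonzero; WLOG $j_1\neq 0$. Then $\Spec A[\nicefrac{1}{j_1}]=\{(0)\}$, so $A[\nicefrac{1}{j_1}]$ is a field, forcing $A[\nicefrac{1}{j_1}]=K\defeq\Frac(A)$. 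The goal becomes producing a prime $\mf{q}$ of $B$ with $j_1\notin \mf{q}$: indeed, $\mf{q}\cap A$ would then be a prime of $A$ not containing $j_1$, hence by closedness of $\mf{p}=0$, equal to $0$. So it suffices to show $j_1$ is not nilpotent in $B$.

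The key technical claim is $\bigcap_n J^n=0$ in $A$, to be proved by induction on $r$. Granting this: if $j_1^N=0$ in $B$, injectivity of $A/J^n\hookrightarrow B/J^nB$ (from faithful flatness) gives $j_1^N\in J^n$ for all $n$, so $j_1^N\in\bigcap_n J^n=0$, contradicting $j_1\neq 0$ in $A$. For the base case $r=1$, a nonzero element $s\in\bigcap_n(j_1^n)$ would satisfy $j_1^N\in (s)$ for some $N$ (using $A[\nicefrac{1}{j_1}]=K$), so $j_1^N=j_1^n b$ for some $b$ and $n>N$; then $j_1^N(1-j_1^{n-N}b)=0$ in the domain forces either $j_1^N=0$ or $j_1$ to be a unit, both contradicting the hypotheses. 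For the inductive step, suppose $0\neq s\in\bigcap_n J^n$, and deduce $j_1^N\in\bigcap_n J^n$ as above. Expanding $J^n=\sum_{k=0}^n j_1^k{J'}^{n-k}$ with $J'=(j_2,\ldots,j_r)$, I write $j_1^N=\sum_k j_1^k x_k$ with $x_k\in {J'}^{n-k}$ and rearrange as $j_1^N\cdot u=\sum_{k<N}j_1^k x_k\in {J'}^{n-N+1}$, where $u=1-x_N-\sum_{k>N}j_1^{k-N}x_k$ is a unit (as each summand of $u-1$ lies in $J$, which is in the Jacobson radical). Hence $j_1^N\in\bigcap_m{J'}^m$; moreover $j_1^N\in J'$ yields $\sqrt{J'}=\sqrt{J}$, so $J'$ satisfies the same hypotheses with $r-1$ generators. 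The inductive hypothesis gives $\bigcap_m{J'}^m=0$, forcing $j_1^N=0$ --- again a contradiction.

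The main obstacle is that $A$ need not be Noetherian (e.g., it can be perfectoid), so Krull's intersection theorem is unavailable for directly proving $\bigcap_n J^n=0$. The inductive Nakayama-style argument above sidesteps this by exploiting the special structure forced by the reduction: essentially, $A$ is ``one-dimensional'' in the sense that $V(J)$ accounts for every non-generic point, and the Jacobson condition ensures unitness of the crucial factor $u$.
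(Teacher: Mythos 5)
Your proof is correct, and your reduction to the case of a closed generic point (base-changing along $A\twoheadrightarrow A/\mf{p}$ so that $A$ becomes a domain in which every nonzero prime contains $J$, with faithful flatness preserved) is exactly the paper's. The core step, however, is genuinely different. The paper argues directly that $\Spec(B)-V(JB)$ is non-empty: if it were empty, $JB$ would lie in the nilradical of $B$, so $J^nB=0$ for some $n$ since $J$ is finitely generated; faithful flatness of $A/J^{n+1}\to B/J^{n+1}B$ then forces $J^n/J^{n+1}=0$, and a single application of Nakayama gives $J^n=0$, contradicting $\Spec(A)-V(J)\neq\emptyset$. You instead track the single element $j_1$ and show it is not nilpotent in $B$; since ``$j_1^M=0$ in $B$'' only yields $j_1^M\in J^n$ for every $n$ (via injectivity of $A/J^n\to B/J^nB$) rather than $J^nB=0$, you are forced to prove the non-Noetherian Krull-intersection statement $\bigcap_n J^n=0$, which you do by induction on the number of generators --- the unit trick with $u\in 1+J$ and the observation $\sqrt{J'}=\sqrt{J}$ are the right moves, and the induction closes (including the degenerate case $J'=0$). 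Both arguments work; the paper's is shorter because passing from ``each generator is nilpotent'' to ``$J^nB=0$'' lets Nakayama finish in one step, whereas your route establishes a stronger intermediate result (a Krull intersection theorem without Noetherian hypotheses, under the assumption that $V(J)$ contains every non-generic point) that is interesting in its own right but not needed for the lemma.
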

\begin{proof} Let $\mf{p}$ be a prime of $A$ constituting a closed point of $\Spec(A)-V(J)$. We show that $\mf{p}$ is in the image of $f$. Suppose first that $\mf{p}=(0)$. In this case we see that it suffices to show that $\Spec(B)-V(JB)$ is non-empty. If it were empty, then $JB$ is contained in the nilradical of $B$ and as $J$ is finitely generated, this implies that $J^nB=0$ for some $n$. This implies that $J^nB/J^{n+1}B$ is zero, and as $A/J^{n+1}\to B/J^{n+1}B$ is faithfully flat, this implies that $J^n/J^{n+1}$ is zero. Since $J$ is contained in the Jacobson radical of $A$, we deduce from Nakayama's lemma (see \stacks{00DV}) that $J^n$ is zero, but this implies that $\Spec(A)-V(J)$ is empty, which is a contradiction.

In general, let $\ov{A}\defeq A/\mf{p}$, $\ov{B}\defeq B/\mf{p}B$, and $\ov{J}\defeq J\ov{A}$. Then, $\ov{J}$ is in the Jacobson radical of $\ov{A}$, and $\ov{A}/\ov{J}^n\to \ov{B}/\ov{J}^n\ov{B}$ is equal to the base change of $A/J^n\to B/J^n B$ along $A\to \ov{A}$, and so faithfully flat. From the argument in the previous paragraph there exists some $\ov{\mf{q}}$ in $\Spec(\ov{B})-V(\ov{J}\ov{B})$. Let $\mf{q}$ be a prime of $B$ lying over $\ov{\mf{q}}$. Observe that $\mf{q}$ belongs to $\Spec(B)-V(JB)$, and by construction $f(\mf{q})$ lies in $V(\mf{p})\cap (\Spec(A)-V(J))$. But, as $\mf{p}$ is closed in $\Spec(A)-V(J)$, one checks that $V(\mf{p})\cap (\Spec(A)-V(J))=\{\mf{p}\}$ from where the claim follows.
\end{proof}

Finally, we include the following beautiful observation of Ofer Gabber showing that the restriction to closed points is not necessary when the rings are complete.

\begin{prop}[Gabber]\label{prop:Gabber} Let $J\subseteq A$ be a finitely generated ideal, and $f\colon A\to B$ a ring map. Suppose that $A$ (resp.\@ $B$) is complete with respect to $J$ (resp.\@ $JB$), and that $A/J^n\to B/J^nB$ is faithfully flat for all $n$. Then, the map $\Spec(B)\to\Spec(A)$ is surjective.
\end{prop}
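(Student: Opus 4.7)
I would aim to prove the stronger statement that $A\to B$ is faithfully flat, from which the asserted surjectivity on spectra is automatic. The two ingredients for faithful flatness are (i) flatness and (ii) surjectivity on maximal ideals, so the question splits into these two sub-goals.

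Sub-goal (ii) is straightforward from the completeness hypotheses. Since $A$ is $J$-adically complete, one has $J\subseteq \mathrm{Jac}(A)$, and similarly $JB\subseteq\mathrm{Jac}(B)$; in particular every maximal ideal $\mathfrak{m}\subseteq A$ contains $J$, and every maximal ideal of $B$ contains $JB$. The hypothesis that $A/J\to B/JB$ is faithfully flat then directly gives surjectivity $\mathrm{Max}(B)\to\mathrm{Max}(A)$: pick a prime of $B/JB$ over $\mathfrak{m}/J$ and lift along $B\to B/JB$. Combined with flatness, this is the standard criterion for faithful flatness, and then $\Spec(B)\to\Spec(A)$ is surjective.

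Sub-goal (i)---flatness of $A\to B$---is the heart of the matter, and constitutes the main obstacle. The input one has is: $A$ is $J$-adically complete with $J$ finitely generated, $B$ is $J$-adically complete as an $A$-algebra, and $A/J^n\to B/J^nB$ is flat for every $n$. In the Noetherian setting, the local criterion for flatness applied at each maximal ideal (all of which contain $J$) instantly upgrades the $J$-adic flatness to global flatness. In the non-Noetherian generality demanded by the statement, I would invoke the theory of weakly proregular ideals and derived complete modules---specifically, results in the spirit of \cite[Theorem~6.9]{YekutieliI} that underlie Lemma~\ref{lem:adically-and-completely-agree}---to bridge between the $J$-completely flat and genuinely flat notions. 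Since $J$ is finitely generated and both rings are $J$-adically complete, this machinery is expected to apply cleanly.

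As an alternative (more direct) avenue, avoiding the need to extract full flatness, one can attempt the following: for any prime $\mathfrak{p}\subseteq A$, reduce to showing $\mathfrak{p}B\cap A=\mathfrak{p}$. Given $s\in \mathfrak{p}B\cap A$, write $s=\sum_{i=1}^r \pi_ib_i$ with $\pi_i\in\mathfrak{p}$, reduce modulo $J^n$, and use the faithful flatness of $A/J^n\to B/J^nB$ (which yields $\bar{I}\cdot (B/J^nB)\cap (A/J^n)=\bar{I}$ for every ideal $\bar{I}$) to conclude $s\in \mathfrak{p}+J^n$ for every $n\geqslant 1$. The remaining step---showing $\bigcap_n(\mathfrak{p}+J^n)=\mathfrak{p}$, i.e.\ that $A/\mathfrak{p}$ is $J$-adically separated---is then the crux, and is precisely where the completeness of $B$ and the passage through $\widehat{A/\mathfrak{p}}\hookrightarrow\widehat{B/\mathfrak{p}B}$ (obtained from inverse limits of the injective maps $A/(\mathfrak{p}+J^n)\hookrightarrow B/(\mathfrak{p}B+J^nB)$) should be exploited. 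Either route converges on the same conceptual difficulty: leveraging the $J$-adic completeness of \emph{both} rings to promote a statement visible only modulo powers of $J$ to one visible on $\Spec$ itself.
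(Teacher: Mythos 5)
Neither of your two routes closes the argument, and both stall at the same decisive point. For Route 1, the assertion that $A\to B$ is flat is exactly what is \emph{not} available in this generality: weak proregularity of $J$ is not among the hypotheses, and even the machinery you cite (the Yekutieli results underlying Lemma \ref{lem:adically-and-completely-agree}) only relates ``$J$-completely flat'' to ``adically flat''; it does not upgrade either notion to honest flatness of $A\to B$. The remark preceding the proposition makes precisely this point: with Noetherian hypotheses one could conclude directly, but in this generality even joint surjectivity is unclear --- the entire purpose of Gabber's observation is to obtain surjectivity \emph{without} flatness. So Route 1 defers the whole difficulty to machinery that does not deliver it.

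Route 2 starts the same way as the paper's proof: reducing to $\mf{p}B\cap A=\mf{p}$ and deducing $\mf{p}B\cap A\subseteq\bigcap_n(\mf{p}+J^n)$ from faithful flatness modulo $J^n$ is correct. But the step you identify as ``the crux'' --- that $\bigcap_n(\mf{p}+J^n)=\mf{p}$, i.e.\ that $A/\mf{p}$ is $J$-adically separated --- is false in general for non-Noetherian complete rings, and exploiting the completeness of $B$ in the way you suggest only reproduces the containment you already have. The missing idea is Gabber's quadratic trick: for \emph{finitely generated} ideals $I_1,I_2$ of the complete ring $A$ one has $\ov{I_1}\cdot\ov{I_2}\subseteq I_1+I_2$, proved by writing $x=\sum_n x_n$ with $x_n\in I_1\cap J^n$ and $y=\sum_m y_m$ with $y_m\in I_2\cap J^m$, splitting $xy=\sum_{n\leqslant m}x_ny_m+\sum_{n>m}x_ny_m$, and regrouping the first sum over a finite generating set of $I_1$ and the second over one of $I_2$. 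After reducing to finitely generated sub-ideals, this yields $(\mf{p}B\cap A)^2\subseteq\mf{p}$, and primality of $\mf{p}$ then forces $\mf{p}B\cap A=\mf{p}$ even though the closure of $\mf{p}$ may be strictly larger than $\mf{p}$. Without this (or some substitute for it) the proof does not go through.
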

\begin{proof} For an ideal $I$ of $A$, let us denote by $I^\mr{ec}$ the ideal $f^{-1}(IB)$, and similarly for the ring maps $f_n\colon A/J^n\to B/J^nB$. Observe that by assumption that each $f_n$ is faithfully flat, we have that $I^\mr{ec}=I$ for any ideal $I\subseteq A/J^n$. Thus, if $I$ is an ideal of $A$ closed in the $J$-adic topology, then by passing to the limit we see that $I^\mr{ec}=I$. 
\vspace*{5 pt}

\noindent\textbf{Claim 1:} For a finitely generated ideal $I\subseteq A$, we have that $\ov{I}^2\subseteq I$, where $\ov I$ is the closure of $I$. 
\begin{proof} More generally we show that for finitely generated ideals $I_1$ and $I_2$ of $A$, the product of their closures is contained in $I_1+I_2$. If $x$ is in the closure of $I_1$ we can write $x=\sum_n x_n$ with $x_n$ in $I_1\cap J^n$. Similarly if $y$ is in the closure of $I_2$ then $y=\sum_n y_n$, with $y_n$ in $I_2\cap J^n$. Thus,
\begin{equation}\label{eq:Gabber-eq}
    xy = \sum_{n\leqslant m}x_n y_m+
\sum_{n>m}x_n y_m.
\end{equation} 
Let $f_k$ be a finite system of generators of $I_1$ and write $x_n=\sum_k x_{nk} f_k$. Then the first term on the right-hand side of \eqref{eq:Gabber-eq} is $\sum_k(\sum_m(\sum_{n\leqslant m}x_{nk}) y_m)f_k$, which is in $I_1$, and similarly the second term on the right-hand side of \eqref{eq:Gabber-eq} is in $I_2$.
\end{proof}

\vspace*{5 pt}

\noindent\textbf{Claim 2:} For every ideal $I$ of $A$, one has $(I^\mr{ec})^2\subseteq I$.
\begin{proof} One reduces to the case when $I$ is finitely generated, in which case we observe that
\begin{equation*}
    (I^\mr{ec})^2\subseteq (\ov{I}^\mr{ec})^2=\ov{I}^2\subseteq I,
\end{equation*}
the second equality by our initial observations, and the second containment by Claim 1.
\end{proof}

If $\mf{p}\subseteq A$ is prime, then from Claim 2 we have $(\mf{p}^\mr{ec})^2\subseteq \mf{p}$. As $\mf{p}$ is radical this implies that $\mf{p}^\mr{ec}\subseteq \mf{p}$ and thus $\mf{p}^\mr{ec}=\mf{p}$, and so $\mf{p}$ is in the image of $\Spec(B)\to\Spec(A)$. Indeed, $B_\mf{p}/\mf{p}B_\mf{p}$ is non-zero as $A_\mf{p}/\mf{p}=A_\mf{p}/f^{-1}(\mf{p}B)A_\mf{p}$ is non-zero and embeds into this ring via $f$.
\end{proof}

\medskip

\paragraph{A criterion to be a prismatic $F$-crystal} We now observe a simple criterion for when an analytic prismatic ($F$-)crystal comes from a prismatic ($F$-)crystal.
%when $\mf{X}$ is a base formal $\mc{O}_K$-scheme.

\begin{prop}\label{prop:suff-cond-for-being-in-ess-image} Let $\mf{X}$ be a base formal $\mc{O}_K$-scheme and let $\{\Spf(R_i)\}$ be an open cover of $\mf{X}$ where $R_i$ is a (formally framed) base $\mc{O}_K$-algebra. Then, the essential image of
\begin{equation*}
    \cat{Vect}(\mf{X}_\Prism)\to\cat{Vect}^{\mr{an}}(\mf{X}_\Prism),\qquad \cat{Vect}^{\varphi}(\mf{X}_\Prism)\to\cat{Vect}^{\mr{an},\varphi}(\mf{X}_\Prism),
\end{equation*}
consists of those $\mc{W}$ and those $(\mc{V},\varphi_\mc{V})$, such that $(j_{(\mf{S}_{R_i},(E))})_\ast \mc{W}_{(\mf{S}_{R_i},(E))}$ and $(j_{(\mf{S}_{R_i},(E))})_\ast\mc{V}_{(\mf{S}_{R_i},(E))}$ are vector bundles on $\mf{S}_{R_i}$ for all $i$, respectively.
\end{prop}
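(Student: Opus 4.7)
The ``only if'' direction is immediate. If $\mc{W}$ is a prismatic crystal in vector bundles extending $\mc{V}$, then $\mc{W}_{(\mf{S}_{R_i}, (E))}$ is a finite projective $\mf{S}_{R_i}$-module, which, by the regularity of $\mf{S}_{R_i}$ (Proposition \ref{prop:base-algebra-good-properties}) and the fact that $V(p, E) = V(p, u)$ has codimension at least $2$ in $\Spec \mf{S}_{R_i}$, equals $(j_{(\mf{S}_{R_i}, (E))})_* \mc{V}_{(\mf{S}_{R_i}, (E))}$; the same holds in the $F$-crystal setting.

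For the ``if'' direction, given $\mc{V}$ (or $(\mc{V}, \varphi_\mc{V})$) with the hypothesized property, I would define the candidate extension by $\mc{W}_{(A, I)} := (j_{(A, I)})_* \mc{V}_{(A, I)}$ for each $(A, I) \in \mf{X}_\Prism$. By Proposition \ref{prop:vector-bundle-limit-decomp}, it suffices to upgrade $\mc{W}$ to a prismatic crystal in vector bundles; the Frobenius structure, when present, will then be induced by pushing forward $\varphi_\mc{V}$, which is compatible with $j_\ast$ since $\mc{I}_\Prism$ is already invertible on $U$. We may reduce to the affine case $\mf{X} = \Spf R$ for $R$ a base $\mc{O}_K$-algebra.

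The core idea is to use descent along the covering $(\mf{S}_R, (E)) \to \ast$ in $\cat{Shv}((\Spf R)_\Prism)$ guaranteed by Proposition \ref{prop:small-open-cover-topos-cover}. By hypothesis, $M := \mc{W}_{(\mf{S}_R, (E))}$ is a vector bundle on $\mf{S}_R$. For the iterated self-products $(\mf{S}_R^{(n)}, (E))$ in the \v{C}ech nerve of this cover (where $\mf{S}_R^{(n)}$ denotes the $(n+1)$-fold completed coproduct in bounded prisms over $R$), one shows that $\mf{S}_R^{(n)}$ inherits regularity and depth $\geq 2$ along $V(p, E)$ from $\mf{S}_R$; consequently $(j_{(\mf{S}_R^{(n)}, (E))})_* \mc{V}_{(\mf{S}_R^{(n)}, (E))}$ is a vector bundle, identifiable with the base change of $M$ along $\mf{S}_R \to \mf{S}_R^{(n)}$ via the analytic crystal property of $\mc{V}$. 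The descent isomorphisms and cocycle condition are encoded on the analytic loci by $\mc{V}$ and extend uniquely across $V(p, E)$ by the depth condition, producing descent data for $M$. Applying effective descent for vector bundles on the prismatic site then yields a prismatic crystal $\mc{W}$ on $(\Spf R)_\Prism$ with $\mc{W}_{(\mf{S}_R, (E))} = M$. Fully faithfulness of $\cat{Vect}((\Spf R)_\Prism) \hookrightarrow \cat{Vect}^\an((\Spf R)_\Prism)$ identifies $\mc{W}$ with an extension of $\mc{V}$, and the various affine pieces glue on overlaps by uniqueness of the $j_*$-extension.

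The principal obstacle is the control of the self-products $\mf{S}_R^{(n)}$: showing that they are regular of sufficient depth along $V(p, E)$ to ensure $j_*$ behaves well, and that the map $\mf{S}_R \to \mf{S}_R^{(n)}$ is $(p, E)$-completely flat in a form that makes the base change of a finite projective module still agree with the pushforward from the analytic locus. Once this depth and flatness propagation is in hand, the remainder of the argument (the descent of vector bundles, descent data, and cocycle) proceeds by standard formalism, and the Frobenius simply inherits via functoriality.
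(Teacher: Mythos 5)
Your overall strategy is the same as the paper's: reduce to $\mf{X}=\Spf(R)$, descend along the cover $\{(\mf{S}_R,(E))\to\ast\}$ from Proposition \ref{prop:small-open-cover-topos-cover}, and use an $S_2$-type extension property to transport the descent data that $\mc{V}$ carries on the analytic loci of the \v{C}ech nerve to descent data for the vector bundle $M=(j_{(\mf{S}_R,(E))})_\ast\mc{V}_{(\mf{S}_R,(E))}$ on the full spectra. The ``only if'' direction and the handling of the Frobenius are also as in the paper.

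However, the step you yourself single out as the principal obstacle is left unresolved, and the route you propose for it would fail. The self-products $\mf{S}_R^{(n)}$ are prismatic-envelope--type rings (cf.\ \cite[Example 3.4]{DLMS}); they do not inherit regularity from $\mf{S}_R$ and are not Noetherian in general, so ``regularity of $\mf{S}_R^{(n)}$'' is not available. The input one actually has --- and the one the paper uses --- is that the projections $p_k^{(i)}\colon \mf{S}_R\to\mf{S}_R^{(i)}$ are \emph{flat}, which is a genuine theorem about these envelopes (\cite[Lemma 3.5]{DLMS}), not a formal consequence. Flatness suffices: since $(p,E)$ has height $2$ in the regular ring $\mf{S}_R$, flat base change propagates the codimension/depth condition to $(p,E)\subseteq\mf{S}_R^{(i)}$, which yields both the full faithfulness of restriction to $U(\mf{S}_R^{(i)},(E))$ and the identification $(j^{(i)})^\ast(p_k^{(i)})^\ast j^{(1)}_\ast\mc{V}_{(\mf{S}_R,(E))}\simeq (p_k^{(i)})^\ast\mc{V}_{(\mf{S}_R,(E))}$ via Proposition \ref{prop:affine-sheaf-adjunction}; without this your descent data cannot be written down. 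Two smaller points: your opening definition $\mc{W}_{(A,I)}\defeq (j_{(A,I)})_\ast\mc{V}_{(A,I)}$ for \emph{every} prism is not usable (for general $(A,I)$ this pushforward need not be a vector bundle nor satisfy the crystal property), and indeed you abandon it --- the extension should be produced only by descent from the Breuil--Kisin prisms, its values elsewhere being determined a posteriori by the crystal property; and the effectivity of descent for vector bundles on the punctured spectra along these large flat maps is itself not ``standard formalism'' but rests on \cite{Mathew} (cf.\ \cite[Theorem 2.2]{BhattScholzeCrystals}).
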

\begin{proof} It suffices to prove the first claim. Moreover, we are clearly reduced to the case when $\mf{X}=\Spf(R)$.  For $i=1,2,3$, let $\mf{S}_R^{(i)}$ be the $i$-fold self-product of $\mf{S}_R$ in the topos $\cat{Shv}(\mf{X}_\Prism)$ (see \cite[Example 3.4]{DLMS}), and let $p^{(i)}_k$ for $k=1,\ldots, i$ denote the projection maps, which are flat by \cite[Lemma 3.5]{DLMS}. Let $j^{(i)}$, for $i=1,2,3$ denote the inclusion of $U(\mf{S}_R^{(i)},(E))$ into $\Spec(\mf{S}_R^{(i)})$. Observe that we have the $2$-commutative diagram of categories.
\begin{equation*}
    \begin{tikzcd}[column sep=small]
	{\cat{Vect}(\mf{X}_\Prism)} & {\mathbf{Vect}(\mf{S}_{R})} & {\mathbf{Vect}(\mathfrak{S}_R^{(2)})} & {\mathbf{Vect}(\mathfrak{S}_R^{(3)})}\\
    {\mathbf{Vect}^\mathrm{an}(\mathfrak{X}_\Prism)} & {\mathbf{Vect}(U(\mathfrak{S}_R,(E)))} & {\mathbf{Vect}(U(\mathfrak{S}_R^{(2)},(E)))} & {\mathbf{Vect}(U(\mathfrak{S}_R^{(3)},(E))).}
	\arrow[from=1-1, to=1-2]
	\arrow[shift left, from=1-2, to=1-3]
	\arrow[shift left=2, from=1-3, to=1-4]
	\arrow[from=2-1, to=2-2]
	\arrow[shift left, from=2-2, to=2-3]
	\arrow[shift left=2, from=2-3, to=2-4]
	\arrow[from=1-1, to=2-1]
	\arrow["(j^{(1)})^\ast",from=1-2, to=2-2]
	\arrow["(j^{(2)})^\ast",from=1-3, to=2-3]
	\arrow["(j^{(3)})^\ast",from=1-4, to=2-4]
	\arrow[shift right, from=1-2, to=1-3]
	\arrow[shift right, from=2-2, to=2-3]
	\arrow[from=1-3, to=1-4]
	\arrow[shift right=2, from=1-3, to=1-4]
	\arrow[from=2-3, to=2-4]
	\arrow[shift right=2, from=2-3, to=2-4]
\end{tikzcd}
\end{equation*}
The top row is obviously exact, and the bottom row is exact by \cite{Mathew} (cf.\@ \cite[Theorem 2.2]{BhattScholzeCrystals}). Observe that, by inspection, $(p,(E))$ has height $2$ in $\mf{S}_R$, and thus by the flatness of $p_k^{(i)}$, the same holds for $(p,(E))$ in $\mf{S}^{(i)}$ for $i=1,2,3$. Thus, we observe that the maps $(j^{(i)})^\ast$ are fully faithful by Proposition \ref{prop:affine-sheaf-adjunction}. Suppose now that $j^{(1)}_\ast \mc{W}_{(\mf{S}_R,(E))}$ is a vector bundle. Let us observe that 
\begin{equation*} 
(j^{(i)})^\ast (p_k^{(i)})^\ast j^{(1)}_\ast \mc{W}_{(\mf{S}_R,(E))}\simeq (p_k^{(i)})^\ast \mc{W}_{(\mf{S}_R,(E))},
\end{equation*}
for each $i=1,2,3$ and $k=1,\ldots,i$, again by Proposition \ref{prop:affine-sheaf-adjunction}. Thus, by the fully faithfulness of the maps $(j^{(i)})^\ast$, we may pull back the descent data on $\mc{W}_{(\mf{S}_R,(E))}$ relative to the covering $\{p_k^{(2)}\colon U(\mf{S}_R^{(2)},(E))\to U(\mf{S}_R,(E))\}_{k=1,2}$ to descent data on $j^{(1)}_\ast \mc{W}_{(\mf{S}_R,(E))}$ relative to the covering $\{p_k^{(2)}\colon \Spec(\mf{S}_R^{(2)})\to \Spec(\mf{S}_R)\}_{k=1,2}$. This gives an object $\mc{F}$ of $\cat{Vect}(\mf{X}_\Prism)$ whose image in $\cat{Vect}^{\mr{an}}(\mf{X}_\Prism)$ is isomorphic to $\mc{W}$.

We have shown that anything satisfying the condition on the pushforward in the proposition statement is in the essential image of $\cat{Vect}(\mf{X}_\Prism)\to\cat{Vect}^\an(\mf{X}_\Prism)$. Conversely, if $\mc{W}$ is in the essential image, then this pushforward condition holds again by applying Proposition \ref{prop:affine-sheaf-adjunction}.
\end{proof}

% \begin{rem} A more succinct (but potentially opaque) way of phrasing the argument for Proposition \ref{prop:suff-cond-for-being-in-ess-image} is the following, where we use the obvious extension of the notation from the proof of Proposition \ref{prop:suff-cond-for-being-in-ess-image}. Consider the topos theoretic \v{C}ech nerve, $\mf{S}_R^{(\bullet)}$, then we have that $\cat{Vect}(\mf{X}_\Prism)= \twolim \cat{Vect}(\mf{S}_R^{(\bullet)})$. If $j^{(1)}_\ast \mc{W}_{(\mf{S}_R,(E))}$ is a vector bundle, then the object $j_\ast\mc{W}$ of $\cat{Vect}(\mf{X}_\Prism)$ may be defined to be the object $(j^{(i)}_\ast \mc{W}_{(\mf{S}_R^{(i)},(E))})$ of $\twolim \cat{Vect}(\mf{S}_R^{(\bullet)})$. Implicitly here we are claiming that the functors $j^{(i)}_\ast$ and $(p_k^{(i)})^\ast$ commute, which follows by flat base change (see \stacks{02KH}) as each $p_k^{(i)}$ is flat. 

% That said, we observe that we also have that $\cat{Vect}(\mf{X}_\Prism)=\twolim_{(A,I)\in\mf{X}_\Prism}\cat{Vect}(A)$, but the system $((j_{(A,I)})_\ast \mc{W}_{(A,I)})$ is not an object of this two-limit category, as the terms will not necessarily satisfy compatibility in $(A,I)$, as the transition maps $(A,I)\to (B,J)$ need not be flat or, more importantly, need not satisfy the above-mentioned pushforward-pullback commtuativity. In particular, we observe that $(j_\ast\mc{W})_{(A,I)}$ does not necessarily equal $(j_{(A,I)})_\ast \mc{W}_{(A,I)}$. 
% \end{rem}

\subsection{Tannakian theory} We now show the category of prismatic $F$-crystals (and quasi-syntomic $F$-torsors, see \S\ref{sss:qsyn-torsor}) possesses good Tannakian properties in the sense of Appendix \ref{s:Tannakian-appendix}. We advise the reader to consult there for undefined terminology or notation, and to recall Notation \ref{nota:Section-1}.

\subsubsection{Prismatic \texorpdfstring{$F$-crystals}{F-crystals}}\label{sss:Tannakian-prismatic-F-crystals}

We begin by shortening some notation from \S\ref{section:torsors via tensors}:
\begin{equation*}
      \mc{G}_\Prism\defeq \mc{G}_{\mc{O}_\Prism},\qquad \cat{Tors}_\mc{G}(\mf{X}_\Prism/T)\defeq \cat{Tors}_{\mc{G}_\Prism}(\mf{X}_\Prism/T).
\end{equation*} 
Combining Proposition \ref{prop:vector-bundle-limit-decomp} and Theorem \ref{thm:broshi2}, we see that $\mc{G}_\Prism$ is reconstructible in $(\mf{X}_\Prism/T,\mc{O}_\Prism)$, and every object of $\GVect(\mf{X}_\Prism/T,\mc{O}_\Prism)$ is locally trivial.

\begin{rem}\label{rem: need general T to define local triviality}
    Although our main interest is the case where $T=\ast$, to make sense of the notion of local triviality, it is natural to include the case of general sheaves $T$, at least that of the representable ones. 
\end{rem}

Set $\phi\colon \mc{G}_\Prism\to\mc{G}_\Prism$ to be the morphism of group sheaves associating to every $(A,I)$ the map $\mc{G}(A)\to\mc{G}(A)$ obtained from $\phi_A\colon A\to A$. For an object $\mc{A}$ of $\cat{Tors}_\mc{G}(\mf{X}_\Prism/T)$, denote by $\phi^\ast\mc{A}$ the $\mc{G}_\Prism$-torsor $\phi_\ast\mc{A}$, in the notation of \S \ref{ss:basic-definitions-and-results}. Denote by $\mc{G}_\Prism[\nicefrac{1}{\mc{I}_\Prism}]$ the group sheaf $\mc{G}_{\mc{O}_\Prism[\nicefrac{1}{\mc{I}_\Prism}]}$ and let $\iota\colon \mc{G}_\Prism\to\mc{G}_\Prism[\nicefrac{1}{\mc{I}_\Prism}]$ be the obvious monomorphism. Finally, set $\mc{A}[\nicefrac{1}{\mc{I}_\Prism}]$ to be $\iota_\ast \mc{A}$. 

\begin{defn}
    The category $\cat{Tors}_{\mc{G}}^\varphi(\mf{X}_\Prism/T)$  of \emph{prismatic $\mc{G}$-torsors with $F$-structure} over $T$ has objects $(\mc{A},\varphi_\mc{A})$ where $\mc{A}$ is an object of $\cat{Tors}_{\mc{G}}(\mf{X}_\Prism/T)$ and $\varphi$ is a \emph{Frobenius isomorphism}
    \begin{equation*}
        \varphi_\mc{A}\colon (\phi^\ast\mc{A})[\nicefrac{1}{\mc{I}_\Prism}]\isomto \mc{A}[\nicefrac{1}{\mc{I}_\Prism}],
    \end{equation*}
and morphisms are morphisms in $\cat{Tors}_{\mc{G}}(\mf{X}_\Prism/T)$ commuting with the Frobenii.
\end{defn}

Objects of $\GVect^\varphi(\mf{X}_\Prism/T)$ are pairs $(\omega,\varphi_\omega)$ with $\omega$ an object of $\GVect(\mf{X}_\Prism/T,\mc{O}_\Prism)$ and $\varphi_\omega\colon (\phi^\ast\omega)[\nicefrac{1}{\mc{I}_\Prism}]\isomto \omega[\nicefrac{1}{\mc{I}_\prism}]$ an isomorphism in $\GVect^\lt(\mf{X}_\Prism/T,\mc{O}_\Prism[\nicefrac{1}{\mc{I}_\Prism}])$. For an object $\mc{A}$ in $\cat{Tors}_\mc{G}(\mf{X}_\Prism/T)$ there is a natural identification $\phi^\ast\omega_\mc{A}\simeq \omega_{\phi^\ast\mc{A}}$. So, for an object $(\mc{A},\varphi_\mc{A})$ of $\cat{Tors}^\varphi_\mc{G}(\mf{X}_\Prism/T)$ there is a natural Frobenius $\varphi_{\omega_\mc{A}}$, and $(\omega_\mc{A},\varphi_{\omega_\mc{A}})$ is an object of $\GVect^\varphi(\mf{X}_\Prism/T)$. 

On the other hand, define $\cat{Twist}_{\mc{O}_\Prism|_T}^\varphi(\Lambda_0,\mathds{T}_0)$ to be the groupoid consisting of pairs $((\mc{E},\varphi_\mc{E}),\mathds{T})$ where $(\mc{E},\varphi_\mc{E})$ is an object of $\cat{Vect}^\varphi(\mf{X}_\Prism/T)$, the pair $(\mc{E},\mathds{T})$ is an object of $\cat{Twist}_{\mc{O}_\Prism|_T}(\Lambda_0,\mathds{T}_0)$, and $\mathds{T}$ constitutes a set of tensors on $(\mc{E},\varphi_\mc{E})$ or, equivalently, each tensor in $\mathds{T}$ is \emph{Frobenius invariant}: fixed under the composition $\mc{E}\to \phi^\ast\mc{E}\xrightarrow{\varphi_\mc{E}}\mc{E}$, where the first map sends $x$ to $x\otimes 1$. 

Finally, observe that there is a natural identification 
\begin{equation*}
    \phi^\ast \underline{\Isom}(\mc{O}_\Prism^n,\mc{E})\isomto\underline{\Isom}(\mc{O}_\Prism^n,\phi^\ast\mc{E}),
\end{equation*} 
for an object $\mc{E}$ of $\cat{Vect}(\mf{X}_\Prism/T,\mc{O}_\Prism)$. Moreover, there is an identification between $\underline{\Isom}(\mc{O}_\Prism^n,\mc{E})[\nicefrac{1}{\mc{I}_\Prism}]$ and $\underline{\Isom}(\mc{O}_\Prism[\nicefrac{1}{\mc{I}_\Prism}]^n, \mc{E}[\nicefrac{1}{\mc{I}_\Prism}])$. So, if $((\mc{E},\varphi_\mc{E}),\mathds{T})$ is an object of $\cat{Twist}_{\mc{O}_\Prism|_T}^\varphi(\Lambda_0,\mathds{T}_0)$, then  the $\mc{G}_\Prism$-torsor $\underline{\Isom}((\Lambda_0\otimes_{\Z_p}\mc{O}_\Prism,\mathds{T}_0\otimes 1),(\mc{E},\mathds{T}))$ has a natural Frobenius also denoted $\varphi_\mc{E}$.

Our main Tannakian result concerning prismatic $F$-crystals is the following, where for a prism $(A,I)$ the category $\cat{Tors}^\varphi_\mc{G}(A,I)$ has the obvious meaning.

\begin{prop}\label{prop:varphi-equivariant-GVect-and-tors-identification} The natural functor 
\begin{equation*}
    \cat{Tors}^\varphi_\mc{G}(\mf{X}_\Prism/T)\to\twolim_{\scriptscriptstyle (A,I)\in\mf{X}_\Prism/T}\cat{Tors}^\varphi_\mc{G}(A,I),
\end{equation*}
is an equivalence. Moreover, we have a commuting triangle of equivalences
\begin{equation*}
\labelmargin-{-2pt}\xymatrixcolsep{6.5pc}\xymatrixrowsep{2.5pc}\xymatrix{\cat{Twist}^\varphi_{\mc{O}_\Prism|_T}(\Lambda_0,\mathds{T}_0)\ar[rr]^-{((\mc{E},\varphi_\mc{E}),\mathds{T})\mapsto (\underline{\Isom}((\Lambda_0\otimes_{\Z_p}\mc{O}_\Prism,\mathds{T}_0\otimes 1),(\mc{E},\mathds{T})),\varphi_\mc{E})} & & \cat{Tors}^\varphi_{\mc{G}}(\mf{X}_\Prism/T)\ar[d]^{(\mc{A},\varphi_\mc{A})\mapsto (\omega_\mc{A},\varphi_{\omega_\mc{A}})}\\ & & \GVect^\varphi(\mf{X}_\Prism/T).\ar[ull]^{(\omega,\varphi_\omega)\mapsto ((\omega(\Lambda_0),\varphi_\omega),\omega(\mathds{T}_0))\qquad\quad}}
\end{equation*}
\end{prop}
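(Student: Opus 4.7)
The plan is to bootstrap the Frobenius-equivariant statements from their non-equivariant counterparts (the Tannakian/torsor framework of \S\ref{section:torsors via tensors}, combined with Proposition~\ref{prop:vector-bundle-limit-decomp} and Theorem~\ref{thm:broshi2}), by showing that the Frobenius structure transports functorially along the previously established equivalences. All three functors in the triangle are straightforward enhancements of the non-$\varphi$ ones, so the task is essentially a bookkeeping exercise verifying that Frobenius data corresponds bijectively on all three sides.

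For the first claim, one first observes that the non-equivariant analogue $\cat{Tors}_{\mc{G}}(\mf{X}_\Prism/T)\isomto \twolim_{(A,I)}\cat{Tors}_{\mc{G}}(A,I)$ holds because $\mc{G}_\Prism$ is reconstructable in $(\mf{X}_\Prism/T,\mc{O}_\Prism)$ and every object of $\GVect(\mf{X}_\Prism/T,\mc{O}_\Prism)$ is locally trivial, so the equivalence follows by passing to the groupoid of twists and applying Proposition~\ref{prop:vector-bundle-limit-decomp} term-by-term. Since the functors $\phi^\ast$ and $(-)[\nicefrac{1}{\mc{I}_\Prism}]$ commute with restriction to any $(A,I)$ (by definition of $\mc{O}_\Prism$ and $\mc{I}_\Prism$), giving a Frobenius isomorphism $\varphi_\mc{A}\colon(\phi^\ast\mc{A})[\nicefrac{1}{\mc{I}_\Prism}]\isomto \mc{A}[\nicefrac{1}{\mc{I}_\Prism}]$ on the left-hand side is the same as giving a compatible system of Frobenius isomorphisms $\varphi_{\mc{A}|_{(A,I)}}$ on the right-hand side. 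This identification upgrades the non-equivariant equivalence to the $\varphi$-equivariant one.

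For the commuting triangle, the non-$\varphi$ triangle of equivalences is given by Theorem~\ref{thm:broshi2} applied to $(\mf{X}_\Prism/T,\mc{O}_\Prism)$, so it suffices to verify that each vertex's Frobenius data corresponds naturally under the equivalences. For the functor $(\omega,\varphi_\omega)\mapsto((\omega(\Lambda_0),\varphi_\omega),\omega(\mathds{T}_0))$, Frobenius invariance of the tensors $\omega(\mathds{T}_0)$ follows because the $\mc{G}$-invariant tensors $\mathds{T}_0$ are killed by the trivial Frobenius on $\Lambda_0$, hence the functor $\omega$ transports this to Frobenius-invariance of $\omega(\mathds{T}_0)$. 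For $((\mc{E},\varphi_\mc{E}),\mathds{T})\mapsto (\underline{\Isom}((\Lambda_0\otimes_{\Z_p}\mc{O}_\Prism,\mathds{T}_0\otimes 1),(\mc{E},\mathds{T})),\varphi_\mc{E})$, the Frobenius structure on the Isom-torsor is induced by the natural identification $\phi^\ast\underline{\Isom}(\mc{O}_\Prism^n,\mc{E})\simeq\underline{\Isom}(\mc{O}_\Prism^n,\phi^\ast\mc{E})$ recalled just before the proposition, and the preservation of the tensors $\mathds{T}$ cuts out the $\mc{G}_\Prism$-subtorsor. For $(\mc{A},\varphi_\mc{A})\mapsto(\omega_\mc{A},\varphi_{\omega_\mc{A}})$, one uses the identification $\phi^\ast\omega_\mc{A}\simeq\omega_{\phi^\ast\mc{A}}$ to obtain the induced Frobenius on the fiber functor. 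In each case, functoriality of these natural identifications gives a bijection between Frobenius structures on the source object and on the target object, so enhancing the vertices of the non-$\varphi$ triangle with Frobenius data preserves the equivalences.

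Commutativity of the triangle on the underlying objects is already established by Theorem~\ref{thm:broshi2}, so we only need to trace that the three Frobenius structures assigned to an object by going around the triangle coincide under the canonical isomorphisms identifying the underlying objects. This amounts to checking, on a representative $(\omega,\varphi_\omega)$, that the Frobenius induced on $\omega_\mc{A}$ via $\mc{A}=\underline{\Isom}((\Lambda_0\otimes_{\Z_p}\mc{O}_\Prism,\mathds{T}_0\otimes 1),(\omega(\Lambda_0),\omega(\mathds{T}_0)))$ equals $\varphi_\omega$ on the tautological representation, which is immediate by unwinding definitions. The main obstacle is purely organizational: keeping track of the two auxiliary natural isomorphisms $\phi^\ast\omega_\mc{A}\simeq\omega_{\phi^\ast\mc{A}}$ and $\phi^\ast\underline{\Isom}(\mc{O}_\Prism^n,\mc{E})\simeq\underline{\Isom}(\mc{O}_\Prism^n,\phi^\ast\mc{E})$, and verifying that they intertwine the inversion of $\mc{I}_\Prism$ and the Tannakian constructions in a sufficiently coherent way.
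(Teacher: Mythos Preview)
Your proposal is essentially correct and follows the same high-level strategy as the paper: reduce the $\varphi$-equivariant statements to their non-equivariant analogues via the Tannakian triangle (Proposition~\ref{prop:torsors-twists-functors}, not Theorem~\ref{thm:broshi2}, which is stated only for schemes), and then observe that Frobenius structures transport functorially. The paper phrases this reduction identically, invoking Proposition~\ref{prop:torsors-twists-functors} for both $\mc{G}_\Prism$ and $\mc{G}_\Prism[\nicefrac{1}{\mc{I}_\Prism}]$ together with Proposition~\ref{prop:vector-bundle-limit-decomp}.

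The one genuine difference lies in how the non-equivariant equivalence $\cat{Tors}_{\mc{G}}(\mf{X}_\Prism/T)\isomto \twolim_{(A,I)}\cat{Tors}_\mc{G}(A)$ is obtained. You deduce it from reconstructability, local triviality, and the bi-exact $\otimes$-equivalence of Proposition~\ref{prop:vector-bundle-limit-decomp} (passing through $\GVect$ and twists). The paper instead proves it directly as Proposition~\ref{prop:torsor-limit-decomp}, by introducing the ancillary site $\mf{X}_{\Prism,\et}$ and comparing torsors on $\mf{X}_\Prism/(A,I)$, $\mf{X}_{\Prism,\et}/(A,I)$, $\Spf(A)_\fl$, and $\Spf(A)_\et$ via Corollaries~\ref{cor:flat-and-etale-torsors-agree} and~\ref{cor:torsor-equiv-continuous-morphism}. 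Your route is more economical but leans on the assertion (made just before the proposition) that every object of $\GVect(\mf{X}_\Prism/T,\mc{O}_\Prism)$ is locally trivial; justifying that assertion ultimately requires the same input---that \'etale covers of $\Spf(A)$ lift to covers in $\mf{X}_\Prism$ (Proposition~\ref{prop:etale-prism-extension})---which the paper instead builds explicitly into the proof of Proposition~\ref{prop:torsor-limit-decomp}. So the two arguments are really repackagings of the same content, with the paper's version making the site-theoretic step visible rather than absorbing it into the local-triviality hypothesis.
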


Results from Appendix \ref{s:Tannakian-appendix} (notably Proposition \ref{prop:torsors-twists-functors} applied to both $\mc{G}_\Prism$ and $\mc{G}_\Prism[\nicefrac{1}{\mc{I}_\Prism}]$) and Proposition \ref{prop:vector-bundle-limit-decomp} reduces the proof of this statement to the claim that the natural functor 
\begin{equation*}
    \cat{Tors}_{\mc{G}}(\mf{X}_\Prism/T)\to \twolim_{\scriptscriptstyle(A,I)\in\mf{X}_\Prism/T}\cat{Tors}_\mc{G}(A)
\end{equation*}
is an equivalence.

To prove this, we introduce an ancillary site. Denote by $\mf{X}_{\Prism,\et}$ the subcategory of $\mf{X}_\Prism$ with the same objects as $\mf{X}_\Prism$ and with those morphisms $(A,I)\to (B,J)$ with $\Spf(B)\to\Spf(A)$ adically \'etale, with the induced topology. By Proposition \ref{prop:etale-prism-extension} (and \stacks{00X5}), the functor $\mf{X}_{\Prism,\et}\to\mf{X}_\Prism$ induces a morphism of sites, and there are equivalences of sites $\mf{X}_{\Prism,\et}/(A,I)\isomto \Spf(A)_\et$.

\begin{prop}\label{prop:torsor-limit-decomp}
The following restriction functors are equivalences of categories 
\begin{equation*}
    \cat{Tors}_{\mc{G}}(\mf{X}_\Prism/T)\to \cat{Tors}_{\mc{G}}(\mf{X}_{\Prism,\et}/T)\to \twolim_{\scriptscriptstyle(A,I)\in\mf{X}_\Prism/T}\cat{Tors}_\mc{G}(A).
\end{equation*}
\end{prop}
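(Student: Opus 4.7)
My plan is to handle the two restriction functors in turn. For the second functor $\cat{Tors}_{\mc{G}}(\mf{X}_{\Prism,\et}/T)\to \twolim_{(A,I)\in\mf{X}_\Prism/T}\cat{Tors}_\mc{G}(A)$, I would build on the equivalence of sites $\mf{X}_{\Prism,\et}/(A,I)\simeq \Spf(A)_\et$ recorded just before the statement, which rests on Proposition \ref{prop:etale-prism-extension}. Evaluation on slices defines the comparison functor, and an essential inverse is obtained by gluing, which is possible because torsors under the sheaf of groups $\mc{G}_\Prism$ form a stack on $\mf{X}_{\Prism,\et}$ and the objects of $\mf{X}_{\Prism,\et}/T$ constitute a generating family. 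The identification of $\mc{G}$-torsors on $\Spf(A)_\et$ with $\cat{Tors}_\mc{G}(A)$, the latter defined via the flat topology on $\Spf(A)$ as in \S\ref{ss:torsors-and-vb-on-formal-schemes}, is a consequence of smoothness of $\mc{G}$, under which the two notions coincide.

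For the first functor $\cat{Tors}_{\mc{G}}(\mf{X}_\Prism/T)\to \cat{Tors}_{\mc{G}}(\mf{X}_{\Prism,\et}/T)$, I would observe that the inclusion $\mf{X}_{\Prism,\et}\hookrightarrow\mf{X}_\Prism$ is identity on objects and only refines the topology from adically étale to adically flat. Restriction is fully faithful since morphisms of sheaves are determined by their values on objects, which the two sites share. For essential surjectivity, given a torsor $\mc{A}$ on $\mf{X}_{\Prism,\et}/T$, one sheafifies with respect to the finer topology on $\mf{X}_\Prism$; the resulting sheaf remains a $\mc{G}_\Prism$-torsor because any descent datum along an adically flat cover of a bounded prism can be refined by an adically étale cover, as smooth affine group torsors are étale-locally trivial.

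The hardest step is the essential surjectivity of the first functor, which encapsulates flat descent of torsors under a smooth affine group scheme on the formal spectra of bounded prisms. Once that input (supplied by the appendix's torsor machinery) is granted, the rest of the argument is formal: the stacky property of torsors and the identification of slices with small étale sites of formal spectra deliver both equivalences. The structure of the proof parallels that of Proposition \ref{prop:vector-bundle-limit-decomp} for vector bundles, with the smoothness of $\mc{G}$ playing the role that the vanishing of higher cohomology of $\mc{O}_\Prism$ on representables played there.
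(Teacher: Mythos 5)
Your overall strategy matches the paper's (étale-local triviality of torsors under a smooth group, plus the slice-site identification $\mf{X}_{\Prism,\et}/(A,I)\simeq\Spf(A)_\et$ and an explicit inverse to evaluation), and your treatment of the second functor is essentially the paper's: the quasi-inverse sends a compatible family $(\mc{A}_{(A,I)})$ to the presheaf $(A,I)\mapsto\mc{A}_{(A,I)}(A)$, which one then checks is locally isomorphic to $\mc{G}_\Prism$. But your argument for the first functor has two genuine gaps. First, $\mf{X}_{\Prism,\et}$ has the same objects as $\mf{X}_\Prism$ but strictly fewer morphisms, so a sheaf on $\mf{X}_{\Prism,\et}$ is not even a presheaf on $\mf{X}_\Prism$ (it carries no restriction maps along non-étale morphisms of prisms); hence ``sheafifying with respect to the finer topology'' is not an available operation, and ``morphisms of sheaves are determined by their values on objects'' is false as a justification for full faithfulness, since naturality along the extra morphisms must be supplied. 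The paper instead runs everything through the adjunction $(u_\ast,u^\ast)$ of Corollary \ref{cor:torsor-equiv-continuous-morphism}, where $u^\ast=\epsilon_\ast\circ u^{-1}$ is built from a colimit over comma categories rather than sheafification.

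Second, and more substantively, the key claim that a prismatic flat torsor is étale-locally trivial cannot be deduced directly from ``smooth affine group torsors are étale-locally trivial'': that statement lives on $\Spf(A)_\fl$, whereas the slice $\mf{X}_\Prism/(A,I)$ is a different site whose objects are prisms over $(A,I)$, not arbitrary adic $\Spf(A)$-schemes. One must first transport the torsor along $\mf{X}'_\Prism/(A,I)\to\Spf(A)'_\fl$, apply Corollary \ref{cor:flat-and-etale-torsors-agree} there, and then lift the trivializing étale cover back to a prism via Proposition \ref{prop:etale-prism-extension}. This three-step comparison --- the commutative diagram of six torsor categories and the diagram chase showing $\mu_1^\ast$ is essentially surjective --- is the heart of the paper's proof and is absent from your proposal; you invoke Proposition \ref{prop:etale-prism-extension} only for the étale slice-site equivalence, not for closing this loop on the flat side.
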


\begin{proof} Fix an object $(A,I)$ of $\mf{X}_\Prism$ and set $\mf{X}'_{\Prism}/(A,I)$ to be the full subcategory of $\mf{X}_\Prism/(A,I)$ consisting of covers $(A,I)\to (B,J)$, with the induced topology. We use this notation in a similar way for other sites. The naturally defined functor $\mu_1\colon \mf{X}'_{\Prism}/(A,I)\to  \Spf(A)'_{\fl}$ is continuous and has exact pullback by \stacks{00X6} and \cite[Lemma 3.3]{DLMS}, and we have the equality $\mc{O}_{\mf{X}_\Prism/(A,I)}=\mu_{1\ast}(\mc{O}_{\Spf(A)})$. The same properties hold mutatis mutandis for $\mu_3\colon \mf{X}'_{\Prism,\et}\to\Spf(A)_\et'$. From the following commutative diagram
\begin{equation*}
    \xymatrixcolsep{1.3pc}\xymatrixrowsep{1.3pc}\xymatrix{\mf{X}'_{\Prism}/(A,I)\ar[r]^-{\mu_1}& \Spf(A)'_{\fl}\ar[r]^-{\mu_2} & \Spf(A)_\fl \\ \mf{X}'_{\Prism,\et}/(A,I)\ar[r]_-{\mu_3}\ar[u]_-{\nu_1} & \Spf(A)'_{\et}\ar[r]_-{\mu_4}\ar[u]_-{\nu_2} & \Spf(A)_{\et}\ar[u]_-{\nu_3},}
\end{equation*}
(where the vertical functors are the natural inclusions), we obtain the diagram
\begin{equation*}
  \xymatrixcolsep{1.3pc}\xymatrixrowsep{1.3pc}\xymatrix{\cat{Tors}_{\mc{G}_\Prism}(\mf{X}'_{\Prism}/(A,I))\ar[r]^-{\mu_1^\ast}& \cat{Tors}_\mc{G}(\Spf(A)'_{\fl})\ar[r]^-{\mu_2^\ast} & \cat{Tors}_\mc{G}(\Spf(A)_\fl) \\ \cat{Tors}_{\mc{G}_\Prism}(\mf{X}'_{\Prism,\et}/(A,I))\ar[r]_-{\mu_3^\ast}\ar[u]_{\nu_1^\ast} & \cat{Tors}_\mc{G}(\Spf(A)'_{\et})\ar[r]_{\mu_4^\ast}\ar[u]_{\nu_2^\ast} & \cat{Tors}_\mc{G}(\Spf(A)_{\et})\ar[u]_{\nu_3^\ast}.}
\end{equation*}
Note that $\mu_4^\ast$ and $\mu_2^\ast$ are evidently equivalences, $\nu_2^\ast$ and $\nu_3^\ast$ are equivalences by Corollary \ref{cor:flat-and-etale-torsors-agree}, and $\mu_3^\ast$ is an equivalence by above discussion. By a diagram chase we see that $\mu_1^\ast$ is essentially surjective and so it is an equivalence by Corollary \ref{cor:torsor-equiv-continuous-morphism}. Thus, $\nu_1^\ast$ is an equivalence. We deduce that for any object of $\cat{Tors}_\mc{G}(\mf{X}_\Prism/T)$, its restriction to $(A,I)$ can be trivialized on an \'etale cover, and thus the restriction  $\cat{Tors}_{\mc{G}}(\mf{X}_\Prism/T)\to \cat{Tors}_{\mc{G}}(\mf{X}_{\Prism,\et}/T)$ is an equivalence by Corollary \ref{cor:torsor-equiv-continuous-morphism}.

To prove that $\cat{Tors}_\mc{G}(\mf{X}_{\Prism,\et}/T)\to \twolim\cat{Tors}_\mc{G}(\Spf(A)_\et)$ is an equivalence we exhibit a quasi-inverse. For an object $(\mc A_{(A,I)})$ of the 2-limit category define the object $\mc{A}$ of $\cat{Tors}_\mc{G}(\mf{X}_{\Prism,\et})$ to be the one sending $(A,I)$ to $\mc{A}_{(A,I)}(A)$. This is a presheaf carrying an action of $\mc{G}_\Prism$ and for which the action on $(A,I)$-points is simply transitive whenever non-empty. Thus, we are done as it is simple to see that this presheaf is locally isomorphic to $\mc{G}_\Prism$.
\end{proof}

Finally, we mention the relationship between the above discussion and analytic prismatic $F$-crystals. By Theorem \ref{thm:broshi2}, and the ideas applied in the proof of Proposition \ref{prop:varphi-equivariant-GVect-and-tors-identification}, one has
\begin{equation}\label{eq:GVectan-torsors}
    \GVect^{\an,\varphi}(\mf{X}_\Prism/T) =\twolim_{\scriptscriptstyle(A,I)\in\mf{X}_\Prism/T} \GVect^\varphi(U(A,I))=\twolim_{\scriptscriptstyle(A,I)\in\mf{X}_\Prism/T}\cat{Tors}^\varphi_\mc{G}(U(A,I)),
\end{equation}
and the restriction map 
\begin{equation*}
    \GVect^\varphi(\mf{X}_\Prism/T)\isomto \twolim_{\scriptscriptstyle (A,I)\in\mf{X}_\Prism}\cat{Tors}^\varphi_\mc{G}(A)\to \twolim_{\scriptscriptstyle (A,I)\in\mf{X}_\Prism}\cat{Tors}^\varphi_\mc{G}(U(A,I))\isomto \GVect^{\an,\varphi}(\mf{X}_\Prism/T)
\end{equation*}
is fully faithful by \cite[Proposition 3.7]{GuoReinecke},
where $\cat{Tors}^\varphi_\mc{G}(U(A,I))$ has the obvious meaning. For this reason, we occasionally identify $\GVect^{\an,\varphi}(\mf{X}_\Prism/T)$ with the category $\cat{Tors}_\mc{G}^{\an,\varphi}(\mf{X}_\Prism)$ which, by definition, is the right-most term in \eqref{eq:GVectan-torsors}.

\subsubsection{Quasi-syntomic torsors with \texorpdfstring{$F$}{F}-structure}\label{sss:qsyn-torsor} For the sake of completeness, we compare the material from \S\ref{sss:Tannakian-prismatic-F-crystals} to the analogous theory in the quasi-syntomic setting. 

Abbreviate $\mc{G}_{\mc{O}^\pris}$ to $\mc{G}^\pris$ and $\mc{G}_{\mc{O}^\pris[\nicefrac{1}{\mc{I}^\pris}]}$ to $\mc{G}^\pris[\nicefrac{1}{\mc{I}^\pris}]$. Write $\cat{Tors}_\mc{G}(\mf{X}_\qsyn)$ instead of $\cat{Tors}_{\mc{G}^\pris}(\mf{X}_\qsyn)$. We make similar notational conventions concerning $(\mf{X}_\Qsyn,\mc{O}^\PRIS)$.

\begin{prop}\label{prop:qsyn-prism-torsor-relationship} The following are well-defined equivalences of categories functorial in $\mc{G}$:
    \begin{equation*}
\begin{tikzcd}
    & \cat{Tors}_{\mc{G}}(\mf{X}_\Prism) \arrow[d,shift left,"v_\ast"]\arrow[dl,swap,shift left,"u_\ast"]\arrow[dr,shift left,"\emph{eval.}"]\\
    \cat{Tors}_{
    \mc{G}}(\mf{X}_\Qsyn)\arrow[r,shift left,"\emph{res.}"] & \cat{Tors}_{\mc{G}}(\mf{X}_\qsyn)\arrow[r,shift left,"\emph{eval.}"] &  \displaystyle \twolim_{\scriptscriptstyle\Spf(R)\in\mf{X}_\qrsp}\cat{Tors}_{\mc{G}}(\Prism_R).
\end{tikzcd}
\end{equation*}
\end{prop}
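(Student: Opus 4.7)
My plan is to reduce all three source categories to the common target $\twolim_{\Spf(R) \in \mf{X}_\qrsp} \cat{Tors}_\mc{G}(\Prism_R)$ via the displayed functors, so that the diagram commutes by construction. The three main inputs are (a) the identification $\cat{Tors}_\mc{G}(\mf{X}_\Prism) \isomto \twolim_{(A,I)\in\mf{X}_\Prism}\cat{Tors}_\mc{G}(A)$ from Proposition \ref{prop:torsor-limit-decomp}, (b) the existence of an initial prism $(\Prism_R, I_R)$ in $R_\Prism$ for qrsp $R$ (\S\ref{ss: initial prisms}), and (c) the fact that $\mf{X}_\qrsp$ is a basis for $\mf{X}_\qsyn$ (and $\mf{X}_\Qsyn$).

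First I would establish the topmost evaluation equivalence. Starting from Proposition \ref{prop:torsor-limit-decomp}, I would restrict the 2-limit to the full subcategory of objects of the form $(\Prism_R, I_R)$ with $\Spf(R) \in \mf{X}_\qrsp$. Initiality of $(\Prism_R, I_R)$ provides, for any $(A,I)$ in $\mf{X}_\Prism$ with $A/I = R$ qrsp, a unique morphism $(\Prism_R, I_R)\to (A,I)$, so the restriction functor has a natural quasi-inverse given by pulling back torsors along such morphisms. Combined with Proposition \ref{prop:cover-of-final-object-qrsp}, which guarantees that every object admits a qrsp refinement covering $\ast$, this shows the restricted 2-limit is still equivalent to $\cat{Tors}_\mc{G}(\mf{X}_\Prism)$.

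Next, for the bottom row: the identity $u_\ast\mc F(R) = \mc F(\Prism_R, I_R)$ for $R$ qrsp (by initiality) applied to $\mc F = \mc{O}_\Prism$ gives $\mc{O}^\pris(R) = \mc{O}^\PRIS(R) = \Prism_R$, hence $\mc{G}^\pris(R) = \mc{G}^\PRIS(R) = \mc{G}(\Prism_R)$. Since $\mf{X}_\qrsp$ is a basis for both $\mf{X}_\qsyn$ and $\mf{X}_\Qsyn$, every sheaf on either site is determined by its restriction to qrsp objects; in particular, the restriction $\cat{Tors}_\mc{G}(\mf{X}_\Qsyn) \to \cat{Tors}_\mc{G}(\mf{X}_\qsyn)$ is an equivalence. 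Evaluating a $\mc{G}^\pris$-torsor $\mc{P}$ at qrsp $R$ yields a $\mc{G}(\Prism_R)$-set which, by descent along qrsp covers $\{\Spf(R_i)\to\Spf(R)\}$ corresponding to $\{(\Prism_{R_i}, I_{R_i})\to (\Prism_R, I_R)\}$ (Proposition \ref{prop:cover-of-final-object-qrsp} relative to $R$), assembles into a $\mc{G}$-torsor on $\Spf(\Prism_R)_\fl$; this gives the bottom evaluation equivalence, with quasi-inverse obtained by gluing via the basis property.

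Finally, for the pushforwards $u_\ast$ and $v_\ast$: the formulae $u_\ast \mc{A}(R) = v_\ast \mc{A}(R) = \mc{A}(\Prism_R, I_R)$ for qrsp $R$ show that after evaluation all three triangular compositions compute the same object, making the whole diagram commute tautologically. Functoriality in $\mc{G}$ is immediate. The main obstacle is verifying that $u_\ast\mc{A}$ (and $v_\ast\mc{A}$) is actually locally trivial for the quasi-syntomic topology, rather than merely for the coarser topology induced from covers in $\mf{X}_\Prism$: a $\mc{G}_\Prism$-torsor trivializes on a flat cover of the underlying prism, but we need a quasi-syntomic cover of $R$. This is where smoothness of $\mc{G}$ enters: by the proof of Proposition \ref{prop:torsor-limit-decomp}, such torsors already trivialize on adically \'etale covers of $\Prism_R$, and Proposition \ref{prop:etale-prism-extension} together with Lemma \ref{lem:etale-over-qrsp} promote such an \'etale cover to a qrsp cover of $R$, completing the argument.
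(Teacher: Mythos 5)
Your overall architecture matches the paper's — reduce everything to the $2$-limit over $\mf{X}_\qrsp$ via initial prisms, and isolate local triviality of $u_\ast\mc{A}$ in the quasi-syntomic topology as the crux — but two steps are not justified. First, $\mf{X}_\qrsp$ is \emph{not} a basis for $\mf{X}_\Qsyn$: objects of the big site are arbitrary bounded $p$-complete rings over $\mf{X}$, and a ring admitting a quasi-syntomic cover by a qrsp ring is itself quasi-syntomic, so the non-quasi-syntomic objects of $\mf{X}_\Qsyn$ admit no qrsp cover and sheaves on $\mf{X}_\Qsyn$ are not determined by their values on $\mf{X}_\qrsp$. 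The equivalence $\mathrm{res.}$ is still true, but the paper proves it by interposing the site $\mf{X}'_\qsyn$ (same objects as $\mf{X}_\qsyn$, only quasi-syntomic morphisms), observing that it has the same covers of the final object as $\mf{X}_\Qsyn$, and applying Corollary \ref{cor:torsor-equiv-continuous-morphism}: for torsors only covers of $\ast$ matter, not a basis.

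Second, in your last step you trivialize $\mc{A}$ on an adically \'etale cover $\Prism_R\to B$, set $S=B/I_RB$ (qrsp by Lemma \ref{lem:etale-over-qrsp}), and conclude $(u_\ast\mc{A})(S)\neq\varnothing$. But $(u_\ast\mc{A})(S)=\mc{A}(\Prism_S,I_S)$, and sections of $\mc{A}$ only push \emph{forward} along the structural morphism $(\Prism_S,I_S)\to(B,I_RB)$ supplied by initiality; so your argument silently requires the identification $\Prism_S\cong B$, i.e.\ \'etale invariance of the initial prism, which neither Proposition \ref{prop:etale-prism-extension} nor Lemma \ref{lem:etale-over-qrsp} provides (it is true, but needs a formal-\'etaleness lifting argument you do not give). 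The paper sidesteps this entirely: it pulls $\mc{A}$ back along the closed immersion $\Spec(R)\to\Spec(\Prism_R)$ to obtain a torsor on $\Spf(R)_\et$, picks a $p$-adically \'etale $R\to S$ trivializing it, and then lifts the resulting section from $S$ to $\Prism_S$ using that the pair $(\Prism_S,\ker(\Prism_S\to S))$ is Henselian and $\mc{A}$ is represented by a smooth affine scheme. You should either supply the \'etale invariance of $\Prism_{(-)}$ or switch to the Henselian-lifting route. (Your descent argument for the bottom evaluation equivalence is also only sketched; the paper's version rests on the fact that the \v{C}ech nerve of a qrsp cover consists of qrsp rings together with the compatibility of $\Prism_{(-)}$ with such covers.)
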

\begin{proof} It suffices to show that $u_\ast$, $\mathrm{res.}$, and $\mathrm{eval.}\colon \cat{Tors}_{\mc{G}}(\mf{X}_\qsyn)\to \twolim\cat{Tors}_{\mc{G}}(\Prism_R)$ are equivalences. To prove that $\mathrm{eval.}$ is an equivalence, take an open cover $\{\Spf(R_i)\}$ of $\mf{X}$ and a qrsp cover $R_i\to S_i$. Let $S_i^\bullet$ denote the objects of the \v{C}ech nerve of $R_i\to S_i$, which consist of qrsp rings by \cite[Lemma 4.30]{BMS-THH}. Then, by descent we have that the natural evaluation functor $\cat{Tors}_\mc{G}(\mf{X}_\qsyn)\to \twolim \cat{Tors}_\mc{G}(\Prism_{S_i^\bullet})$ is an equivalence, where we have implicitly used \cite[Proposition 3.30]{AnschutzLeBrasDD}. The fact that $\mathrm{eval.}$ is an equivalence easily follows.

Set $\mf{X}'_{\qsyn}$ to have the same objects as $\mf{X}_\qsyn$, but whose morphisms are required to be quasi-syntomic. From \cite[Lemma 4.16]{BMS-THH} and \stacks{00X6}, the inclusions $\mf{X}'_\qsyn\to\mf{X}_\qsyn$ and $\mf{X}'_\qsyn\to\mf{X}_\Qsyn$ induce morphisms of sites. As $\mf{X}'_\qsyn$ has the same set of covers of $\mf{X}$ as $\mf{X}_{\Qsyn}$, from Corollary \ref{cor:torsor-equiv-continuous-morphism} we deduce that the functors $\cat{Tors}_{\mc{G}}(\mf{X}_\Qsyn)\to \cat{Tors}_{\mc{G}}(\mf{X}'_{\qsyn})$ and $\cat{Tors}_{\mc{G}}(\mf{X}_\qsyn)\to \cat{Tors}_{\mc{G}}(\mf{X}'_{\qsyn})$ are equivalences, and so is their composition $\mathrm{res.}$

By Corollary \ref{cor:torsor-equiv-cocontinuous-morphism} to show that $u_\ast$ is an equivalence, it suffices to show that for an object $\mc{A}$ of $\cat{Tors}_\mc{G}(\mf{X}_\Prism)$, that $u_\ast(\mc{A})$ is locally non-empty. Passing to a qrsp cover $\{\Spf(R_i)\to \mf{X}\}$ we may assume that $\mf{X}=\Spf(R)$ where $R$ is qrsp. By Proposition \ref{prop:torsor-limit-decomp}, $\cat{Tors}_{\mc{G}}(\mf{X}_\Prism)= \cat{Tors}_\mc{G}(\Prism_{R})$. Pulling back $\mc{A}$ along the closed immersion $i\colon \Spec(R)\to \Spec(\Prism_R)$ as in \cite[Theorem 3.29]{AnschutzLeBrasDD}, gives a $\mc{G}$-torsor $i^\ast\mc{A}$ on $\Spf(R)_\et$. Let $R\to S$ be a $p$-adically \'etale morphism such that $(i^\ast\mc{A})(S)$ is non-empty. By Lemma \ref{lem:etale-over-qrsp} we see $S$ is qrsp, and so $(u_\ast\mc{A})(S)=\mc{A}(\Prism_S,I_S)$. But, the pair $(\Prism_S,\ker(\Prism_S\to S))$ is Henselian by \cite[Lemma 4.28]{AnschutzLeBrasDD}, and so $\mc{A}(\Prism_S,I_S)$ is non-empty (e.g., by \cite[Theorem 2.1.6]{BCLoopTorsors}).
\end{proof}

The proof of the following is obtained mutatis mutandis from the proof of Proposition \ref{prop:qsyn-prism-torsor-relationship}.

\begin{prop}[{cf.\@ \cite[Proposition 4.4]{AnschutzLeBrasDD} and \cite[Proposition 2.13 and Proposition 2.14]{BhattScholzeCrystals}}]\label{prop:qsyn-prism-vec-relationship} 
The following are well-defined rank-preserving bi-exact $\Z_p$-linear $\otimes$-equivalences:
    \begin{equation*}
\begin{tikzcd}
    & \cat{Vect}(\mf{X}_\Prism,\mc{O}_\Prism) \arrow[d,shift left,"v_\ast"]\arrow[dl,swap,shift left,"u_\ast"]\arrow[dr,shift left,"\emph{eval.}"]\\
    \cat{Vect}(\mf{X}_\Qsyn,\mc{O}^\PRIS)\arrow[r,shift left,"\emph{res.}"] & \cat{Vect}(\mf{X}_\qsyn,\mc{O}^\pris)\arrow[r,shift left,"\emph{eval.}"] &  \displaystyle \twolim_{\scriptscriptstyle\Spf(R)\in\mf{X}_\qrsp}\cat{Vect}(\Prism_R).
\end{tikzcd}
\end{equation*}
\end{prop}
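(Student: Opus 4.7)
The plan is to follow the three-step structure of the proof of Proposition \ref{prop:qsyn-prism-torsor-relationship} essentially verbatim, replacing "$\mc{G}$-torsor" by "vector bundle" throughout and invoking vector-bundle descent results where torsor descent was used. All three arrows in the diagram will be shown to be equivalences, and the extra structure (rank preservation, bi-exactness, $\Z_p$-linearity, and tensor compatibility) will be read off from the pointwise description of these functors at qrsp objects.

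First, for the evaluation $\cat{Vect}(\mf{X}_\qsyn,\mc{O}^\pris)\to\twolim_{\Spf(R)\in\mf{X}_\qrsp}\cat{Vect}(\Prism_R)$, I would pick an open cover $\{\Spf(R_i)\}$ of $\mf{X}$ together with qrsp covers $R_i\to S_i$, whose \v{C}ech nerves $S_i^\bullet$ consist of qrsp rings by \cite[Lemma 4.30]{BMS-THH}. Quasi-syntomic descent for vector bundles (as in \cite[Proposition 2.14]{BhattScholzeCrystals}), combined with the fact that $\Prism_{S_i^\bullet}$ computes the cosimplicial ring of initial prisms via \cite[Proposition 3.30]{AnschutzLeBrasDD}, identifies the two categories in the standard way. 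For the restriction $\cat{Vect}(\mf{X}_\Qsyn,\mc{O}^\PRIS)\to\cat{Vect}(\mf{X}_\qsyn,\mc{O}^\pris)$, I would introduce the auxiliary site $\mf{X}'_\qsyn$ exactly as in the torsor proof; since $\mf{X}'_\qsyn\hookrightarrow\mf{X}_\qsyn$ and $\mf{X}'_\qsyn\hookrightarrow\mf{X}_\Qsyn$ are continuous inclusions with the same covers over any given object, the vector-bundle analogue of Corollary \ref{cor:torsor-equiv-continuous-morphism} (immediate from the fact that the induced morphisms on sheaf topoi are equivalences) gives the desired equivalence.

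The $u_\ast$ step is the one requiring the most adjustment and will be the main obstacle. Given $\mc{E}$ in $\cat{Vect}(\mf{X}_\Prism,\mc{O}_\Prism)$, I must verify that $u_\ast\mc{E}$ is an $\mc{O}^\pris$-vector bundle on $\mf{X}_\qsyn$. Reducing along a qrsp cover to $\mf{X}=\Spf(R)$ with $R$ qrsp, Proposition \ref{prop:vector-bundle-limit-decomp} describes $\mc{E}$ by a finite projective $\Prism_R$-module $M$. Because the pair $(\Prism_R,\ker(\Prism_R\to R))$ is Henselian by \cite[Lemma 4.28]{AnschutzLeBrasDD}, a standard lifting argument produces a $p$-adically \'etale extension $R\to S$ trivializing $M$; by Lemma \ref{lem:etale-over-qrsp}, $S$ is again qrsp, so $u_\ast\mc{E}$ is locally trivial on $\mf{X}_\qsyn$. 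Full faithfulness of $u_\ast$ then follows because its composition with the already-established qrsp evaluation equals the evaluation equivalence on $\mf{X}_\Prism$ from Proposition \ref{prop:vector-bundle-limit-decomp}.

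Finally, rank preservation, bi-exactness, and the $\Z_p$-linear $\otimes$-structure all reduce to the pointwise statements at qrsp objects, where they are tautological: vector bundles on $\Prism_R$ correspond to finite projective $\Prism_R$-modules, tensor products and kernels/cokernels of maps between such bundles are computed at the level of these modules, and the evaluation functors preserve these operations by construction. Functoriality in $\mc{G}$ plays no role here (as opposed to the torsor case), and the diagram commutativity is immediate from the definitions of $u_\ast$, $v_\ast$, and the restriction.
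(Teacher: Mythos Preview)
Your proposal is correct and is precisely what the paper does: its entire proof reads ``obtained mutatis mutandis from the proof of Proposition \ref{prop:qsyn-prism-torsor-relationship},'' and you have carried out exactly that translation, invoking the vector-bundle analogue (Proposition \ref{prop:morphisms-of-topos-vector-bundle-equiv}) in place of the torsor statements and supplying the Henselian lifting argument for local triviality of $u_\ast\mc{E}$. Your treatment of bi-exactness and the $\otimes$-structure via reduction to qrsp objects is the natural elaboration of what the paper leaves implicit.
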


Set $\phi=v_\ast(\phi)$ on $\mc{G}^\pris=v_\ast(\mc{G}_\Prism)$. For an object $\mc{B}$ of $\cat{Tors}_{\mc{G}}(\mf{X}_\qsyn)$, define $\mc{B}[\nicefrac{1}{\mc{I}^\pris}]$ as the pushforward along $\mc{G}^\pris\to \mc{G}^\pris[\nicefrac{1}{\mc{I}^\pris}]$, and $\phi^\ast\mc{B}\defeq \phi_\ast\mc{B}$. There are identifications $v_\ast\phi^\ast\mc{A}=\phi^\ast v_\ast\mc{A}$ and $v_\ast(\mc{A}[\nicefrac{1}{\mc{I}_\Prism}])=(v_\ast\mc{A})[\nicefrac{1}{\mc{I}^\pris}]$. Define a \emph{quasi-syntomic $\mc{G}$-torsor with $F$-structure} to be a pair $(\mc{B},\varphi)$ where $\mc{B}$ is an object of $\cat{Tors}_{\mc{G}}(\mf{X}_\qsyn)$ and $\varphi$ a \emph{Frobenius isomorphism}
\begin{equation*}
    \varphi\colon (\phi^\ast \mc{B})[
    \nicefrac{1}{\mc{I}^\pris}]\isomto \mc{B}[\nicefrac{1}{\mc{I}^\pris}].
\end{equation*} 
A morphism of quasi-syntomic $\mc{G}$-torsors with $F$-structure is a morphism of $\mc{G}^\pris$-torsors commuting with the Frobenii. Denote the category of such objects by $\cat{Tors}^\varphi_{\mc{G}}(\mf{X}_\qsyn)$. One can similarly define the categories $\cat{Vect}^\varphi(\mf{X}_\qsyn)$ and $\GVect^\varphi(\mf{X}_\qsyn)$, as well as their analogues for $\mf{X}_\Qsyn$ or $\mf{X}_\qrsp$.

Using Proposition \ref{prop:varphi-equivariant-GVect-and-tors-identification}, Proposition \ref{prop:qsyn-prism-torsor-relationship}, and Proposition \ref{prop:qsyn-prism-vec-relationship} one may deduce the following.

\begin{prop}\label{prop:comparing-prismatic-and-qsyn-F-crystals}
    There is a commuting diagram of well-defined equivalences
    \begin{equation*}
        \xymatrixcolsep{1.3pc}\xymatrixrowsep{1.3pc}\xymatrix{\cat{Tors}^\varphi_{\mc{G}}(\mf{X}_\Prism)\ar[r]\ar[d]_{v_\ast} & \GVect^\varphi(\mf{X}_\Prism)\ar[d]^{v_\ast}\\ \cat{Tors}^\varphi_{\mc{G}}(\mf{X}_\qsyn)\ar[r] & \GVect^\varphi(\mf{X}_\qsyn)}
    \end{equation*} 
    which is functorial in $\mc{G}$. Similar assertions hold with $\mf{X}_\qsyn$ replaced by $\mf{X}_\Qsyn$ or $\mf{X}_\qrsp$.
\end{prop}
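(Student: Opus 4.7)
The plan is to bootstrap the prismatic equivalence of Proposition \ref{prop:varphi-equivariant-GVect-and-tors-identification} to the quasi-syntomic setting using the transfer results Proposition \ref{prop:qsyn-prism-torsor-relationship} and Proposition \ref{prop:qsyn-prism-vec-relationship}, and then put the two sides together.

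First I would verify that $v_\ast$ interacts well with the extra structure that defines the $F$-variants. By construction $\phi^\pris = v_\ast(\phi_{\mf{X}_\Prism})$ on $\mc{G}^\pris = v_\ast(\mc{G}_\Prism)$, and likewise $v_\ast(\mc{O}_\Prism[\nicefrac{1}{\mc{I}_\Prism}]) = \mc{O}^\pris[\nicefrac{1}{\mc{I}^\pris}]$, so $v_\ast(\mc{G}_\Prism[\nicefrac{1}{\mc{I}_\Prism}]) = \mc{G}^\pris[\nicefrac{1}{\mc{I}^\pris}]$. Using this and the fact that pushforward commutes with pushforward, for any $(\mc{A},\varphi_\mc{A})$ in $\cat{Tors}^\varphi_\mc{G}(\mf{X}_\Prism)$ one has canonical identifications $v_\ast(\phi^\ast\mc{A}) \simeq \phi^\ast v_\ast(\mc{A})$ and $v_\ast(\mc{A}[\nicefrac{1}{\mc{I}_\Prism}]) \simeq (v_\ast\mc{A})[\nicefrac{1}{\mc{I}^\pris}]$, and similarly for vector-bundle crystals. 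Thus $v_\ast$ carries the Frobenius structure to a Frobenius structure, defining the vertical functors in the diagram. These are equivalences because they are equivalences after forgetting Frobenii by Propositions \ref{prop:qsyn-prism-torsor-relationship} and \ref{prop:qsyn-prism-vec-relationship}, and the Frobenius structure is additional data pinned down by the same data on the source.

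Next I would construct the lower horizontal arrow independently. By Proposition \ref{prop:qsyn-prism-vec-relationship} the category $\cat{Vect}(\mf{X}_\qsyn,\mc{O}^\pris)$ is bi-exact $\Z_p$-linear $\otimes$, and $\mc{G}^\pris$ is reconstructable therein by Theorem \ref{thm:broshi2}; moreover every $\mc{G}^\pris$-torsor on $\mf{X}_\qsyn$ is locally trivial (this is inherited from $\cat{Tors}_\mc{G}(\mf{X}_\Prism)$ through Proposition \ref{prop:qsyn-prism-torsor-relationship}, since local triviality on the prismatic site pushes forward to local triviality on a qrsp cover). The general Tannakian formalism of Appendix \ref{s:Tannakian-appendix} (Proposition \ref{prop:torsors-twists-functors}) then gives $\GVect(\mf{X}_\qsyn) \simeq \cat{Tors}_\mc{G}(\mf{X}_\qsyn)$, and exactly as in Proposition \ref{prop:varphi-equivariant-GVect-and-tors-identification} one inserts the Frobenius on both sides to obtain the equivalence $\cat{Tors}^\varphi_\mc{G}(\mf{X}_\qsyn) \simeq \GVect^\varphi(\mf{X}_\qsyn)$. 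The $\Qsyn$ and $\qrsp$ variants go through by exactly the same formal argument, using the remaining cases of the two transfer propositions.

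Finally, commutativity of the square is tautological from the constructions: both composites send $(\mc{A},\varphi_\mc{A})$ to the $\mc{G}$-object $\rho \mapsto v_\ast(\mc{A}\times^{\mc{G}_\Prism}\rho)$ with its induced Frobenius, and these two orderings of ``take associated bundle'' and ``apply $v_\ast$'' evidently agree because both operations are computed pointwise on a qrsp cover via $\Prism_R$. Functoriality in $\mc{G}$ is likewise clear, since every construction involved is natural in the group. The main obstacle I anticipate is the bookkeeping around local triviality of $\mc{G}^\pris$-torsors on the qsyn site, needed in order to invoke the Tannakian reconstruction machinery; this is not deep but does require combining Proposition \ref{prop:qsyn-prism-torsor-relationship} with the explicit qrsp description of $\mc{O}^\pris$ to reduce to the prismatic case, where local triviality was established in Proposition \ref{prop:varphi-equivariant-GVect-and-tors-identification} via Proposition \ref{prop:torsor-limit-decomp}.
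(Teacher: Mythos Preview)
Your proposal is correct and follows essentially the same approach as the paper: the paper's proof is the single sentence ``Using Proposition \ref{prop:varphi-equivariant-GVect-and-tors-identification}, Proposition \ref{prop:qsyn-prism-torsor-relationship}, and Proposition \ref{prop:qsyn-prism-vec-relationship} one may deduce the following,'' and you have simply written out that deduction in detail, including the compatibilities $v_\ast\phi^\ast\simeq\phi^\ast v_\ast$ and $v_\ast(\mc{A}[\nicefrac{1}{\mc{I}_\Prism}])\simeq(v_\ast\mc{A})[\nicefrac{1}{\mc{I}^\pris}]$ that the paper records just before stating the proposition. One minor simplification: your second step, building the lower horizontal arrow independently via reconstructability and local triviality on $\mf{X}_\qsyn$, is more than is needed, since once the top arrow and both verticals are equivalences the bottom arrow is forced; this avoids having to re-verify Tannakian hypotheses on the quasi-syntomic side.
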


\section{\texorpdfstring{$\mc{G}$}{G}-objects in the category of crystalline local systems}

In this section we derive Tannakian analogues of the results of \cite{BhattScholzeCrystals}, \cite{GuoReinecke}, and \cite{DLMS}. Unless stated otherwise, we still use the notation given in Notation \ref{nota:Section-1}.

\subsection{The category of \texorpdfstring{$\mc{G}(\Z_p)$}{G(Zp)}-local systems}

In the statement of the main result of this section it will be helpful to have a clear notion of a $\mc{G}(\Z_p)$-local system (on a scheme or an adic space). 
%Fix a pro-finite group $H=\varprojlim H_n$, with each $H_n$ finite.

\subsubsection{\texorpdfstring{$\mc{G}(\Z_p)$}{G(Zp)}-local systems on a scheme}\label{ss:G(Z_p)-local-system-scheme}

For a locally topologically Noetherian scheme $S$ (in the sense in \cite[Definition 6.6.9]{BhattScholzeProetale}), recall from \cite[Definition 4.1.1]{BhattScholzeProetale} that a morphism of schemes $f\colon S'\to S$ is \emph{weakly \'etale} if $f$ and $\Delta_f$ are flat. Moreover, we can associate to $S$ the \emph{pro\'etale site} $S_\proet$ which is the full subcategory of $S_{\mathrm{fpqc}}$ consisting of weakly \'etale morphisms $S'\to S$, with the induced topology.

We will be interested in the ringed site $(S_\proet,\underline{\Z_p}_S)$ , where 
\begin{equation*}
    \underline{\Z_p}_S\defeq R\varprojlim \underline{\Z/p^n}_S=\varprojlim \underline{\Z/p^n}_S,
\end{equation*}
where the second equality follows from \cite[Proposition 3.2.3, Proposition 3.1.10, and Proposition 4.2.8]{BhattScholzeProetale}. We often use the notation $\Z/p^\infty\defeq \Z_p$. More generally, for a topological space $T$, denote by $\underline{T}_S$ (or just $\underline{T}$ when $S$ is clear from context) the sheaf (see \cite[Lemma 4.2.12]{BhattScholzeProetale}) 
\begin{equation*}
    \underline{T}_S\colon S_\proet\to \cat{Set},\qquad U\mapsto \Hom_\text{cont.}(U,T).
\end{equation*}
If $T$ is totally disconnected, then $\Hom_\text{cont.}(U,T)$ is equal to $\text{Hom}_\text{cont.}(\pi_0(U),T)$, and so if $T=\varprojlim T_i$ with each $T_i$ finite then $\underline{T}=\varprojlim \underline{T_i}$ where each $\underline{T_i}$ is the constant sheaf.

\begin{lem}\label{lem:G-for-proet-scheme} With notation as in \emph{(\ref{eq: definition of mc G sub O})}, there is an identification $\mc{G}_{\underline{\Z/p^n}}\isomto \underline{\mc{G}(\Z/p^n)}$, compatible in $n$ and functorial in $\mc{G}$, for $n\in\bb{N}\cup\{\infty\}$.
\end{lem}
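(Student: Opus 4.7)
The plan is to construct a canonical map of sheaves of groups $\mc{G}_{\underline{\Z/p^n}} \to \underline{\mc{G}(\Z/p^n)}$ on $S_\proet$ and then check it is an isomorphism by evaluating on a basis. Writing $\mc{G} = \Spec(A)$ with $A$ a finitely generated $\Z_p$-Hopf algebra, a section of the source over $U$ is a $\Z_p$-algebra map $A \to \underline{\Z/p^n}(U)$. Composing with evaluation at each $u \in U$ gives for each $u$ an element of $\mc{G}(\Z/p^n)$, and these assemble into a continuous map $U \to \mc{G}(\Z/p^n)$ since $A$ is finitely generated. This defines the natural transformation, which is manifestly compatible in $n$ and functorial in $\mc{G}$.

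To check this is an isomorphism, I would invoke \cite[Proposition 4.2.8]{BhattScholzeProetale}: the site $S_\proet$ admits a basis of w-contractible affines $U$, for which $\pi_0(U)$ is extremally disconnected profinite and $\underline{T}(U) = C^0(\pi_0(U), T)$ for every totally disconnected topological space $T$. Writing $\pi_0(U) = \varprojlim_\alpha S_\alpha$ as a cofiltered limit of finite discrete sets, one has for finite $n$ the filtered-colimit description $\underline{\Z/p^n}(U) = \varinjlim_\alpha \prod_{s \in S_\alpha} \Z/p^n$. Since $\mc{G}$ is affine and of finite presentation over $\Z_p$, it commutes with filtered colimits of rings and with finite products, yielding
\[
\mc{G}(\underline{\Z/p^n}(U)) \;=\; \varinjlim_\alpha \prod_{s \in S_\alpha} \mc{G}(\Z/p^n) \;=\; C^0(\pi_0(U), \mc{G}(\Z/p^n)) \;=\; \underline{\mc{G}(\Z/p^n)}(U),
\]
where $\mc{G}(\Z/p^n)$ is discrete since $\Z/p^n$ is finite; one checks directly that this identification agrees with the map constructed in the previous paragraph.

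The case $n = \infty$ will follow by taking the inverse limit of the finite-level isomorphisms over $n$. On the source, the identity $\underline{\Z_p} = \varprojlim_n \underline{\Z/p^n}$ (recalled in the excerpt) together with the affineness of $\mc{G}$, which forces its functor of points to commute with arbitrary limits of rings, yields $\mc{G}(\underline{\Z_p}(U)) = \varprojlim_n \mc{G}(\underline{\Z/p^n}(U))$. On the target, $\mc{G}(\Z_p) = \varprojlim_n \mc{G}(\Z/p^n)$ as topological groups, and $\underline{(-)}$ commutes with inverse limits of totally disconnected spaces via $\Hom_{\mathrm{cont}}(U, \varprojlim_n T_n) = \varprojlim_n \Hom_{\mathrm{cont}}(U, T_n)$. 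The only subtle point is the basic identification $\underline{T}(U) = C^0(\pi_0(U), T)$ on w-contractibles and the interplay between the various limits and colimits; beyond this bookkeeping I do not anticipate any serious obstacle.
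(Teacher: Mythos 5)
Your proof is correct and follows essentially the same strategy as the paper's: factor continuous maps to discrete targets through a filtered colimit of (finite or discrete) quotients of $\pi_0(U)$, commute $\mc{G}$ past the colimit using local finite presentation and past products using affineness, and obtain the $n=\infty$ case by passing to the inverse limit. The only cosmetic difference is that the paper evaluates on arbitrary $U$ via its discrete quotients and an explicit currying bijection, whereas you localize to a basis of w-contractible objects with profinite $\pi_0$; both routes are fine.
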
 
\begin{proof} We handle only the case where $n$ is finite as the case for $n=\infty$ follows by passing to the limit. We provide for $U$ in $S_\proet$ an isomorphism $\mc{G}_{\underline{\Z/p^n}}(U)\isomto \underline{\mc{G}(\Z/p^n)}(U)$ bi-functorial in $U$ and $\mc{G}$ and compatible in $n$. Let $\{D_i\}$ be the set of discrete quotient spaces of $U$. Then, for any discrete space $X$ there is an identification between $\Hom_\text{cont.}(U,X)$ and $\varinjlim \Hom(D_i,X)$, bi-functorial in $U$ and $X$. As $\mc{G}$ is locally of finite presentation over $\Z_p$
\begin{equation*}
    \begin{aligned}\mc{G}_{\underline{\Z/p^n}}(U) &=\mc{G}(\Hom_\text{cont.}(U,\Z/p^n))\\ &=\varinjlim \mc{G}(\Hom(D_i,\Z/p^n))\\ &= \varinjlim \text{Hom}_{\cat{Alg}_{\Z_p}}(A_\mc{G},\text{Hom}(D_i,\Z/p^n)),\end{aligned}
\end{equation*}
(cf.\@ \stacks{01ZC}), where $A_\mc{G}\defeq \mc{O}_\mc{G}(\mc{G})$. On the other hand we see that 
\begin{equation*}
    \begin{aligned}
        \underline{\mc{G}(\Z_p/p^n\Z)}(U) &=  \text{Hom}_\text{cont.}(U,\mc{G}(\Z/p^n))\\ &= \varinjlim \text{Hom}(D_i,\mc{G}(\Z/p^n))\\ &= \varinjlim \Hom(D_i,\Hom_{\cat{Alg}_{\Z_p}}(A_\mc{G},\Z/p^n)).
    \end{aligned}
\end{equation*}
Thus, we are done via the natural isomorphism of groups 
\begin{equation*}
    \Hom(D_i,\Hom_{\cat{Alg}_{\Z_p}}(A_\mc{G},\Z/p^n))\to\text{Hom}_{\cat{Alg}_{\Z_p}}(A_\mc{G},\text{Hom}(D_i,\Z/p^n))
\end{equation*}
bi-functorial in $D_i$ and $\mc{G}$, given by currying.
\end{proof}

Denote by $\cat{Loc}_{\Z/p^n}(S)$ the category $\cat{Vect}(S_\proet,\underline{\Z/p^n})$ of \emph{$\Z/p^n$-local systems}. By \cite[Corollary 5.1.5 and Proposition 6.8.4]{BhattScholzeProetale}, this is equivalent to $\cat{Loc}_{\Z/p^n}(S_\et)$ (see \cite[Expos\'e VI]{SGA5}) as an exact $\Z_p$-linear $\otimes$-category. We refer to objects of $\GLoc_{\Z/p^n}(S)$ as \emph{$\mc{G}(\Z/p^n)$-local systems}.

It will also be convenient to consider the category $\cat{Loc}_{\Q_p}(S)$ of \emph{$\Q_p$-local systems} on $S$. This has the same objects as $\cat{Loc}_{\Z_p}(S)$, denoted by $\bb{L}$ of $\bb{L}[\nicefrac{1}{p}]$ when clarity is necessary, but with
\begin{equation}\label{eq:Q_p-local-systems}
    \cat{Hom}_{\cat{Loc}_{\Q_p}(S)}(\bb{L}[\nicefrac{1}{p}],\bb{L}'[\nicefrac{1}{p}])\defeq \Gamma(S_\proet,\mc{H}om(\bb{L},\bb{L}')[\nicefrac{1}{p}]),
\end{equation}
where $\mc{H}om(\bb{L},\bb{L}')$ is the internal Hom in $\cat{Loc}_{\Z_p}(S)$. The category $\cat{Loc}_{\Q_p}(S)$ admits a fully faithful embedding into $\cat{Vect}(S_\proet,\underline{\Q_p})$, and so inherits an exact $\Q_p$-linear $\otimes$-structure.

\begin{rem}\label{rem:no-lattice} The essential image of $\cat{Loc}_{\Q_p}(S)\to \cat{Vect}(S_\proet,\underline{\Q_p})$, consists of those $\underline{\Q_p}$-vector bundles which admit a $\underline{\Z_p}$-lattice. Equivalently, assuming $S$ is connected, this corresponds to the embedding $\cat{Rep}^\mr{cont.}_{\Q_p}(\pi_1^\et(S,\ov{s}))\to\cat{Rep}_{\Q_p}^\mr{cont.}(\pi_1^\proet(S,\ov{s}))$,  where $\pi_1^\proet(S,\ov{s})$ is the pro\'etale fundamental group as in \cite[\S7]{BhattScholzeProetale} and $\ov{s}$ is a geometric point. If $S$ is geometrically unibranch then $\pi_1^\proet(S,\ov{s})=\pi_1^\et(S,\ov{s})$ (see \cite[Lemma 7.4.10]{BhattScholzeProetale}), and so by compactness any representation $\pi_1^\proet(S,\ov{s})\to \GL_n(\Q_p)$ factorizes through a $\Z_p$-lattice and so $\cat{Loc}_{\Q_p}(S)\to \cat{Vect}(S_\proet,\underline{\Q_p})$ is an equivalence. But, this is not true in general (e.g.\@ if $S$ is the projective nodal curve, then $\pi_1^\proet(S,\ov{s})$ is isomorphic to $\Z$ with the discrete topology).
\end{rem}

\begin{prop}\label{prop:tannakian-formalism-proetale-scheme} For every $n\in\bb{N}\cup\{\infty\}$, the group $\mc{G}$ is reconstructible in $(S_\proet,\underline{\Z/p^n})$ and every object of $\GLoc_{\Z/p^n}(S)$ is locally trivial.
\end{prop}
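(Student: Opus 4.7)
The plan is to verify the hypotheses of the Tannakian reconstruction machinery developed in Appendix \ref{s:Tannakian-appendix}, in particular Theorem \ref{thm:broshi2}, for the ringed site $(S_\proet,\underline{\Z/p^n})$ in each case $n\in\bb{N}\cup\{\infty\}$. Throughout, Lemma \ref{lem:G-for-proet-scheme} provides the identification $\mc{G}_{\underline{\Z/p^n}}\isomto\underline{\mc{G}(\Z/p^n)}$, which is what translates $\mc{G}$-objects into torsors under a concrete sheaf of groups.

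First I would handle finite $n$. By \cite[Corollary 5.1.5 and Proposition 6.8.4]{BhattScholzeProetale}, the natural comparison $\cat{Loc}_{\Z/p^n}(S)\isomto\cat{Loc}_{\Z/p^n}(S_\et)$ is an equivalence of exact $\Z_p$-linear $\otimes$-categories, so every finite projective $\underline{\Z/p^n}$-module on $S_\proet$ is étale-locally free. Combined with faithfully flat descent of modules in the pro-étale topology, this verifies the hypotheses needed to invoke Theorem \ref{thm:broshi2}, yielding reconstructability of $\mc{G}$. For local triviality at finite $n$: an object $\omega$ of $\GLoc_{\Z/p^n}(S)$ corresponds by the formalism of Appendix \ref{s:Tannakian-appendix} to a $\underline{\mc{G}(\Z/p^n)}$-torsor on $S_\proet$, which under the equivalence with $S_\et$ is a torsor for the (discrete) constant sheaf of groups $\underline{\mc{G}(\Z/p^n)}$; such a torsor trivializes étale-locally essentially by definition.

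For $n=\infty$ I would argue by passage to the inverse limit. The identification $\underline{\Z_p}=\varprojlim\underline{\Z/p^n}$ combined with Lemma \ref{lem:G-for-proet-scheme} gives $\underline{\mc{G}(\Z_p)}=\varprojlim\underline{\mc{G}(\Z/p^n)}$ as sheaves of groups on $S_\proet$. Thus a $\underline{\mc{G}(\Z_p)}$-torsor $\mc{A}$ yields a compatible tower of étale $\mc{G}(\Z/p^n)$-torsors $\mc{A}_n$, each of which admits an étale trivializing cover from the case of finite $n$; taking these compatibly (using the smoothness of $\mc{G}$ to ensure surjectivity of $\mc{G}(\Z/p^{n+1})\to\mc{G}(\Z/p^n)$, so that a trivialization at level $n$ lifts to level $n+1$), their inverse limit in $S_\proet$ is a pro-étale cover over which $\mc{A}$ trivializes. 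Reconstructability at $n=\infty$ then follows from the finite-level statements by the same limit procedure, since the category of finite projective $\underline{\Z_p}$-modules is the $2$-limit of the corresponding categories at finite level.

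The main obstacle will be the $n=\infty$ case, specifically the construction of the pro-étale trivializing cover as an inverse limit. One must verify that the compatible system of finite étale trivializations, viewed as an inverse system in $S_\proet$, indeed represents the required pro-étale cover of $S$---this uses both the local Noetherian hypothesis on $S$ (to make sense of the pro-étale site) and the observation in \S\ref{ss:G(Z_p)-local-system-scheme} that $\underline{T}=\varprojlim\underline{T_i}$ for a profinite space $T=\varprojlim T_i$, so that forming the torsor of the limit is compatible with the limit of the torsors.
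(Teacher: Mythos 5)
The central gap is in your finite-level local triviality step, which is circular: you assert that an object $\omega$ of $\GLoc_{\Z/p^n}(S)$ ``corresponds by the formalism of Appendix \ref{s:Tannakian-appendix} to a $\underline{\mc{G}(\Z/p^n)}$-torsor'', but Proposition \ref{prop:torsors-twists-functors} only identifies torsors with the \emph{locally trivial} objects $\GVect^\lt$ --- so the correspondence you invoke presupposes exactly the local triviality you are trying to prove. A priori $\omega$ only yields the pseudo-torsor $\underline{\Isom}(\omega_\triv,\omega)$, and the entire content of the proposition is to show this is locally non-empty. Relatedly, Theorem \ref{thm:broshi2} cannot simply be ``invoked'' for the ringed site $(S_\proet,\underline{\Z/p^n})$: it is a statement about $(X_\fpqc,\mc{O}_X)$ for a scheme $X$ over a Dedekind domain, and the (pro-)constant sheaf $\underline{\Z/p^n}$ on $S_\proet$ is not such a structure sheaf, so ``verifying its hypotheses'' for this site is not meaningful. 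The paper instead (i) proves reconstructability directly, using that sections of $\mc{G}_{\underline{\Z_p}}$ over $U$ are $\Hom_{\text{cont.}}(\pi_0(U),\mc{G}(\Z_p))$ together with the fact that every representation is a subquotient of $\Lambda_0^{\otimes}$, so that the tensors $\mathds{T}$ cut $\mc{G}$ out of $\GL(\Lambda_0)$; and (ii) for local triviality shows that $\underline{\Isom}(\omega_\triv,\omega_n)\to S$ is representable by a \emph{finite \'etale} morphism (finiteness via a closed embedding into $\underline{\Isom}(\Lambda_0\otimes_{\Z_p}\underline{\Z/p^n},\omega_n(\Lambda_0))$ cut out by tensor conditions; \'etaleness via topological invariance of the pro-\'etale topos under square-zero thickenings), and then checks surjectivity after pullback to geometric points, where Theorem \ref{thm:broshi2} legitimately applies to the scheme $\Spec(\Z/p^n)$ and one concludes from the vanishing of $H^1(\Spec(\Z/p^n),\mc{G})$. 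None of this content appears in your proposal, and it is precisely the nontrivial part.

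Your passage to $n=\infty$ by forming the inverse limit of compatible finite \'etale trivializations is in the right spirit --- the paper likewise writes $\underline{\Isom}(\omega_\triv,\omega_\infty)=\varprojlim_n\underline{\Isom}(\omega_\triv,\omega_n)$ and uses that each level is finite \'etale over $S$ so that the limit is weakly \'etale --- but since it rests on the unestablished finite-level statement, the argument as a whole does not go through.
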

\begin{proof}Assume first that $S$ is locally Noetherian. For the first statement, we deal only with the case $n=\infty$. Let $U$ in $S_\proet$ be arbitrary. One may identify the natural map $\mc{G}_{\underline{\Z_p}}(U)\to \underline{\Aut}(\omega_\triv)(U)$ with the map from $\Hom_\text{cont.}(\pi_0(U),\mc{G}(\Z_p))$ to the system $(g_\Lambda)$ of elements of $\Hom_\text{cont.}(\pi_0(U),\GL(\Lambda))$ for $\Lambda$ in $\cat{Rep}_{\Z_p}(\mc{G})$, which are compatible in $\Lambda$. The projection $(g_\Lambda)\mapsto g_{\Lambda_0}$ from $\underline{\Aut}(\omega_\triv)(U)$ to $\Hom_\text{cont.}(\pi_0(U),\GL(\Lambda_0))$ is injective as every object $\Lambda$ of $\cat{Rep}_{\Z_p}(\mc{G})$ is a subquotient of $\Lambda_0^{\otimes}$ (see \cite[Proposition 12]{dosSantos}). As the diagram 
    \begin{equation*}
        \xymatrixcolsep{1.3pc}\xymatrixrowsep{1.3pc}\xymatrix{\mc{G}_{\underline{\Z_p}}(U)\ar[r]^-a\ar[rd]_-c & \underline{\Aut}(\omega_\triv)(U)\ar[d]^-b\\ & \Hom_\text{cont.}(\pi_0(U),\GL(\Lambda_0))}
    \end{equation*}
    commutes, and $a$ and $b$ are injective, to show that $a$ is an isomorphism it suffices to show that $\mathrm{im}(b)\subseteq \mathrm{im}(c)$. But, by functoriality $\mr{im}(b)$ fixes $\mathds{T}$ and thus lies in $\mr{im}(c)$. 

    Let $\omega$ be an object of $\GLoc_{\Z_p}(S)$. For $n\in\mathbb{N}\cup\{\infty\}$ consider the sheaf $\underline{\Isom}(\omega_\triv,\omega_n)$, associating to a locally Noetherian $S$-scheme $f\colon T\to S$ the set of isomorphisms $\omega_\triv\to \omega_{n,T}$, where $\omega_{n,T}(\Lambda)\defeq f^{-1}(\omega(\Lambda)\otimes_{\underline{\Z_p}}\underline{\Z/p^n})$ is considered as an object of $\GLoc_{\Z/p^n}(T)$. To prove the second claim it suffices to show that $\underline{\Isom}(\omega_\triv,\omega_\infty)$ is representable by a weakly \'etale cover of $S$. Moreover, as $\underline{\Isom}(\omega_\triv,\omega_\infty)$ is the limit of $\underline{\Isom}(\omega_\triv,\omega_n)$ (cf.\@ \cite[Proposition 6.8.4]{BhattScholzeProetale}) it further suffices to show that $\underline{\Isom}(\omega_\triv,\omega_n)$ is represented by a finite 
    \'etale cover of $S$ for every $n$ in $\bb{N}$. To see this, observe that the natural map 
    \begin{equation*}
        \underline{\Isom}(\omega_\triv,\omega_n)\to \underline{\Isom}(\Lambda_0\otimes_{\Z_p} \underline{\Z/p^n},\omega_n(\Lambda_0))
    \end{equation*}
    is a closed embedding, cut out by the intersection of the following conditions (with terminology from \cite[Definition 10 and Proposition 12]{dosSantos}) on an $f$ in $\underline{\Isom}(\Lambda_0\otimes_{\Z_p} \underline{\Z/p^n},\omega_n(\Lambda_0))$: for every tuple $(a,b,W,U,q)$, where $a,b$ are multi-indices, $W$ is a special subrepresentation of $(\Lambda_0)^a_b$, and $q\colon W\to U$ is a surjection of representations, $f$ and $f^{-1}$ preserve $W$ and $\ker q$.  Thus, $\underline{\Isom}(\omega_\triv,\omega_n)\to S$ is finite. Moreover, by the topological invariance of the pro-\'etale topos (see \cite[Lemma 5.4.2]{BhattScholzeProetale}), pullback along $i\colon\Spec(A/I)\to\Spec(A)$, for a square-zero ideal $I$ of a Noetherian ring $A$, defines an equivalence 
    \begin{equation*}
        i^{-1}\colon \cat{Loc}_{\Z/p^n}(\Spec(A))\to \cat{Loc}_{\Z/p^n}(\Spec(A/I))
    \end{equation*}
    for $n\in\mathbb{N}\cup\{\infty\}$. From this we deduce that $\underline{\Isom}(\omega_\triv,\omega_n)\to S$ is formally \'etale in the sense of \stacks{02HG}, and thus finite \'etale by \stacks{02HM}.

    To show that $\underline{\Isom}(\omega_\triv,\omega_n)\to S$ is surjective, it suffices to show that the pullback to each geometric point of $S$ is a trivial torsor, and so we may assume that $S$ is the spectrum of an algebraically closed field. In this case, there is a bi-exact $\Z_p$-linear $\otimes$-equivalence between $\cat{Loc}_{\Z/p^n}(S)$ and $\cat{Vect}(\Z/p^n)$. But, by Theorem \ref{thm:broshi2}, if $\nu_n$ belongs to $\GVect(\Z/p^n)$ then $\underline{\Isom}(\omega_\triv,\nu_n)$ is a $\mc{G}$-torsor. But, $H^1(\Spec(\Z/p^n),\mc{G})$ is trivial (e.g.\@ by \cite[Corollary 17.98]{MilneGroups} and \cite[Theorem 2.1.6]{BCLoopTorsors}), and so the claim follows.

To remove the locally Noetherian hypothesis, we may proceed as follows. We may assume that $n$ is in $\mathbb{N}$ and $S$ is connected. Consider $W$ in $\cat{Rep}_{\Z_p}(\mc{G})$ with a saturated injection $W \hookrightarrow A_\mc{G}^*=\Hom_{\Z_p}(\mc{O}_\mc{G}(\mc{G}),\Z_p)$ whose image in $A_\mc{G}^*/p^n$ contains $\mc{G}(\Z/p^n)$. We may replace $S$ by a finite \'etale connected cover that trivializes $\omega_n(W)$. We claim then that $\omega_n(V)$ is trivial for all $V$. By the locally Noetherian case, for each $s$ in $S$, we have an isomorphism 
    \begin{equation*}
        \psi \colon \underline{\Isom}(\omega_\triv,\omega_n)_s \times^{\mc{G}(\Z/p^n)} W(\Z/p^n) \simeq \omega_n(W)_s.
    \end{equation*}
    Take a trivial $\mc{G}(\Z/p^n)$-torsor $\mc{T}$ over $S$ and an isomorphism $\mc{T} \times^{\mc{G}(\Z/p^n)} W(\Z/p^n) \simeq \omega_n(W)$ extending $\psi$. For an object $V$ of $\cat{Rep}_{\Z_p}(\mc{G})$ and  $v$ in $V(\Z_p)$, let $f_v \colon A_\mc{G}^* \to V$ be the morphism induced by the map $\mc{G} \to V$ sending  $g$ to $gv$. Let $\mathrm{ev}_1$ in $A_\mc{G}^*/p^n$ be the unit section. Then 
    \begin{equation*}
        \mc{T} \xrightarrow{\id \times \mathrm{ev}_1} \mc{T} \times^{\mc{G}(\Z/p^n)} W(\Z/p^n) \simeq \omega_n(W) \xrightarrow{\omega_n(f_v|_W)} \omega_n(V)
    \end{equation*} 
    gives a map $\mc{T} \times^{\mc{G}(\Z/p^n)} V(\Z/p^n) \to \omega_n(V)$. One checks this is an isomorphism over $s$, and so is an isomorphism as the source and target are finite \'etale over $S$.
\end{proof}

Given Lemma \ref{lem:G-for-proet-scheme}, we write $\cat{Tors}_{\mc{G}(\Z/p^n)}(S_\proet)$ instead of $\cat{Tors}_{\mc{G}_{\underline{\Z/p^n}}}(S_\proet)$, and call objects of this category \emph{$\mc{G}(\Z/p^n)$-torsors}. By Proposition \ref{prop:tannakian-formalism-proetale-scheme} and Proposition \ref{prop:torsors-twists-functors} we know that there is a natural equivalence of categories between $\GLoc_{\Z/p^n}(S)$ and $\cat{Tors}_{\mc{G}(\Z/p^n)}(S_\proet)$. Explicitly, this functor associates to a $\mc G(\Z/p^n)$-torsor $\mc A$ the $\mc{G}(\Z/p^n)$-local system $\omega_\mc{A}$ given by $\omega_\mc{A}(\Lambda)\defeq \mc A\wedge^\mc G\underline{\Lambda}$.

By a \emph{$G(\Q_p)$-local system} on $S$, we mean an exact $\Q_p$-linear $\otimes$-functor $\cat{Rep}_{\Q_p}(G)\to\cat{Loc}_{\Q_p}(S)$, the category of which we denote by $G\text{-}\cat{Loc}_{\Q_p}(S)$. The following trivial observation allows us to define a natural functor $(-)[\nicefrac{1}{p}]\colon \GLoc_{\Z_p}(S)\to G\text{-}\cat{Loc}_{\Q_p}(S)$. Let us denote by $\cat{Rep}_{\Z_p}(\mc{G})[\nicefrac{1}{p}]$ the category which has the same objects as $\cat{Rep}_{\Z_p}(\mc{G})$, denoted by either $\Lambda$ or $\Lambda[\nicefrac{1}{p}]$ for clarity, but where we set $\Hom_{\cat{Rep}_{\Z_p}(\mc{G})[\nicefrac{1}{p}]}(\Lambda[\nicefrac{1}{p}],\Lambda'[\nicefrac{1}{p}])$ to be $\cat{Hom}_{\cat{Rep}_{\Z_p}(\mc{G})}(\Lambda,\Lambda')[\nicefrac{1}{p}]$.

\begin{lem}\label{lem:rational-integral-rep-equiv} The functor
\begin{equation*}
    \cat{Rep}_{\Z_p}(\mc{G})[\nicefrac{1}{p}]\to \cat{Rep}_{\Q_p}(G),\qquad \Lambda\mapsto \Lambda[\nicefrac{1}{p}],
\end{equation*}
is an equivalence of categories. 
\end{lem}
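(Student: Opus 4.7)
The plan is to prove this in two parts: full faithfulness and essential surjectivity. The functor is well-defined since a $\mc{G}$-comodule structure $\sigma_\Lambda\colon \Lambda\to\Lambda\otimes_{\Z_p}\mc{O}(\mc{G})$ base changes along $\Z_p\to\Q_p$ to a $G$-comodule structure on the finite-dimensional $\Q_p$-vector space $\Lambda[\nicefrac{1}{p}]$, so one obtains an honest functor to $\cat{Rep}_{\Q_p}(G)$.

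For full faithfulness, I would write $\Hom_{\cat{Rep}_{\Z_p}(\mc{G})}(\Lambda,\Lambda')$ as the equalizer of the two maps $\Hom_{\Z_p}(\Lambda,\Lambda')\rightrightarrows \Hom_{\Z_p}(\Lambda,\Lambda'\otimes_{\Z_p}\mc{O}(\mc{G}))$ sending $f$ to $(f\otimes\id)\circ\sigma_\Lambda$ and to $\sigma_{\Lambda'}\circ f$, respectively. Because $\Lambda$ and $\Lambda'$ are finite free over $\Z_p$ and $\mc{O}(\mc{G})$ is $\Z_p$-flat (as $\mc{G}$ is smooth), every term in this diagram is $\Z_p$-flat. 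Hence inverting $p$ commutes with the equalizer, and since $\mc{O}(\mc{G})[\nicefrac{1}{p}]=\mc{O}(G)$ the $p$-inverted equalizer is exactly $\Hom_{\cat{Rep}_{\Q_p}(G)}(\Lambda[\nicefrac{1}{p}],\Lambda'[\nicefrac{1}{p}])$, as required.

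For essential surjectivity, given $V$ in $\cat{Rep}_{\Q_p}(G)$ with coaction $\sigma$, I must produce a $\mc{G}$-stable $\Z_p$-lattice $\Lambda\subset V$. Pick any $\Z_p$-lattice $L_0\subset V$; since $L_0$ is finitely generated there is an $N\geqslant 0$ such that $\sigma(L_0)\subset p^{-N}(L_0\otimes_{\Z_p}\mc{O}(\mc{G}))$ inside $V\otimes_{\Q_p}\mc{O}(G)$. I propose to take
\[
\Lambda\defeq\{v\in L_0 : \sigma(v)\in L_0\otimes_{\Z_p}\mc{O}(\mc{G})\}.
\]
Applying the counit $\id\otimes\varepsilon$ to the defining condition shows $\Lambda\subset L_0$, while $p^N L_0\subset\Lambda$ is immediate from the bound above, so $\Lambda$ is a full $\Z_p$-lattice in $V$.

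The main obstacle will be verifying that this $\Lambda$ is actually $\mc{G}$-stable, i.e.\ that $\sigma(\Lambda)\subset\Lambda\otimes_{\Z_p}\mc{O}(\mc{G})$ rather than merely $L_0\otimes_{\Z_p}\mc{O}(\mc{G})$. The expected argument uses coassociativity $(\sigma\otimes\id)\circ\sigma=(\id\otimes\Delta)\circ\sigma$: for $v\in\Lambda$ the right-hand side lies in $L_0\otimes_{\Z_p}\mc{O}(\mc{G})\otimes_{\Z_p}\mc{O}(\mc{G})$ because $\Delta$ preserves $\mc{O}(\mc{G})$, and by writing $\sigma(v)=\sum w_i\otimes b_i$ with the $b_i$ part of a $\Z_p$-basis of a sufficiently large free submodule of $\mc{O}(\mc{G})$ one deduces $\sigma(w_i)\in L_0\otimes\mc{O}(\mc{G})$, i.e.\ $w_i\in\Lambda$. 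Handling the fact that $\mc{O}(\mc{G})$, though $\Z_p$-flat, is not free is the only delicate point; alternatively, the existence of $\mc{G}$-stable $\Z_p$-lattices in finite-dimensional $G$-representations for a flat affine group scheme $\mc{G}$ over a Dedekind domain is a classical result (\textit{cf.}\ Serre's \emph{Abelian $\ell$-adic representations}) that can simply be cited.
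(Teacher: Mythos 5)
Your proof is correct, and the full-faithfulness half is essentially the paper's argument: both reduce to the fact that inverting $p$ is exact and that $\mc{O}(\mc{G})$ is $\Z_p$-flat (so the comodule condition on a $\Z_p$-linear map can be tested after inverting $p$); your equalizer formulation just makes this explicit. Where you diverge is essential surjectivity: the paper simply invokes \cite[Lemma 3.1]{Broshi}, which produces an $\mc{O}(\mc{G})$-comodule that is finite free over $\Z_p$ and contains a prescribed lattice of $V$, whereas you construct the stable lattice by hand as $\Lambda=\{v\in L_0:\sigma(v)\in L_0\otimes_{\Z_p}\mc{O}(\mc{G})\}$. This is the classical Serre-style argument and it does go through; moreover, the "delicate point" you flag (non-freeness of $\mc{O}(\mc{G})$) is handled more cleanly by flatness alone than by trying to choose bases: writing $\Lambda=\ker\bigl(L_0\to (V\otimes_{\Q_p}\mc{O}(G))/(L_0\otimes_{\Z_p}\mc{O}(\mc{G}))\bigr)$ and tensoring this left-exact sequence with the flat module $\mc{O}(\mc{G})$ identifies $\Lambda\otimes_{\Z_p}\mc{O}(\mc{G})$ with the preimage of $L_0\otimes\mc{O}(\mc{G})\otimes\mc{O}(\mc{G})$ under $\sigma\otimes\id$ inside $L_0\otimes\mc{O}(\mc{G})$, and coassociativity then gives $\sigma(\Lambda)\subseteq\Lambda\otimes_{\Z_p}\mc{O}(\mc{G})$ directly. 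Note also that $\Lambda\subseteq L_0$ is automatic from the counit, and $p^NL_0\subseteq\Lambda\subseteq L_0$ forces $\Lambda$ to be finite free, so it is genuinely an object of $\cat{Rep}_{\Z_p}(\mc{G})$. The trade-off is the usual one: your route is self-contained and elementary, while the paper's citation of Broshi buys the statement over a general Dedekind base with no extra work.
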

\begin{proof}Let $\Lambda$ and $\Lambda'$ be objects of $\cat{Rep}_{\Z_p}(\mc{G})$, viewed as right comodules of $A_\mc{G}\defeq \mc{O}_G(\mc{G})$ and view $\Lambda[\nicefrac{1}{p}]$ and $\Lambda'[\nicefrac{1}{p}]$ as right comodules of $A_\mc{G}[\nicefrac{1}{p}]=\mc{O}_G(G)$. Then, it suffices to show that under the natural isomorphism $\Hom_{\Z_p}(\Lambda,\Lambda')[\nicefrac{1}{p}]\to \Hom_{\Q_p}(\Lambda[\nicefrac{1}{p}],\Lambda'[\nicefrac{1}{p}])$ that the subsets of comodule homomorphisms are matched. But, this is obvious as $A_\mc{G}$ is a flat $\Z_p$-module. To show that this functor is essentially surjective, let $V$ be an $A_\mc{G}[\nicefrac{1}{p}]$-comodule. An $A_\mc{G}$-comodule lattice is obtained using \cite[Lemma 3.1]{Broshi} applied to $V$, by taking an $A_\mc{G}$-comodule, locally free (and thus free) as a $\Z_p$-module, containing any given $\Z_p$-lattice of $V$.
\end{proof}

Finally, it is useful to have a more concrete description of torsors for a pro-finite group $H$ on $S_\proet$. We set the following notation. 

\begin{nota}\label{nota:H} Let $H=\varprojlim H_n$ be a pro-finite group, where each $H_n$ is a finite group.
\end{nota}
Consider a projective system of schemes $\{X_n\}$, each equipped with the structure of a principal homogeneous space over $S$ for $H_n$ (see \stacks{049A}). The quotient sheaf $X_n/K_n$, where $K_n\defeq\ker(H_n\to H_{n-1})$, is representable by \cite[Expos\'e V, \S7, Th\'eor\`eme 7.1]{SGA3-1}, and $X_n/K_n\to S$ is naturally a principal homogeneous space for $H_{n-1}$. We say $(X_n)$ is an \emph{$H$-covering} if each $X_n\to X_{n-1}$ is $K_n$-equivariant (when the target is given the trivial action) and the induced map $X_n/K_n\to X_{n-1}$ is an $H_{n-1}$-equivariant isomorphism. A morphism of $H$-coverings $(X_n)\to (Y_n)$ is a compatible family of $H_n$-equivariant morphisms $X_n\to Y_n$.

An $S$-scheme $X$ equipped with an action of $H$ is a \emph{principal homogeneous space} for $H$ if $h_X$ with the induced action of $\underline{H}$ is an $\underline{H}$-torsor. Morphisms of principal homogeneous spaces are $H$-equivariant morphisms of $S$-schemes. If $(X_n)$ is an $H$-covering then $X_\infty\defeq \varprojlim X_n$ is a principal homogeneous space for $H$. Conversely, if $X$ is a principal homogeneous space for $H$, the system $(X_n)$ with $X_n\defeq X/K_n$ (a scheme by Proposition \ref{lem:torsor-representable}) is an $H$-covering with $X_\infty= X$.

Applying \cite[Lemma 7.3.9]{BhattScholzeProetale} to $\mc{T}\times^{\underline{H}}\underline{H}_n$ for every $n$, every object $\mc{T}$ of $\cat{Tors}_{\underline{H}}(S_\proet)$ is representable. We summarize the above as follows.

\begin{prop}\label{prop:G(Zp)-covering-PHS-torsor-comparison-scheme-case}
    For locally topologically Noetherian $S$, the following are equivalences:
 \begin{equation*} 
	\left\{\begin{matrix}H\emph{-coverings}\\ \emph{of }S\end{matrix}\right\}\xrightarrow{(X_n)\mapsto X_\infty} \left\{\begin{matrix}\emph{Principal homogenous}\\ \emph{spaces for }H\emph{ on }S\end{matrix}\right\} \xrightarrow{X\mapsto h_X}\cat{Tors}_{\underline{H}}(S_\proet).
\end{equation*}
\end{prop}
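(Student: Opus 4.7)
The plan is to treat the two functors separately, deducing essential surjectivity from the limit and quotient constructions sketched in the paragraph preceding the statement, and full faithfulness from Yoneda and the universal property of inverse limits.

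For the second functor $X\mapsto h_X$, essential surjectivity is exactly the observation recorded just above the statement: given $\mc{T}$ in $\cat{Tors}_{\underline{H}}(S_\proet)$, \cite[Lemma 7.3.9]{BhattScholzeProetale} applied to each $\mc{T}\wedge^{\underline{H}}\underline{H_n}$ yields a compatible system $(Y_n)$ of finite \'etale $S$-schemes, each a PHS for $H_n$. One checks $(Y_n)$ is an $H$-covering and that its limit $Y_\infty \defeq \varprojlim Y_n$ lies in $S_\proet$ and represents $\mc{T} = \varprojlim \mc{T}\wedge^{\underline{H}}\underline{H_n}$. For full faithfulness, both $X$ and $Y$ lie in $S_\proet$ (being pro\'etale over $S$, as they are inverse limits of finite \'etale covers), so Yoneda identifies $\Hom(h_X,h_Y)$ with $\Hom_S(X,Y)$, and under this identification $\underline{H}$-equivariance corresponds precisely to $H$-equivariance by representability of the action maps.

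For the first functor $(X_n)\mapsto X_\infty$, essential surjectivity amounts to recovering an $H$-covering from a PHS $X$ via $X_n \defeq X/K_n$. Each such quotient is representable by Proposition \ref{lem:torsor-representable} and inherits the structure of a PHS for $H_n = H/K_n$; the canonical map $X \to \varprojlim X_n$ is then a morphism of $\underline{H}$-torsors, hence an isomorphism. Full faithfulness is formal: a morphism $(X_n)\to (Y_n)$ of $H$-coverings is by definition a compatible family of $H_n$-equivariant maps $X_n \to Y_n$, which by the universal property of the projective limit is in canonical bijection with an $H$-equivariant map $X_\infty \to Y_\infty$ (using $X_n\cong X_\infty/K_n$ on the source side).

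The only genuine obstacle lies in verifying (i) that the quotient $X/K_n$ of the PHS $X$ by the closed normal subgroup $K_n \subseteq H$ exists as an $S$-scheme, and (ii) that the canonical comparison $X \isomto \varprojlim X/K_n$ is an isomorphism of sheaves (and hence, by representability, of schemes). Both points are handled by passing to a pro\'etale cover trivializing $h_X$, where the claim reduces to the elementary identification $H \cong \varprojlim H/K_n$ of profinite groups; the descent back to $S$ uses the representability statement in Proposition \ref{lem:torsor-representable}.
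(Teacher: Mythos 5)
Your proposal is correct and follows essentially the same route as the paper: the paper's argument (given in the paragraphs preceding the statement) likewise obtains representability of the quotients $X/K_n$ from the torsor-representability lemma (Lemma \ref{lem:torsor-representable}, via the finite constant group scheme $\underline{H_n}$), representability of an arbitrary $\underline{H}$-torsor from \cite[Lemma 7.3.9]{BhattScholzeProetale} applied to the pushforwards along $H\to H_n$, and treats the remaining compatibilities as formal consequences of Yoneda and the identification $h_X\isomto\varprojlim h_{X/K_n}$ of $\underline{H}$-torsors. The only cosmetic difference is that you trivialize pro\'etale locally to check the quotient and limit comparisons, whereas the paper cites \cite[Expos\'e V, \S7, Th\'eor\`eme 7.1]{SGA3-1} and fppf descent directly; these amount to the same thing.
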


\subsubsection{\texorpdfstring{$\mc{G}(\Z_p)$}{G(Zp)}-local systems on an adic space}\label{ss:G(Z_p)-local-systems-on-adic-spaces}

Let $X$ be a locally Noetherian adic space over $\Q_p$ (cf.\@ \cite[p.\@ 17]{ScholzepadicHT}), and let $X_\proet$ be the pro-\'etale site as in \cite[\S5.1]{BMSI}. As in \cite{ScholzepadicHT}, we call an object of $X_\proet$ \emph{affinoid perfectoid} if it can be represented as $(\Spa(R_i,R_i^+))$ with finite \'etale surjective transition maps, and the Huber pair $(R,R^+)=((\varinjlim R_i)^\wedge,(\varinjlim R_i^+)^\wedge)$ (endowed with the unique topology such that each $R_i\to R$ is adic) is perfectoid. In this case, $\Spa(R,R^+)\sim \varprojlim \Spa(R_i,R_i^+)$ (see \cite[\S2.4]{ScholzeWeinstein}). The affinoid perfectoid objects of $X_\proet$ form a basis of $X_\proet$ (cf.\@ \cite[Proposition 4.8]{ScholzepadicHT}). The site $X_\proet$ is not subcanonical (see \cite[Example 4.1.7]{ALY1P2}), but $h_Y$ and $h_Y^\#$ have the same value on affinoid perfectoid objects (see \cite[Proposition 4.1.8]{ALY1P2}). So, we abusively conflate the two.

We will again be interested in the ringed site $(X_\proet,\underline{\Z_p}_X)$, where 
\begin{equation*}
    \underline{\Z_p}_X=R\varprojlim \underline{\Z/p^n}_X=\varprojlim \underline{\Z/p^n}_X\in \cat{Shv}(X_\proet),
\end{equation*}
where $\underline{\Z/p^n}_X$ is the constant sheaf on $X_\proet$ associated to $\Z/p^n$, and the second equality follows now from  \cite[Proposition 8.2]{ScholzepadicHT}. Again, more generally, for a topological space $T$, we denote by $\underline{T}_X$ (or just $\underline{T}$ when $X$ is clear from context) the sheaf\footnote{That this is a sheaf can be deduced from the fact that if $\{Y_i\to Y\}$ is a cover, then $\coprod_i |Y_i|\to |Y|$ is a quotient map. By \cite[Lemma 2.5]{ScholzeECD}), to prove this it suffices to each $|Y_i|\to |Y|$ is generalizing, but this follows by combining \cite[Lemma 1.1.10]{HuberEC} and \cite[Lemma 2.11]{ScholzeECD}.}
\begin{equation*}
    \underline{T}_{X}\colon X_\proet\to\cat{Set},\qquad Y\mapsto \text{Hom}_\text{cont.}(Y,T).
\end{equation*}
If $T$ is totally disconnected, then $\Hom_\text{cont.}(Y,T)$ equals $\text{Hom}_\text{cont.}(\pi_0(Y),T)$, and if $T=\varprojlim T_i$ with each $T_i$ finite then $\underline{T}=\varprojlim \underline{T_i}$ where each $\underline{T_i}$ is the constant sheaf. 

For $n\in\mathbb{N}\cup\{\infty\}$, write $\cat{Loc}_{\Z/p^n}(X)$ for the category $\cat{Vect}(X_\proet,\underline{\Z/p^n})$ of \emph{$\Z/p^n$-local systems}. By \cite[Proposition 8.2]{ScholzepadicHT}, this category is equivalent to $\cat{Loc}_{\Z/p^n}(X_\et)$ as an exact $\Z_p$-linear $\otimes$-category. The objects of $\GLoc_{\Z/p^n}(X)$ are called \emph{$\mc{G}(\Z/p^n)$-local systems}. The following is proven, mutatis mutandis, as in Lemma \ref{lem:G-for-proet-scheme} and (the locally Noetherian case of) Proposition \ref{prop:tannakian-formalism-proetale-scheme} (or alternatively, one can use Remark \ref{rem: torsors on the diamond associated to a locally noetherian adic space} and Proposition \ref{prop:G-for-proet-diamonds}, which is proven independently).

\begin{prop}\label{prop:G-for-proet-adic}
    For $n\in\bb{N}\cup\{\infty\}$, there is an identification $ \mc{G}_{\underline{\Z/p^n}}\isomto \underline{\mc{G}(\Z/p^n)}$, the group $\mc{G}$ is reconstructible in $(X_\proet,\underline{\Z/p^n})$, and every object of $\GLoc_{\Z/p^n}(X)$ is locally trivial.
\end{prop}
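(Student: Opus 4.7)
The plan is to run the proofs of Lemma \ref{lem:G-for-proet-scheme} and Proposition \ref{prop:tannakian-formalism-proetale-scheme} essentially verbatim, using only the formal properties of $X_\proet$ and the ringed sheaves $\underline{\Z/p^n}_X$ developed in \cite{ScholzepadicHT}. For the identification in (1) we first reduce to $n$ finite by taking inverse limits, using the fact that $\mc{G}(\Z_p) = \varprojlim \mc{G}(\Z/p^n)$ as topological groups and that $\underline{\Z_p}_X=\varprojlim \underline{\Z/p^n}_X$. Then for $Y$ in $X_\proet$, we write $\underline{\Z/p^n}_X(Y)=\Hom_\text{cont.}(\pi_0(Y),\Z/p^n)=\varinjlim \Hom(D_i,\Z/p^n)$, where $\{D_i\}$ runs over the discrete quotients of $\pi_0(Y)$, and use that $\mc{G}$ is locally of finite presentation to commute $\mc{G}(-)$ past this filtered colimit. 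A standard currying bijection then matches the resulting description with $\underline{\mc{G}(\Z/p^n)}(Y)=\varinjlim \Hom(D_i,\mc{G}(\Z/p^n))$, functorially in $Y$ and $\mc{G}$ and compatibly in $n$.

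For the reconstructability statement (2), we again reduce to $n=\infty$. Using (1) and a tensor package $(\Lambda_0,\mathds{T}_0)$ as in Notation \ref{nota:Section-1}, we compare the evaluation maps
\begin{equation*}
    \mc{G}_{\underline{\Z_p}}(Y)\xrightarrow{a} \underline{\Aut}(\omega_\triv)(Y)\xrightarrow{b} \Hom_\text{cont.}(\pi_0(Y),\GL(\Lambda_0)),
\end{equation*}
exactly as in the proof of Proposition \ref{prop:tannakian-formalism-proetale-scheme}. The map $b$ is injective because every $\Lambda$ in $\cat{Rep}_{\Z_p}(\mc{G})$ is a subquotient of some $\Lambda_0^{\otimes}$ (\cite[Proposition 12]{dosSantos}), and $a$ is injective by (1); the image of $b$ preserves $\mathds{T}_0$ by functoriality, hence lies in the image of $c=b\circ a$, so $a$ is bijective.

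For local triviality (3), fix an object $\omega$ of $\GLoc_{\Z/p^n}(X)$ and consider the presheaf $\underline{\Isom}(\omega_\triv,\omega_n)$ on $X_\proet$ (resp.\ on locally Noetherian adic spaces over $X$) of $\otimes$-isomorphisms $\omega_\triv\to\omega_n$. As in Proposition \ref{prop:tannakian-formalism-proetale-scheme} it suffices, by the limit description (cf.\@ \cite[Proposition 8.2]{ScholzepadicHT}), to handle $n$ in $\bb{N}$ and show $\underline{\Isom}(\omega_\triv,\omega_n)$ is representable by a finite \'etale surjective cover of $X$. Via the faithful representation $\Lambda_0$ we realise it as the closed subscheme of $\underline{\Isom}(\underline{\Lambda_0/p^n},\omega_n(\Lambda_0))$ cut out by the conditions that $f$ and $f^{-1}$ preserve the special subrepresentations and the kernels of surjections between subquotients of tensor powers of $\Lambda_0$ (loc.\@ cit.). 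By \cite[Proposition 8.2]{ScholzepadicHT}, $\omega_n(\Lambda_0)$ is associated with a finite \'etale cover of $X$, so $\underline{\Isom}(\omega_\triv,\omega_n)$ is finite over $X$. To upgrade finiteness to finite \'etaleness, we invoke the analogue for adic spaces of the topological invariance of the pro-\'etale topos under infinitesimal thickenings (via $\cat{Loc}_{\Z/p^n}$), which reduces the \'etaleness to a statement on geometric points. Finally, surjectivity of $\underline{\Isom}(\omega_\triv,\omega_n)\to X$ is checked at a geometric point $\Spa(C,\mc{O}_C)\to X$, where $\cat{Loc}_{\Z/p^n}$ is bi-exactly $\Z_p$-linearly $\otimes$-equivalent to $\cat{Vect}(\Z/p^n)$; the twist $\underline{\Isom}(\omega_\triv,\omega_n)$ there becomes a $\mc{G}$-torsor over $\Spec(\Z/p^n)$, which is trivial since $H^1(\Spec(\Z/p^n),\mc{G})=0$ (as in Proposition \ref{prop:tannakian-formalism-proetale-scheme}).

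The step I expect to require the most care is the representability of $\underline{\Isom}(\omega_\triv,\omega_n)$ by a finite \emph{\'etale} cover in the adic setting: the scheme-theoretic argument used formal \'etaleness via nilpotent thickenings, which must be replaced by the appropriate invariance property in $X_\proet$ coming from \cite[Proposition 8.2]{ScholzepadicHT} and the topological invariance of the \'etale site for adic spaces.
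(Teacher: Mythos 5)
Your proposal is correct and matches the paper's approach exactly: the paper's entire proof of this proposition is the remark that it follows "mutatis mutandis" from Lemma \ref{lem:G-for-proet-scheme} and Proposition \ref{prop:tannakian-formalism-proetale-scheme}, and you have spelled out precisely that transfer. The one step you flag as delicate (finite \'etaleness of $\underline{\Isom}(\omega_\triv,\omega_n)$) can in fact be handled more directly in the adic setting, since by \cite[Proposition 8.2]{ScholzepadicHT} everything in sight is a finite locally constant sheaf on $X_\et$ and such sheaves are automatically represented by finite \'etale covers, so no analogue of the nilpotent-thickening argument is needed.
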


It will again be convenient to consider the category $\cat{Loc}_{\Q_p}(X)$ of \emph{$\Q_p$-local systems} on $X$, defined again using a formula as in \eqref{eq:Q_p-local-systems}. The category $\cat{Loc}_{\Q_p}(X)$ admits a fully faithful embedding into $\cat{Vect}(X_\proet,\underline{\Q_p})$, and so inherits an exact $\Q_p$-linear $\otimes$-structure from this embedding

\begin{rem} As in Remark \ref{rem:no-lattice}, the embedding $\cat{Loc}_{\Q_p}(X)\to \cat{Vect}(X_\proet,\underline{\Q_p})$ is not generally essentially surjective. The same reasoning applies, except now the relevant non-compact group is the de Jong fundamental group $\pi_1^\mr{dJ}(X,\ov{x})$ (see \cite{deJongFG} and \cite[Corollary 4.4.2]{ALY1P2}). Here the situation is more extreme though, as even for $X=\bb{P}^{1,\an}_{\bb{C}_p}$ the de Jong fundamental group is non-compact (cf.\@ \cite[Proposition 7.4]{deJongFG}).
\end{rem}

Given Proposition \ref{prop:G-for-proet-adic}, we write $\cat{Tors}_{\mc{G}(\Z/p^n)}(X_\proet)$ instead of $\cat{Tors}_{\mc{G}_{\underline{\Z/p^n}}}(X_\proet)$, and call objects of this category \emph{$\mc{G}(\Z/p^n)$-torsors}. Moreover, by Proposition \ref{prop:torsors-twists-functors} we know that there is a natural equivalence of categories between $\GLoc_{\Z/p^n}(X)$ and $\cat{Tors}_{\mc{G}(\Z/p^n)}(X_\proet)$.  Explicitly, this functor associates to a $\mc G(\Z/p^n)$-torsor $\mc A$ the $\mc{G}(\Z/p^n)$-local sytem $\omega_\mc{A}$ given by $\omega_\mc{A}(\Lambda)\defeq \mc P\wedge^\mc G\Lambda$.

We again define the category of \emph{$G(\Q_p)$-local systems on $X$}, denoted $G\text{-}\cat{Loc}_{\Q_p}(X)$, to be the category of exact $\Q_p$-linear $\otimes$-functors $\cat{Rep}_{\Q_p}(G)\to\cat{Loc}_{\Q_p}(X)$. Again by Lemma \ref{lem:rational-integral-rep-equiv}, we obtain a natural functor $(-)[\nicefrac{1}{p}]\colon \GLoc_{\Z_p}(X)\to G\text{-}\cat{Loc}_{\Q_p}(X)$. 

Finally, as in \S\ref{ss:G(Z_p)-local-system-scheme}, it is convenient to have a more down-to-earth definition of torsor for a pro-finite group. We use Notation \ref{nota:H}. The definition of an $H$-covering $(Y_n)$ and principal homogeneous spaces $Y$ for $H$ is verbatim to the case of schemes, and the proof of the next result is obtained mutatis mutandis from that of Proposition \ref{prop:G(Zp)-covering-PHS-torsor-comparison-scheme-case} (cf.\@ \cite[Theorem 4.4.1]{ALY1P2}).

\begin{prop}\label{prop:G(Zp)-covering-PHS-torsor-comparison-adic-case}
    The following functors are equivalences functorial in $H$:
 \begin{equation*} 
	\left\{\begin{matrix}H\emph{-coverings}\\ \emph{of }X\end{matrix}\right\}\xrightarrow{(Y_n)\mapsto Y_\infty} \left\{\begin{matrix}\emph{Principal homogenous}\\ \emph{spaces for }H\emph{ on }X\end{matrix}\right\} \xrightarrow{Y\mapsto h_Y}\cat{Tors}_{\underline{H}}(X_\proet).
\end{equation*}
\end{prop}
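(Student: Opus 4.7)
The plan is to mirror the proof of Proposition \ref{prop:G(Zp)-covering-PHS-torsor-comparison-scheme-case}, replacing the scheme-theoretic inputs by their adic-space counterparts.

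For the first arrow $(Y_n)\mapsto Y_\infty$, I would set $Y_\infty\defeq \varprojlim_n Y_n$. Since each transition map $Y_n\to Y_{n-1}$ is a $K_n$-torsor, and $K_n$ is finite, the transition maps are finite \'etale and the limit exists as an object of $X_\proet$ (this is precisely the sort of cofiltered limit with finite \'etale transition maps used throughout \S\ref{ss:G(Z_p)-local-systems-on-adic-spaces}). The compatible $H_n$-actions on the $Y_n$ assemble into an $H$-action on $Y_\infty$, and $h_{Y_\infty}$ is then checked to be an $\underline{H}$-torsor locally on $X_\proet$: after passing to an affinoid perfectoid cover trivializing each $Y_n$ (which exists as each $\mc{T}_n\defeq h_{Y_n}$ is locally trivial by Proposition \ref{prop:G-for-proet-adic}), the tower becomes the constant system with value $H=\varprojlim H_n$, and thus $h_{Y_\infty}$ becomes $\underline{H}$.

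For the quasi-inverse, given a principal homogenous space $Y$ for $H$, I would form $Y_n\defeq Y/K_n$. The quotient exists as an adic space by descent: after pulling back along a pro-\'etale cover which trivializes $Y$ as $\underline{H}\times X$, one has the manifest quotient by $\underline{K_n}$, and this quotient descends along the cover because the descent datum is $K_n$-equivariant and $K_n$ acts freely. The resulting $Y_n$ carries a natural $H_n$-action making it a principal homogeneous space for $H_n$, and the transition maps assemble $(Y_n)$ into an $H$-covering. That $Y_\infty\cong Y$ and conversely $\varprojlim(Y_n)=Y$ for a given $H$-covering $(Y_n)$ can be checked locally after trivializing $Y$, reducing to the obvious statements $H=\varprojlim H_n$ and $H/K_n=H_n$. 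For the second arrow $Y\mapsto h_Y$, full faithfulness reduces to the statement that a morphism of principal homogenous spaces is the same as a morphism of the underlying $\underline{H}$-torsors in $X_\proet$, which is immediate from $\underline{H}$-equivariance. For essential surjectivity, given $\mc{T}$ in $\cat{Tors}_{\underline{H}}(X_\proet)$, the pushouts $\mc{T}_n\defeq \mc{T}\wedge^{\underline{H}}\underline{H_n}$ are $\underline{H_n}$-torsors; by \cite[Theorem 4.4.1]{ALY1P2} each $\mc{T}_n$ is represented by a principal homogenous space $Y_n$ for $H_n$, the compatibility in $n$ makes $(Y_n)$ into an $H$-covering, and $Y_\infty$ represents $\mc{T}$.

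The main obstacle, and the only place where the adic-space argument genuinely differs from the scheme-theoretic one, is verifying that the finite group quotients $Y/K_n$ and the pro-finite-\'etale limit $Y_\infty=\varprojlim Y_n$ actually exist as adic spaces (or at least as representable objects of $X_\proet$). Both are ultimately consequences of the fact that pro-\'etale covers of affinoid perfectoid objects are themselves representable in the pro-\'etale site, together with the \'etale descent of finite \'etale morphisms; once these two representability statements are in hand, every remaining step is a formal repetition of the scheme-theoretic proof.
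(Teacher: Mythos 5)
Your proposal is correct and follows essentially the same route as the paper, which simply declares the proof to be obtained mutatis mutandis from the scheme-theoretic case (Proposition \ref{prop:G(Zp)-covering-PHS-torsor-comparison-scheme-case}), with the representability of $\underline{H}$-torsors in $X_\proet$ supplied by the cited result of \cite{ALY1P2} in place of \cite[Lemma 7.3.9]{BhattScholzeProetale}. Your identification of the two representability statements (the quotient $Y/K_n$ and the limit $Y_\infty$) as the only genuinely adic-space-specific inputs is exactly the right reading of what "mutatis mutandis" must cover here.
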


\subsubsection{Comparison via analytification}\label{ss:comparison-via-analytification} Fix a non-archimedean extension $K$ of $\Q_p$, $S$ to be a finite type $K$-scheme, and $H$ as in Notation \ref{nota:H}. Consider the analytification $S^\mathrm{an}\defeq S\times_{\Spec(K)}\Spa(K)$ of $S$ (see \cite[Proposition 3.8]{HuberGen}). By \cite[Theorem 3.1]{LutkebohmertRE}, we have an equivalence
\begin{equation*}
    \left\{\begin{matrix}H\text{-coverings}\\ \text{of }S\end{matrix}\right\}\isomto \left\{\begin{matrix}H\text{-coverings}\\ \text{of }S^\an\end{matrix}\right\},\qquad (Y_n)\mapsto (Y_n^\an).
\end{equation*}
On the other hand, using the morphism of sites $S^\an_\et\to S_\et$ as in \cite[\S3.8]{HuberEC}, we have a functor 
\begin{equation*}
    (-)^\an\colon \cat{Loc}_{\Z_p}(S)\to \cat{Loc}_{\Z_p}(S^\an),
\end{equation*} 
which is a bi-exact $\Z_p$-linear $\otimes$-equivalence by \cite[Theorem 3.1]{LutkebohmertRE} and the density of classical points. This induces an equivalence of $\mc{G}(\Z_p)$-local systems agreeing with the above equivalence of $\mc{G}(\Z_p)$-coverings under the equivalences of Proposition \ref{prop:G(Zp)-covering-PHS-torsor-comparison-scheme-case} and Proposition \ref{prop:G(Zp)-covering-PHS-torsor-comparison-adic-case}. We denote the quasi-inverse of $(-)^\an$ by $(-)^\mr{alg}$.

\subsubsection{Comparison on affinoids}\label{ss:comparison-on-affinoids} Let $H$ be as in Notation \ref{nota:H}. For a strongly Noetherian Huber pair $(A,A^+)$, we have, by \cite[Example 1.6.6 ii)]{HuberEC}, an equivalence
\begin{equation*}
    \left\{\begin{matrix}H\text{-coverings}\\ \text{of }\Spec(A)\end{matrix}\right\}\to \left\{\begin{matrix}H\text{-coverings}\\ \text{of }\Spa(A,A^+)\end{matrix}\right\}.
\end{equation*}
If $(A,A^+)$ is topologically of finite type over a non-archimedean extension $K$ of $\Q_p$, this gives
\begin{equation*}
    \cat{Loc}_{\Z_p}(\Spec(A))\isomto \cat{Loc}_{\Z_p}(\Spa(A,A^+)),
\end{equation*}
which is a bi-exact $\Z_p$-linear $\otimes$-equivalence by the density of classical points. If $\Spec(A)$ is connected, then the choice of a geometric point $\ov{x}$ gives a futher bi-exact $\Z_p$-linear $\otimes$-equivalence 
\begin{equation*}
    \cat{Loc}_{\Z_p}(\Spec(A))\to\cat{Rep}_{\mathbb{Z}_p}^\mathrm{cont.}(\pi_1^\et(\Spec(A),\ov{x})),\qquad \bb{L}\mapsto \bb{L}_{\ov{x}},
\end{equation*}
%with the target the category of continuous representations $\pi_1^\et(\Spec(A),\ov{x})\to \GL(\Lambda)$ for a finite free $\Z_p$-module $\Lambda$, with the obvious exact $\Z_p$-linear $\otimes$-structure 
(see \cite[Expos\'e VI, Proposition 1.2.5]{SGA5}). We often tacitly identify a $\Z_p$-local system on $\Spa(A,A^+)$ with such a representation.

\subsubsection{$\mc G(\Z_p)$-local systems on a diamond}
It will be convenient to interpret the notion of a $\mc G(\Z/p^n)$-local system in terms of the language of diamonds as in \cite[\S11]{ScholzeECD}. 
Let $Y$ be a diamond in the sense in \cite[Definition 11.1]{ScholzeECD} and $Y_\qproet$ denote the quasi-pro-\'etale site as in \cite[Definition 14.1 (ii)]{ScholzeECD}. 
Similarly as in \S\ref{ss:G(Z_p)-local-system-scheme}--\ref{ss:G(Z_p)-local-systems-on-adic-spaces}, for a topological space $T$, we consider the associated sheaf $\underline{T}_Y$ (as in \cite[Example 11.12]{ScholzeECD}). In particular, we have the ringed site $(Y_\qproet,\underline{\Z/p^n}_Y)$ and the sheaf of groups $\underline{\mc G(\Z/p^n)}_Y$ for $n\in\mathbb N\cup\{\infty\}$. 
Write $\cat{Loc}_{\Z/p^n}(Y)$ for the category $\cat{Vect}(Y_\qproet,\underline{\Z/p^n})$. 

\begin{rem}\label{rem: torsors on the diamond associated to a locally noetherian adic space}
    Let $X$ be a locally Noetherian adic space $X$ over $\Q_p$ and $X^\lozenge$ denote the diamond associated to $X$ (see \cite[Definition 15.5 and Lemma 15.6]{ScholzeECD}). 
    Then the morphism of sites $X^\lozenge_\qproet\to X_\proet$ induces a $\Z_p$-linear bi-exact $\otimes$-equivalence 
    \begin{equation*}
        \cat{Loc}_{\Z/p^n}(X)\isomto \cat{Loc}_{\Z/p^n}(X^\lozenge) 
    \end{equation*}
    and an equivalence of groupoids
    \begin{equation*}
        \cat{Tors}_{\underline{\mc G(\Z/p^n)}}(X)\isomto\cat{Tors}_{\underline{\mc G(\Z/p^n)}}(X^\lozenge). 
    \end{equation*}
    Indeed, when $n\in\mathbb N$, any of the above four categories is identified with the corresponding category for the \'etale site: 
    note first that the sheaf $\underline{\mc G(\Z/p^n)}$ on $X_\proet$ (resp.\ $X^\lozenge_\qproet$) is the pullback of the constant sheaf $\mc G(\Z/p^n)$ on $X_\et$ (resp.\@ $X^\lozenge_\et$); 
    then for the left (resp.\@ right) hand side, use the version of \cite[Corollary 3.17 (i)]{ScholzepadicHT} for sheaves of sets, which is shown by the same way as \cite[Proposition 8.5 (i)]{ScholzeECD}, (resp.\@ use \cite[Proposition 14.8]{ScholzeECD}) to see that the category of \'etale local systems/\'etale torsors is fully-faithfully embedded; 
    then use the version of \cite[Lemma 3.16]{ScholzepadicHT} for sheaves of sets, again shown as in \cite[Proposition 8.5 (ii)]{ScholzeECD}, (resp.\@ use \cite[Proposition 9.7, cf.\@ Definition 10.1]{ScholzeECD}) to see that any pro\'etale (resp.\@ quasi-pro\'etale) $\Z/p^n$-local system or $\underline{\mc G(\Z/p^n)}$-torsor is \'etale locally trivialized. 
    Then the equivalences follow from \cite[Lemma 15.6]{ScholzeECD}. By passage to limit we obtain the case of $n=\infty$, using the analogues of \cite[Proposition 6.8.4 (i)]{BhattScholzeProetale}, which holds as any $\underline{\mathcal{G}(\mathbb{Z}/p^n)}$-torsor on $X_\proet$ (resp.\@ $X^\lozenge_\qproet$) is trivialized by a finite \'etale cover. 
\end{rem}

\begin{prop}\label{prop:G-for-proet-diamonds}
    For $n\in\bb{N}\cup\{\infty\}$, there is an identification $ \mc{G}_{\underline{\Z/p^n}}\isomto \underline{\mc{G}(\Z/p^n)}$, the group $\mc{G}$ is reconstructible in $(Y_\qproet,\underline{\Z/p^n})$, and every object of $\GLoc_{\Z/p^n}(Y)$ is locally trivial.
\end{prop}
\begin{proof}
    The identification and the reconstructibility are proven mutatis mutandis as in Lemma \ref{lem:G-for-proet-scheme} and (the locally Noetherian case of) Proposition \ref{prop:tannakian-formalism-proetale-scheme}. 

    To prove the local triviality, we may assume that $Y$ is represented by a perfectoid space, in which case the assertion is proven in the proof of \cite[Proposition 22.6.1]{ScholzeBerkeley}. 
\end{proof}

\subsubsection{Comparison with \texorpdfstring{$v$}{v}-sheaves}\label{ss:comparison-for-v-sheaves} There is a unique functor 
\begin{equation*}
    (-)^\ad\colon \left\{\begin{matrix}p\text{-adic formal}\\\text{schemes}\end{matrix}\right\}\to \left\{\begin{matrix}\text{Pre-adic spaces}\\ \text{over }\Z_p\end{matrix}\right\},
\end{equation*}
sending open covers to open covers, and with $\Spf(A)^\ad=\Spa^Y(A,A)$ (see \cite[\S3.4]{ScholzeBerkeley}). For a $p$-adic formal scheme $\mf{Y}$, denote by $\mf{Y}^\lozenge$ the $v$-sheaf over $\Spd(\Z_p)$ associated to $\mf{Y}^\ad$ as in \cite[\S18.1]{ScholzeBerkeley}. Set $(\mf{Y}^\lozenge)_\eta\defeq \mf{Y}^\lozenge \times_{\Spd(\Z_p)}\Spd(\Q_p)$. For an affine open $\Spf(A)\subseteq \mf{Y}$, the pair $(A[\nicefrac{1}{p}],\wt{A})$ is a Huber pair over $(\Q_p,\Z_p)$, with $\wt{A}$ the integral closure of $A$ in $A[\nicefrac{1}{p}]$ and $A[\nicefrac{1}{p}]$ given the unique ring topology with the image of $A$ in $A[\nicefrac{1}{p}]$ open. The diamond $(\mf{Y}_\eta)^\lozenge$ associated to $\mf{Y}_\eta\defeq \colim \Spa^Y(A[\nicefrac{1}{p}],\wt{A})$ over $\Q_p$ (see \cite[\S10.1]{ScholzeBerkeley}) agrees with $(\mf{Y}^\lozenge)_\eta$. Denote the common object by $\mf{Y}_\eta^\lozenge$. 

Consider the quasi-pro-\'etale site $\mf{Y}_{\eta,\qproet}^\lozenge$ as in \cite[Definition 14.1]{ScholzeECD}. Consider
\begin{equation*}
    \underline{\Z_p}_{\mf{Y}}=R\lim \underline{\Z/p^n}_{\mf{Y}}=\lim \underline{\Z/p^n}_{\mf{Y}}
\end{equation*}
where $\underline{\Z/p^n}_\mf{Y}$ is the constant sheaf, and the second equality follows from \cite[Lemma 7.18]{ScholzeECD} and \cite[Proposition 3.1.10 and Proposition 3.2.3]{BhattScholzeProetale}. Define $\cat{Loc}_{\Z_p}(\mf{Y}_\eta)$ to be $\cat{Vect}(\mf{Y}_{\eta,\qproet}^\lozenge,\underline{\Z_p}_\mf{Y})$. 

If $\mf{Y}\to \Spf(\mc{O}_K)$ is locally of finite type, then $\mf{Y}_\eta$ is the rigid $K$-space associated to $\mf{Y}$ (see \cite[\S1.9]{HuberEC} or \cite[\S A.5]{FujiwaraKato}). Thus, in this case we have already defined an exact $\Z_p$-linear $\otimes$-category $\cat{Loc}_{\Z_p}(\mf{Y}_\eta)$, and so there is potential for ambiguity. But, with $H$ as in Notation \ref{nota:H}, defining $H$-coverings in the obvious way, one may again show that there is an equivalence of categories between such $H$-coverings and $\cat{Loc}(\mf{Y}_\eta)$. 
Note \cite[Lemma 15.6]{ScholzeECD} gives
\begin{equation*}
    \left\{\begin{matrix}H\text{-coverings}\\ \text{of }\mf{Y}_\eta\end{matrix}\right\}\isomto \left\{\begin{matrix}H\text{-coverings}\\ \text{of }\mf{Y}_\eta^\lozenge\end{matrix}\right\},\qquad (Y_n)\mapsto (Y_n^\lozenge).
\end{equation*}
Using this equivalence and \cite[Proposition 14.3]{ScholzeECD} (for the $\Z/p^n\Z$-local system for each $n\geqslant 1$) one obtains a bi-exact $\Z_p$-linear $\otimes$-equivalence \begin{equation*}
    \cat{Loc}_{\Z_p}(\mf{Y}_\eta)\isomto \cat{Vect}(\mf{Y}_{\eta,\qproet}^\lozenge,\underline{\Z_p}_\mf{Y}). 
\end{equation*} 
Thus, no abmiguity actually occurs if $\mf{Y}\to\Spf(\mc{O}_K)$ is locally of finite type.

If $\mf{Y}$ is quasi-syntomic, then there is a natural functor 
\begin{equation*}
    \cat{Loc}_{\Z_p}(\mf{Y}_\eta)\to \twolim_{\scriptscriptstyle \Spf(R)\in\mf{Y}_\qrsp}\cat{Loc}_{\Z_p}(\Spf(R)_\eta). 
\end{equation*}
This is a bi-exact $\Z_p$-linear $\otimes$-equivalence by the next lemma and \cite[Proposition 9.7]{ScholzeECD}, which we apply by using a limit argument to reduce to the case of finite coefficients.

\begin{lem}\label{lem:quasi-syn-cover-v-sheaf-cover} Let $\{\mf{Y}_i\to\mf{Y}\}$ be a faithfully flat cover of $p$-adic formal schemes. Then, the collection $\{\mf{Y}_{i,\eta}^\lozenge\to\mf{Y}^\lozenge_\eta\}$ is a cover of $v$-sheaves.
\end{lem}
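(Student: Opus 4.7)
The plan is to reduce to the affine case and then prove surjectivity on geometric points. First, as the property of being a cover of $v$-sheaves is local on the target, we may assume $\mf{Y} = \Spf(A)$ is affine. By the quasi-compactness of $\Spf(A)$ we may refine to a finite subcover and replace $\{\mf{Y}_i\}$ by a single formal scheme $\mf{Z} = \Spf(B)$ with $B = \prod_{i \in F} B_i$ for a finite subset $F$, so that $A \to B$ is adically faithfully flat. It thus suffices to show that $\mf{Z}_\eta^\lozenge \to \mf{Y}_\eta^\lozenge$ is a $v$-cover of $v$-sheaves. Using Scholze's criterion that a morphism of small $v$-sheaves is a $v$-cover precisely when it is surjective on geometric points (cf.\@ \cite{ScholzeECD}), this reduces to showing: for every geometric point $\Spa(C, C^+) \to \mf{Y}_\eta^\lozenge$ with $C$ a complete algebraically closed non-archimedean field over $\Q_p$, there exist a $v$-cover $\Spa(D, D^+) \to \Spa(C, C^+)$ and a lift to $\mf{Z}_\eta^\lozenge$. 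Unwinding the definitions, such a geometric point corresponds to a continuous ring homomorphism $A \to C^+$, and the sought-after lift corresponds to a continuous ring homomorphism $B \to D^+$ extending $A \to C^+ \to D^+$.

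To produce $D^+$, I would form $\wh{M} \defeq (B \otimes_A C^+)^{\wedge}_p$, the $p$-adic completion of the base change; the adic faithful flatness of $A \to B$ guarantees that $C^+ \to \wh{M}$ is $p$-completely faithfully flat. Let $D^+$ be the perfectoidization of $\wh{M}$ in the sense of Bhatt--Scholze, which is a perfectoid $C^+$-algebra to which $\wh{M}$ (and hence $B$) naturally maps; since $C^+$ is already perfectoid, the map $C^+ \to D^+$ remains $p$-completely faithfully flat. Setting $D = D^+[\nicefrac{1}{p}]$, the morphism $\Spa(D, D^+) \to \Spa(C, C^+)$ of affinoid perfectoid spaces is then a $v$-cover, and the composition $B \to \wh{M} \to D^+$ yields the required continuous lift, concluding the argument.

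The main technical obstacle is the rigorous justification that perfectoidization preserves $p$-complete faithful flatness over a perfectoid base, combined with the formal identification of the $v$-cover condition on $v$-sheaves with the surjectivity-on-geometric-points criterion used above (which ultimately uses quasi-compactness to avoid set-theoretic issues). An alternative route, avoiding perfectoidization entirely, would be to choose a prime $\mf{q}$ of $\wh{M}$ lying over $(0) \subseteq C^+$ (such a prime exists because $\wh{M} \otimes_{C^+} C$ is faithfully flat over the nonzero ring $C$, hence nonzero), form the field $F = \Frac(\wh{M}/\mf{q})$ extending $C$, and apply the Chevalley--Zariski extension theorem to produce a valuation ring $D^+ \subseteq F$ (or inside the completed algebraic closure of $F$) dominating $C^+$ and containing the image of $B$; the subtlety here is the boundedness of $B$'s image in $F$, which is precisely what is enforced by the $p$-adic completion used in defining $\wh{M}$.
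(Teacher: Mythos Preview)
Your proposal is essentially correct, and your ``alternative route'' is very close to what the paper does. The paper's framing differs slightly: rather than invoking a geometric-points criterion for $v$-covers directly, it applies \cite[Lemma 12.11]{ScholzeECD} to reduce to surjectivity of $\bigsqcup_i |\mf{Y}_{i,\eta}^\lozenge|\to |\mf{Y}_\eta^\lozenge|$ together with a quasi-compactness condition, then uses \cite[Lemma 15.6]{ScholzeECD} to identify these with the underlying spaces $|\mf{Y}_{i,\eta}|$ and $|\mf{Y}_\eta|$ of the pre-adic generic fibers. This reduces (after passing to affines) to showing $|\Spf(B)_\eta|\to|\Spf(A)_\eta|$ is surjective, which by \cite[Proposition 2.1.6]{ScholzeWeinstein} amounts to lifting an arbitrary affinoid-field point $A\to K^+$ (not necessarily with $K$ perfectoid or algebraically closed) along $A\to B$ after an extension of $(K,K^+)$. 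The paper then cites \cite[\S2.2.1, Example (2)]{CesnaviciusScholze} for exactly the valuation-theoretic lift you outline in your alternative route. So the core content is the same; the paper just avoids the perfectoid-points language by passing through the adic-space level first.

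Your primary route via perfectoidization is where you should be cautious. The assertion that perfectoidization of $\wh{M}=(B\otimes_A C^+)^\wedge_p$ over the perfectoid base $C^+$ remains $p$-completely faithfully flat is not a formal consequence of the definitions, and you would need something like \cite[Theorem 7.14]{Bha20} or a direct argument with Andr\'e's lemma to justify it; moreover, perfectoidization as in \cite{BhattScholzePrisms} is only guaranteed to be a genuine ring (rather than a derived object) under additional hypotheses such as semiperfectoidness. Your alternative route sidesteps all of this and is both cleaner and complete: the existence of a prime of $\wh{M}$ over $(0)\subset C^+$ follows from faithful flatness of $C^+\to\wh{M}$, and the extension of the valuation to $\Frac(\wh{M}/\mf{q})$ is standard (this is precisely the content of the \v{C}esnavi\v{c}ius--Scholze reference). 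I would recommend presenting only the alternative route.
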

\begin{proof} By \cite[Lemma 12.11]{ScholzeECD} it suffices to show that $\bigsqcup_i |\mf{Y}_{i,\eta}^\lozenge|\to |\mf{X}_\eta^\lozenge|$ is surjective and any quasi-compact open of the target is covered by a quasi-compact open of the source. By \cite[Lemma 15.6]{ScholzeECD} this is equivalent to proving this claim for $\bigsqcup_i |\mf{Y}_{i,\eta}|\to |\mf{X}_\eta|$. This reduces to showing that if $\Spf(B)\to\Spf(A)$ is faithfully flat then $\Spf(B)_\eta\to\Spf(A)_\eta$ is surjective. By \cite[Proposition 2.1.6]{ScholzeWeinstein}, we must show that for an affinoid field $(K,K^+)$ over $(\Q_p,\Z_p)$ and a morphism $\Spf(K^+)\to \Spf(A)$ there exists a surjection $\Spf(L^+)\to \Spf(K^+)$, with $(L,L^+)$ an affinoid field over $(\Q_p,\Z_p)$, so that $\Spf(L^+)\to \Spf(A)$ lifts to $\Spf(B)$. This follows from the discussion in \cite[Example (2), \S2.2.1]{CesnaviciusScholze}, as the argument there does not use that $K^+$ has rank at most $1$.
\end{proof}

\subsection{The \'etale realization functor}\label{ss: T et} Let $\mf{X}$ be a quasi-syntomic flat formal $\Z_p$-scheme, and $X=\mf{X}_\eta^\lozenge$. We discuss the equivalence between $\Z_p$-local systems on $X$ and prismatic Laurent $F$-crystals on $\mf{X}$ given in \cite[Corollary 3.7]{BhattScholzeCrystals}, explicating its bi-exactness.

Note that $\phi\colon \mc{O}_\Prism\to\mc{O}_\Prism$ induces a morphism $\phi\colon \mc{O}_\Prism[\nicefrac{1}{\mc{I}_\Prism}]^\wedge_p\to \mc{O}_\Prism[\nicefrac{1}{\mc{I}_\Prism}]^\wedge_p$.\footnote{For $(A,I)$, the map $\phi_A$ induces an endomorphism of $A[\nicefrac{1}{I}]/p^nA[\nicefrac{1}{I}]$ as $\phi_A(d)=d^p+p\delta(d)$ for any $d$ in $I$, which as $p$ is nilpotent in $A/p^n$, implies that the image of $I$ under $\phi_A\colon A/p^n\to A/p^n[\nicefrac{1}{I}]$ generates the unit ideal.\label{footnote:laurent-frobenius}} As in \cite[Definition 3.2]{BhattScholzeCrystals}, define the category $\cat{Vect}^\varphi(\mf{X}_\Prism,\mc{O}_\Prism[\nicefrac{1}{\mc{I}_\Prism}]^\wedge_p)$ of \emph{prismatic Laurent $F$-crystals on $\mf{X}$} to consist of pairs $(\mc{L},\varphi)$ with $\mc{L}$ an object of $\cat{Vect}(\mf{X}_\Prism,\mc{O}_\Prism[\nicefrac{1}{\mc{I}_\Prism}]^\wedge_p)$ and $ \varphi\colon \phi^\ast\mc{L}\isomto \mc{L}$ an isomorphism in $\cat{Vect}(\mf{X}_\Prism,\mc{O}_\Prism[\nicefrac{1}{\mc{I}_\Prism}]^\wedge_p)$, called the \emph{Frobenius}, and morphisms are morphisms in $\cat{Vect}(\mf{X}_\Prism,\mc{O}_\Prism[\nicefrac{1}{\mc{I}_\Prism}]^\wedge_p)$ commuting with the Frobenii.

We can endow $\cat{Vect}^\varphi(\mf{X}_\Prism,\mc{O}_\Prism[\nicefrac{1}{\mc{I}_\Prism}]^\wedge_p)$ with the structure of an exact $\Z_p$-linear $\otimes$-category, where it inherits its exact structure from $\cat{Vect}(\mf{X}_\Prism,\mc{O}_\Prism[\nicefrac{1}{\mc{I}_\Prism}]^\wedge_p)$. By \cite[Proposition 2.7]{BhattScholzeCrystals}, together with an argument as in the proof of Proposition \ref{prop:vector-bundle-limit-decomp}, with $\cat{Vect}^\varphi(A[\nicefrac{1}{I}]^\wedge_p)$ having the obvious meaning, taking global sections gives a bi-exact $\Z_p$-linear $\otimes$-equivalence
\begin{equation*}
\cat{Vect}^\varphi(\mf{X}_\Prism,\mc{O}_\Prism[\nicefrac{1}{\mc{I}_\Prism}]^\wedge_p) \isomto \twolim_{\scriptscriptstyle (A,I)\in\mf{X}_\Prism}\cat{Vect}^\varphi(A[\nicefrac{1}{I}]^\wedge_p),
\end{equation*}
where the right-hand side is given the term-by-term exact and $\Z_p$-linear $\otimes$-structure.

In \cite[\S3]{BhattScholzeCrystals}, Bhatt and Scholze define an \emph{\'etale realization functor}
\begin{equation*}
        T_{\mf{X},\et}\colon \cat{Vect}^\varphi(\mf{X}_\Prism,\mc{O}_\Prism[\nicefrac{1}{\mc{I}_\Prism}]^\wedge_p)\isomto \cat{Loc}_{\Z_p}(X).
    \end{equation*}
which is an equivalence by \cite[Corollary 3.8]{BhattScholzeCrystals} (cf.\@ \cite{MinWang} and \cite{Wu}). 
When $\mf{X}$ is clear from context, we shall drop $\mf{X}$ from the notation. We now wish to show the following.

\begin{prop}\label{prop:Laurent-crystal-equivalence} The functor $T_{\mf{X},\et}$ is a bi-exact $\Z_p$-linear $\otimes$-equivalence.
\end{prop}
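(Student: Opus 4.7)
The $\Z_p$-linearity and $\otimes$-compatibility of $T_\et$ can be extracted from its construction in \cite[\S3]{BhattScholzeCrystals}: as in Proposition \ref{prop:vector-bundle-limit-decomp}, global sections yields a $\Z_p$-linear $\otimes$-equivalence
\begin{equation*}
\cat{Vect}^\varphi(\mf{X}_\Prism,\mc{O}_\Prism[\nicefrac{1}{\mc{I}_\Prism}]^\wedge_p) \isomto \twolim_{(A,I)\in\mf{X}_\Prism} \cat{Vect}^\varphi(A[\nicefrac{1}{I}]^\wedge_p),
\end{equation*}
and on each prism $T_\et$ is defined by a manifestly $\Z_p$-linear and tensor-preserving procedure (a relative Fontaine functor $M \mapsto M^{\varphi=1}$), so these structures pass to the limit.

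For bi-exactness, the plan is to reduce to the case of a perfect prism via a cover argument and then apply Artin--Schreier. Exactness of a sequence in the source is term-wise exactness on every prism in a cover of $\ast$ in $\cat{Shv}(\mf{X}_\Prism)$, while exactness in $\cat{Loc}_{\Z_p}(X)$ is a sheaf condition on the pro-\'etale site of $X$ that is tested on affinoid perfectoid objects (where local systems are finite projective $\Z_p$-modules). Taking a qrsp cover $\{\Spf(R_i)\}$ of $\mf{X}$ with each $R_i$ further covered by a perfectoid ring $S_i$, Proposition \ref{prop:cover-of-final-object-qrsp} yields a cover $\{(\Ainf(S_i),(\xi_i))\}$ of $\ast$ in $\cat{Shv}(\mf{X}_\Prism)$, and by \S\ref{ss:comparison-for-v-sheaves} the corresponding $\Spa(S_i[\nicefrac{1}{p}], S_i)$ form a pro-\'etale cover of $X$. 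On each such perfect prism, $T_\et$ specializes to the classical Fontaine equivalence $M \mapsto M^{\varphi=1}$ between \'etale $\varphi$-modules over $\Ainf(S_i)[\nicefrac{1}{\xi_i}]^\wedge_p$ and $\Z_p$-local systems on $\Spa(S_i[\nicefrac{1}{p}], S_i)$, with quasi-inverse $\bb{L} \mapsto \bb{L} \otimes_{\Z_p} \Ainf(S_i)[\nicefrac{1}{\xi_i}]^\wedge_p$. Both directions are exact: the quasi-inverse by the $p$-complete flatness of $\Ainf(S_i)[\nicefrac{1}{\xi_i}]^\wedge_p$ over $\Z_p$ (combined with the fact that relevant local systems are finite projective), and the forward direction by the Artin--Schreier short exact sequence
\begin{equation*}
0 \to \bb{L} \to \bb{L} \otimes_{\Z_p} \Ainf(S_i)[\nicefrac{1}{\xi_i}]^\wedge_p \xrightarrow{\varphi - 1} \bb{L} \otimes_{\Z_p} \Ainf(S_i)[\nicefrac{1}{\xi_i}]^\wedge_p \to 0
\end{equation*}
being exact in the pro-\'etale topology, which forces $(-)^{\varphi=1}$ to be exact on the relevant category of \'etale $\varphi$-modules.

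The main obstacle I anticipate is the careful matching of the prismatic cover with the pro-\'etale cover of $X$, so that exactness can be transferred coherently between the two sides; in particular, one must verify that term-wise exactness on each perfect prism really does detect sheaf-theoretic exactness on $X_\proet$ and vice versa. This matching is to be handled by combining Proposition \ref{prop:cover-of-final-object-qrsp}, Lemma \ref{lem:quasi-syn-cover-v-sheaf-cover}, and the explicit construction of $T_\et$ via pro-\'etale descent on the associated diamond, after which the Artin--Schreier exactness recalled above completes the argument.
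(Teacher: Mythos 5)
Your overall strategy is the same as the paper's in its first half (reduce by descent to a perfectoid situation, then run an Artin--Schreier-type argument), but the endgame is genuinely different. The paper follows the descent structure of the proof of \cite[Corollary 3.7]{BhattScholzeCrystals} verbatim: it first reduces to $\mf{X}=\Spf(R)$ with $R$ qrsp and $p$-torsion-free (exactness being preserved by Lemma \ref{lem:quasi-syn-cover-v-sheaf-cover}), then uses the bi-exact invariance of Laurent $F$-crystals under perfection (Lemma \ref{lem:laurent-invariance-perfection-qrsp}) together with $\Spf(R_\perfd)^\lozenge_\eta\isomto\Spf(R)^\lozenge_\eta$ to reduce to $R$ perfectoid, and finally passes through the tilting equivalence of \cite[Theorem 6.3]{ScholzeECD} and the identification $W(R^\flat)[\nicefrac{1}{\tilde\xi}]^\wedge_p=W(R^\flat[\nicefrac{1}{\varpi^\flat}])$ of strict $p$-rings, so that everything collapses to bi-exactness of the mod-$p$ Katz equivalence $\cat{Loc}_{\bb{F}_p}(S)\isomto\cat{Vect}^\varphi(S)$, which is immediate from faithful flatness of $\bb{F}_p\to\mc{O}_S$. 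You instead stay in mixed characteristic over $\Ainf(S)[\nicefrac{1}{\xi}]^\wedge_p$ and argue exactness of $(-)^{\varphi=1}$ directly from the pro-\'etale Artin--Schreier(--Witt) sequence; this is a valid alternative (it is essentially the Kedlaya--Liu \cite{KeLiRpHF} formulation), and the flatness argument for the quasi-inverse is fine once one notes that local systems trivialize pro-\'etale locally. What the paper's route buys is that the two reductions it performs are exactly the ones already built into the Bhatt--Scholze proof, so the compatibility of exactness on the two sides comes for free at each stage.

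The one place where your writeup falls short of a proof is the step you yourself flag: transferring exactness between term-wise exactness on the chosen perfect prisms and sheaf-theoretic exactness on $X_{\qproet}$ (and, in the other direction, descending exactness from the cover $\{(\Ainf(S_i),(\xi_i))\}$ back to an arbitrary $(A,I)$ in $\mf{X}_\Prism$). This is not a fatal obstruction --- it follows from Proposition \ref{prop:cover-of-final-object-qrsp}, Lemma \ref{lem:quasi-syn-cover-v-sheaf-cover}, and faithfully flat descent of exactness for sequences of finite projective modules over $A[\nicefrac{1}{I}]^\wedge_p$ --- but it does need to be said, and your $\Spa(S_i[\nicefrac{1}{p}],S_i)$ form a $v$-cover of $\mf{X}_\eta^\lozenge$ rather than a pro-\'etale cover in the strict sense, so the exactness test on the local-system side should be phrased via $v$-descent. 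The paper sidesteps all of this by organizing the reduction exactly as in loc.\@ cit.\@, which is why its proof can afford to be so short.
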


Our proof essentially recounts the construction of Bhatt--Scholze, verifying exactness at each step. Let $R$ be a qrsp ring, and define 
\begin{equation*}
    \Prism_{R,\infty}\defeq (\varinjlim_\phi \Prism_R)^\wedge_{(p,I_R)},\quad I_{R,\infty}=I_R\Prism_{R,\infty}.
\end{equation*}
Then, $(\Prism_{R,\infty},I_{R,\infty})$ is a perfect prism and the $p$-adically complete $R$-algebra 
\begin{equation*}
    \Prism_{R,\infty}/I_{R,\infty}=(\varinjlim_\phi \Prism_R/I_R\varinjlim_\phi \Prism_R)^\wedge_p\efdeq R_\perfd 
\end{equation*}
is perfectoid and initial amongst maps from $R$ to a perfectoid ring (see \cite[Corollary 7.3]{BhattScholzePrisms}). Thus, the maps $\Spf(R_\perfd)^\lozenge\to\Spf(R)^\lozenge$ and $\Spf(R_\perfd)^\lozenge_\eta\to\Spf(R)^\lozenge_\eta$ are isomorphisms: indeed, for a perfectoid $(A,A^+)$, by using \cite[Proposition 2.1.6 (i)]{ScholzeWeinstein} the set  $\Spf(R)^\lozenge((A,A^+))$ is identified with the set $\Hom((R,R),(A,A^+))$ of maps of Huber pairs, which is nothing but the set $\Hom(R,A^+)$ of maps of $p$-adic rings; similarly for $\Spf(R_\perfd)^\lozenge$. 
%footnote{The reader may find it helpful to consult \cite[Proposition 2.1.6]{ScholzeWeinstein} for this deduction.}

We now briefly recall the construction of the \'etale realization functor $T_{\mf X,\et}$. When $\mf X=\Spf(S)$ with $S$ a perfectoid $p$-adic ring, the equivalence 
\begin{equation*}
    \cat{Loc}_{\Z_p}(X)\isomto \cat{Vect}^\varphi(\mf X_\Prism,\mc O_\Prism[\nicefrac{1}{\mc I}]^\wedge_p)=\cat{Vect}^\varphi(\Prism_S[\nicefrac{1}{I_S}]_p^\wedge)
\end{equation*}
is defined by passing to the tilt $X^\flat=\Spf(S^\flat)_\eta$ and applying the scalar extension along  $\Z_p\to W(\mc O_{X^\flat,\et})$ (see \cite[Example 3.5]{BhattScholzeCrystals} for the details). 
When $\mf X=\Spf(R)$ with $R$ a qrsp $p$-adic ring, the equivalence $T_{\mf X,\et}$ is defined via the isomorphism $\Spf(R_\mr{perfd})_\eta^\lozenge\to \Spf(R)^\lozenge_\eta$ and the scalar extension functor 
\begin{equation*}
    \cat{Vect}^\varphi(\Prism_R[\nicefrac{1}{I_R}]_p^\wedge)\to\cat{Vect}^\varphi(\Prism_{R,\infty}[\nicefrac{1}{I_{R,\infty}}]_p^\wedge), 
\end{equation*}
which is checked to be an equivalence by passing to mod $p$ and using \cite[Proposition 3.6]{BhattScholzeCrystals}. 
In general, for a quasi-syntomic $p$-adic formal scheme $\mf X$, the equivalence $T_{\mf X,\et}$ is defined via quasi-syntomic descent (cf.\ Lemma \ref{lem:quasi-syn-cover-v-sheaf-cover}).

\begin{lem}[{cf.\@ \cite[Proposition 3.6 and Corollary 3.7]{BhattScholzeCrystals}}]\label{lem:laurent-invariance-perfection-qrsp} 
Let $R$ be a $p$-torsion-free qrsp ring. Then, the base change functor
\begin{equation*}
\cat{Vect}^\varphi(\Prism_R[\nicefrac{1}{I_R}]^\wedge_p)\to \cat{Vect}^\varphi(\Prism_{R,\infty}[\nicefrac{1}{I_{R,\infty}}]^\wedge_p)
\end{equation*}
is a bi-exact $\Z_p$-linear $\otimes$-equivalence functorial in $R$.
\end{lem}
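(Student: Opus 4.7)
The functor in question is given by base change along the ring map $A \defeq \Prism_R[\nicefrac{1}{I_R}]^\wedge_p \to \Prism_{R,\infty}[\nicefrac{1}{I_{R,\infty}}]^\wedge_p \efdeq B$, intertwining Frobenii; consequently, $\Z_p$-linearity, the tensor structure, and functoriality in $R$ (which follows from the functoriality of $\Prism_R$ and $\Prism_{R,\infty}$ as initial objects in their respective categories) are immediate from the construction, and the statement that this base change is an equivalence is precisely \cite[Corollary 3.7]{BhattScholzeCrystals}. Hence the content beyond \emph{loc.\ cit.} is bi-exactness.

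My plan is to show that $A \to B$ is faithfully flat; granted this, tensoring with $B$ both preserves and reflects short exact sequences of arbitrary $A$-modules, and \emph{a fortiori} of the finite projective $\varphi$-modules constituting $\cat{Vect}^\varphi(A)$ and $\cat{Vect}^\varphi(B)$, whose exact structures are inherited from the underlying module categories. Hence both the forward functor and its quasi-inverse are exact. The key input is \cite[Proposition 3.5]{BhattScholzeCrystals}: for any bounded prism $(\Prism, I)$, the Frobenius $\phi\colon \Prism[\nicefrac{1}{I}]^\wedge_p \to \Prism[\nicefrac{1}{I}]^\wedge_p$ is faithfully flat. Applied iteratively to $(\Prism_R, I_R)$, each transition map in $A \xrightarrow{\phi} A \xrightarrow{\phi} \cdots$ is faithfully flat, so the filtered colimit $A_\infty \defeq \varinjlim_\phi A$ is faithfully flat over $A$. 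One then identifies $B$ with $A_\infty{}^\wedge_p$ by unwinding the definition $\Prism_{R,\infty} \defeq (\varinjlim_\phi \Prism_R)^\wedge_{(p,I_R)}$ and commuting the colimit past localization at $I_R$; faithful flatness of $A \to B$ is then verified by reducing modulo $p^n$, where completion is trivial and $A/p^n \to B/p^n$ is identified with the faithfully flat filtered colimit $A/p^n \to A_\infty/p^n$.

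The principal technical obstacle is the identification $B \simeq A_\infty{}^\wedge_p$, which rests on commuting filtered colimits with both localization at $I_R$ and $p$-adic completion, carried out in the correct (uncompleted versus $(p,I_R)$-completed) categories. Once these manipulations are in place, combined with the Bhatt--Scholze faithful flatness of Frobenius, the remainder of the argument is formal.
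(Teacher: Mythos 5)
Your reduction of bi-exactness to faithful flatness of $A \defeq \Prism_R[\nicefrac{1}{I_R}]^\wedge_p \to B \defeq \Prism_{R,\infty}[\nicefrac{1}{I_{R,\infty}}]^\wedge_p$ is logically sound, but the faithful flatness itself is the gap: the key input you cite does not exist, and the claim is false in general. Modulo $p$ the Frobenius of a $\delta$-ring is the honest $p$-power Frobenius of the $\bb{F}_p$-algebra $(\Prism_R/p)[\nicefrac{1}{\bar{I}_R}]$ (the $\delta$-structure contributes nothing mod $p$), and a faithfully flat ring map is injective; so faithful flatness of $\phi$ on $A$ would force $(\Prism_R/p)[\nicefrac{1}{\bar{I}_R}]$ to be reduced with flat Frobenius, a Kunz-type regularity condition. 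For non-perfectoid qrsp $R$ the initial prism is built from prismatic envelopes and its reduction mod $p$ is typically non-reduced (already $R[\nicefrac{1}{p}]$ can be non-reduced, e.g.\ for $R=\mc{O}_C\langle T^{1/p^\infty}\rangle/(T)$), whereas $B/p$ is a localization of a perfect ring and hence reduced; thus $A/p\to B/p$ factors through killing the kernel of iterated Frobenius and need not even be injective. Passing to the colimit does not help: for an $\bb{F}_p$-algebra $S$ the map $S\to\varinjlim_\phi S$ is onto the (reduced) colimit perfection and is flat only under the same regularity-type hypotheses. Indeed, if $A\to B$ were faithfully flat, the equivalence of \cite[Proposition 3.6]{BhattScholzeCrystals} would be a one-line descent; the fact that Bhatt--Scholze argue instead through \'etale local systems reflects that $A\to B$ is only an ``isomorphism on \'etale sites mod $p$'', not a flat cover.

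The intended argument takes exactly that latter route: the equivalence is already known, and bi-exactness is checked by running the d\'evissage of \cite[Proposition 3.6]{BhattScholzeCrystals} (reduce modulo $p^n$, then to $n=1$), which leaves only the bi-exactness of the Katz equivalence $\cat{Loc}_{\bb{F}_p}(S)\isomto\cat{Vect}^\varphi(S)$ of \cite[Proposition 3.4]{BhattScholzeCrystals}; that is clear because the relevant base change there is along the faithfully flat map $\bb{F}_p\to\mc{O}_S$. To repair your write-up you should replace the global faithful flatness of $A\to B$ by this mod-$p$ statement.
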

\begin{proof} We follow the notation of \cite[Proposition 3.6 and Corollary 3.7]{BhattScholzeCrystals}. The only thing to be verified is that this equivalence is bi-exact. In turn, by the method of proof in loc.\@ cit.\@ we reduce to the claim that the equivalence $\cat{Loc}_{\mathbb{F}_p}(S)\isomto \cat{Vect}^\varphi(S)$ for any $\F_p$-scheme $S$ from \cite[Proposition 3.4]{BhattScholzeCrystals} is bi-exact. But, this is clear as the map $\mathbb{F}_p\to \mc{O}_S$ is faithfully flat. 
\end{proof}

\begin{proof}[Proof of Proposition \ref{prop:Laurent-crystal-equivalence}] The only thing to verify is bi-exactness. As the proof in \cite[Corollary 3.7]{BhattScholzeCrystals} proceeds by descent to the case when $\mf{X}=\Spf(R)$ with $R$ qrsp and $p$-torsion-free, which preserves exactness (cf.\@  Lemma \ref{lem:quasi-syn-cover-v-sheaf-cover}), we also reduce to this case. Further, using Lemma \ref{lem:laurent-invariance-perfection-qrsp} and the isomorphism $\Spf(R_\perfd)^\lozenge_\eta\to\Spf(R)^\lozenge_\eta$ we may reduce to the case when $R$ is perfectoid.

By \cite[Lemma 3.21]{BMSI}, $(R[\nicefrac{1}{p}],R')$ is a Tate perfectoid algebra, where $R'$ is the integral closure of $R$ in $R[\nicefrac{1}{p}]$. Thus, there is a bi-exact $\Z_p$-linear $\otimes$-equivalence 
\begin{equation*}
    \cat{Loc}_{\Z_p}(\Spa(R[\nicefrac{1}{p}],R'))\isomto \cat{Loc}_{\Z_p}(\Spa(R[\nicefrac{1}{p}],R')^\flat)=\cat{Loc}_{\Z_p}(\Spa(R^\flat[\nicefrac{1}{\varpi^\flat}],(R')^\flat))
\end{equation*}
(see \cite[Theorem 6.3]{ScholzeECD}), where $\varpi$ is a pseudo-uniformizer of $R[\nicefrac{1}{p}]$ as in \cite[Lemma 6.2.2]{ScholzeBerkeley} which may be taken to lie in $R$. On the other hand, $\Prism_R=W(R^\flat)$ and $I_R=(\tilde{\xi})$, with $\tilde{\xi}=p+[\varpi^\flat]\alpha$ with $\alpha$ in $W(R^\flat)$ (see \cite[Lemma 6.2.10]{ScholzeBerkeley}). But, $W(R^\flat)[\nicefrac{1}{\tilde{\xi}}]^\wedge_p=W(R^\flat[\nicefrac{1}{\varpi^\flat}])$ as both are strict $p$-rings (in the sense of \cite[Definition 3.2.1]{KeLiRpHF}) with the same residue ring. Thus, we will be done if the $\Z_p$-linear $\otimes$-equivalence between the category of $\varphi$-modules for $W(R^\flat[\nicefrac{1}{\varpi^\flat}])$ and $\Z_p$-local systems on $\Spec(R^\flat[\nicefrac{1}{\varpi^\flat}])$ is bi-exact, which was already discussed in the proof of Lemma \ref{lem:laurent-invariance-perfection-qrsp}.
\end{proof}

\begin{example}\label{ex:small-etale-realization}
    Let $R$ be a base $\mc{O}_K$-algebra. As $\wt{\theta}\colon \Ainf(\check{R})\to \check{R}$ is $\Gamma_R$-equivariant, we see that $\Gamma_R$ acts on $(\Ainf(\check{R}),(\tilde{\xi}))$ as an object of $R_\Prism$. In this way, from a prismatic Laurent $F$-crystal $\mc{L}$ on $R$, we obtain a finite free $\Z_p$-module $\Lambda=\mc{L}(\Ainf(\check{R}),(\tilde{\xi}))^{\varphi=1}$ with a continuous action of $\Gamma_R$, which is the $\Gamma_R$-representation associated to $T_\et(\mathcal{L})$.
\end{example}

\subsection{Crystalline local systems and analytic prismatic \texorpdfstring{$F$}{F}-crystals} We discuss the equivalence from \cite{GuoReinecke} and \cite{DLMS} and its Tannakian consequences. Unless stated otherwise, we assume that $\mf{X}\to \Spf(\mc{O}_K)$ is smooth and write $X=\mf{X}_\eta$.

\subsubsection{Filtered \texorpdfstring{$F$-isocrystals}{F-isocrystals}}\label{ss:filtered-f-isocrystals} In this subsection we record our notation and conventions concerning the crystalline site, $F$-(iso)crystals, and filtered $F$-isocrystals. The reader is encouraged to skip this on first reading, referring back only as is necessary. 

\medskip

\paragraph{PD thickenings of formal schemes} Let $\mf{Z}\to\Spf(W)$ be an adic morphism. By a \emph{PD thickening} of formal $\mf{Z}$-schemes over $W$, we mean a pair $(i\colon \mf{U}\hookrightarrow \mf{T},\gamma)$ where $\mf{U}\to\mf{Z}$ is an adic morphism of formal $W$-schemes, $i\colon \mf{U}\to\mf{T}$ is a closed immersion of adic formal $W$-schemes, and if $\mc{I}\defeq\ker(\mc{O}_\mf{T}\to i_\ast\mc{O}_\mf{U})$ then $\gamma=(\gamma_n)\colon \mc{I}\to \mc{O}_\mf{T}$ is a sequence of morphisms of sheaves so that for every open $\mf{V}\subseteq \mf{T}$ the maps $\gamma_n\colon \mc{I}(\mf{V})\to \mc{O}_\mf{T}(\mf{V})$ form a PD structure compatible with the usual one on $(p)\subseteq W$. We will often drop $i$ and $\gamma$ from the notation when they are clear from context. 

A morphism $(f,g)
\colon (i'\colon \mf{U}'\hookrightarrow\mf{T}',\gamma')\to (i\colon \mf{U}\hookrightarrow \mf{T},\gamma)$ is a morphism $f\colon \mf{U}'\to \mf{U}$ of formal $\mf{Z}$-schemes, and $g\colon \mf{T}'\to \mf{T}$ a morphism of formal $W$-schemes, such that the diagram
\begin{equation}\label{eq:PD-diagram}
    \xymatrixcolsep{1.3pc}\xymatrixrowsep{1.3pc}\xymatrix{\mf{U}'\ar[r]^{i'}\ar[d]_f & \mf{T}'\ar[d]^{g}\\ \mf{U}\ar[r]_i & \mf{T},}
\end{equation}
commutes, and the induced map $g\colon \mc{I}\to g_\ast\mc{I}'$ satisfies $g\circ \gamma=g_\ast(\gamma')\circ g$. A morphism $(f,g)$ is \emph{Cartesian} if \eqref{eq:PD-diagram} is Cartesian, and a collection $\{(f_i,g_i)\colon (i_i\colon \mf{U}_i\hookrightarrow \mf{T}_i,\gamma_i)\to (i \colon\mf{U}\hookrightarrow \mf{T},\gamma)\}$ is a \emph{flat cover} if each $(f_i,g_i)$ is Cartesian and $\{g_i\colon \mf{T}_i\to\mf{T}\}$ is a cover in $\Spf(W)_\fl$. 

If $\mf{U}=\Spf(B)$ and $\mf{T}=\Spf(A)$, we will often write $(i\colon \mf{U}\hookrightarrow\mf{T},\gamma)$ as $(i\colon A\twoheadrightarrow B,\gamma)$, and write morphisms of such affine objects in the direction dictated by maps of rings. If $i\colon A\to A$ is the identity map, we further shorten $(i\colon A\to A,\gamma)$ to $A$. Moreover, for a PD thickening of affine formal $\mf{Z}$-schemes $(i\colon A\twoheadrightarrow B,\gamma)$, we have a filtration $\Fil^\bullet_\mr{PD}(A)$, or just $\Fil^\bullet_\mr{PD}$ when $A$ is clear from context, given by $\Fil^r_{\mr{PD}}(A)\defeq \ker(i)^{[r]}$. Here for an element $a$ of $A$ we sometimes abbreviate $\gamma_r(a)$ to $a^{[r]}$, and for an ideal $I\subseteq A$ by $I^{[r]}$ the ideal generated by $\gamma_{e_1}(i_1)\cdots\gamma_{e_k}(i_k)$ with $\sum e_j\geqslant r$ and $i_j$ in $I$ for all $j$.

\medskip

\paragraph{The big crystalline site of a formal scheme} The \emph{(big) crystalline site} $(\mf{Z}/W)_\crys$ is the site consisting of PD-thickenings of formal $\mf{Z}$-schemes over $W$, endowed with the topology whose covers are flat covers. If $p$ is locally nilpotent on $\mf{Z}$, this coincides with the crystalline site as in \stacks{07I5}, and in general the proof that $(\mf{Z}/W)_\crys$ is a site is proven in much the same way. The site $(\mf{Z}/W)_\crys$ naturally comes equipped with the sheaves $\mc{J}_{(\mf{Z}/W)_\crys}\subseteq \mc{O}_{(\mf{Z}/W)_\crys}$ where
\begin{equation*}
    \mc{O}_{(\mf{Z}/W)_\crys}(i\colon \mf{U}\hookrightarrow\mf{T},\gamma)\defeq \mc{O}_\mf{T}(\mf{T}),\qquad \mc{J}_{(\mf{Z}/W)_\crys}(i\colon \mf{U}\hookrightarrow\mf{T},\gamma)\defeq \ker\left(\mc{O}_\mf{T}\to i_\ast\mc{O}_\mf{U}\right), 
\end{equation*}
so $\mc{J}_{(\mf{Z}/W)_\crys}(i\colon A\twoheadrightarrow B,\gamma)=\Fil^1_\mr{PD}(A)$. We often shorten this notation to $\mc{O}_\crys$ and $\mc{J}_\crys$. When $\mf{Z}=\Spf(R)$ we shall shorten $(\mf{Z}/W)_\crys$ to $(R/W)_\crys$, and similarly for other notation below.

The following example will appear frequently in the sequel (compare with Example \ref{example:initial-prism-Acrys}).

\begin{example}\label{example:acrys} Let $S$ be a $p$-adically complete $W$-algebra with $S/p$ semi-perfect. Then, if $S^\flat$ denotes the perfection of $S/p$, there exists a universal $p$-adic pro-thickening $\theta\colon W(S^\flat)\to S$ over $W$ (see \cite[\S1.2]{FontainePeriodes}). Let $K=\ker(\theta)$, and let $\Acrys(S)$ denote the $p$-adic PD-envelope of the pair $(W(S^\flat),K)$. Then, $\theta$ extends to a $p$-adically continuous surjection $\theta\colon \Acrys(S)\to S$. The pair $(\theta\colon\Acrys(S)\to S,\gamma)$ is a final object of $(S/W)_\crys$
(see \cite[Th\'eor\`eme 2.2.1]{FontainePeriodes}).
\end{example}

There is a natural functor $u_\crys\colon (\mf{Z}/W)_\crys\to \mf{Z}_\mr{pr}^\mr{adic}$, where $\mf{Z}_\mr{pr}^\mr{adic}$ is the category of all adic formal $\mf{Z}$-schemes equipped with the pr-topology as in \cite[\S7.1]{LauDivided}, by $u_\crys(i\colon\mf{U}\hookrightarrow\mf{T},\gamma)\defeq \mf{U}$. This functor is cocontinuous (as one can lift pr-covers along surjective closed embeddings), and so induces a morphism of topoi
\begin{equation*}
    (u_{\crys,\ast},u_\crys^{-1})\colon \cat{Shv}((\mf{Z}/W)_\crys)\to \cat{Shv}(\mf{Z}_\mr{pr}^\mr{adic}).
\end{equation*}
If $\mf{Z}$ is quasi-syntomic, then qrsp objects are a basis for $\mf{Z}^\mr{adic}_\mr{pr}$ (cf.\@ the proof of \cite[Lemma 4.28]{BMS-THH}). So, one may prove the following much the same way as Proposition \ref{prop:cover-of-final-object-qrsp}.

\begin{prop}\label{prop:qsyn-crys-cover} Suppose that $\mf{Z}$ is quasi-syntomic. Then, $\{(\theta_i\colon \Acrys(S_i)\to S_i,\gamma_i)\}$ where $\{S_i\}$ runs over $\mf{Z}_\qrsp$ forms a basis of $\cat{Shv}((\mf{Z}/W)_\crys)$.
\end{prop}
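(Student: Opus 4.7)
The plan is to emulate, mutatis mutandis, the proof of Proposition \ref{prop:cover-of-final-object-qrsp}. Two inputs are needed: first, Example \ref{example:acrys} gives that for any qrsp ring $S$ the object $(\theta\colon\Acrys(S)\to S,\gamma)$ is a final object of $(S/W)_\crys$; second, $u_\crys$ is cocontinuous (as noted in the discussion preceding the proposition), and qrsp objects form a basis of $\mf Z_\mr{pr}^\mr{adic}$ by the argument of \cite[Lemma 4.28]{BMS-THH}.

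The key observation is that, for each qrsp ring $S$ with structure map $\Spf(S)\to\mf Z$, there is a natural isomorphism of presheaves
\[
u_\crys^{-1}(h_{\Spf(S)})\simeq h_{(\Acrys(S)\to S,\gamma_S)}
\]
on $(\mf Z/W)_\crys$. Indeed, evaluating both sides on $T=(\mf U\hookrightarrow\mf T,\gamma)$ yields the set $\Hom_\mf Z(\mf U,\Spf(S))$: on the left this is by the definition of $u_\crys^{-1}$, and on the right a morphism $T\to (\Acrys(S)\to S,\gamma_S)$ in $(\mf Z/W)_\crys$ consists of a pair $(f,g)$ with $f\colon \mf U\to\Spf(S)$ a morphism of formal $\mf Z$-schemes and $g\colon \mf T\to\Spf(\Acrys(S))$ compatible with the PD structures, where $g$ is uniquely determined by $f$ via the universal property of $\Acrys(S)$ as the final object of $(S/W)_\crys$.

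Combining the above identification with the cocontinuity of $u_\crys$ via Lemma \ref{lem:cocont-inverse-image-locally-non-empty}, the claim follows: for any object $T$ of $(\mf Z/W)_\crys$, a qrsp cover $\{\Spf(S_i)\to u_\crys(T)\}$ of $\mf U$ in $\mf Z_\mr{pr}^\mr{adic}$ (which exists by the qrsp basis property) transports, by the presheaf identification applied termwise and by cocontinuity, to a covering family of $T$ in the crystalline site whose members are all of the stated $\Acrys$-type. This exhibits $\{(\Acrys(S_i)\to S_i,\gamma_{S_i})\}_{S_i\in\mf Z_\qrsp}$ as a basis of $\cat{Shv}((\mf Z/W)_\crys)$. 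The main technical point is the presheaf-level identification above; the remaining steps are formal consequences of Lemma \ref{lem:cocont-inverse-image-locally-non-empty} and the qrsp basis property of $\mf Z_\mr{pr}^\mr{adic}$.
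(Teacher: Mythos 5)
Your proposal is correct and follows essentially the same route the paper intends: the paper proves this "much the same way as Proposition \ref{prop:cover-of-final-object-qrsp}", i.e.\@ via the identification $u_\crys^{-1}(h_{\Spf(S)})\simeq h_{(\Acrys(S)\to S,\gamma)}$ coming from the finality of $(\Acrys(S)\to S,\gamma)$ in $(S/W)_\crys$ (Example \ref{example:acrys}), combined with the cocontinuity of $u_\crys$, Lemma \ref{lem:cocont-inverse-image-locally-non-empty}, and the fact that qrsp objects form a basis of $\mf{Z}^\mr{adic}_\mr{pr}$. Your presheaf-level computation of $\Hom$ in the crystalline site is exactly the key point, and the rest is the same formal bookkeeping as in the paper.
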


The topos $\cat{Shv}((\mf{Z}/W)_\crys)$ is functorial. Namely, suppose that $f\colon \mf{Z}'\to\mf{Z}$ is an adic morphism of formal schemes lying over a morphism $W\to W$. There is then a morphism of topoi
\begin{equation*}
    (f_\ast,f^{-1})\colon \cat{Shv}((\mf{Z}'/W)_\crys)\to \cat{Shv}((\mf{Z}/W)_\crys),
\end{equation*}
and $f^{-1}$ has a very concrete description, with $f^{-1}\mc{O}_{(\mf{Z}/W)_\crys}=\mc{O}_{(\mf{Z}'/W)_\crys}$ (see \cite[1.1.10]{BBMDieuII}).

\medskip

\paragraph{The category of (iso)crystals} Let $\mf{Z}\to\Spf(W)$ be an adic morphism. Define the category of \emph{finitely presented (resp.\@ locally free) crystals} on $\mf{Z}$, denoted $\cat{Crys}(\mf{Z})$ (resp.\@ $\cat{Vect}(\mf{Z}_\crys)$) to be the category of finitely presented (resp.\@ locally free of finite rank) (in the sense of \stacks{03DL}) $\mc{O}_\crys$-modules.\footnote{The forgetful functor $(\mf{Z}/W(k))_\crys\to (\mf{Z}/\Z_p)_\crys$ induces an equivalence of ringed topoi (see \cite[1.1.13]{BBMDieuII}). It is for this reason that we can be slightly imprecise with our notation for crystals.} A finitely presented (resp.\@ locally free) crystal $\mc{E}$ on $\mf{Z}$ is equivalent to the following data: for every $(i\colon \mf{U}\hookrightarrow\mf{T},\gamma)$ a finitely presented (locally free of finite rank) $\mc{O}_\mf{T}$-module $\mc{E}_{(i\colon \mf{U}\hookrightarrow\mf{T},\gamma)}$ and for every morphism $(f,g)\colon (i'\colon \mf{U}'\hookrightarrow \mf{T}',\gamma')\to (i\colon \mf{U}\hookrightarrow\mf{T},\gamma)$ a morphism $g^{-1}\mc{E}_{(i\colon \mf{U}\hookrightarrow\mf{T},\gamma)}\to \mc{E}_{(i'\colon \mf{U}'\hookrightarrow \mf{T}',\gamma')}$ such that the induced morphism $g^\ast\mc{E}_{(i\colon \mf{U}\hookrightarrow\mf{T},\gamma)}\to \mc{E}_{(i'\colon \mf{U}'\hookrightarrow \mf{T}',\gamma')}$ is an isomorphism (see \stacks{07IT}). The association is as in \stacks{07IN}.

For each $n\in\bb{N}\cup\{\infty\}$ set $\mf{Z}_n\defeq (|\mf{Z}|,\mc{O}_\mf{Z}/p^{n+1})$ (where by definition $\mf{Z}_\infty=\mf{Z}$). For $m\leqslant n$ in $\bb{N}\cup\{\infty\}$, denote by $\iota_{m,n}\colon \mf{Z}_m\to \mf{Z}_{n}$ the natural closed immersion. Then, one may check that $(\iota_{m,n})_\ast \mc{O}_{(\mf{Z}_m/W)_\crys}=\mc{O}_{(\mf{Z}_n/W)_\crys}$, and by \cite[Lemma 2.1.4]{deJongCrystalline} there are pairs of quasi-inverse equivalences
\begin{equation}\label{eq:level-independence-crystals}
    \begin{aligned}((\iota_{m,n})_\ast,\iota_{m,n}^\ast)\colon \cat{Crys}(\mf{Z}_m) &\isomto \cat{Crys}(\mf{Z}_n),\\ ((\iota_{m,n})_\ast,\iota_{m,n}^\ast)\colon \cat{Vect}((\mf{Z}_m)_\crys)&\isomto \cat{Vect}((\mf{Z}_n)_\crys).\end{aligned}
\end{equation}
For a crystal $\mc{E}$ on $\mf{Z}$, set $\mc{E}_n\defeq \iota_{n,\infty}^\ast\mc{E}$. For an object $(i\colon \mf{U} \hookrightarrow \mf{T},\gamma)$ of $(\mf{Z}/W)_\crys$, we have
\begin{equation*}
    \mc{E}(i\colon \mf{U}\hookrightarrow\mf{T},\gamma)=\mc{E}_n(i_n\colon \mf{U}_n\hookrightarrow\mf{T},\gamma)
\end{equation*}
as $\mc{O}_\mf{T}(\mf{T})$-modules. Here, $i_n$ is the composition of the canonical closed embedding $\mf{U}_n\hookrightarrow \mf{U}$ with $i$ which, as $\gamma$ is compatible with the PD structure on $W$, admits a unique extension (also denoted $\gamma$) to $\ker(\mc{O}_\mf{T}\to (i_n)_\ast\mc{O}_{\mf{U}_n})=(p^n,\ker(\mc{O}_\mf{T}\to i_\ast\mc{O}_{\mf{U}}))$.

\begin{rem} Note that while $\mc{J}_{(\mf{Z}/W)_\crys}$ is locally quasi-coherent (see \stacks{07IS}), it does not satisfy the crystal condition. Nor is it true that $(\iota_{m,n})_\ast\mc{J}_{(\mf{Z}_m/W)_\crys}=\mc{J}_{(\mf{Z}_n/W)_\crys}$. 
\end{rem}

Finally, the category $\cat{Isoc}(\mf{Z})$ of \emph{isocrystals} on $\mf{Z}$ has the same objects as $\cat{Crys}(\mf{Z})$, denoted by $\mc{E}$ or the formal symbol $\mc{E}[\nicefrac{1}{p}]$ when clarity is needed, but with the following morphisms
\begin{equation*}
    \Hom(\mc{E}[\nicefrac{1}{p}],\mc{E}'[\nicefrac{1}{p}])\defeq\Gamma((\mf{Z}/W)_\crys,\mc{H}om(\mc{E},\mc{E}')\otimes_{\underline{\Z_p}}\underline{\Q_p}).
\end{equation*}
For any $m\leqslant n$ in $\bb{N}\cup\{\infty\}$ we again have an equivalence
\begin{equation*}
    ((\iota_{m,n})_\ast,\iota_{m,n}^\ast)\colon \cat{Isoc}(\mf{Z}_m)\isomto \cat{Isoc}(\mf{Z}_n).
\end{equation*}
We again denote by $\mc{E}_n$ (or $\mc{E}_n[\nicefrac{1}{p}]$), the pullback of $\mc{E}$ (or $\mc{E}[\nicefrac{1}{p}]$) to $\mf{Z}_n$.

The categories $\cat{Crys}(\mf{Z})$, $\cat{Vect}(\mf{Z}_\crys)$, and $\cat{Isoc}(\mf{Z})$ carry natural exact $\Z_p$-linear $\otimes$-structures, and for $m\leqslant n$ in $\bb{N}\cup\{\infty\}$ the equivalences $(\iota_{m,n})_\ast$ and $\iota_{m,n}^\ast$ are bi-exact $\Z_p$-linear $\otimes$-equivalences.

\medskip

\paragraph{$F$-(iso)crystals}\label{page:F-crystal-convention} Let $\mf{Z}\to\Spf(W)$ be an adic morphism. The absolute Frobenius morphism $F_{\mf{Z}_0}\colon \mf{Z}_0\to \mf{Z}_0$ lies over the Frobenius $\phi\colon W\to W$, and so induces a morphism of ringed topoi 
\begin{equation*}
    (\phi_\ast,\phi^\ast)\colon (\cat{Shv}((\mf{Z}_0/W)_\crys),\mc{O}_{(\mf{Z}_0/W)_\crys})\to (\cat{Shv}((\mf{Z}_0/W)_\crys),\mc{O}_{(\mf{Z}_0/W)_\crys}).
\end{equation*}
The category $\cat{Crys}^\varphi(\mf{Z})$ (resp.\@ $\cat{Isoc}^\varphi(\mf{Z})$) of \emph{$F$-(iso)crystals} on $\mf{Z}$ has as objects pairs $(\mc{E},\varphi_\mc{E})$ (resp.\@ $(\mc{E}[\nicefrac{1}{p}],\varphi_{\mc{E}[\nicefrac{1}{p}]})$), where $\mc{E}$ (resp.\@ $\mc{E}[\nicefrac{1}{p}]$) is an (iso)crystal on $\mf{Z}$ and $\varphi_\mc{E}$ (resp.\@ $\varphi_{\mc{E}[\nicefrac{1}{p}]}$) is a \emph{Frobenius isomorphism} $(\phi^\ast\mc{E}_0)[\nicefrac{1}{p}]\isomto \mc{E}_0[\nicefrac{1}{p}]$ of isocrystals, with morphisms those morphisms of (iso)crystals commuting with the Frobenii. Denote by $\cat{Vect}^\varphi(\mf{Z}_\crys)$ the full subcategory of $\cat{Crys}^\varphi(\mf{Z})$ of $F$-crystal whose underlying crystal is a vector bundle. Each of the categories $\cat{Crys}^\varphi(\mf{Z})$, $\cat{Vect}^\varphi(\mf{Z}_\crys)$, and $\cat{Isoc}^\varphi(\mf{Z})$ carry natural exact $\Z_p$-linear $\otimes$-structures. For every $m\leqslant n$ in $\bb{N}\cup\{\infty\}$ the morphism $(\iota_{m,n})_\ast$ induces quasi-inverse pairs $((\iota_{m,n})_\ast,\iota_{m,n}^\ast)$ 
\begin{equation*}
    \cat{Crys}^\varphi(\mf{Z}_m)\isomto \cat{Crys}^\varphi(\mf{Z}_n),\,\,  \cat{Vect}^\varphi((\mf{Z}_m)_\crys)\isomto \cat{Vect}^\varphi((\mf{Z}_n)_\crys),\,\,  \cat{Isoc}^\varphi(\mf{Z}_m)\isomto \cat{Isoc}^\varphi(\mf{Z}_n).
\end{equation*}
These are each bi-exact $\Z_p$-linear $\otimes$-equivalences.

\begin{rem}\label{rem:pullback-crystal-compat}Suppose that $(i\colon \mf{U}\hookrightarrow\mf{T},\gamma)$ is an object of $(\mf{Z}/W)_\crys$, and $\phi_\mf{T}\colon\mf{T}\to\mf{T}$ is a Frobenius lift compatible with the Frobenius map $\phi$ on $W$. Fix a crystal $\mc{E}$ on $\mf{Z}$. From the morphism $(F_{\mf{U}_0},\phi_\mf{T})\colon (i_0\colon \mf{U}'_0\hookrightarrow\mf{T}',\gamma)\to (i_0\colon \mf{U}_0 \hookrightarrow \mf{T},\gamma)$ in $(\mf{Z}_0/W)_\crys$, and the crystal property, there is an identification
\begin{equation*}
    \phi^\ast\mc{E}_0(i_0\colon \mf{U}_0 \hookrightarrow \mf{T},\gamma)=\mc{E}(i_0\colon \mf{U}_0' \hookrightarrow \mf{T}',\gamma)\simeq \phi_\mf{T}^\ast\mc{E}(i_0\colon\mf{U}_0\hookrightarrow\mf{T},\gamma)=\phi_\mf{T}^\ast\mc{E}(i\colon\mf{U}\hookrightarrow\mf{T},\gamma).
\end{equation*}
By $\mf{T}'$ (resp.\@ $\mf{U}'$) we denote $\mf{T}$ (resp.\@ $\mf{U}$) but with $W$-structure map (resp.\@ $\mf{Z}$-structure map) twisted by $\phi$ (resp.\@ $F_{\mf{Z}_0}$), so the first equality holds by definition.
\end{rem}

\medskip

\paragraph{Base formal schemes and modules with connection} Let us now assume that $\mf{Z}\to\Spf(W)$ is a base formal $W$-scheme. Define $\cat{MIC}^\mathrm{tqn}(\mf{Z})$ to be the category of pairs $(\mc{V},\nabla)$, where $\mc{V}$ is a coherent $\mc{O}_\mf{Z}$-module, and $\nabla\colon \mc{V}\to \mc{V}\otimes_{\mc{O}_\mf{Z}}\Omega^1_{\mf{Z}/W}$ is an integrable topologically quasi-nilpotent connection (see \cite[Remark 2.2.4]{deJongCrystalline}). Let $\cat{Vect}^\nabla(\mf{Z})$ denote the full subcategory of $\cat{MIC}^\mr{tqn}(\mf{Z})$ of those pairs $(\mc{V},\nabla)$ where $\mc{V}$ is a vector bundle. By \cite[Corollary 2.2.3]{deJongCrystalline}, there are equivalences
\begin{equation*}
    \cat{Crys}(\mf{Z})\isomto \cat{MIC}^\mr{tqn}(\mf{Z}),\qquad \cat{Vect}(\mf{Z}_\crys)\isomto\cat{Vect}^\nabla(\mf{Z}),\qquad \mc{E}\mapsto (\mc{E}_\mf{Z},\nabla_\mc{E}).
\end{equation*}
Here, for an $\mc{O}_\crys$-module object $\mc{F}$, we denote by $\mc{F}_\mf{Z}$ the $\mc{O}_\mf{Z}$-module given by associating to a Zariski open $\mf{U}\subseteq \mf{Z}$ the value $\mc{F}(\id\colon \mf{U}\hookrightarrow \mf{U},\gamma)$. These functors are bi-exact $\Z_p$-linear $\otimes$-equivalences, functorial in $\mf{Z}$.

\medskip

\paragraph{Filtered $F$-isocrystals} For a rigid $K$-space $Y$, denote by $\cat{Vect}^\nabla(Y)$ the category of pairs $(V,\nabla_V)$ where $V$ is a vector bundle on $Y$, and $\nabla_V\colon V\to V\otimes_{\mc{O}_Y}\Omega^1_{Y/K}$ is an integrable connection. Denote by $\cat{VectF}^\nabla(Y)$ the category of triples $(V,\nabla_V,\Fil^\bullet_V)$ where $(V,\nabla_V)$ is an object of $\cat{Vect}^\nabla(Y)$ and $\Fil^\bullet_V$ is a locally split filtration satisfying Griffiths transversality: for all $i\geqslant 0$ the containment $\nabla_V(\Fil^i_V)\subseteq \Fil^{i-1}_V\otimes_{\mc{O}_Y}\Omega^1_{Y/K}$ holds. The category $\cat{Vect}^\nabla(Y)$ has an obvious exact $\Z_p$-linear $\otimes$-structure, and we endow $\cat{VectF}^\nabla(Y)$ with an exact $\Z_p$-linear $\otimes$-structure where
\begin{equation*}
    \Fil^k_{V_1\otimes V_2}=\sum_{i+j=k}\Fil^i_{V_1}\otimes \Fil^j_{V_2},
\end{equation*}
and an exact structure where a sequence is exact if for all $i$ the sequence of vector bundles on $Y$ given by the $i^\text{th}$-graded pieces is exact.

Suppose now that $\mf{Z}\to \Spf(\mc{O}_K)$ is smooth with rigid generic fiber $Z$. In  \cite[Remark 2.8.1 and Theorem 2.15]{Ogus-F-Isocrystals-II}, Ogus constructs an exact $\Z_p$-linear $\otimes$-functor
\begin{equation*}
    \cat{Isoc}^\varphi(\mf{Z}_k)\to \cat{Vect}^\nabla(Z), \qquad (\mc{E},\varphi_{\mc{E}})\mapsto (E,\nabla_E).
\end{equation*}
This functor possesses the following property. For any open $\mf{V}\subseteq \mf{Z}$, and any smooth model $\mf{W}$ of $\mf{V}$ over $W$, one has $(E,\nabla_E)|_{\mf{V}_\eta}$ is isomorphic to $(\mc{F}_{\mf{W}},\nabla_{\mc{F}})\otimes K$, where $\mc{F}=(\iota_{0,\infty})_\ast(\mc{E}|_{\mf{W}_0})$, which is well-defined (i.e.\@, doesn't depend on $\mc{E}$ within its isogeny class).

A \emph{filtered $F$-isocrystal} on $\mf{Z}$ is a triple $(\mc{E},\varphi_\mc{E},\mathrm{Fil}^\bullet_E)$ where $(\mc{E},\varphi_{\mc{E}})$ is an object of $\Isoc^\varphi(\mf{Z}_k)$ and $\Fil^\bullet_E$ is a locally split filtration on $E$ satisfying Griffiths transversality. With the obvious notion of morphism, we denote by $\cat{IsocF}^\varphi(\mf{Z})$ the category of filtered $F$-isocrystals on $\mf{Z}$, which is seen to be identified with the fiber product $\cat{Isoc}^\varphi(\mf{Z}_k)\times_{\cat{Vect}^\nabla(Z)}\cat{VectF}^\nabla(Z)$. We endow $\cat{IsocF}^\varphi(\mf{Z})$ with the exact $\Z_p$-linear $\otimes$-structure inherited from this decomposition and those structures on $\cat{Isoc}^\varphi(\mf{Z}_k)$ and $\cat{VectF}^\nabla(Z)$.

\subsubsection{de Rham local systems}\label{ss:de-Rham-local-systems} Let $Y$ be a smooth rigid $K$-space. Recall the following rings for an affinoid perfectoid space $S=\Spa(R,R^+)\sim \varprojlim \Spa(R_i,R_i^+)$ over $C$ in $Y^\aff_\proet$:
\begin{itemize}
    \item $\Ainf(S)\defeq \Ainf(R^+)$,
    \item $\Bdr^+(S)\defeq \Bdr^+(R^+)\defeq \Ainf(S)[\nicefrac{1}{p}]^\wedge_{\xi_0}$,
    \item $\Bdr(S)\defeq \Bdr(R^+)\defeq \Bdr^+(R^+)[\nicefrac{1}{\xi_0}]$,
    \item $\mc{O}\Bdr^+(S)\defeq \mc{O}\Bdr^+(R^+)\defeq \varinjlim ((R_i^+\wh{\otimes}_{W}\Ainf(R^+))[\nicefrac{1}{p}]^\wedge_{\ker(\theta)})$, given the $\ker(\theta)$-adic filtration, where $\theta\colon (R_i^+\wh{\otimes}_{W(k)}\Ainf(S))[\nicefrac{1}{p}]\to R$ is the base extension of $\theta\colon \Ainf(S)\to R^+$,
    \item $\mc{O}\Bdr(S)\defeq \mc{O}\Bdr(R^+)\defeq \mc{O}\Bdr^+(R^+)[\nicefrac{1}{t}]$ with filtration
    \begin{equation*}
        \Fil^i\mc{O}\Bdr(S)=\sum_{j\in\Z}t^{-j}\Fil^{i+j}\mc{O}\Bdr^+(S)
    \end{equation*}
\end{itemize}
which extend to sheaves on $Y^\aff_\proet$. By \cite[Corollary 6.13]{ScholzepadicHT}, $\mc{O}\Bdr^+$ carries a $\Bdr^+$-horizontal integrable connection satisfying Griffiths transversality, extending to $\mc{O}\Bdr$ by base change. 

An object $\bb{L}$ of $\cat{Loc}_{\Z_p}(Y)$ is called \emph{de Rham} if there exists an object $(V,\nabla_V,\Fil^\bullet_V)$ of $\cat{VectF}^\nabla(Y)$, such that there exists an isomorphism of sheaves of modules over $\Bdr^+$:
\begin{equation*}
    c_\mr{Sch}\colon \bb{L}\otimes_{\underline{\Z}_p}\Bdr^+\isomto \Fil^0\left(V\otimes_{\mc{O}_Y}\mc{O}\Bdr\right)^{\nabla=0},
\end{equation*}
where $\nabla\defeq \nabla_V\otimes \nabla_{\mc{O}_{\Bdr}}$ and we endow $V\otimes_{\mc{O}_Y}\mc{O}\Bdr$ with the tensor product filtration. By \cite[Theorem 7.6]{ScholzepadicHT}, the object $(V,\nabla_V,\Fil^\bullet_V)$ is functorially associated to $\bb{L}$, and we denote it by $D_\dR(\bb{L})$ (which we sometimes conflate with the underlying vector bundle). The category of \emph{de Rham $\Q_p$ local systems} is the essential image of the natural functor $\cat{Loc}_{\Z_p}^\mr{dR}(X)\to \cat{Loc}_{\Q_p}(X)$ and we set $D_\dR(\bb{L}[\nicefrac{1}{p}])\defeq D_\dR(\bb{L})$ (which is independent of the choice of $\bb{L}$).

Denote by $\cat{Loc}^\dR_{\Z_p}(Y)$ the full subcategory of de Rham objects of $\cat{Loc}_{\Z_p}(Y)$, which is seen to be stable under tensor products and duals. The functor $D_\dR\colon \cat{Loc}^\dR_{\Z_p}(Y)\to \cat{VectF}^\nabla(Y)$ is an exact $\Z_p$-linear $\otimes$-functor. If $Y=\Spa(K)$, these notations agree with those of Fontaine.

Fix an object $\omega$ of $\mc{G}\text{-}\cat{Loc}_{\Z_p}^\dR(Y)$ and $y\colon \Spa(K')\to Y$, for $K'$ a finite extension of $K$ in $\ov{K}$. Let $\cat{VectF}(K')$ denote the category of finite-dimensional filtered $K'$-vector spaces. Then, we have an exact $\Z_p$-linear $\otimes$-functor 
\begin{equation*}
   (\omega_{\dR,y},\Fil^\bullet_\dR)\colon \cat{Rep}_{\Z_p}(\mc{G})\to \cat{VectF}(K'), \quad \Lambda\mapsto (D_\dR(\omega(\Lambda)),\mathrm{Fil}^\bullet_{D_\dR(\omega(\Lambda))})_y.
\end{equation*}
Fix a conjugacy class $\bm{\mu}$ of cocharacters of $G_{\ov{K}}$. We say $\omega$ has \emph{cocharacter $\bm{\mu}$} if for all such $y$ and all (equiv.\@ for one) $\mu$ in $\bm{\mu}$ the following condition holds. There is an isomorphism $(\omega_{\dR,y})_{\ov{K}}\simeq \omega_\mr{triv}$ such that $(\Fil^\bullet_\dR)_{\ov{K}}$ is carried to $\Fil^\bullet_\mu$ where for a representation $\rho\colon \mc{G}\to \GL(\Lambda)$ one has $\Fil^r_\mu= \bigoplus_{i\geqslant r}\Lambda_{\ov{K}}[i]$, where $\Lambda_{\ov{K}}[i]$ is the $i$-weight space for the cocharacter $\rho\circ\mu$.  The subcategory of $\GLoc_{\Z_p}^{\dR}(Y)$ of those $\omega$ which have cocharacter $\bm{\mu}$ is denoted by $\GLoc^{\dR}_{\Z_p,\bm{\mu}} (Y)$.

Finally, for a smooth $K$-scheme $Y$, denote by $\cat{Loc}^\dR_{\Z_p}(Y)$ (resp.\@ $\GLoc^{\dR}_{\Z_p,\bm{\mu}}(Y)$) the full subcategory of $\cat{Loc}_{\Z_p}(Y)$ (resp.\@ $\GLoc_{\Z_p}(Y)$) consisting of those $\bb{L}$ (resp.\@ $\omega$) such that $\bb{L}^\an$ (resp.\@ $\omega^\an$) belongs to $\cat{Loc}_{\Z_p}^\dR(Y^\an)$ (resp.\@ $\GLoc_{\Z_p,\bm{\mu}}^{\dR}(Y^\an)$).

\subsubsection{Crystalline local systems} As in \cite[\S2A]{TanTong}, recall the following rings for an affinoid perfectoid $C$-space $S=\Spa(R,R^+)\sim \varprojlim \Spa(R_i,R_i^+)$ in $X_\proet$:
\begin{itemize}
    \item $\Acrys(S)\defeq \Acrys(R^+)$, filtered by $\Fil^\bullet_{\mr{PD}}$,
    \item $\Bcrys^+(S)\defeq \Bcrys^+(R^+)\defeq \Acrys(S)[\nicefrac{1}{p}]$ filtered by $\Fil^\bullet_\mr{PD}[\nicefrac{1}{p}]$,
    \item $\Bcrys(S)\defeq \Bcrys(R^+)\defeq \Bcrys^+(S)[\nicefrac{1}{t}]$, with filtration given by 
    \begin{equation*}
        \Fil^i\Bcrys(S)=\sum_{j\in\Z}t^{-j}\Fil^{i+j}\Bcrys^+(R,R^+).
    \end{equation*}
\end{itemize}

These rings, and their filtrations, extend to sheaves on $X_\proet$. The Frobenius on $\Acrys$ extends uniquely to $\Bcrys^+$ as $p$ is Frobenius invariant, and further extends uniquely to $\Bcrys$ with $\phi(\nicefrac{1}{t})=\nicefrac{1}{pt}$ (see \cite[\S2C]{TanTong}). In particular, for any object $\bb{L}[\nicefrac{1}{p}]$ of $\cat{Loc}_{\Q_p}(X)$ the sheaf $\Bcrys\otimes_{\underline{\Q_p}}\bb{L}[\nicefrac{1}{p}]$ carries a natural Frobenius and filtration.

\medskip

\paragraph{The Faltings formulation} For an object $(\mc{E},\varphi,\Fil^\bullet_E)$ of $\cat{IsocF}^\varphi(\mf{X})$ and an affinoid perfectoid space $S=\Spa(R,R^+)$ over $C$ in $X_\proet$, which uniquely extends to a map $\Spf(R^+)\to \mf{X}$ (e.g.\@ those factorizing through $\Spf(A)_\eta$ for an affine open $\Spf(A)\subseteq \mf{X}$), we have that $\Acrys(S)$ is a pro-infinitesimal PD thickening of $R^{+}$, so we have the associated $\Bcrys(R,R^+)$-module
\begin{equation*}
    \Bcrys(\mc{E})(R,R^+)\defeq \mc{E}(\Acrys(R,R^+)\twoheadrightarrow R^+)[\nicefrac{1}{p},\nicefrac{1}{t}].
\end{equation*} 
As in \cite[\S2.3]{GuoReinecke}, this extends to a sheaf on $X_\proet$ and inherits a Frobenius morphism $\varphi\colon \phi^\ast\Bcrys(\mc{E})\to \Bcrys(\mc{E})$ and a filtration $\Fil^\bullet_{\Bcrys(\mc{E})}$ induced from $(\mc{E},\varphi,\Fil^\bullet_E)$. 

As in \cite[p.\@ 67]{Faltings89}, call an object $\bb{L}[\nicefrac{1}{p}]$ of $\cat{Loc}_{\Q_p}(X)$ (resp.\@ object $\bb{L}$ of $\cat{Loc}_{\Z_p}(X)$) \emph{crystalline relative to} $\mf{X}$ if there exists an object $(\mc{E},\varphi_\mc{E},\Fil^\bullet_E)$ of $\cat{IsocF}^\varphi(\mf{X})$ and an isomorphism of $\Bcrys$-modules
\begin{equation*}
    c_\mr{Fal}\colon \Bcrys\otimes_{\underline{\Q_p}}\bb{L}[\nicefrac{1}{p}]\isomto \Bcrys(\mc{E}),
\end{equation*}
compatible with Frobenius and filtration (resp.\@ $\bb{L}$ is crystalline). Denote by $\cat{Loc}_{\Q_p}^\crys(X)$ (resp.\@ $\cat{Loc}_{\Z_p}^\crys(X)$), suppressing $\mf{X}$ from the notation, the full subcategory of crystalline $\Q_p$-local (resp.\@ $\Z_p$-local) systems. By the arguments in \cite[Corollary 2.35]{GuoReinecke}, the filtered $F$-isocrystal $\mc{E}$ is functorial in $\bb{L}[\nicefrac{1}{p}]$. We say that $(\mc{E},\varphi,\Fil^\bullet_E)$ and $\bb{L}[\nicefrac{1}{p}]$ are \emph{associated}, and write $D_\crys(\bb{L})$ for $\mc{E}$. We futher define $D_\crys(\bb{L})\defeq D_\crys(\bb{L}[\nicefrac{1}{p}])$ for a crystalline $\Z_p$-local system $\bb{L}$. As explained in \cite[Proposition 3.22]{TanTong} and \cite[Corollary 2.37]{GuoReinecke}, any crystalline local system $\bb{L}$ on $X$ is de Rham, and if $D_\crys(\bb{L})=(\mc{E},\varphi_\mc{E},\Fil^\bullet_E)$ then $D_\dR(\bb{L})$ is equal to $(E,\nabla_E,\Fil^\bullet_E)$ (cf.\@ \S\ref{ss:filtered-f-isocrystals}).

Let $S=\Spa(R,R^+)$ be an affinoid perfectoid $C$-space $S=\Spa(R,R^+)$ in $X_\proet$, which uniquely extends to a map $\Spf(R^+)\to \mf{X}$. We consider the isomorphism 
\be
\theta^+_\crys\colon D_\crys(\mbb L)(\Acrys(S)\to R^+)\otimes_{\Acrys(S)}\mc O\Bcrys^+(S)\isomto (D_\crys(\mbb L)\otimes_{\mc O_X}\mc O\Bcrys^+)(S)
\ee
from the paragraph right after \cite[Proposition 3.21]{TanTong}. Put $\theta_\crys\defeq \theta_\crys^+[\nicefrac{1}{t}]$. 
Let $\theta^+_\dR$ denote the scalar extension of $\theta^+_\crys$ along $\mc O\Bcrys^+(S)\to \mc O\Bdr^+(S)$, and 
\begin{equation}\label{eq: Faltings c and Scholze c}
    \theta^{+,\nabla}_\dR\colon D_\crys(\mbb L)(\Acrys(S)\to R^+)\otimes_{\Acrys(S)}\Bdr^+(S)\isomto (D_\crys(\mbb L)\otimes_{\mc O_X}\mc O\Bdr^+)(S)^{\nabla=0}
\end{equation}
be the map induced on the spaces of horizontal sections. Put $\theta^\nabla_\dR\defeq \theta^{+,\nabla}_\dR[\nicefrac{1}{t}]$. Since, for a crystalline local system $\mbb L$ on $X$, the isomorphism $c_\mr{Sch}$ is given as the scalar extension of the composition $\theta_\crys\circ c_\mr{Fal}$ as explained in the proof of \cite[Corollary 2.37]{GuoReinecke}, we obtain the following lemma, which is used in the proof of Theorem \ref{thm:prismatic-F-crystal-shtuka-relationship}. 
\begin{lem}\label{lem: Faltings c and Scholze c}
    Assume that $\mbb L|_{S}$ is constant. 
    Then the diagram 
    \begin{equation*}
    \xymatrixcolsep{3pc}\xymatrix{
    (\mbb L\otimes_{\underline{\Z_p}}\Bdr)(S)\ar[r]^-{c_\mr{Fal}\otimes 1}\ar[rd]_-{c_\mr{Sch}\otimes 1}
    & D_\crys(\mbb{L})(\Acrys(S)\to R^+)\otimes_{\Acrys(S)}\Bdr(S)
    \ar[d]^{\theta_\dR^\nabla}
    \\& (D_\dR(\mbb{L})\otimes_{\mc O_X}\mc O\Bdr)(S)^{\nabla=0}
    }
    \end{equation*}
    commutes.
\end{lem}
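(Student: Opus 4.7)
The plan is to prove this by essentially unwinding the construction of $c_\mr{Sch}$ for a crystalline local system, which is by design built as the scalar extension of $\theta_\crys \circ c_\mr{Fal}$; the paper even says as much right before the lemma statement. So I would spend my effort simply tracing the various base-changes carefully and invoking the constancy hypothesis.

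First, I would start on the sheaf level. By construction of the crystalline-to-de-Rham passage (as in the proof of \cite[Corollary 2.37]{GuoReinecke} and \cite[Proposition 3.21]{TanTong}), one produces $c_\mr{Sch}$ in three steps: (i) apply Faltings' comparison $c_\mr{Fal}\colon \Bcrys \otimes_{\underline{\Q_p}} \mathbb{L}[\tfrac{1}{p}] \isomto \Bcrys(D_\crys(\mbb L))$; (ii) scalar extend along $\Bcrys \to \mc O\Bcrys$ and compose with $\theta_\crys$ to land in $D_\crys(\mbb L) \otimes_{\mc O_X}\mc O\Bcrys$; (iii) further scalar extend along $\mc O\Bcrys \to \mc O\Bdr$ and take $\nabla$-horizontal sections, noting that the source acquires a trivial connection. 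The resulting isomorphism of sheaves of $\Bdr$-modules is, by its very definition in op.\ cit., the Scholze comparison $c_\mr{Sch}$, and its target is identified with $(D_\dR(\mbb L)\otimes_{\mc O_X}\mc O\Bdr)^{\nabla=0}$ via the equality $D_\dR(\mbb L)=E$ of underlying bundles recalled in §\ref{ss:filtered-f-isocrystals}.

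Next, I would evaluate on the affinoid perfectoid $S$. The constancy of $\mbb L|_S$ lets sections over $S$ commute with tensoring with $\mbb L$, so $(\mbb L\otimes\Bdr)(S)=\mbb L(S)\otimes_{\Z_p}\Bdr(S)$; the right-hand vertex is $D_\crys(\mbb L)(\Acrys(S)\to R^+)\otimes_{\Acrys(S)}\Bdr(S)$ by definition of $\Bcrys(\mbb L)(S)$; and the downward composite is, by the defining formula \eqref{eq: Faltings c and Scholze c} for $\theta_\dR^\nabla$, obtained from $\theta_\crys^+$ by first extending to $\Bdr^+$ and then inverting $t$ before taking $\nabla$-invariants. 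Thus $\theta_\dR^\nabla\circ(c_\mr{Fal}\otimes 1)$ on $S$-sections is precisely the $S$-sections of the three-step construction of $c_\mr{Sch}$ described in the previous paragraph, which is $(c_\mr{Sch}\otimes 1)(S)$.

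The only mild obstacle will be verifying that taking $\nabla$-horizontal sections commutes appropriately with the scalar extension $\mc O\Bcrys\to\mc O\Bdr$ and with evaluating on $S$; this is handled by combining the fact that the source of the pre-horizontal map carries the trivial connection (coming from $\mbb L$) with the standard identification $(\mc O\Bdr)^{\nabla=0}=\Bdr$. No genuine computation is required beyond bookkeeping the base-changes along $\Acrys\hookrightarrow \Bcrys\to \mc O\Bcrys \to \mc O\Bdr$ and the compatibility of filtrations and Frobenii already used to define the comparisons.
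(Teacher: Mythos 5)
Your proposal is correct and follows the same route as the paper: the paper derives the lemma directly from the observation (stated in the paragraph preceding it) that $c_\mr{Sch}$ is by construction the scalar extension of $\theta_\crys\circ c_\mr{Fal}$, with $\theta_\dR^\nabla$ being the induced map on horizontal sections after extension to $\Bdr$. Your more detailed unwinding of the base changes and the use of constancy of $\mbb L|_S$ to evaluate on $S$-sections is exactly the bookkeeping the paper leaves implicit.
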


For a conjugacy class $\bm{\mu}$ of cocharacters of $G_{\ov{K}}$ denote by $\GLoc_{\Z_p,\bm{\mu}}^{\crys}(X)$ those $\mc{G}(\Z_p)$-local systems which lie in both $\GLoc_{\Z_p}^\crys(X)$ and $\GLoc_{\Z_p,\bm{\mu}}^{\dR}(X)$.

\medskip

\paragraph{The Brinon formulation}Fix a base $\mc{O}_K$-algebra $A=A_0\otimes_W \mc{O}_K$, with formal framing $t\colon T_d\to A_0$. Consider the following rings:
\begin{itemize}
    \item $\mc{O}\Acrys(\check{A})$ is the $p$-adically completed PD envelope of the map $\theta \colon A_0\otimes_W \Ainf(\check{A})\to \check{A}$, filtered by $\Fil^\bullet_\mr{PD}$,
    \item $\mc{O}\Bcrys(\check{A})\defeq \mc{O}\Acrys(\check{A})[\nicefrac{1}{p},\nicefrac{1}{t}]$ filtered by 
    \begin{equation*}
    \Fil^r\mc{O}\Bcrys(\check{A})=\sum_{n\geqslant -r}t^{-n}\Fil^{n+r}\mc{O}\Acrys(\check{A}).
    \end{equation*}
\end{itemize}
The tensor product Frobenius $\phi$ on $A_0\otimes_W \Acrys(\check{A})$ uniquely extends to $\mc{O}\Acrys(\check{A})$ and we can extend the Frobenius to $\mc{O}\Bcrys(\check{A})$ with $\phi(\nicefrac{1}{t})=\nicefrac{1}{pt}$. There is a natural connection
\begin{equation*}
    \nabla \colon \mc{O}\Acrys(\check{A})\to \mc{O}\Acrys(\check{A})\otimes_{A_0}\wh{\Omega}^1_{A_0/W},
\end{equation*}
with $\Acrys(\check{A})$ and $\phi$ horizontal (see \cite[Proposition 4.3]{KimBK}), which extends to $\Bcrys(\check{A})$ by base change. There is a natural action of $\Gamma_A$ on $\mc{O}\Bcrys(\check{A})$ and the tautological morphism $A_0[\nicefrac{1}{p}]\to \mc{O}\Bcrys(\check{A})^{\Gamma_A}$ is an isomorphism (see \cite[Proposition 6.2.9]{Brinon}). 

For a finite-dimensional continuous $\Q_p$-representation $\rho\colon \Gamma_A\to\GL_{\Q_p}(V)$, write
\begin{equation*}
    D_\crys(V)\defeq (\mc{O}\Bcrys(\check{A})\otimes_{\Q_p}V)^{\Gamma_A},
\end{equation*}
which is an $A_0[\nicefrac{1}{p}]$-module. There is a natural morphism of $\mc{O}\Bcrys(\check{A})$-modules
\begin{equation*}
    \alpha_\crys\colon \mc{O}\Bcrys(\check{A})\otimes_{A_0[\nicefrac{1}{p}]}D_\crys(V)\to \mc{O}\Bcrys(\check{A})\otimes_{\Q_p}V
\end{equation*}
and following \cite{Brinon} we say $\rho$ (or $V$) is \emph{crystalline} if it is an isomorphism. This notion is independent of the choice of $A_0$ (see \cite[Proposition 8.3.5]{Brinon}). A finite-rank free $\Z_p$-representation $\Lambda$ of $\Gamma_A$ is \emph{crystalline} if $\Lambda[\nicefrac{1}{p}]$ is, in which case we write $D_\crys(\Lambda)\defeq D_\crys(\Lambda[\nicefrac{1}{p}])$. 

Denote by $\cat{Rep}^\crys_{\Q_p}(\Gamma_A)$ (resp.\@ $\cat{Rep}_{\Z_p}^\crys(\Gamma_A)$) the category of crystalline $\Q_p$-representations (resp.\@ $\Z_p$-representations) endowed with the evident structure of an exact $\Z_p$-linear $\otimes$-category. If $V$ is an object of $\cat{Rep}_{\Q_p}(\Gamma_A)$ then $D_\crys(V)$ has the structure of an object of $\cat{IsocF}^\varphi(A)$, and \cite[Th\'eor\`eme 8.5.1]{Brinon} defines a $\Z_p$-linear $\otimes$-functor
\begin{equation*}
    D_\crys\colon \cat{Rep}^\crys_{\Q_p}(\Gamma_A)\to \cat{IsocF}^\varphi(A),
\end{equation*}
which is a bi-exact equivalence onto its image, functorial in $A$. 

Let $\Sigma$ be either $\Z_p$ or $\Q_p$. If $\{\Spf(A)\}$ is a small open cover of $\mf{X}$, then an object $\bb{L}$ of $\cat{Loc}_{\Sigma}(X)$ is crystalline if and only if the $\Sigma$-representation $V_A$ associated to $\bb{L}|_{\Spf(A)_\eta}$ is crystalline for all $A$ (see \cite[Proposition A.10]{DLMS}). In this case we have that $D_\crys(\bb{L})|_{\Spf(A)_\eta}$ agrees with $D_\crys(V_A)$ for all $A$. Thus, 
\begin{equation*}
    D_\crys\colon \cat{Loc}^\crys_{\Q_p}(X)\to \cat{IsocF}^\varphi(\mf{X}),
\end{equation*}
is a $\Z_p$-linear $\otimes$-functor which is a bi-exact equivalent onto its image. Also, $\cat{Loc}^\crys_{\Sigma}(X)$ is closed under duals, tensor products, direct sums, and subquotients (see \cite[Th\'eor\`eme 8.4.2]{Brinon}).

\begin{prop}\label{prop:crystalline-can-be-tested-on-faithful-rep} For an object $\omega$ of $\GLoc_{\Sigma}(X)$, $\omega$ factorizes through $\cat{Loc}^\crys_\Sigma(X)$ if and only if $\omega(V_0)$ is crystalline for some faithful $\Sigma$-representation $V_0$.
\end{prop}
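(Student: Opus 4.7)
My plan is to reduce the claim to the standard Tannakian observation that every representation of $\mc{G}$ is realized as a subquotient of tensor constructions on a faithful representation, combined with the closure of the crystalline property under these operations. The `only if' direction is trivial (any $V$ will do, in particular a faithful one), so I focus on `if'.

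First, I would invoke \cite[Proposition 12]{dosSantos} (already used in the proof of Proposition \ref{prop:tannakian-formalism-proetale-scheme}): given the faithful object $V_0$ of $\cat{Rep}_{\Z_p}(\mc{G})$, every $V$ in $\cat{Rep}_{\Z_p}(\mc{G})$ fits into a short exact sequence
\begin{equation*}
0 \to W_1 \to W_2 \to V \to 0
\end{equation*}
in $\cat{Rep}_{\Z_p}(\mc{G})$, where $W_2$ is a saturated subobject of a finite direct sum of mixed tensor powers $V_0^{\otimes a_i} \otimes (V_0^\vee)^{\otimes b_i}$. The case $\Sigma = \Q_p$ follows from this together with Lemma \ref{lem:rational-integral-rep-equiv}, which identifies $\cat{Rep}_{\Q_p}(G)$ with $\cat{Rep}_{\Z_p}(\mc{G})[\nicefrac{1}{p}]$ and hence allows lifting a faithful $\Q_p$-representation to a faithful $\Z_p$-representation (by the argument of \cite[Lemma 3.1]{Broshi} also used in Lemma \ref{lem:rational-integral-rep-equiv}).

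Next, applying the exact $\Sigma$-linear $\otimes$-functor $\omega$ to this sequence, I obtain an exact sequence in $\cat{Loc}_\Sigma(X)$ realizing $\omega(V)$ as a subquotient of $\bigoplus_i \omega(V_0)^{\otimes a_i} \otimes \omega(V_0)^{\vee\otimes b_i}$. Since $\omega(V_0)$ is crystalline by hypothesis, I then invoke the closure of $\cat{Loc}^\crys_\Sigma(X)$ under duals, tensor products, direct sums, and subquotients, as stated in the paragraph immediately preceding the proposition (which in turn follows from \cite[Th\'eor\`eme 8.4.2]{Brinon} for $\Sigma = \Q_p$, and reduces to that case for $\Sigma = \Z_p$ via the definition of crystallinity through the associated $\Q_p$-local system).

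The only substantive ingredient is the closure under (saturated) subquotients, which is Brinon's theorem and is already granted. Modulo this input, the proof is a purely formal Tannakian reduction, so I do not expect any genuine obstacle; the main care needed is simply in tracking the integral versus rational versions of `subquotient' to confirm that saturated sub-$\Z_p$-local systems of crystalline objects remain crystalline.
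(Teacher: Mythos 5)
Your proof is correct and follows essentially the same route as the paper: realize every representation as a subquotient of tensor constructions on the faithful one, then invoke the closure of $\cat{Loc}^\crys_\Sigma(X)$ under duals, tensors, direct sums, and subquotients. The only (cosmetic) difference is that the paper first reduces to $\Sigma=\Q_p$ via ``$\omega$ is crystalline iff $\omega[\nicefrac{1}{p}]$ is'' and then cites Deligne's \cite[Proposition 3.1 (a)]{DeligneHodge}, whereas you work integrally with \cite[Proposition 12]{dosSantos} and handle $\Q_p$ via the lattice lemma; both are standard and equivalent here.
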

\begin{proof} It suffices to prove the if condition. Moreover, as $\omega$ is crystalline if and only if $\omega[\nicefrac{1}{p}]$ is, we may suppose that $\Sigma=\Q_p$. By \cite[Proposition 3.1 (a)]{DeligneHodge} every object $V$ of $\cat{Rep}_{\Q_p}(G)$ occurs as a subquotient of $\bigoplus_i (V_0)^{\otimes m_i}\oplus (V_0^\vee)^{\otimes n_i}$ for some finite list of integers $m_i$ and $n_i$. As $\cat{Loc}_{\Q_p}^\crys(X)$ is closed under duals, tensor products, direct sums, and subquotients,  the claim follows. 
\end{proof}

\subsubsection{Crystalline \texorpdfstring{$\mc{G}(\Z_p)$}{G(Zp)}-local systems}
Consider the \emph{\'etale realization functor}
\begin{equation*}
    T_\et\colon \cat{Vect}^{\an,\varphi}(\mf{X}_\Prism)\to \cat{Loc}_{\Z_p}(X),
\end{equation*}
defined to be the composition
\begin{equation*}
\cat{Vect}^{\an,\varphi}(\mf{X}_\Prism)\to \cat{Vect}^\varphi(\mf{X}_\Prism,\mc{O}_\Prism[\nicefrac{1}{\mc{I}_\Prism}]^\wedge_p)\xrightarrow{\,T_\et\,}\cat{Loc}_{\Z_p}(X),
\end{equation*}
where the first functor is obtained by patching together the pullbacks  $\Spec(A[\nicefrac{1}{I}]^\wedge_p)\to U(A,I)$. 

\begin{thm}[{\cite{GuoReinecke} (cf.\@ \cite{DLMS})}]\label{thm:GR-DLMS} The functor $T_\et$ induces an equivalence
\begin{equation*}
    T_\et\colon \cat{Vect}^{\an,\varphi}(\mf{X}_\Prism)\isomto \cat{Loc}^\crys_{\Z_p}(X).
\end{equation*}
\end{thm}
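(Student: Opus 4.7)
The plan is to work étale-locally on $\mf{X}$ and, by Proposition \ref{prop:small-open-cover-topos-cover}, reduce the problem to the case $\mf{X}=\Spf(R)$ where $R=R_0\otimes_W\mc{O}_K$ is a base (in fact small) $\mc{O}_K$-algebra, equipped with a formal framing $t\colon T_d\to R_0$. In this setting, Example \ref{ex:small-etale-realization} identifies $T_\et$ with a functor from $\cat{Vect}^{\an,\varphi}(R_\Prism)$ to the category of continuous $\Z_p$-representations of $\Gamma_R$, so the theorem becomes a statement about this refined functor landing equivalently in $\cat{Rep}_{\Z_p}^\crys(\Gamma_R)$. After this reduction, there are three things to verify: well-definedness (crystallinity of the output), fully faithfulness, and essential surjectivity.

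For well-definedness, given $(\mc{V},\varphi_\mc{V})$ in $\cat{Vect}^{\an,\varphi}(R_\Prism)$, I would evaluate on the Breuil prism $(\Acrys(\check R),(p))$ of Example \ref{example:initial-prism-ainf-to-acrys} and on $(\Ainf(\check R),(\tilde\xi))$, producing a finite projective $\Acrys(\check R)[\nicefrac{1}{p}]$-module with semilinear $\Gamma_R$-action and Frobenius. Using the morphism of prisms $(\Ainf(\check R),(\tilde\xi))\to (\Acrys(\check R),(p))$ together with the $\Ainf$-evaluation (which recovers the associated Galois representation after inverting $\tilde\xi$ and $p$-completing), one obtains a period isomorphism realizing the crystalline comparison over $\Bcrys$. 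The filtration and horizontal connection on $\mc{O}\Bcrys(\check R)\otimes D_\crys$ arise via Diagram \eqref{eq:big-BK-diagram}, since the prism $(R_0,(p))$ together with the framing $t$ produces the usual connection on the evaluation, as in \S\ref{ss:filtered-f-isocrystals}.

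For fully faithfulness, the key observation is that the composition
\[
\cat{Vect}^{\an,\varphi}(R_\Prism)\to \cat{Vect}^\varphi(R_\Prism,\mc{O}_\Prism[\nicefrac{1}{\mc{I}_\Prism}]^\wedge_p)\xrightarrow{T_\et}\cat{Loc}_{\Z_p}(\Spa(R[\nicefrac{1}{p}],\widehat R))
\]
is a composition of fully faithful functors: the second is an equivalence by Proposition \ref{prop:Laurent-crystal-equivalence}, and the first is fully faithful by \cite[Proposition 3.7]{GuoReinecke} (applied to $\mc{H}om$ sheaves, using that a vector bundle on $U(\mf{S}_R,(E))$ is determined by its $p$-completion together with its generic fiber via Beauville--Laszlo-type glueing, since $(p,E)$ has height $2$ in $\mf{S}_R$).

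Essential surjectivity is the main obstacle and is where the real work lies. Given a crystalline $\Z_p$-representation $\Lambda$ of $\Gamma_R$, the strategy is to produce the evaluation of the desired analytic prismatic $F$-crystal on the Breuil--Kisin prism $(\mf{S}_R,(E))$ first, then propagate it to all prisms by descent. Concretely, one uses a Kisin-style construction (as extended in \cite{DLMS} over base rings): viewing $\Lambda$ as a $\Gamma_R$-stable lattice inside the crystalline representation $\Lambda[\nicefrac{1}{p}]$, one associates a Breuil--Kisin module $\mf{M}$ over $\mf{S}_R$ with a Frobenius isomorphism $\varphi\colon \phi^*\mf{M}[\nicefrac{1}{E}]\isomto \mf{M}[\nicefrac{1}{E}]$ by descending a suitable lattice inside $D_\crys(\Lambda)\otimes_{R_0[1/p]} \mf{S}_R[\nicefrac{1}{p}]$ along the compatibility with the $\Ainf$-evaluation. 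The fundamental input is that $\mf{M}$ is a vector bundle on $\Spec(\mf{S}_R)\setminus V(p,E)$, established by combining the Kisin-type classification of crystalline lattices with the local freeness of their $\Ainf$-counterparts. Having built the evaluation at $(\mf{S}_R,(E))$, Proposition \ref{prop:suff-cond-for-being-in-ess-image} (together with the unramifiedness/étaleness of the relevant base changes encoded in Diagram \eqref{eq:big-BK-diagram}) shows it descends to an object of $\cat{Vect}^{\an,\varphi}(R_\Prism)$, and one checks that its image under $T_\et$ recovers $\Lambda$ by tracking the construction through the $\Ainf$-evaluation. The principal difficulty will be showing the vector-bundle property on the punctured spectrum of $\mf{S}_R$ in the relative setting and verifying compatibility of the Breuil--Kisin descent with Zariski-localization in $R$; this is handled in \cite{DLMS}, but can alternatively be approached via Kisin's original method adapted to arbitrary base rings, which is how the complementary argument of \S\ref{ss:complementary-results} proceeds.
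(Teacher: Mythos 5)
The paper does not actually prove Theorem \ref{thm:GR-DLMS}: it is quoted from \cite{GuoReinecke} (cf.\ \cite{DLMS}), and the paper's own contribution in this direction is the bi-exactness statement, Proposition \ref{prop:GR-DLMS-exact}. So there is no internal proof to compare against; the comments below compare your sketch with the external arguments the paper relies on.

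For essential surjectivity your sketch follows the \cite{DLMS} route: build the Breuil--Kisin module $\mf{M}(\Lambda)$ over $\mf{S}_R$ by a Kisin-style descent, verify it is a vector bundle on $U(\mf{S}_R,(E))$, then propagate. The proof of \cite{GuoReinecke} --- which is the one the paper actually uses downstream, see Lemma \ref{lem:triviality on Y} and the proof of Proposition \ref{prop:exactness on BK prism} --- instead works entirely on the perfectoid side: one builds a vector bundle on $\Spec(\Ainf(\wt{R}))-V(p,[p^\flat]^p)$ by gluing $\Lambda\otimes_{\Z_p}\mathrm{B}_{[0,\nicefrac{1}{p}]}$ against a module constructed from $D_\crys(\Lambda)$ over $\mathrm{B}_{[\nicefrac{1}{p},\infty]}$ via Kedlaya's gluing theorem (Lemma \ref{lem:Kedlaya gluing}), and then descends along $\wt{R}\wh{\otimes}_R\wt{R}$. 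Both routes are legitimate (the paper points to the Kisin/DLMS alternative in \S\ref{ss:complementary-results}), but they are genuinely different constructions, and the $\Ainf$-gluing description is the one needed for the exactness analysis in Proposition \ref{prop:GR-DLMS-exact}; if your proof is meant to feed into that result you should keep the Guo--Reinecke construction in view.

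One step is under-justified to the point of being a gap: full faithfulness. The functor $\cat{Vect}^{\an,\varphi}(\mf{X}_\Prism)\to\cat{Vect}^\varphi(\mf{X}_\Prism,\mc{O}_\Prism[\nicefrac{1}{\mc{I}_\Prism}]^\wedge_p)$ is not fully faithful for the reason you give. \cite[Proposition 3.7]{GuoReinecke} concerns the restriction $\cat{Vect}(\mf{X}_\Prism,\mc{O}_\Prism)\to\cat{Vect}^\an(\mf{X}_\Prism)$, a different functor, and Beauville--Laszlo only says that a bundle on $U(A,I)$ is recovered from its restrictions to $\Spec(A[\nicefrac{1}{p}])$ and $\Spec(A[\nicefrac{1}{I}]^\wedge_p)$ glued over their common localization; it does not let you discard the $\Spec(A[\nicefrac{1}{p}])$ piece, which is exactly what passing to the Laurent $F$-crystal does. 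Full faithfulness of the \'etale realization on analytic prismatic $F$-crystals is one of the two substantive halves of the theorem, and its proof in \cite{GuoReinecke} (cf.\ the corresponding argument in \cite{BhattScholzeCrystals} over $\mc{O}_K$) uses the Frobenius structure in an essential way to show that $\varphi$-invariant sections over $A[\nicefrac{1}{I}]^\wedge_p$ spread out over all of $U(A,I)$; without the $F$-structure the statement is false. Your sketch should either cite this as the input it is, or supply the Frobenius-spreading argument.
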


We devote the rest of this subsection to proving the following claim. 

\begin{prop}\label{prop:GR-DLMS-exact} The functor 
\begin{equation*}
    T_\et\colon \cat{Vect}^{\an,\varphi}(\mf{X}_\Prism)\to \cat{Loc}^\crys_{\Z_p}(X)
\end{equation*}
is a bi-exact $\Z_p$-linear $\otimes$-equivalence. In particular, $T_\et$ induces an equivalence of categories
\begin{equation*}
    \GVect^{\an,\varphi}(\mf{X}_\Prism)\to\GLoc^\crys_{\Z_p}(X).
\end{equation*}
\end{prop}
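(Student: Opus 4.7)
The plan is to separate the statement into (i) bi-exactness and preservation of the $\Z_p$-linear $\otimes$-structure of the equivalence $T_\et$ from Theorem \ref{thm:GR-DLMS}, and (ii) the induced equivalence on $\mc{G}$-objects. The second part is purely formal: by definition, $\GVect^{\an,\varphi}(\mf{X}_\Prism)$ and $\GLoc^\crys_{\Z_p}(X)$ are the categories of exact $\Z_p$-linear $\otimes$-functors from $\cat{Rep}_{\Z_p}(\mc{G})$ into $\cat{Vect}^{\an,\varphi}(\mf{X}_\Prism)$ and $\cat{Loc}^\crys_{\Z_p}(X)$ respectively, so post-composition with any bi-exact $\Z_p$-linear $\otimes$-equivalence induces an equivalence on $\mc{G}$-objects. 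Moreover, the $\Z_p$-linear $\otimes$-structure is preserved essentially by construction: the functor $T_\et$ factors through the Laurent restriction $\cat{Vect}^{\an,\varphi}(\mf{X}_\Prism)\to \cat{Vect}^\varphi(\mf{X}_\Prism,\mc{O}_\Prism[\nicefrac{1}{\mc{I}_\Prism}]^\wedge_p)$ followed by the Bhatt--Scholze equivalence of Proposition \ref{prop:Laurent-crystal-equivalence}, and both steps are $\Z_p$-linear and tensor-compatible.

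Exactness of $T_\et$ itself is straightforward. A short exact sequence in $\cat{Vect}^{\an,\varphi}(\mf{X}_\Prism)$ restricts to a short exact sequence of vector bundles on each $U(A,I)$, hence is locally split and preserved by the pullback to $\Spec(A[\nicefrac{1}{I}]^\wedge_p)$; the resulting sequence of Laurent $F$-crystals then corresponds to an exact sequence of $\Z_p$-local systems by the bi-exactness proved in Proposition \ref{prop:Laurent-crystal-equivalence}. The subtle point is the exactness of the quasi-inverse: given an exact sequence $0\to \bb{L}^1\to\bb{L}^2\to\bb{L}^3\to 0$ in $\cat{Loc}^\crys_{\Z_p}(X)$, one must verify that the associated sequence $0\to\mc{V}^1\to\mc{V}^2\to\mc{V}^3\to 0$ of analytic prismatic $F$-crystals is exact.

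For this I would proceed by localization and glueing. By Proposition \ref{prop:exactness-preserved-cover-analytic-prismatic-crystals}, it suffices to check exactness over a cover of the final object in $\cat{Shv}(\mf{X}_\Prism)$; thanks to Proposition \ref{prop:small-open-cover-topos-cover} we may choose an open cover $\{\Spf(R_i)\}$ of $\mf{X}$ by small base $\mc{O}_K$-algebras and reduce the question to exactness of the sequence of vector bundles $\mc{V}^\bullet_{(\mf{S}_{R_i},(E))}$ on $U(\mf{S}_{R_i},(E))$. A Beauville--Laszlo decomposition splits this check into two pieces: on the locus $\Spec(\mf{S}_{R_i}[\nicefrac{1}{E}]^\wedge_p)$ the restriction agrees tautologically with the associated Laurent $F$-crystal attached to $\bb{L}^i$, and exactness there is immediate from Proposition \ref{prop:Laurent-crystal-equivalence}; on the rational locus $\Spec(\mf{S}_{R_i}[\nicefrac{1}{p}])$ the construction of the quasi-inverse from \cite{GuoReinecke,DLMS} identifies (after transport along the Frobenius and the map $\mf{S}_{R_i}\hookrightarrow S_{R_i}$ in Diagram \eqref{eq:big-BK-diagram}) the restriction with a base change of the filtered $F$-isocrystal $D_\crys(\bb{L}^i)$. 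Since $D_\crys\colon\cat{Loc}^\crys_{\Z_p}(X)\to \cat{IsocF}^\varphi(\mf{X})$ is a bi-exact equivalence onto its essential image, and the base changes involved in passing from $D_\crys(\bb{L}^i)$ to a $\mf{S}_{R_i}[\nicefrac{1}{p}]$-module are along flat maps between the rings appearing in Diagram \eqref{eq:big-BK-diagram}, the sequence is exact on the rational locus as well. Glueing these two pieces of exactness is automatic, yielding the desired short exact sequence on $U(\mf{S}_{R_i},(E))$.

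The main technical obstacle I anticipate is the verification on the rational locus: pinning down precisely how the vector bundle on $\Spec(\mf{S}_R[\nicefrac{1}{p}])$ is recovered from $D_\crys(\bb{L})$ using the machinery of \cite{GuoReinecke,DLMS}, and checking that the transition through the Breuil prism $(S_R,(p))$ and the Frobenius twists in Diagram \eqref{eq:big-BK-diagram} is genuinely exact in $\bb{L}$. Once this is done, the promotion to $\mc{G}$-objects stated in the proposition is immediate from the first paragraph.
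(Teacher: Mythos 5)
Your overall architecture matches the paper's: exactness of $T_\et$ itself via universal exactness of sequences of vector bundles plus Proposition \ref{prop:Laurent-crystal-equivalence}, reduction of exactness of the quasi-inverse to a cover of $\ast$ via Propositions \ref{prop:exactness-preserved-cover-analytic-prismatic-crystals} and \ref{prop:small-open-cover-topos-cover}/\ref{prop:cover-of-final-object-qrsp}, a two-piece decomposition of the punctured spectrum into an ``\'etale'' locus (where the bundle is $\Lambda\otimes(-)$, hence exact) and a ``crystalline'' locus (where one invokes exactness of $D_\crys$), and the purely formal passage to $\mc{G}$-objects. The main difference is that the paper evaluates on the perfectoid prism $(\Ainf(\wt{R}),(\tilde\xi))$ and uses Kedlaya's gluing $\mathrm{B}_{[0,\nicefrac{1}{p}]}$ vs.\ $\mathrm{B}_{[\nicefrac{1}{p},\infty]}$ (Lemma \ref{lem:Kedlaya gluing}), whereas you evaluate on the Breuil--Kisin prism and decompose $U(\mf{S}_R,(E))$ into the $p$-completed Laurent locus and $\Spec(\mf{S}_R[\nicefrac{1}{p}])$. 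Your choice of cover is legitimate in principle, but it puts you one descent step further from the Guo--Reinecke construction, which is carried out at the $\Ainf$ level.

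The genuine gap is in your treatment of the crystalline locus. You assert that the restriction of $T_\et^{-1}(\bb{L})$ to $\Spec(\mf{S}_R[\nicefrac{1}{p}])$ ``is identified with a base change of the filtered $F$-isocrystal $D_\crys(\bb{L})$'' along flat maps, and deduce exactness from exactness of $D_\crys$. This is not what the bundle is: on the crystalline locus the construction produces a Beauville--Laszlo \emph{modification} of (a Frobenius twist of) $D_\crys(\bb{L})\otimes(-)$ along the divisor $V(E)$ (resp.\ $V(\tilde\xi)$), with the completed lattice prescribed by the Hodge filtration $\Fil^0(D_\crys(\bb{L})\otimes\Bdr)$. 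Away from the divisor the identification with a flat base change of $D_\crys(\bb{L})$ is correct and gives exactness (this is Lemma \ref{lem:triviality on Y}(2) in the paper), but the completed-lattice piece is \emph{not} a flat base change of $D_\crys(\bb{L})$, and the functor $\bb{L}\mapsto\Fil^0(D_\crys(\bb{L})\otimes\Bdr)$ is not exact merely because $D_\crys$ is: exactness of filtered pieces requires strictness of the filtration on the sequence, which is precisely the point at issue. The paper's resolution is to identify the $\tilde\xi$-adic completion of the crystalline-locus bundle with the $\tilde\xi$-adic completion of the \'etale-locus bundle, i.e.\ with $\Lambda\otimes(\mathrm{B}_{[0,\nicefrac{1}{p}]})^\wedge_{\tilde\xi}$, which is visibly exact in $\Lambda$; this identification is exactly the crystalline comparison isomorphism and is where crystallinity of $\bb{L}$ is used. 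Your proposal flags the rational locus as the technical obstacle but the justification you offer (flatness of the maps in Diagram \eqref{eq:big-BK-diagram} --- note that $\mf{S}_R\to S_R$ is in any case not flat) does not address it; you need the additional step of trading the filtration-defined lattice for the \'etale lattice via the comparison isomorphism before concluding.
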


As the base extension functor $\cat{Vect}^{\an,\varphi}(\mf{X}_\Prism)\to \cat{Vect}^\varphi(\mf{X}_\Prism,\mc{O}_\Prism[\nicefrac{1}{\mc{I}_\Prism}]^\wedge_p)$ is exact, as an exact sequence of vector bundles is universally exact (in the sense of \stacks{058I}), exactness of $T_\et$ follows from Proposition \ref {prop:Laurent-crystal-equivalence}. So, we have reduced ourselves to showing that $T_\et^{-1}$ is exact. But, combining Proposition \ref{prop:cover-of-final-object-qrsp} and Proposition \ref{prop:exactness-preserved-cover-analytic-prismatic-crystals} we are reduced to showing the following.

\begin{prop}\label{prop:exactness on BK prism}
If $R$ is a small $\mc{O}_K$-algebra, then the following composition is exact: 
\begin{equation*}
    \cat{Rep}_{\Z_p}^\crys(\Gamma_{R})\isomto \cat{Loc}^\crys_{\Z_p}(\Spa(R[\nicefrac{1}{p}]))\xrightarrow{T_\et^{-1}}\cat{Vect}^{\an,\varphi}(R_\Prism)\xrightarrow{\mathrm{eval.}}\cat{Vect}^\an(\Ainf(\wt{R}),(\tilde{\xi})).
\end{equation*}
\end{prop}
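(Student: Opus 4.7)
The composition sends a crystalline $\Gamma_R$-representation $\Lambda$ to the vector bundle $\mc V_\Lambda$ on $U(\Ainf(\wt R),(\tilde\xi))$ obtained by evaluating the analytic prismatic $F$-crystal $T_\et^{-1}(\mbb L_\Lambda)$. The plan is to check exactness after restriction to the two natural formal neighborhoods of the boundary divisors of $U(\Ainf(\wt R),(\tilde\xi))$: the \emph{\'etale side} $W(\wt R^\flat[\nicefrac{1}{\varpi^\flat}]) = \Ainf(\wt R)[\nicefrac{1}{\tilde\xi}]^\wedge_p$ and the \emph{de Rham side} $\Bdr^+(\wt R) = \Ainf(\wt R)[\nicefrac{1}{p}]^\wedge_{\tilde\xi}$. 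A Beauville--Laszlo style gluing, applied to the regular elements $p$ and $\tilde\xi$, shows that exactness of a sequence of vector bundles on $U(\Ainf(\wt R),(\tilde\xi))$ reduces to exactness after base change to each of these two rings (with exactness on the common localization $\Ainf(\wt R)[\nicefrac{1}{p,\tilde\xi}]$ following from either leg by flat localization).

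Exactness on the \'etale side should be essentially immediate from the construction of $T_\et$ via Laurent $F$-crystals: by Proposition \ref{prop:Laurent-crystal-equivalence} and the description in Example \ref{ex:small-etale-realization} (applied with $\wt R$ in place of $\check R$), there is a functorial identification
\begin{equation*}
\mc V_\Lambda \otimes_{\Ainf(\wt R)} W(\wt R^\flat[\nicefrac{1}{\varpi^\flat}]) \cong \Lambda \otimes_{\Z_p} W(\wt R^\flat[\nicefrac{1}{\varpi^\flat}]),
\end{equation*}
so exactness on this leg is immediate from the flatness of $W(\wt R^\flat[\nicefrac{1}{\varpi^\flat}])$ over $\Z_p$.

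For the de Rham side, the plan is to use Scholze's comparison isomorphism $c_\mr{Sch}$ to identify $\mc V_\Lambda \otimes_{\Ainf(\wt R)} \Bdr^+(\wt R)$ functorially with a $\Bdr^+(\wt R)$-module built from $D_\dR(\mbb L_\Lambda)$ --- concretely, the $\Fil^0$ of horizontal sections of $D_\dR(\mbb L_\Lambda) \otimes \mc O\Bdr$ sitting over an appropriate classical point of $\wt R[\nicefrac{1}{p}]$. Since $D_\dR$ is an exact $\Z_p$-linear $\otimes$-functor on $\cat{Loc}^\crys_{\Z_p}(X) \subseteq \cat{Loc}^\dR_{\Z_p}(X)$ (cf.\ \S\ref{ss:de-Rham-local-systems}), and the horizontal-section and filtration-taking operations are exact on de Rham objects, exactness on this leg follows. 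I expect the main obstacle to be establishing this identification precisely and functorially in $\Lambda$: one must trace the DLMS/GR construction of $T_\et^{-1}$ (or equivalently its Brinon-style description via $\mc O\Bcrys(\wt R)$) through to the $(\Ainf(\wt R),(\tilde\xi))$-evaluation, and verify that the Beauville--Laszlo gluing data on the overlap $\Spec(\Ainf(\wt R)[\nicefrac{1}{p,\tilde\xi}])$ matches the \'etale description --- which is exactly the content of Lemma \ref{lem: Faltings c and Scholze c} comparing $c_\mr{Fal}$ and $c_\mr{Sch}$.
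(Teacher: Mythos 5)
Your reduction step is where the argument breaks. The two rings you propose, $\Ainf(\wt R)[\nicefrac{1}{\tilde\xi}]^\wedge_p=W(\wt R^\flat[\nicefrac{1}{\varpi^\flat}])$ and $\Bdr^+(\wt R)=\Ainf(\wt R)[\nicefrac{1}{p}]^\wedge_{\tilde\xi}$, are formal completions along the two boundary divisors, and together they do not control exactness on all of $U(\Ainf(\wt R),(\tilde\xi))$. Running Beauville--Laszlo twice reduces exactness on $U(\Ainf(\wt R),(\tilde\xi))$ to exactness over these two completions \emph{and} over the open piece $\Spec(\Ainf(\wt R)[\nicefrac{1}{p\tilde\xi}])$, and your claim that exactness on this last piece ``follows from either leg by flat localization'' is backwards: the maps $\Ainf(\wt R)[\nicefrac{1}{p\tilde\xi}]\to \Bdr(\wt R)$ and $\Ainf(\wt R)[\nicefrac{1}{p\tilde\xi}]\to W(\wt R^\flat[\nicefrac{1}{\varpi^\flat}])[\nicefrac{1}{\tilde\xi}]$ are flat but very far from jointly faithfully flat (already for $\Ainf(\mc{O}_C)$ the kernel of $\theta_{y}$ for any untilt $y$ other than the one cut out by $\tilde\xi$ lies in neither image, since $p-[a]$ is a unit in $W(C^\flat)$), and non-faithful flat base change transports exactness only in the wrong direction. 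This interior region is precisely where the content of the proposition lies, so the gap is not a technicality.

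The paper's proof instead covers the analytic locus by the rational subsets $\mc{Y}_{[0,\nicefrac{1}{p}]}$ and $\mc{Y}_{[\nicefrac{1}{p},\infty]}$, whose union is everything, and invokes the bi-exact gluing of Lemma \ref{lem:Kedlaya gluing}. Over $\mathrm{B}_{[0,\nicefrac{1}{p}]}$ the bundle is trivialized by $\Lambda$; over $\mathrm{B}_{[\nicefrac{1}{p},\infty]}[\nicefrac{1}{\tilde\xi}]$ it is identified with $D_\crys(\Lambda)$ base-changed along $\Acrys(\wt R)[\nicefrac{1}{p}]\to \mathrm{B}_{[\nicefrac{1}{p},\infty]}$ (Lemma \ref{lem:triviality on Y}), so exactness there comes from exactness of $D_\crys$ --- not of $D_\dR$. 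Your de Rham comparison at $\Bdr^+$ only sees the $\tilde\xi$-adic completion (where the paper in fact gets exactness more cheaply from the trivialization, using $(\mathrm{B}_{[0,\nicefrac{1}{p}]})^\wedge_{\tilde\xi}=(\mathrm{B}_{[\nicefrac{1}{p},\infty]})^\wedge_{\tilde\xi}$) and has no access to the region between the two divisors. To repair your argument you would need to replace the two formal completions by a family that is jointly faithfully flat over $U(\Ainf(\wt R),(\tilde\xi))$ and supply a crystalline, rather than merely de Rham, description of the bundle on the piece of the interior not covered by the $\Lambda$-trivialization --- which is exactly what the paper's Lemmas \ref{lem:Kedlaya gluing} and \ref{lem:triviality on Y} accomplish.
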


Define
\begin{equation*}
    \begin{aligned}
        \mathrm{B}_{[\nicefrac{1}{p},\infty]} &\defeq \Ainf(\widetilde{R})[\nicefrac{[p^\flat]^p}{p}]^\wedge_p[\nicefrac{1}{p}]\\ 
        \mathrm{B}_{[0,\nicefrac{1}{p}]} &\defeq \Ainf(\widetilde{R})[\nicefrac{p}{[p^\flat]^p}]^\wedge_{[p^\flat]^p}[\nicefrac{1}{[p^\flat]^p}]\\ 
        \mathrm{B}_{[\nicefrac{1}{p},\nicefrac{1}{p}]} &= \mathrm{B}_{\nicefrac{1}{p}}\defeq \Ainf(\widetilde{R})[\nicefrac{[p^\flat]^p}{p},\nicefrac{p}{[p^\flat]^p}]^\wedge[\nicefrac{1}{p[p^\flat]^p}],
    \end{aligned}
\end{equation*}
where the final completion is either the $p$-adic or $[p^\flat]^p$-adic completion.\footnote{As in \cite[\S12.2]{ScholzeBerkeley}, denote by $\mc{Y}$ the analytic locus $\Spa(\Ainf(\widetilde{R}))^\an$. This is an analytic adic space equipped with a surjective continuous map $\kappa\colon |\mc{Y}|\to [0,\infty]$. For $[a,b]\subseteq [0,\infty]$ with rational (possibly infinite) endpoints, set $\mathcal{Y}_{[a,b]}\defeq \kappa^{-1}([a,b])^\circ$, which is a rational open subset of $\Spa(\Ainf(\widetilde{R}))$. When comparing references it is useful to observe that for each $[a,b]$ in $\{[0,\nicefrac{1}{p}],[\nicefrac{1}{p},\infty],[\nicefrac{1}{p},\nicefrac{1}{p}]\}$ there is a natural isomorphism $\mathrm{B}_{[a,b]}\to \mc{O}_{\mc{Y}}(\mc{Y}_{[a,b]})$.\label{footnote:analytic-algebraic}} We denote by $\wt{\varphi}$ the following composition
\begin{equation*}
    \Acrys(\wt{R})[\nicefrac{1}{p}]=\Ainf(\wt{R})[\tfrac{\xi^n}{n!}]^\wedge_p[\nicefrac{1}{p}]\xrightarrow{\phi}\Ainf(\wt{R})[\tfrac{\tilde \xi^n}{n!}]^\wedge_p[\nicefrac{1}{p}]\hookrightarrow \Ainf(\wt{R})[\nicefrac{[p^\flat]^p]}{p}]^\wedge_p[\nicefrac{1}{p}]=B_{[\nicefrac{1}{p},\infty]},
\end{equation*}
which we use to view $\mathrm{B}_{[\nicefrac{1}{p},\infty]}$ as an $\Acrys(\widetilde{R})[\nicefrac{1}{p}]$-algebra. 

There is a natural pullback functor
\begin{equation*}
    \cat{Vect}\left(\Spec(\Ainf(\widetilde{R}))-V(p,[p^\flat]^p)\right)\to \cat{Vect}(\mathrm{B}_{[0,\nicefrac{1}{p}]})\times_{\cat{Vect}(\mathrm{B}_{\nicefrac{1}{p}})}\cat{Vect}(\mathrm{B}_{[\nicefrac{1}{p},\infty]}),
\end{equation*}
as the natural maps $\Spec(\mathrm{B}_{[a,b]})\to \Spec(\Ainf(\widetilde{R}))$ for $[a,b]\in\{[0,\nicefrac{1}{p}],[\nicefrac{1}{p},\infty]\}$ factorize through $\Spec(\Ainf(\widetilde{R}))-V(p,[p^\flat]^p)$, and are equalized when composed with $\Spec(\mathrm{B}_{\nicefrac{1}{p}})\to\Spec(\mathrm{B}_{[a,b]})$. For a vector bundle $M$ on $\Spec(\Ainf(\wt{R}))-V(p,[p^\flat]^p)$ we denote by $M_{[a,b]}$, for $[a,b]\in\{[0,\nicefrac{1}{p}],[\nicefrac{1}{p},\infty]\}$, the induced vector bundle on $\mr{B}_{[a,b]}$.

\begin{lem}[{cf.\@ \cite[Theorem 3.8]{KedlayaRing-Theoretic}}]\label{lem:Kedlaya gluing} The functor
\begin{equation*}
    \cat{Vect}\left(\Spec(\Ainf(\widetilde{R}))-V(p,[p^\flat]^p)\right)\to \cat{Vect}(\mathrm{B}_{[0,\nicefrac{1}{p}]})\times_{\cat{Vect}(\mathrm{B}_{\nicefrac{1}{p}})}\cat{Vect}(\mathrm{B}_{[\nicefrac{1}{p},\infty]})
\end{equation*}
is a bi-exact $\Z_p$-linear $\otimes$-equivalence.
\end{lem}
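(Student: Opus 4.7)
My plan is to adapt the Beauville--Laszlo-type gluing argument of \cite[Theorem 3.8]{KedlayaRing-Theoretic}. Abbreviate $A \defeq \Ainf(\widetilde R)$, $f \defeq p$, $g \defeq [p^\flat]^p$, and $U \defeq \Spec(A) - V(f,g)$. Bi-exactness, $\Z_p$-linearity, and compatibility with tensor products will follow formally once the underlying equivalence is established, since exactness and tensor products of vector bundles on $U$ are detected stalk-wise, while the fiber product on the right-hand side inherits a term-by-term exact and $\otimes$-structure; pullback of vector bundles along each of the three maps $U \to \Spec(\mathrm{B}_{[a,b]})$ respects these structures.

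The first step is to observe that $U$ is the union of the two principal opens $\Spec(A[1/f])$ and $\Spec(A[1/g])$, with intersection $\Spec(A[1/fg])$. By Zariski gluing for vector bundles, this yields a bi-exact $\Z_p$-linear $\otimes$-equivalence
\begin{equation*}
    \cat{Vect}(U) \isomto \cat{Vect}(A[1/f]) \times_{\cat{Vect}(A[1/fg])} \cat{Vect}(A[1/g]).
\end{equation*}
The second, and more substantive, step is to replace each of $A[1/f]$, $A[1/g]$, $A[1/fg]$ by the corresponding ring $\mathrm{B}_{[\nicefrac{1}{p},\infty]}$, $\mathrm{B}_{[0,\nicefrac{1}{p}]}$, $\mathrm{B}_{\nicefrac{1}{p}}$ appearing in the statement. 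For this I would invoke a Beauville--Laszlo statement: for a ring $R$ and a non-zero-divisor $h \in R$, the base-change functor
\begin{equation*}
    \cat{Vect}(R) \isomto \cat{Vect}(R[1/h]) \times_{\cat{Vect}(R^\wedge_h[1/h])} \cat{Vect}(R^\wedge_h)
\end{equation*}
is a bi-exact $\otimes$-equivalence. I would apply this first with $(R,h) = (A[1/f], g)$ and then with $(R,h) = (A[1/g], f)$. Unwinding the definitions gives the identifications $A[1/f]^\wedge_g = \mathrm{B}_{[\nicefrac{1}{p},\infty]}$ and $A[1/g]^\wedge_f = \mathrm{B}_{[0,\nicefrac{1}{p}]}$, while the common further localizations produced in both applications agree with $\mathrm{B}_{\nicefrac{1}{p}}$; composing the resulting equivalences with the Zariski-glued one produces the desired equivalence.

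The main obstacle will be the Beauville--Laszlo step in our non-Noetherian setting. One must verify that $p$ and $[p^\flat]^p$ act as non-zero-divisors on $A$ and on its localizations at the complementary element, that the relevant completions interact well with finite projective modules (so that the comparison of categories of vector bundles really is an equivalence), and that no hidden finiteness hypothesis is required. Non-zero-divisibility holds by the perfectoid structure of $\widetilde R$, which guarantees that $p$ and $[p^\flat]$ form a regular sequence on $A$ and its relevant localizations, and the completion-theoretic inputs are exactly those carried out in \cite[\S3]{KedlayaRing-Theoretic}. My plan is therefore to invoke Kedlaya's analysis with only minor bookkeeping changes to our particular presentation of the $\mathrm{B}_{[a,b]}$, rather than redoing the technical heart of the argument from scratch.
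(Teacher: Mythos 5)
Your plan breaks at the second step because of a false identification of rings. The rings $\mathrm{B}_{[\nicefrac{1}{p},\infty]}$, $\mathrm{B}_{[0,\nicefrac{1}{p}]}$, $\mathrm{B}_{\nicefrac{1}{p}}$ are \emph{rational localizations} of $A=\Ainf(\widetilde R)$: one adjoins the fraction $[p^\flat]^p/p$ (resp.\ $p/[p^\flat]^p$) inside $A[\nicefrac{1}{p}]$ (resp.\ $A[\nicefrac{1}{[p^\flat]}]$), \emph{then} completes for the topology of the other element, then inverts it; they are the coordinate rings of the tubes $\mc{Y}_{[\nicefrac{1}{p},\infty]}$, $\mc{Y}_{[0,\nicefrac{1}{p}]}$, $\mc{Y}_{[\nicefrac{1}{p},\nicefrac{1}{p}]}$ in the analytic locus of $\Spa(\Ainf(\widetilde R))$ (cf.\ Footnote \ref{footnote:analytic-algebraic}). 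They are \emph{not} the $g$-adic completion of $A[\nicefrac{1}{f}]$ nor the $f$-adic completion of $A[\nicefrac{1}{g}]$. A toy example shows the discrepancy: for $A=\Z_p\ll T\rr$, $f=p$, $g=T$, one has $A[\nicefrac{1}{f}]^\wedge_g=\Q_p\ll T\rr$, whereas $A[\nicefrac{g}{f}]^\wedge_f[\nicefrac{1}{f}]=\Q_p\langle T/p\rangle$ is strictly smaller. Likewise here, $A[\nicefrac{1}{[p^\flat]^p}]^\wedge_p$ is $W(\widetilde R^\flat[\nicefrac{1}{p^\flat}])$ (the ring of the Laurent $F$-crystal story, i.e.\ of $\mc{Y}_{[0,0]}$), not $\mathrm{B}_{[0,\nicefrac{1}{p}]}$. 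So the fiber product produced by your Zariski-plus-Beauville--Laszlo decomposition is taken over $A[\nicefrac{1}{pg}]$, $A[\nicefrac{1}{p}]^\wedge_{[p^\flat]^p}$ and $A[\nicefrac{1}{[p^\flat]^p}]^\wedge_p$ --- a genuinely different diagram from the one in the statement. The content of \cite[Theorem 3.8]{KedlayaRing-Theoretic}, which is exactly what the lemma is quoting, is the non-formal comparison between vector bundles on the scheme $\Spec(A)-V(p,[p^\flat]^p)$ and the glueing square attached to the rational covering $\{\mc{Y}_{[0,\nicefrac{1}{p}]},\mc{Y}_{[\nicefrac{1}{p},\infty]}\}$; your proposal does not engage with this comparison, and it cannot be recovered from Zariski gluing together with completion along principal ideals.

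The bi-exactness is also not formal, and in fact it is the only thing the paper's proof has to supply beyond citing Kedlaya. Exactness of the forward functor is indeed easy (an exact sequence of vector bundles is universally exact), but exactness of the quasi-inverse requires showing that a term-by-term exact sequence over the three Huber rings glues to an exact sequence on $\Spec(A)-V(p,[p^\flat]^p)$; the delicate point is surjectivity of the glued map, which the paper extracts from Tate acyclicity of vector bundles on sheafy affinoid adic spaces together with the structure of Kedlaya's glueing squares \cite[Proposition 3.2]{KedlayaRing-Theoretic}, applied to the maps $A_1\to B_1\oplus B_2'$ and $A_2\to B_1'\oplus B_2$ in the notation of \cite[Definition 3.5]{KedlayaRing-Theoretic}. ``Detected stalk-wise'' does not accomplish this: one would first need to know that the three charts jointly cover $\Spec(A)-V(p,[p^\flat]^p)$ in a flat, exactness-reflecting way, which is again part of what Kedlaya's analysis establishes.
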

\begin{proof} By the proof of \cite[Theorem 3.8]{KedlayaRing-Theoretic}, it remains to prove this functor is bi-exact. The exactness follows as an exact sequence of vector bundles is universally exact. To prove that the quasi-inverse is exact, from acyclicity of vector bundles on sheafy affinoid adic spaces and \cite[Proposition 3.2]{KedlayaRing-Theoretic} as a whole, we deduce exactness of the quasi-inverse to the functor in \cite[Proposition 3.2 (a)]{KedlayaRing-Theoretic}, which we apply to (defaulting to the notation in \cite[Definition 3.5]{KedlayaRing-Theoretic}) $A_1\to B_{1}\oplus B_2'$ and $A_2\to B_1'\oplus B_2$ to get the desired exactness.
\end{proof}

\begin{rem}
The proof of this lemma implies exactness of Kedlaya's equivalence between vector bundles on $\Spec(\Ainf(\widetilde{R}))- V\left(p,[p^\flat]^p\right)$ and those on $\Spa(\Ainf(\widetilde{R}))_\an$ (cf.\@ Footnote \ref{footnote:analytic-algebraic}), which seems well-known. Though the adic space perspective could clarify the following proof of Lemma \ref{prop:exactness on BK prism}, we have chosen to avoid it for the sake of brevity.
\end{rem}

We now recall some structural properties of the functor $T_\et^{-1}$.
\begin{lem}\label{lem:triviality on Y}
For an object $\Lambda$ of $\cat{Rep}_{\Z_p}^\crys(\Gamma_R)$, set $\mc{M}\defeq T_\et^{-1}(\Lambda)$ and $M\defeq \mc{M}(\Ainf(\widetilde{R}),(\tilde\xi))$. 
\begin{enumerate}
    \item There is a natural isomorphism of $\mathrm{B}_{[0,\nicefrac{1}{p}]}$-modules
    \be
    M_{[0,\nicefrac{1}{p}]}\isomto \Lambda\otimes_{\Z_p}\mathrm{B}_{[0,\nicefrac{1}{p}]}.
    \ee 
    \item Let $\mc E=D_\crys(\Lambda)$. Then there is a natural isomorphism
    \begin{equation}\label{eq:GR-second-isom}
   M_{[\nicefrac{1}{p},\infty]}[\nicefrac{1}{\tilde{\xi}}]\isomto\mc E(\Acrys(\widetilde{R})\twoheadrightarrow \wt{R})[\nicefrac{1}{p}]\otimes_{\Acrys(\widetilde{R})[\nicefrac{1}{p}],\wt{\varphi}}\mathrm{B}_{[\nicefrac{1}{p},\infty]}[\nicefrac{1}{\tilde{\xi}}].
    \end{equation}
\end{enumerate}
\end{lem}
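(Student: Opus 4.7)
The plan is to establish parts (1) and (2) separately, invoking the two complementary descriptions of $\mc{M} = T_\et^{-1}(\Lambda)$ furnished by earlier results: the Laurent $F$-crystal identification (Proposition \ref{prop:Laurent-crystal-equivalence}) for (1), and the crystalline $F$-isocrystal comparison underlying the Guo--Reinecke--DLMS equivalence (Theorem \ref{thm:GR-DLMS}) together with the analytic crystal property for (2).

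For part (1), the Laurent $F$-crystal obtained from $\mc{M}$ by $p$-adic completion along $V(\tilde\xi)$ corresponds under $T_\et$ to the $\Z_p$-local system $\Lambda$. Specialized to the object $(\Ainf(\widetilde{R}),(\tilde\xi))$ of $R_\Prism$, this yields a canonical $\varphi$-equivariant isomorphism
\begin{equation*}
    M[\nicefrac{1}{\tilde\xi}]^\wedge_p \isomto \Lambda \otimes_{\Z_p} \Ainf(\widetilde{R})[\nicefrac{1}{\tilde\xi}]^\wedge_p,
\end{equation*}
which provides the desired trivialization after $p$-adic completion along $V(\tilde\xi)$. To extend this identification to $\mathrm{B}_{[0,\nicefrac{1}{p}]}$, I will exploit the Frobenius isomorphism $\varphi_\mc{M}$ together with the fact that the Frobenius $\phi$ on $\Ainf(\widetilde{R})$ permutes the various radii annuli of the adic space $\Spa(\Ainf(\widetilde{R}))^\an$. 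Iterating $\varphi_\mc{M}$ sufficiently many times lets us bootstrap the Laurent trivialization (which lives in the limit at $V(\tilde\xi)$) to a trivialization on $\mathrm{B}_{[0,\nicefrac{1}{p}]}$; the Frobenius equivariance of the Laurent comparison ensures compatibility throughout.

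For part (2), apply the analytic crystal property of $\mc{M}$ to the morphism of prisms $(\Ainf(\widetilde{R}),(\tilde\xi)) \hookrightarrow (\Acrys(\widetilde{R}),(p))$ from Example \ref{example:initial-prism-ainf-to-acrys}. This gives, on $\Spec(\Acrys(\widetilde{R})[\nicefrac{1}{p}])$, the identification
\begin{equation*}
\mc{M}(\Acrys(\widetilde{R}),(p))[\nicefrac{1}{p}] \cong M \otimes_{\Ainf(\widetilde{R})} \Acrys(\widetilde{R})[\nicefrac{1}{p}].
\end{equation*}
The crystalline comparison underlying Theorem \ref{thm:GR-DLMS} further identifies the left-hand side with the Frobenius pullback $\phi^\ast \mc{E}(\Acrys(\widetilde{R}) \twoheadrightarrow \widetilde{R})[\nicefrac{1}{p}]$; the Frobenius twist arises because the prism structure map $\wt{\mr{nat}}$ factors through $\phi$, whereas $\mc{E} = D_\crys(\Lambda)$ is built from the usual $\theta$-thickening (see Example \ref{example:initial-prism-ainf-to-acrys}). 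Base changing along the natural inclusion $\Acrys(\widetilde{R})[\nicefrac{1}{p}] \hookrightarrow \mathrm{B}_{[\nicefrac{1}{p},\infty]}$, and recalling that $\wt\varphi$ is defined as precisely the composition of $\phi$ with this inclusion, the Frobenius twist gets absorbed into the change of scalars, yielding
\begin{equation*}
    M_{[\nicefrac{1}{p},\infty]} \isomto \mc{E}(\Acrys(\widetilde{R}) \twoheadrightarrow \widetilde{R})[\nicefrac{1}{p}] \otimes_{\Acrys(\widetilde{R})[\nicefrac{1}{p}], \wt\varphi} \mathrm{B}_{[\nicefrac{1}{p},\infty]}.
\end{equation*}
Inverting $\tilde\xi$ then yields the stated isomorphism \eqref{eq:GR-second-isom}.

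The main obstacle will be the Frobenius descent argument in part (1); it requires careful control of the convergence of iterated Frobenius pullbacks and of the passage from the Laurent regime to the full region $\mathrm{B}_{[0,\nicefrac{1}{p}]}$, where $\tilde\xi$ need not be invertible. In part (2), correctly tracking the Frobenius twist between the prismatic and crystalline settings is the principal subtle point, but it is a bookkeeping matter once the crystalline comparison is in place.
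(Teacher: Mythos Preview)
Your approach is genuinely different from the paper's, and part (2) has a real gap.

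The paper's proof is almost trivial: it simply unpacks how Guo--Reinecke \emph{construct} $T_\et^{-1}$ in \cite[Theorems 4.8, 4.15 and Proposition 4.11]{GuoReinecke}. There, $M=\mc{M}_{\wt R}$ is literally built by Kedlaya gluing the module $\Lambda\otimes_{\Z_p}\mathrm{B}_{[0,\nicefrac{1}{p}]}$ to a module $\mc{M}_{3,\wt R}$ over $\mathrm{B}_{[\nicefrac{1}{p},\infty]}$, and $\mc{M}_{3,\wt R}$ is in turn built by Beauville--Laszlo gluing the right-hand side of \eqref{eq:GR-second-isom} with a certain $(\mathrm{B}_{[\nicefrac{1}{p},\infty]})^\wedge_{\tilde\xi}$-lattice. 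So (1) and (2) are tautological. Your route instead tries to deduce these facts from the Laurent and crystalline comparisons, treating $T_\et^{-1}$ as a black box.

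For (1), your Frobenius-spreading idea is in principle workable (it is the mechanism behind, e.g., \cite[Proposition 12.4.6]{ScholzeBerkeley}), but you acknowledge it is not carried out, and it is far more involved than the one-line citation the paper gives.

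For (2), there is a genuine problem. Your intermediate claim, an isomorphism $M_{[\nicefrac{1}{p},\infty]}\isomto \mc E(\Acrys(\wt R)\twoheadrightarrow\wt R)[\nicefrac{1}{p}]\otimes_{\Acrys(\wt R)[\nicefrac{1}{p}],\wt\varphi}\mathrm{B}_{[\nicefrac{1}{p},\infty]}$ \emph{before} inverting $\tilde\xi$, is false in general: as the paper's proof makes explicit, $M_{[\nicefrac{1}{p},\infty]}$ is a Beauville--Laszlo modification of the right-hand side along $V(\tilde\xi)$ by the filtered lattice $\wt{\Fil}^0_E$. More concretely, your base-change step is ill-posed: there is no $\Ainf(\wt R)$-linear map $\Acrys(\wt R)[\nicefrac{1}{p}]\to \mathrm{B}_{[\nicefrac{1}{p},\infty]}$; the map $\wt\varphi$ is $\phi$-semilinear, so pushing the crystal-property identification along it yields information about $\phi^\ast M$, not $M$. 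You can then invoke $\varphi_\mc{M}$ to pass from $\phi^\ast M$ to $M$, but only after inverting $\tilde\xi$---which salvages the final statement but invalidates the displayed intermediate isomorphism. Finally, the ``crystalline comparison underlying Theorem~\ref{thm:GR-DLMS}'' you invoke to identify $\mc M(\Acrys(\wt R),(p))[\nicefrac{1}{p}]$ with $\phi^\ast\mc E(\Acrys(\wt R)\twoheadrightarrow\wt R)[\nicefrac{1}{p}]$ is not an independent result in this paper; it is precisely part of the Guo--Reinecke construction you are trying to avoid citing. The direct approach is both shorter and correct.
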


\begin{proof}
In the proof of \cite[Theorem 4.8]{GuoReinecke}, the prismatic crystal $\mc M$ is described as the vector bundle $\mc M_{\widetilde{R}}$ on $\Spec(\Ainf(\widetilde{R})) - V\left(p,[p^\flat]^p\right)$ constructed in \cite[Theorem 4.15]{GuoReinecke}, together with a descent datum on $\widetilde{R}\hat{\otimes}_R \widetilde{R}$. In particular, $M$ is naturally isomorphic to $\mc M_{\widetilde{R}}$. In the proof of \cite[Theorem 4.15]{GuoReinecke}, the module $\mc M_{\widetilde{R}}$ is constructed by gluing the $\mathrm{B}_{[\nicefrac{1}{p},\infty]}$-module $\mc M_{3,\widetilde{R}}$ constructed in \cite[Proposition 4.11]{GuoReinecke} and the $\mathrm{B}_{[0,\nicefrac{1}{p}]}$-module $\Lambda\otimes_{\Z_p}\mathrm{B}_{[0,\nicefrac{1}{p}]}$ via the equivalence in Lemma \ref{lem:Kedlaya gluing}. In particular, assertion (1) follows by definition. To prove assertion (2), note that $\mc M_{3,\widetilde{R}}$ is obtained by the Beauville--Laszlo gluing of the $\mathrm{B}_{[\nicefrac{1}{p},\infty]}[\nicefrac{1}{\tilde{\xi}}]$-module
\begin{equation*}
    \mc{E}(\Acrys(\widetilde{R})\twoheadrightarrow \wt{R})[\nicefrac{1}{p}]\otimes_{\Acrys(\widetilde{R})[\nicefrac{1}{p}],\wt{\varphi}}\mathrm{B}_{[\nicefrac{1}{p},\infty]}[\nicefrac{1}{\tilde{\xi}}]
\end{equation*}
and a certain $(\mathrm{B}_{[\nicefrac{1}{p},\infty]})_{\tilde{\xi}}^\wedge$-module (specifically the submodule $\wt{\mr{Fil}}^0_{E}$ of the $(\mathrm{B}_{[\nicefrac{1}{p},\infty]})_{\tilde{\xi}}^\wedge[\nicefrac{1}{\tilde \xi}]$-module $\mc E(\Acrys(\widetilde{R})\twoheadrightarrow \wt{R})[\nicefrac{1}{p}]\otimes_{\Acrys(\widetilde{R})[\nicefrac{1}{p}],\tilde\varphi}(\mathrm{B}_{[\nicefrac{1}{p},\infty]})_{\tilde{\xi}}^\wedge[\nicefrac{1}{\tilde\xi}]$
%see \eqref{eq: tilde filtration}
). So, (2) again follows by definition.
\end{proof}

\begin{proof}[Proof of Proposition \ref{prop:exactness on BK prism}] By Lemma \ref{lem:Kedlaya gluing} we are reduced to the assertions that the functors
\begin{equation*}
\begin{aligned} &\cat{Rep}^\crys_{\Z_p}(\Gamma_R)\to \cat{Vect}(\mathrm{B}_{[0,\nicefrac{1}{p}]}),\qquad &\Lambda \mapsto T_\et^{-1}(\Lambda)(\Ainf(\widetilde{R}))_{[0,\nicefrac{1}{p}]},\\
&\cat{Rep}^\crys_{\Z_p}(\Gamma_R)\to \cat{Vect}(\mathrm{B}_{[\nicefrac{1}{p},\infty]}),\qquad &\Lambda \mapsto T_\et^{-1}(\Lambda)(\Ainf(\widetilde{R}))_{[\nicefrac{1}{p},\infty]},
\end{aligned}
\end{equation*}
are exact. The first functor being exact follows immediately from the first assertion of Lemma \ref{lem:triviality on Y} and the fact that $\Z_p\to \mathrm{B}_{[0,\nicefrac{1}{p}]}$ is flat. As Beauville--Laszlo gluing is exact, to prove that the second functor is exact, it suffices to show that the functors 
\begin{equation}\label{eq:refined-functor-1}
\cat{Rep}^\crys_{\Z_p}(\Gamma_R)\to \cat{Vect}(\mathrm{B}_{[\nicefrac{1}{p},\infty]}[\nicefrac{1}{\tilde{\xi}}]),\qquad \Lambda \mapsto T_\et^{-1}(\Lambda)(\Ainf(\widetilde{R}))_{[\nicefrac{1}{p},\infty]}[\nicefrac{1}{\tilde{\xi}}],
\end{equation}
and
\begin{equation}\label{eq:refined-functor-2}
\cat{Rep}^\crys_{\Z_p}(\Gamma_R)\to \cat{Vect}((\mathrm{B}_{[\nicefrac{1}{p},\infty]})^\wedge_{\tilde{\xi}}),\qquad \Lambda \mapsto \left[T_\et^{-1}(\Lambda)(\Ainf(\widetilde{R}))_{[\nicefrac{1}{p},\infty]}\right]_{\tilde{\xi}}^\wedge,
\end{equation}
are exact. The functor in \eqref{eq:refined-functor-1} being exact follows from the second assertion of Lemma \ref{lem:triviality on Y} as $D_\crys$ is exact. To see that the functor in \eqref{eq:refined-functor-2} is exact, we observe that there is an identification $(\mathrm{B}_{[0,\nicefrac{1}{p}]})_{\tilde{\xi}}^\wedge=(\mathrm{B}_{[\nicefrac{1}{p},\infty]})_{\tilde{\xi}}^\wedge$, and thus we are again reduced to the first assertion of Lemma \ref{lem:triviality on Y}.
\end{proof}

\subsection{\texorpdfstring{$\mc{G}$}{G}-objects in the category of prismatically good reduction local systems}\label{ss:G-objects-strongly-crystalline} We now wish to extend some of the results of the last subsection to the case of prismatic $F$-crystals. For the remainder of this subsection, we assume that $\mf{X}\to\Spf(\mc{O}_K)$ is smooth and $\mc{G}$ is reductive.

\begin{defn}
    The category $\cat{Loc}^{\strcrys}_{\Z_p}(X)$ of \emph{prismatically good reduction} $\Z_p$-local systems on $X$ (relative to $\mf{X}$) is the full exact $\Z_p$-linear $\otimes$-subcategory of $\cat{Loc}_{\Z_p}^\crys(X)$ consisting of those $\bb{L}$ with $T_\et^{-1}(\bb{L})$ a prismatic $F$-crystal on $\mf{X}$.
\end{defn}

 If $\mf{X}=\Spf(\mc{O}_K)$, then every crystalline $\Z_p$-local system has prismatically good reduction (cf.\@ \cite[Proposition 3.8]{GuoReinecke}), but for higher-dimensional $\mf{X}$ this ceases to be the case (cf.\@ \cite[Example 3.36]{DLMS}). There is a $\Z_p$-linear $\otimes$-equivalence
\begin{equation}\label{eq:T-equiv-strcrys-vect}
    T_\et\colon \cat{Vect}^\varphi(\mf{X}_\Prism)\to\cat{Loc}^\strcrys_{\Z_p}(X).
\end{equation}
This is exact as $\cat{Vect}^\varphi(\mf{X}_\Prism)\to \cat{Vect}^{\an,\varphi}(\mf{X}_\Prism)$ is, and so induces a functor 
\begin{equation*}
    T_\et\colon \GVect^\varphi(\mf{X}_\Prism)\to \GLoc^\strcrys_{\Z_p}(X).
\end{equation*} 
That said, the quasi-inverse to the functor in \eqref{eq:T-equiv-strcrys-vect} is not exact even for $\mf{X}=\Spf(\mc{O}_K$) as its evaluation at the Breuil--Kisin prism is the functor $\mf{M}$ from \cite{KisinFCrystal} (see \cite[Remark 7.11]{BhattScholzeCrystals}), which is known to not be exact (e.g.\@ see \cite[Example 4.1.4]{LiuDifferentUnif}).

But, despite the functor in \eqref{eq:T-equiv-strcrys-vect} not being bi-exact, we still have the following.

\begin{thm}\label{thm:equiv-G-Vect-and-G-strcrys} The functor 
\begin{equation*}
T_\et\colon \GVect^\varphi(\mf{X}_\Prism)\to\GLoc^\strcrys_{\Z_p}(X)
\end{equation*}
is an equivalence.
\end{thm}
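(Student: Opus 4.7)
Full faithfulness is immediate: $T_\et\colon \cat{Vect}^\varphi(\mf{X}_\Prism)\isomto \cat{Loc}^\strcrys_{\Z_p}(X)$ is already a $\Z_p$-linear $\otimes$-equivalence (as noted just before the statement), and morphisms in $\GVect^\varphi$ and $\GLoc^\strcrys_{\Z_p}$ are tensor-natural transformations of the underlying $\otimes$-functors. The content is therefore essential surjectivity: given $\nu$ in $\GLoc^\strcrys_{\Z_p}(X)$, the assignment $\omega(\Lambda)\defeq T_\et^{-1}(\nu(\Lambda))$ tautologically defines a $\Z_p$-linear $\otimes$-functor $\cat{Rep}_{\Z_p}(\mc{G})\to \cat{Vect}^\varphi(\mf{X}_\Prism)$, and one must show this $\omega$ is exact.

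To access exactness, first pass to the analytic setting. Composing $\nu$ with the bi-exact equivalence $T_\et^{-1}\colon \cat{Loc}^\crys_{\Z_p}(X)\isomto \cat{Vect}^{\an,\varphi}(\mf{X}_\Prism)$ of Proposition \ref{prop:GR-DLMS-exact} yields an object $\omega_\an$ of $\GVect^{\an,\varphi}(\mf{X}_\Prism)$ extending $\omega$. Via the identification \eqref{eq:GVectan-torsors}, $\omega_\an$ corresponds to a $\mc{G}$-torsor $\mc{P}^\an$ with Frobenius structure on the analytic loci $U(A,I)$ of prisms $(A,I)\in\mf{X}_\Prism$. By Proposition \ref{prop:suff-cond-for-being-in-ess-image}, applied over a small open cover $\{\Spf(R_i)\}$ of $\mf{X}$, showing that $\omega(\Lambda)\in\cat{Vect}^\varphi(\mf{X}_\Prism)$ for every $\Lambda$ reduces to showing that the pushforward $(j_{(\mf{S}_{R_i},(E))})_*\mc{P}^\an(\Lambda)_{(\mf{S}_{R_i},(E))}$ is a vector bundle on $\mf{S}_{R_i}$ for every $\Lambda$. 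The prismatic-good-reduction hypothesis on $\nu$ supplies this only for $\Lambda=\Lambda_0$.

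The crucial step is to promote this extension from $\Lambda_0$ to all of $\cat{Rep}_{\Z_p}(\mc{G})$ by extending the torsor $\mc{P}^\an$ itself. For a small $\mc{O}_K$-algebra $R$, the ring $\mf{S}_R=R_0\ll u\rr$ is Noetherian and regular, and $V(p,E)=V(p,u)\subseteq \Spec(\mf{S}_R)$ is closed of codimension $2$. Because $\mc{G}$ is reductive, the purity theorem for reductive group schemes (Colliot-Th\'el\`ene--Sansuc, Nisnevich) extends $\mc{P}^\an_{(\mf{S}_R,(E))}$ uniquely to a $\mc{G}$-torsor $\tilde{\mc{P}}$ on $\Spec(\mf{S}_R)$, and the Frobenius isomorphism---already defined on $\Spec(\mf{S}_R)-V(E)$---is carried to $\tilde{\mc{P}}$ by the same uniqueness principle. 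Pushing $\tilde{\mc{P}}$ along any $\Lambda$ yields a vector bundle on $\mf{S}_R$, verifying the condition in Proposition \ref{prop:suff-cond-for-being-in-ess-image} uniformly in $\Lambda$. Exactness of $\omega$ then follows from that of $\omega_\an$: a short exact sequence in $\cat{Rep}_{\Z_p}(\mc{G})$ yields a complex of prismatic $F$-crystals whose restriction to $U(\mf{S}_R,(E))$ is short exact, and since finite projective $\mf{S}_R$-modules have depth $\geqslant 2$ along $V(p,E)$, the pushforward $j_*$ preserves short exactness in this situation. The main obstacle is the purity step---it is here that the reductive hypothesis on $\mc{G}$ is essential, and it is presumably at this juncture that the adaptation of Kisin's argument mentioned in the introduction delivers the required extension in the necessary generality.
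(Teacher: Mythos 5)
Your reduction to essential surjectivity and your identification of where the difficulty lies (extending from the single representation $\Lambda_0$ to a genuine $\mc{G}$-torsor over all of $\Spec(\mf{S}_R)$) match the paper, but the step you lean on --- ``the purity theorem for reductive group schemes extends $\mc{P}^\an_{(\mf{S}_R,(E))}$ uniquely to a $\mc{G}$-torsor on $\Spec(\mf{S}_R)$'' --- is not a theorem in the required generality, and as stated it makes no use of the prismatically-good-reduction hypothesis. The ring $\mf{S}_R=R_0\ll u\rr$ has dimension $d+2$ and $V(p,E)$ has codimension $2$, so you are removing a codimension-$2$ subset from a regular scheme of dimension typically $\geqslant 3$; there purity fails already for $\mc{G}=\GL_n$ (the pushforward $j_\ast$ of a vector bundle is only reflexive, not locally free). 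Indeed, if such an unconditional extension held, every analytic prismatic $F$-crystal would be a prismatic $F$-crystal and $\cat{Loc}^\strcrys_{\Z_p}(X)$ would coincide with $\cat{Loc}^\crys_{\Z_p}(X)$, contradicting \cite[Example 3.36]{DLMS}, which the paper cites as the reason the two categories differ. The Colliot-Th\'el\`ene--Sansuc result (Th\'eor\`eme 6.13) that is actually relevant is \emph{conditional}: a pseudo-torsor of the form $j_\ast\underline{\Isom}\left((\Lambda_0\otimes\mc{O},\mathds{T}_0\otimes 1),(\mc{E},\mathds{T})\right)$ is a torsor precisely when the reflexive extension $\mc{E}=j_\ast\mc{E}^\an$ of the vector bundle attached to the \emph{defining faithful representation} is locally free; reductivity enters only through the affineness of $\GL(\Lambda_0)/\mc{G}$, which lets one extend the classifying section over the large open. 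This is Proposition \ref{prop:reflexive-pseu-tors-is-tors-criterion} (and Remark \ref{rem:alternative-to-rflx-pseudo-torsors}), and the hypothesis that $\omega(\Lambda_0)$ has prismatically good reduction is exactly what supplies the local freeness --- it cannot be set aside after handling $\Lambda=\Lambda_0$, as your write-up does.

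Concretely, the paper's argument runs as follows: lift the tensors $T_\et^{-1}(\omega(\mathds{T}_0))$ from the analytic crystal $\mc{V}_0$ to the prismatic $F$-crystal $\mc{E}_0=T_\et^{-1}(\omega(\Lambda_0))$ (full faithfulness of $\cat{Vect}^\varphi\to\cat{Vect}^{\an,\varphi}$), form the pseudo-torsor $\mc{Q}_\omega=\underline{\Isom}\left((\Lambda_0\otimes\mc{O}_\Prism,\mathds{T}_0\otimes 1),(\mc{E}_0,\mathds{T}_\Prism)\right)$ directly on $\mf{X}_\Prism$, and check it is a torsor by evaluating on Breuil--Kisin prisms, where the conditional extension criterion applies because $M_R=\mc{E}_0(\mf{S}_R,(E))$ is already a vector bundle. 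Once $\mc{Q}_\omega$ is a torsor, exactness of the resulting fibre functor is automatic from Proposition \ref{prop:varphi-equivariant-GVect-and-tors-identification} (locally the functor is $\Lambda\mapsto\Lambda\otimes\mc{O}_\Prism$), so your separate depth argument for exactness of $j_\ast$, while harmless, is not needed. To repair your proof you should replace the purity invocation by this conditional statement and route the good-reduction hypothesis through it.
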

\begin{proof} From Proposition \ref{prop:GR-DLMS-exact}, it is clear that this functor is fully faithful. To show that it is essentially surjective, fix $\omega$ in $\GLoc^\crys_{\Z_p}(X)$ such that $\omega(\Lambda_0)$ has prismatically good reduction. Write $(\mc{E}_0,\varphi_{\mc{E}_0})$ for the associated object of $\cat{Vect}^\varphi(\mf{X}_\Prism)$ and let $(\mc{V}_0,\varphi_{\mc{V}_0})$ denotes its image in $\cat{Vect}^{\an,\varphi}(\mf{X}_\Prism)$. As the functor $\cat{Vect}^\varphi(\mf{X}_\Prism)\to \cat{Vect}^{\an,\varphi}(\mf{X}_\Prism)$ is fully faithful, we see that the tensors
\begin{equation*}
    T_\et^{-1}(\omega(\mathds{T}_0))\subseteq \Hom\left((\mathcal{O}_\Prism^\an,\phi),(\mc{V}_0,\varphi_{\mc{V}_0})^\otimes\right)
\end{equation*}
obtained from Proposition \ref{prop:GR-DLMS-exact} uniquely lift to a set of tensors $\mathds{T}_\Prism\subseteq (\mc{E}_0,\varphi_{\mc{E}_0})^\otimes$. Set
\begin{equation*}
    \mc{Q}_\omega\defeq \underline{\Isom}\left((\Lambda_0\otimes_{\Z_p}\mc{O}_\Prism,\mathds{T}_0\otimes 1),(\mc{E}_0,\mathds{T}_\Prism)\right),
\end{equation*}
with the Frobenius structure inherited from $(\mc{E}_0,\mathds{T}_\Prism)$. This is a pseudo-torsor for $\mc{G}_\Prism$ on $\mf{X}_\Prism$.

\begin{prop}\label{prop:main-torsor-claim}
    The pseudo-torsor $\mc{Q}_\omega$ is a torsor.
\end{prop}
\begin{proof} For any small affine open subset $\Spf(R)$ of $\mf{X}$, set 
\begin{equation*}
    (M_R,\mathds{T}_R):=(\mc{E}_0,\mathds{T}_\Prism)(\mf{S}_R,(E)),\quad \mc{Q}_{\omega,R}\defeq\underline{\Isom}\left((\Lambda_0\otimes_{\Z_p}\mf{S}_R,\mathds{T}_0\otimes 1),(M_R,\mathds{T}_R)\right),
\end{equation*}
considered as a pseudo-torsor for $\mc{G}$ on $\Spec(\mf{S}_R)_\et$. By Corollary \ref{prop:small-open-cover-topos-cover}, it suffices to show that $\mc{Q}_{\omega,R}$ is a torsor for all such $R$. But, the restriction of $\mc{Q}_{\omega,R}$ to $U(\mf{S}_R,(E))$ is identified with
\begin{equation*}
    \underline{\Isom}\left((\Lambda_0\otimes_{\Z_p}\mc{O}_{U(\mf{S}_R,E)},\mathds{T}_0\otimes 1),(\mc{V}_0,T_\et^{-1}(\omega(\mathds{T}_0)))|_{(\mf{S}_R,(E))}\right). 
\end{equation*}
Proposition \ref{prop:GR-DLMS-exact} implies that $\mc{Q}_{\omega,R}|_{U(\mf{S}_R,E)}$ is a torsor. As the height of $(p,E)\subseteq \mf{S}_R$ is $2$ and $M_R$ is a vector bundle, $\mc{Q}_{\omega,R}$ is a torsor by Proposition \ref{prop:reflexive-pseu-tors-is-tors-criterion} or Remark \ref{rem:alternative-to-rflx-pseudo-torsors}. 
\end{proof}
Let $\nu$ be the object of $\GVect^\varphi(\mf{X}_\Prism)$ associated to $\mc{Q}_\omega$ by Proposition \ref{prop:varphi-equivariant-GVect-and-tors-identification}. We claim that $T_\et\circ \nu$
is isomorphic to $\omega$. But, by Proposition \ref{prop:varphi-equivariant-GVect-and-tors-identification} it suffices to observe that, by setup, both $T_\et\circ \nu$ and $\omega$ have the value $(\omega(\Lambda_0),\omega(\mathds{T}_0))$ when evaluated on $(\Lambda_0,\mathds{T}_0)$.
\end{proof}

As a byproduct of the above proof and Theorem \ref{thm:broshi1} (which implies every faithful representation can be upgraded to a tensor package) we obtain an analogue of Proposition \ref{prop:crystalline-can-be-tested-on-faithful-rep}.

\begin{cor}\label{cor:strcrys-faithful-condition} Let $\Lambda$ be a faithful representation of $\mc{G}$. Then, an object  $\omega$ of $\GLoc_{\Z_p}(X)$ belongs to $\GLoc^\strcrys_{\Z_p}(\mf{X})$ if and only if $\omega(\Lambda)$ is an object of $\cat{Loc}^\strcrys_{\Z_p}(X)$.
\end{cor}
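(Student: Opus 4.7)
The ``only if'' direction is immediate from the definition: if $\omega$ factorizes through $\cat{Loc}^\strcrys_{\Z_p}(X)$, then in particular $\omega(\Lambda)$ lies in $\cat{Loc}^\strcrys_{\Z_p}(X)$. So the task is to prove the ``if'' direction: assuming $\omega(\Lambda)$ has prismatically good reduction for a single faithful representation $\Lambda$, we must show $\omega(V)$ has prismatically good reduction for every $V$ in $\cat{Rep}_{\Z_p}(\mc{G})$.

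The first step is to upgrade $\Lambda$ to a tensor package. Since $\mc{G}$ is reductive and $\Lambda$ is faithful, Theorem \ref{thm:broshi1} furnishes a finite set of tensors $\mathds{T}\subseteq \Lambda^\otimes$ with $\mathrm{Fix}(\mathds{T})=\mc{G}$, so $(\Lambda,\mathds{T})$ is a tensor package. Now observe that $\omega(\Lambda)$ being of prismatically good reduction is in particular crystalline, so Proposition \ref{prop:crystalline-can-be-tested-on-faithful-rep} implies that $\omega$ takes values in $\cat{Loc}^\crys_{\Z_p}(X)$, and hence $\omega$ defines an object of $\GLoc^\crys_{\Z_p}(X)$.

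The second step is to repeat the construction in the proof of Theorem \ref{thm:equiv-G-Vect-and-G-strcrys} with $(\Lambda,\mathds{T})$ in place of $(\Lambda_0,\mathds{T}_0)$. Let $(\mc{E}_0,\varphi_{\mc{E}_0})\defeq T_\et^{-1}(\omega(\Lambda))$, which exists as an object of $\cat{Vect}^\varphi(\mf{X}_\Prism)$ by the hypothesis of prismatically good reduction. By full faithfulness of $\cat{Vect}^\varphi(\mf{X}_\Prism)\hookrightarrow\cat{Vect}^{\an,\varphi}(\mf{X}_\Prism)$ combined with Proposition \ref{prop:GR-DLMS-exact}, the tensors $T_\et^{-1}(\omega(\mathds{T}))$, a priori defined on the analytic prismatic $F$-crystal associated to $\omega(\Lambda)$, lift uniquely to a set of Frobenius-invariant tensors $\mathds{T}_\Prism\subseteq (\mc{E}_0,\varphi_{\mc{E}_0})^\otimes$. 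Form
\begin{equation*}
\mc{Q}_\omega\defeq \underline{\Isom}\left((\Lambda\otimes_{\Z_p}\mc{O}_\Prism,\mathds{T}\otimes 1),(\mc{E}_0,\mathds{T}_\Prism)\right),
\end{equation*}
a pseudo-torsor for $\mc{G}_\Prism$ on $\mf{X}_\Prism$ equipped with an inherited Frobenius structure.

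The third step is to verify that $\mc{Q}_\omega$ is actually a torsor. This is carried out verbatim as in Proposition \ref{prop:main-torsor-claim}: by Proposition \ref{prop:small-open-cover-topos-cover} it suffices to check that the analogous pseudo-torsor $\mc{Q}_{\omega,R}$ on $\Spec(\mf{S}_R)_\et$ is a torsor for every small affine open $\Spf(R)\subseteq \mf{X}$. Its restriction to $U(\mf{S}_R,(E))$ is a torsor by Proposition \ref{prop:GR-DLMS-exact} (applied to $(\Lambda,\mathds{T})$), and since $(p,E)$ has height $2$ in $\mf{S}_R$ and the evaluation $\mc{E}_0(\mf{S}_R,(E))$ is a vector bundle, Proposition \ref{prop:reflexive-pseu-tors-is-tors-criterion} upgrades this to a torsor on all of $\Spec(\mf{S}_R)_\et$. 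Applying the equivalence of Proposition \ref{prop:varphi-equivariant-GVect-and-tors-identification}, $\mc{Q}_\omega$ corresponds to an object $\nu$ of $\GVect^\varphi(\mf{X}_\Prism)$ with $\nu(\Lambda)=(\mc{E}_0,\varphi_{\mc{E}_0})$ and $\nu(\mathds{T})=\mathds{T}_\Prism$, so $T_\et\circ\nu$ and $\omega$ agree on the tensor package $(\Lambda,\mathds{T})$ and are therefore isomorphic in $\GLoc_{\Z_p}(X)$ by the uniqueness of the Tannakian reconstruction. Since $T_\et\circ\nu$ visibly takes values in $\cat{Loc}^\strcrys_{\Z_p}(X)$ (each $T_\et(\nu(V))$ is by construction the \'etale realization of the prismatic $F$-crystal $\nu(V)$), the same holds for $\omega$.

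The main obstacle is merely bookkeeping: one must confirm that the proof of Theorem \ref{thm:equiv-G-Vect-and-G-strcrys} depends on the tensor package $(\Lambda_0,\mathds{T}_0)$ only schematically, so that it applies to any tensor package built from any faithful representation. The reductivity hypothesis on $\mc{G}$ is essential precisely for the appeal to Theorem \ref{thm:broshi1}, without which we could not pass from a faithful representation to a tensor package.
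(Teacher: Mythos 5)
Your proposal is correct and is essentially the paper's own argument: the paper disposes of this corollary in one line, noting that it is a byproduct of the proof of Theorem \ref{thm:equiv-G-Vect-and-G-strcrys} (whose essential-surjectivity step only uses that $\omega(\Lambda_0)$ has prismatically good reduction) together with Theorem \ref{thm:broshi1}, which upgrades any faithful representation to a tensor package; you have simply written out that schematic substitution in full. One small misattribution in your closing remark: Theorem \ref{thm:broshi1} holds for any flat finite type affine group scheme over a Dedekind domain and does not need reductivity — the reductive hypothesis is instead used in the torsor-extension step (Proposition \ref{prop:reflexive-pseu-tors-is-tors-criterion} via Proposition \ref{prop:pseudo-torsors-large-opens-equiv}, where affineness of $\GL(\Lambda)/\mc{G}$ is what requires $\mc{G}$ reductive).
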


Proposition \ref{prop:varphi-equivariant-GVect-and-tors-identification} and Theorem \ref{thm:equiv-G-Vect-and-G-strcrys} yield an equivalence $\cat{Tors}_\mc{G}^\varphi(\mf{X}_\Prism)\isomto \GLoc_{\Z_p}^\strcrys(X)$ which we also denote by $T_\et$ (or $T_{\mf{X},\et}$), which is compatible in $\mf{X}$ and $\mc{G}$ in the obvious way.

\subsection{Complementary results about base \texorpdfstring{$\mc{O}_K$}{OK}-algebras}\label{ss:complementary-results}
While the proof of Theorem \ref{thm:equiv-G-Vect-and-G-strcrys} was built on the work of \cite{GuoReinecke}, Proposition \ref{prop:main-torsor-claim} works more generally, using \cite[Proposition 1.3.4]{KisIntShab} and the results of \cite{DLMS}. This could be potentially useful in other contexts (e.g., it provides an alternative method to prove some results in \cite{IKY2}). 

Let $R$ be a (formally framed) base $\mc{O}_K$-algebra. In \cite[\S4.4]{DLMS},\footnote{While in loc.\@ cit.\@ the authors only construct this functor for non-negative Hodge--Tate weights,  this definition can be easily extended using Breuil--Kisin twists.} there is constructed a $\Z_p$-linear $\otimes$-functor
\begin{equation*}
    \mf{M}\colon \cat{Rep}_{\Z_p}^\crys(\Gamma_R)\to \cat{Vect}^{\an,\varphi}(\mf{S}_R,(E)). 
\end{equation*}
Let $j$ denote the inclusion $U(\mf{S}_R,(E))\hookrightarrow \Spec(\mf{S}_R)$. We say that a representation $\Lambda$ in $\cat{Rep}_{\Z_p}^\crys(\Gamma_R)$ has \emph{prismatically good reduction} if $j_\ast \mf{M}(\Lambda)$ is a vector bundle on $\Spec(\mf{S}_R)$. 

Let us suppose that $\Lambda_0$ carries the structure of an object of $\cat{Rep}_{\Z_p}^\crys(\Gamma_R)$. Denote $(\mf{M}(\Lambda_0),\mf{M}(T_0))$ by $(M^\an,\mathds{T}^\an)$, and denote the global sections of $j_\ast(\mf{M}(\Lambda_0),\mf{M}(T_0))$ by $(M,\mathds{T})$.

\begin{prop}\label{prop:base-ring-torsor}  Consider the following sheaf on $U(\mf{S}_R,(E))_\fpqc$:
\begin{equation*}
    \mc{Q}^\an=\underline{\Isom}\left((\Lambda_0\otimes_{\Z_p}\mc{O}_{U(\mf{S}_R,(E))},\mathds{T}_0\otimes 1),(M^\an,\mathds{T}^\an)\right).
\end{equation*}
Set $\mc{Q}\defeq j_\ast\mc{Q}^\an$. Then $\mc{Q}$ is a reflexive pseudo-torsor and if $\mc{G}$ is reductive, then
\begin{equation*}
\mc{Q}=\underline{\Isom}\left((\Lambda_0\otimes_{\Z_p}\mf{S}_R,\mathds{T}_0\otimes 1),(M,\mathds{T})\right),
\end{equation*}
and is a $\mc{G}$-torsor on $\Spec(\mf{S}_R)_\mr{fppf}$ if and only if $\Lambda_0$ has prismatically good reduction.
\end{prop}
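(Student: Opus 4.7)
The plan is to combine the Tannakian framework for analytic prismatic $F$-crystals from \S\ref{sss:Tannakian-prismatic-F-crystals} with Kisin's extension result \cite[Proposition 1.3.4]{KisIntShab} for reductive $\mc{G}$-torsors across codimension-$2$ closed subsets. First I would show that $\mc{Q}^\an$ is a $\mc{G}$-torsor on $U(\mf{S}_R,(E))_\fpqc$. Since $\mf{M}$ is an exact $\Z_p$-linear $\otimes$-functor, the composition $\cat{Rep}_{\Z_p}(\mc{G}) \to \cat{Rep}_{\Z_p}^\crys(\Gamma_R) \xrightarrow{\mf{M}} \cat{Vect}^{\an,\varphi}(\mf{S}_R,(E))$ produces a $\mc{G}$-object in analytic prismatic $F$-crystals. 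Applying Theorem \ref{thm:broshi2} together with the Tannakian identifications of \S\ref{sss:Tannakian-prismatic-F-crystals}, one finds that $\mc{Q}^\an$ is a $\mc{G}$-torsor on $U(\mf{S}_R,(E))_\fpqc$.

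Next I would establish reflexivity of $\mc{Q}$. The ring $\mf{S}_R = R_0\ll u\rr$ is regular Noetherian because $R_0$ is so by Proposition \ref{prop:base-algebra-good-properties}, and the closed subset $V(p,E) \subseteq \Spec(\mf{S}_R)$ coincides with $V(p,u)$ (since $E \equiv u^e \bmod p$) and is therefore of codimension $2$. By Proposition \ref{prop:reflexive-pseu-tors-is-tors-criterion} (or Remark \ref{rem:alternative-to-rflx-pseudo-torsors}), it then follows that $\mc{Q} = j_\ast\mc{Q}^\an$ is a reflexive pseudo-torsor. For the identification under the reductive hypothesis, I would verify that both $\mc{Q}$ and $\underline{\Isom}((\Lambda_0 \otimes \mf{S}_R, \mathds{T}_0 \otimes 1), (M, \mathds{T}))$ are reflexive sheaves on $\Spec(\mf{S}_R)_\fpqc$ whose restrictions to $U(\mf{S}_R,(E))$ both recover $\mc{Q}^\an$ (using $M = j_\ast M^\an$ and $M|_U = M^\an$). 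The equality then follows from uniqueness of the reflexive extension from $U$.

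For the biconditional between $\mc{Q}$ being a $\mc{G}$-torsor and $\Lambda_0$ having prismatically good reduction, assume first that $\mc{Q}$ is a $\mc{G}$-torsor. Then the contracted product $\mc{Q}\wedge^\mc{G}\Lambda_0$ is a vector bundle on $\Spec(\mf{S}_R)$ whose restriction to $U$ is $M^\an$; by reflexivity of $M$ and uniqueness of reflexive extensions, this contracted product must equal $M$, showing that $M$ is a vector bundle and hence $\Lambda_0$ has prismatically good reduction. Conversely, if $\Lambda_0$ has prismatically good reduction then $M$ is a vector bundle, so \cite[Proposition 1.3.4]{KisIntShab} applies to extend the $\mc{G}$-torsor $\mc{Q}^\an$ across the codimension-$2$ locus $V(p,E)$, yielding a $\mc{G}$-torsor $\tilde{\mc{Q}}$ on $\Spec(\mf{S}_R)$. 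By the identification established above and uniqueness of reflexive extensions, $\tilde{\mc{Q}} = \mc{Q}$, so $\mc{Q}$ itself is a $\mc{G}$-torsor.

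The main technical obstacle will be verifying the identification $\mc{Q} = \underline{\Isom}((\Lambda_0 \otimes \mf{S}_R, \mathds{T}_0 \otimes 1), (M, \mathds{T}))$ under the reductive hypothesis, as this underlies the biconditional. The argument hinges on carefully tracking how $j_\ast$ interacts with the formation of $\underline{\Isom}$ sheaves between reflexive objects, with the crucial geometric input being that $V(p,E)$ has codimension $2$ in the regular ring $\mf{S}_R$. Once this identification is in place, Kisin's extension theorem for reductive torsors across codimension-$2$ loci supplies the final ingredient.
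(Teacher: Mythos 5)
Your overall architecture (first show $\mc{Q}$ is a reflexive pseudo-torsor, then use a Colliot-Th\'el\`ene--Sansuc-type extension for the identification and the biconditional) matches the paper's, but the way you obtain the crucial input --- that $\mc{Q}^\an$ is an honest torsor over the large open $U(\mf{S}_R,(E))$ --- is where the gap lies. You invoke Theorem \ref{thm:broshi2} applied to $\Lambda\mapsto\mf{M}(\Lambda)|_{U(\mf{S}_R,(E))}$, which requires this composite to be an \emph{exact} $\otimes$-functor. Exactness of $\mf{M}$ is not formal: in the paper it is only available when $R$ is small, as the evaluation of $T_\et^{-1}$ at the Breuil--Kisin prism, and it rests on the bi-exactness of the Guo--Reinecke equivalence (Propositions \ref{prop:GR-DLMS-exact} and \ref{prop:exactness on BK prism}). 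The stated purpose of \S\ref{ss:complementary-results} is to work with general formally framed base $\mc{O}_K$-algebras, where those results are unavailable, so for such $R$ your first step has no justification. The paper instead only verifies that $\mc{Q}$ is a torsor at every codimension-$1$ point of $\Spec(\mf{S}_R)$ (which suffices for reflexivity by Proposition \ref{prop:rflx-omnibus}), using the flat cover $\Spec(\mf{S}_L)\sqcup\Spec(\mc{O}_\mc{E})\to\Spec(\mf{S}_R)$: over the perfected localization $\mf{S}_{L'}$ the module becomes the classical pointwise Breuil--Kisin module and \cite[Proposition 1.3.4]{KisIntShab} gives a section, while over $\wh{\mc{O}}^\ur_\mc{E}$ one uses the functorial trivialization $M\otimes_{\mf{S}_R}\wh{\mc{O}}^\ur_\mc{E}\simeq\Lambda_0\otimes_{\Z_p}\wh{\mc{O}}^\ur_\mc{E}$ of \'etale $\varphi$-modules. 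This codimension-$1$ analysis is the missing idea in your write-up.

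Two citation-level errors also need repair. Proposition \ref{prop:reflexive-pseu-tors-is-tors-criterion} and Remark \ref{rem:alternative-to-rflx-pseudo-torsors} \emph{presuppose} reflexivity (they take a reflexive twist as input and extend it to a torsor when the underlying module is locally free); they cannot be used to \emph{establish} that $\mc{Q}=j_\ast\mc{Q}^\an$ is reflexive. If you did know that $\mc{Q}^\an$ is a torsor, reflexivity would follow from the definition together with Proposition \ref{prop:affine-sheaf-adjunction}, or from Proposition \ref{prop:rflx-omnibus}. Similarly, \cite[Proposition 1.3.4]{KisIntShab} is not a general theorem on extending reductive torsors across codimension-$2$ loci of regular schemes; it concerns lattices with tensors over the two-dimensional regular local ring $W\ll u\rr$. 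The general extension step in your final paragraph is exactly what Proposition \ref{prop:reflexive-pseu-tors-is-tors-criterion} (via Proposition \ref{prop:pseudo-torsors-large-opens-equiv}) supplies, and that is what should be cited there.
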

\begin{proof} Given Proposition \ref{prop:pseudo-torsors-large-opens-equiv} and Proposition \ref{prop:reflexive-pseu-tors-is-tors-criterion}, the latter claims will follow if we can show that $\mc{Q}$ is a reflexive pseudo-torsor. As $\mc{Q}^\an$ is clearly affine and finite type over $U(\mf{S}_R,E)$, we further deduce from Proposition \ref{prop:rflx-omnibus} that it suffices to prove $\mc{Q}$ is a torsor after pulled back to all codimension $1$ points of $\Spec(\mf{S}_R)$. 

Let $\mc O_L$ (resp.\@ $\mc{O}_{L_0}$) denote the $p$-adic completion of the localization $R_{(p)}$ (resp.\@ $(R_0)_{(p)}$). Note that $\mathcal{O}_L=\mathcal{O}_{L_0}\otimes_W \mathcal{O}_K$ is a base $\mc{O}_K$-algebra, and write $(\mf{S}_L,(E))$ for its Breuil--Kisin prism. Furthermore, let $\mc O_\mc E$ denote the $p$-adic completion of $\mf S_R[u^{-1}]$. As the morphism $\Spec (\mf S_L)\sqcup \Spec(\mc O_\mc E)\to \Spec (\mf S_R)$ is flat (see \stacks{00MB}) and its image contains all codimension $1$ points, it suffices to show that $\mc{Q}$ is a $\mc{G}$-torsor on pullback to $\mf S_L$ and $\mc O_\mc E$.

To prove the first claim, note that the perfection $\varinjlim_{\phi}\mc O_{L_0}$ is a discrete valuation ring that is faithfully flat over $\mc O_{L_0}$. In particular, its $p$-adic completion $\mc O_{L'_0}$ is faithfully flat over $\mc O_{L_0}$. The ring $\mc{O}_{L'}\defeq \mc{O}_{L'_0}\otimes_W \mc{O}_K$ is a complete discrete valuation ring with perfect residue field, and, with notation as above, $\mf S_{L'}$ is faithfully flat over $\mf S_{L}$. Thus, it suffices to show $\mc{Q}_{\mf{S}_{L'}}$ is a $\mc{G}$-torsor. Recall though that $M\otimes_{\mf S_R}\mf S_{L'}$ is canonically identified with the (classical) Breuil--Kisin module associated to $\rho$ restricted to the absolute Galois group of $L'$ as in \cite{KisinFCrystal} (see \cite[Lemma 4.18 and Proposition 4.26]{DLMS}). Thus, from \cite[Proposition 1.3.4]{KisIntShab} we see that $\mc{Q}(\mf{S}_{L'})$ is non-empty. 

For the second claim, observe that (cf.\@ \cite[Proposition 4.26]{DLMS}) there is an isomorphism 
\begin{equation*}
M\otimes_{\mf{S}_R}\wh{\mc{O}}^\mathrm{ur}_\mc{E}\simeq \Lambda_0\otimes_{\Z_p}\wh{\mc{O}}^\mathrm{ur}_{\mc{E}},
\end{equation*}
in $\cat{Mod}^\varphi(\wh{\mc{O}}^\mathrm{ur}_\mc{E})$, functorial in $\Lambda_0$, where $\wh{\mc{O}}_\mc{E}^\mathrm{ur}$ is the $p$-adic completion of a colimit of finite \'etale extensions $\mc{O}^\mathrm{ur}_\mc{E}$ of $\mc{O}_\mc{E}$ with compatible extension of $\phi$ (see \cite[\S2.3]{DLMS}). From this functorial isomorphism, we see that $\mc{Q}(\wh{\mc{O}}_\mc{E}^\mathrm{ur})$ is non-empty, and so $\mc{Q}|_{\mc{O}_\mc{E}}$ is a $\mc{G}$-torsor.
\end{proof}

\section{Shtukas and analytic prismatic \texorpdfstring{$F$}{F}-crystals}\label{ss:shtukas-and-prismatic-F-crystals} In this final section we recall the notion of a $\mc{G}$-shtuka on a (formal) scheme and discuss the relationship to (analytic) prismatic $\mc{G}$-torsors with $F$-structure. Any undefined notation and conventions concerning shtukas, their accompanying adic spaces (e.g.\@ the spaces $\mc{Y}_I(S)$), and $v$-sheaves are as in \cite[\S2]{PappasRapoportI}.

\begin{nota}\label{nota:shtuka} Fix the following notation:
\begin{itemize}
    \item $k$ to be a perfect extension of $\bb{F}_p$,
    \item $W\defeq W(k)$ and $E_0\defeq \Frac(W)$, 
    \item $E$ is a totally ramified finite extension of $E_0$ with uniformizer $\pi$,
    \item $\ov{E}$ is an algebraic closure of $E$ which induces an algebraic closure $\ov{k}$ of $k$,
    \item $C$ denote the $p$-adic completion of $\ov{E}$,
    \item $\breve{W}\defeq W(\ov{k})$,
    \item $\mc{G}$ is a parahoric group $\Z_p$-scheme with generic fiber denoted $G$, 
    \item $\bm{\mu}$ is a conjugacy class of cocharacters of $G_{\ov{E}}$ with field of definition $E$ over $E_0$.
\end{itemize}
\end{nota}
% Finally, extending the notation from \S\ref{ss:comparison-for-v-sheaves}, for a finite type $\mc{O}_E$-scheme (resp.\@ $E$-scheme) $\ms{X}$, write $\ms{X}^\ad$ for $\ms{X}\times_{\Spec(\mc{O}_E)}\Spa(\mc{O}_E)$ (resp.\@ $\ms{X}\times_{\Spec(E)}\Spa(E)$) as in \cite[Proposition 3.8]{HuberGen}, an adic space over $\Spa(\mc{O}_E)$ (resp.\@ $\Spa(E)$).\footnote{If $\ms{X}$ is finite type over $E$, then $\ms{X}^\ad=\ms{X}^\an$, and we mostly use the latter notation.}

\begin{rem}
    The assumption that $\mc G$ is parahoric is needed only when bounding the relative position of two lattices by $\mathbb \mu$. Otherwise, everything in this section works under the assumption that $\mc G$ is a smooth affine group $\Z_p$-scheme with connected fibers. 
\end{rem}

\subsection{\texorpdfstring{$\mc{G}$}{G}-shtukas}\label{sss: shtukas} In this subsection we recall the definition of a $\mc{G}$-shtuka over a (formal)-scheme. We fix notation as in Notation \ref{nota:shtuka}.

\subsubsection{\texorpdfstring{$\mc{G}$}{G}-shtukas over \texorpdfstring{$v$}{v}-sheaves}\label{sss:shtukas-over-v-sheaves} Denote by $\cat{Perf}_k$ the category of affinoid perfectoid spaces $S=\Spa(R,R^+)$ with $R^+$ a $k$-algebra, endowed with the $v$-topology. For any $v$-sheaf $\mc{F}$ on $\cat{Perf}_k$ we let $\cat{Perf}_{\mc{F}}$ denote the slice site over $\mc{F}$. In other words, an object of $\cat{Perf}_{\mc{F}}$ is a pair $(S,\alpha)$ where $S$ is an object of $\cat{Perf}_k$ and $\alpha\colon S\to \mc{F}$ is a morphism, a morphism $(S_1,\alpha_1)\to (S_2,\alpha_2)$ is a map $f\colon S_1\to S$ such that $\alpha_2\circ f=\alpha_1$, and such a map is a $v$-cover if $f$ is. 

Recall that if $\ms{X}$ is a pre-adic space over $\mc{O}_E$, then $\ms{X}^\lozenge$ is the $v$-sheaf on $\cat{Perf}_k$ associating to $S$ the set of pairs $(S^\sharp,f)$ where $S^\sharp$ is an untilt of $S$ and $f\colon S^\sharp\to \ms{X}$ is a morphism of adic spaces. This construction is functorial in $\ms{X}$. We shorten the slice site $\cat{Perf}_{\ms{X}^\lozenge}$ to $\cat{Perf}_{\ms{X}}$ and further to $\cat{Perf}_A$ if $\ms{X}=\Spa(A)$. We observe that objects of $\cat{Perf}_\ms{X}$ can be interpreted as pairs $(S^\sharp,f)$ where $S^\sharp$ is a perfectoid space over $\mc{O}_E$, and $f\colon S^\sharp\to\ms{X}$ is an $\mc O_E$-morphism, and we often use this interpretation without comment.

Let us denote by 
\begin{equation*}
   \cat{Sht}\colon \cat{Perf}_{\mc{O}_E}\to \cat{Grpd},\quad  \GSht\colon \cat{Perf}_{\mc{O}_E}\to \cat{Grpd},\quad \GShtmu\colon \cat{Perf}_{\mc{O}_E}\to \cat{Grpd}
\end{equation*}
the $v$-stacks associating to $(S,S^\sharp,\beta)$ the groupoid of shtukas over $S$ with a single leg at $S^\sharp$, the groupoid of $\mc{G}$-shtukas over $S$ with a single leg at $S^\sharp$, and the groupoid of $\mc{G}$-shtukas over $S$ with a single leg at $S^\sharp$ bounded by $\bm{\mu}$, respectively, as in \cite[Definition 2.4.3]{PappasRapoportI}.

\begin{defn}[{\cite[Definition 2.3.1]{PappasRapoportI}}]\label{defn:shtuka-over-X} For a $v$-sheaf $\mc{F}$ on $\cat{Perf}_{\mc{O}_E}$ and $\mathscr{S}$ an element of $\{\cat{Sht},\GSht,\GShtmu\}$, we define $\ms{S}(\mc{F})$ to be the groupoid of maps of $v$-stacks $\mc{F}\to \ms{S}$.
\end{defn}

Equivalently, each $\ms{S}$ as in Definition \ref{defn:shtuka-over-X} defines a natural stack $p_\mc{F}\colon \ms{S}\to \cat{Perf}_{\mc{F}}$ and the groupoid $\ms{S}(\mc{F})$ may be identified with the groupoid of Cartesian sections of $p_\mc{F}$ (cf.\@ \stacks{07IV}). In other words, an object of $\ms{S}(\mc{F})$ as a functorial rule associating to each $(S,\alpha)$ in $\cat{Perf}_\mc{F}$ an object of $\ms{S}(S,\alpha')$ where $\alpha'$ is the composition $S\xrightarrow{\alpha}\mc{F}\xrightarrow{\text{structure}}\Spd(\mc{O}_E)$. We treat these interpretations as interchangeable below.

\begin{rem} Let $\ms{S}$ be a $v$-stack on $\cat{Perf}_{\mc{O}_E}$. Then, one can naturally form the left Kan extension $\ms{S}\colon \cat{PSh}(\cat{Perf}_{\mc{O}_E})\to \cat{Grpd}$, since $\cat{Grpd}$ is $2$-complete. Concretely, 
\begin{equation*}
    \ms{S}(\mc{F})\defeq \twolim_{S\to \mc{F}}\ms{S}(S).
\end{equation*}
The above notions of shtuka-like objects on $v$-sheaves are a special case of this construction, as $\ms{S}$ commutes with $2$-limits, and $\displaystyle \mc{F}=\lim_{S\to \mc{F}}S$ (e.g., see \cite[Expos\'e I, 3.4.0]{SGA4-1}). This is related to the interpretation as Cartesian sections via the Grothendieck construction.
\end{rem}

We call an object of $\cat{Sht}(\mc{F})$, $\GSht(\mc{F})$, $\GShtmu(\mc{F})$ a \emph{shtuka over $\mc{F}$}, a $\mc{G}$-shtuka over $\mc{F}$, and a $\mc{G}$-shtuka over $\mc{F}$ bounded by $\mu$, respectively. These groupoids are evidently $2$-functorial in $\mc{F}$, and in fact form stacks on $\cat{Sh}(\cat{Perf}_{\mc{O}_E})$.

We often write an object of any of these shtuka-like groupoids over $\mc{F}$ as $(\ms{P},\varphi_{\ms{P}})$. For an object $(S,\alpha)$ of $\cat{Perf}_{\mc{O}_E}$, and an element of $\mc{F}(S,\alpha)$ (i.e., a morphism $(S,\alpha)\to \mc{F}$ or, equivalently, an element of $\cat{Perf}_\mc{F}$), we may pull back $(\ms{P},\varphi_{\ms{P}})$ to an object over $(S,\alpha)$. We denote this pullback by evaluation, i.e., by $(\ms{P},\varphi_{\ms{P}})(S,\alpha)$. 

Finally, let us observe that $\cat{Sht}(\mc{F})$ is actually an exact $\Z_p$-linear $\otimes$-category. Indeed, for each map $(S,\alpha)\to \mc{F}$ we get a map $(S,\alpha')$ in $\cat{Perf}_{\mc{O}_E}$ which corresponds to a triple $(S,S^\sharp,\beta)$. The category $\Sht(S,S^\sharp,\beta)$ is an exact $\Z_p$-linear $\otimes$-category in the usual way as vector bundles on $S\dot{\times}\Spa(\Z_p)$ with meromorphic Frobenius away from a fixed locus (i.e., $S^\sharp\subseteq S\dot{\times}\Spa(\Z_p)$). As the morphisms $\Sht(S,S^\sharp,\beta)\to \Sht(S_1 ,S_1^\sharp,\beta_1)$ are exact $\Z_p$-linear $\otimes$-functors for a morphism $(S_1,S_1^\sharp,\beta_1)\to (S,S^\sharp,\beta)$ (exact because exact sequences of vector bundles are universally exact), we see that we may endow $\cat{Sht}(\mc{F})$ with the structure of an exact $\Z_p$-linear $\otimes$-category by passing to the $2$-limit. It's clear that this structure is functorial in $\mc{F}$.

\begin{rem}\label{rem: G-Sht fits into the Tannakian formalism}
    It is evident (cf.\@ \cite[Appendix to Lecture 19]{ScholzeBerkeley}) that, as the notation suggests, $\GSht(\mc{F})$ is the category of $\mc{G}$-objects in $\Sht(\mc{F})$. In particular, there is a natural identification
\begin{equation*}
    \GL_n\text{-}\cat{Sht}(\mc{F})\isomto \cat{Sht}_n(\mc{F})\subseteq \cat{Sht}(\mc{F}),
\end{equation*}
where the target is the full subgroupoid of shtukas over $\mc{F}$ of rank $n$. We shall make these identifications freely in the sequel.
\end{rem}

\subsubsection{Some \texorpdfstring{$v$}{v}-sheaves associated to (formal) schemes}\label{sss:some-v-sheaves} We will be mostly interested in studying shtuka-like objects over $v$-sheaves which are obtained from (formal) schemes. In particular, we recall the following notational definitions.
\begin{itemize}
    \item If $\ms{X}$ is a locally of finite type $E$-scheme we shorten $(\ms{X}^\an)^\lozenge$ to $\ms{X}^\lozenge$ and shorten $\cat{Perf}_{\ms{X}^\an}$ to $\cat{Perf}_\ms{X}$.
    \item If $\ms{X}$ is a locally formally of finite type formal $\mc{O}_E$-scheme we shorten $(\ms{X}^\ad)^\lozenge$ to $\ms{X}^\lozenge$ and shorten $\cat{Perf}_{\ms{X}^\mr{ad}}$ to $\cat{Perf}_\ms{X}$.
    \item If $\ms{X}$ is a separated locally of finite type $\mc{O}_E$-scheme then we define
    \begin{equation}\label{eq:v-sheaf-gluing}
        \ms{X}^{\lozenge/}=\wh{\ms{X}}^\lozenge\sqcup_{(\wh{\ms{X}}^\lozenge)_E}(\ms{X}_E)^\lozenge. 
    \end{equation}
\end{itemize}
In \eqref{eq:v-sheaf-gluing} the morphism $(\wh{\ms{X}}^\lozenge)_E\to (\ms{X}_E)^\lozenge$ is the open embedding obtained via the composition
\begin{equation*}
    (\wh{\ms{X}}^\lozenge)_E=(\wh{\ms{X}}_E)^\lozenge\xrightarrow{j_\ms{X}^\lozenge} (\ms{X}_E)^\lozenge,
\end{equation*}
where $j_\ms{X}\colon \wh{\ms{X}}_E\to \ms{X}_E^\mr{an}$ is the open embedding from \cite[\S1.9]{HuberEC}. 

% If $\ms{X}$ is a $\mc{O}_E$-scheme, then by set up, there is a natural open embedding $\ms{X}^{\lozenge/}\to \ms{X}^\lozenge$ which is an isomorphism if $\ms{X}\to\Spec(\mc{O}_E)$ is proper (see \cite[\S2.2]{AGLR}), but need not be in general.

% \begin{rem} Explicitly, if one takes $\ms{X}=\Spec(\bb{Z}_p[t])$, then the map $\ms{X}^{\lozenge/}\to \ms{X}^\lozenge$ base changed to $\bb{F}_p$ is not an isomorphism. Indeed, the first object sends $(S,\alpha)$ to the set of morphisms $S\to \Spa(\bb{F}_p[t])$ and the latter to the set of morphisms $S\to \Spa(\bb{F}_p[t],\bb{F}_p)$ and the map between them is induced by $\Spa(\bb{F}_p[t])\to \Spa(\bb{F}_p[t],\bb{F}_p)$. If $S=\Spa(R,R^+)$ then this map can be naturally identified the map $R^+\to R$, which is evidently not a bijection.
% \end{rem}

\subsubsection{Shtukas over (formal) schemes}
With the definitions from \S\ref{sss:some-v-sheaves}, we may import the definitions from \S\ref{sss:shtukas-over-v-sheaves}.

\begin{defn}\label{defn:Shtoversch}
Let $\ms{S}$ be an element of $\{\cat{Sht},\GSht,\GShtmu\}$:
\begin{itemize}
    \item if $\ms{X}$ is a locally of finite type $E$-scheme or a locally of finite type formal $\mc{O}_E$-scheme, we define $\ms{S}(\ms{X})\defeq \ms{S}(\ms{X}^\lozenge)$,
    \item if $\ms{X}$ is a separated finite type $\mc{O}_E$-scheme, we define $\ms{S}(\ms{X})\defeq \ms{S}(\ms{X}^{\lozenge/})$.
\end{itemize}
\end{defn}

To help contextualize this second definition, we make the following further definition. 

\begin{defn} The category $\cat{Tri}(\mc{O}_E)$ of \emph{gluing triples} over $\mc{O}_E$ has 
\begin{itemize}[leftmargin=.25in]
    \item objects of the form $(X,\mf{X},j)$ with $X$ a separated locally of finite type $E$-scheme, $\mf{X}$ a separated locally of finite type flat formal $\mc{O}_E$-scheme, and $j\colon \mf{X}_\eta\to X^\an$ an open embedding,
    \item morphisms $(f,g)\colon (X_1,\mf{X}_1,j_1)\to (X_2,\mf{X}_2,j_2)$ with $f\colon X_1\to X_2$ a morphism of $E$-schemes and $g\colon \mf{X}_1\to\mf{X}_2$ a morphism of formal $\mc{O}_E$-schemes, such that $f^\an\circ j_1=j_2\circ g_\eta$.
\end{itemize}
\end{defn}

If $\ms{X}$ is a separated locally of finite type flat $\mc{O}_E$-scheme, one may functorially associate a gluing triple $(\ms{X}_E,\widehat{\ms{X}},j_{\ms{X}})$. This in particular gives a functor 
\begin{equation}\label{eq:triple-embedding}
    \mathsf{t}\colon \left\{\begin{matrix}\text{Locally of finite type}\\ \text{separated flat }\mathcal{O}_E\text{-schemes}\end{matrix}\right\}\to\cat{Tri}(\mc{O}_E),\qquad \ms{X}\mapsto \mathsf{t}(\ms{X})=(\ms{X}_E,\widehat{\ms{X}},j_\ms{X}).
\end{equation}
We use $j_\ms{X}$ to consider $\wh{\ms{X}}_\eta$ as an open adic subspace of $\ms{X}_E^\an$ without comment in the sequel.

\begin{prop}\label{prop:triple-fully-faithful} The functor $\mathsf{t}$ is fully faithful.
\end{prop}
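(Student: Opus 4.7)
The plan is to handle faithfulness and fullness of $\mathsf{t}$ separately: faithfulness is immediate from schematic density of the generic fiber, while fullness reduces Zariski-locally to a Beauville--Laszlo patching. For faithfulness, suppose $\phi_1, \phi_2\colon \ms{X}_1 \to \ms{X}_2$ induce the same morphism of triples, so in particular $\phi_{1,E} = \phi_{2,E}$. Flatness of $\ms{X}_1$ over $\mc{O}_E$ makes $\mc{O}_{\ms{X}_1}$ $\pi$-torsion-free, so the open immersion $(\ms{X}_1)_E \hookrightarrow \ms{X}_1$ is schematically dense. Since $\ms{X}_2$ is separated, the equalizer of $\phi_1$ and $\phi_2$ is a closed subscheme of $\ms{X}_1$ containing this schematically dense open, hence equals $\ms{X}_1$.

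For fullness, given $(f, g)\colon \mathsf{t}(\ms{X}_1) \to \mathsf{t}(\ms{X}_2)$, I would construct $\phi$ locally. Fixing an affine open $V = \Spec B$ in $\ms{X}_2$, the preimages $f^{-1}(V_E) \subseteq (\ms{X}_1)_E$ and $g^{-1}(\widehat{V}) \subseteq \widehat{\ms{X}}_1$ should assemble, via the specialization compatibility encoded by $f^\an \circ j_1 = j_2 \circ g_\eta$, into an open subscheme $U \subseteq \ms{X}_1$ whose generic fiber is $f^{-1}(V_E)$ and whose formal completion is $g^{-1}(\widehat{V})$. Further localizing on $U$ to an affine $U = \Spec A$ (with $A$ flat of finite type over $\mc{O}_E$), I obtain ring maps
\begin{equation*}
f^*\colon B \to A[\pi^{-1}], \qquad g^*\colon B \to \widehat{A},
\end{equation*}
whose composites into $\widehat{A}[\pi^{-1}]$ agree (this translates the rigid-analytic compatibility once one observes that the global sections of $\widehat{\ms{X}}_{1,\eta}$ are $\widehat{A}[\pi^{-1}]$). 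I would then invoke that the Beauville--Laszlo patching square
\begin{equation*}
\xymatrix{ A \ar[r] \ar[d] & A[\pi^{-1}] \ar[d] \\ \widehat{A} \ar[r] & \widehat{A}[\pi^{-1}] }
\end{equation*}
is Cartesian, which is a consequence of $A$ being Noetherian and $\pi$-torsion-free together with the canonical isomorphism $A/\pi^n A \isomto \widehat{A}/\pi^n \widehat{A}$. This produces a unique lift $\phi^*\colon B \to A$. The resulting local morphisms $U \to V$ then glue, via the faithfulness proved above, into the desired $\phi\colon \ms{X}_1 \to \ms{X}_2$, which by construction induces $(f, g)$.

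The hard part will be the topological gluing step: justifying carefully that a pair consisting of an open of $(\ms{X}_1)_E$ and an open of the underlying space of $\widehat{\ms{X}}_1$ — coming from $f$ and $g$ respectively — genuinely corresponds to an open subscheme of $\ms{X}_1$. Conceptually this is because the specialization map $\widehat{\ms{X}}_{1,\eta} \to \widehat{\ms{X}}_1$ encodes exactly the specialization-compatibility needed, and the rigid-analytic identity $f^\an \circ j_1 = j_2 \circ g_\eta$ ensures this compatibility between $f^{-1}(V_E)$ and $g^{-1}(\widehat{V})$; but tracing through the definitions to see this explicitly, and then globalizing the affine construction over an arbitrary cover, will require some care.
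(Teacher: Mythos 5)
Your proof takes essentially the same route as the paper's: after reducing to the affine case, the paper's entire content is the statement that the preimage of $\widehat{A}$ under $A_E\to \widehat{A}_E$ equals $A$, which is exactly the Cartesianness of your Beauville--Laszlo square (the paper proves it by the two-line computation that $a\in A$ with image in $\pi^k\widehat{A}$ lies in $\pi^kA$, since $A/\pi^kA=\widehat{A}/\pi^k\widehat{A}$, rather than by citing Beauville--Laszlo). Your faithfulness argument via schematic density of the generic fibre and separatedness of the target is fine.

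The one step you flag as unresolved --- that $f^{-1}(V_E)$ and $g^{-1}(\widehat{V})$ assemble into an open of $\ms{X}_1$ --- is precisely what the paper buries in the phrase ``standard devissage,'' and it does go through. Concretely, one must show that if $\eta'$ is a generic-fibre point of $\ms{X}_1$ specializing to a special-fibre point $y$ with $g(y)\in\widehat{V}$, then $f(\eta')\in V_E$. Choose a discrete valuation ring $R$ and a map $\Spec(R)\to\ms{X}_1$ realizing the specialization $\eta'\rightsquigarrow y$ (possible since $\ms{X}_1$ is locally Noetherian), and pass to the $\pi$-adic completion $\widehat{R}$. Since $\Spf(\widehat{R})$ is local with closed point mapping to $g(y)$, the composite $\Spf(\widehat{R})\to\widehat{\ms{X}}_1\xrightarrow{g}\widehat{\ms{X}}_2$ factors through $\widehat{V}$; applying $f^\an\circ j_1=j_2\circ g_\eta$ to the induced point of $\Spa(\widehat{R}[\nicefrac{1}{\pi}],\widehat{R})$, whose support in $\ms{X}_{1,E}$ is $\eta'$, then forces $f(\eta')\in V_E$. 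With that, $\phi^{-1}(V)$ is open for every affine open $V\subseteq\ms{X}_2$, and your local construction glues (using separatedness of $\ms{X}_2$ and the faithfulness already proved for agreement on overlaps).
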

\begin{proof} By a standard devissage, we reduce ourselves to the following. Suppose that $A$ is a finite type flat $\mc{O}_E$-algebra. Then, the preimage of $\wh{A}$ under $A_E\to \wh{A}_E$ is $A$. Indeed, writing an element of $A_E$ as $\tfrac{a}{\pi^k}$ with $a$ in $A$, by assumption $a$ maps into $\pi^k\wh{A}$ and thus $a$ lies in the preimage of $\pi^k\wh{A}$ which is the kernel of $A\to \wh{A}\to \wh{A}/\pi^k\wh{A}=A/\pi^k A$ and so $\pi^k A$. Thus $\tfrac{a}{\pi^k}$ is in $A$ as desired. 
\end{proof}

\begin{rem}\label{rem:alg-space-gluing} See \cite{AchingerYoucis} for a much stronger version of Proposition \ref{prop:triple-fully-faithful}.
\end{rem}

Now, fix a gluing triple $(X,\mf{X},j)$ over $\mc{O}_E$. For $\ms{S}$ any element of $\{\cat{Sht},\GSht,\GShtmu\}$ we denote by $\ms{S}(X,\mf{X},j)$ the category of triples $(s_X,s_{\mf{X}},\gamma)$ where $s_X$ and $s_\mf{X}$ are objects of $\ms{S}(X)$ and $\ms{S}(\mf{X})$ respectively, and $\gamma\colon j^\ast(s_X)\isomto s_{\mf{X}}|_{\mf{X}_\eta}$ is an isomorphism in $\ms{S}(\mf{X}_\eta)$, with the obvious notion of morphisms. Then, for a separated finite type $\mc{O}_E$-scheme $\ms{X}$ there are natural equivalences
\begin{equation}\label{eq:triples-shtuka}
    \mathsf{t}\colon \ms{S}(\ms{X})\to\ms{S}(\mathsf{t}(\ms{X}))
\end{equation}
functorial in $\ms{X}$. Therefore, we shall often not differentiate between $\ms{S}(\ms{X})$ and $\ms{S}(\mathsf{t}(\ms{X}))$ below.

\subsection{\texorpdfstring{$\mc{G}$}{G}-shtukas and local systems}\label{ss:shtukas-and-prismatic-torsors} We now recall and elaborate on the relationship between $\mc{G}$-shtukas and $\mc{G}$-objects in de Rham local systems as in \cite{PappasRapoportI}.

\subsubsection{Shtukas and \texorpdfstring{$\Bdr^+$-pairs}{Bdr-pairs}}\label{sss:BdR-pairs} We begin by elaborating on the relationship between shtukas on $\Bdr^+$-pairs. Throughout the following we fix $X$ to be a locally Noetherian adic space over $E$. 

\begin{thm}[{\cite[\S2.5.1--\S2.5.2]{PappasRapoportI}}]\label{thm:PR-BdR-pairs} There is an equivalence of categories
\begin{equation}\label{eq:DRT-equiv}
    \Phi_X\colon \GSht(X)\isomto \left\{(\bb{P},H)\colon \begin{aligned}(1) & \,\,\bb{P}\emph{ is an object of }\cat{Tors}_{\mc{G}(\Z_p)}(X),\\ (2) & \,\,\emph{a }\underline{\mc{G}(\Z_p)}\emph{-equivariant map }H\colon\bb{P}^\lozenge\to \mr{Gr}_{G,\mr{Spd}(E)}.\end{aligned}\right\}. 
\end{equation}
\end{thm}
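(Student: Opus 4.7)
The plan is to construct $\Phi_X$ $v$-locally as follows. Fix an affinoid perfectoid $S=\Spa(R,R^+)$ with an untilt $S^\sharp$ mapping to $X$. A $\mc{G}$-shtuka $(\ms{P},\varphi_\ms{P})$ is then a $\mc{G}$-bundle on the relative space $\mc{Y}_{[0,\infty)}(S)$ together with a Frobenius isomorphism $\varphi^*\ms{P}\isomto \ms{P}$ which is meromorphic along the untilt locus cut out by $S^\sharp$. The restriction to the punctured space $\mc{Y}_{(0,\infty)}(S)$ is then a $\varphi$-equivariant $\mc{G}$-bundle, and by a Tannakian upgrade (via Theorem \ref{thm:broshi2} and \S\ref{section:torsors via tensors}) of the classical $\GL_n$-statement --- an analog for $\mc{Y}_{(0,\infty)}$ of Proposition \ref{prop:Laurent-crystal-equivalence}, which identifies $\varphi$-equivariant vector bundles with $\Z_p$-local systems --- taking the $\varphi$-fixed sub-structure produces a $\mc{G}(\Z_p)$-torsor. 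After $v$-descending along $S\to X$ and transporting via Proposition \ref{prop:G(Zp)-covering-PHS-torsor-comparison-adic-case}, one obtains the desired object $\bb{P}$ of $\cat{Tors}_{\mc{G}(\Z_p)}(X_\proet)$.

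To construct $H$, observe that after pulling back along the principal homogeneous space $\bb{P}^\lozenge\to X^\lozenge$ the torsor $\bb{P}$ becomes canonically trivial, so the shtuka acquires a canonical Frobenius-equivariant trivialization on the punctured locus $\mc{Y}_{(0,\infty)}$. The formal completion of $\ms{P}$ along the untilt therefore records a modification of the trivial $\mc{G}$-bundle on $\Spa(\Bdr^+(R^\sharp))$, which by definition of the $\Bdr^+$-affine Grassmannian corresponds to a point of $\mr{Gr}_{G,\Spd(E)}(S^\sharp)$. This construction is $\underline{\mc{G}(\Z_p)}$-equivariant, with $\underline{\mc{G}(\Z_p)}$ acting on $\bb{P}^\lozenge$ via the torsor structure and on $\mr{Gr}_{G,\Spd(E)}$ via the inclusion $\mc{G}(\Z_p)\hookrightarrow G(E)$, and it is functorial in $S$, so assembles into the $v$-sheaf morphism $H\colon \bb{P}^\lozenge\to \mr{Gr}_{G,\Spd(E)}$.

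For the quasi-inverse, given a pair $(\bb{P},H)$, pull back to $\bb{P}^\lozenge$ where the torsor is trivialized, and combine the trivial $\varphi$-equivariant $\mc{G}$-bundle on $\mc{Y}_{(0,\infty)}$ (coming from the trivial $\mc{G}(\Z_p)$-torsor under the equivalence used above) with the modification at the untilt supplied by $H$; this yields a $\mc{G}$-shtuka over $\bb{P}^\lozenge$. The $\underline{\mc{G}(\Z_p)}$-equivariance of $H$ provides canonical descent data along $\bb{P}^\lozenge\to X^\lozenge$, and $v$-descent for $\mc{G}$-shtukas produces an object of $\GSht(X)$. That the two functors are mutually quasi-inverse reduces to the two foundational statements isolated above: the $\varphi$-equivariant $\mc{G}$-bundle on $\mc{Y}_{(0,\infty)}$ is determined by its $\mc{G}(\Z_p)$-torsor of $\varphi$-fixed sections, and modifications at the untilt are faithfully parametrized by $\mr{Gr}_{G,\Spd(E)}$.

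The principal obstacle will be the first foundational claim, namely the Tannakian identification of $\varphi$-equivariant $\mc{G}$-bundles on the relevant portion of the relative Fargues--Fontaine curve with pro-\'etale $\mc{G}(\Z_p)$-torsors in a fashion compatible with $v$-descent on $X$; although for $\mc{G}=\GL_n$ this is in the spirit of Proposition \ref{prop:Laurent-crystal-equivalence}, extracting the $\mc{G}$-version requires both the formalism of \S\ref{section:torsors via tensors} (to transfer between torsors and tensor functors) and careful $v$-local verification that the $\varphi$-fixed-point trivialization on $\bb{P}^\lozenge$ is functorial in $S$ and glues. A secondary but more bookkeeping-heavy obstacle is checking that $H$, constructed from local data on affinoid perfectoids, genuinely descends to a morphism of $v$-sheaves, which rests on the $v$-stack property of the $\Bdr^+$-affine Grassmannian and the correct interpretation of $\bb{P}^\lozenge$ via Proposition \ref{prop:G(Zp)-covering-PHS-torsor-comparison-adic-case}.
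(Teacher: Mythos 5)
Your overall architecture is the same as the paper's (which itself recalls the construction of Pappas--Rapoport and Scholze--Weinstein): isolate a local equivalence over a single affinoid perfectoid point identifying the shtuka near the boundary with a $\Z_p$-local system and the leg data with a $\Bdr^+$-lattice, then descend (the paper stackifies pro-\'etale locally on $X$, you descend along $\bb{P}^\lozenge\to X^\lozenge$; both work), and finally apply the Tannakian formalism to pass from $\GL_n$ to general $\mc{G}$. The paper packages the local statement as the equivalence $\cat{Sht}_{n,\mr{free}}(S^\sharp)\isomto \B_\dR^+\text{-}\cat{Pair}_n(S^\sharp)$, $(\mc{P},\varphi_\mc{P})\mapsto (T_\mc{P},\Xi_\mc{P})$ with $T_\mc{P}=\Gamma^\dagger(S,\mc{P})^{\varphi=1}$, i.e.\ Lemma \ref{lem: Fargues equivalence for shtukas}.

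There is, however, a genuine error in your key local claim: you extract the $\mc{G}(\Z_p)$-torsor from the restriction of the shtuka to the punctured space $\mc{Y}_{(0,\infty)}(S)$ and assert that $\varphi$-equivariant bundles there are identified with $\Z_p$-local systems. This is false: $\varphi$-equivariant vector bundles on $\mc{Y}_{(0,\infty)}(S)$ are exactly vector bundles on the relative Fargues--Fontaine curve $X_S$, a strictly larger category than $\cat{Loc}_{\Z_p}(S)$ (already for rank one there are the nontrivial-degree line bundles), so "taking the $\varphi$-fixed sub-structure" does not produce a torsor. The correct locus is the opposite end: the local system comes from the germ of the shtuka along the closed Cartier divisor $S\subseteq S\dot{\times}\Spa(\Z_p)$, i.e.\ from the \'etale $\varphi$-module $\Gamma^\dagger(S,\mc{P})=\varinjlim_{r\to 0^+}\Gamma(\mc{Y}_{[0,r]}(S),\mc{P})$ over the integral Robba ring $\wt{\mc{R}}^{\mr{int}}_S$, where the Frobenius is an isomorphism because the leg $S^\sharp$ lies away from $r=0$; the equivalence with $\Z_p$-local systems is then \cite[Theorem 8.5.3]{KeLiRpHF}, which is the correct analogue of Proposition \ref{prop:Laurent-crystal-equivalence} here. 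Once this is corrected, your construction of $H$ (completion at the leg, measured against the $\varphi$-fixed trivialization propagated to $\mc{Y}_{[0,1]}$ via $\varphi_\mc{P}$, as in \eqref{eq:phi-module-isom}) and of the quasi-inverse (Beauville--Laszlo gluing of the lattice $\Xi$ into $T\otimes_{\Z_p}\mc{O}_{\mc{Y}}$ and spreading out by Frobenius) matches the paper's, and the remaining descent and Tannakian steps go through as you describe.
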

Let us elaborate on the notation on the right-hand side of \eqref{eq:DRT-equiv}:
\begin{itemize} 
\item $\bb{P}^\lozenge=\varprojlim (\bb{P}/K_n)^\lozenge$ is the diamond associated to $\bb{P}$ as in \S\ref{ss:comparison-for-v-sheaves}, 
\item $\mr{Gr}_{G,\mr{Spd}(E)}$ is the $\B_\dR^+$-Grassmannian associated to $G$ as in \cite[\S III.3]{FarguesScholze}, 
\item and $H$ is a morphism of $v$-sheaves over $\mr{Spd}(E)$. 
\end{itemize}
The morphisms in the target category in \eqref{eq:DRT-equiv} are the obvious ones. 

\begin{rem}
    Although in \cite{PappasRapoportI}, the target category is defined with $\cat{Tors}_{\mc G(\Z_p)}(X^\lozenge)$ instead of $\cat{Tors}_{\mc G(\Z_p)}(X)$, these groupoids are equivalent (see Remark \ref{rem: torsors on the diamond associated to a locally noetherian adic space}). 
\end{rem}
\begin{rem}\label{rem: the lattice category fits into the Tannakian formalism}
    The target category of the functor $\Phi_X$ fits into the Tannakian framework in the following sense. Let $S^\sharp$ be an affinoid perfectoid space over $E$ and $\Bdr^+\hyphen\cat{Lat}_\mc G(S^\sharp)$ denote the target category. We let $\Bdr^+\hyphen\cat{Lat}^\mr{vect}(S^\sharp)$ denote the groupoid of triples $(\mbb L,M,\psi\colon \mbb L\otimes_{\underline{\Z_p}}\Bdr(S^\sharp)\isomto M\otimes_{\Bdr^+(S^\sharp)}\Bdr(S^\sharp))$ where $\mbb L$ is a finite free $\Z_p$-local system on $S^\sharp$ (or equivalently on the tilt $S$), $M$ is a finite projective $\Bdr^+(S^\sharp)$-module, and $\mbb L\otimes_{\underline{\Z_p}}\Bdr(S^\sharp)$ is the finite projective $\Bdr(S^\sharp)$-module constructed as in the beginning of \cite[\S22.3]{ScholzeBerkeley}. Then with the notation in Definition \ref{defn: G-C the groupoid of exact tensor functors}, there is an equivalence 
    \begin{equation*}
        \Bdr^+\hyphen\cat{Lat}_\mc G(S^\sharp)\isomto \mc G\hyphen\Bdr^+\hyphen\cat{Lat}^\mr{vect}(S^\sharp).
    \end{equation*}
    In fact, $\Bdr^+\hyphen\cat{Lat}_\mc G(S^\sharp)$ is identified with the groupoid of triples $(\mbb P,\mc Q,\psi\colon \mbb P\times^{\underline{\mc{G}(\Z_p)}}G_{\Bdr(S^\sharp)}\isomto\mc Q\otimes_{\Bdr^+(S^\sharp)}\Bdr(S^\sharp))$ where $\mbb P$ is an object of $\cat{Tors}_{\underline{\mc G(\Z_p)}}(S^\sharp)=\cat{Tors}_{\underline{\mc G(\Z_p)}}(S)$, $\mc Q$ is a $G$-torsor on $\Spec(\Bdr^+(S^\sharp))$, and $\mbb P\times^{\underline{\mc{G}(\Z_p)}}G_{\Bdr(S^\sharp)}$ denotes the $G$-torsor on $\Bdr(S^\sharp)$ constructed via \cite[Theorem 22.5.2]{ScholzeBerkeley} (cf.\ \cite[\S III.3]{FarguesScholze}). 
\end{rem}

To explain the definition of the functor $\Phi_X$, begin by fixing an object of $S=\Spa(R,R^+)$ of $\cat{Perf}_k$ as well as an untilt $S^\sharp=\Spa(R^\sharp,R^{\sharp+})$ over $E$ with $(\xi)\defeq \ker(\theta\colon W(R^+)\to R^{\sharp+})$. Also, for a closed subset $Z$ of a topological space $Y$ and a sheaf $\mc{F}$ on $Y$, we write $\Gamma^\dagger(Z,\mathcal{F})$ as shorthand for $\varinjlim_{Z\subseteq U}\mc{F}(U)$.

As in \cite[Definitions 4.2.2 and 5.1.1]{KeLiRpHF}, consider the \emph{integral Robba ring (over $S$)} 
\begin{equation*}
    \wt{\mc R}^\mr{int}_S\defeq\varinjlim_{r\to 0^+}\mc{O}(\mc{Y}_{[0,r]}(S))=\Gamma^\dagger(S,\mc{O}_{S\dot{\times}\Z_p}). 
\end{equation*} 
For this last object, we are considering $S$ as a closed Cartier divisor of $S\dot{\times}\Z_p$ in the usual way (see \cite[Proposition II.1.4]{FarguesScholze}). We observe that $\wt{\mc{R}}^\mr{int}_S$ carries a natural Frobenius morphism, compatible with that on $S\dot{\times}\Z_p$. As $S^\sharp$ does not intersect $S$ we see that we have a functor 
\begin{equation*}
    \cat{Sht}_n(S^\sharp)\to \cat{Mod}^\varphi(\wt{\mc{R}}^\mr{int}_S),\qquad (\mc{P},\varphi_\mc{P})\mapsto (\Gamma^\dagger(S,\mc{P}),\varphi_\mc{P}),
\end{equation*}
where the target is the category of \'etale $\varphi$-modules over $\wt{\mc{R}}^\mr{int}_S$ (see \cite[Definition 6.1.1]{KeLiRpHF}).

Denote by $\cat{Sht}_{n,\mr{free}}(S^\sharp)$ the full subgroupoid of $\cat{Sht}_n(S^\sharp)$ consisting of shtukas $(\mc{P},\varphi_\mc{P})$ with the property that the associated object of $\cat{Mod}^\varphi(\wt{\mc{R}}^\mr{int}_S)$ is free (which is called `trivial' in loc.\@ cit.\@). On the other hand, let ${\B}_\dR^+\text{-}\cat{Pair}_n(S^\sharp)$ be the groupoid of pairs  $(T,\Xi)$, where $T$ a finite free $\Z_p$-module of rank $n$ and, $\Xi$ is a ${\B}_\dR^+(S^\sharp)$-lattice of $T\otimes_{\Z_p}{\B}_\dR(S^\sharp)$, with the obvious notion of (iso)morphisms. 

There is a natural morphism of groupoids
\begin{equation*}
    \Phi_{S^\sharp}\colon \cat{Sht}_\mr{n,free}(S^\sharp)\to {\B}_\dR^+\text{-}\cat{Pair}_n(S^\sharp), \qquad (\mc{P},\varphi_\mc{P})\mapsto (T_\mc{P},\Xi_\mc{P}). 
\end{equation*}
Here $T_\mc{P}\defeq \Gamma^\dagger(S,\mc{P})^{\varphi=1}$, which is a free $\Z_p$-module of rank $n$. Let $\mc P_{\Bdr^+(S^\sharp)}$ denote $\Gamma(\mc Y_{[0,1]},\mc P)^\wedge_{\xi}$, and set $\mc P_{\Bdr(S^\sharp)}\defeq \mc P_{\Bdr^+(S^\sharp)}[\nicefrac{1}{\xi}]$ . Then there is a natural isomorphism 
\begin{equation}\label{eq:phi-module-isom}
    T_\mc{P}\otimes_{\Z_p}{\B}^+_\dR(S^\sharp)\isomto \mc{P}_{\Bdr^+(S^\sharp)}
    \,\,(=\Gamma^\dagger(S^\sharp,\mc{P})^\wedge_{\xi}),
\end{equation}
obtained by extending an element of $T_\mc{P}$, which is a priori an element of $\Gamma(\mc Y_{[0,r]}(S),\mc P)$ for some $r>0$, to $\Gamma(\mc Y_{[0,1]},\mc P)$ via $\varphi_\mc P$ (cf.\@ \cite[Corollary 12.4.1]{ScholzeBerkeley}). Set $\Xi_\mc{P}$ to be the ${\B}_\dR^+(S^\sharp)$-lattice in $T_\mc{P}\otimes_{\Z_p}{\B}_\dR(S^\sharp)$ corresponding under \eqref{eq:phi-module-isom} to $\varphi_\mc P((\phi^*\mc P)_{\Bdr^+(S^\sharp)})$ where
\begin{equation*}
\varphi_\mc P\colon (\phi^*\mc P)_{\Bdr(S^\sharp)}\to \mc P_{\Bdr(S^\sharp)},
\end{equation*} 
is the map induced by the Frobenius structure of $\mc{P}$.

If an object $(\mc{P},\varphi_\mc{P})$ of $\cat{Sht}_{n,\mr{free}}(S^\sharp)$ is obtained by restriction from an object $(M,\varphi_M)$ of $\cat{Vect}^\varphi(W(R^+),(\xi))$ then we can describe $\Phi_{S^\sharp}(\mc{P},\varphi_\mc{P})$ more concretely. Namely, we have the equality $T_\mc{P}=(M\otimes_{W(R^+)}\wt{\mc{R}}_S^\mr{int})^{\varphi=1}$, and $\Xi_\mc{P}$ corresponds under \eqref{eq:phi-module-isom} to the image of the $\B_\dR^+(S^\sharp)$-lattice $\phi^\ast M\otimes_{W(R^+)}{\B}_\dR^+(S^\sharp)$ under
\begin{equation*}
    \varphi_M\otimes 1\colon \phi^\ast M\otimes_{W(R^+)}{\B}_\dR(S^\sharp)\to M\otimes_{W(R^+)}{\B}_\dR(S^\sharp).
\end{equation*}
In any case, we have the following result, which can be proven exactly in the same way as \cite[Proposition 12.4.6]{ScholzeBerkeley}, as mentioned in the proof of \cite[Proposition 2.5.1]{PappasRapoportI}. 
\begin{lem}[{cf.\@ \cite[Proposition 12.4.6]{ScholzeBerkeley} and  \cite[Proposition 2.5.1]{PappasRapoportI} }]\label{lem: Fargues equivalence for shtukas} The functor 
    \begin{equation*}
    \Phi_{S^\sharp}\colon \cat{Sht}_{n,\mr{free}}(S^\sharp)\to {\B}_\dR^+\text{-}\cat{Pair}_n(S^\sharp), \qquad (\mc{P},\varphi_\mc{P})\mapsto (T_\mc{P},\Xi_\mc{P}) 
\end{equation*}
is an equivalence. 
\end{lem}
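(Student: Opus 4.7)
The plan is to construct a quasi-inverse $\Psi_{S^\sharp}$ to $\Phi_{S^\sharp}$ by Beauville--Laszlo gluing, following the argument of \cite[Proposition 12.4.6]{ScholzeBerkeley}. Given a pair $(T,\Xi)$, first produce a vector bundle $\mc{P}^{(1)}$ of rank $n$ on $\mc{Y}_{[0,1]}(S)$ by gluing $T\otimes_{\Z_p}\mc{O}$ on the complement of the Cartier divisor $S^\sharp\subset \mc{Y}_{[0,1]}(S)$ with the lattice $\Xi$ on the formal neighborhood of $S^\sharp$, using the inclusion $\Xi\subset T\otimes_{\Z_p}\Bdr(S^\sharp)$ as the gluing datum over the punctured completion. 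Beauville--Laszlo produces a vector bundle because both pieces are free of rank $n$.

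Next, extend $\mc{P}^{(1)}$ to a vector bundle $\mc{P}$ on all of $\mc{Y}_{[0,\infty)}(S)$ carrying a Frobenius structure with a single leg at $S^\sharp$. On $\mc{Y}_{[0,1]}(S)\setminus S^\sharp$, declare $\varphi_{\mc{P}}$ to be the canonical isomorphism $\phi^\ast(T\otimes_{\Z_p}\mc{O})\isomto T\otimes_{\Z_p}\mc{O}$ coming from the $\Z_p$-module structure on $T$, and use iterated Frobenius pushforward of $\mc{P}^{(1)}$ to propagate the bundle across the successive annuli $\mc{Y}_{[1,p]}(S),\mc{Y}_{[p,p^2]}(S),\ldots$ By construction the resulting pair $(\mc{P},\varphi_{\mc{P}})$ is a shtuka of rank $n$ with a single leg at $S^\sharp$, and its underlying \'etale $\varphi$-module over $\wt{\mc{R}}^\mr{int}_S$ is free, hence $(\mc{P},\varphi_\mc{P})$ lies in $\cat{Sht}_{n,\mr{free}}(S^\sharp)$.

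Verifying the equivalence: $\Phi_{S^\sharp}\circ \Psi_{S^\sharp}(T,\Xi)\simeq (T,\Xi)$ is immediate, since the $\varphi$-invariants of the construction recover $T$, and the Beauville--Laszlo modification at $S^\sharp$ that we used is exactly what is extracted as $\Xi_{\mc{P}}$. Conversely, given $(\mc{P},\varphi_\mc{P})$ in $\cat{Sht}_{n,\mr{free}}(S^\sharp)$, the freeness hypothesis supplies an isomorphism $\Gamma^\dagger(S,\mc{P})\simeq T_\mc{P}\otimes_{\Z_p}\wt{\mc{R}}^\mr{int}_S$, which combined with the identification \eqref{eq:phi-module-isom} for $\mc{P}_{\Bdr^+(S^\sharp)}$ exhibits $\mc{P}|_{\mc{Y}_{[0,1]}(S)}$ as the Beauville--Laszlo gluing of precisely the data produced by $\Psi_{S^\sharp}(T_\mc{P},\Xi_\mc{P})$; the extension to $\mc{Y}_{[0,\infty)}(S)$ is then forced by $\varphi_\mc{P}$. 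Full faithfulness is a direct check through the same identifications.

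The main technical obstacle is verifying that the Beauville--Laszlo construction on $\mc{Y}_{[0,1]}(S)$ genuinely extends via iterated Frobenius pushforward to a vector bundle on all of $\mc{Y}_{[0,\infty)}(S)$ whose Frobenius is meromorphic exactly along $S^\sharp$; this requires the propagation argument together with the uniqueness of extensions of vector bundles across the leg. As the lemma statement already indicates, this is precisely the argument of \cite[Proposition 12.4.6]{ScholzeBerkeley} (see also \cite[Proposition 2.5.1]{PappasRapoportI}), so the proof ultimately consists of assembling the pieces above and citing that reference.
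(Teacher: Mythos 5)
Your proposal is correct and matches the paper's treatment: the paper gives no independent argument for this lemma, stating only that it "can be proven exactly in the same way as \cite[Proposition 12.4.6]{ScholzeBerkeley}," and the quasi-inverse you sketch (Beauville--Laszlo modification of $T\otimes_{\Z_p}\mc{O}$ by $\Xi$ at the leg, followed by iterated Frobenius pushforward to extend over $\mc{Y}_{[0,\infty)}(S)$) is precisely that argument. No discrepancy to report.
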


When $\mc{G}=\GL_{n,\Z_p}$, the equivalence $\Phi_X$ is then obtained from the observations that 
\begin{itemize} 
\item[(a)] $\Phi_{S^\sharp}$ defines a functor on $\cat{Perf}_X$,
\item[(b)] the source and target are the global sections of the stackification of $\cat{Sht}_{n,\mr{free}}$ and $\B_\dR^+\hyphen\cat{Pair}_n$ for the pro-\'etale topology, respectively, where here we consider these objects as prestacks on $\cat{Perf}_{X}$.\footnote{The fact that every object of $\cat{Sht}_n(S^\sharp)$ is pro-\'etale locally in $\cat{Sht}_{n,\text{free}}(S^\sharp)$ follows from \cite[Theorem 8.5.3]{KeLiRpHF} as any local system can be trivialized pro-\'etale locally.} 
\end{itemize}
The case for general $\mc{G}$ is then obtained by applying the Tannakian formalism (see Remarks \ref{rem: G-Sht fits into the Tannakian formalism} and \ref{rem: the lattice category fits into the Tannakian formalism}). 

Let $\Gr_{G,\bm{\mu},\Spd(E)}$ and $\Gr_{G,\leqslant\bm{\mu},\Spd(E)}$ be as in \cite[Definition 19.2.2]{ScholzeBerkeley}. Observe that if $\bm{\mu}$ is minuscule, then $\Gr_{G,\bm{\mu},\Spd(E)}=\Gr_{G,\leqslant\bm{\mu},\Spd(E)}$. The following lemma is just an unraveling of the definitions.

\begin{lem}\label{lem:boundedness-factorization-equiv} If $(\ms{P},\varphi_\ms{P})$ is an object of $\GSht(X)$, and $\Phi_X(\ms{P},\varphi_\ms{P})=(\bb{P},H)$, then $(\ms{P},\varphi_\ms{P})$ is an object of $\GSht_{\bm{\mu}^{-1}}(X)$ if and only if $H$ factorizes through $\Gr_{G,\leqslant \bm{\mu},\Spd(E)}$.
\end{lem}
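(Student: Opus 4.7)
Proof proposal for Lemma \ref{lem:boundedness-factorization-equiv}.

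The plan is to reduce both sides of the equivalence to a single statement about the relative position of $\Bdr^+$-lattices at a geometric point, where the claim becomes a bookkeeping exercise. Both the property of being an object of $\GSht_{\bm{\mu}^{-1}}(X)$ and the property of $H$ factorizing through the closed sub-$v$-sheaf $\Gr_{G,\leqslant\bm{\mu},\Spd(E)}\hookrightarrow \Gr_{G,\Spd(E)}$ are defined pointwise: the former by the defining condition of $\GSht_{\bm{\mu}^{-1}}$ (cf.\ \cite[Definition 2.4.3]{PappasRapoportI}) which, after possibly passing to a $v$-cover, checks the bound geometrically at each untilted point, and the latter by the definition of $\Gr_{G,\leqslant\bm{\mu},\Spd(E)}$ as the closure of the Schubert cell $\Gr_{G,\bm{\mu},\Spd(E)}$ (see \cite[Definition 19.2.2]{ScholzeBerkeley}). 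Hence, it suffices to verify the claim after base change along an arbitrary map $(S,S^\sharp)\to X$ with $S=\Spa(C,C^+)$ a geometric point.

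In this geometric setting I will trivialize $\bb{P}$ (possible after enlarging $S$ by a pro-\'etale cover, by Proposition \ref{prop:torsors-twists-functors}), so that the image of the chosen trivialization under $H$ is a genuine $C$-point of $\Gr_{G,\Spd(E)}$. Unwinding the Tannakian construction of $\Phi_{S^\sharp}$ discussed above Lemma \ref{lem: Fargues equivalence for shtukas}, this $C$-point is given by the pair $(\underline{\Isom}(\omega_\mr{triv},\omega_\ms{P})_{\Bdr^+(S^\sharp)},\varphi_{\ms{P}})$, i.e., it encodes the relative position of the two $\mc{G}$-torsors
\begin{equation*}
    \phi^\ast\underline{\Isom}(\omega_\mr{triv},\omega_\ms{P})_{\Bdr^+(S^\sharp)}
    \quad\text{and}\quad
    \underline{\Isom}(\omega_\mr{triv},\omega_\ms{P})_{\Bdr^+(S^\sharp)}
\end{equation*}
linked by $\varphi_\ms{P}$ as an isomorphism over $\Bdr(S^\sharp)$. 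By the Cartan decomposition for $\mc{G}(\Bdr(S^\sharp))$ acting on $\Gr_{G,\Spd(E)}(C)$ (see \cite[\S19.2]{ScholzeBerkeley}), this position is classified by a unique conjugacy class $\bm{\nu}$ of cocharacters of $G_{\ov{E}}$, and the image of $H$ lies in $\Gr_{G,\leqslant\bm{\mu},\Spd(E)}$ if and only if $\bm{\nu}\leqslant\bm{\mu}$ in the dominance order.

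On the other hand, the condition of being bounded by $\bm{\mu}^{-1}$ for the shtuka $(\ms{P},\varphi_\ms{P})$, as in \cite[\S2.4]{PappasRapoportI}, is by definition a bound on the relative position at the leg between $\ms{P}$ and $\phi^\ast\ms{P}$ viewed through $\varphi_\ms{P}^{-1}$: concretely, the relative position of $\varphi_\ms{P}^{-1}$ at $S^\sharp$ should be $\leqslant \bm{\mu}$. Equivalently, reading the relative position via $\varphi_\ms{P}$ rather than $\varphi_\ms{P}^{-1}$ (which simply inverts the cocharacter defining the stratum) gives the bound $\leqslant\bm{\mu}$. Matching this with the previous paragraph, this is exactly the condition $\bm{\nu}\leqslant\bm{\mu}$, and the two characterizations coincide.

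The only potential obstacle, beyond bookkeeping, is correctly aligning the sign/inverse conventions between the Pappas--Rapoport definition of $\GShtmu$ and the Scholze convention for $\Gr_{G,\leqslant\bm{\mu},\Spd(E)}$; once this is fixed (which is precisely why the statement involves $\bm{\mu}^{-1}$ on one side and $\bm{\mu}$ on the other), the equivalence is tautological. For this reason, I expect a half-page proof along these lines to suffice, with no new input beyond Lemma \ref{lem: Fargues equivalence for shtukas} and the definitions.
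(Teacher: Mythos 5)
Your proposal is correct and follows essentially the same route as the paper: reduce to geometric points and then observe that the $\Bdr^+$-lattice recorded by $H$ is the relative position of $\varphi_\ms{P}(\phi^\ast\ms{P})$ against $\ms{P}$, so that swapping the reference lattice inverts the cocharacter and converts the bound $\leqslant\bm{\mu}^{-1}$ into factorization through $\Gr_{G,\leqslant\bm{\mu},\Spd(E)}$. The only difference is that the paper justifies the pointwise reduction explicitly (closedness of $\Gr_{G,\leqslant\bm{\mu},\Spd(E)}$ plus partial properness to restrict to rank-one points $\Spa(C^\sharp,C^{\sharp\circ})$), whereas you assert it; your sentence about reading the position via $\varphi_\ms{P}$ versus $\varphi_\ms{P}^{-1}$ is also slightly garbled, but the conclusion and the identification of the inversion as the crux are right.
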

\begin{proof} Let us begin by observing that as $\Gr_{G,\leqslant\bm{\mu},\Spd(E)}$ is a closed subdiamond of $\Gr_{G,\Spd(E)}$ (see \cite[Proposition 19.2.3]{ScholzeBerkeley}). Thus, $H$ factorizes through $\Gr_{G,\leqslant\bm{\mu},\Spd(E)}$ if and only if it does so at the level of points. Moreover, as $\Gr_{G,\leqslant\bm{\mu},\Spd(E)}$ is closed in $\Gr_{G,\Spd(E)}$ it is partially proper (see \cite[Lemma 19.1.4]{ScholzeBerkeley}), and so we further see that $H$ factorizes through $\Gr_{G,\leqslant \bm{\mu},\Spd(E)}$ if and only if it does so for points of the form $S^\sharp=\Spa(C^\sharp,C^{\sharp\circ})$ with $C$ an algebraically closed perfectoid field. Tracing through the definitions, this means that for trivializations $G_{\B_\dR^+(C^{\sharp+})}\isomto \ms P_{\B_\dR^+(C^{\sharp+})}$ and $G_{\B_\dR^+(C^{\sharp+})}\isomto \phi^*\ms P_{\B_\dR^+(C^{\sharp+})}$, that the relative position of $\ms{P}$ and $\varphi_\ms{P}(\phi^\ast\ms{P})$ defines an element of $\Gr_{G,\leqslant\bm{\mu},\Spd(E)}(C^\sharp,C^{\sharp\circ})$. On the other hand, by definition, $(\ms{P},\varphi_{\ms{P}})$ lies in $\GSht_{\bm{\mu}^{-1}}(X)$ if and only if for all such $S^\sharp$ and all such trivializations, the relative position of $\varphi_\ms{P}(\phi^\ast\mc{P})$ and $\ms{P}$  defines an element of $\Gr_{G,\leqslant\bm{\mu}^{-1},\Spd(E)}(C^\sharp,C^{\sharp\circ})$. These are clearly equivalent. 
\end{proof}

\subsubsection{Shtukas associated to de Rham local systems}\label{sss:de-Rham-shtukas} Let $X$ be a smooth rigid $E$-space. In \cite[\S2.6.1--\S2.6.2]{PappasRapoportI} there is constructed a functor 
\begin{equation*}
    U_\sht\colon \GLoc_{\Z_p}^{\dR}(X)\to \GSht(X),
\end{equation*}
which we now recall. 

Thanks to Theorem \ref{thm:PR-BdR-pairs}, it suffices to construct a functor
\begin{equation*}
    \Phi_X\circ U_\sht\colon \GLoc_{\Z_p}^\dR(X)\to \left\{(\bb{P},H)\colon \begin{aligned}(1) & \,\,\bb{P}\text{ is an object of }\cat{Tors}_{\mc{G}(\Z_p)}(X),\\ (2) & \,\,\text{a }\underline{\mc{G}(\Z_p)}\text{-equivariant map }H\colon\bb{P}^\lozenge\to \mr{Gr}_{G,\mr{Spd}(E)}.\end{aligned}\right\}. 
\end{equation*}
Further, by employing the Tannakian formalism (see Remark \ref{rem: the lattice category fits into the Tannakian formalism}), we may further assume that $\mc{G}=\GL_{n,\Z_p}$. In this case, an object of $\GLoc_{\Z_p}^\dR(X)$ is nothing but a rank $n$ de Rham local system $\bb{L}$ on $X$. Then one defines $\Phi_X(U_\sht(\bb{L}))$ to be $(\bb{P}_\bb{L},H_\bb{L})$. Here, $\bb{P}_\bb{L}=\underline{\Isom}(\underline{\Z}_p^n,\bb{L})$ is the $\underline{\GL_n(\Z_p)}$-torsor as in \S\ref{ss:G(Z_p)-local-systems-on-adic-spaces}. Recall, as in \S\ref{ss:de-Rham-local-systems}, there is a canonical $\B_\dR^+$-equivariant isomorphism
\begin{equation*}
    c_\mr{Sch}\colon \bb{L}\otimes_{\underline{\Z}_p}\B_\dR^+\isomto \Fil^0(D_\dR(\bb{L})\otimes_{\mc{O}_X}\mc{O}\B_\dR)^{\nabla=0}.
\end{equation*}
This induces a canonical $\B_\dR$-equivariant isomorphism 
\begin{equation*}
    c_\mr{Sch}\otimes 1\colon \bb{L}\otimes_{\underline{\Z}_p}\B_\dR\isomto (D_\dR(\bb{L})\otimes_{\mc{O}_X}\mc{O}\B_\dR)^{\nabla=0}.
\end{equation*}
So, from an isomorphism $a\colon \underline{\Z}_p^n\isomto \bb{L}_{S^\sharp}$, where $S^\sharp$ is an untilt over $X$ of some $S$ in $\cat{Perf}_k$, we obtain an isomorphism of $\Bdr(S^\sharp)$-modules
\begin{equation*}
    (c_\mr{Sch}\otimes 1)\circ a\colon \underline{\Z_p}^n(S^\sharp)\otimes_{\underline{\Z_p}(S^\sharp)}\B_\dR(S^\sharp)\isomto (D_\dR(\bb{L})(S^\sharp)\otimes_{S^\sharp}\mc{O}\B_\dR(S^\sharp))^{\nabla=0}.
\end{equation*}
We then set 
\begin{equation*}
    H_\bb{L}(a)\defeq ((c_\mr{Sch}\otimes 1)\circ a)^{-1}((D_\dR(\bb{L})(S^\sharp)\otimes_{S^\sharp}\mc{O}\B_\dR^+(S^\sharp))^{\nabla=0}),
\end{equation*}
a $\Bdr^+(S^\sharp)$-lattice in $\underline{\Z_p}^n(S^\sharp)\otimes_{\underline{\Z_p}(S^\sharp)}\Bdr(S^\sharp)$. 

\begin{prop}\label{prop:mu-equiv-factorization} Let $\omega$ be an object of $\GLoc_{\Z_p}^\dR(X)$ and write $\Phi_X(U_\sht(\omega))=(\bb{P},H)$. Then, we have that $\omega$ belongs to $\GLoc^{\dR}_{\Z_p,\bm{\mu}}$ 
if and only if $H$ factorizes through $\Gr_{G,\bm{\mu},\Spd(E)}$.
\end{prop}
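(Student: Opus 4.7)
The plan is to reduce the statement to a pointwise comparison on geometric points of $\bb{P}^\lozenge$, where we interpret the $\Bdr^+$-lattice defining $H$ as an avatar of the Hodge filtration via the de Rham comparison isomorphism $c_\mr{Sch}$. First I would observe that the subdiamond $\Gr_{G,\bm{\mu},\Spd(E)}\subseteq \Gr_{G,\Spd(E)}$ is locally closed (see \cite[Proposition 19.2.3]{ScholzeBerkeley}), so factorization of $H$ through it may be tested on geometric points. Since $H$ is $\underline{\mc{G}(\Z_p)}$-equivariant and $\bb{P}$ is pro-\'etale locally trivial, this reduces to checking, for every geometric point $x^\sharp=\Spa(C^\sharp,C^{\sharp+})$ of $X^\lozenge$ and every trivialization $a$ of $\omega$ at $x^\sharp$, that $H_\omega(a)\in\Gr_G(x^\sharp)$ has Cartan type $\bm{\mu}$ if and only if the filtration on $D_\dR(\omega)_{x^\sharp}$ has type $\bm{\mu}$.

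The main computation is then as follows. By the construction of $H_\omega$ recalled in \S\ref{sss:de-Rham-shtukas}, the $\Bdr^+(x^\sharp)$-lattice $H_\omega(a)(\Lambda_0)\subseteq \Lambda_0\otimes_{\Z_p}\Bdr(x^\sharp)$ is the preimage under $(c_\mr{Sch}\otimes 1)\circ(a\otimes 1)$ of $\Fil^0(D_\dR(\omega(\Lambda_0))\otimes_{\mc{O}_X}\mc{O}\Bdr^+)^{\nabla=0}(x^\sharp)$. After choosing a $\Bdr^+(x^\sharp)$-linear trivialization of the latter which identifies it with $\Lambda_0\otimes_{\Z_p}\Bdr^+(x^\sharp)$ equipped with the filtration coming from the Hodge filtration on $D_\dR(\omega(\Lambda_0))_{x^\sharp}$ together with the filtration on $\mc{O}\Bdr^+$ coming from $\ker(\theta)$, the lattice $H_\omega(a)(\Lambda_0)$ corresponds to the $\Bdr^+$-lattice $\mu(\xi)\cdot(\Lambda_0\otimes_{\Z_p}\Bdr^+(x^\sharp))$ in $\Lambda_0\otimes_{\Z_p}\Bdr(x^\sharp)$, where $\mu$ is any cocharacter realizing $\Fil^\bullet_\dR$ on $D_\dR(\omega)_{x^\sharp}$. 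Applying this to a faithful $\Lambda_0$ and invoking the Tannakian formalism (valid since $\mc{G}$ is reductive and cocharacter types in $G$ are detected by faithful representations), the relative position $H_\omega(a)$ in $\Gr_G(x^\sharp)$ has type exactly $\bm{\mu}$ if and only if the Hodge filtration on $D_\dR(\omega)_{x^\sharp}$ has type $\bm{\mu}$.

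The remaining subtlety is that the defining condition of $\GLoc^{\dR}_{\Z_p,\bm{\mu}}(X)$ is phrased only at classical points of $X$, while the Grassmannian factorization must be tested at all geometric points. This is reconciled by the observation that $\Fil^\bullet_\dR$ on $D_\dR(\omega(\Lambda))$ is a locally split filtration of the vector bundle $D_\dR(\omega(\Lambda))$ by subbundles, so the Cartan type of each fiber is locally constant on $X$ and hence constant on each connected component. Since $X$ is a smooth rigid $E$-space, its classical points are Zariski dense in each connected component, so agreement of the types at all classical points forces agreement at every geometric point of $X^\lozenge$. Combined with the pointwise comparison of the preceding paragraph, this yields both directions.

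I expect the principal technical hurdle to be the explicit pointwise comparison in the second step: one has to carefully track how $c_\mr{Sch}$ transfers the filtration $\Fil^\bullet$ on $\mc{O}\Bdr^+$ to the lattice structure on $(D_\dR(\omega(\Lambda))\otimes\mc{O}\Bdr^+)^{\nabla=0}$, and to verify that the resulting $\Bdr^+$-lattice has relative position with respect to $\Lambda\otimes_{\Z_p}\Bdr^+(x^\sharp)$ precisely given by the cocharacter $\mu$ of the Hodge filtration in the normalization $\Fil^r_\mu=\bigoplus_{i\geqslant r}\Lambda_{\ov{K}}[i]$ of \S\ref{ss:de-Rham-local-systems}. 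The rest of the argument is essentially formal once this identification of relative positions is in hand.
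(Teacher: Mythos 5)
Your overall route coincides with the paper's: both arguments use that $\Gr_{G,\bm{\mu},\Spd(E)}$ is locally closed in $\Gr_{G,\Spd(E)}$ to reduce the factorization of $H$ to a test on geometric points, both then identify the relative position of the $\Bdr^+$-lattice $H(a)$ with the cocharacter splitting the Hodge filtration by unwinding $c_\mr{Sch}$ exactly as you describe, and both must bridge the gap between the classical points (where membership in $\GLoc^{\dR}_{\Z_p,\bm{\mu}}$ is defined) and arbitrary geometric points of $X^\lozenge$. The one place you genuinely diverge is in that bridging step: the paper argues topologically, using that $|\bb{P}^\lozenge|=|\bb{P}|$ and that $|\bb{P}|\to|X|$ is open (so preimages of dense sets are dense), to reduce directly to classical points and hence to $X=\Spa(E)$; you instead invoke local constancy of the Hodge type coming from local splitness of $\Fil^\bullet_\dR$ together with density of classical points. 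Your variant is workable and arguably isolates the same underlying mechanism.

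There is, however, one genuine flaw in your pointwise step: the parenthetical claim that ``cocharacter types in $G$ are detected by faithful representations'' is false for general reductive $\mc{G}$. Already for $G$ a torus, two distinct (hence non-conjugate) cocharacters can have the same weights with multiplicity on a faithful representation $\Lambda_0$ and so become conjugate in $\GL(\Lambda_0)$; more generally the map from $G$-conjugacy classes of cocharacters to $\GL(\Lambda_0)$-conjugacy classes is not injective. Consequently, verifying that $H(a)(\Lambda_0)$ has relative position $\mu$ in $\Gr_{\GL(\Lambda_0)}$ does not by itself place $H(a)$ in $\Gr_{G,\bm{\mu}}$. The repair is to run the identification at the level of the full fiber functor: the trivialization you construct must be a single element of $G(\Bdr^+(x^\sharp))$-worth of data, i.e.\@ an isomorphism of $\Bdr^+$-lattices in $\omega\otimes\Bdr$ compatible with all representations and tensor operations simultaneously, so that the relative position is computed as an element of $G(\Bdr^+)\backslash G(\Bdr)/G(\Bdr^+)$ rather than of the corresponding double coset space for $\GL(\Lambda_0)$. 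This is what the paper's appeal to the Tannakian formalism (reducing the whole biconditional, in which both sides are phrased via existence of $G$-trivializations, to the tautological representation of $\GL_n$) accomplishes. With that correction your argument goes through.
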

\begin{proof}
    Observe that $\Gr_{G,\bm{\mu},\Spd(E)}$ is an open subdiamond of a closed subdiamond of $\Gr_{G,\Spd(E)}$ (see \cite[Proposition 19.2.3]{ScholzeBerkeley}). So, if $\Phi_X(U_\sht(\omega))=(\bb{P},H)$, then $H$ factorizes through $\Gr_{G,\bm{\mu},\Spd(E)}$ if and only if $|H|$ factorizes through $|\Gr_{G,\bm{\mu},\Spd(E)}|$ (see \cite[Proposition 12.15]{ScholzeECD}). But, as $|\bb{P}^\lozenge|=|\bb{P}|$ (see \cite[Lemma 15.6]{ScholzeECD}), and the projection map $|\bb{P}|\to |X|$ is open (see \cite[Lemma 3.10 (iv)]{ScholzepadicHT}), so that the preimage of a dense set is dense, we see that $H$ factorizes through $\Gr_{G,\bm{\mu},\Spd(E)}$ if and only if it does so after restriction to each classical point $x$. Thus, we may assume that $X=\Spa(E)$. By the Tannakian formalism, we are reduced to the case when $\mc{G}=\GL_{n,\Z_p}$, and we identify $\omega$ with its value $\bb{L}$ at the tautological representation. Set $T$ to be $\mbb L(C,\mc{O}_C)$ with its natural Galois action. By definition, $H$ factorizes through $\Gr_{\GL_n,\bm{\mu},\Spd(E)}(C,\mc{O}_C)$ if and only if we can find trivializations $\B_\dR^+(C)^n\isomto T\otimes_{\Z_p}\B_\dR^+(C)$ and 
    \begin{equation*}
        (\B_\dR^+(C))^n\isomto D_\dR(T)
        \otimes_{E}\B_\dR^+(C)
    \end{equation*} 
    such that through the filtered isomorphism 
    \begin{equation*}
        c_\mr{Sch}\colon D_\dR(T)
        \otimes_{E}\Bdr(C)\isomto T\otimes_{\Z_p}\Bdr(C)
    \end{equation*} 
    the induced automorphism of $\B_\dR(C)^n$ given by the multiplication of an element of the double coset $\GL_n(\B_\dR^+(C))\bm{\mu}(\xi)\GL_n(\B_\dR^+(C))$. Choose an element $\mu$ of $\bm{\mu}$, and write $\mu(\xi)=(\xi^{r_1},\ldots,\xi^{r_n})$. Then this condition holds if and only if there exists a $\Bdr^+(C)$-basis $(e_\nu)_{\nu=1}^n$ of $D_\dR(T)\otimes_{E}\Bdr^+(C)$ such that the filtration $\Fil^r$ on $D_\dR(T)\otimes_{E}\Bdr(C)$ is given by $\Fil^r=\sum_{\nu=1}^n\xi^{r-r_\nu}\Bdr^+(C)\cdot e_\nu$. This is seen to be equivalent $\mbb L$ belonging to $\cat{Loc}_{\Z_p,\bm{\mu}}^{\dR}(\Spa(E))$. 
\end{proof}

%% Lemma \ref{lem: free BK mod} from IKY2 can be used to justify the last sentence in the above.

\subsection{Analytic prismatic \texorpdfstring{$F$-crystals}{F-crystals} and shtukas} We finally come to the shtuka realization functor from analytic prismatic $F$-crystals on a smooth formal $\mc{O}_E$-scheme and the comparison isomorphism to the functor from \S\ref{sss:de-Rham-shtukas}, which will allow us to extend this realization functor to the scheme-theoretic setting.

We continue to fix notation as in Notation \ref{nota:shtuka}. 

\subsubsection{Prismatic \texorpdfstring{$\mc{G}$-torsors}{G-torsors} with \texorpdfstring{$F$-structure}{F-structure} bounded by \texorpdfstring{$\mu$}{mu}} We would first like to define a notion of prismatic $\mc{G}$-torsors with $F$-structure bounded by a cocharacter $\mu$ that will match the category $\GShtmu$ under the shtuka realization functor constructed below.

To this end, temporarily fix the following data:
\begin{itemize}
    \item $k$ is a perfect field of characteristic $p$,
    \item $W\defeq W(k)$,
    \item $\mf{Y}$ is a quasi-syntomic $p$-adic formal $W$-scheme,
    \item $\mu\colon \bb{G}_{m,W}\to \mc{G}_{W}$ is a minuscule cocharacter.
\end{itemize} 
Let $(A,I)$ be an object of $\mf{Y}_\Prism$. Define $\cat{Tors}^{\varphi,\mu}_{\mc{G}}(A,I)$ to be the full subcategory of $\cat{Tors}_{\mc{G}}^\varphi(A,I)$ consisting of $(\mc{A},\varphi_\mc{A})$ such that there exists a $(p,I)$-adic faithfully flat cover $A\to A'$ such that $IA'$ is principal, and there exists a trivialization $\mc{A}\isomto \mc{G}$ after restriction to the slice site over $(A',IA')$ such that under this trivialization $\varphi_\mc{A}$ corresponds to left multiplication by an element of $\mc{G}(A')\mu(d)\mc{G}(A')$ for some (equiv.\@ for any) generator $d$ of $IA'$. Set
\begin{equation*}
    \cat{Tors}_{\mc{G}}^{\varphi,\mu}(\mf{Y}_\Prism)\defeq \twolim_{(A,I)\in\mf{Y}_\Prism}\cat{Tors}_{\mc{G}}^{\varphi,\mu}(A,I).
\end{equation*}
We give the objects of the category the following name.

\begin{defn}\label{defn:bounded-by-mu}
    An object of $\cat{Tors}_{\mc{G}}^{\varphi,\mu}(\mf{Y}_\Prism)$ is called a \emph{prismatic $\mc{G}$-torsor with $F$-structure bounded by $\mu$} on $\mf{Y}$.
\end{defn}

We now return to the notation as in Notation \ref{nota:shtuka}, but assume that $\mc{O}_E$ is absolutely unramified. Suppose further that $\mf{X}$ is a smooth formal $\mc{O}_E$-scheme, with generic fiber $X$, and let $\mu\colon \bb{G}_{m,\mc{O}_E}\to\mc{G}_{\mc{O}_E}$ be a minuscule cocharacter with $\mu_{\ov{E}}$ an element of our previously fixed conjugacy class $\bm{\mu}$. Let us define $\cat{Tors}^\varphi_{\mc{G},\bm{\mu}}(\mf{X}_\Prism)$ to be the full subcategory of $\cat{Tors}^\varphi_\mc{G}(\mf{X}_\Prism)$ consisting of those $(\mc{A},\varphi_\mc{A})$ such that $T_\et(\mc{A},\varphi_\mc{A})$ corresponds to an element of $\GLoc^{\crys}_{\Z_p,\bm{\mu}}(X)$. To understand the relationship between $\cat{Tors}^{\varphi}_{\mc{G},\bm{\mu}}(\mf{X}_\Prism)$ and $\cat{Tors}_{\mc{G}}^{\varphi,\mu^{-1}}(\mf{X}_\Prism)$, we make the following observation.

\begin{prop}\label{prop:matching-of-bounded-by-mu-conditions}
     Let $\{\Spf(R_i)\}$ be an open  cover $\mf{X}$ with each $R_i$ small. Then, an object $(\mc{A},\varphi_\mc{A})$ of $\cat{Tors}^\varphi_\mc{G}(\mf{X}_\Prism)$ lies in $\cat{Tors}^\varphi_{\mc{G},\bm{\mu}}(\mf{X}_\Prism)$ if and only if the following condition holds: for each $i$ there exists an \'etale cover $(\wt{R}_i[\nicefrac{1}{p}],\wt{R}_i)\to (S^\sharp,S^{\sharp+})$ in $\cat{Perf}_E$ such that $\varphi_\mc{A}$ on $\mc{A}(\Ainf(S^{\sharp+}),\xi_{S^{\sharp+}})$ lies in $G(\B_\dR^+(S^\sharp))\mu^{-1}(\xi_{S^{\sharp+}})G(\B_\dR^+(S^\sharp))$. 
\end{prop}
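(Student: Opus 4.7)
The plan is to translate the cocharacter condition defining $\cat{Tors}^\varphi_{\mc{G},\bm{\mu}}(\mf{X}_\Prism)$ through the chain of equivalences
\begin{equation*}
\cat{Tors}^\varphi_\mc{G}(\mf{X}_\Prism) \xrightarrow{\,T_\et\,} \GLoc^{\crys}_{\Z_p}(X) \xrightarrow{\,U_\sht\,} \GSht(X) \xrightarrow{\,\Phi_X\,} \left(\bb{P},H\right),
\end{equation*}
into a pointwise condition on $\B_\dR^+$-lattices, and then to read that condition off directly from the evaluation of $(\mc{A},\varphi_\mc{A})$ at perfectoid prisms of the form $(\Ainf(S^{\sharp+}),\xi_{S^{\sharp+}})$.

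First I would combine the definition of $\cat{Tors}^\varphi_{\mc{G},\bm{\mu}}(\mf{X}_\Prism)$ with Proposition \ref{prop:mu-equiv-factorization} to re-express the bound by $\bm{\mu}$ as the factorization of $H = \Phi_X(U_\sht(T_\et(\mc{A},\varphi_\mc{A})))$ through $\Gr_{G,\bm{\mu},\Spd(E)}$. Since $\bm{\mu}$ is minuscule, $\Gr_{G,\bm{\mu},\Spd(E)} = \Gr_{G,\leqslant\bm{\mu},\Spd(E)}$ is closed in $\Gr_{G,\Spd(E)}$ (see \cite[Proposition 19.2.3]{ScholzeBerkeley}), so this factorization can be tested on any $v$-cover of $X^\lozenge$. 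Combining Lemma \ref{lem:Bhatt-flatness} with Lemma \ref{lem:quasi-syn-cover-v-sheaf-cover} shows that $\{\Spf(\wt{R}_i)^\lozenge_\eta\to X^\lozenge\}$ is such a $v$-cover, and it remains so after any further étale refinement $(\wt{R}_i[\nicefrac{1}{p}],\wt{R}_i)\to(S^\sharp,S^{\sharp+})$; moreover, it then suffices to test the condition at geometric points factoring through such untilts.

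Second I would invoke the shtuka realization functor together with the comparison isomorphism $\varrho\colon (T_\sht)_\eta \isomto U_\sht\circ T_\et$ of Theorem \ref{thmi:shtuka-realization} to compute the pullback of $H$ along such an untilt directly from prismatic data. Explicitly, $(\Ainf(S^{\sharp+}),\xi_{S^{\sharp+}})$ defines an object of $\mf{X}_\Prism$, and the value $(M_S,\varphi_{M_S}) \defeq (\mc{A},\varphi_\mc{A})(\Ainf(S^{\sharp+}),\xi_{S^{\sharp+}})$ is a $\mc{G}$-torsor on $\Spec(\Ainf(S^{\sharp+}))$ equipped with a Frobenius isomorphism after inverting $\xi_{S^{\sharp+}}$. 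Through the Fargues-style equivalence of Lemma \ref{lem: Fargues equivalence for shtukas} (applied Tannakianly to a faithful representation of $\mc{G}$), the value of $H$ at $(S^\sharp,S^{\sharp+})$ is identified with the relative position of the two $\B_\dR^+(S^\sharp)$-lattices inside $M_S \otimes_{\Ainf(S^{\sharp+})} \B_\dR(S^\sharp)$ coming from $M_S$ and from $\varphi_{M_S}(\phi^\ast M_S)$, respectively.

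Finally, after possibly refining the étale cover further (using smoothness of $\mc{G}$ and Proposition \ref{prop:torsor-limit-decomp}), both $M_S$ and $\phi^\ast M_S$ admit trivializations over $\Ainf(S^{\sharp+})$. Under such trivializations, $\varphi_{M_S}$ becomes left multiplication by some $g \in G(\Ainf(S^{\sharp+})[\nicefrac{1}{\xi_{S^{\sharp+}}}]) \subseteq G(\B_\dR(S^\sharp))$, and the previous paragraph identifies the relative position of the lattices with the double coset of $g$. Factorization of $H$ through $\Gr_{G,\bm{\mu},\Spd(E)}$ at this untilt is then precisely the requirement that $g$ lies in $G(\B_\dR^+(S^\sharp))\mu^{-1}(\xi_{S^{\sharp+}})G(\B_\dR^+(S^\sharp))$, with the inverse appearing due to the convention of Lemma \ref{lem:boundedness-factorization-equiv} relative to Proposition \ref{prop:mu-equiv-factorization}. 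The main obstacle is pinning down the comparison isomorphism $\varrho$ at the level of $\B_\dR^+$-lattices: tracking the Frobenius twist between $\xi$ and $\tilde\xi$ (as in Example \ref{example:initial-prism-ainf-to-acrys}) and the compatibility between Faltings' and Scholze's formulations of the crystalline comparison (as in Lemma \ref{lem: Faltings c and Scholze c}); once that bookkeeping is settled, both directions of the proposition follow at once.
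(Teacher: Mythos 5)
Your proposal is correct and follows essentially the same route as the paper's proof: reduce via Proposition \ref{prop:mu-equiv-factorization} and the comparison isomorphism $\varrho$ to the factorization of $H=\Phi_X(T_\sht(\mc{A},\varphi_\mc{A}))$ through $\Gr_{G,\bm{\mu},\Spd(E)}$, check this on the $v$-cover $\{\Spa(\wt{R}_i[\nicefrac{1}{p}],\wt{R}_i)\to X^\lozenge\}$ supplied by Lemmas \ref{lem:Bhatt-flatness} and \ref{lem:quasi-syn-cover-v-sheaf-cover}, and use minusculeness of $\bm{\mu}$ to identify the affine Grassmannian condition with the explicit $\B_\dR^+$-double-coset condition after an \'etale refinement. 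Your write-up is simply a more detailed unwinding (via Lemma \ref{lem: Fargues equivalence for shtukas} and the lattice bookkeeping) of what the paper compresses into "the claim is then clear."
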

\begin{proof} Observe that by Lemma \ref{lem:Bhatt-flatness} and Lemma \ref{lem:quasi-syn-cover-v-sheaf-cover}, $\{\Spa(\wt{R}_i[\nicefrac{1}{p}],\wt{R}_i)\to X^\lozenge\}$ (see \S\ref{sss: shtukas} for notation) is a $v$-cover, where $(\wt{R}_i[\nicefrac{1}{p}],\wt{R}_i)$ is a perfectoid Huber pair over $E$ (cf.\@ \cite[Lemma 3.21]{BMSI}). Note that $H\colon \bb{P}\to\Gr_{G,\Spd(E)}$, where $(\bb{P},H)=\Phi_X(T_\mr{sht}(\mc{A},\varphi_\mc{A}))$, factorizing through $\Gr_{G,\bm{\mu},\Spd(E)}$ can be checked on a $v$-cover. The claim is then clear since as $\bm{\mu}$ is minuscule, one has that $\Gr_{G,\bm{\mu}^{-1},\Spd(E)}$ is the \'etale sheafification of
\begin{equation*}
    (S^\sharp,S^{\sharp+})\mapsto G(\Bdr^+(S^\sharp))\mu^{-1}(\xi_{S^{\sharp+}})G(\Bdr^+(S^\sharp))\subseteq G(\Bdr(S^\sharp))/G(\Bdr^+(S^\sharp)),
\end{equation*}
a presheaf on $\cat{Perf}_E$.
\end{proof}

\begin{cor}\label{cor:containment-two-torsors} There is a containment $\cat{Tors}_{\mc{G}}^{\varphi,\mu^{-1}}(\mf{X}_\Prism)\subseteq \cat{Tors}_{\mc{G},\bm{\mu}}^\varphi(\mf{X}_\Prism)$ and so, in particular, for $(\mc{E},\varphi_\mc{E})$ in $\cat{Tors}_{\mc{G}}^{\varphi,\mu^{-1}}(\mf{X}_\Prism)$ one has that $T_\et(\mc{E},\varphi_\mc{E})$ corresponds to an element of $\GLoc^{\crys}_{\Z_p,\bm{\mu}}(X)$.
\end{cor}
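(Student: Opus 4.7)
My plan is to reduce the containment to the concrete criterion of Proposition \ref{prop:matching-of-bounded-by-mu-conditions}: for a fixed small open cover $\{\Spf(R_i)\}$ of $\mf{X}$, I would verify that for any $(\mc{A},\varphi_\mc{A})$ in $\cat{Tors}_{\mc{G}}^{\varphi,\mu^{-1}}(\mf{X}_\Prism)$, and each $i$, there exists an étale cover $(\wt{R}_i[\nicefrac{1}{p}],\wt{R}_i)\to (S^\sharp,S^{\sharp+})$ such that $\varphi_\mc{A}$ on $\mc{A}(\Ainf(S^{\sharp+}),\xi_{S^{\sharp+}})$ lies in $G(\B_\dR^+(S^\sharp))\mu^{-1}(\xi_{S^{\sharp+}})G(\B_\dR^+(S^\sharp))$. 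The second assertion of the corollary is then a tautology: by the definition of $\cat{Tors}^\varphi_{\mc{G},\bm{\mu}}(\mf{X}_\Prism)$, every object there has $T_\et$-image in $\GLoc_{\Z_p,\bm{\mu}}^\crys(X)$.

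The main argument is then a descent calculation starting from the perfect prism $(\Ainf(\wt{R}_i),(\xi))$, where the ideal already has the canonical generator $\xi=\xi_{\wt{R}_i}$. Restricting $(\mc{A},\varphi_\mc{A})$ to this prism and applying the defining condition of $\cat{Tors}^{\varphi,\mu^{-1}}_\mc{G}$ produces a $(p,\xi)$-adically faithfully flat cover $\Ainf(\wt{R}_i)\to A'$, a trivialization of $\mc{A}|_{(A',\xi A')}$, and an element $g\in \mc{G}(A')\mu^{-1}(\xi)\mc{G}(A')$ representing $\varphi_\mc{A}$. Under the natural map $A'\to A'[\nicefrac{1}{p}]^\wedge_\xi$, the image of $g$ lies in $G(A'[\nicefrac{1}{p}]^\wedge_\xi)\mu^{-1}(\xi)G(A'[\nicefrac{1}{p}]^\wedge_\xi)$. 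Here $\B_\dR^+(\wt{R}_i[\nicefrac{1}{p}])=\Ainf(\wt{R}_i)[\nicefrac{1}{p}]^\wedge_\xi$, and the induced map $\B_\dR^+(\wt{R}_i[\nicefrac{1}{p}])\to A'[\nicefrac{1}{p}]^\wedge_\xi$ is faithfully flat since $(p,\xi)$-adic faithful flatness is preserved under inversion of $p$ and $\xi$-adic completion. Now, because $\bm{\mu}$ is minuscule, the Schubert cell $\Gr_{G,\mu^{-1},\Spd(E)}=\Gr_{G,\leqslant\mu^{-1},\Spd(E)}$ is a closed subdiamond of $\Gr_{G,\Spd(E)}$ (see \cite[Proposition 19.2.3]{ScholzeBerkeley}); correspondingly, the condition that a class in $G(\B_\dR(B))/G(\B_\dR^+(B))$ lies in $G(\B_\dR^+(B))\mu^{-1}(\xi)G(\B_\dR^+(B))$ defines a closed subfunctor of the affine Grassmannian, and closed subfunctors descend along faithfully flat morphisms. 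Applied to our faithfully flat base change, this descent shows the class of $\varphi_\mc{A}$ already lies in the desired double coset over $\B_\dR^+(\wt{R}_i[\nicefrac{1}{p}])$, possibly after a further étale refinement of $(\wt{R}_i[\nicefrac{1}{p}],\wt{R}_i)$ that trivializes $\mc{A}(\Ainf(\wt{R}_i),\xi)$---which is guaranteed by the local triviality of $\mc{G}$-torsors on the small étale (equivalently, flat, see Corollary \ref{cor:flat-and-etale-torsors-agree}) site of $\Spf(\Ainf(\wt{R}_i))$.

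The main obstacle in making this rigorous is the descent step for the double coset condition through the combined inversion of $p$ and $\xi$-adic completion. Concretely, one must identify the minuscule Schubert cell with a closed subfunctor on points of affinoid perfectoids, and check that the specific map $\B_\dR^+(\wt{R}_i[\nicefrac{1}{p}])\to \B_\dR^+(\wt{R}_i[\nicefrac{1}{p}])\otimes_{\Ainf(\wt{R}_i)}A'$ is faithfully flat in the ordinary ring-theoretic sense. Once these are in place the containment of the first part follows; and the ``in particular'' statement is then immediate from the very definition of $\cat{Tors}^\varphi_{\mc{G},\bm{\mu}}(\mf{X}_\Prism)$ as the preimage under $T_\et$ of $\GLoc_{\Z_p,\bm{\mu}}^\crys(X)\subseteq \GLoc_{\Z_p}^\strcrys(X)$ via the equivalence $T_\et\colon \cat{Tors}_\mc{G}^\varphi(\mf{X}_\prism)\isomto \GLoc_{\Z_p}^\strcrys(X)$ of \S\ref{ss:G-objects-strongly-crystalline}.
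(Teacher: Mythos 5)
Your overall strategy is the right one, and it is essentially what the paper leaves implicit: the second assertion is indeed a tautology given the definition of $\cat{Tors}^\varphi_{\mc{G},\bm{\mu}}(\mf{X}_\Prism)$, and the first reduces to verifying the criterion of Proposition \ref{prop:matching-of-bounded-by-mu-conditions} by evaluating the prismatic boundedness condition on the perfect prisms $(\Ainf(S^{\sharp+}),(\xi))$ and transporting the double-coset condition to $\B_\dR^+(S^\sharp)$. However, the two load-bearing steps of your argument are asserted rather than proved, and the justification you give for the first is not valid. The claim that $\B_\dR^+(\wt{R}_i[\nicefrac{1}{p}])\to A'[\nicefrac{1}{p}]^\wedge_\xi$ is faithfully flat "since $(p,\xi)$-adic faithful flatness is preserved under inversion of $p$ and $\xi$-adic completion" is not a general principle: $(p,\xi)$-complete faithful flatness only controls the reductions modulo powers of $(p,\xi)$ (Lemma \ref{lem:adically-and-completely-agree}); it gives no direct control over $A'[\nicefrac{1}{p}]$ as an $\Ainf(\wt{R}_i)[\nicefrac{1}{p}]$-algebra, and classical $\xi$-adic completion does not preserve flatness without Noetherian or finiteness hypotheses. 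Second, even granting some form of flatness, the descent step is not available in the form you invoke it: $\Gr_{G,\leqslant\bm{\mu},\Spd(E)}$ is a closed sub-$v$-sheaf of $\Gr_{G,\Spd(E)}$ on $\cat{Perf}$, whose test objects are perfectoid spaces, whereas $A'[\nicefrac{1}{p}]^\wedge_\xi$ is not of the form $\B_\dR^+(S^\sharp)$; so "closed subfunctors descend along faithfully flat morphisms" does not apply as stated. What you actually need is a ring-theoretic statement: if $B\to B'$ is a suitable map of $\xi$-adically complete $\xi$-torsion-free rings and $g\in G(B[\nicefrac{1}{\xi}])$ has image in $G(B')\mu^{-1}(\xi)G(B')$, then $g\in G(B)\mu^{-1}(\xi)G(B)$.

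Both gaps can be repaired, but this requires an argument you have not supplied. A cleaner route is to first use Lemma \ref{lem:boundedness-factorization-equiv} together with the partial properness of $\Gr_{G,\leqslant\bm{\mu},\Spd(E)}$ (and minuscularity, so $\Gr_{G,\bm{\mu}}=\Gr_{G,\leqslant\bm{\mu}}$) to reduce the factorization of $H$ to geometric points $\Spa(C^\sharp,\mc{O}_{C^\sharp})$, where $\B_\dR^+(C^\sharp)$ is a complete discrete valuation ring and the torsor already trivializes over $\Ainf(\mc{O}_{C^\sharp})$ since the pair $(\Ainf(\mc{O}_{C^\sharp}),(p,\xi))$ is Henselian with algebraically closed "residue field". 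One is then left with precisely the descent statement above for the single faithfully flat cover $A'$ of $\Ainf(\mc{O}_{C^\sharp})$; for minuscule $\mu$ this can be handled either by identifying the double coset with the $B$-points of the projective scheme $G_{\B_\dR^+}/P_{\mu}$ and descending those, or, after choosing a faithful representation, by reducing to $\xi^k$-divisibility of minors, which only requires that $\xi^k B'\cap B=\xi^k B$ for the relevant reductions modulo powers of $\xi$ (where $p$-complete faithful flatness of $\Ainf/\xi^k\to A'/\xi^k$ can genuinely be brought to bear). Until one of these routes is carried out, the containment is not established.
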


\begin{rem} The assumption that $\mathcal{O}_E$ is absolutely unramified is not conceptually necessary, but without it one would be required to work with $\mc{O}_E$-prisms as in \cite[\S2]{Ito1}. 
\end{rem}

\subsubsection{The shtuka realization functor}
Let $\mf{X}$ be a smooth formal $\mc{O}_E$-scheme. As in \cite[Definition 4.6]{GleasonSpecialization}, we call a morphism $f\colon \Spa(R^\sharp,R^{\sharp+})\to \mf{X}^\mr{ad}$, for $(\Spa(R^\sharp,R^{+\sharp}),\alpha)$ in $\cat{Perf}_{\mc{O}_E}$, \emph{formalizing} if $f=g^\ad\circ i_R$ where $g$ is some morphism $\Spf(R^{\sharp+})\to \mf{X}$, and $i_R\colon \Spa(R^{\sharp},R^{\sharp ^+})\to \Spa(R^{\sharp+})$ is the canonical map. By \cite[Proposition 4.17]{GleasonSpecialization}, such a $g$ is unique if it exists. We call $g$ the \emph{formalization} of $f$. Set $\cat{Perf}^+_\mf{X}$ denotes the full subsite of $\cat{Perf}_\mf{X}$ consisting of formalizing morphisms $f\colon \Spa(R^\sharp,R^{\sharp+})\to \mf{X}^\mr{ad}$. Evidently $\cat{Perf}^+_{\mf{X}}$ a basis of $\cat{Perf}_\mf{X}$ with the analytic (i.e.\@, topological open cover) topology, as can be seen from taking an affine open cover $\mf{X}$.  

For an object $(S,S^\sharp,f)$ of $\cat{Perf}^+_\mf{X}$, with $S^\sharp=\Spa(R^\sharp,R^{\sharp+})$ and $g$ a formalization of $f$, observe that the triple $(\Ainf(R^{\sharp+}),(\xi),g)$ defines an object of $\mf{X}_\Prism$. So, $\mc{A}(\Ainf(R^{\sharp+}),({\xi}),g)$ defines an object of $\cat{Tors}^{\an,\varphi}_\mc{G}(\Ainf(R^{\sharp+}),(\xi))$. Pulling this back along the map of locally ringed spaces $\mc{Y}_{[0,\infty)}(S)\to\Spec(W(R^+))-V(p,\xi)$, gives a $\mc{G}$-shtuka $\mc{A}_\mathrm{sht}(S^\sharp,f)$ over $S$ with one leg at $S^\sharp$ (cf.\@ \cite[\S3.1]{Daniels}). It is clear that this construction defines a sheaf on $\cat{Perf}^+_\mf{X}$ for the analytic topology, and thus extends uniquely to give a section of $p_\mf{X}$, and thus an object of $\GSht(\mf{X})$.

\begin{construction} Let $\mf{X}$ be a smooth formal $\mc{O}_E$-scheme. The functor
\begin{equation*}
    T_\sht\colon \cat{Tors}_\mc{G}^{\an,\varphi}(\mf{X}_\Prism)\to \GSht(\mf{X}),\qquad (\mc{A},\varphi_\mc{A})\mapsto T_\sht(\mc{A},\varphi_{\mc{A}})=(\mc{A}_\sht,\varphi_{\mc{A}_\sht})
\end{equation*}
is called the \emph{shtuka realization functor}, and restricts to a functor
\begin{equation*}
     T_\sht\colon \cat{Tors}_{\mc{G},\bm{\mu}}^{\varphi}(\mf{X}_\Prism)\to \GSht_{\bm{\mu}}(\mf{X})
\end{equation*}
\end{construction}

\subsubsection{The comparison isomorphism} We now show that the shtuka realization functor $T_\mr{sht}$ intertwines the \'etale realization functor from \S\ref{ss: T et} and the functor $U_\mr{sht}$ from \S\ref{sss:de-Rham-shtukas}.

Let $\mf{X}$ be a smooth formal $\mc{O}_E$-scheme and let $X$ be its generic fiber. Recall (see \cite[Corollary 2.37]{GuoReinecke} and \cite[Propositions 3.21 and 3.22]{TanTong}) that there is a containment $\cat{Loc}_{\Z_p}^\crys(X)\subseteq \cat{Loc}_{\Z_p}^\dR(X)$. Thus, associated to an object $(\mc{A},\varphi_\mc{A})$ of $\cat{Tors}^{\an,\varphi}_\mc{G}(\mf{X}_\Prism)$, one may build two ostensibly different shtukas on $X$: the shtukas $T_\sht(\mc{A},\varphi_\mc{A})_\eta$ and $U_\sht(T_\et(\mc{A},\varphi_\mc{A}))$. The following comparison result says that these are canonically the same.

\begin{thm}\label{thm:prismatic-F-crystal-shtuka-relationship} For an object $(\mc{A},\varphi_\mc{A})$ of $\cat{Tors}_{\mc{G}}^{\an,\varphi}(\mf{X}_\Prism)$, there is a natural identification 
\begin{equation*}
    \varrho_{(\mc{A},\varphi_\mc{A})}\colon T_\sht(\mc{A},\varphi_{\mc{A}})_\eta\isomto U_\sht(T_\et(\mc{A},\varphi_\mc{A})),
\end{equation*}
functorial in $(\mc{A},\varphi_\mc{A})$.
\end{thm}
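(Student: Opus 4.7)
The strategy is to produce $\varrho_{(\mc{A},\varphi_\mc{A})}$ by computing both shtukas at affinoid perfectoid untilts of $X$ coming from formalizing morphisms (which form a $v$-cover of $X^\lozenge$), matching them componentwise via Theorem \ref{thm:PR-BdR-pairs}, and then gluing by functoriality. By the Tannakian formalism, both $\GSht$ and $\cat{Tors}_\mc{G}^{\an,\varphi}$ are built by Tannakian extension from their rank-$n$ avatars, and the functors $T_\sht$, $T_\et$, and $U_\sht$ are all compatible with this extension, so one reduces to constructing a functorial isomorphism for rank-$n$ analytic prismatic $F$-crystals. Fix such $(\mc{V},\varphi_\mc{V})$ and set $\bb{L} := T_\et(\mc{V},\varphi_\mc{V})$. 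Under the equivalence $\Phi_X$, it suffices to produce a canonical isomorphism of the associated pairs $(\bb{P},H)$.

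For the underlying $\underline{\GL_n(\Z_p)}$-torsor, let $S = \Spa(R,R^+) \in \cat{Perf}_k$ with untilt $S^\sharp = \Spa(R^\sharp,R^{\sharp+})$ lying over $\mf{X}$ via a formalizing map $\Spf(R^{\sharp+}) \to \mf{X}$. By construction, $T_\sht(\mc{V})_\eta|_S$ is the pullback of $\mc{V}(\Ainf(R^{\sharp+}),(\xi))$ along $\mc{Y}_{[0,\infty)}(S) \to U(\Ainf(R^{\sharp+}),(\xi))$. Via Lemma \ref{lem: Fargues equivalence for shtukas}, its associated $\Z_p$-local system is $\Gamma^\dagger(S, T_\sht(\mc{V})_\eta|_S)^{\varphi=1}$, which by the construction of the \'etale realization functor (Example \ref{ex:small-etale-realization} and Proposition \ref{prop:Laurent-crystal-equivalence}) is canonically identified with $\bb{L}(S^\sharp)$. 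This identification is functorial in both $\mc{V}$ and $S^\sharp$, and descends to an isomorphism of $\underline{\GL_n(\Z_p)}$-torsors on $X$.

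For the $\Bdr^+(S^\sharp)$-lattice $\Xi$, by the explicit description at the end of \S\ref{sss:BdR-pairs}, $\Xi$ is the image under $\varphi_\mc{V} \otimes 1$ of $\phi^*\mc{V}(\Ainf(R^{\sharp+}),(\xi)) \otimes \Bdr^+(S^\sharp)$ inside $\bb{L}(S^\sharp) \otimes_{\Z_p} \Bdr(S^\sharp)$. Using the morphism of prisms $(\Ainf(R^{\sharp+}),(\tilde\xi)) \to (\Acrys(R^{\sharp+}),(p))$ of Example \ref{example:initial-prism-ainf-to-acrys}, this lattice is identified with the image of the canonical $\Bdr^+(S^\sharp)$-lattice
\begin{equation*}
D_\crys(\bb{L})(\Acrys(R^{\sharp+}) \to R^{\sharp+}) \otimes_{\Acrys(R^{\sharp+})} \Bdr^+(S^\sharp)
\end{equation*}
under the base change of the Faltings comparison $c_\mr{Fal}$; this identification is the crystalline-comparison statement underpinning Theorem \ref{thm:GR-DLMS}, whose essential content for us is visible in Lemma \ref{lem:triviality on Y}(2). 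On the other hand, $H_\bb{L}$ from \S\ref{sss:de-Rham-shtukas} is defined as the image of the analogous lattice under $c_\mr{Sch}$. By Lemma \ref{lem: Faltings c and Scholze c}, $c_\mr{Sch} \otimes 1$ factors as $\theta_\dR^\nabla \circ (c_\mr{Fal} \otimes 1)$, and $\theta_\dR^{+,\nabla}$ preserves the canonical $\Bdr^+(S^\sharp)$-lattices on both sides, so the two lattices coincide. The resulting pointwise isomorphism of pairs $(\bb{P},H)$ is functorial in $S^\sharp$ and in $\mc{V}$, so glues to the desired global $\varrho_{(\mc{V},\varphi_\mc{V})}$.

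The main obstacle is the prismatic-to-crystalline identification used at the start of the previous paragraph: one must check that, after Frobenius-twisting from $(\xi)$ to $(\tilde\xi)$ (cf.\ Remark \ref{rem:choice-of-twist}) and base-changing along $(\Ainf(R^{\sharp+}),(\tilde\xi)) \to (\Acrys(R^{\sharp+}),(p))$, the $\Bdr^+(S^\sharp)$-lattice coming from the evaluation of the analytic prismatic $F$-crystal matches on the nose with the Faltings-comparison image of the canonical $\Acrys$-lattice inside $D_\crys(\bb{L}) \otimes \Bdr(S^\sharp)$. Once this match is established—by unwinding the construction of the isocrystal part of $T_\et^{-1}$ in \cite{GuoReinecke} (see also the proof of Proposition \ref{prop:exactness on BK prism} and Lemma \ref{lem:triviality on Y})—Lemma \ref{lem: Faltings c and Scholze c} supplies the remaining identification with the $c_\mr{Sch}$-lattice, and the rest of the argument (descent, functoriality, Tannakian extension) is routine.
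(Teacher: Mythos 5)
Your proposal is correct and follows essentially the same route as the paper: Tannakian reduction to $\GL_n$, pointwise comparison of $\Bdr^+$-pairs via $\Phi_{S^\sharp}$, identification of the prismatic lattice with the Faltings lattice through the Guo--Reinecke construction (the content of Lemma \ref{lem:triviality on Y}), and then Lemma \ref{lem: Faltings c and Scholze c}. The only point the paper makes explicit that you gloss over is the restriction to untilts $S^\sharp$ in $X^w_\qrsp$ (via \cite[Lemma 4.10]{GuoReinecke}) so that both shtukas lie in $\cat{Sht}_{n,\mr{free}}(S^\sharp)$ and Lemma \ref{lem: Fargues equivalence for shtukas} actually applies.
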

\begin{proof} By the Tannakian formalism (see Remark \ref{rem: G-Sht fits into the Tannakian formalism}), we are reduced to the case when $\mc G=\GL_{n,\Z_p}$, in which case we identify $(\mc{A},\varphi_\mc{A})$ with an object $(\mc{E},\varphi_\mc{E})$ of $\cat{Vect}^{\an,\varphi}(\mf{X}_\Prism)$ and write $\bb{L}=T_\et(\mc{E},\varphi_\mc{E})$. 
Consider an object $S=\Spa(R,R^+)$ of $\cat{Perf}_k$ and let $(S^\sharp,f)$ be an element of $\cat{Perf}^+_\mf{X}$ where $S^\sharp\to \Spa(\mc{O}_E)$ factorizes over $\Spa(E)$. Write $S^\sharp=\Spa(R^\sharp,R^{\sharp+})$.  We must show that the shtuka given by $(\mc E,\varphi_\mc{E})(\Ainf(R^{\sharp+}),({\xi}))|_{\mc{Y}_{[0,\infty)}(S)}$ and $U_\sht(\bb{L})(S^\sharp,f)$ are isomorphic functorially in $(S^\sharp,f)$.

By \cite[Lemma 4.10]{GuoReinecke}, it suffices to work only with $(S^\sharp,f)$ where $S^\sharp$ belongs to the category $X^w_\qrsp$ from \cite[Definition 4.9]{GuoReinecke}. 
In particular, we may assume that both of these shtukas over $S$ lie in $\cat{Sht}_{n,\mr{free}}(S^\sharp)$ and so by Lemma \ref{lem: Fargues equivalence for shtukas} it suffices to show that they have the same value under $\Phi_{S^{\sharp}}$. 
For both $(\mc E,\varphi_\mc{E})(\Ainf(R^{\sharp+}),({\xi}))|_{\mc{Y}_{[0,\infty)}(S)}$ and $U_\sht(\bb{L})(S^\sharp,f)$, the underlying free $\Z_p$-module is (by definition) $T=\bb{L}(S^\sharp)$. 
We denote by $(T,\Xi_{\mr{GR}})$ the $\B_\dR^+$-pair corresponding to the former, and by $(T,\Xi_{\dR})$ the one attached to the latter.

The former lattice $\Xi_\mr{GR}$ is described as follows. By the description of $\Phi_{S^\sharp}$ (before Lemma \ref{lem: Fargues equivalence for shtukas}), we have $\Xi_\mr{GR}=\varphi_\mc E((\phi^*\mc E)_{\Bdr^+(S^\sharp)})$. Observe that we have a commutative diagram of isomorphisms as in the proof of \cite[Theorem 4.8]{GuoReinecke} (see \cite[Remark 4.12]{GuoReinecke})
\begin{equation*}
\xymatrixrowsep{3pc}\xymatrixcolsep{5pc}\xymatrix{\phi^*\mc E(\Ainf(S),(\xi))_{\Bdr(S^\sharp)}\ar[d]_{(\text{cf. } \eqref{eq:GR-second-isom})} \ar[r]^{\varphi_\mc{E}} &\mc E(\Ainf(S),(\xi))_{\Bdr(S^\sharp)}\\ D_\crys(\mbb{L})(\Acrys(R^{\sharp+})\twoheadrightarrow R^{\sharp+})\otimes_{\Acrys(R^{\sharp+})}\Bdr(S^{\sharp})\ar[r]_-{c_\mr{Fal}^{-1}\otimes1} & T\otimes_{\Z_p}\Bdr(S^{\sharp}).\ar[u]_{\eqref{eq:phi-module-isom}}} 
\end{equation*}
Thus, we have 
 \begin{equation*}
     \Xi_\mr{GR}=(c_\mr{Fal}\otimes 1)^{-1}\left(D_\crys(\mbb{L})(\Acrys(R^{\sharp+})\twoheadrightarrow R^{\sharp+})\otimes_{\Acrys(R^{\sharp+})}\Bdr^+(S^{\sharp})\right).
 \end{equation*} 
 On the other hand, recall that the lattice $\Xi_\dR$ is given by 
 \begin{equation*}
     \Xi_\dR= (c_\mr{Sch}\otimes 1)^{-1}(D_\dR(\mbb L)(S^\sharp)\otimes_{S^\sharp}\mc O\Bdr^+(S^\sharp))^{\nabla=0}).
 \end{equation*} 
 Thus, considering the isomorphism 
\be
\theta_\dR^{+,\nabla}\colon D_\crys(\mbb L)(\Acrys(R^{\sharp+})\twoheadrightarrow R^{\sharp+})\otimes_{\Acrys(R^{\sharp+})}\Bdr^+(S^{\sharp})\isomto (D_\dR(\mbb L)(S^\sharp)\otimes_{S^\sharp}\mc O\Bdr^+(S^\sharp))^{\nabla=0}
\ee
from \eqref{eq: Faltings c and Scholze c}, the assertion follows from Lemma \ref{lem: Faltings c and Scholze c}. 
\end{proof}

 \subsubsection{The category of analytic prismatic \texorpdfstring{$F$-crystals}{F-crystals} over \texorpdfstring{$\mc{O}_E$-schemes}{OE-schemes} and shtuka realization} We now define the category of analytic prismatic $F$-crystals over $\mc{O}_E$-schemes, and use the comparison isomorphism in Theorem \ref{thm:prismatic-F-crystal-shtuka-relationship} to obtain a shtuka realization functor also on such objects.

Suppose that $\ms{X}$ is a separated locally of finite type flat $\mc{O}_E$-scheme with $\ms X_E$ smooth. 

\begin{defn}\label{defn:prismatic-model} An \emph{analytic prismatic $F$-crystal} on $\ms{X}$ is a triple $(\mc{F},(\mc{V},\varphi_\mc{V}),\iota)$ where
\begin{itemize}
    \item $\mc{F}$ is an object of $\cat{Loc}^{\dR}_{\Z_p}(\ms{X}_E^\an)$, 
    \item $(\mc{V},\varphi_\mc{V})$ is an object of $\cat{Vect}^{\mr{an},\varphi}(\wh{\ms{X}}_\Prism)$, 
\item and $\iota\colon T_\et(\mc{V},\varphi_\mc{V}) \isomto \mc{F}|_{\wh{\ms{X}}_\eta}$ is an isomorphism.
\end{itemize}
Call $(\mc{F},(\mc{V},\varphi_\mc{V}),\iota)$ a \emph{prismatic $F$-crystal} if $(\mc{V},\varphi_\mc{V})$ is one. 
\end{defn} 

A morphism of analytic prismatic $F$-crystals over $\ms{X}$
\begin{equation*}
    (f,g)\colon (\mc{F}',(\mc{V}',\varphi_{\mc{V}'}),\iota')\to (\mc{F},(\mc{V},\varphi_{\mc{V}}),\iota)
\end{equation*} 
consists of a morphism $f\colon \mc{F}'\to \mc{F}$ as well as a morphism $g\colon (\mc{V}',\varphi_{\mc{V}'})\to(\mc{V},\varphi_\mc{V})$ satisfying $\iota\circ f|_{\wh{\ms{X}}_\eta}=T_\et(g)\circ \iota'$. Denote the category of prismatic $F$-crystals (resp.\@ analytic prismatic $F$-crystals) on $\ms{X}$ by $\cat{Vect}^\varphi(\ms{X}_\Prism)$ (resp.\@ $\cat{Vect}^{\varphi,\an}(\ms{X}_\prism)$).\footnote{Note that, despite the notation, we are not claiming the existence of a site $\ms{X}_\Prism$.} It is evident that $\cat{Vect}^{\varphi,\an}(\ms{X}_\Prism)$ carries the structure of an exact $\Z_p$-linear $\otimes$-category, where exactness and the notion of tensor product are defined entry-by-entry.

The category $\GVect^{\varphi,\an}(\ms{X}_\Prism)$ of $\mc{G}$-objects of analytic prismatic $F$-crystals on $\ms{X}$ may be identified with the category of triples $(\omega_\et,\omega_\Prism,\iota)$ where 
\begin{itemize}
\item $\omega_\et$ is an object of $\GLoc^{\dR}_{\Z_p}(\ms{X}_E^\mr{an})$, 
\item $\omega_\Prism$ is an object of $\GVect^{\an,\varphi}(\wh{\ms{X}}_\Prism)$, 
\item and $\iota\colon T_\et\circ \omega_\Prism\isomto \omega_\et^\an|_{\wh{\ms{X}}_\eta}$ is an isomorphism,
\end{itemize}
and where morphisms are defined similarly to that of analytic prismatic $F$-crystals over $\ms{X}$. A similar statement holds for $\GVect^\varphi(\ms{X}_\Prism)$. We define $\GVect^{\varphi}_{\bm{\mu}}(\ms{X}_\Prism)$ to be the full subgroupoid of $\GVect^\varphi(\ms{X}_\Prism)$ consisting of those $(\omega_\eta,\omega_\prism,\iota)$ such that $\omega_\et$ and $\omega_\prism$ are both of type $\bm{\mu}$.

Using Theorem \ref{thm:prismatic-F-crystal-shtuka-relationship} we can define a shtuka realization functor also in this context.

\begin{construction} The association
\begin{equation*}
    (\omega_\et,\omega_\Prism,\iota)\mapsto (U_\sht(\omega_\et),T_\sht(\omega_\Prism),U_\sht (\iota) \circ \varrho_{\omega_\Prism})
\end{equation*}
defines a functor
\begin{equation*}
    T_\mr{sht}\colon \GVect^{\varphi,\an}(\ms{X}_\Prism)\to \GSht(\mathsf{t}(\ms{X}))
\end{equation*}
called the \emph{shtuka realization functor}, which restrictions to a functor
\begin{equation*}
    T_\sht\colon \GVect^\varphi_{\bm{\mu}}(\ms{X}_\Prism)\to \GShtmu(\ms{X}).
\end{equation*} 
\end{construction}

% We can define a natural \emph{\'etale realization functor}
% \begin{equation*}
%     T_\et\colon \cat{Vect}^\varphi(\ms{X}_\Prism)\to \cat{Loc}^\dR_{\Z_p}(\ms{X}_E^\mr{an}), \quad (\mc{F},(\mc{V},\varphi_\mc{V}),\iota)\mapsto \mc{F}.
% \end{equation*}
% and analagous \'etale realization functors
% \begin{equation*}
%     T_\et\colon \GVect(\ms{X}_\Prism)\to \GLoc^\dR_{\Z_p}(\ms{X}_E^\mr{an}),\qquad T_\et\colon \GVect^\varphi_{\bm{\mu}}(\ms{X})\to   \GLoc^{\dR}_{\Z_p,\bm{\mu}}(\ms{X}_E^\mr{an}).
% \end{equation*}
% By set up, we have the following tautological version of Theorem \ref{thm:prismatic-F-crystal-shtuka-relationship}:
% \begin{equation*}
%     (T_\mr{sht})_\eta\isomto U_\mr{sht}\circ T_\et.
% \end{equation*}

\appendix

\addtocontents{toc}{\protect\setcounter{tocdepth}{1}}%

\section{Tannakian formalism of torsors}\label{s:Tannakian-appendix}

In this appendix we collect some results concerning torsors and the Tannakian formalism.

\subsection{Basic definitions and results}\label{ss:basic-definitions-and-results} A topos $\ms{T}$ is the category of sheaves on a site (as in \stacks{03NH}), with the topology where $\{T_i\to T\}$ is a cover if it is a universal effective epimorphism (equiv.\@ $\bigsqcup T_i\to T$ is a surjection of sheaves). Denote the final object of $\ms{T}$ by $\ast$.

Fix $\mc{G}$ to be a group object of $\ms{T}$. An object $P$ of $\ms{T}$ equipped with a right action of $\mc{G}$ is a \emph{pseudo-torsor} for $\mc{G}$ if the following morphism is an isomorphism
\begin{equation*}
    P\times \mc{G}\to P\times P,\qquad (p,g)\mapsto (p,p\cdot g),
\end{equation*}
or, equivalently, $\mc{G}(S)$ acts simply transitively on $P(S)$ if the latter is non-empty. A pseudo-torsor $P$ is a \emph{torsor} if $P\to \ast$ is an epimorphism or, equivalently, $P$ is locally non-empty. Here, we say that an object $\mc{Q}$ of $\ms{T}$ is \emph{locally non-empty} if there exists a cover $\{U_i\to \ast\}$ with $\mc{Q}(U_i)$ non-empty for all $i$. A morphism of pseudo-torsors for $\mc{G}$ is a $\mc{G}$-equivariant morphism in $\ms{T}$, which is automatically an isomorphism if the source is a torsor. A torsor $P$ is trivial, if and only if $P(\ast)\ne\varnothing$. 

Denote the category of pseudo-torsors for $\mc{G}$ on $\ms{T}$ by $\cat{PseuTors}_\mc{G}(\ms{T})$, by $\cat{Tors}_\mc{G}(\ms{T})$ the full subcategory of torsors, and by $H^1(\ms{T},\mc{G})$ the set of isomorphism classes in $\cat{Tors}_\mc{G}(\ms{T})$. For an object $T$ of $\ms{T}$ with localized topos $\ms{T}/T$ (see \stacks{04GY}), for $\mc{G}_T\defeq \mc{G}|_{\ms{S}/T}$ we have 
\begin{equation*}
    \cat{Tors}_\mc{G}(\ms{T})\to \cat{Tors}_{\mc{G}_T}(\ms{T}/T),\qquad P\mapsto P_T\defeq P\times T, 
\end{equation*}
(we shorten the target to $\cat{Tors}_\mc{G}(\ms{T}/T)$). The association of $\cat{Tors}_\mc{G}(\ms{T}/T)$ to $T$ is a stack on $\ms{T}$.

For a $\mc{G}$-torsor $P$ and an object $Q$ of $\ms{T}$ with a left action of $\mc{G}$, we denote by $P\wedge^\mc{G}Q$ the \emph{contracted product} obtained as the quotient of $P\times Q$ by the $\mc{G}$-action $g\cdot(p,q)\defeq (pg^{-1},gq)$. For a morphism $f\colon \mc{G}\to\mc{H}$ of group objects, $\mc{H}$ inherits a left $\mc{G}$-action and we have a functor
\begin{equation*}
    f_\ast\colon \cat{Tors}_\mc{G}(\ms{T})\to\cat{Tors}_\mc{H}(\ms{T}),\qquad P\mapsto f_\ast(P)\defeq P\wedge^\mc{G}\mc{H},
\end{equation*}
where $\mc{H}$ acts on $f_\ast(P)$ in the obvious way (see \cite[Chapitre III, Proposition 1.4.6]{Giraud}).

Let $\mc{C}$ be a site and set $\ms{C}\defeq \cat{Shv}(\mc{C})$ to be its category of sheaves. For an object $X$ of $\mc{C}$ denote by $h_X$ the associated representable presheaf and by $h_X^\sharp$, or just $X$, its sheafification. We shall freely abuse the identification $\ms{T}\isomto \cat{Shv}(\ms{T})$ (cf.\@ \cite[Expos\'{e} IV, Corollaire 1.2.1]{SGA4-1}). 

\begin{lem}\label{lem:condition-for-covering-of-final-object} Let $\{X_i\}$ be a set of objects of $\mc{C}$ and $\mc{A}$ an object of $\ms{C}$. Then, a collection of elements $f_i$ of $\mc{A}(X_i)$ corresponds to a cover $\{h_{X_i}^\sharp\xrightarrow{f_i}\mc{A}\}$ if and only if for every object $X$ of $\mc{C}$ and element $f\in\mc{A}(X)$ there is a cover $\{U_j\xrightarrow{g_j} X\}$ so that for all $j$ there is a morphism $k_j\colon U_j\to X_i$ with $f_i\circ k_j=f\circ g_j$.
\end{lem}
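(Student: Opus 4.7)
The plan is to reduce both directions to the characterization of a covering family in $\ms{C}$ as an epimorphism of sheaves, which in turn is equivalent to the induced map $\bigsqcup_i h_{X_i}^\sharp \to \mc{A}$ being locally surjective on sections. The lemma then amounts to translating this local surjectivity, which a priori is phrased in terms of sections of the sheafified presheaf $h_{X_i}^\sharp$, into the concrete lifting property stated in terms of honest morphisms in the site $\mc{C}$.

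For the reverse direction, assume the local lifting property. Given $f \in \mc{A}(X)$, the cover $\{U_j \xrightarrow{g_j} X\}$ and maps $k_j\colon U_j \to X_{i(j)}$ furnished by hypothesis exhibit $f|_{U_j}$ as the image of $k_j \in h_{X_{i(j)}}(U_j) \subseteq h_{X_{i(j)}}^\sharp(U_j)$ under $f_{i(j)}$, since $f_{i(j)} \circ k_j = f \circ g_j$ by assumption. Hence $f$ lies locally in the image of $\bigsqcup_i h_{X_i}^\sharp \to \mc{A}$, so this map is locally surjective, and therefore an epimorphism of sheaves; equivalently, $\{f_i\}$ is a covering family.

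For the forward direction, assume $\{f_i\}$ is a cover, so $\bigsqcup_i h_{X_i}^\sharp \to \mc{A}$ is an epimorphism of sheaves. Given $f \in \mc{A}(X)$, local surjectivity produces a cover $\{V_\alpha \to X\}$ and sections $s_\alpha \in h_{X_{i(\alpha)}}^\sharp(V_\alpha)$ with $f_{i(\alpha)}(s_\alpha) = f|_{V_\alpha}$. By the plus-construction description of sheafification, each $s_\alpha$ is represented, after passing to a further cover $\{U_{\alpha\beta} \to V_\alpha\}$, by an actual morphism $k_{\alpha\beta}\colon U_{\alpha\beta} \to X_{i(\alpha)}$ in $\mc{C}$. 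The composite cover $\{U_{\alpha\beta} \to X\}$, together with the morphisms $k_{\alpha\beta}$, then satisfies $f_{i(\alpha)} \circ k_{\alpha\beta} = f \circ g_{\alpha\beta}$ where $g_{\alpha\beta}\colon U_{\alpha\beta} \to X$ is the composite, which is precisely the desired lifting property.

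The only non-routine point is the refinement step in the forward direction: one must pass from a section of the sheafification $h_{X_i}^\sharp$ to an honest morphism in $\mc{C}$, using that the plus-construction realizes every such section locally by a compatible family of representable sections. Everything else is a direct application of the definitions of epimorphism and cover in a topos.
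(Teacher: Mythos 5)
Your proof is correct; the paper in fact states this lemma without proof, and your argument — unwinding the definition of a cover in $\ms{C}$ as local surjectivity of $\bigsqcup_i h_{X_i}^\sharp\to\mc{A}$ and using that sections of a sheafification are locally represented by honest morphisms of $\mc{C}$ — is exactly the standard argument the authors leave implicit. The only nitpick is that in the forward direction the index $i$ may only stabilize after the second refinement (a section of the coproduct sheaf over $V_\alpha$ need not come from a single summand), but this does not affect the argument.
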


\begin{lem}\label{lem:refine-cover-by-representable-cover} Let $X$ be an object of $\mc{C}$, and $\{\mc{A}_j\to h_X^\sharp\}$ a cover in $\ms{C}$. Then, there exists a cover $\{X_i\to X\}$ in $\mc{C}$ such that $\{h_{X_i}^\sharp\to h_X^\sharp\}$ refines $\{\mc{A}_j\to h_X^\sharp\}$.
\end{lem}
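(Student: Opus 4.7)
The plan is to test the hypothesis on the tautological section. Saying that $\{\mc{A}_j \to h_X^\sharp\}$ is a cover in $\ms{C}$ amounts, by the definition of the topology on $\ms{C}$, to $\coprod_j \mc{A}_j \to h_X^\sharp$ being an epimorphism of sheaves. Unpacking this via the plus-construction for sheafification, it means: for every object $U$ of $\mc{C}$ and every section $s \in h_X^\sharp(U)$, there exists a cover $\{U_\alpha \to U\}_\alpha$ in $\mc{C}$ such that for each $\alpha$ the restriction $s|_{U_\alpha}$ lies in the image of $\bigsqcup_j \mc{A}_j(U_\alpha) \to h_X^\sharp(U_\alpha)$. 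Nothing deeper than this standard sheaf-theoretic characterisation will be needed.

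Next I would apply this to $U = X$ and to the tautological section $\mathrm{id}_X \in h_X(X) \subseteq h_X^\sharp(X)$. This produces a cover $\{X_i \to X\}_{i \in I}$ in $\mc{C}$ together with, for each $i$, an index $j(i)$ and an element $a_i \in \mc{A}_{j(i)}(X_i)$ whose image in $h_X^\sharp(X_i)$ equals the class of the structure morphism $X_i \to X$. By the Yoneda lemma and the sheafification adjunction, each $a_i$ promotes to a morphism $\widetilde{a}_i \colon h_{X_i}^\sharp \to \mc{A}_{j(i)}$ in $\ms{C}$; the equality of sections forces the composition
\begin{equation*}
    h_{X_i}^\sharp \xrightarrow{\widetilde{a}_i} \mc{A}_{j(i)} \to h_X^\sharp
\end{equation*}
to coincide with the morphism induced by $X_i \to X$. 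Therefore $\{h_{X_i}^\sharp \to h_X^\sharp\}$ refines $\{\mc{A}_j \to h_X^\sharp\}$, and it is itself a cover in $\ms{C}$ since $\{X_i \to X\}$ is one in $\mc{C}$ (alternatively, one may invoke Lemma \ref{lem:condition-for-covering-of-final-object} directly to see this last point).

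The only genuine obstacle is the first step: one must produce a cover \emph{in} $\mc{C}$ (and not merely in the larger topos $\ms{C}$) that witnesses the surjectivity of $\coprod_j \mc{A}_j \to h_X^\sharp$ at the section $\mathrm{id}_X$. This is exactly the reason epimorphisms of sheaves are traditionally described via the plus-construction, and it is essentially built into the definition of the topology on $\ms{C}$; once this is made explicit, the rest of the argument is a direct application of Yoneda.
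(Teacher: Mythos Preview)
The paper states this lemma without proof, treating it as a standard fact about sites and their associated topoi. Your argument is correct and is exactly the expected proof: apply the sheaf-theoretic characterisation of epimorphisms to the tautological section $\mathrm{id}_X \in h_X^\sharp(X)$ and use Yoneda to interpret the resulting lifts as a refinement.
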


Set $\cat{PseuTors}_\mc{G}(\mc{C})$ to be $\cat{PseuTors}_\mc{G}(\ms{C})$, and define $\cat{Tors}_\mc{G}(\mc{C})$ and $H^1(\mc{C},\mc{G})$ similarly. By Lemma \ref{lem:condition-for-covering-of-final-object}, an object $\mc{A}$ of $\ms{C}$ with right $\mc{G}$-action belongs to $\cat{PseuTors}_\mc{G}(\mc{C})$ if and only if $\mc{G}(X)$ acts simply transtively on $\mc{A}(X)$ whenever $X$ is an object of $\mc{C}$ with $\mc{A}(X)\ne\varnothing$. By the following lemma an object $\mc{A}$ of $\cat{PseuTors}_\mc{G}(\mc{C})$ belongs to $\cat{Tors}_\mc{G}(\mc{C})$ if and only if for every object $X$ of $\mc{C}$, there exists a cover $\{X_i\to X\}$ in $\mc{C}$ with $\mc{A}(X_i)$ non-empty. 

\begin{lem}\label{lem:locally-nonempty-equiv} An object $\mc{A}$ of $\ms{C}$ is locally non-empty if and only if for all objects $X$ of $\mc{C}$ there exists a cover $\{X_i\to X\}$ in $\mc{C}$ with $\mc{A}(X_i)$ non-empty for all $i$.
\end{lem}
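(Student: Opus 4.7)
The plan is to reduce both directions to the two immediately preceding lemmas. The forward direction will use Lemma \ref{lem:refine-cover-by-representable-cover} to upgrade an arbitrary cover of the terminal object (pulled back to $h_X^\sharp$) to a representable one, while the backward direction will use Lemma \ref{lem:condition-for-covering-of-final-object} to recognize that a naturally constructed family of representables covers $\ast$.

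For the forward implication, I would start from a cover $\{U_i \to \ast\}$ in $\ms{C}$ together with chosen sections $s_i \colon U_i \to \mc{A}$ witnessing $\mc{A}(U_i) \ne \varnothing$. Given $X$ in $\mc{C}$, the base change $\{U_i \times h_X^\sharp \to h_X^\sharp\}$ remains a cover (covers in $\ms{C}$ being universal effective epimorphisms), so Lemma \ref{lem:refine-cover-by-representable-cover} produces a representable refinement $\{X_j \to X\}$ in $\mc{C}$. Concretely, each map $h_{X_j}^\sharp \to h_X^\sharp$ then factors through some $U_{i(j)} \times h_X^\sharp$; projecting to $U_{i(j)}$ and composing with $s_{i(j)}$ yields the desired element of $\mc{A}(X_j)$.

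For the backward implication, assume that for every $X$ in $\mc{C}$ there is a cover $\{X_i \to X\}$ in $\mc{C}$ with $\mc{A}(X_i) \ne \varnothing$. I will consider the family of all morphisms $h_Y^\sharp \to \ast$ indexed by those $Y$ in $\mc{C}$ with $\mc{A}(Y) \ne \varnothing$, and verify that it covers $\ast$ in $\ms{C}$ via Lemma \ref{lem:condition-for-covering-of-final-object} applied to the terminal sheaf $\ast$. Given any $X$ in $\mc{C}$ and the unique element of $\ast(X)$, the hypothesis produces a cover $\{X_i \to X\}$ whose members lie in the indexing family, and the compatibility condition with the chosen sections is automatic because the target sheaf is $\ast$. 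The resulting cover of $\ast$ witnesses local non-emptiness of $\mc{A}$ by construction.

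I do not anticipate a significant technical obstacle: both directions are essentially formal manipulations combining the topos-theoretic definition of a cover with the two auxiliary lemmas already established. The only mild subtlety is making sure, in the forward direction, that the refinement of a base-changed cover produces representable morphisms that genuinely factor through the given $U_i$, which is exactly what Lemma \ref{lem:refine-cover-by-representable-cover} guarantees.
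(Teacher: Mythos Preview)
The paper states this lemma without proof, evidently regarding it as a routine unpacking of the definitions via the two preceding lemmas. Your proposal is correct and proceeds exactly along those lines: Lemma~\ref{lem:refine-cover-by-representable-cover} for the forward direction and Lemma~\ref{lem:condition-for-covering-of-final-object} (applied with target sheaf $\ast$) for the backward direction.

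One small remark: in the backward direction you could bypass Lemma~\ref{lem:condition-for-covering-of-final-object} entirely by observing that your hypothesis is precisely the statement that $\mc{A}\to\ast$ is an epimorphism of sheaves, whence the singleton $\{\mc{A}\to\ast\}$ is already a cover with $\id_{\mc{A}}\in\mc{A}(\mc{A})$. This also sidesteps any worry about whether the collection of all $Y$ with $\mc{A}(Y)\ne\varnothing$ forms a set. But this is a cosmetic simplification; your argument is sound as written.
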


By \cite[Chapitre III, 1.7.3.3]{Giraud}, for any object $X$ of $\mc{C}$ there is a natural identification between $\cat{Tors}_\mc{G}(\ms{C}/h_X^\sharp)$ and $\cat{Tors}_\mc{G}(\mc{C}/X)$, with $\mc{C}/X$ as in \stacks{00XZ}. Thus, these objects are unambiguous in their definition, and so we use the latter notation in practice.

\subsection{Vector bundles and torsors}

Let $\mc{O}$ be a ring object of a topos $\ms{T}$. A \emph{vector bundle} on $(\ms{T},\mc{O})$ is an $\mc{O}$-module $\mc{E}$ for which there exists a cover $\{U_i\to \ast\}$ with $\mc{E}|_{U_i}$ isomorphic to $\mc{O}^{n_i}_{U_i}$ for some $n_i$.\footnote{For all ringed sites we consider this agrees with the notion of vector bundle defined in \cite[Notation 2.1]{BhattScholzeCrystals}.} Define $\cat{Vect}(\ms{T},\mc{O})$ to be the category of vector bundles on $(\ms{T},\mc{O})$, and $\cat{Vect}_n(\ms{T},\mc{O})$ the full subcategory where $n_i=n$ for all $i$. Let $\cat{Vect}^\mathrm{iso}_n(\ms{T},\mc{O})$ be the groupoid with the same objects as $\cat{Vect}_n(\ms{T},\mc{O})$ but with only the isomorphisms as morphisms. If $\ms{C}=\mathbf{Sh}(\mc{C})$ we use the notation $\cat{Vect}(\mc{C},\mc{O})$ and $\cat{Vect}_n(\mc{C},\mc{O})$, instead.

Define $\GL_{n,\mc{O}}$ to be the group object of $\ms{T}$ given by $\GL_{n,\mc{O}}(T)\defeq \Aut_{\mc{O}_T}(\mc{O}_T^n)$. Consider
\begin{equation*}
    \uIsom(\mc{O}^n,\mc{E})\colon \ms{T}\to \cat{Set},\qquad T\mapsto \Isom(\mc{O}^n_T,\mc{E}_T),
\end{equation*}
for $\mc{E}$ an object of $\cat{Vect}_n(\ms{T},\mc{O})$, which carries the natural structure of a $\GL_{n,\mc{O}}$-torsor. Conversely, for a  $\GL_{n,\mc{O}}$-torsor $P$ the contracted product $P\wedge^{\GL_{n,\mc{O}}}\mc{O}^n$, which inherits the structure of an $\mc{O}$-module from $\mc{O}^n$, is a vector bundle.

\begin{prop}[{cf.\@ \cite[Chapitre III, Th\'{e}or\`eme 2.5.1]{Giraud}}]\label{prop:gln-torsor-vector-bundle}
The functor
\begin{equation*}
    \cat{Vect}_n^\mathrm{iso}(\ms{T},\mc{O})\to \cat{Tors}_{\GL_{n,\mc{O}}}(\ms{T}),\qquad \mc{E}\mapsto \uIsom(\mc{E},\mc{O}^n)
\end{equation*}
is an equivalence with quasi-inverse given by 
\begin{equation*}
    \cat{Tors}_{\GL_{n,\mc{O}}}(\ms{T})\to \cat{Vect}^\mathrm{iso}_n(\ms{T},\mc{O}),\qquad P\mapsto P\wedge^{\GL_{n,\mc{O}}}\mc{O}^n.
\end{equation*}
\end{prop}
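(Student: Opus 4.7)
My plan is to establish the equivalence by verifying both functors are well-defined and constructing mutually quasi-inverse natural isomorphisms, each step reducing to a local check afforded by the torsor/vector bundle definitions. Throughout, I will use inversion to freely identify $\uIsom(\mc{E},\mc{O}^n)$ with $\uIsom(\mc{O}^n,\mc{E})$, converting right actions into left actions as needed.

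First, I would verify that $\uIsom(\mc{E},\mc{O}^n)$ is indeed a $\GL_{n,\mc{O}}$-torsor. Simple transitivity of the (say, post-composition) action is immediate: fixing any section $\mc{E}_T\isomto \mc{O}^n_T$ over $T$, every other section differs from it by a unique $\mc{O}_T$-linear automorphism of $\mc{O}^n_T$, that is, by a unique section of $\GL_{n,\mc{O}}$ over $T$. Local non-emptiness is built into the definition of a vector bundle: by hypothesis there is a cover $\{U_i\to \ast\}$ over which $\mc{E}|_{U_i}\simeq \mc{O}^n|_{U_i}$. Next, I would verify that $P\wedge^{\GL_{n,\mc{O}}}\mc{O}^n$ is locally free of rank $n$: the $\mc{O}$-module structure descends from the second factor because the left action of $\GL_{n,\mc{O}}$ on $\mc{O}^n$ is $\mc{O}$-linear, and since $P$ is a torsor there is a cover $\{U_i\to\ast\}$ and isomorphisms $P|_{U_i}\simeq \GL_{n,\mc{O}}|_{U_i}$ as torsors, whence
\begin{equation*}
(P\wedge^{\GL_{n,\mc{O}}}\mc{O}^n)|_{U_i}\simeq \GL_{n,\mc{O}}|_{U_i}\wedge^{\GL_{n,\mc{O}}}\mc{O}^n|_{U_i}\simeq \mc{O}^n|_{U_i}
\end{equation*}
as $\mc{O}_{U_i}$-modules.

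Second, I would produce the two natural isomorphisms. For a vector bundle $\mc{E}$, define the evaluation map
\begin{equation*}
\mathrm{ev}_\mc{E}\colon \uIsom(\mc{O}^n,\mc{E})\wedge^{\GL_{n,\mc{O}}}\mc{O}^n\to \mc{E},\qquad [f,v]\mapsto f(v),
\end{equation*}
which is well-defined on the contracted product since $[f\circ g,v]=[f,g(v)]$, and $\mc{O}$-linear by construction. For a $\GL_{n,\mc{O}}$-torsor $P$, define
\begin{equation*}
\tau_P\colon P\to \uIsom(\mc{O}^n,P\wedge^{\GL_{n,\mc{O}}}\mc{O}^n),\qquad p\mapsto \bigl(v\mapsto [p,v]\bigr),
\end{equation*}
which is $\GL_{n,\mc{O}}$-equivariant because $p\cdot g$ sends $v$ to $[pg,v]=[p,g(v)]$. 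Both $\mathrm{ev}_\mc{E}$ and $\tau_P$ are clearly natural in $\mc{E}$ and $P$ respectively.

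Finally, I would check that both $\mathrm{ev}_\mc{E}$ and $\tau_P$ are isomorphisms. For $\tau_P$ this is automatic: it is a morphism of $\GL_{n,\mc{O}}$-torsors and any such morphism is an isomorphism (both source and target act simply transitively and are locally non-empty, so equivariance forces bijectivity over every object of the site where either side is non-empty). For $\mathrm{ev}_\mc{E}$ the claim is local on $\ms{T}$: pick a cover $\{U_i\to\ast\}$ trivializing $\mc{E}$, so that after restriction $\uIsom(\mc{O}^n,\mc{E})|_{U_i}$ becomes isomorphic to $\GL_{n,\mc{O}}|_{U_i}$ (with the tautological torsor structure), and the evaluation map becomes the canonical isomorphism $\GL_{n,\mc{O}}\wedge^{\GL_{n,\mc{O}}}\mc{O}^n\isomto \mc{O}^n$. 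The only minor subtlety is clerical, namely bookkeeping of left versus right actions and the dualization implicit in passing between $\uIsom(\mc{E},\mc{O}^n)$ and $\uIsom(\mc{O}^n,\mc{E})$; beyond this, the entire proof is formal and amounts to the standard descent picture that identifies locally free modules of rank $n$ with their frame bundles.
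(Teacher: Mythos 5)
Your proof is correct and is, in substance, the standard argument: the paper itself gives no written proof but simply cites Giraud's general theorem on twisted forms (\cite[Chapitre III, Th\'eor\`eme 2.5.1]{Giraud}), of which your verification --- the frame-bundle functor $\uIsom(\mc{O}^n,\mc{E})$, the contracted product, the evaluation map $[f,v]\mapsto f(v)$, the equivariant map $\tau_P$, the fact that any morphism of pseudo-torsors out of a torsor is an isomorphism, and the local trivialization check --- is exactly the specialization to rank-$n$ free modules. The only point worth flagging is the one you already flag yourself: the statement writes $\uIsom(\mc{E},\mc{O}^n)$ while the surrounding text of the paper uses $\uIsom(\mc{O}^n,\mc{E})$, so the left/right action bookkeeping under inversion should be fixed once and used consistently, but this is purely notational.
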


\subsection{Torsors and morphisms of topoi}
Let $\mc{C}$ (resp.\@ $\mc{D}$) be a site and set $\ms{C}$ (resp.\@ $\ms{D}$) to be the associated topos. Fix a morphism of topoi $(u_\ast,u^{-1})\colon \ms{C}\to \ms{D}$ (see \cite[Exposé IV, Definition 3.1]{SGA4-1} or \stacks{00XA}) and a group object $\mc{G}$ of $\ms{C}$. Observe that as $u_\ast$ is left exact it induces a morphism
\begin{equation*}
    u_\ast\colon \cat{PseuTors}_\mc{G}(\ms{C})\to \cat{PseuTors}_{u_\ast(\mc{G})}(\ms{D}).
\end{equation*}
On the other hand, if $\mc{H}$ is a group object of $\ms{D}$ then we similarly obtain a functor 
\begin{equation*}
    u^{-1}\colon \cat{PseuTors}_{\mc{H}}(\ms{D})\to\cat{PseuTors}_{u^{-1}(\mc{H})}(\ms{C}).
\end{equation*}
When $\mc{H}=u_\ast(\mc{G})$, the counit map $\epsilon: u^{-1}(u_\ast(\mc{G}))\to \mc{G}$ gives us a functor 
\begin{equation*}
    \epsilon_\ast\colon \cat{PseuTors}_{u^{-1}(u_\ast(\mc{G}))}(\ms{C})\to \cat{PseuTors}_{\mc{G}}(\ms{C}).
\end{equation*}
By composing these two functors we obtain a functor 
\begin{equation*}
u^\ast\defeq \epsilon_\ast \circ u^{-1}\colon \cat{PseuTors}_{u_\ast(\mc{G})}(\ms{D})\to \cat{PseuTors}_{\mc{G}}(\ms{C}).
\end{equation*}
We then obtain an adjoint pair $(u_\ast,u^\ast)\colon \cat{PseuTors}_\mc{G}(\ms{C}) \to\cat{PseuTors}_{u_\ast(\mc{G})}(\ms{D})$.

The following result follows quickly by applying the adjointness of $u^\ast$ and $u_\ast$.
\begin{prop}\label{prop:ominbus-torsor-comparison} Suppose that $u^{-1}(\mc{B})$ is locally non-empty for all objects $\mc{B}$ of $\cat{Tors}_{u_\ast(\mc{G})}(\ms{D})$. Then, $(u_\ast,u^\ast)\colon \cat{Tors}_\mc{G}(\ms{C})'\to \cat{Tors}_{u_\ast(\mc{G})}(\ms{D})$ is a pair of quasi-inverses, where $\cat{Tors}_\mc{G}(\ms{C})'$ is the full subcategory of $\cat{Tors}_\mc{G}(\ms{C})$ consisting of those $\mc{A}$ such that $u_\ast(\mc{A})$ is locally non-empty.
\end{prop}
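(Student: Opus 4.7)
My plan is to exploit the adjunction $(u_\ast, u^\ast)$ on pseudo-torsors already constructed in the paragraph above the proposition, and show that its unit $\eta\colon \mr{id}\to u_\ast u^\ast$ (on $\ms{D}$) and counit $\varepsilon\colon u^\ast u_\ast\to \mr{id}$ (on $\ms{C}$) are isomorphisms after restriction to the two torsor subcategories in question. The main observation I will use repeatedly is the following basic fact: any morphism $f\colon P\to Q$ of $\mc{G}$-pseudo-torsors in a topos for which both $P$ and $Q$ are torsors is automatically an isomorphism, since on a common trivializing cover $f$ becomes a $\mc{G}$-equivariant self-map of $\mc{G}$, hence left-multiplication by a section, hence invertible. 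A second trivial observation I will use is that if $f\colon P\to Q$ is any morphism of pseudo-torsors and $P$ is locally non-empty, then so is $Q$.

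First, I would check that the two functors restrict correctly. The functor $u_\ast$ clearly sends $\cat{Tors}_\mc{G}(\ms{C})'$ into $\cat{Tors}_{u_\ast(\mc{G})}(\ms{D})$ by the very definition of the primed subcategory (local non-emptiness of $u_\ast(\mc{A})$ is built in). Conversely, for $\mc{B}$ in $\cat{Tors}_{u_\ast(\mc{G})}(\ms{D})$, the hypothesis provides that $u^{-1}(\mc{B})$ is locally non-empty, and hence $u^\ast(\mc{B}) = u^{-1}(\mc{B})\wedge^{u^{-1}(u_\ast(\mc{G}))}\mc{G}$ inherits local sections from the identity section of $\mc{G}$, so it is a $\mc{G}$-torsor.

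Next I address the counit. For $\mc{A}\in\cat{Tors}_\mc{G}(\ms{C})'$, the object $u_\ast(\mc{A})$ is a torsor by the prime condition, so by hypothesis $u^{-1}(u_\ast\mc{A})$ is locally non-empty, and by the same contracted-product argument $u^\ast u_\ast\mc{A}$ is a $\mc{G}$-torsor. The counit $\varepsilon_\mc{A}\colon u^\ast u_\ast\mc{A}\to \mc{A}$ is then a morphism between two $\mc{G}$-torsors and is therefore an isomorphism by the observation above. For the unit, take $\mc{B}\in\cat{Tors}_{u_\ast(\mc{G})}(\ms{D})$. Since $\mc{B}$ is locally non-empty and $\eta_\mc{B}\colon\mc{B}\to u_\ast u^\ast\mc{B}$ is a morphism of $u_\ast(\mc{G})$-pseudo-torsors, the target is automatically locally non-empty, hence a torsor; therefore $\eta_\mc{B}$ is an isomorphism as well. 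This also shows that $u^\ast(\mc{B})$ actually lies in $\cat{Tors}_\mc{G}(\ms{C})'$ (as $u_\ast u^\ast\mc{B}\cong \mc{B}$ is locally non-empty), completing the verification that the restrictions form an adjoint pair of quasi-inverses.

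There is no substantive obstacle: the proposition is essentially a formal consequence of the adjunction together with the triviality that maps between torsors are isomorphisms. The only point that requires any care is packaging the two local-non-emptiness arguments (one coming from the prime condition, the other from the hypothesis on $u^{-1}$) so that the unit and counit land between actual torsors rather than merely pseudo-torsors.
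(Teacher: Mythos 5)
Your argument is correct and is exactly the intended one: the paper's proof consists of the single remark that the result "follows quickly by applying the adjointness of $u^\ast$ and $u_\ast$", and your write-up supplies precisely the missing bookkeeping (unit and counit are maps of pseudo-torsors, the two local-non-emptiness hypotheses force their sources and targets to be genuine torsors, and any map of torsors is an isomorphism). No gaps.
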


If $u\colon \mathcal{D}\to \mathcal{C}$ is a continuous functor (see \cite[Expos\'e III, Definition 1.1]{SGA4-1} or \stacks{00WV}) then, by \stacks{00WU} we get an adjoint pair $(u_\ast,u^{-1})\colon \ms{C}\to\ms{D}$ where $u_\ast(\mc{A})$ is $\mc{A}\circ u$, and $u^{-1}(\mc{B})$ is the sheafification of 
\begin{equation*}
    (u^{-1})^\mathrm{pre}(\mathcal{B})(C)\defeq\colim_{\scriptscriptstyle(D,\psi)\in  \mc{I}_{\scriptscriptstyle C}^\mathrm{opp}}\mc{B}(D)
\end{equation*}
with $\mc{I}^\mathrm{opp}_C$ the category of pairs $(D,\psi)$ where $D$ is an object of $\mc{D}$ and $\psi\colon C\to u(D)$. If $u$ induces a morphism of sites (i.e.\@, that $u^{-1}$ is left exact), then $(u_\ast,u^{-1})$ is a morphism of topoi.

\begin{cor}[{cf.\@ \cite[Chapitre V, Proposition 3.1.1]{Giraud}}]\label{cor:torsor-equiv-continuous-morphism} If $u\colon \mc{D}\to\mc{C}$ induces a morphism of sites, then we obtain a pair of quasi-inverse functors $(u_\ast,u^\ast)\colon \cat{Tors}_\mc{G}(\ms{C})'\to \cat{Tors}_{u_\ast(\mc{G})}(\ms{D})$.
\end{cor}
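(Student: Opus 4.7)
The plan is to deduce this corollary directly from Proposition \ref{prop:ominbus-torsor-comparison}, whose only hypothesis is that $u^{-1}(\mc{B})$ is locally non-empty in $\ms{C}$ for every $\mc{B}$ in $\cat{Tors}_{u_\ast(\mc{G})}(\ms{D})$. Thus all the real content is the verification of this local non-emptiness, and the key input will be that $u^{-1}$ preserves representables, the final object, and epimorphisms — the latter two by virtue of the morphism-of-sites hypothesis that $u^{-1}$ is left exact (so in particular left exact and cocontinuous, hence preserves epimorphisms via coequalizer descriptions).

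To begin, I would record the elementary identification $u^{-1}(h_D^\sharp) = h_{u(D)}^\sharp$ for every object $D$ of $\mc{D}$, which follows from Yoneda applied to the adjunction chain
\begin{equation*}
    \Hom_\ms{C}(u^{-1}(h_D^\sharp), \mc{A}) = \Hom_\ms{D}(h_D^\sharp, u_\ast\mc{A}) = u_\ast\mc{A}(D) = \mc{A}(u(D)) = \Hom_\ms{C}(h_{u(D)}^\sharp, \mc{A}).
\end{equation*}
Given a torsor $\mc{B}$, Lemma \ref{lem:locally-nonempty-equiv} applied at $\ast_\mc{D}$ produces a cover $\{D_i \to \ast_\mc{D}\}$ in $\mc{D}$ with each $\mc{B}(D_i)$ non-empty. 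In $\ms{D}$ the induced morphism $\bigsqcup h_{D_i}^\sharp \to \ast_\mc{D}$ is then an epimorphism, and applying $u^{-1}$ together with the above facts yields an epimorphism $\bigsqcup h_{u(D_i)}^\sharp \to \ast_\mc{C}$ in $\ms{C}$.

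To check local non-emptiness of $u^{-1}(\mc{B})$ at an arbitrary $C$ of $\mc{C}$, I will pull back the above $\ms{C}$-cover along $C \to \ast_\mc{C}$ (epimorphisms in a topos being stable under pullback) to obtain a cover $\{h_{u(D_i)}^\sharp \times C \to h_C^\sharp\}$, and then invoke Lemma \ref{lem:refine-cover-by-representable-cover} to refine it to a cover $\{C_j \to C\}$ in $\mc{C}$ with the property that each $h_{C_j}^\sharp \to h_C^\sharp$ factors through some $h_{u(D_{i(j)})}^\sharp \times C$. Projecting to the first factor and applying Yoneda extracts a morphism $\psi_j \colon C_j \to u(D_{i(j)})$ in $\mc{C}$. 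Choosing any $b_{i(j)}$ in $\mc{B}(D_{i(j)})$, the triple $(D_{i(j)}, \psi_j, b_{i(j)})$ represents a class in the presheaf $(u^{-1})^{\mathrm{pre}}(\mc{B})(C_j)$, whose image under sheafification is a non-empty element of $u^{-1}(\mc{B})(C_j)$. By Lemma \ref{lem:locally-nonempty-equiv} this furnishes the required local non-emptiness, and Proposition \ref{prop:ominbus-torsor-comparison} then completes the proof. I do not anticipate a serious obstacle: the argument is a routine topos-theoretic transfer, with every needed ingredient (adjunction, left exactness, and the two preparatory lemmas on covers) already established in the appendix.
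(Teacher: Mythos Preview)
Your proof is correct and follows exactly the approach the paper intends: the corollary is stated without proof immediately after Proposition~\ref{prop:ominbus-torsor-comparison}, so the only content is verifying that $u^{-1}(\mc{B})$ is locally non-empty, which you do cleanly via the identification $u^{-1}(h_D^\sharp)\simeq h_{u(D)}^\sharp$ and the fact that $u^{-1}$ preserves epimorphisms and the final object. One small terminological slip: in your parenthetical, ``cocontinuous'' risks confusion with the site-theoretic notion used elsewhere in the appendix; the point is simply that $u^{-1}$, being a left adjoint, preserves all colimits (hence epimorphisms), while left exactness (the morphism-of-sites hypothesis) gives preservation of $\ast$.
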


If $u\colon \mc{C}\to\mc{D}$ is a cocontinuous functor (see \cite[Expos\'e III, \S2]{SGA4-1} or \stacks{00XI}), then by \stacks{00XN} $u$ induces a morphism of topoi $(u_\ast,u^{-1})\colon \ms{C}\to\ms{D}$. Here, $u^{-1}(\mc{B})$ is the sheafification of $\mc{B}\circ u$, and 
\begin{equation*}
    u_\ast(\mc{A})(D)=\lim_{(C,\psi)\in {}_D \mc{I}^\mathrm{opp}}\mc{A}(C)
\end{equation*}
where ${}_D \mc{I}^\mathrm{opp}$ is the the category of pairs $(C,\psi)$ where $C$ is an object of $\mc{C}$ and $\psi\colon u(C)\to D$. Combining Proposition \ref{prop:ominbus-torsor-comparison} and Lemma \ref{lem:cocont-inverse-image-locally-non-empty} below we obtain the following corollary.

\begin{cor}\label{cor:torsor-equiv-cocontinuous-morphism} Let $u\colon \mc{C}\to\mc{D}$ be a cocontinuous functor. Then, for any group object $\mc{G}$ of $\ms{C}$ we obtain a pair of quasi-inverse functors $(u_\ast,u^\ast)\colon \cat{Tors}_\mc{G}(\ms{C})'\to \cat{Tors}_{u_\ast(\mc{G})}(\ms{D})$.
\end{cor}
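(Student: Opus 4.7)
The plan is to deduce this corollary directly from Proposition \ref{prop:ominbus-torsor-comparison}. That proposition asserts that $(u_\ast,u^\ast)$ restricts to a pair of quasi-inverse equivalences between $\cat{Tors}_{u_\ast(\mc{G})}(\ms{D})$ and the full subcategory $\cat{Tors}_\mc{G}(\ms{C})'$, provided that $u^{-1}(\mc{B})$ is locally non-empty in $\ms{C}$ for every object $\mc{B}$ of $\cat{Tors}_{u_\ast(\mc{G})}(\ms{D})$. Consequently, once this local non-emptiness hypothesis is verified for a cocontinuous $u$ and any torsor $\mc{B}$ (in fact, for any locally non-empty sheaf on $\ms{D}$), the corollary will be immediate. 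This local non-emptiness property is precisely the content of the auxiliary Lemma \ref{lem:cocont-inverse-image-locally-non-empty} invoked in the statement.

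To establish this auxiliary lemma, I would use the criterion of Lemma \ref{lem:locally-nonempty-equiv}: it suffices, for every object $C$ of $\mc{C}$, to produce a covering $\{C_i \to C\}$ in $\mc{C}$ with $u^{-1}(\mc{B})(C_i) \neq \varnothing$ for every $i$. Since $\mc{B}$ is locally non-empty on $\ms{D}$, applying Lemma \ref{lem:locally-nonempty-equiv} to the object $u(C)$ of $\mc{D}$ yields a cover $\{D_j \to u(C)\}$ in $\mc{D}$ with $\mc{B}(D_j) \neq \varnothing$ for all $j$. Now the defining feature of a cocontinuous functor (see \stacks{00XI}) provides a cover $\{C_i \to C\}$ in $\mc{C}$ such that each morphism $u(C_i) \to u(C)$ factors through some $D_{j(i)} \to u(C)$.

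Restricting sections along the factorization $u(C_i) \to D_{j(i)}$ gives a (non-canonical) map $\mc{B}(D_{j(i)}) \to \mc{B}(u(C_i)) = (\mc{B}\circ u)(C_i)$, so the right-hand side is non-empty. Because $u^{-1}(\mc{B})$ is by definition the sheafification of the presheaf $\mc{B}\circ u$, the natural map $(\mc{B}\circ u)(C_i) \to u^{-1}(\mc{B})(C_i)$ shows the latter is non-empty as well. This verifies the hypothesis of Proposition \ref{prop:ominbus-torsor-comparison} for every torsor $\mc{B}$, completing the proof. No serious obstacle arises; the only substantive step is the translation between the cocontinuity of $u$ and the refinement property of covers, and this is already packaged into the standard definition.
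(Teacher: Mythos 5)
Your proposal is correct and follows essentially the same route as the paper: the corollary is obtained by combining Proposition \ref{prop:ominbus-torsor-comparison} with Lemma \ref{lem:cocont-inverse-image-locally-non-empty}, and your verification of that lemma (refine a cover of $u(C)$ witnessing local non-emptiness of $\mc{B}$ into a cover $\{C_i\to C\}$ via cocontinuity, then pass through the unit $\mc{B}\circ u\to u^{-1}(\mc{B})$) is the paper's argument, merely phrased via Lemma \ref{lem:locally-nonempty-equiv} for an arbitrary object $C$ rather than via refining an arbitrary representable cover of $\ast$.
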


\begin{lem}\label{lem:cocont-inverse-image-locally-non-empty} Let $u\colon \mc{C}\to\mc{D}$ be a cocontinuous functor. Then, for any locally non-empty object $\mc{B}$ of $\ms{D}$, the object $u^{-1}(\mc{B})$ of $\ms{C}$ is locally non-empty. 
\end{lem}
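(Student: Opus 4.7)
The plan is to unwind all the relevant definitions and show that the non-emptiness propagates directly through the cocontinuity axiom. By (the version of) Lemma \ref{lem:locally-nonempty-equiv} applied in $\mathcal{D}$, the hypothesis that $\mathcal{B}$ is locally non-empty means concretely that for every object $D$ of $\mathcal{D}$ there is a covering $\{D_j \to D\}$ in $\mathcal{D}$ with $\mathcal{B}(D_j)$ non-empty for every $j$. To verify the analogous statement for $u^{-1}(\mathcal{B})$ in $\mathcal{C}$, it suffices, again by Lemma \ref{lem:locally-nonempty-equiv}, to produce, for an arbitrary object $C$ of $\mathcal{C}$, a covering $\{C_i \to C\}$ in $\mathcal{C}$ with $u^{-1}(\mathcal{B})(C_i)$ non-empty for every $i$.

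The key input is cocontinuity of $u$: given the covering $\{D_j \to u(C)\}$ of $u(C)$ in $\mathcal{D}$ with $\mathcal{B}(D_j) \neq \varnothing$ obtained from local non-emptiness of $\mathcal{B}$, cocontinuity yields a covering $\{C_i \to C\}$ in $\mathcal{C}$ and, for each $i$, an index $j(i)$ together with an $\mathcal{D}$-morphism $u(C_i) \to D_{j(i)}$ over $u(C)$. Pulling back an element of $\mathcal{B}(D_{j(i)})$ along this morphism exhibits an element of $\mathcal{B}(u(C_i)) = (\mathcal{B}\circ u)(C_i)$, so this presheaf is non-empty on each $C_i$.

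Finally, the canonical map of presheaves $\mathcal{B}\circ u \to u^{-1}(\mathcal{B})$ from the sheafification adjunction shows that $u^{-1}(\mathcal{B})(C_i)$ is non-empty whenever $(\mathcal{B}\circ u)(C_i)$ is, completing the verification. There is no real obstacle here: the whole argument is a direct chase through the definitions of cocontinuous functor, $u^{-1}$, and locally non-empty, the only subtlety being to remember that $u^{-1}(\mathcal{B})$ is the sheafification of $\mathcal{B}\circ u$ (not $\mathcal{B}\circ u$ itself), which is harmless because one only needs existence of sections, and the canonical map to the sheafification preserves non-emptiness of section sets.
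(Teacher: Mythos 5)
Your proof is correct and takes essentially the same route as the paper's own argument: cocontinuity produces a cover $\{C_i\to C\}$ refining the given cover of $u(C)$, sections of $\mc{B}$ restrict along $u(C_i)\to D_{j(i)}$, and the unit $\mc{B}\circ u\to u^{-1}(\mc{B})$ of sheafification transports non-emptiness. The only cosmetic difference is that you package the reduction to site-level covers by citing Lemma \ref{lem:locally-nonempty-equiv}, whereas the paper inlines that reduction using Lemma \ref{lem:refine-cover-by-representable-cover}.
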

\begin{proof} Take a cover $\{\mc{B}_i\to \ast\}$ in $\ms{D}$ with $\Hom(\mc{B}_i,\mc{B})\ne\varnothing$ for all $i$, and an arbitrary cover $\{h^\sharp_{Z_\gamma}\to *\}$ in $\ms{C}$. By Lemma \ref{lem:refine-cover-by-representable-cover}, for each $\gamma$ there exists a cover $\{D_{\beta\gamma}\to u(Z_\gamma)\}$ in $\mc{D}$ such that $\{h_{D_{\beta\gamma}}^\sharp\to h_{u(Z_\gamma)}^\sharp\}$ refines $\{\mc{B}_i\times h_{u(Z_\gamma)}^\sharp\to h_{u(Z_\gamma)}^\sharp\}$. By cocontinuity, for each $\gamma$ there exists a cover $\{X_{\alpha\gamma}\to Z_\gamma\}$ in $\mc{C}$ such that $\{u(X_{\alpha\gamma})\to u(Z_\gamma)\}$ refines $\{D_{\beta\gamma}\to u(Z_\gamma)\}$, and therefore $\mc B(u(X_{\alpha\gamma}))$ is non-empty, and there is a map $\mc B(u(X_{\alpha\gamma}))\to u^{-1}(\mc B)(X_{\alpha\gamma})$, so $u^{-1}(\mc{B})(X_{\alpha\gamma})$ is non-empty. As $\{h_{X_{\alpha\gamma}}^\sharp\to \ast\}$ is a cover in $\ms{C}$, the claim follows.
\end{proof}

Let $(u_\ast,u^{-1})\colon \ms{C}\to\ms{D}$ be a morphism of topoi defined by a (co)continuous functor $u$. For a ring object $\mc{O}$ of $\ms{C}$ one has an identification $u_\ast(\GL_{n,\mc{O}})\isomto \GL_{n,u_\ast(\mc{O})}$. Define $\cat{Vect}(\ms{C},\mc{O})'$ to be the full subcategory of $\cat{Vect}(\ms{C},\mc{O})$ of vector bundles $\mc{E}$ with $u_\ast(\mc{E})$ a vector bundle over $u_\ast(\mc{O})$. Moreover, define the functor
\begin{equation*}
    u^\ast\colon \cat{Vect}(\ms{D},u_\ast(\mc{O}))\to \cat{Vect}(\ms{C},\mc{O})',\quad u^\ast(\mc{W})\defeq u^{-1}(\mc{W})\otimes_{u^{-1}(u_\ast(\mc{O}))}\mc{O}
\end{equation*}
which is compatible under Proposition \ref{prop:gln-torsor-vector-bundle} with $u^\ast\colon \cat{Tors}_{u_\ast(\GL_{n,\mc{O}})}(\ms{D})\to \cat{Tors}_{\GL_{n,\mc{O}}}(\ms{C})'$. Using similar ideas to above, one obtains the following.

\begin{prop}\label{prop:morphisms-of-topos-vector-bundle-equiv} Suppose that $(u_\ast,u^{-1})\colon\ms{C}\to\ms{D}$ is a morphism of topoi induced by either continuous or cocontinuous morphism. Then, $u_\ast\colon \cat{Vect}(\mc{C},\mc{O})'\to\cat{Vect}(\mc{D},u_\ast(\mc{O}))$ is a rank preserving $\otimes$-equivalence with quasi-inverse $u^\ast$ (see \S\ref{section:torsors via tensors} for this terminology).
\end{prop}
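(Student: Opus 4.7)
The plan is to bootstrap this from the corresponding torsor statement (Corollary \ref{cor:torsor-equiv-continuous-morphism} or Corollary \ref{cor:torsor-equiv-cocontinuous-morphism}, applied to $\mc{G} = \GL_{n,\mc{O}}$) by means of the dictionary between vector bundles and $\GL_n$-torsors given by Proposition \ref{prop:gln-torsor-vector-bundle}. The key technical compatibility, namely that the $u^\ast$ defined for torsors intertwines with the $u^\ast$ defined for vector bundles (which is indicated in the paragraph preceding the proposition), is a direct unwinding of definitions: one checks that the contracted product in the construction of $u^\ast = \varepsilon_\ast \circ u^{-1}$ on torsors commutes with the tensor product $-\otimes_{u^{-1}u_\ast\mc{O}} \mc{O}$ that appears in the definition of $u^\ast$ on modules.

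First, I would establish the equivalence on isomorphism groupoids. The decomposition $\cat{Vect}^\mathrm{iso}(\mc{C},\mc{O}) = \bigsqcup_n \cat{Vect}_n^\mathrm{iso}(\mc{C},\mc{O})$ lets us reduce to fixed rank $n$. The identification $u_\ast(\GL_{n,\mc{O}}) = \GL_{n,u_\ast(\mc{O})}$ together with the direct computation
\begin{equation*}
u_\ast\uIsom(\mc{O}^n,\mc{E}) = \uIsom\bigl(u_\ast(\mc{O})^n, u_\ast(\mc{E})\bigr)
\end{equation*}
shows that the subcategory $\cat{Vect}_n^\mathrm{iso}(\mc{C},\mc{O})'$ corresponds under Proposition \ref{prop:gln-torsor-vector-bundle} exactly to the subcategory $\cat{Tors}_{\GL_{n,\mc{O}}}(\ms{C})'$ of \S\ref{ss:basic-definitions-and-results}. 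Applying the torsor equivalence then gives that $u_\ast \colon \cat{Vect}_n^\mathrm{iso}(\mc{C},\mc{O})' \to \cat{Vect}_n^\mathrm{iso}(\mc{D},u_\ast\mc{O})$ is an equivalence, with quasi-inverse given by the associated bundle of $u^\ast$ on torsors, which agrees with $u^\ast$ on vector bundles by the compatibility above.

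Next, I would upgrade this to an equivalence on the full categories (including non-isomorphism and rank-changing morphisms). The adjunction $(u^\ast, u_\ast)$ at the level of $\mc{O}$-modules gives, for any $\mc{E}_1$ in $\cat{Vect}(\mc{C},\mc{O})'$ and $\mc{E}_2$ in $\cat{Vect}(\mc{C},\mc{O})$, a bijection
\begin{equation*}
\Hom_{u_\ast\mc{O}}\bigl(u_\ast\mc{E}_1, u_\ast\mc{E}_2\bigr) \isomto \Hom_{\mc{O}}\bigl(u^\ast u_\ast\mc{E}_1, \mc{E}_2\bigr).
\end{equation*}
The unit map $\mc{E}_1 \to u^\ast u_\ast\mc{E}_1$ is an isomorphism for $\mc{E}_1$ a vector bundle in the dashed subcategory by the previous step (since the torsor-level unit is an isomorphism on $\cat{Tors}_{\GL_{n,\mc{O}}}(\ms{C})'$), so we get fully faithfulness. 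Essential surjectivity and the quasi-inverse statement follow similarly using the counit.

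Finally, rank preservation is built into the rank-$n$ decomposition used above. To see that $u_\ast$ is a $\otimes$-equivalence it suffices to show its quasi-inverse $u^\ast$ is monoidal, and this follows from the standard canonical isomorphism
\begin{equation*}
u^\ast\bigl(\mc{F}_1 \otimes_{u_\ast\mc{O}} \mc{F}_2\bigr) \simeq u^\ast\mc{F}_1 \otimes_{\mc{O}} u^\ast\mc{F}_2,
\end{equation*}
valid for any morphism of ringed topoi. The main obstacle is the verification of the torsor/vector-bundle compatibility of $u^\ast$ mentioned in the first paragraph; all other steps are essentially formal manipulations with the already-established torsor equivalence.
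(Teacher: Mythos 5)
Your proposal follows exactly the route the paper intends: the paper gives no written proof beyond ``using similar ideas to above,'' which means precisely the reduction to the torsor comparison (Proposition \ref{prop:ominbus-torsor-comparison} and Corollaries \ref{cor:torsor-equiv-continuous-morphism}/\ref{cor:torsor-equiv-cocontinuous-morphism}) via the $\GL_n$-torsor/vector-bundle dictionary of Proposition \ref{prop:gln-torsor-vector-bundle}, together with the compatibility of the two $u^\ast$'s asserted in the paragraph preceding the statement. The only nit is terminological: the map $\mc{E}_1\to u^\ast u_\ast\mc{E}_1$ you invoke in the full-faithfulness step is (the inverse of) the \emph{counit} of the adjunction $(u^\ast,u_\ast)$, not the unit, but the adjunction argument is otherwise correct.
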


\subsection{Torsors and vector bundles on formal schemes}\label{ss:torsors-and-vb-on-formal-schemes} Let $\mf{X}$ be a formal scheme, and denote by $\mf{X}_\fl$ the category consisting of morphisms of formal schemes $\mf{Y}\to\mf{X}$, morphisms being $\mf{X}$-morphisms, and endowed with the Grothendieck topology where $\{\mf{Y}_i\to\mf{Y}\}$ is a cover if $\coprod_i \mf{Y}_i\to\mf{Y}$ is adically faithfully flat (see \cite[Chapter I, Definition 4.8.12.(2)]{FujiwaraKato}) and finitary in the sense of (2) in \stacks{03NW}. This site is subcanonical by \cite[Chapter I, Proposition 6.1.5]{FujiwaraKato}. Denote by $\mf{X}^\mathrm{adic}_{\fl}$, $\mf{X}_\et$, and $\mf{X}_\Zar$ the full subcategories of $\mf{X}_\fl$ consisting of objects whose structure morphism is adic, \'etale, and an open embedding, respectively, with the induced topology. Denote by $\mf{X}_{\mr{ZAR}}$, the full subcategory of $\mf{X}_\fl$ whose covers are given by Zariski covers. When $\mf{X}$ is a scheme, we use the notation $\mf{X}_\mathrm{fpqc}$ for $\mf{X}^\mathrm{adic}_\fl$. Each of these has a variant consisting only of affine (formal) schemes, but as these variants give rise to the same topos, we often confuse the two. Each of these sites is ringed via the usual structure sheaf (see \cite[Chapter I, Proposition 6.1.2]{FujiwaraKato}).

Let $\mc{G}$ be a smooth affine group (formal) $\mf{X}$-scheme. As affine adic morphisms satisfy effective descent in $\mf{X}_\fl$ (see \cite[Chapter I, Corollary 6.1.13]{FujiwaraKato}), and smoothness can be checked locally in $\mf{X}_\fl$ (see \cite[Chapter I, Proposition 6.1.8]{FujiwaraKato}) one may observe the following.

\begin{lem}\label{lem:torsor-representable} A $\mc{G}$-torsor on $\mf{X}_\fl$ is representable by a smooth and affine surjection $\mf{P}\to\mf{X}$.
\end{lem}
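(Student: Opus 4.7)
The plan is to establish representability via faithfully flat descent from a local trivialization of the torsor. Given a $\mc{G}$-torsor $\mc{P}$ on $\mf{X}_\fl$, local non-emptiness guarantees a cover $\{\mf{U}_i \to \mf{X}\}$ in $\mf{X}_\fl$ with $\mc{P}(\mf{U}_i) \ne \varnothing$; by refining we may assume each $\mf{U}_i$ is affine and that the cover is adic (since the structure map of an object in $\mf{X}_\fl$ is locally a composition of an adic morphism with open immersions on the target, and $\mc{P}$ is in particular a Zariski sheaf). A choice of section in $\mc{P}(\mf{U}_i)$ yields an isomorphism of $\mc{G}|_{\mf{U}_i}$-torsors $\mc{P}|_{\mf{U}_i} \simeq \mc{G}|_{\mf{U}_i}$, so that $\mc{P}|_{\mf{U}_i}$ is representable by the smooth affine adic formal $\mf{U}_i$-scheme $\mc{G}_{\mf{U}_i}$.

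Next I would descend: the isomorphisms on double overlaps $\mf{U}_i \times_\mf{X} \mf{U}_j$ produced by the torsor structure give effective descent data for the affine adic morphisms $\mc{G}_{\mf{U}_i} \to \mf{U}_i$ relative to the cover $\{\mf{U}_i \to \mf{X}\}$. Invoking the effective descent for affine adic morphisms along flat covers in $\mf{X}_\fl$ (\cite[Chapter I, Corollary 6.1.13]{FujiwaraKato}, cited just before the lemma), this glues to an affine adic morphism $\mf{P} \to \mf{X}$ representing $\mc{P}$. One then checks that the formal scheme so obtained agrees with the sheaf $\mc{P}$ itself by comparing their restrictions to the cover $\{\mf{U}_i\}$, using that $\mf{X}_\fl$ is subcanonical.

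For the geometric properties, I would argue locally. Smoothness of $\mf{P} \to \mf{X}$ may be checked after pullback along the flat cover $\{\mf{U}_i \to \mf{X}\}$ by \cite[Chapter I, Proposition 6.1.8]{FujiwaraKato}, and the pullback is $\mc{G}_{\mf{U}_i} \to \mf{U}_i$, which is smooth by hypothesis on $\mc{G}$. Surjectivity follows because $\mc{G}_{\mf{U}_i} \to \mf{U}_i$ admits the identity section and so is surjective on underlying spaces, and $\bigsqcup_i \mf{U}_i \to \mf{X}$ is surjective as a flat cover; composing gives surjectivity of $\bigsqcup_i \mc{G}_{\mf{U}_i} \to \mf{X}$, which factors through $\mf{P}$.

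The only genuine subtlety is arranging the cover trivializing $\mc{P}$ to consist of adic affine morphisms so that the cited effective descent statement applies verbatim; this is resolved by the standard observation that any flat cover can be refined by one consisting of affine adic morphisms (after shrinking the targets to affine opens, which does not affect the torsor property). Once this adic refinement is in hand, everything else reduces to quoting descent and the local nature of smoothness in $\mf{X}_\fl$.
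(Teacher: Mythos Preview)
Your proposal is correct and follows essentially the same approach as the paper, which does not give a detailed proof but merely observes (in the sentence preceding the lemma) that the result follows from effective descent for affine adic morphisms \cite[Chapter~I, Corollary~6.1.13]{FujiwaraKato} and the flat-local nature of smoothness \cite[Chapter~I, Proposition~6.1.8]{FujiwaraKato}. One small simplification: your concern about refining to an adic cover is unnecessary, since by the paper's definition of $\mf{X}_\fl$ a cover $\{\mf{U}_i\to\mf{X}\}$ already requires $\coprod_i\mf{U}_i\to\mf{X}$ to be adically faithfully flat, so adicness is automatic and only the passage to affine $\mf{U}_i$ (via Zariski refinement) needs mention.
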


As smooth surjections have \'etale local sections, we obtain the following from Corollary \ref{cor:torsor-equiv-continuous-morphism}, and we denote the common category of $\mc{G}$-torsors on these three sites by $\cat{Tors}_\mc{G}(\mf{X})$.

\begin{cor}\label{cor:flat-and-etale-torsors-agree} The inclusions $\mf{X}_\et\to\mf{X}_\fl^\mathrm{adic} \to \mf{X}_\fl$ give equivalences on categories of $\mc{G}$-torsors. 
\end{cor}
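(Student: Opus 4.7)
The plan is to apply Corollary \ref{cor:torsor-equiv-continuous-morphism} to each of the two inclusions $u_1\colon \mf{X}_\et\to\mf{X}_\fl^\mr{adic}$ and $u_2\colon \mf{X}_\fl^\mr{adic}\to \mf{X}_\fl$. Each is a fully faithful inclusion of sites preserving coverings (every \'etale cover is an adic flat cover, and every adic flat cover is a flat cover), so each $u_i$ is continuous. Moreover, as $u_i$ is an inclusion of a full subcategory, the inverse image functor acts (up to sheafification) by restriction and so preserves finite limits, meaning each $u_i$ induces a morphism of sites. Corollary \ref{cor:torsor-equiv-continuous-morphism} then yields quasi-inverse pairs $(u_{i,\ast}, u_i^\ast)$ defining equivalences between $\cat{Tors}_\mc{G}(\mathsf{T}_i)'$ and $\cat{Tors}_\mc{G}(\mathsf{S}_i)$, where $(\mathsf{T}_1, \mathsf{S}_1) = (\mf{X}_\fl^\mr{adic}, \mf{X}_\et)$ and $(\mathsf{T}_2, \mathsf{S}_2) = (\mf{X}_\fl, \mf{X}_\fl^\mr{adic})$. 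The prime denotes the full subcategory of torsors whose restriction is locally non-empty.

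It therefore suffices to show that every $\mc{G}$-torsor on $\mf{X}_\fl$ remains locally non-empty after restriction to $\mf{X}_\fl^\mr{adic}$, and similarly for the restriction from $\mf{X}_\fl^\mr{adic}$ to $\mf{X}_\et$. The key input is Lemma \ref{lem:torsor-representable}, which (together with its evident analogue for $\mf{X}_\fl^\mr{adic}$, proven by the same descent argument) shows any such torsor $\mc{A}$ is representable by a smooth affine surjection $\mf{P} \to \mf{X}$. In the paper's conventions smooth morphisms of formal schemes are adic, so $\mf{P}$ is an object of $\mf{X}_\fl^\mr{adic}$ carrying the tautological section of $\mc{A}$; this settles the case of $u_2$.

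For $u_1$ one must further show that the adically smooth affine surjection $\mf{P} \to \mf{X}$ admits an \'etale cover of $\mf{X}$ along which $\mc{A}$ has a section. This can be established via topological invariance of the \'etale site of a formal scheme: the closed-fiber reduction $\mf{P}_0 \to \mf{X}_0$ is a smooth surjection of ordinary schemes and hence admits \'etale local sections (\stacks{055U}); such an \'etale cover $Y \to \mf{X}_0$ lifts uniquely to an adically \'etale cover $\mf{Y} \to \mf{X}$, and the section of $\mf{P}_0$ over $Y$ lifts inductively through the nilpotent thickenings by formal smoothness, yielding a section of $\mf{P}$ over $\mf{Y}$.

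Composing the two equivalences then produces the desired identification $\cat{Tors}_\mc{G}(\mf{X}_\fl) \isomto \cat{Tors}_\mc{G}(\mf{X}_\et)$. The genuinely non-formal part is the assertion that adically smooth surjections of formal schemes admit \'etale local sections, that is, the compatibility of the classical smooth-implies-\'etale-local-section statement with the adic formalism; all remaining steps are mechanical applications of the torsor-theoretic apparatus from the earlier parts of the appendix.
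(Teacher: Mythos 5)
Your proof is correct and follows the paper's own argument: the paper likewise deduces the corollary from Lemma \ref{lem:torsor-representable} and Corollary \ref{cor:torsor-equiv-continuous-morphism} via the one-line observation that smooth surjections admit \'etale-local sections, which you have simply spelled out (including the reduction modulo the ideal of definition and the lifting through nilpotent thickenings). One cosmetic point: the functor that ``acts by restriction'' is $u_\ast$, not $u^{-1}$; the left-exactness of $u^{-1}$ (hence that each inclusion induces a morphism of sites) follows because both inclusions preserve fiber products and the final object $\mf{X}$.
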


Suppose that $R$ is a ring which is $J$-adically complete with respect to a finitely generated ideal $J\subseteq R$. Consider the left exact functor
\begin{equation*}
    \wh{(-)}\colon \cat{PSh}(\Spec(R)_\fpqc)\to\cat{PSh}(\Spf(R)^\mathrm{adic}_\fl),\qquad \mc{F}\mapsto \left(\wh{\mc{F}}(\mf{Y})\defeq \varprojlim_n \mc{F}(\mf{Y}_n)\right),
\end{equation*}
where $\mf{Y}_n:=(|\mf{Y}|,\mc{O}_{\mf{Y}}/J^n\mc{O}_\mf{Y})$. If $\mc{F}$ is a sheaf, then $\wh{\mc{F}}$ is a sheaf as the inverse limit functor is left exact. If $\mc{F}=h_P$ for a morphism $P\to\Spec(R)$, then $\wh{\mc{F}}$ is represented by $\wh{P}\to \Spf(R)$.

Let $\mc{G}$ be a smooth affine group $R$-scheme. Note that $\wh{\mc{G}}(\Spf(S))=\mc{G}(\Spec(S))$ when $S$ is $J$-adically complete, and we denote this common group by $\mc{G}(S)$, and confuse $\mc{G}$ and $\wh{\mc{G}}$. As $\wh{(-)}$ commutes with products, it naturally sends pseudo-torsors for $\mc{G}$ to pseudo-torsors for $\wh{\mc{G}}$. Due to the following, we can unambiguously denote $\cat{Tors}_\mc{G}(\Spf(R))$ and $\cat{Tors}_\mc{G}(\Spec(R))$ by the common symbol $\cat{Tors}_\mc{G}(R)$.

\begin{prop}\label{prop:completion-is-equivalence} The functor $\wh{(-)}$ functor induces an equivalence
\begin{equation*}
    \wh{(-)}\colon \cat{Tors}_\mc{G}(\Spec(R)_{\fpqc})\to \cat{Tors}_\mc{G}(\Spf(R)^\mathrm{adic}_\fl). 
\end{equation*}
\end{prop}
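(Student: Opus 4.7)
The plan is to directly verify full faithfulness and essential surjectivity of $\wh{(-)}$ using Elkik's algebraization theorem together with the Henselian property of the pair $(R,J)$. By Lemma \ref{lem:torsor-representable} and its scheme-theoretic analogue, every $\mc{G}$-torsor on either side is representable by a smooth affine surjection. Since $R$ is $J$-adically complete with $J$ finitely generated, the pair $(R,J)$ is Henselian (\stacks{0ALJ}), and Elkik's theorem (e.g., \stacks{0AKA}) yields an equivalence, via $J$-adic completion, between smooth affine $R$-schemes and adic smooth affine formal $\Spf(R)$-schemes; morphisms between such objects correspond analogously.

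For full faithfulness, a morphism of $\mc{G}$-torsors is automatically an isomorphism, so it suffices to show $\widehat{P_1}\simeq \widehat{P_2}$ as $\wh{\mc{G}}$-torsors implies $P_1\simeq P_2$ as $\mc{G}$-torsors. An isomorphism of the underlying smooth affine formal schemes lifts uniquely to the algebraic level by Elkik, and the resulting algebraic morphism is automatically $\mc{G}$-equivariant since its $J$-adic completion is $\wh{\mc{G}}$-equivariant and morphisms between smooth affine $R$-schemes are determined by their completions (again by Elkik applied to the product $\mc{G}\times P_1$, or more prosaically by the observation that the two morphisms $\mc{G}\times P_1\rightrightarrows P_2$ expressing equivariance and its failure agree after completion, hence coincide).

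For essential surjectivity, given a $\mc{G}$-torsor $\mf{P}\to \Spf(R)$, algebraize to obtain a smooth affine $R$-scheme $P\to \Spec(R)$ with $\wh{P}\simeq \mf{P}$, and likewise algebraize the action $\wh{\mc{G}}\times_{\Spf(R)}\wh{P}\to \wh{P}$ to a morphism $\mc{G}\times_{\Spec(R)} P\to P$. The group action axioms transfer from $\mf{P}$ by full faithfulness. To see $P\to \Spec(R)$ is a torsor, note that it is smooth (so its image is open) and its image contains $V(J)$ (since $\mf{P}\to\Spf(R)$ is surjective); as $J$ lies in the Jacobson radical of $R$ (because $R$ is $J$-adically complete), the open set $\mathrm{im}(P\to\Spec(R))$ contains every maximal ideal and is therefore all of $\Spec(R)$. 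Finally, the action map $P\times_R \mc{G}\to P\times_R P$ is an isomorphism of smooth affine $R$-schemes because its $J$-adic completion is and completion is fully faithful on such objects.

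The main obstacle is invoking Elkik's theorem at the requisite generality: the stated hypotheses on $R$ are weaker than typical Noetherian setups, but for smooth morphisms the relevant algebraization is handled by \stacks{0AKA} and its refinements. One may alternatively work locally on $\Spec(R)$ in the \'etale topology, where smoothness reduces to the simpler \'etale algebraization statement.
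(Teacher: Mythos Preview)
Your argument rests on the claim that Elkik's theorem gives an \emph{equivalence of categories} between smooth affine $R$-schemes and adic smooth affine formal $\Spf(R)$-schemes, with morphisms corresponding bijectively under completion. This is false, and the error propagates through both halves of your proof. Completion is not fully faithful on smooth affine schemes: already for $R=\Z_p$, $J=(p)$, and $P_1=P_2=\bb{A}^1_{\Z_p}$, the map $t\mapsto t+pt^2$ defines an automorphism of $\Z_p\langle t\rangle$ (it is the identity modulo $p$, hence invertible by successive approximation) that does not arise from an automorphism of $\Z_p[t]$. So your lifting of an isomorphism $\wh{P_1}\isomto\wh{P_2}$ to $P_1\isomto P_2$ is not justified, nor is the argument that the algebraized action map satisfies the group-action axioms ``because its completion does.'' Elkik's theorem, in any of its standard forms, gives only essential surjectivity (algebraization of a single smooth affine object), not control on morphisms; and even that typically requires Noetherian hypotheses, whereas here $R$ is only assumed $J$-adically complete for a finitely generated $J$. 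The tag \stacks{0AKA} you cite concerns lifting \'etale algebras over Henselian pairs, which is a much weaker statement.

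The paper's proof avoids algebraization entirely and instead works cohomologically. It first reduces to the \'etale sites via Corollary~\ref{cor:flat-and-etale-torsors-agree}, then identifies $H^1(\Spf(R)_\et,\mc{G})\isomto\varprojlim_n H^1(\Spec(R/J^n)_\et,\mc{G})$ using topological invariance of the \'etale site and smoothness of torsors, and finally invokes \cite[Theorem 2.1.6(b)]{BCLoopTorsors} to conclude that $H^1(\Spec(R)_\et,\mc{G})\to\varprojlim_n H^1(\Spec(R/J^n)_\et,\mc{G})$ is a bijection. Full faithfulness is then handled separately by observing that $\underline{\Aut}(\mc{F}_2)$ is itself representable by a smooth affine group $H$, so the Hom-sets on both sides are torsors under $H(R)=\wh{H}(R)$ once one knows the isomorphism classes match. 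This sidesteps the failure of full faithfulness for general smooth affine schemes by exploiting the torsor structure directly.
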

\begin{proof} Let us first establish bijectivity on the sets of isomorphism classes. To do this, first observe that by Corollary \ref{cor:flat-and-etale-torsors-agree}, we are free to replace $\Spec(R)_{\mr{fpqc}}$ (resp.\@ $\Spf(R)^\mr{adic}_\mr{fl}$) with $\Spec(R)_\et$ (resp.\@ $\Spf(R)_\et$). Observe then that we have a commutative diagram
\begin{equation}\label{eq:torsor-diagram}
    \xymatrix{H^1(\Spec(R)_{\et},\mc{G})\ar[r]^{\wh{(-)}}\ar[d] &  H^1(\Spf(R)_\et,\mc{G}).\ar[dl]^-{\sim}\\ \displaystyle \varprojlim_n H^1(\Spec(R/J^n)_{\et},\mc{G}) & }
\end{equation}
As the completion of a $\mc{G}$-pseudo-torsor is a $\mc{G}$-pseudo-torsor, that the arrow labeled by $\widehat{(-)}$ is well-defined (i.e.\@, sends a torsor to a torsor) follows from the observation that the completion of an \'etale cover of $\Spec(R)$ is an \'etale cover of $\Spf(R)$. The arrow labeled as an isomorphism is obtained from the equivalence of categories
\begin{equation*}
    \cat{Tors}_\mc{G}(\Spf(R)_\et)\isomto \twolim_n \cat{Tors}_\mc{G}(\Spec(R/J^n)),
\end{equation*}
given by sending $\mc{A}$ to $(\mc{A}_{\Spec(R/J^n)})$ with quasi-inverse taking $(\mc{A}_n)$ to the $\mc{G}$-torsor sending $\Spf(S)$ to $\varprojlim \mc{A}_n(\Spec(S/J^nS))$. That this quasi-inverse is well-defined (i.e.\@, actually produces torsors) follows from the topological invariance of \'etale sites $\Spf(R)_\et\isomto\Spec(R/J)_\et$, and the smoothness of each $\mc{A}_n$, which shows that any \'etale cover of $\Spec(R/J)$ trivializing $\mc{A}_1$ lifts uniquely to an \'etale cover of $\Spf(R)$ trivializing $\varprojlim \mc{A}_n$. So, the claim follows as the vertical arrow in \eqref{eq:torsor-diagram} is bijective by \cite[Theorem 2.1.6.(b)]{BCLoopTorsors}.

To show fully faithfulness we must show that for any two $\mc{G}$-torsors $\mc{F}_1$ and $\mc{F}_2$ on $\Spec(R)_{\fpqc}$ that the induced map $\Hom_\mc{G}(\mc{F}_1,\mc{F}_2)\to \Hom_{\mc{G}}(\wh{\mc{F}}_1,\wh{\mc{F}}_2)$ is a bijection. As $\underline{\Aut}(\mc{F}_2)$ is locally isomorphic to $\mc{G}$, we deduce from effective descent for affine morphisms in $\Spec(R)_\fpqc$ that $\underline{\Aut}(\mc{F}_2)$ is represented by some smooth affine group $R$-scheme $H$. Thus, $\underline{\Aut}(\wh{\mc{F}}_2)$ is represented by $\wh{H}$. Moreover, we may assume that $\mc{F}_1$ is isomorphic to $\mc{F}_2$, and thus by the bijectivity of isomorphism classes, that $\wh{\mc{F}}_1$ is isomorphic to $\wh{\mc{F}}_2$. So, $\Hom_\mc{G}(\mc{F}_1,\mc{F}_2)$ (resp.\@ $\Hom_{\mc{G}}(\wh{\mc{F}}_1,\wh{\mc{F}}_2)$) is a torsor for $\Aut(\mc{F}_2)=H(R)$ (resp.\@ $\Aut(\wh{\mc{F}}_2)=\wh{H}(R)$). The claim follows as $\Hom_\mc{G}(\mc{F}_1,\mc{F}_2)\to \Hom_{\mc{G}}(\wh{\mc{F}}_1,\wh{\mc{F}}_2)$ is equivariant for the bijection $H(R)\to\wh{H}(R)$.
\end{proof}

Let $\cat{FPMod}(R)$ denote the category of finite projective $R$-modules. The following is a vector bundle analogue of Proposition \ref{prop:completion-is-equivalence}.
\begin{prop}\label{prop:global-sections-equivalence-vb} The global section functor $\cat{Vect}(\Spf(R)_{\fl},\mc{O}_{\Spf(R)})\to\cat{FPMod}(R)$ is a bi-exact $R$-linear $\otimes$-equivalence (see \S\ref{section:torsors via tensors} for this terminology) which preserves rank.
\end{prop}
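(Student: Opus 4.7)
The plan is to prove the proposition by constructing an explicit quasi-inverse $L\colon \cat{FPMod}(R)\to \cat{Vect}(\Spf(R)_\fl,\mc{O}_{\Spf(R)})$ sending $M$ to the sheaf $\widetilde M$ determined on affines by $\widetilde M(\mf Y)\defeq M\otimes_R \mc{O}_\mf{Y}(\mf{Y})$. First I would check that $\widetilde M$ is a sheaf on $\Spf(R)_\fl$: because $M$ is finitely presented, the relevant equalizer diagram is obtained by tensoring the descent diagram of $\mc{O}_{\Spf(R)}$ with $M$, and faithfully flat descent of modules (cf.\ \cite[Chapter I, Proposition 6.1.2]{FujiwaraKato}) applies. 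Since $M$ is finite projective there exist $f_1,\dots,f_k$ generating the unit ideal in $R$ such that each $M_{f_i}$ is free; the Zariski cover $\{D(f_i)\to\Spf(R)\}$ then trivializes $\widetilde M$, so $\widetilde M$ is a vector bundle. This simultaneously shows that $L$ is $R$-linear, preserves tensor products, and preserves rank.

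Next I would check that $\Gamma$ and $L$ are quasi-inverse on objects. The identity $\Gamma(\widetilde M)=M$ is immediate from the definition. For the reverse, given a vector bundle $\mc E$ of rank $n$, combining Proposition \ref{prop:gln-torsor-vector-bundle} (vector bundles of rank $n$ correspond to $\GL_n$-torsors) with Proposition \ref{prop:completion-is-equivalence} (applied to $\mc G=\GL_{n,R}$, which is smooth and affine) and the classical fpqc descent equivalence between $\GL_n$-torsors on $\Spec(R)_\fpqc$ and rank $n$ finite projective $R$-modules, shows that $\mc E$ is isomorphic to $\widetilde M$ for some finite projective $R$-module $M$ of rank $n$. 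Applying $\Gamma$ identifies $M$ with $\Gamma(\mc E)$, so the natural adjunction unit $\widetilde{\Gamma(\mc E)}\to \mc E$ is an isomorphism. For full faithfulness on morphisms, note that $\mc Hom_{\mc O}(\widetilde M,\widetilde N)=\widetilde{\Hom_R(M,N)}$ (both sides are constructed from the same descent data, using that $M$ is finite projective so internal Hom commutes with tensor product over flat base change), and therefore $\Hom_\mc{O}(\widetilde M,\widetilde N)=\Gamma(\widetilde{\Hom_R(M,N)})=\Hom_R(M,N)$.

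For bi-exactness I would argue as follows. A short exact sequence of finite projective $R$-modules is locally (even globally) split, and applying $L$ preserves such splittings, so $L$ is exact. Conversely, suppose $0\to\mc E'\to\mc E\to\mc E''\to 0$ is a short exact sequence of vector bundles in $\cat{Vect}(\Spf(R)_\fl,\mc O)$; since this sequence is locally split (by Proposition \ref{prop:gln-torsor-vector-bundle}, choose a local trivialization of $\mc{E}''$ and lift to split the sequence), and since $\{D(f_i)\to \Spf(R)\}$ refines any such local splitting in the already-established equivalence, taking global sections of a locally split sequence of finite projective modules preserves exactness.

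The principal obstacle I anticipate is verifying the sheaf property of $\widetilde M$ and the comparison $\mc Hom(\widetilde M,\widetilde N)=\widetilde{\Hom_R(M,N)}$ without Noetherian hypotheses; this requires care with fpqc descent for finite projective modules over formal schemes, but reduces quickly to the standard statement for finitely presented modules thanks to \cite[Chapter I, Proposition 6.1.2]{FujiwaraKato}. The remainder of the proof is then an unraveling of the torsor equivalences already established in Proposition \ref{prop:gln-torsor-vector-bundle} and Proposition \ref{prop:completion-is-equivalence}.
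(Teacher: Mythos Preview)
Your approach is genuinely different from the paper's and is essentially sound for establishing the equivalence, though the bi-exactness argument needs repair.

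\textbf{Comparison with the paper.} The paper does not invoke Proposition~\ref{prop:completion-is-equivalence} at all. Instead it first shows directly that $\cat{Vect}(\Spf(R)_\fl)=\cat{Vect}(\Spf(R)_\Zar)$: for $\mc{E}$ of rank $n$ and $P=\uIsom(\mc{O}^n,\mc{E})$, one replaces $R$ by a completed localization, observes $P(R/J)\ne\varnothing$ since projective modules over local rings are free, and lifts via Hensel's lemma using smoothness of $P$. Then it appeals to \cite[Chapter~I, Theorem~3.2.8]{FujiwaraKato} (the equivalence between adically quasi-coherent sheaves and complete modules), reducing to the claim that $\Gamma(\mc{E})$ is finite projective; this is deduced from the projectivity of each $\Gamma(\mc{E})/J^m$ together with \stacks{0D4B}. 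Your route instead reuses the torsor equivalence for $\GL_n$ already proved in Proposition~\ref{prop:completion-is-equivalence}, which is slicker but hides the same Hensel-type input inside that earlier result. Both arguments are valid for the equivalence statement; the paper's is more self-contained, yours more modular.

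\textbf{Gap in the bi-exactness argument.} Your paragraph on exactness of $\Gamma$ is circular. Knowing that $\mc{E}''$ is Zariski-locally free does \emph{not} by itself let you split the surjection $\mc{E}\to\mc{E}''$ over each $D(f_i)$: splitting requires surjectivity of $\mc{E}(D(f_i))\to\mc{E}''(D(f_i))$, which is precisely the exactness of $\Gamma$ one level down. A correct argument: set $C=\mathrm{coker}\bigl(\Gamma(\mc{E})\to\Gamma(\mc{E}'')\bigr)$, a finitely generated $R$-module. Since $\widetilde{\Gamma(\mc{E})}\to\widetilde{\Gamma(\mc{E}'')}$ is an epimorphism of sheaves, for each generator of $C$ there is an adically faithfully flat cover on which it dies; refining to a common cover $\{\Spf(S_j)\}$ gives $C\otimes_R S_j=0$. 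Reducing modulo $J^n$ and using that $R/J^n\to\prod S_j/J^nS_j$ is faithfully flat yields $C/J^nC=0$ for every $n$, hence $C=JC$, and Nakayama's lemma (valid since $J$ lies in the Jacobson radical of the $J$-adically complete ring $R$) forces $C=0$.
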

\begin{proof} We claim the source is equal to $\cat{Vect}(\Spf(R)_\Zar,\mc{O}_{\Spf(R)})$. By Proposition \ref{prop:morphisms-of-topos-vector-bundle-equiv} it suffices to show that for an object $\mc{E}$ of $\cat{Vect}_n(\Spf(R),\mc{O}_{\Spf(R)})$ that $P=\underline{\Isom}(\mc{O}_{\Spf(R)}^n,\mc{E})$ has a section Zariski locally on $\Spf(R)$. Up to replacing $R$ by a completed localization, we may assume by \stacks{05VG} that $P(R/JR)$ is non-empty. But, as $P$ is represented by a smooth formal $R$-scheme by Lemma \ref{lem:torsor-representable} we deduce from Hensel's lemma that $P(R)$ is non-empty. Then, by \cite[Chapter I, Theorem 3.2.8]{FujiwaraKato}, and the fact that any finite projective $R$-module $M$ is automatically $J$-adically complete,\footnote{Find an $R$-module $N$ such that $M\oplus N\simeq R^m$ for some $m$. As $R^m$ is $J$-adically complete the morphism $M\oplus N\to \widehat{M\oplus N}=\wh{M}\oplus\wh{N}$ is an isomorphism, from where it follows that $M\to \wh{M}$ is an isomorphism.} it suffices to show that for an adically quasi-coherent sheaf $\mc{E}$ on $\Spf(R)$, that $M=\mc{E}(\Spf(R))$ is finite projective if and only if $\mc{E}$ is a vector bundle, and the only if direction is clear. So, suppose that $\mc{E}$ is a vector bundle. Then, by \cite[Chapter I, Theorem 3.2.8]{FujiwaraKato} $M$ is a finitely generated $J$-adically complete $R$-module. Moreover, as $\mc{E}|_{\Spec(R/J^m)}$ is a vector bundle for all $m$, we know from \stacks{05JM} that $M/J^mM$ is finite projective for all $m$. Then, $M$ is finite projective by \stacks{0D4B}.
\end{proof}

Because of Proposition \ref{prop:global-sections-equivalence-vb} and its proof, the category of vector bundles on a formal scheme $\mf{X}$ is independent of the above-defined sites. We denote the common category by $\cat{Vect}(\mf{X})$ (omitting the structure sheaf from the notation). If $\mf{X}=\Spf(R)$, we shorten this notation further to $\cat{Vect}(R)$, and abusively identity it with $\cat{FPMod}(R)$.

\subsection{Tannakian formalism}\label{section:torsors via tensors}

For a ring $R$,\footnote{In this article $R$ will almost always be $\Z_p$ or $\Q_p$, and which it is should always be clear from context.} we say $\mc{C}$ is an \emph{(exact) $R$-linear $\otimes$-category} if 
\begin{itemize}
\Item (it is an exact category (see \cite[Appendix A]{Keller}),)
\item $\mc{C}$ is an additive $R$-linear category (see \stacks{0104} and \stacks{09MI}), 
\item the underlying category is Karoubian (see \stacks{09SF}),
\item there is an $R$-bilinear symmetric monoidal structure $\otimes\colon \mc{C}\times \mc{C}\to \mc{C}$ (see \stacks{0FFJ}).
\end{itemize}
For the unit object $\mathbf{1}$ of $\mc{C}$ and an object $X$ of $\mc{C}$, an \emph{element} of $X$ means a morphism $\mathbf{1}\to X$. 

For (exact) $R$-linear $\otimes$-categories $\mc{C}$ and $\mc{D}$, a functor $F\colon \mc{C}\to\mc{D}$ is an \emph{(exact) $R$-linear $\otimes$-functor} if it (preserves exact sequences and it) is $R$-linear (see \stacks{09MK}) and symmetric monoidal (see \stacks{0FFL} and \stacks{0FFY}). From \cite[I. Proposition 4.4.2]{SaaCatT}, a quasi-inverse of an $R$-linear $\otimes$-functor $F$ is automatically an $R$-linear $\otimes$-functor, in which case we call $F$ an \emph{$R$-linear $\otimes$-equivalence}. If $F$ is exact then it is not guaranteed that the same holds for its quasi-inverse (for example, the restriction functor $\cat{Rflx}(X)\to\cat{Rflx}(U)$ for a large open subset $U\subsetneq X$ when both are endowed with the exact structure inherited from the usual one on the category of coherent modules: see \S\ref{ss:reflexive-pseudo-torsors}). If an exact $R$-linear $\otimes$-functor has an exact $R$-linear $\otimes$-functor quasi-inverse, we say that $F$ is a \emph{bi-exact $R$-linear $\otimes$-equivalence}.

Let $\mc{C}$ be an $R$-linear $\otimes$-category and $X$ a dualizable object of $\mc{C}$ (see \stacks{0FFP}). As $\mc{C}$ is Karoubian, and by \stacks{0FFU} and \stacks{0FFT}, we may construct in $\mc{C}$ an object obtained from $X$ by taking any finite combination of direct sums, duals, symmetric products, and alternating products. By a \emph{set of tensors} $\mathds{T}$ on $X$ we mean a finite set of elements in an object built in this way. We write this symbolically as $\mathds{T}\subseteq X^\otimes$.\footnote{We often implicitly interpret $X^\otimes$ as the direct sum of these finite constructions in a larger $R$-linear $\otimes$-category that is closed under arbitrary direct sums when such a larger category is naturally given.} For an $R$-linear $\otimes$-functor $F\colon\mc{C}\to\mc{D}$ and a set of tensors $\mathds{T}\subseteq X^\otimes$ we obtain the set $F(\mathds{T})\subseteq F(X)^\otimes$ of tensors on $F(X)$. 

We define a \emph{tensor package} over $R$ to be a pair $(\Lambda_0,\mathds{T}_0)$ where $\Lambda_0$ is a finite projective $R$-module and $\mathds{T}_0\subseteq \Lambda_0^\otimes$. Given a tensor package $(\Lambda_0,\mathds{T}_0)$ we have the group $R$-scheme
\begin{equation*}
\mathrm{Fix}(\mathds{T}_0)\colon \cat{Alg}_R\to \cat{Set},\qquad S\mapsto \left\{g\in \GL(\Lambda_0\otimes_R S): g(t)=t,\text{ for all }t\in \mathds{T}_0\right\},
\end{equation*}
a closed subgroup scheme of $\GL(\Lambda_0)$. 

\begin{thm}[{\cite[Theorem 1.1]{Broshi}}]\label{thm:broshi1} Suppose that $R$ is a Dedekind domain. Then, for every flat finite type affine group $R$-scheme $\mc{G}$, and faithful representation $\mc{G}\to \GL(\Lambda_0)$, there exists a tensor package $(\Lambda_0,\mathds{T}_0)$ with $\mc{G}=\mathrm{Fix}(\mathds{T}_0)$.
\end{thm}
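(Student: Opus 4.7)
The plan is to adapt the Chevalley-style argument of Deligne (see \cite{DeligneHodge}) from the setting of reductive groups over a field to that of flat finite-type group $R$-schemes over a Dedekind domain, leveraging the fact that every finitely generated torsion-free module over $R$ is projective. First, I would observe that $A\defeq\mc{O}(\GL(\Lambda_0))$ is Noetherian over $R$, so the ideal $I\subseteq A$ cutting out $\mc{G}\hookrightarrow \GL(\Lambda_0)$ is finitely generated. Viewing $A$ under the right-regular $\GL(\Lambda_0)$-action, it is the filtered union of its finitely generated $\GL(\Lambda_0)$-stable $R$-submodules (one constructs such a submodule containing any given finite set of sections via matrix-coefficient considerations). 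I would thus choose such a stable submodule $W\subseteq A$ containing a finite generating set of $I$, and set $M\defeq W\cap I$. Replacing $M$ by its saturation $M^\mathrm{sat}$ in $W$ is harmless (the subgroup scheme stabilizing $M^\mathrm{sat}$ agrees with that stabilizing $M$, since passing to the saturation commutes with tensoring by a flat $R$-algebra). Then $W/M^\mathrm{sat}$ is torsion-free, hence projective as $R$ is Dedekind, so $M^\mathrm{sat}\subseteq W$ is a direct summand, and a direct verification identifies $\mc{G}$ with the stabilizer of $M^\mathrm{sat}\subseteq W$ inside $\GL(\Lambda_0)$.

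Second, I would convert ``$g$ preserves the direct summand $M^\mathrm{sat}\subseteq W$'' into the condition that $g$ fix a single tensor. Namely, the idempotent $e\in\End_R(W)=W\otimes_R W^\vee$ projecting onto $M^\mathrm{sat}$ along its complement satisfies $g\cdot e=e$ under the conjugation action precisely when $g$ preserves both $M^\mathrm{sat}$ and its complement; this already suffices, so $\mc{G}=\mathrm{Fix}(e)$ with $e$ regarded as a tensor in $W\otimes_R W^\vee$.

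Third, I would realize $W$ inside a tensor construction on $\Lambda_0$, transporting $e$ accordingly. Since $A$ is the localization of $\Sym^\bullet(\Lambda_0^\vee\otimes\Lambda_0)$ at the section $\det$, and $\det^{-1}$ can be accommodated by tensoring with a suitable top exterior power of $\Lambda_0^\vee$, a sufficiently large finite direct sum $T$ of iterated tensor, symmetric, and exterior power constructions on $\Lambda_0$ and $\Lambda_0^\vee$ accommodates $W$ as a $\GL(\Lambda_0)$-subrepresentation. Saturating once more inside $T$ arranges that $W\subseteq T$ is a direct summand, whose projector $e'\in T\otimes T^\vee$ is itself a tensor. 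The finite set $\mathds{T}_0\defeq \{e',e\}$ (with $e$ now viewed inside the tensor construction via the embedding $W\otimes W^\vee \hookrightarrow T\otimes T^\vee$) yields a tensor package with $\mathrm{Fix}(\mathds{T}_0)=\mc{G}$.

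The main technical obstacle lies in the third step: over a field, complete reducibility and the highest-weight classification make embedding $W$ into a tensor construction on $\Lambda_0$ automatic, whereas over a Dedekind domain one must work integrally, relying on saturations and careful bookkeeping of $\det$-twists to ensure that both the submodule $W$ and the projector $e$ descend to genuine tensors in $\Lambda_0^\otimes$ as defined in the paper (which includes duals, symmetric, and alternating products). The first two steps are essentially formal consequences of Noetherianness and the Dedekind property; the heart of Broshi's argument is the integral realization in the third step.
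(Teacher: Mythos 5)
The paper gives no proof of this statement—it is imported as a citation of Broshi—so I am assessing your argument on its own terms. Your Step 1 is essentially fine (and simpler than you suggest: flatness of $\mc{G}$ makes $A/I$ $R$-torsion-free, so $M=W\cap I$ is automatically saturated in $W$ and hence already a direct summand over a Dedekind base). The fatal problem is Step 2. You correctly observe that $g$ fixes the idempotent $e$ under the conjugation action on $W\otimes_R W^\vee$ if and only if $g$ preserves \emph{both} $M^{\mathrm{sat}}$ and the chosen complement $C$, and then assert that ``this already suffices.'' It does not: what you have shown is $\mathrm{Fix}(e)=\mathrm{Stab}(M^{\mathrm{sat}})\cap\mathrm{Stab}(C)=\mc{G}\cap\mathrm{Stab}(C)$, and there is no reason for $\mc{G}$ to stabilize $C$. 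The basic obstruction: for $B$ the upper-triangular Borel of $\GL_2$ stabilizing the line $L\subseteq V$ in the standard representation, the extension $0\to L\to V\to V/L\to 0$ of $B$-modules is non-split ($L$ is the unique $B$-stable line), so for \emph{every} complement $C$ one has $\mathrm{Stab}(L)\cap\mathrm{Stab}(C)=$ the diagonal torus $\subsetneq B$. This is exactly where Kisin uses reductivity in \cite[Proposition 1.3.2]{KisIntShab}: one chooses $C$ to be $\mc{G}$-stable, so that $e$ is $\mc{G}$-equivariant, giving $\mc{G}\subseteq\mathrm{Fix}(e)$, which combined with $\mathrm{Fix}(e)\subseteq\mathrm{Stab}(\mathrm{im}\,e)=\mc{G}$ closes the argument. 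The same unjustified choice of splitting recurs in your Step 3 when you embed $W^\vee$ into $T^\vee$ and add the projector $e'$ to $\mathds{T}_0$, which can only shrink $\mathrm{Fix}(\mathds{T}_0)$ further below $\mc{G}$.

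Moreover, this gap cannot be patched in the stated generality, because the statement itself fails for non-reductive $\mc{G}$. If $\mathds{T}_0$ is any finite set of vectors in finite-dimensional representations of $\GL(\Lambda_0)$ and $B\subseteq\mathrm{Fix}(\mathds{T}_0)$ for a Borel $B\subseteq\GL_2$, then over the fraction field each orbit map $g\mapsto g\cdot t$ factors through the proper variety $\GL_2/B\cong\bb{P}^1$ and lands in affine space, hence is constant; so $\mathrm{Fix}(\mathds{T}_0)=\GL_2\neq B$. (In general $\GL(\Lambda_0)/\mathrm{Fix}(\mathds{T}_0)$ is quasi-affine, i.e., $\mathrm{Fix}(\mathds{T}_0)$ is observable, and a Borel is not.) What your Step 1 genuinely delivers—and what I believe is the actual content of Broshi's Theorem 1.1—is that $\mc{G}$ is the scheme-theoretic \emph{stabilizer of a saturated submodule (or line)} in a tensorial construction on $\Lambda_0$; upgrading this to ``fixer of tensors'' requires an additional hypothesis such as reductivity, which is in force in the one place the paper invokes the theorem (Corollary \ref{cor:strcrys-faithful-condition}). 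Rather than trying to complete the proof as stated, you should note that the statement needs such a hypothesis.
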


\begin{rem} Theorem \ref{thm:broshi1} was previously proven by Kisin in \cite[Proposition 1.3.2]{KisIntShab} in the case when $R$ is a discrete valuation ring and $\mc{G}$ has reductive generic fiber. As pointed out by Deligne in \cite{DeligneLetter} it is possible in this case to only consider $\mathds{T}_0$ contained in $\bigoplus_{m,n} \Lambda_0^{\otimes m}\otimes_R (\Lambda_0^\vee)^{\otimes n}$. This observation also applies to the situation of Theorem \ref{thm:broshi1}.
\end{rem}

Suppose now that $(\Lambda_0,\mathds{T}_0)$ is a tensor package over $R$ with $\mc{G}=\mathrm{Fix}(\mathds{T}_0)$. Denote by $\cat{Rep}_R(\mc{G})$ the natural exact $R$-linear $\otimes$-category of representations $\mc{G}\to\GL(\Lambda)$, where $\Lambda$ is a finite projective $R$-module. 
\begin{defn}\label{defn: G-C the groupoid of exact tensor functors}
    For any exact $R$-linear $\otimes$-category $\mc{C}$ a \emph{$\mc{G}$-object} in $\mc{C}$ is an exact $R$-linear $\otimes$-functor $\omega\colon \cat{Rep}_R(\mc{G})\to \mc{C}$. An \emph{isomorphism} $\omega\to \omega'$ is an invertible natural transformation, and we denote the groupoid of $\mc{G}$-objects in $\mc{C}$ by $\mc{G}\text{-}\mc{C}$. 
\end{defn}

Let $\ms{X}$ be a topos and $\mc{O}$ an $R$-algebra object of $\ms{X}$. Then, $\cat{Vect}(\ms{X},\mc{O})$ is an exact $R$-linear $\otimes$-category, with exactness inherited from the category of $\mc{O}$-modules. The following is a sheaf
\begin{equation}\label{eq: definition of mc G sub O}
\mc{G}_\mc{O}\colon \ms{X}\to \cat{Grp},\qquad T\mapsto \mc{G}(\mc{O}(T)),
\end{equation}
as $\mc{G}$ preserves all limits of rings. Observe that $(\GL_{n,R})_\mc{O}=\GL_{n,\mc{O}}$. We write $\cat{PseuTors}_{\mc{G}}(\ms{X})$ (resp.\@ $\cat{Tors}_{\mc{G}}(\ms{X})$) for $\cat{PseuTors}_{\mc{G}_\mc{O}}(\ms{X})$ (resp.\@ $\cat{Tors}_{\mc{G}_\mc{O}}(\ms{X})$), when $\mc{O}$ is clear from context. For any object $T$ of $\ms{X}$ there is, with the notation in Definition \ref{defn: G-C the groupoid of exact tensor functors}, a restriction functor 
\begin{equation*}
\GVect(\ms{X},\mc{O})\to \GVect(\ms{X}/T,\mc{O}),\qquad \omega\mapsto \omega_T\defeq (-)|_T\circ \omega,
\end{equation*}
where $(-)|_T$ denotes restriction. Denote by $\omega_\triv$ the $\mc{G}$-object given by $\omega_\triv(\Lambda)=\Lambda\otimes_R\mc{O}$. 

For an object $\mc{P}$ of $\cat{Tors}_\mc{G}(\ms{X})$ we obtain the object $\omega_\mc{P}$ of $\GVect(\ms{X},\mc{O})$ given by 
\begin{equation*}
    \omega_\mc{P}\colon \cat{Rep}_R(\mc{G})\to \cat{Vect}(\ms{X},\mc{O}),\qquad \Lambda\mapsto \mc{P}\wedge^\mc{G}(\Lambda\otimes_R \mc{O}),
\end{equation*}
which is locally on $\ms{X}$ isomorphic to $\omega_\triv$. Observe that for a representation $\rho\colon \mc{G}\to\GL(\Lambda)$, the vector bundle $\omega_\mc{P}(\Lambda)$ agrees, functorially in $\mc{P}$ and $\Lambda$, with the vector bundle associated to $\rho_\ast(\mc{P})$ by Proposition \ref{prop:gln-torsor-vector-bundle}, and so we sometimes confuse the two.

For a pair $(\mc{E},\mathds{T})$, where $\mc{E}$ is an object of $\cat{Vect}(\ms{X},\mc{O})$ and $\mathds{T}\subseteq \mc{E}^\otimes$, the functor
\begin{equation*}
\begin{aligned}\underline{\Isom}\left((\Lambda_0\otimes_R \mc{O},\mathds{T}_0\otimes 1),(\mc{E},\mathds{T})\right)\colon &\ms{X}\to \cat{Set},\\ &T\mapsto \left\{f\colon\Lambda_0\otimes_R  \mc{O}_T\isomto  \mc{E}_T: f(\mathds{T}_0\otimes 1)=\mathds{T}\right\},\end{aligned}
\end{equation*}
has the structure of an object of $\cat{PseuTors}_\mc{G}(\ms{X})$. We call $(\mc{E},\mathds{T})$ a \emph{twist} of $(\Lambda_0,\mathds{T}_0)$ if this pseudo-torsor is a torsor. By an \emph{isomorphism} of twists $(\mc{E},\mathds{T})\to (\mc{E}',\mathds{T}')$ we mean an isomorphism $\mc{E}\to \mc{E}'$ carrying $\mathds{T}$ to $\mathds{T}'$. Denote by $\cat{Twist}_\mc{O}(\Lambda_0,\mathds{T}_0)$ the groupoid of twists of $(\Lambda_0,\mathds{T}_0)$.

\begin{prop}\label{prop:torsors-and-twists}The functor 
\begin{equation*}
\cat{Twist}_\mc{O}(\Lambda_0,\mathds{T}_0)\to \cat{Tors}_{\mc{G}_\mc{O}}(\ms{X}),\qquad (\mc{E},\mathds{T})\mapsto \underline{\Isom}\left((\Lambda_0\otimes_R \mc{O},\mathds{T}_0\otimes 1),(\mc{E},\mathds{T})\right),
\end{equation*}
is an equivalence of groupoids with quasi-inverse given by sending $\mc{P}$ to $(\omega_\mc{P}(\Lambda_0),\omega_\mc{P}(\mathds{T}_0))$.
\end{prop}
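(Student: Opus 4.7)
The plan is to construct explicit unit and counit natural transformations and check they are isomorphisms locally, exploiting the definition of $\mc{G} = \mathrm{Fix}(\mathds{T}_0)$ throughout. Write $\Phi$ for the functor $(\mc{E},\mathds{T}) \mapsto \underline{\Isom}((\Lambda_0 \otimes_R \mc{O}, \mathds{T}_0 \otimes 1), (\mc{E},\mathds{T}))$ and $\Psi$ for the candidate quasi-inverse $\mc{P} \mapsto (\omega_\mc{P}(\Lambda_0), \omega_\mc{P}(\mathds{T}_0))$. The first point to dispatch, really a sanity check, is that $\Psi$ lands in $\cat{Twist}_\mc{O}(\Lambda_0,\mathds{T}_0)$: if $\mc{P}$ is a torsor, then a local section $p \in \mc{P}(T)$ produces an isomorphism $\Lambda_0 \otimes_R \mc{O}_T \isomto \omega_\mc{P}(\Lambda_0)|_T$ sending $\mathds{T}_0 \otimes 1$ to $\omega_\mc{P}(\mathds{T}_0)|_T$ via $v \mapsto [p \wedge v]$, so $\Phi(\Psi(\mc{P}))$ has local sections.

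For the counit $\Phi \circ \Psi \isomto \id$, I would construct, for a torsor $\mc{P}$, the natural $\mc{G}_\mc{O}$-equivariant morphism $\mc{P} \to \Phi(\Psi(\mc{P}))$ sending a section $p$ to the isomorphism $v \mapsto [p \wedge v]$ just described. This is a morphism of $\mc{G}_\mc{O}$-pseudo-torsors, hence an isomorphism since both source and target are torsors. For the unit $\id \isomto \Psi \circ \Phi$, given a twist $(\mc{E},\mathds{T})$ with $\mc{P} \defeq \Phi(\mc{E},\mathds{T})$, the evaluation map $\mc{P} \times (\Lambda_0 \otimes_R \mc{O}) \to \mc{E}$, $(f,v) \mapsto f(v)$, factors through the contracted product $\mc{P} \wedge^\mc{G} (\Lambda_0 \otimes_R \mc{O}) = \omega_\mc{P}(\Lambda_0)$, yielding a morphism $\omega_\mc{P}(\Lambda_0) \to \mc{E}$. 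By construction it carries $\omega_\mc{P}(\mathds{T}_0)$ to $\mathds{T}$, and is an isomorphism because it becomes one after pulling back along any local trivialization of $\mc{P}$, where it restricts to the tautological isomorphism.

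The main obstacle, if there is one, is not conceptual but bookkeeping: one must verify that the contracted-product construction interacts correctly with forming $\Lambda^\otimes$-like mixed objects in the underlying category of $\mc{O}$-modules, so that the statement $\omega_\mc{P}(\mathds{T}_0) \subseteq \omega_\mc{P}(\Lambda_0)^\otimes$ makes sense and agrees with the image of $\mathds{T}_0 \otimes 1$ under the evaluation map. This follows formally because $\omega_\mc{P}$ is an exact $R$-linear $\otimes$-functor (extending $\Lambda \mapsto \mc{P} \wedge^\mc{G} (\Lambda \otimes_R \mc{O})$ to duals, symmetric and alternating powers by exhibiting them as sub/quotient objects of tensor powers, which are preserved by the contracted product since the $\mc{G}$-action is $R$-linear on each factor), but one must spell out the compatibility at least once. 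Once this is in place, the two natural transformations above are the desired unit and counit and the equivalence follows.
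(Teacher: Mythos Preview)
Your proposal is correct. The approach, however, differs from the paper's: the paper simply observes that the assignment $T\mapsto\{(\mc{E},\mathds{T})\}$ forms a stack $C$ on $\ms{X}$, notes that $\mc{G}_\mc{O}\to\underline{\Aut}(\Lambda_0\otimes_R\mc{O},\mathds{T}_0\otimes 1)$ is an isomorphism, and then invokes Giraud's general equivalence \cite[Chapitre III, Th\'eor\`eme 2.5.1]{Giraud} between $\mc{G}$-torsors and objects of $C$ locally isomorphic to the distinguished object. You instead construct the unit and counit by hand and verify them locally, in effect reproving the relevant special case of Giraud's theorem. Your route is more self-contained and makes the maps explicit --- useful when one later needs to trace through the equivalence --- whereas the paper's citation is shorter and places the result in its natural generality. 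Your remark about the bookkeeping for $\omega_\mc{P}$ on mixed tensor constructions is a genuine detail that the paper absorbs into the Giraud citation; spelling it out once, as you suggest, does no harm.
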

\begin{proof} The association of $T$ in $\ms{X}$ to the groupoid of pairs $(\mc{E},\mathds{T})$ of an object $\mc{E}$ of $\cat{Vect}(\ms{X},\mc{O}/T)$ and $\mathds{T}\subseteq \mc{E}^{\otimes}$, forms a stack over $\ms{X}$ which we denote $C$. This proposition is then a special case of \cite[Chapitre III, Th\'eor\`eme 2.5.1]{Giraud} as the natural map $\mc{G}_\mc{O}\to \underline{\Aut}(\Lambda_0\otimes_R \mc{O}_X,\mathds{T}_0\otimes 1)$ is an isomorphism, and (with notation in loc.\@ cit.\@) $C(\Lambda_0\otimes_R\mc{O},\mathds{T}_0\otimes 1)=\cat{Twist}_\mc{O}(\Lambda_0,\mathds{T}_0)$.
\end{proof}

For an object $\omega$ of $\GVect(\ms{X},\mc{O})$ we have a pseudo-torsor 
\begin{equation*}
    \uIsom(\omega_\triv,\omega)\colon \ms{X}\to \cat{Set},\qquad T\mapsto \Isom(\omega_{\triv,T},\omega_T),
\end{equation*}
for the group sheaf $\underline{\Aut}(\omega_\triv)$. Call $\omega$ \emph{locally trivial} if this pseudo-torsor is a torsor, and by $\GVect^\lt(\ms{X},\mc{O})$ the full subgroupoid of $\GVect(\ms{X},\mc{O})$ of locally trivial objects. Say that $\mc{G}$ is \emph{reconstructible in $(\ms{X},\mc{O})$} if the natural map $\mc{G}_\mc{O}\to \underline{\Aut}(\omega_\triv)$ is an isomorphism. In this case, there is a natural equivalence $\cat{Tors}_{\mc{G}}(\ms{X})\to \GVect^\lt(\ms{X},\mc{O})$ given by sending $\mc{P}$ to $\omega_\mc{P}$, with quasi-inverse sending $\omega$ to $\underline{\Isom}(\omega_\triv,\omega)$.

If $\mc{G}$ is reconstructible in $(\ms{X},\mc{O})$, then for an object $\omega$ of $\GVect^\lt(\ms{X},\mc{O})$, the map
\begin{equation*}
\underline{\Isom}(\omega_\triv,\omega)\to \underline{\Isom}\left((\Lambda_0\otimes_R\mc{O},\mathds{T}_0\otimes 1),(\omega(\Lambda_0),\omega(\mathds{T}_0))\right)
\end{equation*}
given by evaluation is a morphism of pseudo-torsors where the source is a torsor, and so an isomorphism. Thus, $(\omega(\Lambda_0),\omega(\mathds{T}_0))$ is an object of $\cat{Twist}_\mc{O}(\Lambda_0,\mathds{T}_0)$. We deduce the following.

\begin{prop}\label{prop:torsors-twists-functors} Suppose that $\mc{G}$ is reconstructible in $(\ms{X},\mc{O})$. Then, 
\begin{equation*}
\xymatrixcolsep{5.5pc}\xymatrixrowsep{2.5pc}\xymatrix{\cat{Twist}_\mc{O}(\Lambda_0,\mathds{T}_0)\ar[rr]^-{(\mc{E},\mathds{T})\mapsto \underline{\Isom}((\Lambda_0\otimes_R\mc{O},\mathds{T}_0\otimes 1),(\mc{E},\mathds{T}))} & & \cat{Tors}_{\mc{G}}(\ms{X})\ar[d]^{\mc{P}\mapsto \omega_\mc{P}}\\ & & \GVect^\lt(\ms{X},\mc{O})\ar[ull]^{\omega\mapsto (\omega(\Lambda_0),\omega(\mathds{T}_0))\qquad\quad}}
\end{equation*}
is a commuting triangle of equivalences.
\end{prop}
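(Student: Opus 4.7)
The plan is to decompose the triangle into its three arrows and invoke the preceding machinery for each — the statement is essentially a bookkeeping consequence of what has already been set up, namely Proposition \ref{prop:torsors-and-twists}, the reconstructability hypothesis on $\mc{G}$, and the evaluation-at-$\Lambda_0$ discussion in the paragraph immediately preceding the proposition.

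The top horizontal arrow $\cat{Twist}_\mc{O}(\Lambda_0,\mathds{T}_0) \to \cat{Tors}_\mc{G}(\ms{X})$ is already established to be an equivalence by Proposition \ref{prop:torsors-and-twists}, whose explicit quasi-inverse is precisely $\mc{P}\mapsto (\omega_\mc{P}(\Lambda_0),\omega_\mc{P}(\mathds{T}_0))$; so this leg requires no new work. For the right-hand vertical arrow $\mc{P}\mapsto \omega_\mc{P}$, I would first observe that $\omega_\mc{P}$ is locally trivial whenever $\mc{P}$ is: a local section of $\mc{P}$ produces a local isomorphism $\mc{P}\wedge^\mc{G}(\Lambda\otimes_R\mc{O})\simeq \Lambda\otimes_R\mc{O}$, natural in $\Lambda$, i.e.\@ a local isomorphism $\omega_\mc{P}\simeq \omega_\triv$. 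Conversely, for $\omega$ in $\GVect^\lt(\ms{X},\mc{O})$, the pseudo-torsor $\underline{\Isom}(\omega_\triv,\omega)$ is by definition a torsor for $\underline{\Aut}(\omega_\triv)$, which reconstructability identifies with $\mc{G}_\mc{O}$. A routine check shows $\mc{P}\mapsto \omega_\mc{P}$ and $\omega\mapsto \underline{\Isom}(\omega_\triv,\omega)$ are mutually quasi-inverse.

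The diagonal arrow and the commutativity of the triangle are both encoded in the single evaluation morphism
\[
\underline{\Isom}(\omega_\triv,\omega) \longrightarrow \underline{\Isom}\left((\Lambda_0\otimes_R\mc{O},\mathds{T}_0\otimes 1),(\omega(\Lambda_0),\omega(\mathds{T}_0))\right),
\]
which, as noted just before the proposition, is a morphism of pseudo-torsors whose source is a torsor, and hence is itself an isomorphism. This simultaneously shows that the target is a torsor (so the diagonal arrow genuinely lands in $\cat{Twist}_\mc{O}(\Lambda_0,\mathds{T}_0)$) and that the diagonal agrees with the composition of the right and top arrows, functorially in $\omega$. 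With two sides of the triangle identified as equivalences and the triangle commuting up to canonical isomorphism, the third side is automatically an equivalence.

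I do not anticipate any serious obstacle. The only point that requires mild care is the naturality of the evaluation isomorphism in $\omega$, but this is automatic from the functoriality of $\omega\mapsto \omega(\Lambda_0)$ and $\omega\mapsto \underline{\Isom}(\omega_\triv,\omega)$; similarly, that the three functors restrict to the indicated groupoids of isomorphisms is built into the definitions.
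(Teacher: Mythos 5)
Your proposal is correct and follows essentially the same route as the paper: the top arrow is Proposition \ref{prop:torsors-and-twists}, the vertical arrow is the equivalence $\mc{P}\mapsto\omega_\mc{P}$ granted by reconstructability, and the diagonal arrow plus commutativity both come from the evaluation morphism of pseudo-torsors being an isomorphism because its source is a torsor. The paper treats the proposition as an immediate deduction from the two preceding paragraphs, which contain exactly the ingredients you identify; your slightly more explicit check that $\omega_\mc{P}$ is locally trivial is a harmless elaboration.
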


The following shows that the assumptions in Proposition \ref{prop:torsors-twists-functors} are often satisfied.

\begin{thm}[{\cite[Theorem 1.2]{Broshi}, \cite[Theorem 19.5.1]{ScholzeBerkeley}}]\label{thm:broshi2}
Assume $R$ is a Dedekind domain and that $\mc{G}$ is $R$-flat. Then for an $R$-scheme $X$, $\mc{G}$ is reconstructible in $(X_\fpqc,\mc{O}_X)$ and every object of $\GVect(X)$ is locally trivial.
\end{thm}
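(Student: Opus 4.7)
The plan is to first apply Theorem \ref{thm:broshi1} to choose a tensor package $(\Lambda_0, \mathds{T}_0)$ over $R$ with $\mc{G} = \mathrm{Fix}(\mathds{T}_0)$, giving a faithful closed immersion $\mc{G} \hookrightarrow \GL(\Lambda_0)$ cut out by the tensors. Both assertions then reduce to concrete tensor-theoretic statements, paralleling the proof of Proposition \ref{prop:tannakian-formalism-proetale-scheme}.

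For reconstructability, for each $T$ in $X_\fpqc$ I would consider the commutative triangle
\begin{equation*}
\xymatrixrowsep{2pc}\xymatrix{
\mc{G}_{\mc{O}_X}(T) \ar[r]^-{a} \ar[rd]_-{c} & \underline{\Aut}(\omega_\triv)(T) \ar[d]^-{b} \\
 & \GL(\Lambda_0 \otimes_R \mc{O}_X(T)),
}
\end{equation*}
with $b$ being evaluation at $\Lambda_0$. The diagonal $c$ is injective since $\mc{G} \hookrightarrow \GL(\Lambda_0)$ is a closed immersion, and $b$ is injective because every object of $\cat{Rep}_R(\mc{G})$ is a subquotient of a tensor construction from $\Lambda_0$ (see \cite[Proposition 12]{dosSantos}), so an automorphism of $\omega_\triv$ is determined by its value on $\Lambda_0$. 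Any such automorphism preserves $\omega_\triv(\mathds{T}_0) = \mathds{T}_0 \otimes 1$ by functoriality, so $\mathrm{im}(b) \subseteq \mathrm{Fix}(\mathds{T}_0)(\mc{O}_X(T)) = \mathrm{im}(c)$, forcing $a$ to be a bijection functorial in $T$.

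For local triviality of $\omega$ in $\GVect(X)$, I would study the $\mc{G}_{\mc{O}_X}$-pseudo-torsor
\begin{equation*}
P := \underline{\Isom}\bigl((\Lambda_0 \otimes_R \mc{O}_X, \mathds{T}_0 \otimes 1), (\omega(\Lambda_0), \omega(\mathds{T}_0))\bigr),
\end{equation*}
which is representable as a closed subscheme of the affine, finitely presented $X$-scheme $\underline{\Isom}(\Lambda_0 \otimes_R \mc{O}_X, \omega(\Lambda_0))$. By the reconstructability established above and the argument of Proposition \ref{prop:torsors-and-twists}, there is a natural isomorphism of pseudo-torsors $\underline{\Isom}(\omega_\triv, \omega) \isomto P$, so it suffices to show $P \to X$ is faithfully flat.

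The main obstacle is establishing faithful flatness of $P \to X$. For surjectivity on points, one reduces to each geometric point $\bar{x} \to X$: the restriction $\omega_{\bar{x}} \colon \cat{Rep}_R(\mc{G}) \to \cat{Vect}(k(\bar{x}))$ is an exact $R$-linear $\otimes$-functor into a neutral Tannakian category, so classical Tannakian duality over a field (see \cite[II.3.3.1]{SaaCatT}) produces a $\mc{G}_{k(\bar{x})}$-torsor $Q_{\bar{x}}$ realizing $\omega_{\bar{x}}$; this torsor is trivial since $k(\bar{x})$ is algebraically closed and $\mc{G}$ is flat affine (cf.\@ \cite[Theorem 2.1.6]{BCLoopTorsors}), and a trivialization of $Q_{\bar{x}}$ yields a $k(\bar{x})$-point of $P_{\bar{x}}$. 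For flatness, the crucial input is that the exactness of $\omega$ forces $P$ to be $R$-flat: Zariski-locally on $X$ one may trivialize $\omega(\Lambda_0)$, after which $P$ identifies with the closed subscheme of $\GL(\Lambda_0)_X$ cut out by the equations transporting $\mathds{T}_0 \otimes 1$ to the tensors induced by $\omega(\mathds{T}_0)$, and the exact-tensor axioms on $\omega$ ensure these equations define a flat $R$-scheme (cf.\@ the arguments in \cite{Broshi} and \cite[Theorem 19.5.1]{ScholzeBerkeley}). Combining surjectivity and flatness, $P \to X$ is an fpqc cover trivializing $\omega$.
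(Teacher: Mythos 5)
Your reconstructability argument is correct and self-contained: it is the same tensor/subquotient argument the paper uses in Proposition \ref{prop:tannakian-formalism-proetale-scheme}, and it is a legitimate alternative to the paper's proof, which instead derives reconstructability from the full faithfulness in \cite[Theorem 1.2]{Broshi} applied to trivial torsors (that one sentence, plus citing Broshi for local triviality, is the paper's entire proof).

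The local-triviality half has a genuine gap, in two places. First, the identification $\underline{\Isom}(\omega_\triv,\omega)\isomto P$ is circular at this stage: the paper only establishes it (in the discussion preceding Proposition \ref{prop:torsors-twists-functors}) under the hypothesis that $\omega$ is \emph{already} locally trivial, since the argument is that a morphism of pseudo-torsors whose source is a torsor is an isomorphism; Proposition \ref{prop:torsors-and-twists} likewise only concerns pairs already known to be twists. Without that identification, even after showing $P\to X$ is an fppf cover you must still extend a tensor-preserving isomorphism $\Lambda_0\otimes_R\mc{O}_T\isomto\omega(\Lambda_0)_T$ to an isomorphism of tensor functors $\omega_{\triv,T}\isomto\omega_T$, i.e.\ propagate it to every subquotient of every $\Lambda_0^{\otimes m}\otimes(\Lambda_0^\vee)^{\otimes n}$ compatibly with $\omega$; that extension is essentially the content of the theorem, not a formality. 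Second, the flatness of $P$ over $X$ is the heart of the matter and you only assert it. Exactness of $\omega$ does not visibly make the transporter of tensors inside $\GL(\Lambda_0)_X$ flat, and the source you defer to does not argue this way: Broshi constructs the torsor as the relative spectrum of $\omega(\mc{O}_\mc{G})$ (the regular representation, viewed as an ind-object) and the crux is that this quasi-coherent algebra is faithfully flat --- precisely the step the paper's remark following the theorem repairs. (Your surjectivity reduction to geometric points also quietly assumes that $\omega_{\ov{x}}$ induces a fiber functor on $\cat{Rep}_{k(\ov{x})}(\mc{G}_{k(\ov{x})})$ rather than merely on $\cat{Rep}_R(\mc{G})$, which needs an argument.) Since the paper itself simply cites \cite[Theorem 1.2]{Broshi} for local triviality, doing the same would be acceptable; as written, the partial expansion introduces a circularity and leaves the essential flatness and extension steps unproved.
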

\begin{proof} The only thing not contained in loc.\@ cit.\@ is the reconstructibility claim, but this follows from the fully faithfulness of the the functors in loc.\@ cit.\@ applied to the trivial objects.
\end{proof}

\begin{rem} Theorem \ref{thm:broshi2} actually implies that if $(\ms{T},\mc{O})$ is a ringed topos over $R$, then $\mc{G}$ is reconstructible in $(\ms{T},\mc{O})$. Indeed, for an object $T$ of $\ms{T}$, we must check that the map $\mc{G}(\mc{O}(T))\to \Aut(\omega_\mr{triv}|_T)$ is an isomorphism. But, observe that $\Aut(\omega_\mr{triv}|_T)$ boils down to understanding compatible automorphisms of $\Lambda\otimes_R \mc{O}|_T$ for $\Lambda$ a representation of $\mc{G}$. But, such automorphisms are precisely the $\mc{O}(T)$-linear automorphisms of $\Lambda\otimes_R \mc{O}(T)$. Thus, as $\mc{O}(T)$ is an $R$-algebra, we know from Theorem \ref{thm:broshi2} that the natural map from $\mc{G}(\mc{O}(T))$ is an isomorphism. 

In particular, using this one obtains simpler proofs for parts of  Propositions \ref{prop:tannakian-formalism-proetale-scheme}, \ref{prop:G-for-proet-adic}, \ref{prop:G-for-proet-diamonds}, and \ref{prop:rflx-broshi}, as well as the claim at the beginning of \S\ref{sss:Tannakian-prismatic-F-crystals}. We leave these proofs unchanged though, in case they are helpful. The fact that a simplification should exist was pointed out to us by Mark Kisin.
\end{rem}

\begin{rem} There is an error in \cite[Lemma 4.4 (iii)]{Broshi}, which is necessary for \cite[Theorem 1.2]{Broshi}. Namely, the inclusion $\mathcal{O}_X\to \mathcal{O}_G$\footnote{In this remark we use notation as in loc.\@ cit. In particular, our $X$ (resp.\@ $\Spec(R)$) is $Y$ (resp.\@ $X$) in loc.\@ cit.\@} is not split as $\mc{O}_G$-comodules, and thus one cannot use additivity to conclude that $F(\mc{O}_X)$ is a summand of $F(\mc{O}_G)$. But, observe that
\begin{equation*}
    0\to \mc{O}_X\to\mc{O}_G\to \mc{O}_G/\mc{O}_X\to 0,
\end{equation*}
is an exact sequence in $\cat{Rep}'(G)$, where $\mc{O}_G/\mc{O}_X$ is flat as $\mc{O}_X\to\mc{O}_G$ is split as $\mc O_X$-modules. By the exactness of $F$, and the flatness of $F(\mc{O}_G/\mc{O}_X)$, we obtain the (universally) exact sequence
\begin{equation*}
    0\to F(\mc{O}_X)\to F(\mc{O}_G)\to F(\mc{O}_G/\mc{O}_X)\to 0.
\end{equation*}
As $F(\mc{O}_X)=\mathcal{O}_Y$ universally injects into $F(\mc{O}_G)$, we deduce that $F(\mc{O}_G)$ is faithfully flat.
\end{rem}

\begin{rem}If $\mc{G}$ is $R$-smooth we may replace $X_\fl$ in Theorem \ref{thm:broshi2} with $X_\et$. When $X=\Spec(A)$, for $A$ complete with respect to a finitely generated ideal, we may replace all instances of $X$ in Theorem \ref{thm:broshi2} with $\Spf(A)$ by Proposition \ref{prop:completion-is-equivalence} and Proposition \ref{prop:global-sections-equivalence-vb}.
\end{rem}

\subsection{Reflexive pseudo-torsors}\label{ss:reflexive-pseudo-torsors} 
Let $R$ be a ring and $X$ an integral locally Noetherian normal $R$-scheme. An open embedding $j\colon U\hookrightarrow X$ is \emph{large} if $j(U)$ contains all points of codimension $1$. If $X'\to X$ is an \'etale map, then $X'$ is normal and $U\times_X X'\hookrightarrow X'$ is large. Denote by $\cat{A}(U_\et)$ the full subcategory of $\cat{Shv}(U_\et)$ of sheaves represented by an affine $U$-scheme. Let $\cat{A}_U(X_\et)$ be the full subcategory of $\cat{Shv}(X_\et)$ of sheaves represented by an affine $X$-scheme $Y$ such that $Y \times_X U$ is dense in $Y$. 

\begin{prop}[{cf.\@ \cite[Lemme 2.1]{ColliotTheleneSansuc}}]\label{prop:affine-sheaf-adjunction}
     Let $j\colon U\hookrightarrow X$ be a large open embedding. Then, $(j_\ast,j^\ast)\colon \cat{A}(U_\et) \to \cat{A}_U(X_\et)$ is a pair of quasi-inverse functors.
\end{prop}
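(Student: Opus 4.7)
The proof proposal rests on a Hartogs-type identification: for any étale morphism $X'\to X$, the scheme $X'$ is locally Noetherian and normal, and (since étale maps are flat and so preserve codimension) the base-changed open $U':=X'\times_X U\hookrightarrow X'$ is again large. Hartogs' theorem then gives an isomorphism of structure sheaves $\mathcal{O}_{X'}\isomto j'_\ast \mathcal{O}_{U'}$ on $X'_\Zar$, where $j'\colon U'\hookrightarrow X'$ denotes the base-changed immersion; in particular $\Gamma(X',\mathcal{O}_{X'})=\Gamma(U',\mathcal{O}_{U'})$ for every étale $X'\to X$.

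I would then verify that the adjunction unit is an isomorphism: for $Y=\mathrm{Spec}_X\mathcal{A}\in \cat{A}(X_\et)$ and any étale $X'\to X$, the universal property of $\mathrm{Spec}_{X'}$ together with the $(j'^{-1},j'_\ast)$-adjunction yield
\begin{equation*}
Y(X')=\Hom_{\mathcal{O}_{X'}\text{-alg}}(\mathcal{A}|_{X'},\mathcal{O}_{X'})=\Hom_{\mathcal{O}_{X'}\text{-alg}}(\mathcal{A}|_{X'},j'_\ast\mathcal{O}_{U'})=\Hom_{\mathcal{O}_{U'}\text{-alg}}(\mathcal{A}|_{U'},\mathcal{O}_{U'})=Y(U'),
\end{equation*}
and $Y(U')=(j_\ast j^\ast Y)(X')$ by definition. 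The counit $j^\ast j_\ast\mathcal{F}\isomto\mathcal{F}$ is automatic for any sheaf $\mathcal{F}$ on $U_\et$, since restriction of a pushforward along an open immersion is the identity. So it remains only to show that $j_\ast$ preserves $\cat{A}$.

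To this end, given $Z=\mathrm{Spec}_U\mathcal{B}\in\cat{A}(U_\et)$ I would, working Zariski-locally on $X$, choose a closed immersion $Z\hookrightarrow\mathbb{A}^n_U$ (possible since $\mathcal{B}$ is a finite-type $\mathcal{O}_U$-algebra over the affine base) and let $\widetilde{Z}$ be the scheme-theoretic closure of $Z$ in $\mathbb{A}^n_X$. Since $X$ is locally Noetherian, $\widetilde{Z}$ is closed in $\mathbb{A}^n_X$ and hence affine of finite type over $X$; flat base change gives $\widetilde{Z}\cap\mathbb{A}^n_U=Z$, as well as the analogous property after any étale base change $X'\to X$. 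Using Hartogs, any tuple $(a_i)\in\mathcal{O}_{X'}(X')^n=\mathcal{O}_{U'}(U')^n$ kills the ideal defining $\widetilde{Z}$ in $\mathcal{O}_{X'}(X')[t_1,\dots,t_n]$ if and only if its restriction kills the ideal defining $Z$ in $\mathcal{O}_{U'}(U')[t_1,\dots,t_n]$, since the injection $\mathcal{O}_{X'}(X')\hookrightarrow\mathcal{O}_{U'}(U')$ is actually an equality. Thus $\widetilde{Z}$ represents $j_\ast Z$ on $X_\et$.

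The principal obstacle is this last step: producing the candidate $\widetilde{Z}$ in a Zariski-globalizable way over all of $X$ and then matching the defining ideals. Both difficulties resolve by working affine-locally on $X$, using flat base change for scheme-theoretic closure, and invoking the first half of the argument: any two affine finite-type $X$-schemes whose restrictions to $U$ agree with $Z$ necessarily represent the same sheaf $j_\ast Z$ (by the unit computation applied to morphisms between them), so the locally constructed pieces glue uniquely to a global $\widetilde{Z}$, completing the equivalence.
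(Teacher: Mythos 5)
Your unit and counit arguments are correct and coincide in content with the paper's (which simply cites \cite[Lemme 2.1]{ColliotTheleneSansuc} for the unit), and your affine-local verification that the scheme-theoretic closure $\wt{Z}\subseteq \bb{A}^n_X$ represents $j_\ast Z$ over an affine chart is also correct. The genuine gap is the final gluing step. Your local candidate depends on the chosen closed embedding $Z\hookrightarrow\bb{A}^n_U$, and different choices can produce \emph{non-isomorphic} schemes; the fact that they "represent the same sheaf $j_\ast Z$" does not let you glue them, because on the \emph{small} \'etale site a representing object of a sheaf is not unique up to scheme isomorphism, and an isomorphism of sheaves $h_{\wt{Z}_i}\cong h_{\wt{Z}_j}$ need not be induced by any morphism of schemes. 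Concretely, take $X=\Spec(k[x,y])$, $U=X\setminus\{(x,y)\}$ (a large open), and $Z=V(x)\cap U$, which is affine of finite type over $U$ and isomorphic to $\Gm$ with coordinate $y$. Embedding $Z$ into $\bb{A}^0_U=U$ as a closed subscheme gives scheme-theoretic closure $V(x)\cong\bb{A}^1_k$; embedding it into $\bb{A}^1_U$ as $V(x,yT-1)$ gives closure $V(x,yT-1)\cong\Gm$. Both represent $j_\ast Z$ on $X_\et$ (this sheaf has empty value on every nonempty \'etale $X$-scheme, since \'etale maps are open and cannot factor through $V(x)$), yet $\bb{A}^1_k\not\cong\Gm$ over $X$. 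So two runs of your construction on overlapping charts can produce outputs that admit no gluing datum, and the claimed "unique gluing" fails.

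The repair is to avoid gluing altogether by producing the global representing object canonically, which is what the paper does: writing $Z=\underline{\Spec}_U(\mc{B})$ for a finite type quasi-coherent $\mc{O}_U$-algebra $\mc{B}$, the sheaf $j_\ast Z$ is represented by $\underline{\Spec}_X(j_\ast\mc{B})$. This follows from exactly your Hartogs-plus-adjunction computation, using $\mc{O}_{X'}\isomto j'_\ast\mc{O}_{U'}$ together with flat base change $(j_\ast\mc{B})|_{X'}\cong j'_\ast(\mc{B}|_{U'})$ (cf.\@ \stacks{01LQ}). What then remains is only that the quasi-coherent $\mc{O}_X$-algebra $j_\ast\mc{B}$ is of finite type, and \emph{that} is a Zariski-local statement about a globally defined algebra, so your affine-local analysis (a presentation of $\mc{B}$ over the quasi-affine $U_i$) can be reinserted there without any gluing problem. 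Note in passing that in the example above $\Gamma(X,j_\ast\mc{B})=k[y,y^{-1}]$, which is strictly larger than the coordinate ring $k[y]$ of the first scheme-theoretic closure; so your $\wt{Z}$ is in general not the canonical representing object, which is a further sign that it cannot be expected to globalize.
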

\begin{proof} Suppose that $\mc{F}=\underline{\Spec}(\mc{A})$, where $\mc{A}$ is a quasi-coherent $\mc{O}_U$-algebra. As $\mc{O}_X\to j_\ast j^\ast \mc{O}_X=j_\ast\mc{O}_U$ is an isomorphism (see \cite[Lemme 2.1]{ColliotTheleneSansuc}), applying \stacks{01LQ} shows that $j_\ast\mc{F}$ is represented by $\underline{\Spec}(j_\ast\mc{A})$. To see $j_\ast\mc{F} \in \cat{A}_U(X_\et)$, we may assume that $X$ is affine. Take an affine covering $\{V_i\}_{i \in I}$ of $U$. Then $\mc{A}(U) \to \prod_{i \in I}\mc{A}(V_i)$ is injective. Hence $\underline{\Spec}(\mc{A}) \subset \Spec(\mc{A}(U))$ is dense. Therefore $j_\ast$ is well-defined. 
It is clear that $j^*j_* \simeq \id$. 
We show that $\id \simeq j_* j^*$. We may assume that $X$ is affine. Let $Y$ be an affine $X$-scheme. Take a closed immersion $Y \hookrightarrow \underline{\mathrm{Spec}}(\mc{O}_X [T_{\lambda}]_{\lambda \in \Lambda})$ over $X$, where $\Lambda$ is a set. Then a $U$-section of $Y$ uniquely extends to an $X$-section of $\underline{\mathrm{Spec}}(\mc{O}_X [T_{\lambda}]_{\lambda \in \Lambda})$ by \cite[Lemme 2.1 (i)]{ColliotTheleneSansuc}. Further it factors through $Y$ by the density of $U \subset X$. Hence the claim follows. 
\end{proof}

Recall that a coherent $\mc{O}_X$-module $\mc{F}$ is called \emph{reflexive} if the natural map 
\begin{equation*}
\mc{F}\to \mc{F}^{\ast\ast}\defeq \mc{H}om(\mc{H}om(\mc{F},\mc{O}_X),\mc{O}_X)
\end{equation*}
is an isomorphism. Equivalently, $\mc{F}$ is reflexive if there exists a large open embedding $j\colon U\hookrightarrow X$ such that $j^\ast\mc{F}$ is a vector bundle and for which the unit map $\mc{F}\to j_\ast j^\ast \mc{F}$ is an isomorphism (see \stacks{0AY6}). Denote by $\cat{Rflx}(X)$ the category of reflexive $\mc{O}_X$-modules, which has the structure of an $R$-linear $\otimes$-category where the tensor product is as in \stacks{0EBH}. We endow $\cat{Rflx}(X)$ with an exact structure by declaring an sequence exact if it is exact at every codimension $1$ point. Thus, $\cat{Rflx}(X)$ contains $\cat{Vect}(X)$ as a full $R$-linear tensor subcategory but \emph{not} as an exact subcategory. With this exact structure, if $j\colon U\hookrightarrow X$ is a large open embedding, then $j_\ast \colon\cat{Rflx}(U)\to\cat{Rflx}(X)$ is a bi-exact $R$-linear $\otimes$-equivalence (see \stacks{0EBJ}). Let $\cat{Rflx}_n^\mathrm{iso}(X)$ be the category of reflexive $\mc{O}_X$-modules $\mc{F}$ for which there is a large open embedding $j\colon U\to X$ with $j^\ast\mc{F}$ a rank $n$ vector bundle, with morphisms being isomorphisms of $\mc{O}_X$-modules.

For a smooth group $X$-scheme $\mc{G}$, a pseudo-torsor $\mc{Q}$ for $\mc{G}$ is called \emph{reflexive} if there is a large open embedding $j\colon U\hookrightarrow X$ with $j^\ast\mc{Q}$ a $\mc{G}$-torsor and the unit map $\mc{Q}\to j_\ast j^\ast\mc{Q}$ an isomorphism. Denote by $\cat{Rflx}_\mc{G}(X)$ the full subcategory of $\cat{PseuTors}_\mc{G}(X_\et)$ of reflexive pseudo-torsors.

\begin{prop}\label{prop:rflx-omnibus} Suppose $\mc{G}$ is a smooth group $X$-scheme. Then, the following is true.
\begin{enumerate}
\item An object $\mc{Q}$ of $\cat{PseuTors}_\mc{G}(X_\et)$ belongs to $\cat{Rflx}_\mc{G}(X)$ if and only if $\mc{Q}$ belongs to $\cat{A}(X_\et)$, and $\mc{Q}_x$ belongs to $\cat{Tors}_\mc{G}(\mc{O}_{X,x})$ for all codimension $1$ points $x$ of $X$.
\item For a large open embedding $j\colon U\hookrightarrow X$, the pair $(j_\ast,j^\ast)\colon \cat{Rflx}_\mc{G}(U)\to \cat{Rflx}_{\mc{G}}(X)$ are quasi-inverse.
\item  The natural functors $\mc{Q}\mapsto \mc{Q}\wedge^{\GL_{n,\mc{O}_X}}\mc{O}_X^n$ is an equivalence of categories 
\begin{equation*}\cat{Rflx}_{\GL_{n,\mc{O}_X}}(X)\to \cat{Rflx}^\mathrm{iso}_n(X),
\end{equation*}
with quasi-inverse given by $\mc{E}\mapsto \underline{\Isom}(\mc{E},\mc{O}_X^n)$.
\end{enumerate}
\end{prop}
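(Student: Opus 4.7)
The plan is to prove the three parts in sequence, with the main technical engine being Proposition~\ref{prop:affine-sheaf-adjunction} (which extends $\cat{A}(U_\et)$ to $\cat{A}(X_\et)$ across a large open embedding) together with the representability of torsors by affine schemes (Lemma~\ref{lem:torsor-representable}) and the $\GL_n$-torsor/vector-bundle dictionary (Proposition~\ref{prop:gln-torsor-vector-bundle}). Part~(2) will be a formal consequence of Part~(1), and Part~(3) will combine Part~(2) with Proposition~\ref{prop:gln-torsor-vector-bundle} applied on a suitable large open. The main obstacle is Part~(1): identifying the candidate torsor locus and showing it is open.

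For Part~(1), the only-if direction is immediate: if $\mc{Q}$ is reflexive with $\mc{Q}|_U$ a torsor on a large open $j\colon U\hookrightarrow X$, then $\mc{Q}|_U$ is affine of finite type over $U$ by Lemma~\ref{lem:torsor-representable}, so $\mc{Q}=j_\ast j^\ast\mc{Q}$ is in $\cat{A}(X_\et)$ by Proposition~\ref{prop:affine-sheaf-adjunction}, and codimension-one fibers are torsors since such points lie in $U$. For the if direction, define $U\subseteq X$ to be the set of points at which $\mc{Q}$ has an \'etale-local section; this is open, since a section over an \'etale neighborhood of one point $x\in U$ yields sections in a whole Zariski neighborhood of $x$. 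By hypothesis $U$ contains every codimension-one point, so is large. The pseudo-torsor $\mc{Q}|_U$ has \'etale-local sections everywhere on $U$, hence is a torsor, and $\mc{Q}=j_\ast j^\ast\mc{Q}$ follows from $\mc{Q}\in\cat{A}(X_\et)$ and Proposition~\ref{prop:affine-sheaf-adjunction}.

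Part~(2) follows formally from Part~(1) and Proposition~\ref{prop:affine-sheaf-adjunction}. Given $\mc{Q}\in\cat{Rflx}_\mc{G}(X)$ with torsor locus $V\subseteq X$, one has $V\cap U$ large in $U$ and serving as the torsor locus of $j^\ast\mc{Q}$, while $j^\ast\mc{Q}\in\cat{A}(U_\et)$ by pullback; hence $j^\ast\mc{Q}\in\cat{Rflx}_\mc{G}(U)$ by Part~(1). Conversely, for $\mc{Q}'\in\cat{Rflx}_\mc{G}(U)$ with torsor locus $W\subseteq U$, the open $W$ is also large in $X$, $j_\ast\mc{Q}'|_W=\mc{Q}'|_W$ is a torsor, and $j_\ast\mc{Q}'\in\cat{A}(X_\et)$ by Proposition~\ref{prop:affine-sheaf-adjunction}, so Part~(1) applies. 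That the unit and counit of $(j_\ast,j^\ast)$ are isomorphisms is Proposition~\ref{prop:affine-sheaf-adjunction}.

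For Part~(3), given $\mc{Q}\in\cat{Rflx}_{\GL_{n,\mc{O}_X}}(X)$ with torsor locus $j\colon U\hookrightarrow X$, Proposition~\ref{prop:gln-torsor-vector-bundle} converts $j^\ast\mc{Q}$ to a rank-$n$ vector bundle $\mc{E}_U\defeq (j^\ast\mc{Q})\wedge^{\GL_{n,\mc{O}_U}}\mc{O}_U^n$ on $U$, and $j_\ast\mc{E}_U$ lies in $\cat{Rflx}_n^{\mathrm{iso}}(X)$ (using \stacks{0EBJ} or the paragraph following the definition of $\cat{Rflx}^\mathrm{iso}_n(X)$). One identifies $j_\ast\mc{E}_U$ with $\mc{Q}\wedge^{\GL_{n,\mc{O}_X}}\mc{O}_X^n$ by comparing restrictions to $U$ and using Proposition~\ref{prop:affine-sheaf-adjunction}. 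Conversely, given $\mc{E}\in\cat{Rflx}^\mathrm{iso}_n(X)$ with $j^\ast\mc{E}$ a rank-$n$ vector bundle on some large $j\colon U\hookrightarrow X$, the sheaf $\uIsom(\mc{O}_U^n,j^\ast\mc{E})$ is a $\GL_{n,\mc{O}_U}$-torsor by Proposition~\ref{prop:gln-torsor-vector-bundle}, whose pushforward under $j_\ast$ lies in $\cat{Rflx}_{\GL_{n,\mc{O}_X}}(X)$ by Part~(2). Checking that the two functors are quasi-inverse reduces, via Part~(2), to the analogous statement on $U$, which is exactly Proposition~\ref{prop:gln-torsor-vector-bundle}.
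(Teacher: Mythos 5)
Your proof is correct in outline and its overall architecture agrees with the paper's: parts (2) and (3) are formal consequences of Proposition \ref{prop:affine-sheaf-adjunction} and Proposition \ref{prop:gln-torsor-vector-bundle} once part (1) is known, and the only substantive point is the ``if'' direction of (1). Where you diverge is in how the large open $U$ is produced there. You define $U$ as the locus of points admitting an \'etale-local section of $\mc{Q}$ and note it is open by construction; the paper instead takes the representing scheme $Y\to X$ (available since $\mc{Q}\in\cat{A}(X_\et)$), observes that $Y_{\mc{O}_{X,x}}\to\Spec(\mc{O}_{X,x})$ is faithfully flat for each codimension-one $x$ by Lemma \ref{lem:torsor-representable}, and invokes openness of the flat locus (\stacks{04AI}, \stacks{07RR}) to obtain a large open $U$ over which $Y\to X$ is faithfully flat, whence $\mc{Q}|_U$ is a torsor using smoothness of $\mc{G}$.

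The one step you should not wave through is the assertion that ``by hypothesis $U$ contains every codimension-one point.'' The hypothesis only gives a section of $\mc{Q}$ over an \'etale cover of $\Spec(\mc{O}_{X,x})$; to place $x$ in your $U$ you must produce an \'etale neighborhood of $x$ \emph{in $X$} carrying a section. This is true but requires an argument: for instance, flatness and smoothness of $Y\to X$ at a point $y$ lying over $x$ are detected on the local ring $\mc{O}_{X,x}$, so $Y\to X$ is smooth at every such $y$; the smooth locus is open with open image containing $x$, and a smooth morphism admits sections \'etale-locally over its image. (Alternatively, a limit/spreading-out argument descends the cover and the section from $\Spec(\mc{O}_{X,x})=\varprojlim V$ to some open $V\ni x$, using that $\mc{Q}$ is affine of finite presentation over the locally Noetherian $X$.) The paper's flat-locus route is engineered precisely to avoid spreading out a section, spreading out only flatness. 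With that step supplied, your argument for (1), and hence your treatment of (2) and (3), is complete and matches the paper's.
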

\begin{proof} By Proposition \ref{prop:affine-sheaf-adjunction} and Proposition \ref{prop:gln-torsor-vector-bundle}, it only remains to show the if part of (1). Let $Y\to X$ be a finite type affine $X$-scheme representing $\mc{Q}$. By Proposition \ref{prop:affine-sheaf-adjunction}, it suffices to show that $Y_U\to U$ is faithfully flat for some large open $U$. As $Y_{\mc{O}_{X,x}}$ is faithfully flat over $\mc{O}_{X,x}$ for all codimension $1$ points $x$, we may conclude by \stacks{04AI} and \stacks{07RR}.
\end{proof}

For any \'etale map $X'\to X$ there is a restriction functor 
\begin{equation*}
\GRflx(X)\to \GRflx(X'),\qquad \omega\mapsto \omega_{X'
}\defeq (-)|_{X'}\circ \omega,
\end{equation*}
where $(-)|_{X'}$ denotes restriction. There is a fully faithful embedding of $\GVect(X)$ into $\GRflx(X)$. For an object $\mc{Q}$ of $\cat{Rflx}_\mc{G}(\ms{X})$ we obtain the object $\omega_\mc{Q}$ of $\GRflx(X)$ given by 
\begin{equation*}
    \omega_\mc{Q}\colon \cat{Rep}_R(\mc{G})\to \cat{Rflx}(X),\qquad \Lambda\mapsto \mc{Q}\wedge^\mc{G}(\Lambda\otimes_R \mc{O}_X).
\end{equation*}
For $\rho\colon \mc{G}\to\GL(\Lambda)$ one checks that $\omega_\mc{Q}(\Lambda)$ agrees, functorially in $\mc{Q}$ and $\Lambda$, with the reflexive module associated to $\rho_\ast(\mc{Q})$ by Proposition \ref{prop:rflx-omnibus}. If $\mc{G}$ is reconstructible in $(X_\et,\mc{O}_X)$, an object $\omega$ of $\GRflx(X)$ is called \emph{locally trivial} if the pseudo-torsor $\underline{\Isom}(\omega_\triv,\omega)$ is reflexive. Denote by $\GRflx^\mathrm{lt}(X,\mc{O}_X)$ the full subcategory of locally trivial objects. 

Suppose $(\Lambda_0,\mathds{T}_0)$ is a tensor package over $R$ and $\mc{G}$ is the base change to $X$ of $\mathrm{Fix}(\mathds{T}_0)$, which we abusively also denote $\mc{G}$. The category $\cat{Twist}^\rflx_{X}(\Lambda_0,\mathds{T}_0)$ of \emph{reflexive twists} consists of pairs $(\mc{E},\mathds{T})$ where $\mc{E}$ is an object of $\cat{Rflx}(X)$ and $\mathds{T}\subseteq \mc{E}^{\otimes}$ such that $\underline{\Isom}\left((\Lambda_0\otimes_R\mc{O}_X,\mathds{T}_0\otimes 1),(\mc{E},\mathds{T})\right)$ is reflexive. If $\mc{Q}$ is a reflexive pseudo-torsor for $\mc{G}$, then $(\omega_\mc{Q}(\Lambda_0),\omega_\mc{Q}(\mathds{T}_0))$ is reflexive.

Combining Proposition \ref{prop:torsors-twists-functors} and Proposition \ref{prop:rflx-omnibus}, one deduces the following.

\begin{prop}\label{prop:rflx-triangle} Suppose that $\mc{G}$ is reconstructible in $(X_\et,\mc{O}_X)$. Then,
\begin{equation*}
\xymatrixcolsep{5.5pc}\xymatrixrowsep{2.5pc}\xymatrix{\cat{Twist}^\rflx_{X}(\Lambda_0,\mathds{T}_0)\ar[rr]^-{(\mc{E},\mathds{T})\mapsto \underline{\Isom}((\Lambda_0\otimes_R\mc{O},\mathds{T}_0\otimes 1),(\mc{E},\mathds{T}))} & & \cat{Rflx}_\mc{G}(X)\ar[d]^{\mc{P}\mapsto \omega_\mc{P}}\\ & & \GRflx^\lt(X)\ar[ull]^{\omega\mapsto (\omega(\Lambda_0),\omega(\mathds{T}_0))\qquad\quad}}
\end{equation*}
is a commuting triangle of equivalences.
\end{prop}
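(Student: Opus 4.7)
The strategy is to reduce to the vector-bundle/torsor/twist setting on a large open subscheme $j\colon U \hookrightarrow X$, where Proposition \ref{prop:torsors-twists-functors} directly applies, and then transfer the equivalences back to $X$ using the bi-exact equivalence $(j_\ast, j^\ast)\colon \cat{Rflx}_\mc{G}(U) \isomto \cat{Rflx}_\mc{G}(X)$ furnished by Proposition \ref{prop:rflx-omnibus}(2) (together with its analogue for reflexive pseudo-torsors, and for coherent modules). Commutativity of the triangle, as well as well-definedness of each of the three functors on the reflexive categories (e.g., that $\omega_\mc{Q}$ is locally trivial in the sense of \S\ref{ss:reflexive-pseudo-torsors} when $\mc{Q}$ is a reflexive pseudo-torsor, and that $(\omega_\mc{Q}(\Lambda_0), \omega_\mc{Q}(\mathds{T}_0))$ really is a reflexive twist) will then follow by restriction to such a $U$, on which these statements are exactly the conclusions of Proposition \ref{prop:torsors-twists-functors}.

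More concretely, I would first observe that $(j_\ast, j^\ast)$ induces equivalences $\cat{Twist}^\rflx_U(\Lambda_0,\mathds{T}_0) \isomto \cat{Twist}^\rflx_X(\Lambda_0,\mathds{T}_0)$ and $\GRflx^\lt(U) \isomto \GRflx^\lt(X)$, by transporting the tensor/$\mc{G}$-action structure through the bi-exact $R$-linear $\otimes$-equivalence on $\cat{Rflx}$. Note here that the defining conditions, namely that the accompanying $\underline{\Isom}$-pseudo-torsor be reflexive, transfer between $X$ and $U$ since $j_\ast$ and $j^\ast$ preserve reflexivity of pseudo-torsors by Proposition \ref{prop:rflx-omnibus}(2). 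Second, I would check that the three functors on $X$ in the triangle are compatible with these restriction equivalences and with the corresponding functors on $U$ from Proposition \ref{prop:torsors-twists-functors}; that is, $j^\ast$ intertwines the $X$-triangle with the $U$-triangle. This boils down to the fact that $j^\ast$ commutes with contracted products, tensor products, and the formation of $\underline{\Isom}$ of affine sheaves, the last of which is Proposition \ref{prop:affine-sheaf-adjunction}.

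Given these compatibilities, essential surjectivity and full faithfulness of each of the three functors on $X$ reduce to the same statements for the corresponding functors on $U$ from Proposition \ref{prop:torsors-twists-functors}. To handle pairs of objects requiring different large opens on which to trivialize, one simply intersects the two large opens, which remains large; morphisms between reflexive objects are then determined by their restrictions to this common $U$ thanks to the fully faithful nature of $j_\ast$. The hypothesis that $\mc{G}$ is reconstructable in $(X_\et, \mc{O}_X)$ restricts to reconstructability in $(U_\et, \mc{O}_U)$ since the natural map $\mc{G}_{\mc{O}} \to \underline{\Aut}(\omega_\triv)$ is an isomorphism of sheaves and this condition is local. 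I do not anticipate a serious obstacle; the argument is formal given Propositions \ref{prop:torsors-twists-functors} and \ref{prop:rflx-omnibus}, and the only real work is tracing through the definitions to verify compatibility with the $(j_\ast, j^\ast)$ adjunction.
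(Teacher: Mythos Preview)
Your proposal is correct and matches the paper's approach exactly: the paper simply states that the result follows by combining Proposition \ref{prop:torsors-twists-functors} and Proposition \ref{prop:rflx-omnibus}, and your argument spells out precisely how that combination works, reducing via $(j_\ast,j^\ast)$ to a large open $U$ on which the reflexive objects become genuine torsors/vector bundles and then invoking the triangle of equivalences there.
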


If $R$ is a Dedekind domain, then we have an analogue of Theorem \ref{thm:broshi2}.
\begin{prop}\label{prop:rflx-broshi}
    Assume that $R$ is a Dedekind domain. Then, $\mc{G}$ is reconstructible in $(X_\et,\mc{O}_X)$ and every object of $\GRflx(X)$ is locally trivial.
\end{prop}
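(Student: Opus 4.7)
The plan is to mirror the proof of Theorem \ref{thm:broshi2} in the reflexive setting, exploiting that at codimension-one points of a normal locally Noetherian scheme the local ring is a DVR on which reflexive modules are automatically finite free.

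\emph{Reconstructability.} On $X_\et$, both $\mc{G}_{\mc{O}_X}$ and $\underline{\Aut}(\omega_\triv)$ are defined by the same formulas as on $X_\fpqc$, and the natural map between them is an isomorphism on $X_\fpqc$ by Theorem \ref{thm:broshi2}. Restriction to the étale site thus preserves the isomorphism.

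\emph{Local triviality.} Fix $\omega$ in $\GRflx(X)$ and set $\mc{Q}\defeq\underline{\Isom}(\omega_\triv,\omega)$, a pseudo-torsor for $\mc{G}_{\mc{O}_X}$ on $X_\et$. By Proposition \ref{prop:rflx-omnibus}(1), it suffices to verify that (i) $\mc{Q}_x$ is a $\mc{G}$-torsor at every codimension-one point $x\in X$, and (ii) $\mc{Q}$ is representable by an affine finite-type $X$-scheme. For (i): at any codimension-one point $x$, $\mc{O}_{X,x}$ is a DVR, so every reflexive $\mc{O}_{X,x}$-module is finite free. Combined with the definition of the exact structure on $\cat{Rflx}(X)$ (exactness tested at codimension-one stalks), this implies that $\omega_x\colon\cat{Rep}_R(\mc{G})\to\cat{Vect}(\mc{O}_{X,x})$ is an exact $R$-linear $\otimes$-functor, i.e., an object of $\GVect(\Spec\mc{O}_{X,x})$, so Theorem \ref{thm:broshi2} yields the desired torsor property.

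For (ii), the plan is to apply Theorem \ref{thm:broshi1} (valid since $R$ is a Dedekind domain) to pick a tensor package $(\Lambda_0,\mathds{T}_0)$ over $R$ with $\mc{G}=\mathrm{Fix}(\mathds{T}_0)$, and then argue that evaluation on $\Lambda_0$ induces a natural isomorphism of sheaves
\[
    \mc{Q}\isomto \underline{\Isom}\bigl((\Lambda_0\otimes_R\mc{O}_X,\mathds{T}_0\otimes1),(\omega(\Lambda_0),\omega(\mathds{T}_0))\bigr).
\]
The right-hand side is a subfunctor of $\underline{\Hom}(\Lambda_0\otimes_R\mc{O}_X,\omega(\Lambda_0))$, which is represented by an affine finite-type $X$-scheme (namely $\underline{\Spec}_X(\Sym_{\mc{O}_X}(\omega(\Lambda_0)^\vee)^{\oplus\mathrm{rk}\,\Lambda_0})$) by coherence of $\omega(\Lambda_0)$, and is cut out inside it by the tensor-preservation and isomorphism conditions; representability of $\mc{Q}$ as an affine finite-type $X$-scheme then follows. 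The main technical point---justifying the displayed isomorphism globally on $X_\et$, not merely at codimension-one points---is a Tannakian rigidity statement formally analogous to Proposition \ref{prop:torsors-twists-functors} (which rests on \cite[Chapitre III, Théorème 2.5.1]{Giraud} together with reconstructability). The reflexive-valued case can be reduced to the vector-bundle case as follows: both sides of the display coincide with the pushforward along $j\colon U\hookrightarrow X$ of their restrictions, where $U$ is any large open over which $\omega(\Lambda_0)|_U$ is a vector bundle (using that morphisms of reflexive sheaves agree with morphisms of their restrictions to any large open, cf.\@ \stacks{0AY6}); verifying the isomorphism over $U$ then reduces to the vector-bundle case handled in Proposition \ref{prop:torsors-twists-functors}.
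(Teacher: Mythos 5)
Your overall architecture is the same as the paper's: reduce to Proposition \ref{prop:rflx-omnibus}(1), check the torsor property at codimension-one points via the DVR observation and Theorem \ref{thm:broshi2} (this part of your argument is exactly the paper's and is fine), and separately establish that $\mc{Q}=\underline{\Isom}(\omega_\triv,\omega)$ lies in $\cat{A}(X_\et)$ by embedding it into $\underline{\Isom}(\Lambda_0\otimes_R\mc{O}_X,\omega(\Lambda_0))$. The gap is in how you justify the displayed identification of $\mc{Q}$ with the tensor-preserving $\underline{\Isom}$. Your reduction to Proposition \ref{prop:torsors-twists-functors} over a large open $U$ fails for two reasons. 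First, you choose $U$ only so that $\omega(\Lambda_0)|_U$ is locally free; for a general $\Lambda$ in $\cat{Rep}_R(\mc{G})$ the reflexive sheaf $\omega(\Lambda)|_U$ need not be a vector bundle (reflexive sheaves are locally free only in codimension one, and you cannot intersect the infinitely many large opens needed for all $\Lambda$), so $\omega|_U$ is not known to be an object of $\GVect(U)$ and "the vector-bundle case" does not apply there. Second, and more seriously, the evaluation isomorphism in the paragraph preceding Proposition \ref{prop:torsors-twists-functors} is derived from the fact that the source $\underline{\Isom}(\omega_\triv,\omega)$ is already a torsor (a morphism of pseudo-torsors out of a torsor is automatically an isomorphism); it is only stated for $\omega$ in $\GVect^\lt$. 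Local triviality of $\omega|_U$ is precisely what you are in the middle of proving, so invoking that isomorphism at this stage is circular.

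The paper avoids this by not aiming for the tensor-fixing locus at all. Using \cite[Proposition 12]{dosSantos}, every representation is a subquotient of some $T^{m,n}=\Lambda_0^{\otimes m}\otimes_R(\Lambda_0^\vee)^{\otimes n}$, and one shows directly that evaluation identifies $\mc{Q}$ with the closed subscheme of $\underline{\Isom}(\Lambda_0\otimes_R\mc{O}_X,\omega(\Lambda_0))$ consisting of those $f$ such that the induced $f^{m,n}$ preserve $\Lambda\otimes_R\mc{O}_X$ and $\omega(\Lambda)$ (in both directions) for every subrepresentation $\Lambda\subseteq T^{m,n}$. Surjectivity onto this locus is checked by hand using exactness of $\omega$ and the subquotient description, with no appeal to prior local triviality, and these conditions are visibly closed, so representability by an affine finite-type $X$-scheme follows at once. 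If you want to keep your tensor-package formulation, you would need an independent argument that any tensor-fixing isomorphism automatically preserves all subrepresentations of the $T^{m,n}$ (for a reflexive-valued $\omega$ that is not yet known to be locally trivial); as written, that step is missing.
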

\begin{proof} Every representation $\Lambda$ of $\mathcal{G}$ occurs as a subquotient of some $T^{m,n}\defeq \Lambda_0^{\otimes m}\otimes_R (\Lambda_0^\vee)^{\otimes n}$ (cf.\@ \cite[Proposition 12]{dosSantos}). Thus, if $\omega$ is an object of $\GRflx(X)$ then the natural map $\underline{\Isom}(\omega_\triv,\omega)\to \underline{\Isom}(\Lambda_0\otimes_R \mc{O}_X,\omega(\Lambda_0))$ is an isomorphism onto the closed subscheme of those $f$ such that for all $m$ and $n$, and all subrepresentations $\Lambda\subseteq T^{m,n}$, the induced isomorphism $f^{m,n}\colon T^{m,n}\otimes_R \mc{O}_X\to \omega(T^{m,n})$ satisfies $f^{m,n}(\Lambda\otimes_R \mc{O}_X)\subseteq \Lambda$ and $(f^{m,n})^{-1}(\omega(\Lambda))\subseteq \Lambda\otimes_R\mc{O}_X$. As $\underline{\Isom}(\Lambda_0\otimes_R \mc{O}_X,\omega(\Lambda_0))$ is an affine finite type $X$-scheme, the same is true for $\underline{\Isom}(\omega_\triv,\omega)$. By Proposition \ref{prop:rflx-omnibus}, it then suffices to show that for all codimension $1$ points $x$ that $\underline{\Isom}(\omega_\triv,\omega)_x$ is a torsor. But, this corresponds to the exact $R$-linear $\otimes$-functor $\Lambda\mapsto \omega(\Lambda)_x$. As $\mc{O}_{X,x}$ is dimension $1$ all reflexive modules are vector bundles, and thus this an object of $\GVect(\mc{O}_{X,x})$. The claim then follows from Theorem \ref{thm:broshi2}.
\end{proof}

We end with some results inspired by \cite{ColliotTheleneSansuc}. Suppose that $\rho\colon \mc{G}\hookrightarrow \mc{H}$ is a closed embedding of reductive group $X$-schemes. Denote by $p\colon \mc{H}\to \mc{H}/\mc{G}$ the quotient sheaf in the fppf topology. Combining \cite[Corollary 9.7.7]{Alper} and \stacks{02KK} shows that $\mc{H}/\mc{G}$ belongs to $\cat{A}(X_\et)$. 

\begin{prop}\label{prop:pseudo-torsors-large-opens-equiv} Let $\mc{Q}$ be an object of $\cat{Rflx}_\mc{G}(X)$. Then, for any large open embedding $j\colon U\hookrightarrow X$, the natural map $\rho_\ast\mc{Q}\to j_\ast  \rho_\ast j^\ast \mc{Q}$ is an isomorphism.
\end{prop}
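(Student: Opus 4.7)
The approach is to reduce to Proposition \ref{prop:affine-sheaf-adjunction} by showing that $\rho_\ast\mc{Q}=\mc{Q}\wedge^{\mc{G}}\mc{H}$ is representable by an affine finite type $X$-scheme, i.e., lies in $\cat{A}(X_\et)$.

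First, I would observe that the contracted product $(-)\wedge^{\mc{G}}\mc{H}$, being the sheafification of a presheaf construction that only involves restriction to \'etale opens, commutes canonically with $j^\ast$. Consequently, the natural map $\rho_\ast\mc{Q}\to j_\ast\rho_\ast j^\ast\mc{Q}$ in the statement is identified with the unit of adjunction $\rho_\ast\mc{Q}\to j_\ast j^\ast\rho_\ast\mc{Q}$, which by Proposition \ref{prop:affine-sheaf-adjunction} is an isomorphism as soon as $\rho_\ast\mc{Q}$ belongs to $\cat{A}(X_\et)$.

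Second, by Proposition \ref{prop:rflx-omnibus}(1), $\mc{Q}$ is representable by an affine finite type $X$-scheme, and hence so is $\mc{Q}\times_X\mc{H}$. This scheme carries the $\mc{G}$-action $g\cdot(q,h)=(qg^{-1},\rho(g)h)$, which is free because $\rho$ is a closed embedding, so already the left $\mc{G}$-action on $\mc{H}$ via $\rho$ is free. Since $\mc{G}$ is reductive, the fppf quotient $(\mc{Q}\times_X\mc{H})/\mc{G}$ exists as an affine finite type $X$-scheme, for instance by \cite[Corollary 9.7.7]{Alper} applied \'etale-locally on $X$. By construction, this quotient represents $\rho_\ast\mc{Q}$ as an fppf sheaf, and hence lies in $\cat{A}(X_\et)$.

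The main point to verify is that the GIT-style quotient coincides with the fppf sheaf quotient $\mc{Q}\wedge^{\mc{G}}\mc{H}$; this is standard in the free-action setting, since the quotient map is then itself an fppf $\mc{G}$-torsor and so represents the fppf sheaf quotient. Alternatively, one can bypass explicit GIT by observing that $[q,h]\mapsto p(h)$ defines a canonical map $\rho_\ast\mc{Q}\to\mc{H}/\mc{G}$ realizing $\rho_\ast\mc{Q}$ as the twisted form of $\mc{Q}$ over $\mc{H}/\mc{G}$ associated with the $\mc{G}$-torsor $\mc{H}\to\mc{H}/\mc{G}$; since twisting an affine scheme by a torsor yields an affine morphism, $\rho_\ast\mc{Q}$ is affine over the affine $X$-scheme $\mc{H}/\mc{G}$, hence affine over $X$.
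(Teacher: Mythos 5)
Your proof is correct, but it takes a genuinely different route from the paper's. You reduce everything to representability: after observing that the contracted product commutes with $j^\ast$, so that the map in question is the unit $\rho_\ast\mc{Q}\to j_\ast j^\ast\rho_\ast\mc{Q}$, you show that $\rho_\ast\mc{Q}$ itself lies in $\cat{A}(X_\et)$ and invoke Proposition \ref{prop:affine-sheaf-adjunction} once. The paper instead argues section by section over each \'etale $X'\to X$: using Giraud's description, a section of $\rho_\ast\mc{Q}$ is a $\mc{G}$-subtorsor of $\mc{H}_{X'}\times\mc{Q}_{X'}$, whose pushforward to $\mc{H}_{X'}$ is trivial and hence of the form $p^{-1}(s)$ for $s\in(\mc{H}/\mc{G})(X')$; one then extends $s$ and the accompanying $\mc{G}$-map to $\mc{Q}$ across the large open, using the affineness of $\mc{H}/\mc{G}$ and of $\mc{Q}$. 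Both arguments hinge on the same two inputs ($\mc{H}/\mc{G}\in\cat{A}(X_\et)$ and $\mc{Q}\in\cat{A}(X_\et)$ via Proposition \ref{prop:rflx-omnibus}), but yours packages them more cleanly: the fibration $\rho_\ast\mc{Q}=(\mc{Q}\times_X\mc{H})/\mc{G}\to\mc{H}/\mc{G}$, which the paper exploits pointwise, is promoted to a single descent statement, and the extension of sections becomes automatic.

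One caveat: of your two representability arguments, only the second is airtight. Alper's Corollary 9.7.7, as used in the surrounding text, concerns the homogeneous space $\mc{H}/\mc{G}$ for a reductive closed subgroup, not quotients of arbitrary affine schemes with $\mc{G}$-action; and the claim that a free action of a reductive group on an affine scheme automatically makes the invariant-theoretic quotient an fppf $\mc{G}$-torsor is precisely the point at issue (in general one needs properness of the action or closedness of orbits, which here would come from the $\mc{H}$-factor but which you do not verify). Your final argument bypasses this entirely: the pullback of $(\mc{Q}\times_X\mc{H})/\mc{G}\to\mc{H}/\mc{G}$ along the fppf cover $\mc{H}\to\mc{H}/\mc{G}$ is $\mc{Q}\times_X\mc{H}$, which is affine of finite type over $\mc{H}$, so by effectivity of fppf descent for affine morphisms the quotient is represented by a scheme affine of finite type over $\mc{H}/\mc{G}$, hence over $X$. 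I would drop the GIT version and keep only this descent argument.
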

\begin{proof} For $X'\to X$ \'etale and $U'\defeq U\times_X X'$, we show that $\rho_\ast\mc{Q}(X')\to \rho_\ast j^\ast \mc{Q}(U')$ is bijective. By \cite[Chapitre III, Proposition 3.1.2]{Giraud}, the source (resp.\@ target) is identified with the set of $\mc{G}$-subtorsors $\mc{A}$ (resp.\@ $\mc{B}$) of $\mc{H}_{X'}\times \mc{Q}_{X'}$ (resp.\@ $\mc{H}_{U'}\times \mc{Q}_{U'}$). By \cite[Chapitre III, Proposition 1.3.6]{Giraud}, $\Hom_{\mc{H}_{X'}}(\rho_\ast(\mc{A}),\mc{H}_{X'})$ is in bijection with $\Hom_{\mc{G}_{X'}}(\mc{A},\mc{H}_{X'})$ which is non-empty by composing $\mc{A}\to \mc{H}_{X'}\times\mc{Q}_{X'}$ with the projection to $\mc{H}_{X'}$. So, $\rho_\ast\mc{A}$, and by a similar argument $\rho_\ast(\mc{B})$, are trivial. Then, \cite[Chapitre III, Proposition 3.2.2]{Giraud} implies that $\mc{A}$ (resp.\@ $\mc{B}$) is of the form $p^{-1}(s)$ (resp.\@ $p^{-1}(t)$) where $s$ (resp.\@ $t$) is an element of $(\mc{H}/\mc{G})(X')$ (resp.\@ $(\mc{H}/\mc{G})(U')$). Consider
\begin{equation*}
\xymatrixcolsep{4pc}\xymatrixrowsep{1pc}\xymatrix{\rho_\ast\mc{Q}(X')\ar[r]\ar[d] & \rho_\ast j^\ast\mc{Q}(U')\ar[d]\\ \{p^{-1}(s)\hookrightarrow \mc{H}_{X'}\times\mc{Q}_{X'}\}\ar[r]^a & \{p^{-1}(t)\hookrightarrow \mc{H}_{U'}\times\mc{Q}_{U'}\} }
\end{equation*}
where the vertical arrows are bijections, and $a$ is the obvious map. By Proposition \ref{prop:affine-sheaf-adjunction}, $(\mc{H}/\mc{G})(X')\to (\mc{H}/\mc{G})(U')$ is bijective, and so the $s$ and $t$ occurring in this diagram are in bijective correspondence, and applying Proposition \ref{prop:affine-sheaf-adjunction} and Proposition \ref{prop:rflx-omnibus} shows $a$ is bijective.
\end{proof}

\begin{prop}\label{prop:reflexive-pseu-tors-is-tors-criterion}
   Suppose that $(\Lambda_0,\mathds{T}_0)$ is a tensor package with $\mc{G}\defeq\mathrm{Fix}(\mathds{T}_0)$ reductive. Denote by $\rho_0\colon \mc{G}\to \GL(\Lambda_0)$ the tautological map and let $(\mc{E},\mathds{T})$ be an object of $\cat{Twist}^\rflx_{X}(\Lambda_0,\mathds{T}_0)$. Then,
    \begin{equation*}
    (\rho_0)_\ast\mc{Q}=\underline{\Isom}\left(\Lambda_0\otimes_R\mc{O}_X,\mc{E}\right),\quad \text{where }\mc{Q}\defeq \underline{\Isom}\left((\Lambda_0\otimes_R\mc{O}_X,\mathds{T}_0\otimes 1),(\mc{E},\mathds{T})\right).
    \end{equation*} 
    In particular, $\mc{Q}$ is a torsor if and only if $\mc{E}$ is locally free.
\end{prop}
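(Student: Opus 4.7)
The plan is to deduce both statements from the Tannakian framework of \S\ref{ss:reflexive-pseudo-torsors}, specifically by combining the commuting triangle of Proposition~\ref{prop:rflx-triangle} at $\mc{G}$ with Proposition~\ref{prop:rflx-omnibus}(3) at $\GL(\Lambda_0)$. Since $R$ is a Dedekind domain (the standing hypothesis of this subsection), Proposition~\ref{prop:rflx-broshi} supplies the reconstructability of $\mc{G}$ in $(X_\et,\mc{O}_X)$ that makes Proposition~\ref{prop:rflx-triangle} applicable.

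First I would apply this triangle to the given reflexive twist $(\mc{E},\mathds{T})$: commutativity forces $(\omega_\mc{Q}(\Lambda_0),\omega_\mc{Q}(\mathds{T}_0))=(\mc{E},\mathds{T})$, and in particular $\omega_\mc{Q}(\Lambda_0)=\mc{E}$. Next I would invoke the identification, recorded in the discussion preceding Proposition~\ref{prop:rflx-broshi}, of $\omega_\mc{Q}(\Lambda_0)=\mc{Q}\wedge^{\mc{G}}(\Lambda_0\otimes_R\mc{O}_X)$ with the reflexive module attached to the $\GL(\Lambda_0)$-reflexive pseudo-torsor $(\rho_0)_\ast\mc{Q}$ by Proposition~\ref{prop:rflx-omnibus}(3) applied to $\GL(\Lambda_0)$. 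Passing through the quasi-inverse $\mc{F}\mapsto\underline{\Isom}(\mc{F},\Lambda_0\otimes_R\mc{O}_X)$ of that equivalence, and then composing with the canonical isomorphism $\underline{\Isom}(\mc{E},\Lambda_0\otimes\mc{O}_X)\simeq\underline{\Isom}(\Lambda_0\otimes\mc{O}_X,\mc{E})$ given by inversion of morphisms, would yield the asserted equality $(\rho_0)_\ast\mc{Q}=\underline{\Isom}(\Lambda_0\otimes_R\mc{O}_X,\mc{E})$.

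For the ``in particular'' claim, I would argue that $\mc{Q}$ admits \'etale-local sections if and only if $(\rho_0)_\ast\mc{Q}$ does; the forward direction is immediate, while for the reverse any \'etale-local section of the contracted product can, after refining the cover, be represented by an honest pair $(q,g)$ with $q\in\mc{Q}$. Combining this with the established equality and Proposition~\ref{prop:gln-torsor-vector-bundle} applied to $\GL(\Lambda_0)$, one sees that this is in turn equivalent to $\mc{E}$ being locally free. The main obstacle, albeit a conceptual one rather than a computational one, is ensuring that $(\rho_0)_\ast\mc{Q}$ genuinely is a \emph{reflexive} $\GL(\Lambda_0)$-pseudo-torsor (so that Proposition~\ref{prop:rflx-omnibus}(3) is available for it); this is precisely the content of the remark preceding Proposition~\ref{prop:rflx-broshi}, and once that is granted the remainder of the argument is formal.
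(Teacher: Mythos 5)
Your derivation of the main equality is circular. The step you lean on --- that $\omega_{\mc{Q}}(\Lambda_0)=\mc{Q}\wedge^{\mc{G}}(\Lambda_0\otimes_R\mc{O}_X)$ is the reflexive module attached to $(\rho_0)_\ast\mc{Q}$ under the equivalence of Proposition \ref{prop:rflx-omnibus}(3) --- becomes, after applying the quasi-inverse of that equivalence and the triangle of Proposition \ref{prop:rflx-triangle}, precisely the equality $(\rho_0)_\ast\mc{Q}=\underline{\Isom}(\Lambda_0\otimes_R\mc{O}_X,\mc{E})$ that you are trying to prove. The sentence preceding Proposition \ref{prop:rflx-broshi} merely \emph{asserts} this compatibility (``one checks''); for honest torsors it is a formal consequence of associativity of contracted products, but for a reflexive pseudo-torsor that fails to be a torsor on a locus of codimension $\geqslant 2$ it is exactly the nontrivial point, and it does not follow from the reconstructability input you cite. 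The tension is that $(\rho_0)_\ast\mc{Q}$ is a colimit (a quotient sheaf), whereas $\underline{\Isom}(\Lambda_0\otimes_R\mc{O}_X,\mc{E})$ is recovered from its restriction to a large open $j\colon U\hookrightarrow X$ via $j_\ast$ (a limit); there is no formal reason these agree away from $U$.

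The paper closes this gap as follows: over a large open $U$ on which $\mc{Q}$ is a torsor, \cite[Chapitre III, Proposition 1.3.6]{Giraud} produces a morphism, hence an isomorphism, of torsors $(\rho_0)_\ast j^\ast\mc{Q}\isomto\underline{\Isom}(\Lambda_0\otimes_R\mc{O}_U,\mc{E}|_U)$; then Proposition \ref{prop:pseudo-torsors-large-opens-equiv} --- whose proof genuinely uses that $\GL(\Lambda_0)/\mc{G}$ is affine --- shows that $(\rho_0)_\ast\mc{Q}\to j_\ast(\rho_0)_\ast j^\ast\mc{Q}$ is an isomorphism, so the identification extends over all of $X$ by Propositions \ref{prop:affine-sheaf-adjunction} and \ref{prop:rflx-omnibus}. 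You never invoke Proposition \ref{prop:pseudo-torsors-large-opens-equiv}, and without it (or an equivalent extension argument) your proof cannot be completed. Your treatment of the ``in particular'' clause is correct and matches the paper's (the tautological epimorphism $\GL_{n,X}\times\mc{Q}\to(\rho_0)_\ast\mc{Q}$); note also that your route needs $R$ Dedekind for reconstructability, a hypothesis the paper's direct argument does not use.
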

\begin{proof} Let $j\colon U\hookrightarrow X$ be a large open embedding such that $j^\ast\mc{Q}$ is a torsor. Applying \cite[Chapitre III, Proposition 1.3.6]{Giraud}, we obtain a map $(\rho_0)_\ast j^\ast \mc{Q}\to  \underline{\Isom}\left(\Lambda_0\otimes_R\mc{O}_U,\mc{E}|_U\right)$ of torsors. As the natural map $\underline{\Isom}\left(\Lambda_0\otimes_R\mc{O}_X,\mc{E}\right)\to j_\ast\underline{\Isom}\left(\Lambda_0\otimes_R\mc{O}_U,\mc{E}|_U\right)$ is an isomorphism from Proposition \ref{prop:affine-sheaf-adjunction} and Proposition \ref{prop:rflx-omnibus}, we conclude by Proposition \ref{prop:pseudo-torsors-large-opens-equiv}. For the second claim, it suffices to show the if statement, which follows as there is a tautological surjection of sheaves $\GL_{n,X}\times \mc{Q}\to (\rho_0)_\ast \mc{Q}$ and thus if the target is locally non-empty, so then must be the source.
\end{proof}

\begin{rem}\label{rem:alternative-to-rflx-pseudo-torsors} We introduced reflexive pseudo-torsors because we feel they are natural extensions of ideas present in the current article, that may be useful in a Tannakian formalism of analytic prismatic $F$-torsors. That said, while we do use reflexive pseudo-torsors in the body of this article, this is mainly through the final claim in Proposition \ref{prop:reflexive-pseu-tors-is-tors-criterion}. The reader uninterested in the formalism of reflexive pseudo-torsors should note that such a result is already obtained by the method of proof in \cite[Th\'eor\`eme 6.13]{ColliotTheleneSansuc}, which we briefly sketch. 

Let $j\colon U\hookrightarrow X$ be a large open such that $j^\ast\mc{Q}$ is a torsor. Moving to an \'etale extension of $X$, we may assume that $\mathcal{E}$ is trivial. Thus $\rho_\ast j^\ast\mc{Q}$ is trivial, and so comes from an element of $(\GL(\Lambda_0)/\mc{G})(U)$. As $(\GL(\Lambda_0)/\mc{G})_X$ is affine over $X$, this can be extended to an element of $(\GL(\Lambda_0)/\mc{G})(X)$ by Proposition \ref{prop:affine-sheaf-adjunction}, which gives rise to a $\mc{G}$-torsor $\mc{Q}'$. Evidently there exists an isomorphism of $\mc{G}$-torsors $f\colon j^\ast\mc{Q}\to j^\ast\mc{Q}'$ which as $\mc{Q}$ and $\mc{Q}'$ are affine (the former as it is a closed subscheme of the affine scheme $\underline{\Isom}(\Lambda_0\otimes_R\mc{O}_X,\mc{E})$, and the latter by Lemma \ref{lem:torsor-representable}), extends to an isomorphism of sheaves $\mc{Q}\to\mc{Q}'$ by \ref{prop:affine-sheaf-adjunction}. That this is $\mc{G}$-equivariant, and thus an isomorphism of pseudo-torsors follows as $f|_U$ is $\mc{G}$-equivariant, and $j(U)$ is Zariski dense in $X$.
\end{rem}


\begin{thebibliography}{DLMS24}
	\providecommand{\url}[1]{\texttt{#1}}
	\providecommand{\urlprefix}{URL }
	\providecommand{\eprint}[2][]{\url{#2}}
	
	\bibitem[ALB23]{AnschutzLeBrasDD}
	J.~Ansch\"{u}tz and A.-C. Le~Bras, Prismatic {D}ieudonn\'{e} {T}heory, Forum
	Math. Pi 11 (2023), Paper No. e2.
	
	\bibitem[Alp14]{Alper}
	J.~Alper, Adequate moduli spaces and geometrically reductive group schemes,
	Algebr. Geom. 1 (2014), no.~4, 489--531.
	
	\bibitem[ALY21]{ALY1P2}
	P.~Achinger, M.~Lara and A.~Youcis, Variants of the de {J}ong fundamental
	group, 2021, arXiv preprint
	\href{http://arxiv.org/abs/2203.11750}{arXiv:2203.11750}.
	
	\bibitem[Ans22]{Anschutz}
	J.~Ansch\"{u}tz, Extending torsors on the punctured {${\rm Spec}(A_{\inf})$},
	J. Reine Angew. Math. 783 (2022), 227--268.
	
	\bibitem[AY24]{AchingerYoucis}
	P.~Achinger and A.~Youcis, Beauville-Laszlo gluing of algebraic spaces, 2024,
	\eprint{2410.20500}.
	
	\bibitem[BBM82]{BBMDieuII}
	P.~Berthelot, L.~Breen and W.~Messing, Th\'{e}orie de {D}ieudonn\'{e}
	cristalline. {II}, vol. 930 of Lecture Notes in Mathematics, Springer-Verlag,
	Berlin, 1982.
	
	\bibitem[B{\v{C}}22]{BCLoopTorsors}
	A.~Bouthier and K.~{\v{C}}esnavi\v{c}ius, Torsors on loop groups and the
	{H}itchin fibration, Ann. Sci. \'{E}c. Norm. Sup\'{e}r. (4) 55 (2022), no.~3,
	791--864.
	
	\bibitem[Bha20]{Bha20}
	B.~Bhatt, Cohen-Macaulayness of absolute integral closures, arXiv preprint
	arXiv:2008.08070  (2020).
	
	\bibitem[BMS18]{BMSI}
	B.~Bhatt, M.~Morrow and P.~Scholze, Integral {$p$}-adic {H}odge theory, Publ.
	Math. Inst. Hautes \'{E}tudes Sci. 128 (2018), 219--397.
	
	\bibitem[BMS19]{BMS-THH}
	B.~Bhatt, M.~Morrow and P.~Scholze, Topological {H}ochschild homology and
	integral {$p$}-adic {H}odge theory, Publ. Math. Inst. Hautes \'{E}tudes Sci.
	129 (2019), 199--310.
	
	\bibitem[Bri08]{Brinon}
	O.~Brinon, Repr\'{e}sentations {$p$}-adiques cristallines et de de {R}ham dans
	le cas relatif, M\'{e}m. Soc. Math. Fr. (N.S.)  (2008), no. 112, vi+159.
	
	\bibitem[Bro13]{Broshi}
	M.~Broshi, {$G$}-torsors over a {D}edekind scheme, J. Pure Appl. Algebra 217
	(2013), no.~1, 11--19.
	
	\bibitem[BS15]{BhattScholzeProetale}
	B.~Bhatt and P.~Scholze, The pro-\'{e}tale topology for schemes, Ast\'{e}risque
	(2015), no. 369, 99--201.
	
	\bibitem[BS22]{BhattScholzePrisms}
	B.~Bhatt and P.~Scholze, Prisms and prismatic cohomology, Ann. of Math. (2) 196
	(2022), no.~3, 1135--1275.
	
	\bibitem[BS23]{BhattScholzeCrystals}
	B.~Bhatt and P.~Scholze, Prismatic {$F$}-crystals and crystalline {G}alois
	representations, Camb. J. Math. 11 (2023), no.~2, 507--562.
	
	\bibitem[{\v{C}}S24]{CesnaviciusScholze}
	K.~{\v{C}}esnavi\v{c}ius and P.~Scholze, Purity for flat cohomology, Ann. of
	Math. (2) 199 (2024), no.~1, 51--180.
	
	\bibitem[CTS79]{ColliotTheleneSansuc}
	J.-L. Colliot-Th\'{e}l\`ene and J.-J. Sansuc, Fibr\'{e}s quadratiques et
	composantes connexes r\'{e}elles, Math. Ann. 244 (1979), no.~2, 105--134.
	
	\bibitem[Dan25]{Daniels}
	P.~Daniels, Canonical integral models for {S}himura varieties of toral type,
	Algebra Number Theory 19 (2025), no.~2, 247--286.
	
	\bibitem[Del82]{DeligneHodge}
	P.~Deligne, Hodge Cycles on Abelian Varieties, pp. 9--100, Springer Berlin
	Heidelberg, Berlin, Heidelberg, 1982.
	
	\bibitem[Del11]{DeligneLetter}
	P.~Deligne, August 26, 2011 letter from Deligne to Kisin, 2011, unpublished
	letter
	\href{http://people.math.binghamton.edu/adrian/Letter_Deligne.pdf?fbclid=IwAR1Tbjo9nvvPCEBuKL5tKJHkjtTax_uohsSdtqNBQYQxKV3ZKtl2okrbLrM}{(link
		to letter)}.
	
	\bibitem[dJ95a]{deJongCrystalline}
	A.~J. de~Jong, Crystalline {D}ieudonn\'{e} module theory via formal and rigid
	geometry, Inst. Hautes \'{E}tudes Sci. Publ. Math.  (1995), no.~82, 5--96
	(1996).
	
	\bibitem[dJ95b]{deJongFG}
	A.~J. de~Jong, \'{E}tale fundamental groups of non-{A}rchimedean analytic
	spaces, vol.~97, pp. 89--118, 1995, special issue in honour of Frans Oort.
	
	\bibitem[DLMS24]{DLMS}
	H.~Du, T.~Liu, Y.~S. Moon and K.~Shimizu, Completed prismatic {$F$}-crystals
	and crystalline {$\mathbb{Z}_p$}-local systems, Compos. Math. 160 (2024),
	no.~5, 1101--1166.
	
	\bibitem[dS09]{dosSantos}
	J.~P. dos Santos, The behaviour of the differential {G}alois group on the
	generic and special fibres: a {T}annakian approach, J. Reine Angew. Math. 637
	(2009), 63--98.
	
	\bibitem[Fal89]{Faltings89}
	G.~Faltings, Crystalline cohomology and {$p$}-adic {G}alois-representations, in
	Algebraic analysis, geometry, and number theory ({B}altimore, {MD}, 1988),
	pp. 25--80, Johns Hopkins Univ. Press, Baltimore, MD, 1989.
	
	\bibitem[FK18]{FujiwaraKato}
	K.~Fujiwara and F.~Kato, Foundations of rigid geometry. {I}, EMS Monographs in
	Mathematics, European Mathematical Society (EMS), Z\"{u}rich, 2018.
	
	\bibitem[Fon94]{FontainePeriodes}
	J.-M. Fontaine, Le corps des p\'{e}riodes {$p$}-adiques, Ast\'{e}risque
	(1994), no. 223, 59--111, with an appendix by Pierre Colmez, P\'{e}riodes
	$p$-adiques (Bures-sur-Yvette, 1988).
	
	\bibitem[FS21]{FarguesScholze}
	L.~Fargues and P.~Scholze, Geometrization of the local Langlands
	correspondence, 2021, \eprint{2102.13459}.
	
	\bibitem[Gir71]{Giraud}
	J.~Giraud, Cohomologie non ab\'{e}lienne, Die Grundlehren der mathematischen
	Wissenschaften, Band 179, Springer-Verlag, Berlin-New York, 1971.
	
	\bibitem[Gle25]{GleasonSpecialization}
	I.~Gleason, Specialization maps for {S}cholze's category of diamonds, Math.
	Ann. 391 (2025), no.~2, 1611--1679.
	
	\bibitem[GR24]{GuoReinecke}
	H.~Guo and E.~Reinecke, A prismatic approach to crystalline local systems,
	Invent. Math. 236 (2024), no.~1, 17--164.
	
	\bibitem[Hub94]{HuberGen}
	R.~Huber, A generalization of formal schemes and rigid analytic varieties,
	Math. Z. 217 (1994), no.~4, 513--551.
	
	\bibitem[Hub96]{HuberEC}
	R.~Huber, \'{E}tale cohomology of rigid analytic varieties and adic spaces,
	Aspects of Mathematics, E30, Friedr. Vieweg \& Sohn, Braunschweig, 1996.
	
	\bibitem[IKY23]{IKY2}
	N.~Imai, H.~Kato and A.~Youcis, The Prismatic Realization Functor for Shimura
	Varieties of Abelian Type, 2023, \eprint{2310.08472v5}.
	
	\bibitem[IKY25]{IKY3}
	N.~Imai, H.~Kato and A.~Youcis, An integral analogue of Fontaine's crystalline
	functor, 2025, \eprint{2504.16282v2}.
	
	\bibitem[Ito25]{Ito1}
	K.~Ito, Prismatic {$G$}-displays and descent theory, Algebra Number Theory 19
	(2025), no.~9, 1685--1770.
	
	\bibitem[Ked20]{KedlayaRing-Theoretic}
	K.~S. Kedlaya, Some ring-theoretic properties of {${\rm A_{\rm inf}}$}, in
	{$p$}-adic Hodge theory, Simons Symp., pp. 129--141, Springer, Cham, [2020]
	\copyright 2020.
	
	\bibitem[Kel90]{Keller}
	B.~Keller, Chain complexes and stable categories, Manuscripta Math. 67 (1990),
	no.~4, 379--417.
	
	\bibitem[Kim15]{KimBK}
	W.~Kim, The relative {B}reuil-{K}isin classification of {$p$}-divisible groups
	and finite flat group schemes, Int. Math. Res. Not. IMRN  (2015), no.~17,
	8152--8232.
	
	\bibitem[Kis06]{KisinFCrystal}
	M.~Kisin, Crystalline representations and {$F$}-crystals, in Algebraic geometry
	and number theory, vol. 253 of Progr. Math., pp. 459--496, Birkh\"{a}user
	Boston, Boston, MA, 2006.
	
	\bibitem[Kis10]{KisIntShab}
	M.~Kisin, Integral models for {S}himura varieties of abelian type, J. Amer.
	Math. Soc. 23 (2010), no.~4, 967--1012.
	
	\bibitem[KL15]{KeLiRpHF}
	K.~S. Kedlaya and R.~Liu, Relative {$p$}-adic {H}odge theory: foundations,
	Ast\'erisque 371 (2015), 239.
	
	\bibitem[KS21]{KuranoShimomoto}
	K.~Kurano and K.~Shimomoto, Ideal-adic completion of quasi-excellent rings
	(after {G}abber), Kyoto J. Math. 61 (2021), no.~3, 707--722.
	
	\bibitem[Lau18]{LauDivided}
	E.~Lau, Divided Dieudonn\'e crystals, 2018, \eprint{1811.09439}.
	
	\bibitem[Liu18]{LiuDifferentUnif}
	T.~Liu, Compatibility of {K}isin modules for different uniformizers, J. Reine
	Angew. Math. 740 (2018), 1--24.
	
	\bibitem[L{\"{u}}t93]{LutkebohmertRE}
	W.~L{\"{u}}tkebohmert, Riemann's existence problem for a {$p$}-adic field,
	Invent. Math. 111 (1993), no.~2, 309--330.
	
	\bibitem[Mat22]{Mathew}
	A.~Mathew, Faithfully flat descent of almost perfect complexes in rigid
	geometry, J. Pure Appl. Algebra 226 (2022), no.~5, Paper No. 106938, 31.
	
	\bibitem[Mil17]{MilneGroups}
	J.~S. Milne, Algebraic groups, vol. 170 of Cambridge Studies in Advanced
	Mathematics, Cambridge University Press, Cambridge, 2017, the theory of group
	schemes of finite type over a field.
	
	\bibitem[MR10]{MajadasRodicio}
	J.~Majadas and A.~G. Rodicio, Smoothness, regularity and complete intersection,
	vol. 373 of London Mathematical Society Lecture Note Series, Cambridge
	University Press, Cambridge, 2010.
	
	\bibitem[MW25]{MinWang}
	Y.~Min and Y.~Wang, Relative {$(\varphi,\Gamma)$}-modules and prismatic
	{$F$}-crystals, Math. Z. 310 (2025), no.~1, Paper No. 16, 25.
	
	\bibitem[Ogu84]{Ogus-F-Isocrystals-II}
	A.~Ogus, {$F$}-isocrystals and de {R}ham cohomology. {II}. {C}onvergent
	isocrystals, Duke Math. J. 51 (1984), no.~4, 765--850.
	
	\bibitem[Orz71]{Orzech}
	M.~Orzech, Onto endomorphisms are isomorphisms, Amer. Math. Monthly 78 (1971),
	357--362.
	
	\bibitem[PR24]{PappasRapoportI}
	G.~Pappas and M.~Rapoport, {$p$}-adic shtukas and the theory of global and
	local {S}himura varieties, Camb. J. Math. 12 (2024), no.~1, 1--164.
	
	\bibitem[PZ13]{PappasZhu}
	G.~Pappas and X.~Zhu, Local models of {S}himura varieties and a conjecture of
	{K}ottwitz, Invent. Math. 194 (2013), no.~1, 147--254.
	
	\bibitem[Sch13]{ScholzepadicHT}
	P.~Scholze, {$p$}-adic {H}odge theory for rigid-analytic varieties, Forum Math.
	Pi 1 (2013), e1, 77.
	
	\bibitem[Sch22]{ScholzeECD}
	P.~Scholze, Etale cohomology of diamonds, 2022, \eprint{1709.07343}.
	
	\bibitem[SGA3-1]{SGA3-1}
	Sch\'{e}mas en groupes. {I}: {P}ropri\'{e}t\'{e}s g\'{e}n\'{e}rales des
	sch\'{e}mas en groupes, S\'{e}minaire de G\'{e}om\'{e}trie Alg\'{e}brique du
	Bois Marie 1962/64 (SGA 3). Dirig\'{e} par M. Demazure et A. Grothendieck.
	Lecture Notes in Mathematics, Vol. 151, Springer-Verlag, Berlin-New York,
	1970.
	
	\bibitem[SGA4-1]{SGA4-1}
	Th\'eorie des topos et cohomologie \'etale des sch\'emas. {T}ome 1: {T}h\'eorie
	des topos, Lecture Notes in Mathematics, Vol. 269, Springer-Verlag,
	Berlin-New York, 1972, s{\'e}minaire de G{\'e}om{\'e}trie Alg{\'e}brique du
	Bois-Marie 1963--1964 (SGA 4), Dirig{\'e} par M. Artin, A. Grothendieck, et
	J. L. Verdier. Avec la collaboration de N. Bourbaki, P. Deligne et B.
	Saint-Donat.
	
	\bibitem[SGA5]{SGA5}
	Cohomologie {$l$}-adique et fonctions {$L$}, Lecture Notes in Mathematics, Vol.
	589, Springer-Verlag, Berlin-New York, 1977, s{\'e}minaire de G{\'e}ometrie
	Alg{\'e}brique du Bois-Marie 1965--1966 (SGA 5), Edit{\'e} par Luc Illusie.
	
	\bibitem[SR72]{SaaCatT}
	N.~Saavedra~Rivano, Cat\'egories {T}annakiennes, Lecture Notes in Mathematics,
	Vol. 265, Springer-Verlag, Berlin-New York, 1972.
	
	\bibitem[SP]{StacksProject}
	{Stacks Project Authors}, \textit{Stacks Project},
	\url{http://stacks.math.columbia.edu}, 2023.
	
	\bibitem[SW13]{ScholzeWeinstein}
	P.~Scholze and J.~Weinstein, Moduli of {$p$}-divisible groups, Camb. J. Math. 1
	(2013), no.~2, 145--237.
	
	\bibitem[SW20]{ScholzeBerkeley}
	P.~Scholze and J.~Weinstein, Berkeley lectures on {$p$}-adic geometry, vol. 207
	of Annals of Mathematics Studies, Princeton University Press, Princeton, NJ,
	2020.
	
	\bibitem[TT19]{TanTong}
	F.~Tan and J.~Tong, Crystalline comparison isomorphisms in {$p$}-adic {H}odge
	theory: the absolutely unramified case, Algebra Number Theory 13 (2019),
	no.~7, 1509--1581.
	
	\bibitem[Wu21]{Wu}
	Z.~Wu, Galois representations, {$(\varphi,\Gamma)$}-modules and prismatic
	{F}-crystals, Doc. Math. 26 (2021), 1771--1798.
	
	\bibitem[Yek18]{YekutieliI}
	A.~Yekutieli, Flatness and completion revisited, Algebr. Represent. Theory 21
	(2018), no.~4, 717--736.
	
	\bibitem[Yek21]{YekutieliII}
	A.~Yekutieli, Weak proregularity, derived completion, adic flatness, and
	prisms, J. Algebra 583 (2021), 126--152.
	
\end{thebibliography}
\end{document}